\renewcommand*{\@pnumwidth}{2.2em}
\renewcommand*{\@tocrmarg}{3.2em}
\renewcommand{\contentsname}{Table of Contents}
\newcommand*{\listofappendices}{\listoftoc{loapp}}
\xapptocmd{\appendix}{
	\phantomsection %
	\addcontentsline{toc}{chapter}{Appendices}
}{}{\PatchFailed}
\xpatchcmd{\addchaptertocentry}{%
    \addtocentrydefault{chapter}{#1}{#2}%
  }{%
  	\ifstr{\chapapp}{Appendix}{
	    \ifstr{#1}{}
	      {\addtocentrydefault{chapter}{#1}{#2}} %
	      {\pdfbookmark[1]{\chapapp{} #1. #2}{app.#1}
        \addcontentsline{loapp}{chapter}{\chapapp{} #1. #2}} %
  	}{
	    \ifstr{#1}{}
	      {\addtocentrydefault{chapter}{#1}{#2}}
	      {\addtocentrydefault{chapter}{}{\chapapp{} #1. #2}}
  	}
  }{}{\PatchFailed}
\xpatchcmd{\addsectiontocentry}{%
    \addtocentrydefault{section}{#1}{#2}%
  }{%
  	\ifstr{\chapapp}{Appendix}{%
    \addcontentsline{loapp}{section}{\protect\numberline{#1} #2}%
    \makeatletter
    \bookmark[level=subsection,dest=\@currentHref]{#1 #2}
    \makeatother
  	}{
		\addtocentrydefault{section}{#1}{#2}%
  	}
  }{}{\PatchFailed}
\xpatchcmd{\addsubsectiontocentry}{%
    \addtocentrydefault{subsection}{#1}{#2}%
  }{%
  	\ifstr{\chapapp}{Appendix}{%
		\addcontentsline{loapp}{subsection}{\protect\numberline{#1} #2}%
    \makeatletter
    \bookmark[level=subsubsection,dest=\@currentHref]{#1 #2}
    \makeatother
  	}{
		\addtocentrydefault{subsection}{#1}{#2}%
  	}
  }{}{\PatchFailed}
\xpatchcmd{\addsubsubsectiontocentry}{%
    \addtocentrydefault{subsubsection}{#1}{#2}%
  }{%
  	\ifstr{\chapapp}{Appendix}{%
		\addcontentsline{loapp}{subsubsection}{\protect\numberline{#1} #2}%
  	}{
		\addtocentrydefault{subsubsection}{#1}{#2}%
  	}
  }{}{\PatchFailed}
\definecolor{umdarkmaize}{cmyk}{0,.18,1,0}
\definecolor{umlightblue}{cmyk}{.72,.31,.03,.12}
\definecolor{umdarkblue}{cmyk}{1,.6,0,.6}
\definecolor{umlightgray}{cmyk}{.2,.14,.12,.4}
\colorlet{umdarkgray}{umlightgray!200}
\definecolor{umpoppy}{cmyk}{0,0.71,1,0.03}
\DeclareMathOperator{\Ann}{Ann}
\DeclareMathOperator{\Ass}{Ass}
\DeclareMathOperator{\Bigcone}{Big}
\DeclareMathOperator{\Bs}{Bs}
\DeclareMathOperator{\Bplus}{\mathbf{B}_+}
\DeclareMathOperator{\Char}{char}
\DeclareMathOperator{\Div}{Cart}
\DeclareMathOperator{\End}{End}
\DeclareMathOperator{\ExcDiv}{ExcDiv}
\DeclareMathOperator{\Ext}{Ext}
\DeclareMathOperator{\Frac}{Frac}
\DeclareMathOperator{\HHom}{\mathscr{H}\!\!\mathit{om}}
\DeclareMathOperator{\Hom}{Hom}
\DeclareMathOperator{\Nklt}{Nklt}
\DeclareMathOperator{\SB}{\mathbf{B}}
\DeclareMathOperator{\SSpec}{\mathbf{Spec}}
\DeclareMathOperator{\Spec}{Spec}
\DeclareMathOperator{\Supp}{Supp}
\DeclareMathOperator{\Sym}{Sym}
\DeclareMathOperator{\TTor}{\mathscr{T}\!\!\mathit{or}}
\DeclareMathOperator{\WDiv}{WDiv}
\DeclareMathOperator{\codim}{codim}
\DeclareMathOperator{\coker}{coker}
\DeclareMathOperator{\eval}{eval}
\DeclareMathOperator{\exc}{Exc}
\DeclareMathOperator{\fpt}{fpt}
\DeclareMathOperator{\height}{ht}
\DeclareMathOperator{\im}{im}
\DeclareMathOperator{\lct}{lct}
\DeclareMathOperator{\length}{length}
\DeclareMathOperator{\mult}{mult}
\DeclareMathOperator{\ord}{ord}
\DeclareMathOperator{\totaldiscrep}{totaldiscrep}
\DeclareMathOperator{\trdeg}{trdeg}
\DeclareMathOperator{\vol}{vol}
\renewcommand{\AA}{\mathbf{A}}
\newcommand{\CC}{\mathbf{C}}
\newcommand{\FF}{\mathbf{F}}
\newcommand{\HH}{\mathbf{H}}
\newcommand{\NN}{\mathbf{N}}
\newcommand{\LL}{\mathbf{L}}
\newcommand{\PP}{\mathbf{P}}
\newcommand{\QQ}{\mathbf{Q}}
\newcommand{\RR}{\mathbf{R}}
\newcommand{\ZZ}{\mathbf{Z}}
\newcommand{\cI}{\mathcal{I}}
\newcommand{\cJ}{\mathcal{J}}
\newcommand{\cO}{\mathcal{O}}
\newcommand{\cP}{\mathcal{P}}
\newcommand{\cQ}{\mathcal{Q}}
\newcommand{\fa}{\mathfrak{a}}
\newcommand{\fb}{\mathfrak{b}}
\newcommand{\fm}{\mathfrak{m}}
\newcommand{\fp}{\mathfrak{p}}
\newcommand{\fq}{\mathfrak{q}}
\newcommand{\sF}{\mathscr{F}}
\newcommand{\sG}{\mathscr{G}}
\newcommand{\sK}{\mathscr{K}}
\newcommand{\sI}{\mathscr{I}}
\newcommand{\sP}{\mathscr{P}}
\newcommand{\Tr}{\mathrm{Tr}}
\newcommand{\cyc}{\mathrm{cyc}}
\newcommand{\id}{\mathrm{id}}
\newcommand{\perf}{\mathrm{perf}}
\newcommand{\pr}{\mathrm{pr}}
\newcommand{\red}{\mathrm{red}}
\newcommand{\isoto}{\overset{\sim}{\to}}
\newcommand{\longisoto}{\overset{\sim}{\longrightarrow}}
\newcommand{\longhookrightarrow}{\lhook\joinrel\longrightarrow}
\newcommand{\longtwoheadrightarrow}{\mathrel{\text{\tikz \draw [-cm double to] (0,0) (0.05em,0.5ex) -- (1.525em,0.5ex);}\hspace{0.05em}}}
\renewcommand{\twoheadrightarrow}{\mathrel{\text{\tikz \draw [-cm double to] (0,0) (0.05em,0.5ex) -- (0.925em,0.5ex);}\hspace{0.05em}}}
\newtheorem{theorem}{Theorem}[section]
\newtheorem{conjecture}[theorem]{Conjecture}
\newtheorem{corollary}[theorem]{Corollary}
\newtheorem{lemma}[theorem]{Lemma}
\newtheorem{principle}[theorem]{Principle}
\newtheorem{proposition}[theorem]{Proposition}
\newtheorem{question}[theorem]{Question}
\newtheorem*{question*}{Question}
\newenvironment{customthm}[1]
  {\innercustomthm}
  {\endinnercustomthm}
\newenvironment{customquest}[1]
  {\innercustomquest}
  {\endinnercustomquest}
\theoremstyle{definition}
\newtheorem{definition}[theorem]{Definition}
\newtheorem{example}[theorem]{Example}
\newtheorem{notation}[theorem]{Notation}
\newtheorem{setup}[theorem]{Setup}
\theoremstyle{remark}
\newtheorem{remark}[theorem]{Remark}
\newtheoremstyle{step}{.25\baselineskip\@plus.1\baselineskip\@minus.1\baselineskip}{.25\baselineskip\@plus.1\baselineskip\@minus.1\baselineskip}{\itshape}{}{\bfseries}{\bfseries .}{5pt plus 1pt minus 1pt}{\thmname{#1}\thmnumber{ #2}\thmnote{ \normalfont(#3)}}
\theoremstyle{step}
\newtheorem{step}{Step}[theorem]
\newtheoremstyle{claim}{.25\baselineskip\@plus.1\baselineskip\@minus.1\baselineskip}{.25\baselineskip\@plus.1\baselineskip\@minus.1\baselineskip}{\itshape}{}{\itshape}{\itshape .}{5pt plus 1pt minus 1pt}{\thmname{#1}\thmnumber{ \normalfont#2}\thmnote{ \normalfont(#3)}}
\theoremstyle{claim}
\newtheorem{claim}[theorem]{Claim}
\newtheoremstyle{cited}{.5\baselineskip\@plus.2\baselineskip\@minus.2\baselineskip}{.5\baselineskip\@plus.2\baselineskip\@minus.2\baselineskip}{\itshape}{}{\bfseries}{\bfseries .}{5pt plus 1pt minus 1pt}{\thmname{#1}\thmnumber{ #2}\thmnote{ \normalfont#3}}
\theoremstyle{cited}
\newtheorem{citedthm}[theorem]{Theorem}
\newtheorem{citedconj}[theorem]{Conjecture}
\newtheorem{citedlem}[theorem]{Lemma}
\newtheorem{citedprop}[theorem]{Proposition}
\newtheorem{citedquestion}[theorem]{Question}
\newtheorem*{customthm*}{Theorem}
\newenvironment{customcitedconj}[1]
  {\innercustomcitedconj}
  {\endinnercustomcitedconj}
\newtheoremstyle{citeddef}{.5\baselineskip\@plus.2\baselineskip\@minus.2\baselineskip}{.5\baselineskip\@plus.2\baselineskip\@minus.2\baselineskip}{}{}{\bfseries}{\bfseries .}{5pt plus 1pt minus 1pt}{\thmname{#1}\thmnumber{ #2}\thmnote{ \normalfont#3}}
\theoremstyle{citeddef}
\newtheorem{citeddef}[theorem]{Definition}
\newtheorem{citedex}[theorem]{Example}
\newtheorem{citedconstr}[theorem]{Construction}
\newtheorem{citedsetup}[theorem]{Setup}
\newtheoremstyle{citedrem}{.5\baselineskip\@plus.2\baselineskip\@minus.2\baselineskip}{.5\baselineskip\@plus.2\baselineskip\@minus.2\baselineskip}{}{}{\itshape}{\itshape .}{5pt plus 1pt minus 1pt}{\thmname{#1}\thmnumber{ \normalfont#2}\thmnote{ \normalfont#3}}
\theoremstyle{citedrem}
\renewcommand{\thetheorem}{%
  \ifnum\value{section}=0
    \thechapter.%
  \else
    \thesection.%
  \fi
  \arabic{theorem}%
}
\title{Seshadri Constants and Fujita's Conjecture via Positive
Characteristic Methods}
\author{Takumi Murayama}
\date{2019}
\begin{document}
\hypersetup{
  pdfsubject={Mathematics},
  pdfkeywords={%
    Seshadri constants,
    Fujita's conjecture,
    positive characteristic methods,
    characterizations of projective space,
    Angehrn--Siu theorem,
    asymptotic cohomological functions%
  }
}

\tikzexternaldisable

\frontmatter

\pagenumbering{Alph}
\begin{titlepage}
 \begin{singlespace} %
  \hbox{\vspace{1.2in}} %
  \begin{center} %
   \makeatletter
   \begin{onehalfspacing}
    {\usekomafont{title} \@title} %
   \end{onehalfspacing} \\[4ex] %
   by \\[2ex] %
   \@author \\ %
   \vfill %
   A dissertation submitted in partial fulfillment \\ %
   of the requirements for the degree of \\ %
   Doctor of Philosophy \\ %
   (Mathematics) \\ %
   in the University of Michigan \\ %
   \@date %
   \makeatother
  \end{center} %
  \vfill %
  \begin{flushleft}
   \hspace{0.7in}Doctoral Committee: \\[2ex] %
   \hspace{1in} %
   \parbox{4.2in}{Professor Mircea Musta\c{t}\u{a}, Chair\\
            Professor Ratindranath Akhoury\\
            Professor Melvin Hochster\\
   				  Professor Mattias Jonsson\\
            Professor Karen E. Smith}
  \end{flushleft} %
 \end{singlespace} %
\end{titlepage}

\clearpage
\begingroup
\renewcommand{\thepage}{F}
\thispagestyle{empty}
\begin{center}
  \hspace{0pt}
  \vfill
  \tikzexternalenable
    \begin{tikzpicture}
      \draw[draw=none, use as bounding box] (-5.25,-1.25) rectangle (6.5,5);
      \draw[umpoppy] (0,4) -- (0,2.5);
      \draw[umpoppy] (0,2.5) -- (1.5,1.13636363636);
      \draw (0,2.5) -- (0,0);
      \draw (0,2.5) to[bend right=10] (-1.25,1);
      \draw (0,2.5) to[bend right=10] (-0.5,1);
      \draw (0,2.5) to[bend left=5] (0.2,1);
      \draw (0,2.5) to[bend left=10] (0.7,1);
      \draw (1.5,1.13636363636) -- (2.75,0);
      \draw (1.5,0) -- (2,0.68181818181);
      \filldraw[umpoppy] (0,4) circle (1.25pt) node[anchor=east,color=umpoppy] {\footnotesize $x_0$};
      \filldraw (0,2.5) circle (1.25pt);
      \draw[open triangle 45-,shorten <=4pt] (0,2.5) -- (-1.25,2.25)
      node[anchor=east] {\footnotesize $x \vee y$};
      \filldraw (0,2.9) circle (1.25pt);

      \filldraw (0,0) circle (1.25pt) node[anchor=south,yshift=-15pt]
      {\footnotesize $0$};
      \filldraw (1.5,0) circle (1.25pt) node[anchor=south,yshift=-15pt]
      {\footnotesize $b$};
      \filldraw (2.75,0) circle (1.25pt) node[anchor=south,yshift=-15pt,xshift=0.5pt]
      {\footnotesize $a$}; 

      \draw[open triangle 45-,shorten <=4pt] (2,0.68181818181) -- (2.625,1)
      node[anchor=west] {\footnotesize Type 2};
      \draw[open triangle 45-,shorten <=10pt] (2.75,0) -- (4,0)
      node[anchor=west] {\footnotesize Type 1 points};
      \draw[open triangle 45-,shorten <=4pt] (0,2.5) -- (0.75,2.6)
      node[anchor=west] {\footnotesize Type 2};
      \draw[open triangle 45-,shorten <=4pt] (0,4) -- (0.75,4.25)
      node[anchor=west] {\footnotesize given norm on $k\{r^{-1}T\}$ $= p(E(r))$};
      \draw[open triangle 45-,shorten <=4pt] (0,2.9) -- (-0.65,2.9)
      node[anchor=east] {\footnotesize Type 3};

      \draw[-open triangle 45] (-3,-0.25) -- (-3,4.5)
      node[anchor=east,yshift=-2pt] {\scriptsize $\rho =$ radius};
      \draw (-3.1,4) -- (-2.9,4) node[anchor=east,xshift=-5pt] {\scriptsize $r$};
      \draw (-3.1,0) -- (-2.9,0) node[anchor=east,xshift=-5pt] {\scriptsize $0$};
      \draw (-3.1,2.9) -- (-2.9,2.9) node[anchor=east,xshift=-5pt] {\scriptsize $\rho
      \notin \lvert k^* \rvert$};
      \draw (-3.1,2.5) -- (-2.9,2.5) node[anchor=east,xshift=-5pt] {\scriptsize $\lvert a
      \rvert$};

      \draw[decoration={brace,mirror,amplitude=5pt},decorate] (-2.25,-0.5) --
      (3.5,-0.5) node [pos=0.5,anchor=north,yshift=-4pt] {\footnotesize
      $\overline{\mathbf{D}}(r)$};

      \filldraw (0,3.45) circle (1.25pt);
      \draw (0,3.45) to[bend left=25] (1,3.1) to[bend left=20] (2,2.9)
      to[bend left=20] (2.8,2.7);
      \draw[thick,loosely dotted] (2.8,2.7) -- (3.5,2.6);
      \filldraw (1,3.1) circle (0.75pt);
      \filldraw (2,2.9) circle (0.75pt);
      \filldraw (2.8,2.7) circle (0.75pt);
      \filldraw (3.5,2.6) circle (1.25pt);
      \draw[open triangle 45-,shorten <=4pt] (3.5,2.6) -- (4.25,2.6)
      node[anchor=west] {\footnotesize Type 4};

      \draw (2,0.68181818181) -- ++(-20:0.45cm);
      \draw (2,0.68181818181) -- ++(-75:0.3cm);
      \draw (1.5,1.13636363636) to[bend left=10] ++(-25:0.45cm);
      \draw (1.5,1.13636363636) -- ++(-110:0.3cm);
      \draw (1.5,1.13636363636) -- ++(-80:0.35cm);
      \draw (1.25,1.3636363636) -- ++(-85:0.4cm);
      \filldraw (1.25,1.3636363636) circle (1.25pt);
      \filldraw[umpoppy] (1.5,1.13636363636) circle (1.25pt) node[anchor=west,
      shift={(-2pt,5pt)},color=umpoppy] {\footnotesize $y$};
      \filldraw (2,0.68181818181) circle (1.25pt);
      \filldraw (0,0.75) circle (1.25pt) node[anchor=east] {\footnotesize $x$};
      \node[umpoppy,anchor=west] at (0.75,2) {\footnotesize$[y,x_0]$};
    \end{tikzpicture}
    \vskip1em
        \begin{tikzpicture}[scale=0.75,every node/.append style={font=\small}]
      \draw[thick] (0,0) arc (100:77:15cm) coordinate (a);
      \draw[umpoppy] (0,0.5) arc (100:87:15cm) edge[out=-2,in=-95] (4,1.5);
      \draw[umpoppy] {(a)+(0,0.5)} arc (77:82:15cm) coordinate (b);
      \node[anchor=east,umpoppy] at (-0.1,0.55) {$a$};
      \path[umlightblue] (0,0.25) edge[out=10,in=120,looseness=0.5] (4.1,0.16)
        (4.1,0.16) edge[out=60,in=170,looseness=0.7] ($(a)+(0,0.25)$);
      \node[anchor=east,umlightblue] at (0.1,0.2) {$a^{-1}$};
      \draw[umpoppy] (b) edge[out=170,in=-87] (4.25,1.48);
      \draw[thick] (5.5,-0.4) arc (170:110:2cm) node[anchor=west,rotate=15] {$\cdots$};
      \node[anchor=west] at (6.2,0.4) {$\operatorname{Spec}(G[a^{-1}])$};
      \fill (4.1,0.16) circle (2pt) node[anchor=north] {$\mathfrak{p}$};
      \draw[line width=2.5pt] (0,0) arc (100:97:15cm) node[pos=0.5,anchor=north,yshift=-3pt] (q) {$\mathfrak{q}$};

      \draw[thick,line cap=round] (0.25,2.5) arc (105:72:10cm) coordinate (a1);
      \draw[thick,line cap=round] (0.25,5) arc (105:72:10cm) coordinate (a2);
      \draw[thick,line cap=round] (0.25,2.5) -- (0.25,5) (a1) -- (a2);
      \node[anchor=west] at (6.2,3.75) {$\operatorname{Spv}(G[a^{-1}])$};
      \draw[-cm to] (3,2.25) -- (3,1.25) node[pos=0.5,anchor=east]
        {\footnotesize $\operatorname{supp}$};
      \fill (0.25,4) circle (2pt) node[anchor=east] (vals) {$s$};
      \draw[open triangle 45-,shorten <=4pt] (vals) -- ($(vals)+(1,0)$)
        node[anchor=west,text width=4cm]
        {\scriptsize Corr.\ to a valuation ring\\
          $R \subset \operatorname{Frac}(G[a^{-1}]/\mathfrak{q})$\\[-0.25em]
          dominating $(G[a^{-1}]/\mathfrak{q})_{\mathfrak{p}/\mathfrak{q}}$};

      \draw[thick] (-7.75,2) arc (100:77:15cm) coordinate (d);
      \draw[thick,line cap=round] (-7.75,3.5) arc (105:72:10cm) coordinate (b1);
      \draw[thick,line cap=round] (-7.75,6) arc (105:72:10cm) coordinate (b2);
      \draw[thick,line cap=round] (-7.75,3.5) -- (-7.75,6) (b1) -- (b2);
      \node[anchor=east] at (-7.75,4.25) {$\operatorname{Spv}(A_a)$};
      \fill (-7.75,5) circle (2pt) node[anchor=east] {$t$};
      \path (-7.75,5) edge[cm bar-cm to,bend right=2,shorten <=6pt,commutative
        diagrams/crossing over] node[pos=0.55,above=1pt,sloped,fill=white]
        {\footnotesize abstract extension} (vals);
      \path (d) edge[-cm to,shorten <=10pt,shorten >=5pt,yshift=-5pt]
        node[pos=0.6,sloped,above=1pt,yshift=2pt] {\footnotesize dominant} (0,1);
      \path ($(b2)-(0,0.25)$) edge[-cm to,yshift=-10pt,shorten <=10pt,shorten >=10pt] (0.25,5);

      \node[anchor=east] at (-8,2) {$\operatorname{Spec}(A_a)$};
      \draw[-cm to] (-5,3.5) -- (-5,2.5) node[pos=0.5,anchor=east]
        {\footnotesize $\operatorname{supp}$};
      \draw[line width=2.5pt] (-8,1.975) arc (100:97:15cm) node[pos=0.5,anchor=north] (tildeq) {$\widetilde{\mathfrak{q}}$};
      \path (tildeq) edge[cm bar-cm to,bend right=12] (q);

      \draw[thick] (-3,7) rectangle (1,9.5);
      \node[anchor=west] at (1,8.5) {$\operatorname{Spv}(A)$};
      \node[rotate=45] at (-3.25,6.7) {\large$\subseteq$};
      \fill (-3,8.75) circle (2pt) node[anchor=east] (valu) {$u$};
      \path (-7.75,5) edge[cm bar-cm to,bend left=5,shorten <=6pt,commutative
        diagrams/crossing over] node[pos=0.5,sloped,above=1pt] {\footnotesize 
        restriction} (valu);
      \fill (0,8.75) circle (2pt) node[anchor=west] {$v$};
      \path (-2.85,8.75) edge[commutative diagrams/rightsquigarrow] node[pos=0.55,above] {$\cdot\rvert_{c\Gamma_u}$} (-0.15,8.75);
    \end{tikzpicture}
    \vskip1em
  \begin{tikzpicture}[scale=1]
    \pgfdeclareshape{strokegenuspic}{
      \anchor{center}{\pgfpointorigin}
      \backgroundpath{\pgfpathmoveto{\pgfqpoint{-1cm}{0cm}}
        \pgfpathcurveto %
          {\pgfpoint{-0.5cm}{-.5cm}}
          {\pgfpoint{0.5cm}{-.5cm}}
          {\pgfpoint{1cm}{0cm}}
        \pgfpathmoveto{\pgfqpoint{-0.75cm}{-0.15cm}}
        \pgfpathcurveto %
            {\pgfpoint{-0.25cm}{.25cm}}
            {\pgfpoint{.25cm}{.25cm}}
            {\pgfpoint{0.75cm}{-0.15cm}}
        \pgfusepath{stroke}
      }
    }
    \pgfdeclareshape{hackgenuspic}{
      \anchor{center}{\pgfpointorigin}
      \backgroundpath{\pgfpathmoveto{\pgfqpoint{-0.78cm}{-.17cm}}
        \pgfpathcurveto %
          {\pgfpoint{-0.35cm}{-.44cm}}
          {\pgfpoint{0.35cm}{-.44cm}}
          {\pgfpoint{.78cm}{-0.17cm}} 
        \pgfpathmoveto{\pgfqpoint{-0.78cm}{-0.17cm}}
        \pgfpathcurveto %
          {\pgfpoint{-0.25cm}{.25cm}}
          {\pgfpoint{.25cm}{.25cm}}
          {\pgfpoint{0.78cm}{-0.17cm}}
        \pgfsetfillcolor{white}
        \pgfusepath{fill}

        \pgfpathmoveto{\pgfqpoint{-1cm}{0cm}}
        \pgfpathcurveto %
          {\pgfpoint{-0.5cm}{-.5cm}}
          {\pgfpoint{0.5cm}{-.5cm}}
          {\pgfpoint{1cm}{0cm}}

        \pgfpathmoveto{\pgfqpoint{-0.75cm}{-0.15cm}}
        \pgfpathcurveto %
          {\pgfpoint{-0.25cm}{.25cm}}
          {\pgfpoint{.25cm}{.25cm}}
          {\pgfpoint{0.75cm}{-0.15cm}}
          \pgfusepath{stroke}
      }
    }
    \path[draw,use Hobby shortcut,closed=true]
      (0,0) .. (1,.75) .. (3,1) .. (4,.3) .. (2,-0.75) .. (.5,-1);
    \begin{scope}
      \clip (1.75,0) -- (2.5,0) -- (2.7,0.4) -- (2,0.4) -- (1.75,0);
      \fill[pattern=north east lines] (2.25,0.2) -- (3.1,0.2) -- (3.05,-0.225) -- (2.05,-0.2) -- (2.25,0.2);
    \end{scope}
    \draw (1.75,0) -- (2.5,0) -- (2.7,0.4) -- (2,0.4) -- cycle;
    \draw (2.25,0.2) -- (3.1,0.2) -- (3.05,-0.225) -- (2.05,-0.2) -- cycle;
    \node[strokegenuspic, draw, scale=0.6, rotate=10] at (1,-0.1) {};
    \node at (-0.3,0.2) {$X$};
    \node[anchor=east,xshift=-3pt] at (2,0.4) {$U_\alpha$};
    \node[anchor=west] at (3.1,0.2) {$U_\beta$};

    \draw (0.5,-2.5) rectangle (1.5,-1.5);
    \draw (3,-2.5) rectangle (4,-1.5);

    \filldraw[pattern=north east lines] (1,-2.5) rectangle (1.5,-2);
    \filldraw[pattern=north east lines] (3,-1.5) rectangle (3.5,-2);

    \path (1.5,-2.25) edge[<->, shorten >=1.5pt, shorten <=1.5pt] node[sloped,
    anchor=center, above, yshift=-2pt] {\scriptsize analytic} (3,-1.75);

    \draw[shorten >=1pt,shorten <=1pt] (1.25,-1.5) -- (1.95,0);
    \draw[shorten >=1pt,shorten <=1pt] (3.25,-1.5) -- (2.85,-0.2);

    \node[anchor=east] at (0.5,-2.1) {$\CC^n \supseteq V_\alpha$};
    \node[anchor=west] at (4,-1.9) {$V_\beta \subseteq \CC^n$};
  \end{tikzpicture}$\quad$%
    \begin{tikzpicture}[y=0.80pt, x=0.80pt, yscale=-0.5, xscale=0.5, inner
      sep=0pt, outer sep=0pt,rotate=-5,scale=0.85]
      \path[draw=black] (1758.3020,890.1180) -- (1649.3230,1104.5890) --
      (2080.5000,1180.0000) node[pos=0.95,above,rotate=-5,yshift=5pt]
      {\small $\mathbf{P}^2_k$} -- (2180.9180,959.6050) -- cycle;
      \path[draw=black,miter limit=10.00,thick] (1719.0000,1035.0000) .. controls
      (1726.1480,1042.4110) and (1743.1240,1052.9010) .. (1752.1580,1057.4780) ..
      controls (1774.8600,1068.9770) and (1781.7920,1070.7240) ..
      (1805.9780,1078.6160) .. controls (1822.5330,1084.0170) and
      (1843.9950,1090.6910) .. (1865.1830,1092.9500) .. controls
      (1886.5600,1095.2280) and (1907.6670,1100.9400) .. (1924.9330,1094.0760) ..
      controls (1940.0400,1088.0700) and (1943.5260,1072.2070) ..
      (1936.4980,1058.9260) .. controls (1931.2570,1049.0200) and
      (1924.3110,1040.0120) .. (1917.7910,1030.7750) .. controls
      (1911.6350,1022.0540) and (1905.8580,1013.1290) .. (1902.2500,1003.0500) ..
      controls (1899.0780,994.1870) and (1897.7800,985.3400) .. (1900.4860,980.0570)
      .. controls (1910.6130,960.2850) and (1956.8040,961.6480) ..
      (1981.7230,967.4400) .. controls (1994.2880,970.3610) and (2023.7800,982.0190)
      .. (2051.7680,997.9190) .. controls (2067.0220,1006.5850) and
      (2078.6970,1019.0710) .. (2090.6830,1028.9620) .. controls
      (2094.8180,1032.3740) and (2109.0440,1043.3490) .. (2111.3740,1045.6270);
      \path[draw=umpoppy,miter limit=10.00,thick] (1719.0000,1065.5000) .. controls
      (1733.9510,1061.8990) and (1751.0450,1061.7650) .. (1762.7560,1050.4180) ..
      controls (1765.7210,1047.5460) and (1774.1330,1032.7620) ..
      (1764.0960,1033.0740) .. controls (1753.5280,1033.4020) and
      (1751.5140,1051.8930) .. (1752.2700,1059.1100) .. controls
      (1753.7830,1073.5650) and (1764.0410,1087.5530) .. (1778.8360,1090.3280) ..
      controls (1793.1390,1093.0110) and (1805.5610,1083.4770) ..
      (1810.2930,1070.3920) .. controls (1812.3420,1064.7250) and
      (1813.1840,1058.4330) .. (1812.3770,1052.4410) .. controls
      (1812.0730,1050.1810) and (1811.2590,1044.3520) .. (1808.3610,1043.6230) ..
      controls (1801.9840,1042.0180) and (1802.9590,1061.1210) ..
      (1803.1740,1063.9300) .. controls (1804.1120,1076.0980) and
      (1808.1610,1088.9740) .. (1817.7120,1097.1380) .. controls
      (1827.8030,1105.7640) and (1842.6190,1107.4600) .. (1854.4890,1101.5740) ..
      controls (1866.0800,1095.8260) and (1874.0440,1082.6430) ..
      (1871.7260,1069.5750) .. controls (1870.5520,1062.9540) and
      (1862.4070,1053.4300) .. (1858.6930,1064.0820) .. controls
      (1854.8930,1074.9790) and (1861.4120,1088.8620) .. (1868.2640,1097.0900) ..
      controls (1875.9200,1106.2810) and (1887.2720,1112.0630) ..
      (1899.3150,1112.0680) .. controls (1910.1240,1112.0720) and
      (1921.4060,1106.9240) .. (1924.5290,1095.8080) .. controls
      (1927.4070,1085.5610) and (1922.6080,1071.2490) .. (1912.8980,1066.0380) ..
      controls (1909.9790,1064.4730) and (1906.1800,1064.0400) ..
      (1905.0050,1067.8320) .. controls (1903.5170,1072.6360) and
      (1907.5440,1078.7080) .. (1910.2900,1082.2280) .. controls
      (1918.9790,1093.3660) and (1936.1520,1102.2940) .. (1949.8280,1094.7940) ..
      controls (1962.7790,1087.6910) and (1964.8370,1069.8860) ..
      (1960.5980,1057.0780) .. controls (1956.4720,1044.6130) and
      (1946.7280,1038.1340) .. (1934.4960,1034.9740) .. controls
      (1921.3580,1031.5800) and (1908.0580,1029.3560) .. (1896.4830,1021.7610) ..
      controls (1887.6330,1015.9540) and (1878.0370,1005.7670) ..
      (1879.9790,994.2190) .. controls (1881.6240,984.4370) and (1891.3390,979.7630)
      .. (1900.4850,980.0560) .. controls (1910.4320,980.3750) and
      (1921.4880,985.5380) .. (1927.6680,993.4580) .. controls (1929.1430,995.3490)
      and (1931.5810,998.8930) .. (1929.7210,1001.2800) .. controls
      (1927.7490,1003.8130) and (1922.9680,1002.3360) .. (1920.5710,1001.4850) ..
      controls (1910.6170,997.9520) and (1899.8040,988.1050) .. (1899.4830,976.8920)
      .. controls (1899.0800,962.8090) and (1915.2880,955.2690) ..
      (1926.5840,951.8160) .. controls (1939.8430,947.7620) and (1957.1030,945.3300)
      .. (1969.9050,952.1580) .. controls (1980.4190,957.7650) and
      (1984.4520,970.0160) .. (1981.9120,981.2850) .. controls (1980.7540,986.4200)
      and (1973.0800,1001.2750) .. (1966.1410,993.5480) .. controls
      (1957.9220,984.3960) and (1973.4500,971.3610) .. (1980.4140,968.0370) ..
      controls (1996.7920,960.2210) and (2017.7080,962.5610) .. (2033.2960,971.1880)
      .. controls (2044.3420,977.3010) and (2055.4320,989.1110) ..
      (2050.7390,1002.6980) .. controls (2047.3380,1012.5480) and
      (2036.9740,1020.2140) .. (2026.9360,1022.1830) .. controls
      (2020.9700,1023.3540) and (2011.8630,1021.3290) .. (2015.1600,1013.3330) ..
      controls (2018.7570,1004.6110) and (2030.9260,1001.0860) ..
      (2039.2200,999.5020) .. controls (2049.6730,997.5050) and (2061.3310,996.9630)
      .. (2071.7910,999.2250) .. controls (2081.3280,1001.2870) and
      (2089.0510,1007.2890) .. (2091.0380,1017.2060) .. controls
      (2093.0090,1027.0440) and (2088.9560,1039.0710) .. (2079.5780,1043.7470) ..
      controls (2075.0930,1045.9830) and (2062.6820,1045.4220) ..
      (2067.2680,1037.4920) .. controls (2072.0380,1029.2420) and
      (2087.4490,1029.0200) .. (2095.5550,1028.7960) .. controls
      (2103.8670,1028.5650) and (2112.1840,1029.0640) .. (2120.4980,1028.9990);
      \fill[fill=umlightblue,miter limit=10.00] (1917.7910,1030.7730) circle
      (0.15cm) node[anchor=south west,xshift=4pt,yshift=2pt,umlightblue] {\footnotesize $p$};
    \end{tikzpicture}
  \tikzexternaldisable
  \vfill
  \hspace{0pt}
\end{center}

\endgroup

\clearpage
\thispagestyle{empty}

\hspace{0pt}

\vfill

\begin{center}
\begin{onehalfspacing}
  \makeatletter
  \@author \\
  \href{mailto:takumim@umich.edu}{\nolinkurl{takumim@umich.edu}} \\
  ORCID iD: \href{https://orcid.org/0000-0002-8404-8540}{\nolinkurl{0000-0002-8404-8540}}\\
  \vspace{0.3in}
  \copyright\;\@author\ \@date
\end{onehalfspacing}
\end{center}

\vfill
\hspace{0pt}

\clearpage

\pagenumbering{roman}
\setcounter{page}{2}

\phantomsection
\addcontentsline{toc}{chapter}{Dedication}
\null\vfill
\makeatletter
{\centering\@dedication \par}%
\vskip \z@ \@plus3fill
\makeatother

\chapter{Acknowledgments}
\begin{onehalfspacing}
  \par First of all, I am tremendously grateful to my advisor Mircea
  Musta\c{t}\u{a} for his constant support throughout the last five years.
  I learned much of the philosophy and many of the methods behind the results
  in this thesis from him, and I am very thankful for his patience and guidance
  as I worked out the many moving pieces that came together to become this
  thesis.
  \par I would also like to thank the rest of my doctoral committee.
  I have benefited greatly from taking courses from Ratindranath Akhoury,
  Melvin Hochster, Mattias
  Jonsson, and Karen E. Smith, and I am especially glad that they are willing to
  discuss physics or mathematics with me when I have questions about their work
  or the classes they are teaching.
  My intellectual debt to Mel and Karen is particularly apparent in this
  thesis, since much of the underlying commutative-algebraic theory was
  developed by Mel and his collaborators, and
  this theory was first applied to algebraic geometry by Karen among others.
  I would also like to thank Bhargav Bhatt, from whom I also learned a lot of
  mathematics through courses and in the seminars that he often led.
  \par Next, I would like to thank my collaborators Rankeya Datta, Yajnaseni
  Dutta, Mihai Fulger, Jeffrey C. Lagarias, Lance E. Miller, David Harry
  Richman, and Jakub Witaszek for their willingness to work on mathematics with
  me.
  While only some of our joint work appears explicitly in this thesis, the ideas
  I learned through conversations with them have had a substantial impact on the
  work presented here.
  \par I would also like to extend my gratitude toward my fellow colleagues at
  Michigan for many useful conversations, including (but not limited to)
  Harold Blum,
  Eric Canton,
  Francesca Gandini,
  Jack Jeffries,
  Zhan Jiang,
  Hyung Kyu Jun,
  Devlin Mallory,
  Eamon Quinlan-Gallego,
  Ashwath Rabindranath,
  Emanuel Reinecke,
  Matthew Stevenson,
  Robert M. Walker,
  Rachel Webb,
  and Ming Zhang.
  I am particularly grateful to Farrah Yhee, without whose support I
  would have had a much harder and definitely more stressful time as a graduate
  student.
  I would also like to thank
  Javier Carvajal-Rojas,
  Alessandro De Stefani,
  Lawrence Ein,
  Krishna Hanumanthu,
  Mitsuyasu Hashimoto,
  J\'anos Koll\'ar,
  Alex K\"uronya,
  Yuchen Liu,
  Linquan Ma,
  Zsolt Patakfalvi,
  Thomas Polstra,
  Mihnea Popa,
  Kenta Sato,
  Karl Schwede,
  Junchao Shentu,
  Daniel Smolkin,
  Shunsuke Takagi,
  Hiromu Tanaka,
  Kevin Tucker,
  and
  Ziquan Zhuang
  for helpful discussions about material in this thesis through the years.
  \par Finally, I would like to thank my parents, sisters, grandparents, and
  other relatives
  for their constant support throughout my life.
  They have always been supportive of me working in mathematics, and of my
  career and life choices.
  I hope that my grandparents in particular are proud of me, even though all of
  them probably have no idea what I work on, and some of
  them did not have the chance to witness me receiving my Ph.D.
  \par This material is based upon work supported by the National Science
  Foundation under Grant Nos.\ DMS-1265256 and DMS-1501461.
  \par
\end{onehalfspacing}

\clearpage
\renewcommand*{\contentsname}{Table of Contents}
\phantomsection\pdfbookmark{\contentsname}{toc}
\tableofcontents

\clearpage
\phantomsection
\addcontentsline{toc}{chapter}{\listfigurename}
\listoffigures

\clearpage
\phantomsection
\listoftables
\addcontentsline{toc}{chapter}{List of Tables}

\clearpage
\listofappendices

\setglossarystyle{long-booktabs}
\setglossarypreamble{Symbols are grouped into three groups,
depending on whether they start with punctuation, Greek letters, or Latin
letters.}
\renewcommand*{\glsgroupskip}{}
\renewcommand{\entryname}{Symbol}
\newlength{\glsattmplen}
\settowidth{\glsattmplen}{$\totaldiscrep(X,\Delta,\fa^t)$}
\setlength{\glsdescwidth}{\dimexpr\linewidth-4\tabcolsep-\glsattmplen}
\renewcommand{\glsxtrprelocation}{}
\makeatletter
\renewcommand*{\@glsxtrpreloctag}{, }
\makeatother
\glsadd{finitefields}
\glsadd{naturalnumbers}
\glsadd{integers}
\glsadd{rationalnumbers}
\glsadd{realnumbers}
\glsadd{complexnumbers}
\glsadd{twistingsheaf}
\glsadd{sheafcohomology}
\glsadd{singcohomology}
\glsadd{dimofcohomology}
\glsadd{degree}
\glsadd{divisorbundle}
\glsadd{projectivebundle}
\glsadd{spectrumring}
\glsadd{spectrumsheaf}
\glsadd{structuresheaf}
\glsadd{linearequivalence}
\glsadd{numericalequivalence}
\glsadd{hilbertsamuel}
\glsadd{cotangentbundle}
\glsadd{tangentbundle}
\glsadd{annihilator}
\glsadd{injectivehull}
\glsadd{rfstar}
\glsadd{cohomologysheaf}

\printglossary[title={List of Symbols}]

\chapter{Abstract}
\onehalfspacing

In 1988, Fujita conjectured that there is an effective and uniform way to turn an
ample line bundle on a smooth projective variety into a globally generated or
very ample line bundle.
We study Fujita's conjecture using Seshadri constants, which were first
introduced by Demailly in 1992 with the hope that they could be used to prove
cases of Fujita's conjecture.
While examples of Miranda seemed to indicate that Seshadri constants could not
be used to prove Fujita's conjecture, we present a new approach to Fujita's
conjecture using Seshadri constants and positive characteristic methods.
Our technique recovers some known results toward Fujita's conjecture over the
complex numbers, without the use of vanishing theorems, and proves new results
for complex varieties with singularities.
Instead of vanishing theorems, we use positive characteristic techniques related
to the Frobenius--Seshadri constants introduced by Musta\c{t}\u{a}--Schwede and
the author.
As an application of our results, we give a characterization of projective
space using Seshadri constants in positive characteristic, which was proved in
characteristic zero by Bauer and Szemberg.

\mainmatter

\chapter{Introduction}\label{s:intro}
Algebraic geometry is the study of
\textsl{algebraic varieties,} which are
geometric spaces defined by polynomial equations.
Some varieties are particularly simple, and the simplest algebraic varieties are
perhaps the $n$-dimensional projective 
spaces\index{projective space, $\mathbf{P}^n$|textbf} \gls*{projectivespace}.
Recall that if $k$ is a field (e.g.\ the complex numbers $\CC$), then the
\textsl{projective space} of dimension $n$ over $k$ is
\[\label{eq:defofpn}
  \PP^n_k \coloneqq \frac{k^{n+1} \smallsetminus \{0\}}{k^*}.
\]
A \textsl{projective variety} over $k$\index{projective variety|textbf} is an
algebraic variety that is isomorphic to a
subset of $\PP^n_k$ defined as the zero set of homogeneous polynomials.
\par Projective spaces are very well understood.
The most relevant property of projective space for us is its intersection theory.
Since at least the Renaissance, artists have used the intersection theory
of $\PP^2_k$ to paint
perspective\index{projective space, $\mathbf{P}^n$!in art}:
in Raphael's \emph{School of Athens} (see
\cref{fig:schoolofathens}), every pair of lines not parallel to the plane of
vision appear to intersect between the two central figures, Plato and Aristotle.
\begin{figure}[t]
  \centering
  \includegraphics[height=3.25in]{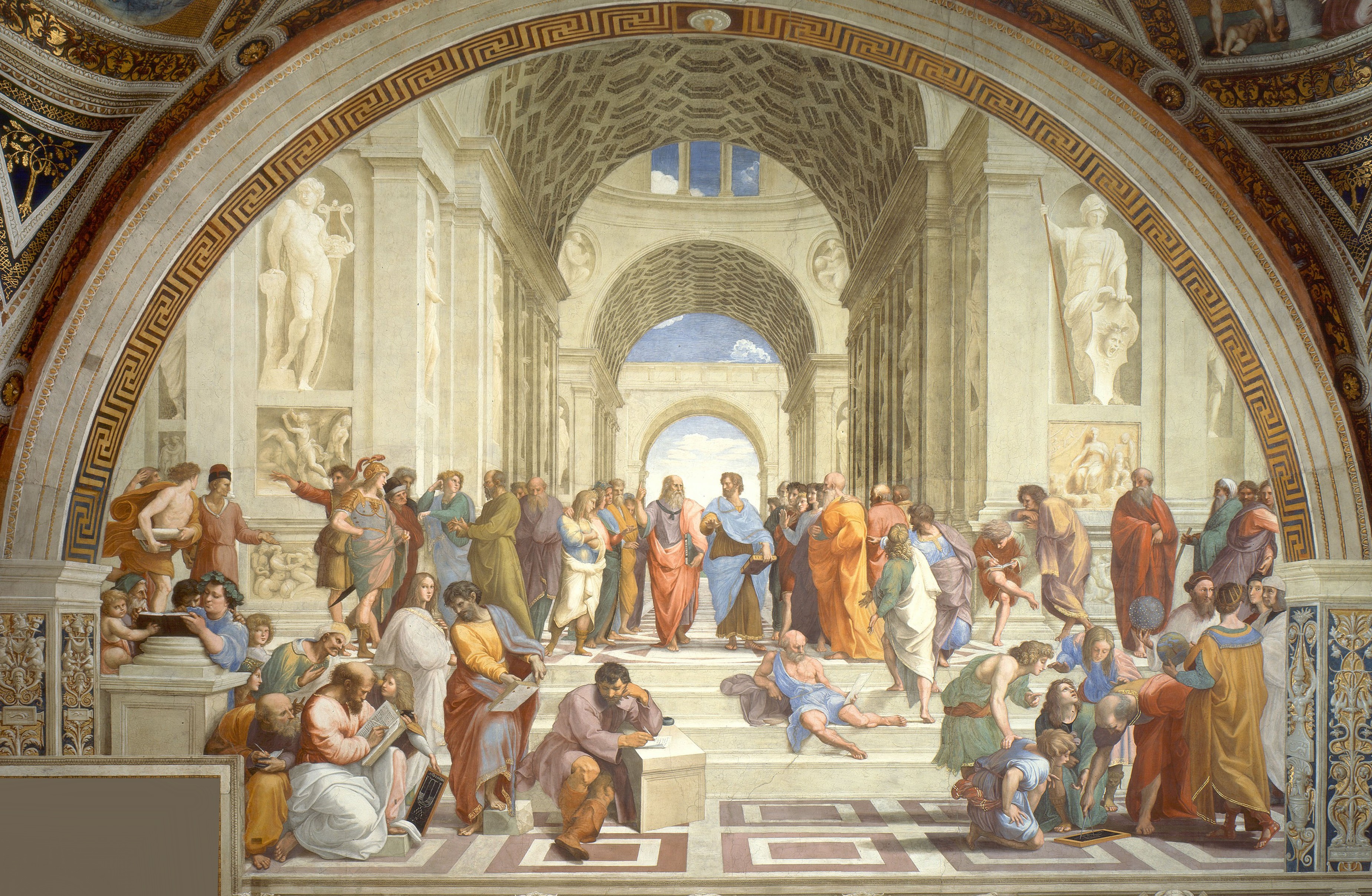}
  \caption{Raphael's \emph{School of Athens} (1509--1511)}
  {\footnotesize Public domain,
  \url{https://commons.wikimedia.org/w/index.php?curid=2194482}}
  \label{fig:schoolofathens}
  \index{projective space, $\mathbf{P}^n$!in art|ff{}}
\end{figure}
Mathematically, a concise way to describe this feature is that the singular
cohomology
ring of $\PP^n_\CC$\index{projective space, $\mathbf{P}^n$!cohomology ring of}
can be described as
\begin{equation}\label{eq:cohringpn}
  H^{*}_{\textup{sing}}\bigl(\PP^n_\CC,\ZZ\bigr) \simeq \frac{\ZZ[h]}{(h^{n+1})},
\end{equation}
where $h \in H^2(\PP^n_\CC,\ZZ)$ is the cohomology class
associated to a hyperplane.
\par In addition to its intersection theory, we understand many more things
about projective spaces, in particular the values of various cohomological
invariants associated to algebraic varieties that come from sheaf cohomology.
It is therefore useful to know when a variety is projective space, prompting the
following:
\begin{question}\label{question:characterization}
  How can we identify when a given projective variety is projective space?
\end{question}
Of course, not every projective variety is a projective space.
For example, the hyperboloid\index{hyperboloid}
\[
  \PP^1_k \times_k \PP^1_k \simeq \bigl\{x^2 + y^2 - z^2 = w^2\bigr\} \subseteq
  \PP^3_k
\]
is an example of a ruled surface\index{ruled surface}, and cannot be isomorphic
to $\PP^2_k$ since two lines in it may not intersect.
See \cref{fig:kobeporttower} for a real-world example of this phenomenon:
adjacent steel trusses that run vertically along the Kobe port tower are
straight, and do not intersect\index{ruled surface!in real life}.
\begin{figure}[t]
  \centering
  \includegraphics[height=3.25in]{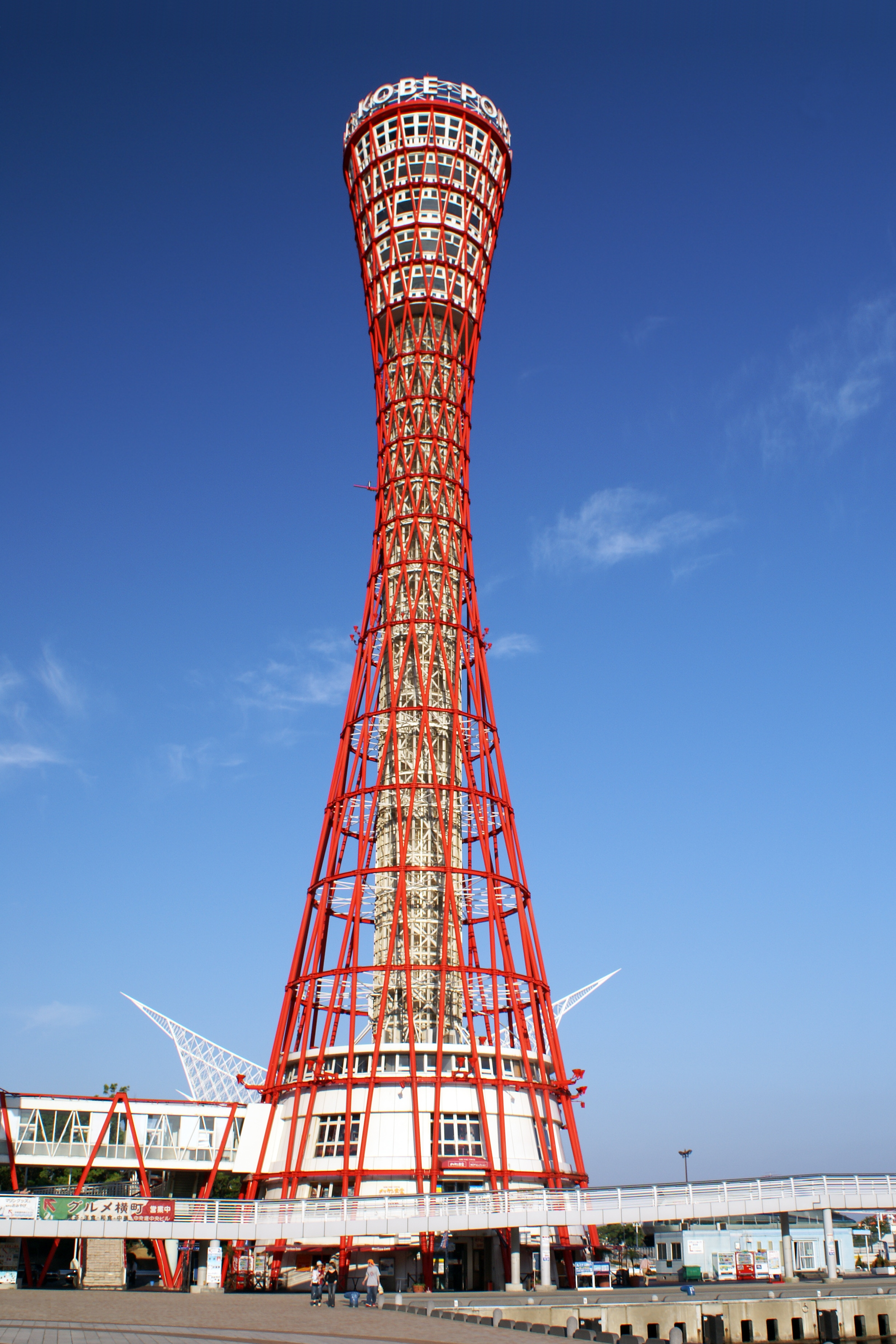}
  \caption{Kobe Port Tower in the Kobe harbor (2006)}
  {\footnotesize By
  \href{https://commons.wikimedia.org/wiki/User:663highland}{\texttt{663highland}},
  \href{https://creativecommons.org/licenses/by/2.5/}{CC BY 2.5},
  \url{https://commons.wikimedia.org/w/index.php?curid=1389137}}
  \index{hyperboloid!in real life|ff{}}
  \index{ruled surface!in real life|ff{}}
  \label{fig:kobeporttower}
\end{figure}
We therefore also ask:
\begin{question}\label{question:embedding}
  Given a projective variety $X$, how can we find an embedding $X
  \hookrightarrow \PP^N_k$, or even just a morphism $X \to \PP^N_k$?
\end{question}
We now state our first result, which gives one answer to
\cref{question:characterization}.
In the statement below, we recall that a smooth projective variety is
\textsl{Fano} if the anti-canonical bundle $\omega_X^{-1} \coloneqq
\bigwedge^{\dim X} T_X$ is ample, where a line
bundle $L$ on a variety $X$ over a field $k$ is \textsl{ample}%
\index{ample line bundle or divisor} if one of the following
equivalent conditions hold (see \cref{def:ampledef,thm:ampleggchar}):
\begin{enumerate}[label=$(\arabic*)$]
  \item There exists an integer $\ell > 0$ such that $L^{\otimes \ell}$ is
    \textsl{very ample,}\index{very ample line bundle or divisor}
    i.e., such that there exists an embedding $X
    \hookrightarrow \PP^N_k$ for some $N$ for which $L^{\otimes \ell} \simeq
    \cO_{\PP^N_k}(1)\rvert_X$.
  \item For every coherent sheaf $\sF$ on $X$, there exists an integer $\ell_0
    \ge 0$ such that the sheaf $\sF \otimes L^{\otimes \ell}$ is globally
    generated for all $\ell \ge \ell_0$.
\end{enumerate}
Additionally, $e(\cO_{C,x})$ denotes the \textsl{Hilbert--Samuel multiplicity}
of $C$ at $x$.
\begin{customthm}{\ref{thm:charpn}}
  Let\index{projective space, $\mathbf{P}^n$!characterization via Seshadri constants}
  $X$ be a Fano variety of dimension $n$ over an
  algebraically closed field $k$ of positive characteristic.
  If there exists a closed point $x \in X$ with
  \[
    \deg\bigl(\omega_X^{-1}\rvert_C\bigr) \ge e(\cO_{C,x}) \cdot (n+1)
  \]
  for every integral curve $C \subseteq X$ passing
  through $x$, then $X$ is isomorphic to the $n$-dimensional projective space
  $\PP^n_k$.
\end{customthm}
An interesting feature of this theorem is that it only requires a positivity
condition on $\omega_X^{-1}$ at \emph{one} point $x \in X$.
Bauer\index{Bauer, Thomas} and Szemberg\index{Szemberg, Tomasz} showed the
analogous statement in characteristic zero.
There have been some recent generalizations of both Bauer and Szemberg's result
and of \cref{thm:charpn} due to Liu\index{Liu, Yuchen} and
Zhuang\index{Zhuang, Ziquan}; see \cref{rem:lz16}.
There is also an interesting connection between \cref{thm:charpn} and the
Mori--Mukai conjecture\index{Mori--Mukai conjecture|(} (see
\cref{conj:morimukai}), which states that if $X$ is a Fano variety of dimension
$n$ such that the anti-canonical bundle $\omega_X^{-1}$ satisfies
$\deg(\omega_X^{-1}\rvert_C) \ge n+1$ for all
rational curves $C \subseteq X$, then $X$ is isomorphic to $\PP^n_k$.
\cref{thm:charpn} strengthens the positivity assumption on $\omega_X^{-1}$ to
incorporate the multiplicity of the curves passing through $x$, but has the
advantage of not having to impose any generality conditions on the point $x$.
See \cref{s:charpncomparison} for further discussion.%
\index{Mori--Mukai conjecture|)}
\medskip
\par Our next result is the main ingredient in proving \cref{thm:charpn},
and gives a partial answer to \cref{question:embedding}.
We motivate this result by first stating Fujita's conjecture, a proof of which
would answer \cref{question:embedding}.
Below, $\omega_X \coloneqq \bigwedge^{\dim X} \Omega_X$ is the \textsl{canonical
bundle}\index{canonical bundle or sheaf} on $X$.
\begin{citedconj}[{\cites[Conj\adddot]{Fuj87}[n\textsuperscript{o} 1]{Fuj88}}]
  \label{conj:fujita}
  Let $X$ be a smooth projective variety of dimension $n$ over an algebraically
  closed field $k$, and let $L$ be an ample line bundle on $X$.
  We then have the following:
  \begin{enumerate}[label=$(\roman*)$,ref=\roman*]
    \item \textup{(Fujita's freeness conjecture)} $\omega_X
      \otimes L^{\otimes \ell}$ is globally generated for all $\ell \ge n+1$.%
      \index{Fujita, Takao!freeness conjecture|textbf}%
      \label{conj:fujitafree}
    \item \textup{(Fujita's very ampleness conjecture)}
      $\omega_X \otimes L^{\otimes \ell}$ is very ample for all $\ell \ge n+2$.%
      \index{Fujita, Takao!very ampleness conjecture|textbf}%
      \label{conj:fujitava}
  \end{enumerate}
\end{citedconj}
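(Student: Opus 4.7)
The plan rests on a two-stage reduction. First, a standard spread-out argument reduces the characteristic-zero case to uniform-in-$p$ statements in positive characteristic: choose a finitely generated $\ZZ$-subalgebra $A \subseteq k$ over which $(X, L)$ is defined, spread out to a smooth projective family $\mathcal{X} \to \Spec A$ with a relatively ample $\mathcal{L}$, prove the analogue on every closed fiber (all of which have positive residue characteristic), and propagate global generation and very ampleness back to the generic fiber by upper semicontinuity of $h^0$ and cohomology-and-base-change. It therefore suffices to fix $k$ algebraically closed of characteristic $p>0$ and prove both conclusions with bounds independent of $p$.

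Second, in positive characteristic I would invoke the Frobenius--Seshadri machinery of Musta\c{t}\u{a}--Schwede and the author. The driving implication is: if a suitable Frobenius--Seshadri constant $\varepsilon^F_\ell(M; x)$ exceeds $1$, then $\omega_X \otimes M$ separates $\ell$-jets at $x$. Applied to $M = L^{\otimes \ell}$, global generation of $\omega_X \otimes L^{\otimes \ell}$ at every $x$ follows once $\varepsilon^F_1(L^{\otimes \ell}; x) > 1$, which by homogeneity is the bound
\[
  \varepsilon^F_1(L; x) \ge \frac{1}{n+1} \quad \text{for every closed } x \in X,
\]
and the two-point, $1$-jet analogue bounded below by $1/(n+2)$ delivers very ampleness. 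One would attack such a bound by mimicking the Angehrn--Siu template but replacing multiplier ideals and Kawamata--Viehweg vanishing with test ideals $\tau(\mathfrak{a}^c)$ and the Frobenius trace $F^e_* \omega_X \to \omega_X$: construct an effective $\QQ$-divisor $D \equiv_{\mathrm{num}} cL$ with $c < 1/(n+1)$ whose non-$F$-pure locus is isolated at $x$, then lift sections from the resulting zero-dimensional test-ideal subscheme using Frobenius splitting in lieu of a vanishing theorem.

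The main obstacle is precisely that the required lower bound $\varepsilon^F_1(L; x) \ge 1/(n+1)$ is nothing less than Fujita's conjecture itself, restated in the language of Frobenius positivity. Miranda's surface examples show that the ordinary Seshadri constant $\varepsilon(L; x)$ can be made arbitrarily small at special points, so this approach can succeed only if some feature of the Frobenius setting --- for instance the asymptotic behavior of $F^e_*(L^{\otimes p^e})$, the $F$-singularities of cyclic covers branched at $x$, or trace-map rigidity for $F^e_* \omega_X \to \omega_X$ --- systematically excludes the Miranda pathology. No such mechanism is presently known, and producing an integral curve $C \ni x$ with $(L \cdot C)$ anomalously small relative to $\mathrm{mult}_x C$ remains the precise geometric bottleneck. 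I expect the techniques assembled here to give the full conjecture only under auxiliary positivity hypotheses (such as a lower bound on a generic Seshadri constant, or strong $F$-regularity of an auxiliary boundary), and a genuine unconditional proof along this route would require a new, fundamentally Frobenius-theoretic mechanism for ruling out such curves.
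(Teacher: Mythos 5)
There is no proof to compare against: the statement you were asked about is Fujita's conjecture itself, which the paper records as an open conjecture (cited to Fujita) and never proves; the paper only establishes weaker results in its direction, namely the jet-separation criterion \cref{thm:poscharseshsm} and the Angehrn--Siu-type statements \cref{thm:gettingsectionscharzero,thm:myangehrnsiu}, which yield global generation only for $\ell \ge \frac{1}{2}n(n+1)+1$ (\cref{cor:as02}). Your proposal is likewise not a proof, and to your credit you say so explicitly: the entire weight of the argument falls on the uniform lower bound $\varepsilon^F_1(L;x) \ge \frac{1}{n+1}$ (or its jet-theoretic or log-canonical-threshold surrogate) at \emph{every} closed point, and no mechanism is supplied for it. That bound is not a technical lemma one can hope to extract from the trace map or test-ideal formalism as currently developed; it is the conjecture restated. \cref{ex:mirandaex} shows the ordinary Seshadri constant can be made arbitrarily small at special points, the Ein--K\"uchle--Lazarsfeld bound \cref{thm:ekl} only controls very general points, and the Frobenius--Seshadri constants of \cref{rem:frobseshconst} are sandwiched between multiples of $\varepsilon_{\textup{jet}}(\lVert L\rVert;x)$, so they inherit the same pathology rather than escape it.

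Your first-stage reduction (spreading out and descending global generation from closed fibers via cohomology and base change) is sound and is exactly how the paper passes between characteristics in \cref{sect:proofofdemfinjtype} and the proof of \cref{thm:gettingsectionscharzero}, and your diagnosis of where the difficulty lives agrees with the paper's own \cref{principle:raynaudnotobstacle}: the obstruction is not the failure of Kodaira vanishing but the construction, as in \cref{thm:kol64}, of boundary divisors that are very singular at the chosen point yet mild nearby, with the Angehrn--Siu cutting-down procedure costing roughly $m/c(m)$ at each dimension step and hence producing the quadratic bound rather than $n+1$. So your second stage, carried out as far as the available technology allows, reproves (a characteristic-$p$-flavored form of) \cref{thm:myangehrnsiu}; it does not and cannot, without a genuinely new idea for excluding Miranda-type curves or for improving the dimension-by-dimension cost, establish either part of the conjecture.
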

\noindent The essence of Fujita's conjecture is that an ample line bundle $L$ can
effectively and uniformly be turned into a globally generated or very ample line
bundle.
Over the complex numbers, Fujita's freeness conjecture holds in dimensions $\le
5$ \cites{Rei88}{EL93fuj}{Kaw97}{YZ15}, and Fujita's very ampleness conjecture
holds in dimensions $\le 2$ \cite{Rei88}.
On the other hand, in arbitrary characteristic, much less is known.
While the same proof as over the complex numbers works for curves, only partial
results are known for surfaces \cites{SB91}{Ter99}{DCF15}, and
in higher dimensions, we only know that Fujita's
conjecture \ref{conj:fujita} holds when $L$ is additionally assumed to be
globally generated \cite{Smi97}.
See \cref{sect:fujitaconj} and especially \cref{tab:knownfujita} for a summary
of existing results.
\par We now describe our approach to Fujita's conjecture \ref{conj:fujita}, and
state our second main result.
In 1992, Demailly\index{Demailly, Jean-Pierre} introduced Seshadri
constants to measure the local positivity of line bundles with the hope that
they could be used to prove cases of Fujita's conjecture \cite[\S6]{Dem92}.
These constants are defined as follows.
Let $L$ be an ample line bundle on a projective variety $X$ over an algebraically
closed field, and consider a closed point $x \in X$.
The \textsl{Seshadri constant}\index{Seshadri constant, $\varepsilon(D;x)$} of
$L$ at $x$ is
\begin{equation}\label{eq:seshdefample}
  \varepsilon(L;x) \coloneqq \sup\bigl\{t \in \RR_{\ge 0} \bigm\vert
  \mu^*L(-tE)\ \text{is ample}\bigr\},
\end{equation}
where $\mu\colon \widetilde{X}\to X$ is the blowup of $X$ at $x$ with
exceptional divisor $E$.
The connection between Seshadri constants and Fujita's conjecture
\ref{conj:fujita} is given by the following result, which says that if the
Seshadri constant $\varepsilon(L;x)$ is sufficiently large, then $\omega_X
\otimes L$ has many global sections.
This is the main ingredient in the proof of \cref{thm:charpn}.
\begin{customthm}{\ref{thm:poscharseshsm}}
  Let\index{Seshadri constant, $\varepsilon(D;x)$!criterion for separation of jets|(}
  $X$ be a smooth projective variety of dimension $n$ over an algebraically
  closed field $k$ of characteristic $p > 0$, and let $L$ be an ample line
  bundle on $X$.
  Let $x \in X$ be a closed point, and consider an integer $\ell \ge 0$.
  If $\varepsilon(L;x) > n+\ell$, then $\omega_X \otimes L$ separates
  $\ell$-jets at $x$, i.e., the restriction morphism
  \[
    H^0(X,\omega_X \otimes L) \longrightarrow H^0(X,\omega_X \otimes L \otimes
    \cO_X/\fm_x^{\ell+1})
  \]
  is surjective, where $\fm_x \subseteq \cO_X$ is the ideal defining $x$.%
  \index{Seshadri constant, $\varepsilon(D;x)$!criterion for separation of jets|)}
\end{customthm}
In particular, then, to show Fujita's freeness conjecture
\ref{conj:fujita}\cref{conj:fujitafree}, it would suffice to show that
$\varepsilon(L;x) > \frac{n}{n+1}$ for every point $x \in X$, where $n = \dim
X$.
\cref{thm:poscharseshsm} was proved over the complex numbers by
Demailly\index{Demailly, Jean-Pierre}; see \cref{prop:demsesh}.
In positive characteristic, the
special case when $\ell = 0$ is due to Musta\c{t}\u{a}%
\index{Mustata, Mircea@Musta\c{t}\u{a}, Mircea} and Schwede\index{Schwede, Karl}
\cite[Thm.\ 3.1]{MS14}.
Our contribution is that the same result holds for all $\ell \ge 0$ in positive
characteristic.
\begin{remark}
  \cref{thm:poscharseshsm} holds more generally for line
  bundles that are not necessarily ample, and for certain
  singular varieties over arbitrary fields; see \cref{thm:demseshsingular}.
  This version of \cref{thm:poscharseshsm} for singular varieties is new even
  over the complex numbers, and we do not know of a proof of this more general
  result that does not reduce to the positive characteristic case.
  Moreover, by combining \cref{thm:demseshsingular} with lower bounds on
  Seshadri constants due to Ein, K\"uchle, and Lazarsfeld (\cref{thm:ekl}), we
  obtain generic results toward Fujita's conjecture \ref{conj:fujita} for
  singular varieties; see \cref{cor:ekl,rem:ekl}.
\end{remark}
\par The main difficulty in proving \cref{thm:poscharseshsm} is that
Kodaira-type vanishing theorems can fail in positive characteristic.
Recall that if $X$ is a smooth projective variety over the complex numbers, and
$L$ is an ample line bundle on $X$, then the Kodaira vanishing
theorem\index{vanishing theorem!Kodaira|textbf} states that
\begin{equation*}\label{eq:kodairavanishing}
  H^i(X,\omega_X \otimes L) = 0
\end{equation*}
for every $i > 0$.
This vanishing theorem was a critical ingredient in Demailly's
proof of \cref{thm:poscharseshsm} over the complex numbers.
In positive characteristic, however, the Kodaira vanishing theorem is often
false, as was first discovered by Raynaud\index{Raynaud, Michel} \cite{Ray78}
(see \cref{ex:raynaud}).
We note that
the strategy behind known cases of Fujita's conjecture \ref{conj:fujita}
is to construct global sections of $\omega_X \otimes L^{\otimes \ell}$
inductively by using versions of the Kodaira vanishing theorem to lift sections
from smaller dimensional subvarieties.
It has therefore been thought that the failure
of vanishing theorems may be the greatest obstacle to making progress on
Fujita's conjecture \ref{conj:fujita} in positive characteristic.
\par In order to replace vanishing theorems, we build on the theory of
so-called ``Frobenius techniques.''
A key insight in positive characteristic algebraic geometry is that while
vanishing theorems are false, there is one major advantage to working in
positive characteristic: every variety $X$ has an interesting endomorphism,
called the \textsl{Frobenius morphism}\index{Frobenius morphism}.
This endomorphism $F\colon X \to X$
is defined as the identity map on points, and the $p$-power
map
\[
  \begin{tikzcd}[row sep=0,column sep=1.475em]
    \cO_X(U) \rar & F_*\cO_X(U)\\
    f \rar[mapsto] & f^p
  \end{tikzcd}
\]
on functions over every open set $U \subseteq X$, where $p$ is the
characteristic of the ground field $k$.
Even if one is only interested in algebraic geometry over the complex numbers,
some results necessitate reducing to the case when the ground field is of
positive characteristic and then using the Frobenius morphism.
For example, this ``reduction modulo $p$'' technique is used in one proof of
the Ax--Grothendieck theorem\index{Ax--Grothendieck theorem},
which says that an injective polynomial endomorphism $\CC^n \to \CC^n$ is
bijective \cites[Thm.\ C]{Ax68}[Prop.\ 10.4.11]{EGAIV3}, and in
Mori's\index{Mori, Shigefumi} bend and break\index{bend and break} technique,
which is used to find rational curves on varieties \cite[\S2]{Mor79}.
The latter in particular is a fundamental technique in modern
birational geometry, but there is no known direct proof of Mori's theorems over
the complex numbers.
\par In its current form, Frobenius techniques were developed simultaneously in
commutative algebra (see, e.g., \cites{HR76}{HH90}) and in representation theory
(see, e.g., \cites{MR85}{RR85}).
Particularly important is the theory of tight closure\index{tight closure}
developed by Hochster\index{Hochster, Melvin} and Huneke\index{Huneke, Craig},
which was used by Smith\index{Smith, Karen E.} to show special cases of Fujita's
conjecture \cites{Smi97}{Smi00fuj}.
\medskip
\par The Frobenius techniques used in proving \cref{thm:poscharseshsm} can be
used to give progress toward Fujita's conjecture \ref{conj:fujita}.
As mentioned above, \cref{thm:poscharseshsm} implies that to show Fujita's
freeness conjecture
\ref{conj:fujita}\cref{conj:fujitafree}, it would suffice to show that
$\varepsilon(L;x) > \frac{n}{n+1}$ for every point $x \in X$, where $n = \dim
X$.
Unfortunately, Miranda\index{Miranda, Rick} showed that the Seshadri constant
$\varepsilon(L;x)$ can get arbitrarily small at special points $x \in X$; see
\cref{ex:mirandaex}.
Nevertheless, we show that the dimension $n$ in the statement of
\cref{thm:poscharseshsm} can be replaced by a smaller number, called the
\textsl{log canonical threshold,} over which one has more control.
See \cref{def:lct} for a precise definition of the log canonical threshold.
This invariant is associated to the data of the variety $X$ together with a
formal $\QQ$-linear combination $\Delta$ of codimension one subvarieties of $X$,
and measures how bad the singularities of $X$ and $\Delta$ are.
We also mention that $\varepsilon(\lVert H \rVert;x)$ below denotes the
\textsl{moving Seshadri constant} of $H$ at $x$, which is a version of the
Seshadri constant defined above in \cref{eq:seshdefample} for line bundles that
are not necessarily ample; see \cref{def:movingsesh}.
\begin{customthm}{\ref{thm:gettingsectionscharzero}}
  Let $(X,\Delta)$ be an effective log pair such that $X$ is a projective normal
  variety over a field $k$ of characteristic zero, and such that
  $K_X+\Delta$ is $\QQ$-Cartier.
  Consider a $k$-rational point $x \in X$ such that $(X,\Delta)$ is klt, and
  suppose that $D$ is a Cartier divisor on $X$ such that $H = D - (K_X+\Delta)$
  satisfies
  \[
    \varepsilon\bigl(\lVert H \rVert;x\bigr) > 
    \lct_x\bigl((X,\Delta);\fm_x\bigr).
  \]
  Then, $\cO_X(D)$ has a global section not vanishing at $x$.
\end{customthm}
While we have stated \cref{thm:gettingsectionscharzero} over a field of
characteristic zero, our proof uses reduction modulo $p$ and Frobenius
techniques to reduce to a similar result
in positive characteristic (\cref{thm:gettingsections}).
\par Using \cref{thm:gettingsectionscharzero}, we then show the following
version of a theorem of Angehrn\index{Angehrn, Urban} and
Siu\index{Siu, Yum Tong} \cite[Thm.\ 0.1]{AS95}.
Our statement is modeled after that in \cite[Thm.\ 5.8]{Kol97}.
Below, $\vol_{X\mid Z}(H)$ denotes the \textsl{restricted volume,} which
measures how many global sections $\cO_Z(mH\rvert_Z)$ has on $Z$ that are
restrictions of global sections of $\cO_X(mH)$ on $X$ as $m \to \infty$; see
\cref{def:restrictedvolume}.
\begin{customthm}{\ref{thm:myangehrnsiu}}
  Let $(X,\Delta)$ be an effective log pair, where $X$ is a normal
  projective variety over an algebraically closed field $k$ of characteristic
  zero, $\Delta$ is a $\QQ$-Weil divisor, and $K_X+\Delta$ is
  $\QQ$-Cartier.
  Let $x \in X$ be a closed
  point such that $(X,\Delta)$ is klt at $x$, and
  let $D$ be a Cartier divisor on $X$ such that setting $H \coloneqq D -
  (K_X+\Delta)$, there exist positive numbers $c(m)$ with the following
  properties:
  \begin{enumerate}[label=$(\roman*)$,ref=\roman*]
    \item For every positive dimensional variety $Z \subseteq X$ containing $x$,
      we have
      \[
        \vol_{X\mid Z}(H) > c(\dim Z)^{\dim Z}.
      \]
    \item The numbers $c(m)$ satisfy the inequality
      \[
        \sum_{m=1}^{\dim X} \frac{m}{c(m)} \le 1.
      \]
  \end{enumerate}
  Then, $\cO_X(D)$ has a global section not vanishing at $x$.
\end{customthm}
A version of this result for smooth complex projective varieties appears in
\cite[Thm.\ 2.20]{ELMNP09}.
As a consequence, we recover the following result, which gives positive evidence
toward Fujita's freeness conjecture \ref{conj:fujita}\cref{conj:fujitafree}.
\begin{corollary}[cf.\ {\cite[Cor.\ 0.2]{AS95}}]\label{cor:as02}
  Let $X$ be a smooth projective variety of dimension $n$ over an algebraically
  closed field of characteristic zero, and let $L$ be an ample line bundle on
  $X$.
  Then, the line bundle $\omega_X \otimes L^{\otimes \ell}$ is globally
  generated for all $\ell \ge \frac{1}{2} n(n+1) + 1$.
\end{corollary}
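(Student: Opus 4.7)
The plan is to apply Theorem~\ref{thm:myangehrnsiu} to the pair $(X,0)$ at each closed point $x \in X$, with $D = K_X + \ell L$ and hence $H = D - K_X = \ell L$. Since $X$ is smooth, $(X,0)$ is klt at every closed point and $K_X$ is Cartier, so the only task is to exhibit positive numbers $c(m)$ for $m = 1,2,\ldots,n$ satisfying the two hypotheses of the theorem.

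For the volume lower bound, let $Z \subseteq X$ be an irreducible subvariety of dimension $m \ge 1$ containing $x$. Because $\ell L$ is ample, the restriction maps $H^0(X,\cO_X(t\ell L)) \to H^0(Z,\cO_Z(t\ell L|_Z))$ are surjective for $t \gg 0$ by Serre vanishing, and asymptotic Riemann--Roch on $Z$ then gives
\[
  \vol_{X\mid Z}(\ell L) = \bigl(\ell L|_Z\bigr)^m = \ell^m \cdot (L|_Z)^m \ge \ell^m,
\]
where the last inequality uses that $L|_Z$ is an ample line bundle on the $m$-dimensional projective variety $Z$, so its top self-intersection is a positive integer.

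With this bound in hand, take the constant choice $c(m) \coloneqq \tfrac{1}{2}n(n+1)$ for every $m$. The hypothesis $\ell \ge \tfrac{1}{2}n(n+1) + 1$ gives $c(m) < \ell$, whence $c(m)^m < \ell^m \le \vol_{X\mid Z}(\ell L)$ for every such $Z$, verifying $(i)$. Condition $(ii)$ reduces to the identity
\[
  \sum_{m=1}^{n} \frac{m}{c(m)} = \frac{2}{n(n+1)} \cdot \frac{n(n+1)}{2} = 1 \le 1.
\]

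Theorem~\ref{thm:myangehrnsiu} then produces, for each closed point $x \in X$, a global section of $\cO_X(D) = \omega_X \otimes L^{\otimes \ell}$ not vanishing at $x$, so this line bundle is globally generated. There is no real obstacle in the argument: all of the genuine geometric content (reduction modulo $p$, Frobenius techniques, the inductive construction of non-vanishing sections) is already packaged inside Theorem~\ref{thm:myangehrnsiu} and, through it, Theorem~\ref{thm:gettingsectionscharzero}; the corollary is a direct numerical consequence. The only step that one might want to isolate as a lemma is the computation $\vol_{X\mid Z}(\ell L) = (\ell L|_Z)^m$ for ample restrictions, which is the sole place where ampleness of $L$ (as opposed to bigness) is used.
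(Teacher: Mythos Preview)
Your proof is correct and follows exactly the approach indicated in the paper, which simply says the corollary is obtained from Theorem~\ref{thm:myangehrnsiu} by setting $c(m) = \binom{n+1}{2}$ for every $m$; you have supplied the routine verification of hypotheses $(i)$ and $(ii)$ that the paper leaves implicit.
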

This corollary is obtained from \cref{thm:myangehrnsiu} by setting $c(m) =
\binom{n+1}{2}$ for every $m$.
Since we prove \cref{cor:as02} without the use of Kodaira-type
vanishing theorems, \cref{thm:myangehrnsiu,cor:as02} support the validity of the
following:
\begin{principle}\label{principle:raynaudnotobstacle}
  The failure of Kodaira-type vanishing theorems is \emph{not} the main
  obstacle to proving Fujita's conjecture \ref{conj:fujita} over fields of
  positive characteristic.
\end{principle}
\noindent Instead, the difficulty is in constructing certain boundary divisors
that are very singular at a point, but have mild singularities elsewhere; cf.\
\cref{thm:kol64}.
\medskip
\par Finally, we mention one intermediate result used in the proofs of
Theorems \ref{thm:gettingsectionscharzero} and \ref{thm:myangehrnsiu},
which is of independent interest.
This statement characterizes ampleness in
terms of asymptotic growth of higher cohomology groups.
It is well known that if $X$ is a projective variety of dimension $n > 0$, then
$h^i(X,\cO_X(mL)) \coloneqq \dim_k H^i(X,\cO_X(mL)) = O(m^n)$ for every Cartier
divisor $L$; see \cite[Ex.\ 1.2.20]{Laz04a}.
It is therefore natural to ask when cohomology groups have submaximal growth.
The following result says that ample Cartier divisors $L$ are characterized by
having submaximal growth of higher cohomology groups for small perturbations of
$L$.
\begin{customthm}{\ref{thm:dfkl41}}
  Let $X$ be a projective variety of dimension $n > 0$ over a field $k$.
  Let $L$ be an $\RR$-Cartier divisor on $X$.
  Then, $L$ is ample if and only if
  there exists a very ample Cartier divisor $A$ on $X$ and a
  real number $\varepsilon > 0$ such that
  \[
    \widehat{h}^i(X,L-tA) \coloneqq \limsup_{m \to \infty}
    \frac{h^i\bigl(X,\cO_X\bigl(\lceil m(L-tA) \rceil\bigr)\bigr)}{m^n/n!} =
    0
  \]
  for all $i > 0$ and for all $t \in [0,\varepsilon)$.
\end{customthm}
Here, the $\widehat{h}^i(X,-)$ are the \textsl{asymptotic higher
cohomological functions} introduced by K\"uronya \cite{Kur06}; see
\cref{sect:burgosgil}.
\cref{thm:dfkl41} was first proved by de Fernex, K\"uronya, and
Lazarsfeld over the complex numbers \cite[Thm.\ 4.1]{dFKL07}.
We note that one can have $\widehat{h}^i(X,L) = 0$ for all $i > 0$ without $L$
being ample, or even pseudoeffective; see \cref{ex:hhatabvar}.
\section{Outline}
This thesis is divided into two parts, followed by two appendices.
The first part consists of \cref{chap:motivation,s:charpn}, and is more
introductory in nature.
In \cref{chap:motivation}, we give more motivation and many examples
illustrating the questions we are studying in this thesis.
After highlighting some difficulties in positive characteristic, we prove
\cref{thm:poscharseshsm}.
We then devote \cref{s:charpn} to proving our characterization of projective
space (\cref{thm:charpn}).
\par The second part of this thesis consists of the remaining chapters.
In \cref{chap:prelims,chap:poschar}, we review some preliminary material that
will be used in the rest of the thesis.
Since almost all of this material is not new, we recommend the reader to skip
ahead to the results they are interested in, and to refer back to these
preliminary chapters as necessary.
We then focus on proving \cref{thm:demseshsingular}, which is a generalization
of \cref{thm:poscharseshsm} for singular varieties, and on proving
Theorems \ref{thm:gettingsectionscharzero} and \ref{thm:myangehrnsiu}.
To do so, we prove \cref{thm:dfkl41} in \cref{ch:dfkl}, which is used when we
study moving Seshadri constants in \cref{chapter:movingseshadri}.
This latter chapter is also where we prove \cref{thm:demseshsingular}.
Finally, we prove Theorems \ref{thm:gettingsectionscharzero} and
\ref{thm:myangehrnsiu} in \cref{ch:angehrnsiu}.
\par The two appendices are devoted to some technical aspects of the theory of
$F$-singularities for rings and schemes whose Frobenius endomorphisms are not
necessarily finite.
\cref{app:nonffinfsings} reviews the definitions of and relationships between
different classes of $F$-singularities, and \cref{app:gamma} develops a
scheme-theoretic version of the gamma construction of Hochster--Huneke, which we
use throughout the thesis to reduce to the case when the ground field $k$
satisfies $[k:k^p]<\infty$, where $\Char k = p > 0$.

\section{Notation and conventions}
We mostly follow the notation and conventions of \cite{Har77} for generalities
in algebraic geometry, of \cites{Laz04a}{Laz04b} for positivity of divisors, line
bundles, and vector bundles, and of \cite{Har66} for Grothendieck
duality theory.
See also the \hyperlink{section*.10}{List of Symbols}.
A notable exception is that we do not assume anything a priori about the ground
field that we work over, and in particular, the ground field may not be
algebraically closed or even perfect.
\par All rings are commutative with identity.
A \textsl{variety}\index{variety|textbf} is a reduced and irreducible scheme
that is separated and of finite type over a field $k$.
A \textsl{complete scheme} is a scheme that is proper over a field $k$.
Intersection products \gls*{intersectionproduct} are defined using Euler
characteristics, following Kleiman\index{Kleiman, Steven L.}; see \cite[App.\
B]{Kle05}.

\chapter{Motivation and examples}\label{chap:motivation}
In this chapter, we motivate the questions posed in the introduction with
some more background and examples.
The new material is a slight modification of Koll\'ar's example
\ref{ex:kollarsurface} to work in arbitrary characteristic, and the proof of
\cref{thm:poscharseshsm}; see \cref{sect:demseshspecialcase}.
A different proof of \cref{thm:poscharseshsm} originally appeared in
\cite[\S3]{Mur18}.
\section{Fujita's conjecture}\label{sect:fujitaconj}
\index{Fujita, Takao!freeness conjecture|(}
\index{Fujita, Takao!very ampleness conjecture|(}
\par To motivate Fujita's\index{Fujita, Takao} conjectural answer to
\cref{question:embedding}, we give some background.
First, we recall the following definition.
\begin{definition}[see {\cite[Def.\ on p.\ 120 and Thm.\ II.7.6]{Har77}}]
  \label{def:ampledef}
  Let $X$ be a scheme over a field $k$, and let $L$ be a line
  bundle on $X$.
  We say that $L$ is
  \textsl{very ample}\index{very ample line bundle or divisor|textbf}
  if there exists an embedding $X \hookrightarrow \PP^N_k$ for some $N$ for
  which $L \simeq \cO_{\PP^N_k}(1)\rvert_X$.
  We say that $L$ is
  \textsl{ample}\index{ample line bundle or divisor|textbf}
  if $L^{\otimes \ell}$ is very ample for some integer $\ell > 0$.
\end{definition}
Ample line bundles can be characterized in the following manner.
\begin{theorem}[Cartan--Serre--Grothendieck; see {\cite[Def.\ on p.\ 153 and
  Thm.\ II.7.6]{Har77}}]\label{thm:ampleggchar}
  Let\index{Cartan--Serre--Grothendieck theorem|textbf}
  $X$ be a scheme of finite type over a field $k$, and let $L$ be a line bundle
  on $X$.
  Then, $L$ is ample if and only if for every coherent sheaf $\sF$ on $X$, there
  exists an integer $\ell_0 \ge 0$ such that the
  sheaf $\sF \otimes L^{\otimes \ell}$ is globally generated for all $\ell \ge
  \ell_0$.
\end{theorem}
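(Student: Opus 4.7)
The plan is to prove the two implications separately. The forward direction reduces cleanly to Serre's classical generation theorem on projective space, while the reverse direction requires constructing an embedding $X \hookrightarrow \PP^N_k$ from sections of a power of $L$.

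For $(\Rightarrow)$, by definition some $L^{\otimes m}$ is very ample, giving a closed immersion $i\colon X \hookrightarrow \PP^N_k$ with $L^{\otimes m} \simeq i^*\cO_{\PP^N_k}(1)$. Given a coherent sheaf $\sF$ on $X$, the pushforward $i_*\sF$ is coherent on $\PP^N_k$, so the standard Serre generation statement says $(i_*\sF)(d)$ is globally generated for all $d \gg 0$. Since $i$ is a closed immersion, global sections and stalks transport through $i$, so $\sF \otimes L^{\otimes md}$ is globally generated on $X$ for $d$ large. To cover all residue classes modulo $m$, apply the same argument to each coherent sheaf $\sF \otimes L^{\otimes j}$ for $j = 0, 1, \ldots, m-1$ and take the maximum of the resulting thresholds.

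For $(\Leftarrow)$, I would construct the embedding in three steps. First, applying the hypothesis to $\sF = \cO_X$ gives that $L^{\otimes \ell}$ is globally generated for $\ell \gg 0$, so every closed point is avoided by some section. Second, cover $X$ by finitely many affine opens $U_i$ (possible since $X$ is of finite type over $k$), and for each $i$ let $\sI_i$ be the ideal sheaf of the closed complement $X \smallsetminus U_i$; applying the hypothesis to the coherent sheaf $\sI_i$ produces sections $s \in H^0(X, L^{\otimes \ell})$ vanishing on $X \smallsetminus U_i$ whose non-vanishing loci $X_s \subseteq U_i$ are principal open subsets of the affine $U_i$, hence affine. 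By quasi-compactness and passage to a common power I obtain sections $s_1, \ldots, s_r \in H^0(X, L^{\otimes m})$ with each $X_{s_j}$ affine and $X = \bigcup_j X_{s_j}$. Third, for each $j$ pick $k$-algebra generators $f_{j,1}, \ldots, f_{j,k_j}$ of $\cO_X(X_{s_j})$; a standard denominator-clearing argument, again powered by the global-generation hypothesis applied to suitable coherent subsheaves of $\cO_X$, shows that for a single sufficiently large $N$, every $s_j^N f_{j,l}$ extends to a global section of $L^{\otimes Nm}$. These extensions together with the $s_j^{N+1}$ define a morphism $X \to \PP^M_k$ using sections of $L^{\otimes (N+1)m}$ that restricts to a closed immersion on each $X_{s_j}$, and is therefore a closed immersion on $X$.

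The main technical obstacle is the third step of $(\Leftarrow)$: obtaining a \emph{uniform} $N$ so that a single power $L^{\otimes Nm}$ simultaneously extends enough generators of all coordinate rings $\cO_X(X_{s_j})$ to produce a closed immersion. This uniformity is exactly what the hypothesis on \emph{every} coherent sheaf supplies, applied to ideal sheaves tailored to the finitely many charts, and it is the step that converts a pointwise abundance of sections into a genuine projective embedding.
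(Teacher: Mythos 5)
The paper does not prove this statement at all: it is quoted directly from Hartshorne (Def.\ on p.\ 153 and Thm.\ II.7.6), with the roles of definition and characterization interchanged to match \cref{def:ampledef}. Your argument is essentially a sketch of that standard proof — Serre's generation theorem on projective space for one direction, and the affine-chart plus denominator-clearing construction of a map to projective space for the other — so the strategy is the right one.

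There is, however, one concrete error that appears in both directions: you treat the map to projective space furnished by a very ample power as a \emph{closed} immersion. The theorem must apply to $X = \AA^1_k$ with $L = \cO_X$ (every coherent sheaf on an affine scheme is globally generated, so the right-hand condition holds with $\ell_0 = 0$), yet $\AA^1_k$ admits no closed immersion into any $\PP^N_k$; so ``embedding'' here has to mean locally closed immersion, and your proof must accommodate that. In the forward direction this is not cosmetic: if $i$ is only an immersion, $i_*\sF$ need not be coherent (e.g.\ $j_*\cO_{\AA^1}$ on $\PP^1$ is not), so Serre's theorem cannot be applied to it. The standard repair is to pass to the closure $\bar X$ of the image, extend $\sF$ to a coherent sheaf on the projective scheme $\bar X$ (Hartshorne, Exer.\ II.5.15), apply Serre generation there, and restrict the generating sections back to the open subscheme $X$. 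In the reverse direction, your final inference --- ``restricts to a closed immersion on each $X_{s_j}$, and is therefore a closed immersion on $X$'' --- is invalid: the standard opens of $\PP^M$ indexed by the coordinates $s_j^{N+1}$ do not cover $\PP^M$, so you only learn that the image is closed in their union, i.e.\ that the morphism is a locally closed immersion. Fortunately that is all the definition of very ample demands, so replacing ``closed immersion'' by ``immersion'' there completes the argument. Two smaller points to make explicit: the affine charts $U_i$ should be chosen so that $L\rvert_{U_i}$ is trivial (otherwise $X_s$ need not be a principal open subset of $U_i$), and the extension of $s_j^N f_{j,l}$ to a global section of $L^{\otimes Nm}$ is the general gluing lemma for sections over $X_s$ on a noetherian scheme (Hartshorne, Lem.\ II.5.14), not a consequence of the global-generation hypothesis; the uniformity of $N$ comes simply from the finiteness of the set of generators involved.
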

\par Because of the defining property in \cref{def:ampledef} and the
characterization in \cref{thm:ampleggchar}, we can ask the following
mathematically precise version of \cref{question:embedding}.
\begin{question}\label{question:whatpower}
  Let $L$ be an ample line bundle on a projective variety $X$.
  What power of $L$ is very ample or globally generated?
\end{question}
The best thing we could hope for is that the power needed in
\cref{question:whatpower} depends on some invariants of $X$.
For curves, we can give a very explicit answer to
\cref{question:whatpower}.
We use the language of divisors instead of line bundles below to
simplify notation.
\begin{example}[Curves I; see {\cite[Cor.\ IV.3.2]{Har77}}]
  \label{ex:matsusakacurves}
  Let $X$ be a smooth curve over an algebraically closed field $k$, i.e., a
  projective variety of dimension $1$ over $k$.
  Let $D$ be a divisor on $X$.
  We claim that the complete linear system $\lvert D \rvert$ is basepoint-free
  if $\deg D \ge 2g$, and is very ample if $\deg D \ge 2g+1$, where $g$ is the
  genus of $X$.
  Recall that by  \cite[Prop.\ IV.3.1]{Har77}, the complete linear system
  $\lvert D \rvert$ is basepoint-free if and only if
  \begin{align*}
    h^0\bigl(X,\cO_X(D-P)\bigr) &= h^0\bigl(X,\cO_X(D)\bigr) - 1
    \intertext{for every closed point $P \in X$, and is very ample if and only
    if}
    h^0\bigl(X,\cO_X(D-P-Q)\bigr) &= h^0\bigl(X,\cO_X(D)\bigr) - 2
  \end{align*}
  for every pair of closed points $P,Q \in X$.
  We will verify these properties below.
  \par Suppose $\deg D \ge 2g$ (resp.\ $\deg D \ge 2g+1$).
  By Serre duality, we have $h^1(X,\cO_X(D)) = 0$, and
  $h^1(X,\cO_X(D-P-Q)) = 0$ for every closed point $P \in X$ (resp.\
  $h^1(X,\cO_X(D-P-Q)) = 0$ for every two closed points $P,Q \in X$).
  We therefore have
  \begin{align*}
    h^0\bigl(X,\cO_X(D-P)\bigr) &= \deg(D - P) + 1 - g\\
    &= \deg D -1 + 1 - g = h^0\bigl(X,\cO_X(D)\bigr) - 1\\
    h^0\bigl(X,\cO_X(D-P-Q)\bigr) &= \deg(D-P-Q) + 1 - g\\
    &= \deg D - 2 + 1 - g = h^0\bigl(X,\cO_X(D)\bigr) - 2
  \end{align*}
  in each case by the Riemann--Roch theorem\index{Riemann--Roch theorem}
  \cite[Thm.\ IV.1.3]{Har77}.
  As a result, we see that if $L$ is an ample divisor on $X$, the complete
  linear system
  $\lvert \ell L \rvert$ is basepoint-free for all $\ell \ge 2g$,
  and is very ample for all $\ell \ge 2g+1$, where $g$ is the genus of $X$.
\end{example}
We can answer \cref{question:whatpower} for abelian varieties as well.
\begin{example}[Abelian varieties]\label{ex:fujitaavs}
  If $L$ is an ample line bundle on an abelian variety $A$,
  then $L^{\otimes \ell}$ is globally generated for $\ell \ge 2$ and is very
  ample for $\ell \ge 3$ by a theorem of
  Lefschetz\index{Lefschetz, Solomon, theorem of}.
  See \cite[App.\ 1 on p.\ 57 and Thm.\ on p.\ 152]{Mum08}.
\end{example}
On the other hand, the following example essentially due to
Koll\'ar\index{Koll\'ar, J\'anos} shows that one
cannot hope for such a simple answer on surfaces: different ample line
bundles on the same surface may need to be raised to different powers to become
very ample.
Note that we have modified Koll\'ar's example to work in arbitrary
characteristic.
\begin{example}[Koll\'ar {\cite[Ex.\ 3.7]{EL93a}}]
  \label{ex:kollarsurface}
  Let\index{Koll\'ar, J\'anos!example of|(}
  $E$ be an elliptic curve over an algebraically closed field $k$.
  Let $X = E \times_k E$, let $F_i$ be the divisors associated to the fibers
  of the projection morphisms $\pr_i\colon X \to E$ for $i \in \{1,2\}$, and
  let $\Delta$ be the divisor associated to the diagonal in $X$.
  Set $R = F_1 + F_2$.
  Since $3R$ is very ample by \cref{ex:matsusakacurves}, we can choose a smooth
  divisor $B \in \lvert 3R \rvert$ by Bertini's theorem \cite[Thm.\
  II.8.18]{Har77}; see \cref{fig:kollarex}.
  \begin{figure}[t]
    \centering
    \tikzexternalenable
    \begin{tikzpicture}[scale=1.25]
      \draw[thick,umlightblue] (-1,-1) -- (3,3);
      \draw[thick,umdarkblue] (0,-1) -- (0,3) node[anchor=south] {\footnotesize $F_1$};
      \draw[thick,umdarkblue] (3,0) -- (-1,0) node[anchor=east] {\footnotesize $F_2$};
      \begin{scope}
        \clip (-1,-1) rectangle (3,3);
        \draw[ultra thick,umpoppy] (0.2,3) .. controls (0.25,0.25) .. (3,0.2);
        \draw[ultra thick,umpoppy] (-0.2,3) .. controls (-0.25,0.25) .. (-3,0.2);
        \draw[ultra thick,umpoppy] (-0.2,-3) .. controls (-0.25,-0.25) .. (-3,-0.2);
        \draw[ultra thick,umpoppy] (0.2,-3) .. controls (0.25,-0.25) .. (3,-0.2);
      \end{scope}
      \draw (-1,-1) rectangle (3,3);

      \node[umlightblue] at (1.75,1.5) {\footnotesize $\Delta$};
      \node[umpoppy] at (1.5,0.5) {\footnotesize $B$};

      \node[anchor=north] at (1.5,3) {$X = E \times E$};
      \draw[<-] (3.15,0.5) -- node[anchor=north] {\scriptsize $3\mathbin{:}1$}
      node[anchor=south] {\scriptsize $f$} (3.95,0.5) node[anchor=west] {$Y$};
    \end{tikzpicture}
    \tikzexternaldisable
    \caption{Koll\'ar's example (\cref{ex:kollarsurface})}
    \index{Koll\'ar, J\'anos!example of|ff{}}
    \label{fig:kollarex}
  \end{figure}
  For each integer $m \ge 2$, consider the divisor
  \[
    A_m \coloneqq m F_1 + (m^2-m+1) F_2 - (m-1) \Delta
  \]
  on $X$.
  We can compute that $(A_m^2) = 2$ and $(A_m\cdot R) = m^2 - 2m+3 > 0$, hence
  $A_m$ is ample: these intersection conditions imply $A_m$ is big by
  \cite[Cor.\ V.1.8]{Har77}, and the fact that $X$ is a homogeneous space
  implies $A_m$ is ample by the Nakai--Moishezon criterion \cite[Thm.\
  1.2.23]{Laz04a} (see \cite[Lem.\ 1.5.4]{Laz04a}).
  \par Now consider the triple cover $f\colon Y \to X$ branched over $B$, as
  constructed in \cite[Prop.\ 4.1.6]{Laz04a}.
  For every $m \ge 2$, the divisors $D_m \coloneqq f^*A_m$ are ample by
  \cite[Prop.\ 1.2.13]{Laz04a}, but we claim that $mD_m$ is not ample.
  It suffices to show that the pullback homomorphism
  \begin{equation}\label{eq:kollarexstar}
    f^*\colon H^0\bigl(X,\cO_X(mA_m)\bigr) \longrightarrow
    H^0\bigl(Y,\cO_Y(mD_m)\bigr)
  \end{equation}
  is an isomorphism, since if this were the case, then the morphism
  \[
    Y \xrightarrow{\lvert mD_m \rvert}
    \PP\bigl(H^0\bigl(Y,\cO_Y(mD_m)\bigr)\bigr)
  \]
  would factor through the $3\mathbin{:}1$ morphism $f$.
  To show that \cref{eq:kollarexstar} is an isomorphism, we first note that
  \begin{equation}\label{eq:doublecoveriso}
    \begin{aligned}
      f_*\bigl(\cO_Y(mD_m)\bigr) &\simeq f_*\cO_Y \otimes \cO_X(mA_m)\\
      &\simeq \cO_X(mA_m) \oplus \cO_X(mA_m-R) \oplus \cO_X(mA_m-2R)
    \end{aligned}
  \end{equation}
  by the projection formula and by the construction of $Y$ (see \cite[Rem.\
  4.1.7]{Laz04a}).
  On global sections, the inclusion $H^0(X,\cO_X(mA_m)) \hookrightarrow
  H^0(X,f_*(\cO_Y(mD_m)))$ induced by the isomorphism \cref{eq:doublecoveriso}
  can be identified with the pullback homomorphism \cref{eq:kollarexstar} by
  the construction of $Y$.
  On the other hand, since $(mA_m-R)^2 < 0$ and $(mA_m-2R)^2 < 0$, we have that
  $H^0(X,\cO_X(mA_m-R)) = H^0(X,\cO_X(mA_m-2R)) = 0$ by \cite[Lem.\
  1.5.4]{Laz04a}.
  Thus, \cref{eq:kollarexstar} is an isomorphism.%
  \index{Koll\'ar, J\'anos!example of|)}
\end{example}
To get bounds only in terms of the dimension of $X$,
Mukai\index{Mukai, Shigeru} suggested that the correct bundles to look at are
\textsl{adjoint line bundles}%
\index{adjoint line bundle or sheaf, $\omega_X \otimes L$|textbf}, i.e., line
bundles of the form $\omega_X \otimes L$, where $\omega_X$ is the
canonical bundle on $X$.
In this direction, Fujita\index{Fujita, Takao|(} conjectured the following:
\begin{customcitedconj}{\ref{conj:fujita}}[{\cites[Conj\adddot]{Fuj87}[n\textsuperscript{o} 1]{Fuj88}}]
  Let $X$ be a smooth projective variety of dimension $n$ over an algebraically
  closed field, and let $L$ be an ample line bundle on $X$.
  We then have the following:
  \begin{enumerate}[label=$(\roman*)$,ref=\roman*]
    \item \textup{(Fujita's freeness conjecture)} $\omega_X
      \otimes L^{\otimes \ell}$ is globally generated for all $\ell \ge n+1$.%
      \index{Fujita, Takao!freeness conjecture}%
    \item \textup{(Fujita's very ampleness conjecture)}
      $\omega_X \otimes L^{\otimes \ell}$ is very ample for all $\ell \ge n+2$.%
      \index{Fujita, Takao!very ampleness conjecture}%
  \end{enumerate}
\end{customcitedconj}
\index{Fujita, Takao|)}%
Note that both properties hold for some $\ell$: \cref{conj:fujitafree} holds
for some $\ell$ by \cref{thm:ampleggchar}, and for
\cref{conj:fujitava}, it suffices to note that if $\omega_X \otimes
L^{\otimes \ell_1}$ is globally generated and $L^{\otimes \ell_2}$ is very
ample, then their tensor product $\omega_X \otimes
L^{\otimes(\ell_1+\ell_2)}$ is very ample \cite[Prop.\ 4.4.8]{EGAII}.
The essence of Fujita's conjecture, then, is that the $\ell$
required can be bounded effectively in terms of only the dimension of $X$.
\par Fujita's conjecture \ref{conj:fujita} is known for some special classes of
varieties.
\begin{example}[Projective spaces and toric varieties]
  If $X = \PP^n_k$ for a field $k$ and $L = \cO_{\PP^n_k}(1)$, then $\omega_X =
  \cO_{\PP^n_k}(-n-1)$
  \index{Fujita, Takao!freeness conjecture!for P n@for $\PP^n$}
  \index{Fujita, Takao!very ampleness conjecture!for P n@for $\PP^n$}
  \index{canonical bundle or sheaf, $\omega_X$!of Pn@of $\PP^n$}
  \cite[Ex.\ II.8.20.2]{Har77}.
  Thus, the bounds in Fujita's conjecture \ref{conj:fujita} are in some sense
  optimal.
  \par Fujita's conjecture also holds for toric varieties.
  \index{Fujita, Takao!freeness conjecture!for toric varieties}%
  \index{Fujita, Takao!very ampleness conjecture!for toric varieties}%
  In the smooth case, this follows from Mori's cone
  theorem\index{Mori, Shigefumi!cone theorem} (see, e.g.,
  \cite[Rem.\ 10.4.6]{Laz04a} and see \cite[Thm.\ 0.3]{Mus02} for a stronger
  statement), and in the singular case, see \cites[Cor.\ 0.2]{Fuj03}[Thm.\
  1]{Pay06}.
\end{example}
\begin{example}[Curves II]\label{ex:fujitacurves}
  \index{Fujita, Takao!freeness conjecture!for curves}
  \index{Fujita, Takao!very ampleness conjecture!for curves}
  Let $X$ be a smooth curve over an algebraically closed field as in
  \cref{ex:matsusakacurves}, and let $L$ be an ample line bundle on $X$.
  By \cref{ex:matsusakacurves}, since $\deg \omega_X = 2g-2$ where $g$
  is the genus of $X$ \cite[Ex.\ IV.1.3.3]{Har77}, the line bundle $\omega_X
  \otimes L^{\otimes\ell}$ is globally generated if $\ell \ge 2$, and is very
  ample if $\ell \ge 3$.
\end{example}
\begin{example}[Abelian varieties]
  Since\index{Fujita, Takao!freeness conjecture!for abelian varieties}%
  \index{Fujita, Takao!very ampleness conjecture!for abelian varieties}
  the canonical bundle $\omega_A$ is isomorphic to the structure sheaf $\cO_A$
  on an abelian variety $A$,
  \cref{ex:fujitaavs} already shows that Fujita's conjecture holds for abelian
  varieties.
\end{example}
\begin{example}[Ample and globally generated line bundles; see {\cite[Ex.\
  1.8.23]{Laz04a}}]\label{ex:fujitaamplegg}
  \index{Fujita, Takao!freeness conjecture!for ample and globally generated line bundles|(}
  \index{Fujita, Takao!very ampleness conjecture!for ample and globally generated line bundles|(}
  Fujita's conjecture \ref{conj:fujita} holds when $L$ is moreover assumed to be
  globally generated.
  In characteristic zero, this can be seen as follows.
  By Castelnuovo--Mumford regularity \cite[Thm.\ 1.8.5]{Laz04a}, a coherent
  sheaf $\sF$ on $X$ is globally generated if $H^i(X,\sF \otimes L^{\otimes-i} )
  = 0$ for all $i > 0$.
  Thus, the sheaf $\sF = \omega_X \otimes L^{\otimes \ell}$ is globally
  generated for $\ell \ge n+1$ since
  $H^i(X,\omega_X \otimes L^{\otimes(\ell-i)} ) = 0$
  by the Kodaira vanishing theorem\index{vanishing theorem, Kodaira} \cite[Thm.\
  4.2.1]{Laz04a}, proving \cref{conj:fujitafree}.
  \cref{conj:fujitava} then follows from \cite[Ex.\ 1.8.22]{Laz04a}.
  \par We also mention generalizations of this example.
  In characteristic zero, the argument above works when $X$ is only assumed to
  have rational singularities by \cite[Ex.\ 4.3.13]{Laz04a}, and in positive
  characteristic, Smith\index{Smith, Karen E.|(} used tight
  closure\index{tight closure, $N^*_M$} methods to
  recover an analogous result when $X$ has $F$-rational singularities
  \cite[Thm.\ 3.2]{Smi97}.
  Keeler\index{Keeler, Dennis S.} gave a proof of Smith's
  \index{Smith, Karen E.|)} result using Castelnuovo--Mumford
  regularity, and also showed
  \cref{conj:fujitava} for smooth varieties
  when $L$ is globally generated \cite[Thm.\ 1.1]{Kee08}.
  Note that Keeler's argument for \cref{conj:fujitafree} also applies to
  varieties with $F$-injective singularities in
  positive characteristic; see \cite[Thm.\ 3.4$(i)$]{Sch14}.
  \index{Fujita, Takao!very ampleness conjecture!for ample and globally generated line bundles|)}
  \index{Fujita, Takao!freeness conjecture!for ample and globally generated line bundles|)}
\end{example}
\begin{table}[t]
  \centering
  \renewcommand\arraystretch{1.25}
  \begin{tabular}{cr@{ }lc}
    \toprule
    Dimension & \multicolumn{2}{c}{Result (over $\CC$)} & Method\\
    \midrule
    1 & Classical & \cite[Cor.\ IV.3.2$(a)$]{Har77} &
    Riemann--Roch\index{Riemann--Roch theorem|ttindex{}}\\
    2 & Reider\index{Reider, Igor|ttindex{}} & \cite[Thm.\ 1$(i)$]{Rei88}
    & Bogomolov instability\index{Bogomolov, Fedor, instability|ttindex{}}\\
    \cmidrule(lr){1-4}
    3 & Ein--Lazarsfeld\index{Ein, Lawrence|ttindex{}}
    \index{Lazarsfeld, Robert|ttindex{}} &
    \cite[Cor.\ 2*]{EL93fuj} &
    \multirow{3}{*}{\makecell{Cohomological method
    of\\Kawamata--Reid--Shokurov
    \index{Kawamata--Reid--Shokurov, cohomological method of|ttindex{}}}}
    \index{Kawamata, Yujiro|ttindex{}}\\
    4 & Kawamata\index{Kawamata, Yujiro|ttindex{}} & \cite[Thm.\ 4.1]{Kaw97}\\
    5 & Ye--Zhu\index{Ye, Fei|ttindex{}}\index{Zhu, Zhixian|ttindex{}} &
    \cite[Main Thm.]{YZ15}\\
    \bottomrule
  \end{tabular}
  \caption{Known cases of Fujita's freeness conjecture over the complex numbers}
  \index{Fujita, Takao!freeness conjecture!in dimensions $\le 5$ over $\CC$|ttindex{}}
  \label{tab:knownfujita}
\end{table}
\par For general smooth complex projective varieties, Fujita's freeness
conjecture \ref{conj:fujita}\cref{conj:fujitafree} holds in dimensions $n \le
5$ (see \cref{tab:knownfujita}) while Fujita's very ampleness conjecture
\ref{conj:fujita}\cref{conj:fujitava} is only known in dimensions $n \le 2$
\cite[Thm.\ 1$(ii)$]{Rei88}%
\index{Fujita, Takao!very ampleness conjecture!for surfaces over $\CC$}.
In positive characteristic, the usual statement of Fujita's conjecture holds for
surfaces that are neither quasi-elliptic nor of general type \cite[Cor.\
8]{SB91}, and weaker bounds are known for quasi-elliptic and general type
surfaces \cites[Thm\adddot]{Ter99}[Thm.\ 1.4]{DCF15}.%
\index{Fujita, Takao!freeness conjecture!for surfaces in characteristic $p$}
\par In arbitrary dimension, one of the best results toward Fujita's conjecture
so far is the following result due to Angehrn\index{Angehrn, Urban} and
Siu\index{Siu, Yum Tong}, which they proved using analytic
methods.
\begin{citedthm}[{\cite[Cor.\ 0.2]{AS95}}]\label{thm:angehrnsiu}
  \index{Angehrn--Siu theorem}
  Let $X$ be a smooth complex projective variety of dimension $n$, and let $L$
  be an ample line bundle on $X$.
  Then, the line bundle $\omega_X \otimes L^{\otimes \ell}$ is globally
  generated for all $\ell \ge \frac{1}{2} n(n+1) + 1$.
\end{citedthm}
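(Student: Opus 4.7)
The plan is to deduce the result directly from Theorem \ref{thm:myangehrnsiu}, exactly as indicated in the discussion surrounding Corollary \ref{cor:as02}. Global generation of $\omega_X \otimes L^{\otimes \ell}$ is equivalent to producing, for every closed point $x \in X$, a global section not vanishing at $x$. Since $X$ is smooth, the log pair $(X,0)$ is automatically klt, so I would apply Theorem \ref{thm:myangehrnsiu} with $\Delta = 0$, $D = K_X + \ell L$, and $H = \ell L$, for an arbitrary closed point $x \in X$.

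The only freedom in that theorem is the choice of the sequence $c(m)$. The natural choice is the constant sequence $c(m) = \binom{n+1}{2}$ for every $m \in \{1,\dots,n\}$, for then condition $(ii)$ becomes the tight identity
\[
\sum_{m=1}^{n} \frac{m}{c(m)} \;=\; \frac{1}{\binom{n+1}{2}} \cdot \frac{n(n+1)}{2} \;=\; 1.
\]
For condition $(i)$, I would use the fact that since $\ell L$ is ample, the restricted volume along any positive-dimensional subvariety $Z \subseteq X$ containing $x$ reduces to the self-intersection $(\ell L)^{\dim Z} \cdot Z = \ell^{\dim Z}(L^{\dim Z} \cdot Z) \ge \ell^{\dim Z}$, since $L^{\dim Z} \cdot Z$ is a positive integer. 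Thus whenever $\ell \ge \binom{n+1}{2} + 1 = \tfrac{1}{2}n(n+1) + 1$, the strict inequality $\vol_{X\mid Z}(H) > c(\dim Z)^{\dim Z}$ holds for every such $Z$, and Theorem \ref{thm:myangehrnsiu} produces the desired non-vanishing section.

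The substantive difficulty is entirely absorbed into Theorem \ref{thm:myangehrnsiu} itself: the combinatorial bound in $(ii)$ is exactly what one needs to run the Angehrn--Siu induction, in which one constructs a boundary divisor very singular at $x$, cuts down the minimal log canonical center through $x$, and at each inductive step accumulates an error controlled by the ratio $m/c(m)$. In the original analytic proof of Angehrn--Siu the lifting step at each stage is handled by Nadel vanishing for multiplier ideals, whereas in the approach of this thesis that step is replaced by Theorem \ref{thm:gettingsectionscharzero}, itself obtained by reducing modulo $p$ and invoking the Frobenius input behind Theorem \ref{thm:poscharseshsm}. Once Theorem \ref{thm:myangehrnsiu} is granted, however, the derivation of Theorem \ref{thm:angehrnsiu} reduces to the formal verification above: the arithmetic identity for $(ii)$ and a one-line estimate of the restricted volume of an ample divisor for $(i)$.
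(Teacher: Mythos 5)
Your proposal is correct and follows exactly the paper's route: the paper derives this statement as \cref{cor:as02} from Theorem \ref{thm:myangehrnsiu} by taking $c(m)=\binom{n+1}{2}$ for every $m$, and your verification of the two hypotheses --- the arithmetic identity $\sum_{m=1}^n m/\binom{n+1}{2}=1$ for $(ii)$ and the estimate $\vol_{X\mid Z}(\ell L)=\bigl((\ell L)^{\dim Z}\cdot Z\bigr)\ge \ell^{\dim Z}>\binom{n+1}{2}^{\dim Z}$ for $(i)$ --- is precisely the (standard) computation the paper leaves implicit. Nothing further is needed.
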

Koll\'ar\index{Koll\'ar, J\'anos} later gave an algebraic proof of
\cref{thm:angehrnsiu}, which also applies to klt pairs \cite[Thm.\ 5.8]{Kol97}.
Improved lower bounds for $\ell$ have also been obtained by
Helmke\index{Helmke, Stefan} \cites[Thm.\ 1.3]{Hel97}[Thm.\ 4.4]{Hel99} and
Heier\index{Heier, Gordon} \cite[Thm.\ 1.4]{Hei02}.
Note that \cref{thm:angehrnsiu} is a special case of \cref{thm:myangehrnsiu},
which we will prove later in this thesis, since we can set $c(m) =
\binom{n+1}{2}$ for all $m$; see \cref{cor:as02}.
\index{Fujita, Takao!very ampleness conjecture|)}
\index{Fujita, Takao!freeness conjecture|)}
\section{Seshadri constants}\label{sect:seshadriconstants}
\index{Seshadri constant, $\varepsilon(D;x)$|(}
To study Fujita's conjecture \ref{conj:fujita},
Demailly\index{Demailly, Jean-Pierre} introduced Seshadri
constants, which measure the local positivity of nef divisors.
Recall that an $\RR$-Cartier divisor $D$ is
\textsl{nef}\index{nef line bundle or divisor|textbf}
if $(D \cdot C) \ge 0$ for every curve $C \subseteq X$.
See \cref{def:qrcartierdiv} for the definition of an $\RR$-Cartier divisor.
\begin{definition}[{see \cite[Def.\ 5.1.1]{Laz04a}}]\label{def:seshconst}
  Let $X$ be a complete scheme over a field $k$, and let $D$ be a nef
  $\RR$-Cartier divisor on $X$.
  Let $x \in X$ be a $k$-rational point, and let $\mu\colon \widetilde{X} \to X$
  be the blowup of $X$ at $x$ with exceptional divisor $E$.
  The \textsl{Seshadri constant}%
  \index{Seshadri constant, $\varepsilon(D;x)$|textbf} of $D$ at $x$ is
  \[
    \gls*{seshadriconst} \coloneqq \sup\bigl\{ t \in \RR_{\ge 0} \bigm\vert
    \mu^*D - tE\ \text{is nef}\bigr\}.
  \]
  We use the same notation for line bundles.
\end{definition}
We will see later that this definition matches the definition in
\cref{eq:seshdefample} (\cref{lem:seshample}).
This definition was motivated by
Seshadri's\index{Seshadri, C. S.!criterion for ampleness|(} criterion for
ampleness, which says that when $k$ is algebraically closed, an $\RR$-Cartier
divisor $D$ is ample if and only if $\inf_{x \in X}\varepsilon(D;x) > 0$
\cite[Thm.\ 1.4.13]{Laz04a}.\index{Seshadri, C. S.!criterion for ampleness|)}
While originally defined in the context of Fujita's conjecture, Seshadri
constants have also attracted attention as interesting geometric invariants in
their own right; see \cites[Ch.\ 5]{Laz04a}{BDRHKKSS09}.
\par Before describing the connection between Seshadri constants and Fujita's
conjecture \ref{conj:fujita}, we compute a simple example.
Note that Seshadri constants are very difficult to compute in general.
We will use the fact from \cite[Prop.\ 5.1.5]{Laz04a} that
\begin{equation}\label{eq:seshinfcurves}
  \varepsilon(D;x) = \inf_{C \ni x} \biggl\{ \frac{(D \cdot C)}{e(\cO_{C,x})}
  \biggr\},%
  \index{Seshadri constant, $\varepsilon(D;x)$!via intersections with curves}
\end{equation}
where the infimum runs over all integral curves $C \subseteq X$ containing
$x$, and $e(\cO_{C,x})$ is the Hilbert--Samuel
multiplicity\index{Hilbert--Samuel multiplicity, $e(R)$} of $C$ at $x$.
\begin{example}[Projective spaces; see \cref{fig:seshprojspace}]
  \label{ex:seshprojspace}
  \begin{figure}[t]
    \centering
    \tikzexternalenable
    \begin{tikzpicture}[scale=2]
      \draw (-0.3,-1) -- (1,-0.3) node [very near start, above, sloped] {$H$};
      \draw (1,-0.3) -- (0.2,1) -- (-1,0.3) -- (-0.3,-1);
      \draw[ultra thick,smooth,color=umlightblue,domain=-1:1,samples=50,variable=\t,rotate=20,commutative diagrams/crossing over] plot ({(\t)^2}, {\t^3}) node[anchor=north west] {$C$};
      \fill (0,0) circle (1pt) node[anchor=north,yshift=-1pt] {\footnotesize $x$};
    \end{tikzpicture}
    \tikzexternaldisable
    \caption{Computing the Seshadri constant of the hyperplane class on
    $\PP^n_k$}
    \label{fig:seshprojspace}
  \end{figure}
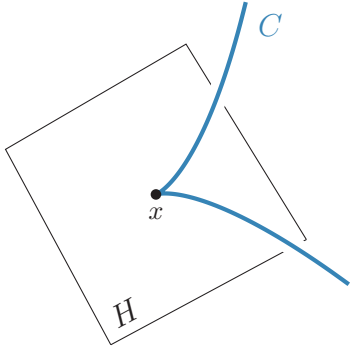
  Consider%
  \index{Seshadri constant, $\varepsilon(D;x)$!on $\PP^n$}
  $\PP^n_k$ for an algebraically closed field $k$, and let $D = H$ be
  the hyperplane class.
  We claim that $\varepsilon(H;x) = 1$ for every closed point $x \in \PP^n_k$.
  By B\'ezout's theorem \cite[Thm.\ I.7.7]{Har77}, we have $(H \cdot C) \ge
  e(\cO_{C,x})$ for every such curve $C \ni x$, hence $\varepsilon(H;x) \ge 1$
  by \cref{eq:seshinfcurves}.
  The inequality $\varepsilon(H;x) \le 1$ also holds by considering the case
  when $C$ is a line containing $x$.
\end{example}
\cref{ex:seshprojspace} can be generalized as follows.
\begin{example}[Ample and globally generated line bundles; see {\cite[Ex.\
  5.1.18]{Laz04a}}]
  We%
  \index{Seshadri constant, $\varepsilon(D;x)$!for ample and globally generated divisors}
  claim that if $D$ is an ample and free Cartier divisor, then
  $\varepsilon(D;x) \ge 1$.
  Let $C \ni x$ be a curve; it suffices to show that $(D \cdot C) \ge
  e(\cO_{C,x})$.
  Since the complete linear system $\lvert D \rvert$ is basepoint-free, there
  exists a divisor $H \in \lvert D \rvert$ such that $H$ does not
  contain $C$.
  We then see that
  \[
    (D \cdot C) = \deg(D\rvert_C) \ge \ell(\cO_{D\rvert_C,x}) \ge e(\cO_{C,x}),
  \]
  where the first inequality follows from definition (see \cite[Def.\
  15.29]{GW10}) and the second inequality is a consequence of \cite[Thm.\
  14.10]{Mat89}.
\end{example}
\par Demailly's\index{Demailly, Jean-Pierre|(} original motivation for defining
Seshadri constants seems to have been its potential application to
Fujita's conjecture \ref{conj:fujita}.
Before we state the result realizing this connection, we make the following
definition.
\begin{definition}\label{def:sepljets}
  Let $X$ be a scheme, and let $\sF$ be a coherent sheaf on $X$.
  Fix a closed point $x \in X$, and denote by $\gls*{maximalx} \subseteq \cO_X$
  the ideal sheaf defining $x$.
  For every integer $\ell \ge -1$, we say that $\sF$ \textsl{separates
  $\ell$-jets}\index{separation of jets|textbf} at $x$ if the restriction
  morphism
  \begin{equation}\label{eq:demaillyrestriction}
    H^0(X,\sF) \longrightarrow H^0(X,\sF/\fm_x^{\ell+1}\sF)
  \end{equation}
  is surjective.
  We denote by \gls*{sfx} the largest integer $\ell \ge -1$ such that $\sF$
  separates $\ell$-jets at $x$.
  If $\sF = \cO_X(D)$ for a Cartier divisor $D$, then we denote $\gls*{sfxd}
  \coloneqq s(\cO_X(D);x)$.
\end{definition}
\begin{remark}
  The convention that $s(\sF;x) = -1$ if $\sF$ does not separate
  $\ell$-jets for every $\ell \ge 0$ is from \cite[Def.\ 6.1]{FM}.
  This differs from the convention $s(\sF;x) = -\infty$, which is used in
  \cite[p.\ 96]{Dem92} and \cite[Def.\ 2.1]{Mur18}, and the convention
  $s(\sF;x) = 0$, which is used in \cite[p.\ 646]{ELMNP09}.
  Our convention is chosen to make a variant of the Seshadri constant defined
  using jet separation (\cref{def:jetsesh}) detect augmented base loci
  (\cref{lem:jetseshbplus}), while distinguishing whether or not $\sF$ has any
  non-vanishing global sections.
\end{remark}
We now prove the following result due to Demailly,
which connects Seshadri constants to separation of jets.
\begin{citedprop}[{\cite[Prop.\ 6.8$(a)$]{Dem92}}]\label{prop:demsesh}
  Let\index{Seshadri constant, $\varepsilon(D;x)$!criterion for separation of jets}
  $X$ be a smooth projective variety of dimension $n$ over an algebraically
  closed field of characteristic zero, and let $L$ be a big and nef
  divisor on $X$.
  Let $x \in X$ be a closed point, and consider an integer $\ell \ge 0$.
  If $\varepsilon(L;x) > n+\ell$, then $\omega_X \otimes \cO_X(L)$ separates
  $\ell$-jets at $x$.
\end{citedprop}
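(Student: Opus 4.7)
The plan is to transfer the jet-separation question to a cohomology vanishing on the blowup $\mu\colon \widetilde{X} \to X$ of $X$ at $x$, with exceptional divisor $E \simeq \PP^{n-1}_k$, where Kawamata--Viehweg vanishing is available in characteristic zero. Because $X$ is smooth of dimension $n$, one has $\omega_{\widetilde{X}} \simeq \mu^*\omega_X \otimes \cO_{\widetilde{X}}((n-1)E)$, and $\mu_*\cO_{\widetilde{X}}(-mE) \simeq \fm_x^m$ with vanishing higher direct images for $m \ge 0$. The latter follows by descending induction on $m$ using the sequences
\[
0 \to \cO_{\widetilde{X}}\bigl(-(m+1)E\bigr) \to \cO_{\widetilde{X}}(-mE) \to \cO_E \otimes \cO_{\widetilde{X}}(-mE) \to 0,
\]
the identification $\cO_E(-E) \simeq \cO_{\PP^{n-1}}(1)$, the vanishing of higher cohomology of $\cO_{\PP^{n-1}}(m)$ for $m \ge 0$, and the base case $m \gg 0$ from relative Serre vanishing (as $-E$ is $\mu$-ample).

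Taking cohomology of the short exact sequence
\[
0 \to \fm_x^{\ell+1} \otimes \bigl(\omega_X \otimes \cO_X(L)\bigr) \to \omega_X \otimes \cO_X(L) \to \bigl(\omega_X \otimes \cO_X(L)\bigr) \otimes \cO_X/\fm_x^{\ell+1} \to 0,
\]
separation of $\ell$-jets is equivalent to the vanishing $H^1\bigl(X, \fm_x^{\ell+1} \otimes (\omega_X \otimes \cO_X(L))\bigr) = 0$. Using the projection formula, the Leray spectral sequence, and the identifications above, together with the crucial numerical identity $\omega_{\widetilde{X}} \otimes \cO_{\widetilde{X}}(\mu^*L - (n+\ell)E) \simeq \mu^*(\omega_X \otimes \cO_X(L)) \otimes \cO_{\widetilde{X}}(-(\ell+1)E)$, this reduces to showing
\[
H^1\bigl(\widetilde{X},\, \omega_{\widetilde{X}} \otimes \cO_{\widetilde{X}}(\mu^*L - (n+\ell)E)\bigr) = 0.
\]

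The main step is then to verify that $D := \mu^*L - (n+\ell)E$ is big and nef, so that Kawamata--Viehweg vanishing applies. Nefness holds because $\varepsilon(L;x) > n+\ell$ and the set of $t \ge 0$ for which $\mu^*L - tE$ is nef is a closed subset of $\RR_{\ge 0}$ (the preimage of a closed cone under a continuous map). For bigness, choose $\delta > 0$ with $\varepsilon(L;x) > n+\ell+\delta$; then $\mu^*L - (n+\ell+\delta)E$ is nef as well, and the convex-combination identity
\[
D = \tfrac{n+\ell}{n+\ell+\delta}\bigl(\mu^*L - (n+\ell+\delta)E\bigr) + \tfrac{\delta}{n+\ell+\delta}\,\mu^*L
\]
writes $D$ as a sum of a nef class and a positive multiple of $\mu^*L$, which is big (pullbacks of big divisors under birational morphisms are big); hence $D$ is big and nef.

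I expect the most delicate point to be tying together the higher direct image computation with the numerical identity, since these are exactly the steps that become problematic in positive characteristic: Kawamata--Viehweg itself may fail there, which is precisely why \cref{thm:poscharseshsm} must replace it with a \emph{Frobenius-based} argument rather than simply mimicking the present proof.
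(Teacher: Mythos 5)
Your proof is correct and follows essentially the same route as the paper: reduce jet separation to the vanishing of $H^1\bigl(\widetilde{X},\omega_{\widetilde{X}} \otimes \cO_{\widetilde{X}}(\mu^*L-(n+\ell)E)\bigr)$ via the ideal-sheaf exact sequence, the Leray spectral sequence with $\RR\mu_*\cO_{\widetilde{X}}(-mE) \simeq \fm_x^m$, and the blowup canonical bundle formula, then apply Kawamata--Viehweg. The only (harmless) variation is in checking bigness of $\mu^*L-(n+\ell)E$: the paper computes $(\mu^*L-(n+\ell)E)^n \ge \varepsilon(L;x)^n-(n+\ell)^n>0$ and invokes the fact that a nef divisor with positive top self-intersection is big, whereas you write it as a sum of a nef class and a positive multiple of the big class $\mu^*L$.
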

\begin{proof}
  Consider the short exact sequence
  \[
    0 \longrightarrow \fm_x^{\ell+1} \cdot \omega_X \otimes \cO_X(L)
    \longrightarrow \omega_X \otimes \cO_X(L) \longrightarrow \omega_X \otimes
    \cO_X(L) \otimes \cO_X/\fm_x^{\ell+1} \longrightarrow 0.
  \]
  By the associated long exact sequence on sheaf cohomology, to show the
  surjectivity of the restriction morphism \cref{eq:demaillyrestriction},
  it suffices to show that
  \begin{align*}
    H^1\bigl(X,\fm_x^{\ell+1} \cdot \omega_X \otimes \cO_X(L)\bigr) &\simeq
    H^1\bigl(\widetilde{X},\mu^*\bigl(\omega_X \otimes
    \cO_X(L)\bigr)\bigl(-(\ell+1)E\bigr)\bigr)\\
    &\simeq H^1\bigl(\widetilde{X},\omega_{\widetilde{X}} \otimes
    \cO_{\widetilde{X}}\bigl(\mu^*L-(n+\ell)E\bigr)\bigr) = 0,
  \end{align*}
  where $\mu\colon \widetilde{X} \to X$ is the blowup of $X$ at $x$.
  Here, the first isomorphism follows from the Leray spectral
  sequence and the
  quasi-isomorphism $\fm_x^{\ell+1} \simeq
  \RR\mu_*\cO_{\widetilde{X}}(-(\ell+1)E)$ \cite[Lem.\ 4.3.16]{Laz04a},
  and the second isomorphism follows from how the canonical bundle transforms
  under a blowup with a smooth center \cite[Exer.\ II.8.5$(b)$]{Har77}.
  The vanishing of the last group follows from the Kawamata--Viehweg vanishing
  theorem\index{vanishing theorem!Kawamata--Viehweg} \cite[Thm.\ 4.3.1]{Laz04a}
  since $\mu^*L-(n+\ell)E$ is nef by the assumption $\varepsilon(L;x) > n +
  \ell$, and is big because by \cite[Prop.\ 5.1.9]{Laz04a}, we have
  \[
    \bigl(\mu^*L-(n+\ell)E\bigr)^n = (L^n) - (n+\ell)^n \ge
    \bigl(\varepsilon(L;x)\bigr)^n - (n+\ell)^n > 0.\qedhere
  \]
\end{proof}
Demailly showed that a similar technique can be
used to deduce separation of points from the existence of lower
bounds on Seshadri constants, and in particular, that if $\inf_{x \in
X}\varepsilon(L;x) > 2n$ where $n = \dim X$, then $\omega_X \otimes \cO_X(L)$ is
very ample \cite[Prop.\ 6.8$(b)$]{Dem92}.
Because of these results, Demailly asked:
\begin{citedquestion}[{\cite[Quest.\ 6.9]{Dem92}}]\label{quest:demailly}
  Given a smooth projective variety $X$ over an algebraically closed field and
  an ample divisor $L$ on $X$, does there exist a lower bound for
  \[
    \varepsilon(L) \coloneqq \inf_{x \in X} \varepsilon(L;x)\,?
  \]
  If such a lower bound were to exist, could we compute this lower bound
  explicitly in terms of geometric invariants of $X$?%
  \index{Demailly, Jean-Pierre|)}
\end{citedquestion}
\begin{remark}
  We note that the divisors constructed in Koll\'ar's example
  \ref{ex:kollarsurface}\index{Kollar, Janos@Koll\'ar, J\'anos!example of}
  do not give a counterexample to \cref{quest:demailly}.
  In the notation of \cref{ex:kollarsurface}, the divisor $2A_m$ is free on $X$
  by \cref{ex:fujitaavs}.
  The pullback $2D_m = f^*(2A_m)$ is therefore ample and free, hence
  $\varepsilon(2D_m;x) \ge 1$ for every point $x \in Y$.
  By the homogeneity of Seshadri constants \cite[Ex.\ 5.1.4]{Laz04a}, we have
  $\varepsilon(D_m;x) \ge 1/2$.
\end{remark}
A very optimistic answer to \cref{quest:demailly} would be that $\varepsilon(L)
> \frac{n}{n+1}$ where $n = \dim X$, since if this were the case,
\cref{prop:demsesh} would then imply Fujita's freeness conjecture
\ref{conj:fujita}\cref{conj:fujitafree}.
The following example of Miranda\index{Miranda, Rick}, however, shows that
$\varepsilon(L)$ can become arbitrarily small, even on smooth surfaces.
\begin{example}[Miranda {\cite[Ex.\ 3.1]{EL93a}}]\label{ex:mirandaex}
  Let\index{Miranda, Rick!example of|(} $\delta > 0$ be arbitrary.
  We will construct a smooth projective surface $X$ over an
  algebraically closed field $k$ such that $\varepsilon(L;x) < \delta$ for an
  ample divisor $L$ on $X$ and a closed point $x \in X$.
  \par Choose an integer $m \ge 1$ such that $\frac{1}{m} < \delta$, and let
  $\Gamma \subseteq \PP^2_k$ be an integral curve of degree $d \ge 3$ and
  multiplicity $m$ at a closed point $\xi \in \PP^2_k$.
  Let $\Gamma' \subseteq \PP^2_k$ be a general curve of degree $d$, which by
  generality we may assume is integral and intersects $\Gamma$ in $d^2$
  reduced points.
  We moreover claim that for general $\Gamma'$, every curve in the
  pencil $\lvert W \rvert$ spanned by $\Gamma$ and
  $\Gamma'$ is irreducible.
  Note that such a pencil is a one-dimensional linear system, while
  the codimension of the space of reducible curves in $\lvert dH \rvert$
  is
  \begin{align*}
    \binom{d+2}{2} \MoveEqLeft- \max_{1 \le i \le d-1} \biggl\{
    \binom{i+2}{2} + \binom{d-i+2}{2} \biggr\} + 1\\
    &\ge \frac{(d+1)(d+2)}{2} - \biggl(
    \frac{d}{2} + 1 \biggr) \biggl( \frac{d}{2} + 2 \biggr) + 1 =
    \frac{d^2}{4} \ge 2,
  \end{align*}
  by the assumption $d \ge 3$.
  Thus, for general $\Gamma'$, the pencil $\lvert W \rvert$ does not contain any
  reducible curves.
  \begin{figure}[t]
    \centering
    \tikzexternalenable
    \begin{tikzpicture}[scale=1.85,
      continuation/.style={insert path={coordinate[pos=-#1] (aux1)
        coordinate[pos=0] (aux2) coordinate[pos=1+#1] (aux3) coordinate[pos=1]
        (aux4) (aux1) -- (aux2) (aux3) -- (aux4)}
      }]
      \begin{scope}[yscale=0.85,rotate=15]
        \foreach \t in {0.1,0.2,...,0.9}{
          \draw[umlightgray] (-1,1) .. controls ({-(1-\t)*0.5+\t},{(1-\t)*1.75}) and ({(1-\t)*0.5-\t},{(1-\t)*1.75}) .. (1,1) [continuation={((1-\t)*0.07+\t*0.03)}];
        }
        \draw[ultra thick,umlightblue] (-1,1) .. controls (-0.5,1.75) and (0.5,1.75) .. (1,1) node[pos=0.8,anchor=south,yshift=2pt,yscale=0.85,rotate=15] {\footnotesize $\Gamma'$} [continuation=0.075];
        \draw[ultra thick,umpoppy] (-1,1) .. controls (1,0) and (-1,0) .. (1,1) node[midway] (multpt) {} node[pos=0.9,anchor=west,xshift=5pt,yscale=0.85,rotate=15] {\footnotesize $\Gamma$} [continuation=0.035];
        \fill (multpt) circle (1pt) node[anchor=north,yscale=0.85,rotate=15] {\scriptsize $\xi$};
        \fill (-1,1) circle (1pt);
        \fill (1,1) circle (1pt);
        \draw (-1.5,-0.25) rectangle (1.5,2);
        \node[anchor=south west,yscale=0.85,rotate=15] at (-1.5,-0.25) {$\mathbf{P}^2_k$};
      \end{scope}
      \draw[thick,umdarkblue] (3,0.75) node[anchor=east] {\footnotesize $E$} -- (6,0.75);
      \draw[ultra thick,umpoppy] (3.75,0.25) .. controls (3.25,2.5) and (3.25,0.25) .. (3.75,2.5) node[midway] (multptonx) {};
      \draw[ultra thick,umlightblue] (5.25,0.25) .. controls (5.5,1) and (5.5,1.75) .. (5.25,2.5);
      \foreach \t in {0.1,0.2,...,0.9}{
        \draw[umlightgray] ({3.75+1.5*\t},0.25) .. controls ({(1-\t)*3.25+\t*5.5},{(1-\t)*2.5+\t*1}) and ({(1-\t)*3.25+\t*5.5},{(1-\t)*0.25+\t*1.75}) .. ({3.75+1.5*\t},2.5);
      }
      \fill (multptonx) circle (1pt) node[anchor=east] {\scriptsize $x$};
      \node[umpoppy] at (3.45,2) {\footnotesize $C$};
      \node[umlightblue] at (5.6,2) {\footnotesize $C'$};
      \draw (3,0.25) rectangle (6,2.5);
      \node at (6.25,1.5) {$X$};
      \draw[thick] (3,-0.75) -- (6,-0.75);
      \node at (6.25,-0.75) {$\mathbf{P}^1_k$};
      \path[->] (4.5,0.1) edge node[right] {\footnotesize $\pi$} (4.5,-0.6);
      \path[->] (2.75,1.6) edge[bend right=15] node[above,sloped] {\footnotesize blowup} node[below,sloped] {\footnotesize $\Gamma \cap \Gamma'$} (1.4,1.25);
    \end{tikzpicture}
    \tikzexternaldisable
    \caption{Miranda's example (\cref{ex:mirandaex})}
    {\footnotesize Illustration inspired by \cite[Fig.\ 5.1]{Laz04a}}
    \index{Miranda, Rick!example of|ff{}}
    \label{fig:mirandaex}
  \end{figure}
  \par We now consider the blowup $X \to \PP^2_k$ along $\Gamma \cap \Gamma'$
  (see \cref{fig:mirandaex}).
  Since we have blown up the base locus of $\lvert W \rvert$, there is an
  induced morphism $\pi\colon X \to \PP^1_k$ whose fibers correspond to curves
  in the pencil $\lvert W \rvert$.
  Let $C$ and $C'$ be the strict transforms of $\Gamma$ and $\Gamma'$ in $X$,
  respectively, let $x \in C$ be the strict transform of $\xi \in \Gamma$, and
  let $E$ be an exceptional divisor of the blowup $X \to \PP^2_k$.
  We claim that the divisor $L = aC+E$ on $X$ is ample for $a \ge 2$.
  First, note that since $(C \cdot E) = 1$, we have $(L^2) = 2a-1$ and $(L \cdot
  E) = a - 1$.
  If $Z$ is a curve on $X$ different from $E$, we then have
  \begin{equation}\label{eq:mirandanakai}
    (L \cdot Z) = (C \cdot Z) + (E \cdot Z) \ge 0
  \end{equation}
  since $C$ is basepoint-free and $(E \cdot Z) \ge 0$.
  By the Nakai--Moishezon criterion
  \cite[Thm.\ 1.2.23]{Laz04a}, to show that
  $L$ is ample, it suffices to show that equality cannot hold in
  \cref{eq:mirandanakai}.
  If equality holds, then $(C \cdot Z) = 0$, in which case $\pi(Z)$ is a point.
  On the other hand, since every curve in the pencil $\lvert W \rvert$ is
  irreducible, this implies $Z$ is a fiber of $\pi$, in which case $(E \cdot Z)
  > 0$, a contradiction.
  Thus, $L$ is ample.
  Finally, we note that
  \[
    \varepsilon(L;x) \le \frac{(L \cdot C)}{m} = \frac{1}{m} < \delta.
  \]
\end{example}
\begin{remark}
  As noted by Viehweg\index{Viehweg, Eckart}, Miranda's example
  can be used to
  construct varieties of any dimension with arbitrarily small Seshadri
  constants \cite[Ex.\ 3.2]{EL93a}.
  Letting $X$ be as constructed in Miranda's example \ref{ex:mirandaex}, for
  every $n \ge 2$, the $n$-dimensional smooth projective variety $X \times_k
  \PP^{n-2}_k$ satisfies
  \[
    \varepsilon\bigl(p_1^*L \otimes p_2^*\cO(1);(x,z)\bigr) \le
    \varepsilon(L;x)
  \]
  for every $z \in \PP^{n-2}_k$ by considering the curve $C \times_k \{z\}$,
  where $p_1,p_2$ are the first and second projection morphisms, respectively.
  \par Bauer\index{Bauer, Thomas} has also shown that Miranda's
  example is not as exceptional as it might appear: suitable
  blowups of any surface with Picard number one have
  arbitrarily small Seshadri constants \cite[Prop.\ 3.3]{Bau99}.%
  \index{Miranda, Rick!example of|)}
\end{remark}
Despite Miranda's example, Ein\index{Ein, Lawrence}, K\"uchle\index{Kuchle,
Oliver@K\"uchle, Oliver}, and Lazarsfeld\index{Lazarsfeld, Robert} were
able to prove that at very general points on complex projective
varieties, lower bounds for $\varepsilon(L;x)$ do exist.
\begin{citedthm}[{\cite[Thm.\ 1]{EKL95}}]\label{thm:ekl}
  Let $X$ be a complex projective variety of dimension $n$, and let $L$ be a big
  and nef divisor on $X$.
  Then, for all closed points $x \in X$ outside of a countable union of proper
  closed subvarieties in $X$, we have
  \[
    \varepsilon(L;x) \ge \frac{1}{n}.
  \]
  Moreover, for every $\delta > 0$, the locus
  \[
    \biggl\{ x \in X \biggm\vert \varepsilon(L;x) > \frac{1}{n+\delta} \biggr\}
  \]
  contains a Zariski-open dense set in $X(\CC)$.
\end{citedthm}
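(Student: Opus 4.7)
The plan is to argue by contradiction using a ``differentiation of multiplicities'' technique applied to families of singular subvarieties. The starting point is the following reformulation, which I would first establish as a lemma: one has $\varepsilon(L;x) < \alpha$ if and only if there exists an integral subvariety $V \subseteq X$ of some dimension $d \ge 1$ through $x$ satisfying
\[
  \bigl(L^{d} \cdot V\bigr) < \alpha^{d} \cdot \mult_x V.
\]
This generalizes the curve-based formula \cref{eq:seshinfcurves} and can be derived by analyzing the Seshadri quotient $\mu^{*}L - tE$ on the blowup $\widetilde{X}$, asking when it fails to be nef on the strict transform of $V$ (see \cite[Prop.\ 5.1.9]{Laz04a}).

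Assume for contradiction that the first assertion fails, so the locus $B = \{x \in X \mid \varepsilon(L;x) < 1/n\}$ is not contained in any countable union of proper closed subvarieties. Choose a rational $\alpha < 1/n$ so that $B_{\alpha} = \{x \mid \varepsilon(L;x) < \alpha\}$ still has this property. For each $x \in B_{\alpha}$, pick an integral subvariety $V_x$ as above. Since the Chow variety of $X$ has only countably many irreducible components and the function $x \mapsto \mult_x V$ is constructible on each component of the associated incidence correspondence, some component of the incidence variety dominates $X$. After restricting to a smooth curve $T$ through $0$ and base-changing, I obtain a flat family $\mathcal{V} \to T$ of $d$-dimensional subvarieties $V_t \subseteq X$ equipped with a section $\sigma\colon T \to \mathcal{V}$ such that the inequality $(L^{d} \cdot V_t) < \alpha^{d} \cdot \mult_{\sigma(t)} V_t$ persists for all $t$.

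The heart of the argument, and the step I expect to be the main technical obstacle, is the multiplicity-differentiation estimate: for the central fiber one has
\[
  \mult_{\sigma(0)} V_{0} \ge \mult_{\sigma(t)} V_{t} + 1 \quad \text{for general } t,
\]
or, failing strict inequality, one can extract from $V_{0}$ an irreducible component $W \subseteq V_{0}$ of strictly smaller dimension $d' < d$ passing through $\sigma(0)$ which still witnesses $\varepsilon(L;\sigma(0)) < \alpha$, i.e.\ $(L^{d'} \cdot W) < \alpha^{d'} \cdot \mult_{\sigma(0)} W$. The proof is by infinitesimal analysis of the Hilbert--Samuel function along $\mathcal{V}$: either the moving point $\sigma(t)$ sees constant multiplicity and one picks up genuine extra tangent directions at the specialization (giving the $+1$), or the family degenerates and a component of the flat limit through $\sigma(0)$ inherits a sharpened version of the defect. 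Iterating this dichotomy decreases either the dimension $d$ or forces the multiplicity to grow without bound; since the degree $(L^{d} \cdot V)$ is controlled by $\alpha^d$ and $\dim X = n$, after at most $n$ iterations the bookkeeping inequality $(L^{d} \cdot V) < \alpha^{d} \cdot \mult_{\sigma(0)} V$ with $\alpha < 1/n$ becomes impossible (the extremal case $d=0$ gives $1 < 0$).

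For the second assertion, I would note that for each rational $\alpha < 1/n + $ something of the form $1/(n+\delta)$, the locus $\{x \mid \varepsilon(L;x) \le \alpha\}$ is a countable union of Zariski-closed subsets of $X$: namely, over each component of the Chow variety parametrizing candidate subvarieties $V$, the projection to $X$ of the locus where $V$ satisfies the bad inequality at a point of $V$ is constructible and hence contained in a closed subvariety. Since we have just shown this locus cannot be all of $X$ for $\alpha < 1/n$, each such closed subvariety is proper; by Baire applied over $\CC$ (or the ``very general point'' formalism), the complement is Zariski-dense in $X(\CC)$, giving the Zariski-open dense subset on which $\varepsilon(L;x) > 1/(n+\delta)$.
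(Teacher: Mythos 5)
You should first note that the paper offers no proof of this statement: it is quoted verbatim as a cited theorem of Ein--K\"uchle--Lazarsfeld \cite{EKL95}, so your proposal can only be measured against their argument. Your architecture is indeed theirs: reduce to the intersection-theoretic criterion $(L^{d}\cdot V) < \alpha^{d}\cdot\mult_x V$ via \cite[Prop.\ 5.1.9]{Laz04a}, use the countability of Chow/Hilbert components to organize the violating subvarieties into families equipped with a section, and run an induction on dimension driven by a differentiation procedure. But the step you yourself flag as ``the heart of the argument'' is wrong as stated. The inequality $\mult_{\sigma(0)}V_{0} \ge \mult_{\sigma(t)}V_{t} + 1$ is false: multiplicity along a section of a flat family is only upper semicontinuous, and for (say) a family of lines through a moving point it is constant, with no lower-dimensional component of the flat limit available to rescue your dichotomy. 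The actual mechanism in \cite{EKL95} is not specialization to a central fiber but differentiation \emph{at a general parameter}: because the section moves (the incidence variety dominates $X$), differentiating local equations of the family in the base directions produces, inside the \emph{general} fiber $V_t$, a strictly smaller subvariety through $\sigma(t)$ whose multiplicity has dropped by a chosen amount $l$ while its $L$-degree stays controlled. The quantitative trade-off between multiplicity lost and dimension gained at each of the at most $n$ steps is precisely what produces the constant $1/n$ (and is why the method falls short of the conjectured bound $1$). Your closing sentence ``after at most $n$ iterations the bookkeeping inequality \dots becomes impossible'' asserts this accounting rather than performing it, so the proposal never exhibits where $1/n$ actually comes from.

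The second assertion also has a gap. Showing that $\{x \mid \varepsilon(L;x) \le 1/(n+\delta)\}$ is a countable union of proper closed subvarieties makes its complement dense, but not Zariski-open dense: a countable union of proper closed subvarieties need not be contained in a single proper closed subvariety, so Baire-type reasoning only recovers the ``very general'' statement of the first part. To get a Zariski-open dense set one needs the additional finiteness input of \cite{EKL95}: for fixed $\delta > 0$, the chain construction bounds the multiplicities, and hence the $L$-degrees, of any subvariety that could witness $\varepsilon(L;x)\le 1/(n+\delta)$, so only \emph{finitely many} Chow components can contribute and the bad locus sits inside a finite union of proper closed subvarieties. Without that degree bound, your argument for the second claim does not close.
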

When $L$ is ample and $X$ is smooth of dimension $n \le 3$, the lower bound in
\cref{thm:ekl} can be improved to $\varepsilon(L;x) \ge 1/(n-1)$
\cites[Thm\adddot]{EL93a}[Thm.\ 1.2]{CN14}.
The case $n = 2$ supports the following strengthening of \cref{thm:ekl}.
\begin{citedconj}[{\cite[p.\ 194]{EKL95}}]
  Let $X$ be a projective variety over an algebraically closed field, and let
  $L$ be a big and nef divisor on $X$.
  Then, for all closed points $x \in X$ outside
  of a countable union of proper closed subvarieties of $X$, we have
  $\varepsilon(L;x) \ge 1$.
\end{citedconj}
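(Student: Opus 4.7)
My plan is to argue by contradiction along the template of Ein--K\"uchle--Lazarsfeld (\cref{thm:ekl}), trying to sharpen the bound from $1/n$ to $1$. Suppose the conclusion fails; then there is a dense open subset $U \subseteq X$ such that for every $x \in U$ one has $\varepsilon(L;x) < 1$, so by \cref{eq:seshinfcurves} there exists an integral curve $C_x \ni x$ with $(L \cdot C_x) < e(\cO_{C_x,x})$. Because the Hilbert scheme has countably many components, a standard countability argument extracts a single irreducible family $\mathcal{C} \to T$ of integral curves, together with a section $\sigma\colon T \to \mathcal{C}$, such that the evaluation map $T \to X$, $t \mapsto \sigma(t)$, dominates $X$ and $(L \cdot C_t) < e(\cO_{C_t,\sigma(t)})$ for very general $t \in T$. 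Setting $m$ equal to the generic value of $e(\cO_{C_t,\sigma(t)})$, the task is to derive a contradiction.

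First I would normalize the situation: after pulling back to a resolution of a compactification of the total space, I may assume $\mathcal{C}$ is smooth, with a proper, dominant evaluation map $\mathrm{ev}\colon\mathcal{C} \to X$ whose generic fiber has dimension $\dim T + 1 - n$. The EKL argument, via the Terracini-style differentiation of a covering family, already yields the bound $(L \cdot C_t) \ge m/n$ by comparing the degree of $\mathrm{ev}^*L$ along a curve in $T$ to the order of vanishing of differentials of $\mathrm{ev}$ at the marked section, giving $\varepsilon(L;x) \ge 1/n$. To push from $1/n$ to $1$, my intention is to iterate: each first-order deformation direction $v \in T_{\sigma(t)}X$ realized by the family forces $C_t$ to acquire an infinitesimal singularity at $\sigma(t)$ in the direction $v$, so running over the full tangent space (which is possible because $\mathrm{ev}$ dominates $X$) should produce, on a single generic $C_t$, a singularity of multiplicity comparable to $m$ in every direction. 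Properly quantified, this would force $(L \cdot C_t) \ge m$, contradicting the choice of family.

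To make the quantitative step rigorous, I would invoke the machinery of the thesis: pass to positive characteristic by reduction modulo $p$, and use Frobenius techniques in the spirit of \cref{thm:poscharseshsm} to build auxiliary subsheaves of $\cO_X$ cosupported at $\sigma(t)$ that detect higher-order jet separation against $L$. Combined with a log resolution of the cycle traced out by $\mathcal{C}$ along $\sigma$, these sheaves should upgrade the first-order EKL differentiation into an estimate involving the full $m$-fold multiplicity, rather than just its $1/n$-th part. Transferring back to characteristic zero by standard spreading-out arguments would then yield the conclusion.

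The main obstacle, which is why this remains a conjecture, is controlling the singularities of $\mathcal{C}$ along $\sigma$ and of the cycle $\mathrm{ev}_*\mathcal{C}$ on $X$ precisely enough to convert the qualitative statement ``the family forces singularities in every tangent direction'' into a numerical inequality. In dimension $n = 2$ one can substitute Bogomolov's instability theorem or Reider's method for this step, which is why the surface case is known; in higher dimension no such substitute is available, and constructing an adequate one (Frobenius-theoretic or otherwise) is, in my view, the core difficulty of the conjecture.
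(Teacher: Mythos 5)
This statement is a \emph{conjecture} in the thesis (cited from Ein--K\"uchle--Lazarsfeld, and introduced there with the remark that the surface case ``supports'' it); the paper gives no proof of it, so there is nothing to compare your argument against. Your proposal is, by your own admission in its final paragraph, not a proof either: it is a strategy sketch that stalls exactly at the step which makes the problem open. So the honest verdict is that the attempt is incomplete, and the gap is not a technicality but the entire content of the conjecture.

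Concretely, the gap is the passage from the Ein--K\"uchle--Lazarsfeld bound $\varepsilon(L;x)\ge 1/n$ to $\varepsilon(L;x)\ge 1$. The EKL differentiation argument gains one order of vanishing per derivative of the covering family, but each differentiation only controls the multiplicity of $C_t$ at $\sigma(t)$ along a one-dimensional slice of directions; summing over the $n$ independent directions needed to dominate $X$ is precisely what produces the factor $1/n$, and no one knows how to make the contributions from distinct tangent directions accumulate on a \emph{single} curve of the family rather than being averaged. Your heuristic that ``running over the full tangent space should produce a singularity of multiplicity comparable to $m$ in every direction'' is the assertion to be proved, not an argument for it. I would also flag two smaller issues: (1) the negation of the statement does not give a dense open set on which $\varepsilon(L;x)<1$; it gives that the locus $\{x:\varepsilon(L;x)<1\}$ is not contained in any countable union of proper closed subvarieties, from which the standard Hilbert-scheme countability argument extracts a dominating irreducible family --- your conclusion is right but the intermediate claim is not the correct logical step; (2) the Frobenius machinery of the thesis (e.g.\ Theorem~\ref{thm:poscharseshsm}) runs in the wrong direction for your purposes: it converts \emph{lower bounds} on Seshadri constants into jet separation, whereas here you need to \emph{establish} a lower bound, so invoking it does not obviously buy anything. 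Your remark that the surface case is accessible via Reider/Bogomolov is consistent with the paper, which notes that for $n\le 3$ and $L$ ample the EKL bound improves to $1/(n-1)$, giving $\varepsilon\ge 1$ when $n=2$.
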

By combining \cref{prop:demsesh,thm:ekl}, we obtain the following:
\begin{corollary}\label{cor:ekl}
  Let $X$ be a smooth complex projective variety of dimension $n$, and let $L$
  be a big and nef divisor on $X$.
  Then, the bundle $\omega_X \otimes L^{\otimes m}$ separates $\ell$-jets at
  all general points $x \in X$ for all $m \ge n(n+\ell)+1$.
  In particular, we have
  \[
    h^0(X,\omega_X \otimes L^{\otimes m}) \ge \binom{n+\ell}{n}
  \]
  for all $m \ge n(n+\ell)+1$.
\end{corollary}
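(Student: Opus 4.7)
The plan is to combine the two main ingredients made available just before this corollary: Proposition~\ref{prop:demsesh} (Demailly's jet separation criterion in characteristic zero) and Theorem~\ref{thm:ekl} (the Ein--K\"uchle--Lazarsfeld lower bound at very general points). The bridge between the two is the homogeneity of Seshadri constants, which gives $\varepsilon(mL;x) = m \cdot \varepsilon(L;x)$ for every positive integer $m$.

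First, I would fix $m \ge n(n+\ell)+1$ and pick a closed point $x \in X$ lying outside the countable union of proper closed subvarieties produced by \cref{thm:ekl}, so that $\varepsilon(L;x) \ge 1/n$. By homogeneity,
\[
  \varepsilon\bigl(mL;x\bigr) \;=\; m \cdot \varepsilon(L;x) \;\ge\; \frac{m}{n} \;\ge\; \frac{n(n+\ell)+1}{n} \;>\; n+\ell.
\]
Since $mL$ is a big and nef (in fact ample when $L$ is, but bigness and nefness suffice) $\ZZ$-divisor on the smooth complex projective variety $X$, \cref{prop:demsesh} applied to $mL$ in place of $L$ yields that $\omega_X \otimes L^{\otimes m}$ separates $\ell$-jets at $x$. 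This proves the first assertion.

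For the cohomological bound, I would note that separation of $\ell$-jets at $x$ means that the restriction map
\[
  H^0\bigl(X,\omega_X \otimes L^{\otimes m}\bigr)
  \longrightarrow
  H^0\bigl(X,\omega_X \otimes L^{\otimes m} \otimes \cO_X/\fm_x^{\ell+1}\bigr)
\]
is surjective. Because $\omega_X \otimes L^{\otimes m}$ is an invertible sheaf and $X$ is smooth of dimension $n$ at $x$, the target is a $k$-vector space of dimension
\[
  \dim_k \bigl(\cO_{X,x}/\fm_x^{\ell+1}\bigr) \;=\; \binom{n+\ell}{n},
\]
by a standard Hilbert-series computation for a regular local ring of dimension $n$. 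Surjectivity therefore forces $h^0(X,\omega_X \otimes L^{\otimes m}) \ge \binom{n+\ell}{n}$, as required.

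There is no real obstacle here; the content of the corollary is entirely contained in \cref{prop:demsesh,thm:ekl}, and the only step requiring any care is the bookkeeping of the inequality $\varepsilon(mL;x) > n+\ell$ via homogeneity. The one subtlety worth mentioning in the write-up is that \cref{thm:ekl} guarantees lower bounds only at \emph{very general} $x$ (i.e., away from a countable union of proper subvarieties), which is why the statement of the corollary refers to ``general points.''
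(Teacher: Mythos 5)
Your proposal is correct and is exactly the argument the paper intends: the corollary is stated immediately after the sentence ``By combining \cref{prop:demsesh,thm:ekl}, we obtain the following,'' and your combination of the EKL lower bound, homogeneity of Seshadri constants, and Demailly's criterion applied to $mL$, followed by the count $\dim_k \cO_{X,x}/\fm_x^{\ell+1} = \binom{n+\ell}{n}$, is that proof.

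One small correction to your closing remark: using only the first part of \cref{thm:ekl} yields the conclusion at \emph{very general} points (complement of a countable union of proper subvarieties), whereas the corollary asserts it at \emph{general} points, i.e.\ on a Zariski-open dense set. To get the stronger statement you should invoke the ``Moreover'' clause of \cref{thm:ekl}: for $0 < \delta < 1/(n+\ell)$ the locus $\{\varepsilon(L;x) > 1/(n+\delta)\}$ contains a Zariski-open dense set, and on that set $\varepsilon(mL;x) > m/(n+\delta) \ge (n(n+\ell)+1)/(n+\delta) > n+\ell$, so the same application of \cref{prop:demsesh} goes through.
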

\begin{remark}\label{rem:ekl}
  By replacing \cref{prop:demsesh} with \cref{thm:demseshsingular}, we see that
  \cref{cor:ekl} holds for $X$ with singularities of at worst dense
  $F$-injective type.
  See \cref{def:densefsings} for the definition of this
  class of singularities.
  In particular, \cref{cor:ekl} holds for $X$ with at worst rational
  singularities by \cref{fig:singsvsfsings}.%
  \index{Seshadri constant, $\varepsilon(D;x)$|)}
\end{remark}
\section{A relative Fujita-type conjecture}
We also mention the following relative version of Fujita's conjecture.
Inspired by Koll\'ar and Viehweg's work on weak positivity, which partially
answers an analogue of \cref{question:embedding} for \emph{families} of
varieties, Popa and Schnell proposed the following:
\begin{citedconj}[{\cite[Conj.\ 1.3]{PS14}}]\label{conj:popaschnell}
  Let $f\colon Y \to X$ be a morphism of smooth complex projective varieties,
  where $X$ is of dimension $n$, and let $L$ be an ample line bundle on $X$.
  Then, for every $k \ge 1$, the sheaf $f_*\omega_Y^{\otimes k} \otimes
  L^{\otimes m}$ is globally generated for all $m \ge k(n + 1)$.
\end{citedconj}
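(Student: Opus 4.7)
This conjecture is a broad generalization of Fujita's freeness conjecture, recovered by taking $Y = X$, $f = \mathrm{id}$, and $k = 1$. Since the absolute Fujita conjecture remains open in dimensions $n \ge 6$ even over $\CC$, I would aim for partial results parallel to those obtained in this thesis for the absolute setting, rather than a complete proof.

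The plan is to reduce the relative problem to an absolute Fujita-type statement on $X$ by exploiting the positivity of the sheaf $f_*\omega_Y^{\otimes k}$. First, I would invoke Viehweg's weak positivity (or Kollár's semipositivity) to establish that, over a dense open set $U \subseteq X$, the sheaf $f_*\omega_Y^{\otimes k}$ is locally free with suitable nef-type properties. Viehweg's cyclic covering construction then exhibits $f_*\omega_Y^{\otimes k}$ as a direct summand of $f'_*\omega_{Y'}$ for a modified morphism $f' \colon Y' \to X$, reducing the problem to the case $k = 1$ (modulo keeping track of the twist by powers of $L$, which accounts for the factor of $k$ in the bound).

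With $k = 1$, the next step would combine Kollár's vanishing theorem, which gives $H^i(X, f_*\omega_Y \otimes A) = 0$ for all $i > 0$ and $A$ ample, with an Angehrn--Siu-type argument. A direct Castelnuovo--Mumford regularity argument yields global generation of $f_*\omega_Y \otimes L^{\otimes m}$ whenever one can arrange enough vanishing for twists $L^{\otimes(m-i)}$, but this only reaches the bound $m \ge n+1$ when $L$ itself is globally generated. To handle arbitrary ample $L$, I would adapt \cref{thm:myangehrnsiu} to the relative setting, replacing $\omega_X$ by $f_*\omega_Y$ and using restricted volumes of $L$ together with a singularity-creation argument in which the boundary divisors are pulled back from $X$ but intersected against $Y$.

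The main obstacle is the gap between the quadratic bound $\tfrac{1}{2}n(n+1)+1$ produced by \cref{cor:as02} and the linear bound $k(n+1)$ demanded by \cref{conj:popaschnell}; closing this gap would, in particular, resolve the absolute Fujita conjecture. Per \cref{principle:raynaudnotobstacle}, the difficulty lies not in vanishing theorems but in constructing divisors that are very singular at one point while having mild singularities elsewhere, and the presence of the auxiliary morphism $f$ only exacerbates this. A more speculative route, in keeping with the philosophy of this thesis, would be to define a Frobenius--Seshadri-type invariant directly for the sheaf $f_*\omega_Y^{\otimes k}$ — using relative trace maps and reduction modulo $p$ — that admits a jet-separation criterion analogous to \cref{thm:poscharseshsm}, thereby bypassing the absolute reduction altogether. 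Making either strategy precise is the crux of the difficulty.
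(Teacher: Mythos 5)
This is a conjecture that the paper states but does not prove, so there is no proof of record to compare against; you are right to treat it as open and to aim only at partial results. Your outline tracks the known partial progress closely: Popa and Schnell's own proof for globally generated $L$ is exactly the Castelnuovo--Mumford regularity argument you describe, with Ambro and Fujino's Koll\'ar-type vanishing supplying the required cohomology vanishing, and the thesis's Theorem \ref{thm:dmfm} (joint with Dutta and Fulger) realizes your ``relativize the Seshadri-constant criterion'' idea, obtaining generic jet separation for $f_*\cO_Y(P) \otimes L^{\otimes m}$ with the quadratic bound $m \ge k(n(n+\ell)+1)$ rather than the conjectured linear one. Your identification of the gap between $\frac{1}{2}n(n+1)+1$ and $n+1$ as the crux, and of singular-divisor construction rather than vanishing as the obstruction, matches Principle \ref{principle:raynaudnotobstacle}.

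One concrete warning about your ``more speculative route'': a Frobenius--Seshadri or reduction-modulo-$p$ attack on the full relative statement cannot work naively, because the analogue of Conjecture \ref{conj:popaschnell} is \emph{false} in positive characteristic --- there is a curve fibration over $\PP^1_k$, based on Moret-Bailly's construction, that is a counterexample both to the positive-characteristic version of the conjecture and to the positive-characteristic analogue of Theorem \ref{thm:dmfm} (see the remark following that theorem, citing \cite[Prop.\ 4.11]{SZ17}). So any characteristic-$p$ technique would have to exploit features of reductions of complex varieties that fail for arbitrary characteristic-$p$ fibrations; the relative trace map for $f_*\omega_Y^{\otimes k}$ need not be surjective, which is precisely what breaks the Cartier-module argument of Theorem \ref{thm:murcimproved} in the relative setting. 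Your characteristic-zero reduction via Viehweg's cyclic covering trick from $k$ to $1$ is standard but also requires care: the summand realization of $f_*\omega_Y^{\otimes k}$ lives inside $f'_*\omega_{Y'}$ only after twisting by divisors whose positivity must be controlled relative to $L$, and this is where the factor $k$ in the bound $k(n+1)$ has to be earned rather than assumed.
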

Note that if $f$ is the identity morphism $X \to X$, then
\cref{conj:popaschnell} is identical to Fujita's freeness conjecture
\ref{conj:fujita}\cref{conj:fujitafree}.
Popa and Schnell proved \cref{conj:popaschnell} when $\dim X = 1$ \cite[Prop.\
2.11]{PS14}, or when $L$ is additionally assumed to be globally generated
\cite[Thm.\ 1.4]{PS14}.
This latter result was shown using Castelnuovo--Mumford regularity in a similar
fashion to \cref{ex:fujitaamplegg}, with Ambro and Fujino's Koll\'ar-type
vanishing theorem replacing the Kodaira vanishing theorem in the proof.
\medskip
\par In joint work with Yajnaseni Dutta, we proved the following effective
global generation result in the spirit of \cref{conj:popaschnell}, which we
later extended to higher-order jets in joint work with Mihai Fulger.
Note that the case when $(Y,\Delta)$ is klt and $k = 1$ is due to de Cataldo
\cite[Thm.\ 2.2]{dC98}.
\begin{citedthm}[{\cites[Thm.\ A]{DM}[Cor.\ 8.2]{FM}}]\label{thm:dmfm}
  Let $f\colon Y \to X$ be a surjective morphism of complex projective
  varieties, where $X$ is of dimension $n$.
  Let $(Y,\Delta)$ be a log canonical pair and let $L$ be a big and nef line
  bundle on $X$.
  Consider a Cartier divisor $P$ on $Y$ such that $P \sim_\RR k(K_Y+\Delta)$ for
  some integer $k \ge 1$.
  Then, the sheaf
  \[
    f_*\cO_Y(P) \otimes L^{\otimes m}
  \]
  separates $\ell$-jets at all general points $x \in X$ for all $m \ge
  k(n(n+\ell)+1)$.
\end{citedthm}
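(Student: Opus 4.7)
The plan is to adapt Demailly's proof of \cref{prop:demsesh} to the relative setting, replacing Kawamata--Viehweg vanishing with a Koll\'ar--Ambro--Fujino-type vanishing for log canonical pairs and feeding in the Ein--K\"uchle--Lazarsfeld lower bound \cref{thm:ekl} to supply the required positivity. By \cref{thm:ekl}, a very general point $x \in X$ satisfies $\varepsilon(L;x) \ge 1/n$. For any such $x$, let $\mu\colon \widetilde X \to X$ be the blowup at $x$ with exceptional divisor $E$, and choose a birational model $\mu'\colon \widetilde Y \to Y$ (for instance a log resolution of $(Y,\Delta)$ dominating the blowup of $f^{-1}(x)$) fitting into a commutative square with an induced morphism $\widetilde f\colon \widetilde Y \to \widetilde X$.

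As in the proof of \cref{prop:demsesh}, the short exact sequence
\[
  0 \longrightarrow \fm_x^{\ell+1}\cdot \bigl(f_*\cO_Y(P)\otimes L^{\otimes m}\bigr) \longrightarrow f_*\cO_Y(P)\otimes L^{\otimes m} \longrightarrow f_*\cO_Y(P)\otimes L^{\otimes m}/\fm_x^{\ell+1} \longrightarrow 0
\]
reduces $\ell$-jet separation at $x$ to the vanishing $H^1\bigl(X, \fm_x^{\ell+1}\cdot f_*\cO_Y(P)\otimes L^{\otimes m}\bigr) = 0$. Using the quasi-isomorphism $\fm_x^{\ell+1} \simeq \RR\mu_*\cO_{\widetilde X}(-(\ell+1)E)$ together with a Leray argument and a sheaf-level comparison on the commutative square, this reduces in turn to
\[
  H^1\bigl(\widetilde X,\ \widetilde f_*\cO_{\widetilde Y}(\widetilde P)\otimes \cO_{\widetilde X}\bigl(\mu^*(mL) - (\ell+1)E\bigr)\bigr) = 0,
\]
where $\widetilde P$ is a Cartier divisor on $\widetilde Y$ with $\widetilde P \sim_\RR k(K_{\widetilde Y}+\widetilde \Delta)$ for an LC boundary $\widetilde \Delta$ pulled back from $(Y,\Delta)$.

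To obtain this vanishing I would invoke a relative Ambro--Fujino-type theorem for LC pairs, of the same kind used by Popa--Schnell in the globally generated case of \cref{conj:popaschnell}: for $(\widetilde Y,\widetilde \Delta)$ LC and a Cartier divisor $\widetilde P \sim_\RR k(K_{\widetilde Y}+\widetilde \Delta)$, and for a line bundle $M$ on $\widetilde X$ such that $\tfrac{1}{k}M$ is the class of a nef and big $\RR$-divisor, one has $H^i\bigl(\widetilde X, \widetilde f_*\cO_{\widetilde Y}(\widetilde P)\otimes M\bigr) = 0$ for $i > 0$. Applied with $M = \mu^*L^{\otimes m}(-(\ell+1)E)$, this requires $\tfrac{m}{k}\mu^*L - \tfrac{\ell+1}{k}E$ to be nef and big; the stronger statement that $\tfrac{m}{k}\mu^*L - (n+\ell)E$ is nef and big follows from $m \ge k(n(n+\ell)+1)$ together with $\varepsilon(L;x) \ge 1/n$, exactly as in the final display of the proof of \cref{prop:demsesh} together with the Seshadri intersection estimate $(L^n) \ge \varepsilon(L;x)^n$.

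The main obstacle is the precise setup of the vanishing theorem together with the sheaf-level identifications: one must choose the birational model $\widetilde Y$ so that $(\widetilde Y, \widetilde \Delta)$ remains LC with $\widetilde P\sim_\RR k(K_{\widetilde Y}+\widetilde\Delta)$, so that $\mu'_*\cO_{\widetilde Y}(\widetilde P)\simeq \cO_Y(P)$ with vanishing higher direct images, and so that the Leray spectral sequence for $\widetilde f$ collapses in the relevant range. The appearance of a $k$-multiple of $K+\Delta$, rather than the classical $k=1$ form of Koll\'ar vanishing, is the genuinely new input over the globally generated case treated by Popa--Schnell, and is handled either by appealing to the full Ambro--Fujino formalism for LC pairs or, more elementarily, by a cyclic cover $Y'\to Y$ of degree $k$ on which the statement reduces to $k=1$ before descending to $Y$ as a Galois-invariant summand.
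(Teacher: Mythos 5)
Your architecture---relativize the blowup argument of \cref{prop:demsesh}, feed in the Ein--K\"uchle--Lazarsfeld bound \cref{thm:ekl} at a general point of $X$, and replace Kawamata--Viehweg by a Koll\'ar/Ambro--Fujino-type vanishing for the pushforward---is exactly the strategy of \cite{DM} and \cite{FM}, which is all this thesis records for \cref{thm:dmfm} (the theorem is cited there, not reproved). The Seshadri bookkeeping is also fine: $\varepsilon(L;x) > 1/(n+\delta)$ on an open dense set, with $\delta < 1/(n+\ell)$, gives $\tfrac{m}{k}\,\varepsilon(L;x) > n+\ell$ once $m \ge k(n(n+\ell)+1)$.

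The gap is in the step you yourself flag as ``the genuinely new input,'' and neither of your two proposed fixes works as stated. The vanishing you invoke---$H^i\bigl(\widetilde X, \widetilde f_*\cO_{\widetilde Y}(\widetilde P)\otimes M\bigr)=0$ whenever $\tfrac{1}{k}M$ is nef and big and $\widetilde P\sim_\RR k(K_{\widetilde Y}+\widetilde\Delta)$---is \emph{not} an instance of the Ambro--Fujino formalism: that machinery applies to divisors of the form $K_Y+\Delta+f^*(\text{nef and big})$, i.e.\ to $k=1$, and the extra summand $(k-1)(K_{\widetilde Y}+\widetilde\Delta)$ is not pulled back from the base. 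The degree-$k$ cyclic cover also does not exist in general: $K_Y+\Delta$ is only an $\RR$-linear equivalence class, so there is no line bundle $N$ with $N^{\otimes k}\simeq \cO_Y(P)$ whose root one could extract, and even when there is, one must still control $f_*$ of the cover and descend along an eigensheaf decomposition. What actually closes the loop in \cite{DM} is a boundary trick: using the decomposition $P+mf^*L \sim_\RR (K_Y+\Delta) + \tfrac{k-1}{k}(P+mf^*L) + \tfrac{m}{k}f^*L$, one shows that $\cO_Y(l(P+mf^*L))$ has enough sections in general position (this is itself a nontrivial input---weak positivity or an induction on $k$), chooses a general member $D$ of that system on a log smooth model, and sets $\Delta'=\Delta+\tfrac{k-1}{lk}D$ so that $P+mf^*L\sim_\RR K_Y+\Delta'+\tfrac{m}{k}f^*L$ with the pair still log canonical; only then does a $k=1$ Koll\'ar-type vanishing apply. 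This accounting---$\tfrac{(k-1)m}{k}$ of the $L$-positivity is consumed in building $\Delta'$ and only $\tfrac{m}{k}$ survives for the Seshadri/blowup step---is exactly why the bound is $k(n(n+\ell)+1)$ rather than $n(n+\ell)+O(k)$, and it cannot be outsourced to a standard citation. Keeping $(Y,\Delta')$ log canonical (not merely klt) through this construction is the additional delicacy that makes the LC case the contribution of \cite{DM}.
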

The proof of \cref{thm:dmfm} is a relativization of the argument in
\cref{prop:demsesh}, and uses the lower bound on Seshadri constants in
\cref{thm:ekl}.
A generic global generation result in this direction was first obtained by Dutta
for klt $\QQ$-pairs $(Y,\Delta)$ \cite[Thm.\ A]{Dut}.
Using analytic techniques, Deng and Iwai later obtained improvements of Dutta's
original result for klt pairs with better lower bounds, under the additional
assumption that $X$ is smooth and $L$ is ample \cites[Thm.\ C]{Den}[Thm.\
1.5]{Iwa}.
In \cite[Thm.\ B]{DM}, we proved algebraic versions of Deng's and Iwai's
results as a consequence of a new weak positivity result for pairs
\cite[Thms.\ E and F]{DM}.
Note, however, that only our methods in \cites{DM}{FM} apply to log canonical
pairs.
\begin{remark}
  In positive characteristic, there is an example of a curve fibration over
  $\PP^1_k$ which gives a counterexample both to Popa and Schnell's
  relative Fujita-type conjecture \ref{conj:popaschnell}, and
  to the analogue of \cref{thm:dmfm} in positive characteristic.
  The example is based on a construction due to Moret-Bailly \cite{MB81};
  see \cite[Prop.\ 4.11]{SZ17}.
\end{remark}

\section{Difficulties in positive characteristic}\label{sect:poschardifficult}
While most of the questions, conjectures, and examples seen so far have been
stated over fields of arbitrary characteristic, the majority of the results
stated, in particular on Fujita's conjecture
(\cref{tab:knownfujita,thm:angehrnsiu}) and lower bounds on Seshadri constants
(\cref{thm:ekl}), are only known over fields of characteristic zero.
The most problematic situation is when the ground field $k$ is an imperfect
field of characteristic $p > 0$, in which case there are at least three major
difficulties.
First, since $k$ is of characteristic $p > 0$,
\begin{enumerate}[label=(\Roman*)]
  \item Resolutions of singularities are not known to exist (see \cite{Hau10}),
    and%
    \index{resolution of singularities!not known in characteristic $p$}
  \item Kodaira-type vanishing theorems are false \cite{Ray78} (see
    \cref{sect:raynaud}).%
    \index{vanishing theorem!Kodaira!false in characteristic $p$}
\end{enumerate}
A common workaround for the lack of resolutions is to use de
Jong's\index{de Jong, Aise Johan} theory of
alterations \cite{dJ96}.
The lack of vanishing theorems is harder to circumvent, however, since
over the complex numbers, vanishing theorems are a fundamental ingredient used
to construct global sections of line bundles.
A useful workaround is to exploit the Frobenius
morphism\index{Frobenius morphism, $F$} $F\colon X \to X$ and
its Grothendieck\index{Grothendieck, Alexander!trace}
trace\index{Frobenius morphism, $F$!Grothendieck trace of}
$F_*\omega_X^\bullet \to \omega_X^\bullet$; see
\cites{PST17}{Pat18}.
For imperfect fields, however, this approach runs into another problem:
\begin{enumerate}[label=(\Roman*),start=3]
  \item Applications of Frobenius techniques in algebraic geometry usually
    require the ground field $k$ to be
    \textsl{$F$-finite,}\index{F-finite@$F$-finite!field|textbf} i.e., satisfy
    $[k:k^p] < \infty$.
\end{enumerate}
The last issue arises since Grothendieck
duality\index{Grothendieck, Alexander!duality} cannot be applied to the
Frobenius if it is not finite.
Working around this last issue is the focus of \cref{app:gamma}.
\subsection{Proof of Theorem \ref{thm:poscharseshsm}}
\label{sect:demseshspecialcase}
To illustrate how Frobenius techniques can be used in practice, we prove
the following positive characteristic version of \cref{prop:demsesh}.
\begin{customthm}{B}\label{thm:poscharseshsm}
  Let\index{Seshadri constant, $\varepsilon(D;x)$!criterion for separation of jets|(}
  $X$ be a smooth projective variety of dimension $n$ over an algebraically
  closed field $k$ of characteristic $p > 0$, and let $L$ be an ample line
  bundle on $X$.
  Let $x \in X$ be a closed point, and consider an integer $\ell \ge 0$.
  If $\varepsilon(L;x) > n+\ell$, then $\omega_X \otimes L$ separates
  $\ell$-jets at $x$.%
  \index{Seshadri constant, $\varepsilon(D;x)$!criterion for separation of jets|)}
\end{customthm}
The case $\ell = 0$ is due to
Musta\c{t}\u{a}\index{Mustata, Mircea@Musta\c{t}\u{a}, Mircea} and
Schwede\index{Schwede, Karl} \cite[Thm.\ 3.1]{MS14}.
The case for arbitrary $\ell \ge 0$ first appeared in \cite[Thm.\ A]{Mur18}.
These proofs used a positive-characteristic version of Seshadri constants called
\textsl{Frobenius--Seshadri constants} $\varepsilon_F^\ell(L;x)$; see
\cref{rem:frobseshconst}.
Note that we will later prove a generalization of \cref{thm:poscharseshsm}; see
\cref{thm:demseshsingular}.
\par We give a new proof of \cref{thm:poscharseshsm}, which is an adaptation of
the proof in \cite[Exer.\ 6.3]{PST17}, which proves the case when $\ell = 0$.
As in the proof of \cite[Thm.\ 3.1]{MS14}, the main ingredient in the proof is
the \textsl{Grothendieck\index{Grothendieck, Alexander!trace}
trace}\index{Frobenius morphism, $F$!Grothendieck trace of}
\[
  \Tr_X\colon F_*\omega_X \longrightarrow \omega_X\glsadd{traceoffrobenius}
\]
associated to the (absolute) Frobenius morphism $F\colon X \to X$.
Recall that the Frobenius morphism is defined as the identity map on points, and
the $p$-power map
\[
  \begin{tikzcd}[row sep=0,column sep=1.475em]
    \cO_X(U) \rar & F_*\cO_X(U)\\
    f \rar[mapsto] & f^p
  \end{tikzcd}
\]
on functions over every open set $U \subseteq X$.
This map $\Tr_X$ is a morphism of $\cO_X$-modules, which can be obtained by
applying Grothendieck duality for finite flat
morphisms to the (absolute)
Frobenius\index{Frobenius morphism, $F$} morphism $F\colon X \to X$;
see \cref{sect:grothendieckduality}.
Note that $F$ is finite since $k$ is $F$-finite (see \cref{ex:eftoverffin}),
and is flat by Kunz's theorem \cite[Thm.\ 2.1]{Kun69} since $X$ is smooth.
By \cite[Lem.\ 1.3.6]{BK05},  we can also
describe the trace map locally by
\begin{equation}\label{eq:tracemaplocal}
  \prod_{i=1}^n x_i^{a_i}\,dx \longmapsto
  \prod_{i=1}^n x_i^{\frac{a_i-p+1}{p}}\,dx,
\end{equation}
where $x_1,x_2,\ldots,x_n \in \cO_X(U)$ is a choice of local coordinates on an
affine open subset $U \subseteq X$, and $dx \coloneqq dx_1 \wedge dx_2 \wedge
\cdots \wedge dx_n$.
By convention, the expression on the right-hand side of \cref{eq:tracemaplocal}
is zero unless all exponents are integers.
See \cite[\S1.3]{BK05} for the definition and basic properties of the morphism
$\Tr_X$ from this point of view, where it is also called the \textsl{Cartier
operator}\index{Cartier operator}.
\par The trace map $\Tr_X$ satisfies the following key properties needed for our
proof:
\begin{enumerate}[label=$(\alph*)$,ref=\ensuremath{\alph*}]
  \item\label{list:trace1}
    Since $X$ is smooth, the trace map $\Tr_X$ and its $e$th iterates $\Tr^e_X
    \colon F_*^e\omega_X \to \omega_X$ are surjective for every $e \ge 0$
    \cite[Thm.\ 1.3.4]{BK05}.
  \item If $\fa \subseteq \cO_X$ is a coherent ideal sheaf, then for every $e
    \ge 0$, the map $\Tr^e_X$ satisfies
    \begin{equation}\label{eq:tracemapideals}
      \Tr^e_X\bigl(F_*^e(\fa^{[p^e]}\cdot \omega_X)\bigr) = \fa
      \cdot \Tr^e_X(F^e_*\omega_X) = \fa \cdot \omega_X.
    \end{equation}
    Here, $\fa^{[p^e]}$ is the $e$th \textsl{Frobenius
    power}\index{Frobenius power, $\mathfrak{a}^{[p^e]}$} of $\fa$, which is the
    ideal sheaf
    locally generated by $p^e$th powers of local generators of $\fa$.
    Note that \cref{eq:tracemapideals} follows from \cref{list:trace1} by
    considering the $\cO_X$-module structure on $F_*^e\omega_X$.
\end{enumerate}
\par We need one more general result about Seshadri constants of ample divisors.
Note that this result shows that the definition of the Seshadri constant in
\cref{eq:seshdefample} matches that in \cref{def:seshconst}.
\begin{lemma}\label{lem:seshample}
  Let $X$ be a projective scheme over a field $k$, and let $D$ be an ample
  $\RR$-Cartier divisor on $X$.
  Consider a $k$-rational point $x \in X$, and let $\mu\colon \widetilde{X} \to
  X$ be the blowup of $X$ at $x$ with exceptional divisor $E$.
  For every $\delta \in (0,\varepsilon(D;x))$, the
  $\RR$-Cartier divisor $\mu^*D-\delta E$ is ample.
\end{lemma}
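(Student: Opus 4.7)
The strategy is to exhibit $\mu^*D - \delta E$ as a positive combination of an $\RR$-ample divisor and a nef divisor on $\widetilde{X}$, since sums of ample and nef classes are ample on a projective scheme by Kleiman's criterion.

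The plan begins with producing an ample divisor of the form $\mu^*D - t_0 E$ for some small $t_0 > 0$. Since $\mu\colon \widetilde{X} \to X$ is the blowup of the $k$-rational point $x$, we have $\widetilde{X} \simeq \mathbf{Proj}_X\bigoplus_{n \ge 0}\fm_x^n$ with $\cO_{\widetilde{X}}(1) \simeq \cO_{\widetilde{X}}(-E)$, so $-E$ is $\mu$-ample. The standard fact that a $\mu$-ample divisor plus a sufficiently positive pullback is ample (see \cite[Prop.\ 1.7.10]{Laz04a}) then gives an integer $m \gg 0$ such that $\mu^*(mD) - E$ is ample; scaling, $\mu^*D - t_0 E$ is $\RR$-ample for $t_0 = 1/m$. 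Averaging against the nef class $\mu^*D$ via
\[
  \mu^*D - tE \;=\; \tfrac{t}{t_0}\bigl(\mu^*D - t_0 E\bigr) + \bigl(1 - \tfrac{t}{t_0}\bigr)\mu^*D
\]
shows that $\mu^*D - tE$ is ample for every $t \in (0,t_0]$; in particular we may shrink $t_0$ and assume $t_0 < \delta$.

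Next, by the definition of the Seshadri constant (\cref{def:seshconst}), we may choose $\delta' \in (\delta,\varepsilon(D;x))$ so that $\mu^*D - \delta' E$ is nef. Then we form the convex combination
\[
  \alpha(\mu^*D - t_0 E) + (1-\alpha)(\mu^*D - \delta' E) \;=\; \mu^*D - \bigl(\alpha t_0 + (1-\alpha)\delta'\bigr)E,
\]
and observe that the linear function $\alpha \mapsto \alpha t_0 + (1-\alpha)\delta'$ runs continuously over $[t_0,\delta']$, which contains $\delta$ by the choices of $t_0$ and $\delta'$. Solving for $\alpha$ gives $\alpha = (\delta' - \delta)/(\delta' - t_0) \in (0,1)$, and therefore
\[
  \mu^*D - \delta E \;=\; \alpha(\mu^*D - t_0 E) + (1-\alpha)(\mu^*D - \delta' E)
\]
is a strictly positive multiple of an ample divisor plus a nonnegative multiple of a nef divisor. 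Since ample-plus-nef is ample on a projective scheme, this concludes the proof.

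The only nontrivial input is the first step — namely, that $\mu^*D - t_0 E$ is ample for some $t_0 > 0$; this is where the projectivity of $\mu$ and the $\mu$-ampleness of $-E$ enter. Everything after that is a formal convex combination argument, so no further obstacles are anticipated.
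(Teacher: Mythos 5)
Your proof is correct, but it takes a genuinely different route from the paper. The paper verifies the Nakai--Moishezon criterion directly on $\widetilde{X}$: for a subvariety $V \not\subseteq E$ it identifies $V$ with the strict transform of some $V_0 \subseteq X$ and bounds $\bigl((\mu^*D-\delta E)^{\dim V} \cdot V\bigr)$ below using the intersection-theoretic description of the Seshadri constant \cite[Prop.\ 5.1.9]{Laz04a}, while for $V \subseteq E$ it uses that $\cO_E(-E\rvert_E)$ is very ample. You instead never touch Nakai--Moishezon: you use the $\mu$-ampleness of $-E$ to produce one ample class $\mu^*D - t_0E$ with $t_0$ small, extract a nef class $\mu^*D - \delta'E$ with $\delta' > \delta$ straight from the supremum defining $\varepsilon(D;x)$ (the implicit point, which you should make explicit, is that the set of $t$ with $\mu^*D - tE$ nef is convex, so every $t$ below the supremum lies in it), and then conclude by convexity of the ample cone, i.e.\ ample plus nef is ample \cite[Cor.\ 1.4.10]{Laz04a}. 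Your approach buys independence from the geometry of $E$ and from \cite[Prop.\ 5.1.9]{Laz04a} — it works verbatim for the blowup of any projective scheme at a point, since relative ampleness of $\cO_{\widetilde{X}}(-E)$ holds for the blowup of any coherent ideal — whereas the paper's computation is more explicit and simultaneously records the quantitative intersection inequalities that are reused elsewhere (e.g.\ in the bigness estimate of \cref{prop:demsesh}). Both arguments are valid over an arbitrary field.
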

\begin{proof}
  Let $V \subseteq \widetilde{X}$ be a subvariety.
  If $V \not\subseteq E$, then $V$ is the strict transform of a subvariety $V_0
  \subseteq X$, and
  \begin{align*}
    \Bigl(\bigl(\mu^*D-\delta E\bigr)^{\dim V} \cdot V\Bigr) &= \bigl(D^{\dim V}
    \cdot V_0\bigr)
    - \delta\,e(\cO_{V_0,x}) > 0
    \intertext{by the assumption $\varepsilon(D;x) > \delta$ and \cite[Prop.\
    5.1.9]{Laz04a}.
    Otherwise, if $V \subseteq E$, then}
    \Bigl(\bigl(\mu^*D-\delta E\bigr)^{\dim V} \cdot V\Bigr) &=
    \Bigl(\bigl(-\delta E\rvert_E\bigr)^{\dim V} \cdot V\Bigr) > 0
  \end{align*}
  since $\cO_E(-E\rvert_E) \simeq \cO_{\PP^{n-1}}(1)$ is very ample.
  Thus, the divisor $\mu^*D-\delta E$ is ample by the Nakai--Moishezon criterion
  \cite[Thm.\ 1.2.23]{Laz04a}.
\end{proof}
\par We can now prove \cref{thm:poscharseshsm}.
\begin{proof}[Proof of \cref{thm:poscharseshsm}]
  First, we claim that it suffices to show that the restriction morphism
  \[
    \varphi_e\colon H^0(X,\omega_X \otimes L^{\otimes p^e})
    \longrightarrow H^0\bigl(X,\omega_X \otimes L^{\otimes p^e} \otimes
    \cO_X/\fm_x^{\ell p^e+n(p^e-1)+1}\bigr)
  \]
  is surjective for some $e \ge 0$.
  By \cref{eq:tracemapideals}, the map $\Tr^e_X$ induces a morphism
  \begin{align}
    F_*^e\bigl((\fm_x^{\ell+1})^{[p^e]}\cdot\omega_X\bigr)
    &\longrightarrow \fm_x^{\ell+1} \cdot \omega_X.\nonumber
  \intertext{Twisting this morphism by $L$ and applying the projection formula
  yields a morphism}
    F_*^e\bigl((\fm_x^{\ell+1})^{[p^e]}\cdot\omega_X \otimes L^{\otimes p^e}\bigr)
    &\longrightarrow \fm_x^{\ell+1} \cdot \omega_X \otimes L.
    \label{eq:tracemapprojformula}
  \end{align}
  Here, we use the fact that $F^{*}L \simeq L^{\otimes p}$ since pulling back by
  the
  Frobenius morphism raises the transition functions defining $L$ to the $p$th
  power.
  Since the Frobenius morphism $F$ is affine, the pushforward functor $F^e_*$ is
  exact, hence we obtain the exactness of the left column in the following
  commutative diagram:
  \begin{equation}\label{eq:poscharseshsmdiag1}
    \begin{tikzcd}
      0\dar & 0\dar\\
      F_*^e\bigl((\fm_x^{\ell+1})^{[p^e]}\cdot \omega_X \otimes
      L^{\otimes p^e}\bigr) \rar\dar & \fm_x^{\ell+1} \cdot \omega_X \otimes
      L\dar\\
      F_*^e\bigl(\omega_X \otimes L^{\otimes p^e}\bigr) \rar[twoheadrightarrow]
      \dar & \omega_X \otimes L\dar\\
      F_*^e\bigl(\omega_X \otimes L^{\otimes p^e} \otimes
      \cO_X/(\fm_x^{\ell+1})^{[p^e]}\bigr) \rar[twoheadrightarrow] \dar
      & \omega_X \otimes L \otimes \cO_X/\fm_x^{\ell+1} \dar\\
      0 & 0
    \end{tikzcd}
  \end{equation}
  The top horizontal arrow is the map in \cref{eq:tracemapprojformula}; the
  middle horizontal arrow is obtained from $\Tr^e_X$ in a similar fashion by
  twisting by $L$ and by applying the projection formula, hence is surjective
  by \cref{list:trace1}.
  The surjectivity of the middle horizontal arrow also implies the bottom
  horizontal arrow is surjective by the snake lemma.
  Now by the pigeonhole principle (see \cite[Lem.\ 2.4$(a)$]{HH02} or
  \cref{lem:monomials}), we have the inclusion
  $\fm_x^{\ell p^e+n(p^e-1)+1} \subseteq (\fm_x^{\ell+1})^{[p^e]}$
  for every $e \ge 0$, which yields the following commutative diagram:
  \begin{equation}\label{eq:poscharseshsmdiag2}
    \mathclap{\begin{tikzcd}[ampersand replacement=\&]
      0\dar \& 0\dar\\
      \fm_x^{\ell p^e+n(p^e-1)+1}\cdot \omega_X \otimes
      L^{\otimes p^e} \rar[hook]\dar \&
      (\fm_x^{\ell+1})^{[p^e]}\cdot \omega_X \otimes
      L^{\otimes p^e}\dar\\
      \omega_X \otimes L^{\otimes p^e} \rar[equal] \dar \&
      \omega_X \otimes L^{\otimes p^e}\dar\\
      \omega_X \otimes L^{\otimes p^e} \otimes
      \cO_X/\fm_x^{\ell p^e+n(p^e-1)+1} \rar[twoheadrightarrow] \dar
      \& \omega_X \otimes L^{\otimes p^e} \otimes
      \cO_X/(\fm_x^{\ell+1})^{[p^e]} \dar\\
      0 \& 0
    \end{tikzcd}}
  \end{equation}
  By applying $F^e_*(-)$ to \cref{eq:poscharseshsmdiag2}, combining it with
  \cref{eq:poscharseshsmdiag1}, and taking
  global sections in the bottom half of both diagrams, we obtain the following
  commutative square:
  \[
    \begin{tikzcd}
      H^0(X,\omega_X \otimes L^{\otimes p^e}) \dar[swap]{\varphi_e}\rar
      & H^0(X,\omega_X \otimes L) \dar{\rho}\\
      H^0\bigl(X,\omega_X \otimes L^{\otimes p^e} \otimes
        \cO_X/\fm_x^{\ell p^e+n(p^e-1)+1}\bigr)
      \rar[twoheadrightarrow]{\psi}
      & H^0\bigl(X,\omega_X \otimes L \otimes \cO_X/\fm_x^{\ell+1}\bigr)
    \end{tikzcd}
  \]
  Note that $\psi$ is surjective because the kernel of the corresponding
  morphism of sheaves is a skyscraper sheaf supported at $x$.
  Now assuming that $\varphi_e$ is surjective, we see that the composition from
  the top left corner to the bottom right corner is surjective, hence the
  restriction morphism $\rho$ is necessarily surjective as well.
  \par We now show that $\varphi_e$ is surjective for some $e$.
  By the long exact sequence on sheaf cohomology, it suffices to show that
  \begin{align*}
    \MoveEqLeft[4]
    H^1\bigl(X,\fm_x^{\ell p^e+n(p^e-1)+1} \cdot \omega_X \otimes L^{\otimes
    p^e} \bigr)\\
    &\simeq H^1\Bigl(\widetilde{X},\mu^*\bigl(\omega_X \otimes L^{\otimes p^e}
    \bigr)\bigl(-\bigl(\ell p^e+n(p^e-1)+1\bigr)E\bigr)\Bigr)\\
    &\simeq H^1\Bigl(\widetilde{X},\omega_{\widetilde{X}} \otimes
    \bigl(\mu^*L\bigl(-(n+\ell)E\bigr)\bigr)^{\otimes p^e}\Bigr)
    = 0,
  \end{align*}
  where $\mu\colon \widetilde{X} \to X$ is the blowup of $X$ at $x$.
  The first isomorphism follows from the Leray spectral
  sequence and the quasi-isomorphism
  \[
    \fm_x^{\ell p^e+n(p^e-1)+1} \simeq
    \mathbf{R}\mu_*\cO_{\widetilde{X}}\bigl(-\bigl(\ell
    p^e+n(p^e-1)+1\bigr)E\bigr)
  \]
  from \cite[Lem.\ 4.3.16]{Laz04a}, and
  the second isomorphism follows from how the canonical bundle transforms under
  a blowup with a smooth center \cite[Exer.\ II.8.5$(b)$]{Har77}.
  The vanishing of the last group follows from Serre vanishing for $e$
  sufficiently large%
  \index{vanishing theorem!Serre} \cite[Prop.\ III.5.3]{Har77} since
  $\mu^*L-(n+\ell)E$ is ample by \cref{lem:seshample}.
\end{proof}
\begin{remark}
  The proof of \cref{thm:poscharseshsm} works under the weaker assumption that
  $X$ is regular and $k$ is $F$-finite.
  Moreover, by using the gamma construction (\cref{thm:gammaconstintro}) to
  reduce to the case when $k$ is $F$-finite, the proof of
  \cref{thm:poscharseshsm} yields a statement over arbitrary fields of
  characteristic $p > 0$.
  Since this more general version of \cref{thm:poscharseshsm} follows from
  \cref{thm:demseshsingular}, we have chosen to prove this weaker result
  for simplicity.
\end{remark}
\begin{remark}\label{rem:szpirolmvanishing}
  If $\dim X = 2$, then it suffices for $L$ to be big and nef instead of ample
  in \cref{thm:poscharseshsm}.
  To prove this, it suffices to replace Serre
  vanishing\index{vanishing theorem!Serre} with a vanishing theorem of
  Szpiro\index{Szpiro, Lucien} \cite[Prop.\ 2.1]{Szp79} and
  Lewin-M\'en\'egaux\index{Lewin-Menegaux, Renee@Lewin-M\'en\'egaux, Ren\'ee}%
  \index{vanishing theorem!Szpiro--Lewin-M\'en\'egaux|(}
  \cite[Prop.\ 2]{LM81}, which asserts that for a big and
  nef divisor $L$ on a smooth projective surface $X$, we have
  \[
    H^1\bigl(X,\cO_X(-mL)\bigr) = 0
  \]
  for $m$ sufficiently large.
  Fujita\index{Fujita, Takao} has shown that a similar vanishing theorem also
  holds for higher-dimensional projective varieties that are only assumed to be
  normal \cite[Thm.\ 7.5]{Fuj83}, although the positivity condition on $L$ is
  stronger.\index{vanishing theorem!Szpiro--Lewin-M\'en\'egaux!in dimensions $\ge 3$}
  Fujita's theorem cannot be used to prove \cref{thm:poscharseshsm} in higher
  dimensions for big and nef divisors $L$, however, since the required vanishing
  $H^{n-1}(X,\cO_X(-mL)) = 0$ does not hold in general, even as $m \to \infty$;
  see \cref{ex:raynaudfujita}.%
  \index{vanishing theorem!Szpiro--Lewin-M\'en\'egaux|)}
\end{remark}
\subsection{Raynaud's counterexample to Kodaira vanishing}\label{sect:raynaud}
To\index{Raynaud, Michel!counterexample to Kodaira vanishing|(}%
\index{vanishing theorem!Kodaira!false in characteristic $p$|(}
illustrate what goes wrong in positive characteristic, we give a version of
Raynaud's\index{Raynaud, Michel} original example showing that Kodaira vanishing
is false in positive characteristic, with some changes in
presentation following Mukai\index{Mukai, Shigeru} \cites{Muk79}{Muk13}.
See also \cites{Tak10}{Zhe17}.
Note that Mukai also constructs versions of Raynaud's example in higher
dimensions.
\begin{example}[Raynaud {\cites{Ray78}{Muk79}{Muk13}}]\label{ex:raynaud}
  Let $k$ be an algebraically closed field of characteristic $p > 0$.
  The construction proceeds in four steps.
  \begin{step}\label{step:raynaud1}
    Construction of a smooth projective curve $C$ over $k$ and a Cartier
    divisor $D$ on $C$ such that the morphism
    \begin{equation}\label{eq:raynaudeszfails}
      F^*\colon H^1\bigl(C,\cO_C(-D)\bigr) \longrightarrow
      H^1\bigl(C,\cO_C(-pD)\bigr)
    \end{equation}
    induced by the Frobenius morphism is not injective.
  \end{step}
  Let $h > 0$ be an integer, let $P$ be a polynomial of degree $h$ in one
  variable over $k$, and consider the plane curve
  \[
    C = \overline{\bigl\{ P(x^p) - x = y^{ph-1}\bigr\}} \subseteq \PP^2_k
  \]
  of degree $ph$, where $\PP^2_k$ has variables $x,y,z$, and $P(x^p) - x =
  y^{ph-1}$ is the equation defining $C$ on the open set $\{z \ne 0\}$.
  Note that $C$ has exactly one point $\infty$ along $\{z = 0\}$.
  By the Jacobian criterion\index{Jacobian criterion}
  \cite[Exer.\ I.5.8]{Har77}, since the homogeneous Jacobian
  $(-z^{ph-1},\ y^{ph-2}z,\ xz^{ph-2}-y^{ph-1})$
  associated to $C$ has full rank along $C$, we see that $C$ is smooth.
  \par We claim that the morphism \cref{eq:raynaudeszfails} is not injective for
  the divisor $D = h(ph-3)\cdot\infty$.
  By \cite[Lem.\ 12]{Tan72}, since the kernel of the morphism in
  \cref{eq:raynaudeszfails} can be described by
  \[
    \ker(F^*) \simeq \bigl\{ df \in \Omega_{K(C)} \bigm\vert f \in K(C)\
    \text{such that}\ (df) \ge pD\bigr\},
  \]
  it suffices to construct a rational function $f \in K(C)$ satisfying $(df) \ge
  pD$.
  Here, $(df)$ is the divisor of zeroes and poles of the differential form $df$.
  Consider the rational function $y \in K(C)$.
  By the relation $-dx = -y^{ph-2}dy$ on $C \smallsetminus \{\infty\}$, we see
  that $\Omega_C$ is generated by $dy$ over $C \smallsetminus \{\infty\}$, hence
  $dy$ has no poles or zeroes away from $\infty$.
  Since by \cite[Ex.\ V.1.5.1]{Har77}, we have
  \begin{equation}\label{eq:genusoftango}
    \deg \Omega_C = 2g(C) - 2 = ph(ph-3),
  \end{equation}
  we obtain $(dy) = ph(ph-3)\cdot\infty = pD$, as desired.
  We note that $C$ is an example of a \textsl{Tango
  curve}\index{Tango, Hiroshi, curve}.
  \begin{step}
    Construction of a projective bundle $\pi\colon\PP(E) \to C$ with two
    distinguished divisors $F$ and $G$ arising from sections of $\PP(E)$ and of
    $\PP(E^{(p)})$.
  \end{step}
  By identifying the sheaf cohomology groups in \cref{eq:raynaudeszfails} with
  $\Ext^1$ groups \cite[Prop.\ III.6.3]{Har77}, we obtain a short exact sequence
  \begin{equation}\label{eq:mukai2star}
    0 \longrightarrow \cO_C \longrightarrow E \longrightarrow \cO_C(D)
    \longrightarrow 0
  \end{equation}
  such that after pulling back via the Frobenius morphism $F\colon C \to C$ on
  $C$, the resulting short exact sequence
  \begin{equation}\label{eq:mukai3star}
    0 \longrightarrow \cO_C \longrightarrow E^{(p)} \longrightarrow \cO_C(pD)
    \longrightarrow 0
  \end{equation}
  splits.
  The projective bundles of one-dimensional quotients $\PP(E)$ and
  $\PP(E^{(p)})$ associated to $E$ and $E^{(p)}$ fit into the
  pullback diagram
  \[
    \begin{tikzcd}
      \PP(E) \arrow[dashed]{dr}{\varphi} \arrow[bend left=15]{drr}{F}
      \arrow[bend right=15]{ddr}[swap]{f}\\
      & \PP(E^{(p)}) \rar\dar & \PP(E)\dar{f}\\
      & C \rar{F} & C
    \end{tikzcd}
  \]
  where $\varphi\colon \PP(E) \to \PP(E^{(p)})$ is the relative
  Frobenius morphism for $\PP(E)$ over $C$.
  \begin{figure}[t]
    \centering
    \tikzexternalenable
    \begin{tikzpicture}
      \draw (-4,0) rectangle (-1,2);
      \draw (1,0) rectangle (4,2);
      \draw[thick] (-1.75,-1.75) -- (1.75,-1.75) node[anchor=west] {$C$};
      \node[anchor=south] at (-4,2) {$\PP(E)$};
      \node[anchor=south] at (1,2) {$\PP(E^{(p)})$};

      \draw[umlightblue,ultra thick] (-4,1.5) node[anchor=east] {\footnotesize $F$} -- (-1,1.5);
      \draw[umlightblue,ultra thick] (1,1.5) -- (4,1.5) node[anchor=west] {\footnotesize $F'$};

      \draw[umpoppy,ultra thick,decorate,decoration={zigzag,amplitude=1pt,segment length=1.5mm,pre=lineto,pre length=1pt,post=lineto,post length=1pt}] (-4,0.5) node[anchor=east] {\footnotesize $G$} -- (-1,0.5);
      \draw[umpoppy,ultra thick] (1,0.5) -- (4,0.5) node[anchor=west] {\footnotesize $G'$};

      \path[->] (-0.75,1) edge node[above] {\scriptsize $\varphi$} (0.75,1);
      \path[->] (-2,-0.5) edge node[anchor=north east] {\scriptsize $f$} (-1.2,-1.25);
      \path[->] (2,-0.5) edge (1.2,-1.25);
    \end{tikzpicture}
    \tikzexternaldisable
    \caption{Raynaud's example (Example \ref{ex:raynaud})}
    {\footnotesize Illustration from \cite[Fig.\ on p.\ 18]{Muk79}}
    \label{fig:raynaudprojbundle}
  \end{figure}
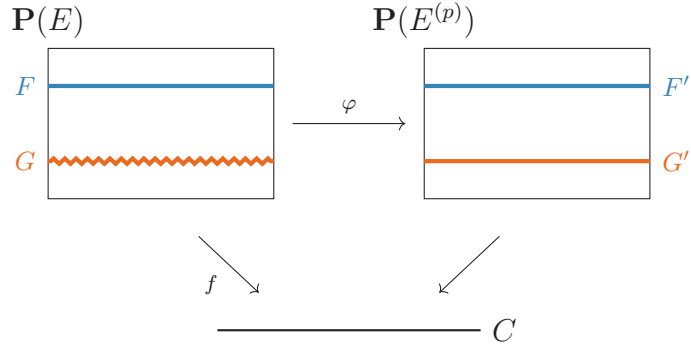
  \par We now note that $f\colon \PP(E) \to C$ has a section $C \to
  \PP(E)$ with image $F \simeq C$, which corresponds to the surjection in
  \cref{eq:mukai2star}.
  The image $F' = \varphi(F)$ of $F$ gives the section of $\PP(E^{(p)}) \to
  C$ corresponding to the surjection in \cref{eq:mukai3star}, and the fact that
  \cref{eq:mukai3star} splits implies $\PP(E^{(p)})$
  also has another section $C \to \PP(E^{(p)})$ with image $G' \simeq C$
  such that $F' \cap G' = \emptyset$ \cite[Exer.\ V.2.2]{Har77}.
  We denote by $G \coloneqq \varphi^{-1}(G')$ the scheme-theoretic inverse of
  $G'$, which is a smooth variety by \cite[Prop.\ 1.7]{Muk13}.\footnote{The
  fact that \cref{eq:mukai2star} does not split but \cref{eq:mukai3star}
  does split is used here.
  We mention that \cite[Prop.\ 1.7]{Muk13} is proved for a higher-dimensional
  generalization of our example.
  See \cite[Thm.\ 3]{Tak10} for a simpler statement that suffices for our
  purposes.}
  Note that $F \cap G = \emptyset$ since $F' \cap G' = \emptyset$; see
  \cref{fig:raynaudprojbundle}.
  By \cite[Prop.\ V.2.6]{Har77}, we have the linear equivalences $0 \sim
  \xi - F$ and $f^*(pD) \sim p\xi - G$ on $\PP(E)$, where $\xi$ is the divisor
  class associated to $\cO_{\PP(E)}(1)$, hence
  \begin{equation}\label{eq:mukai7}
    G - pF \sim -f^*(pD).
  \end{equation}
  \begin{step}
    Construction of a cyclic cover $\pi\colon X \to \PP(E)$ where $X$ is
    a smooth surface and a suitable ample divisor $\widetilde{D}$ on $X$.
  \end{step}
  Let $r \ge 2$ be an integer such that $r\mid p + 1$ and $r \mid h(ph-3)$ for
  some choice of integer $h > 0$ in \cref{step:raynaud1}.
  For example, if $p \ne 2$ then we can set $r = 2$ for arbitrary $h
  > 0$; if $p = 2$, then we can set $r = 3$ for $h > 0$ such that $3
  \mid h$.
  By adding $(p+1)F$ to \eqref{eq:mukai7}, we have $G + F \sim (p+1)F - f^*(pD)
  \sim rM$, where
  \[
    M \coloneqq \frac{p+1}{r}F - f^*\biggl( \frac{ph(ph-3)}{r}\cdot\infty
    \biggr),
  \]
  since $D = h(ph-3) \cdot \infty$.
  We can therefore construct a degree $r$ cyclic cover
  \[
    X \coloneqq \SSpec_{\PP(E)}\Bigl(\cO_{\PP(E)} \oplus \cO_{\PP(E)}(-M) \oplus
    \cdots \oplus \cO_{\PP(E)}\bigl(-(r-1)M\bigr)\Bigr)
    \overset{\pi}{\longrightarrow} \PP(E)
  \]
  branched along $G+F$, which is smooth by the fact that both
  $\PP(E)$ and $G+F$ are smooth \cite[Prop.\ 4.1.6]{Laz04a}.
  We then set
  \[
    \widetilde{D} = \widetilde{F} + (f \circ \pi)^*\biggl(
    \frac{h(ph-3)}{r}\cdot\infty \biggr),
  \]
  where $\widetilde{F} = \pi^{-1}(F)_\red$ is the inverse image of $F$ with
  reduced scheme structure.
  To show that $\widetilde{D}$ is ample, we first note that $F +
  f^*(h(ph-3)\cdot\infty)$ is ample by the Nakai--Moishezon
  criterion \cite[Thm.\ 1.2.23]{Laz04a}
  since it intersects both the
  section $F$ and the fibers of the ruled surface $f\colon \PP(E) \to C$
  positively.
  The pullback $r\widetilde{D} \sim \pi^*(rF) + \pi^*f^*(h(ph-3)\cdot\infty)$
  is therefore also ample by \cite[Prop.\ 4.1.6]{Laz04a}, hence $\widetilde{D}$
  is ample.
  We note that $X$ is an example of a \textsl{Raynaud 
  surface}\index{Raynaud, Michel!surface}.
  \begin{step}
    Proof that $H^1(X,\cO_X(-\widetilde{D})) \ne 0$.
  \end{step}
  First, we note that
  \begin{align*}
    H^1\bigl(X,\cO_X(-\widetilde{D})\bigr) &\simeq
    H^1\bigl(\PP(E),\pi_*\cO_X(-\widetilde{D})\bigr)\\
    &\simeq H^1\biggl(\PP(E),\cO_{\PP(E)}(-F) \oplus \bigoplus_{i=1}^{r-1}
    \cO_{\PP(E)}(-iM)\biggr).
  \end{align*}
  The first isomorphism holds by the fact that $\pi$ is finite.
  The second isomorphism holds by properties of cyclic covers; see \cite[Prop.\
  6.3.4]{Zhe16} for a proof using local coordinates, or see \cite[Prop.\
  3.3]{Zhe17} for a shorter proof.
  Now consider the Leray spectral sequence
  \begin{equation}\label{eq:raynaudlearay}
    \begin{aligned}
      E^{p,q}_2 \MoveEqLeft[6]= H^p\biggl(C,R^qf_*\biggl(\cO_{\PP(E)}(-F)
      \oplus \bigoplus_{i=1}^{r-1} \cO_{\PP(E)}(-iM)\biggr)\biggr)\\
      &\Rightarrow H^{p+q}\biggl(\PP(E),\cO_{\PP(E)}(-F) \oplus
      \bigoplus_{i=1}^{r-1} \cO_{\PP(E)}(-iM)\biggr)
    \end{aligned}
  \end{equation}
  which already degenerates on the $E_2$ page.
  We have that $E^{1,0}_2 = 0$, since the pushforward
  \begin{align*}
    \MoveEqLeft[6]f_*\biggl(\cO_{\PP(E)}(-F) \oplus \bigoplus_{i=1}^{r-1}
    \cO_{\PP(E)}(-iM)\biggr)\\
    &\simeq f_*\cO_{\PP(E)}(-1)
    \oplus \bigoplus_{i=1}^{r-1}
    f_*\cO_{\PP(E)}\biggl(-\frac{i(p+1)}{r}\biggr) \otimes
    \cO_C\biggl(\frac{iph(ph-3)}{r}\cdot\infty\biggr)
  \end{align*}
  is zero by the fact that $f_*\cO_{\PP(E)}(-n) = 0$ for $n > 0$.
  Thus, the Leray spectral sequence
  \cref{eq:raynaudlearay} implies that
  \begin{equation}\label{eq:raynaudkeyiso}
    H^1\bigl(X,\cO_X(-\widetilde{D})\bigr) \simeq
    H^0\biggl(C,R^1f_*\biggl(\cO_{\PP(E)}(-F) \oplus
    \bigoplus_{i=1}^{r-1} \cO_{\PP(E)}(-iM)\biggr)\biggr).
  \end{equation}
  Since $R^1f_*(\cO_{\PP(E)}(-F)) \simeq R^1f_*(\cO_{\PP(E)}(-1)) = 0$,
  we will consider the summands containing
  $\cO_{\PP(E)}(-iM)$.
  By \cite[Exer.\ II.8.4$(c)$]{Har77}, we have
  \begin{align*}
    R^1f_*\biggl(\cO_{\PP(E)}\biggl(-\dfrac{i(p+1)}{r}\biggr)\biggr)
    &\simeq \biggl(f_*\cO_{\PP(E)}\biggl(\dfrac{i(p+1)-2r}{r}\biggr)\biggr)^\vee
    \simeq \bigl(\Sym^{\frac{i(p+1)-2r}{r}}E\bigr)^\vee,
    \intertext{hence the projection formula implies}
    R^1f_*\bigl(\cO_{\PP(E)}(-iM)\bigr) &\simeq
    \bigl(\Sym^{\frac{i(p+1)-2r}{r}}E\bigr)^\vee \otimes
    \cO_C\biggl(\frac{iph(ph-3)}{r}\cdot\infty\biggr).
  \end{align*}
  The short exact sequence \cref{eq:mukai2star} implies that there is a
  surjection $\Sym^{\frac{i(p+1)-2r}{r}}E \twoheadrightarrow
  \cO_C\bigl(\frac{i(p+1)-2r}{r}D\bigr)$, hence there is an injection
  \begin{align*}
    \cO_C\biggl(\frac{(2r-i)h(ph-3)}{r} \cdot \infty \biggr)
    &\longhookrightarrow R^1f_*\bigl(\cO_{\PP(E)}(-iM)\bigr).
    \intertext{We therefore have}
    H^0\biggl(C,\cO_C\biggl(\frac{(2r-i)h(ph-3)}{2} \cdot \infty
    \biggr)\biggr) &\longhookrightarrow
    H^0\bigl(C,R^1f_*\bigl(\cO_{\PP(E)}(-iM)\bigr)\bigr),
  \end{align*}
  where the left-hand side is nonzero as long as $2r-i \ge 0$.
  By the assumption $r \ge 2$, the left-hand side is nonzero for $i
  =1$, hence \cref{eq:raynaudkeyiso} implies $H^1(X,\cO_X(-\widetilde{D})) \ne
  0$.
\end{example}
\index{vanishing theorem!Kodaira!false in characteristic $p$|)}%
As noted by Fujita\index{Fujita, Takao}, Raynaud's example \ref{ex:raynaud}
also gives counterexamples to the vanishing theorem in
\cref{rem:szpirolmvanishing} for smooth projective varieties of dimension $3$.
\begin{citedex}[{\cite[(7.10)]{Fuj83}}]\label{ex:raynaudfujita}
  Let\index{vanishing theorem!Szpiro--Lewin-M\'en\'egaux!in dimensions $\ge 3$|(}
  $X$ and $\widetilde{D}$ be as constructed in \cref{ex:raynaud}, and
  consider the $\PP^1$-bundle
  \[
    Y \coloneqq \PP\bigl(\cO_X(\widetilde{D}) \oplus \cO_X\bigr)
    \overset{\pi}{\longrightarrow} X
  \]
  over $X$.
  Note that $\cO_Y(1)$ is big and nef by \cite[Lems.\ 2.3.2$(iii)$ and
  2.3.2$(iv)$]{Laz04a}.
  We claim that $H^2(Y,\cO_Y(-m)) \ne 0$ for all $m \ge 2$.
  We have
  \begin{align*}
    H^2\bigl(Y,\cO_Y(-m)\bigr)^\vee &\simeq
    H^1\bigl(Y,\omega_Y \otimes \cO_Y(m)\bigr)\\
    &\simeq H^1\bigl(Y,\cO_Y(m-2) \otimes \pi^*\bigl(\omega_X \otimes
    \cO_X(\widetilde{D})\bigr)\bigr)
  \end{align*}
  by Serre duality \cite[Cor.\ 7.7]{Har77} and by \cite[Exer.\
  II.8.4$(b)$]{Har77}, respectively.
  By the projection formula, we therefore have
  \[
    H^2\bigl(Y,\cO_Y(-m)\bigr)^\vee
    \simeq H^1\bigl(X,\Sym^{m-2}\bigl(\cO_X(\widetilde{D}) \oplus \cO_X\bigr)
    \otimes \omega_X \otimes \cO_X(\widetilde{D})\bigr).
  \]
  The right-hand side contains $H^1(X,\omega_X \otimes \cO_X(\widetilde{D}))
  \simeq H^1(X,\cO_X(-\widetilde{D}))^\vee$ as a direct summand
  for all $m \ge 2$.
  Since $H^1(X,\cO_X(-\widetilde{D})) \ne 0$ by the construction in
  \cref{ex:raynaud}, we see that $H^2(Y,\cO_Y(-m)) \ne 0$ for all $m \ge
  2$.\index{vanishing theorem!Szpiro--Lewin-M\'en\'egaux!in dimensions $\ge 3$|)}
\end{citedex}%
\index{Raynaud, Michel!counterexample to Kodaira vanishing|)}

\chapter{Characterizations of projective~space}\label{s:charpn}
In this chapter, we describe how Seshadri constants can be used to study the
following:
\begin{customquest}{\ref{question:characterization}}
  How can we identify when a given projective variety is projective space?
\end{customquest}
\par Recall that a smooth projective variety $X$ of dimension $n$ is
\textsl{Fano}\index{Fano variety|textbf} if its anti-canonical bundle
$\omega_X^{-1} \coloneqq \bigwedge^nT_X$%
\index{anti-canonical bundle, $\omega_X^{-1}$|textbf}
is ample.
In this chapter, we prove the following characterization of projective space
amongst Fano varieties using Seshadri constants.
Note that the lower bound $\deg(\omega_X^{-1}\rvert_C) \ge
e(\cO_{C,x})\cdot(n+1)$ below is equivalent to $\varepsilon(\omega_X^{-1};x) \ge
n+1$; see the statement of \cref{thm:altcharpn}.
\begin{customthm}{A}\label{thm:charpn}
  Let\index{projective space, $\mathbf{P}^n$!characterization via Seshadri constants|textbf}
  $X$ be a Fano variety of dimension $n$ over an
  algebraically closed field $k$ of positive characteristic.
  If there exists a closed point $x \in X$ with
  \[
    \deg\bigl(\omega_X^{-1}\rvert_C\bigr) \ge e(\cO_{C,x}) \cdot(n+1)
  \]
  for every integral curve $C \subseteq X$ passing
  through $x$, then $X$ is isomorphic to the $n$-dimensional projective space
  $\PP^n_k$.
\end{customthm}
This result is originally due to Bauer\index{Bauer, Thomas} and
Szemberg\index{Szemberg, Tomasz} in characteristic zero \cite[Thm.\ 1.7]{BS09}.
The material in this chapter is from \cite[\S4]{Mur18}.
\section{Background}\label{s:charpncomparison}
We start by motivating the statement of \cref{thm:charpn}.
Our story begins with the following observation about projective space.
\begin{principle}
  The\index{tangent bundle, $T_X$!of $\PP^n$|(} $n$-dimensional projective space $\PP^n_k$ over a field $k$ has a
  ``positive'' tangent bundle $T_X$.
  For example, we have the following:
  \begin{enumerate}[label=$(\arabic*)$,ref=\arabic*]
    \item $T_X$ is an ample vector bundle \emph{\cite[Prop.\
      6.3.1$(i)$]{Laz04b}}.\label{moricond}
    \item There exists an ample line bundle $H$ on $\PP^n_k$ such that
      $\bigwedge^n T_X \simeq H^{\otimes(n+1)}$.\label{kocond}
    \item $\bigwedge^n T_X = \omega_X^{-1}$ is ample (i.e., $X$ is
      Fano\index{Fano variety}) and
      $\deg(\omega_X^{-1}\rvert_C) \ge n+1$ for all integral curves
      $C \subseteq X$.\label{lengthcond}%
      \index{tangent bundle, $T_X$!of $\PP^n$|)}
  \end{enumerate}
\end{principle}
\noindent Note that \cref{kocond,lengthcond} hold since $\omega_{\PP^n_k}^{-1} =
\cO_{\PP^n_k}(n+1)$\index{anti-canonical bundle, $\omega_X^{-1}$!of Pn@of $\PP^n$}
\cite[Ex.\ II.8.20.2]{Har77}.
We recall that a vector bundle $E$ on $X$ is \textsl{ample} if $\cO_{\PP(E)}(1)$
is ample \cite[Def.\ 6.1.1]{Laz04b}.\index{ample vector bundle|textbf}
\par These properties seem very special, and lead us to ask the following more
specific version of \cref{question:characterization}.
\begin{question}
  Let $X$ be a smooth projective variety of dimension $n$ over an algebraically
  closed field $k$.
  If $X$ satisfies one of \cref{moricond}--\cref{lengthcond}, is $X$ isomorphic
  to $\PP^n_k$?
\end{question}
Many results in this direction are known.
The first result, due to Mori\index{Mori, Shigefumi|(}, is in some sense
the birthplace of modern birational geometry and the minimal model program.
\begin{citedthm}[{\cite[Thm.\ 8]{Mor79}}]\label{thm:morichar}
  Let\index{projective space, $\mathbf{P}^n$!Mori's characterization of}
  $X$ be a smooth projective variety of dimension $n$ over an algebraically
  closed field $k$.
  If \cref{moricond} holds, then $X$ is isomorphic to $\PP^n_k$.
\end{citedthm}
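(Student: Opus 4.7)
The plan is to follow Mori's original bend-and-break argument, which first reduces to the positive-characteristic case and then exploits the Frobenius morphism. By spreading the pair $(X,\det T_X)$ out over a finitely generated $\ZZ$-subalgebra of $k$ and specializing to a closed point of the base, I reduce to the case $\Char k = p > 0$; once the result is established there, one recovers the characteristic zero case by comparing Hilbert polynomials with those of $\PP^n_k$ under specialization and invoking rigidity of projective space.

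Assuming $\Char k = p > 0$, the first goal is to produce a rational curve through an arbitrary closed point $x \in X$. Pick a smooth projective curve $C$ of genus $g$ and a morphism $f\colon C \to X$ with $f(c_0) = x$ for some $c_0 \in C$ (such an $f$ exists, e.g., by intersecting generic hyperplane sections). The scheme $\Hom(C,X;c_0 \mapsto x)$ has dimension at least
\[
  \chi(f^*T_X) - n = \bigl(-K_X \cdot f_*C\bigr) + n(1-g) - n,
\]
by standard deformation theory. Precomposing with the $e$th iterated Frobenius $F^e\colon C \to C$ replaces $-K_X \cdot f_*C$ by $p^e\cdot(-K_X \cdot f_*C)$ without changing $g$, and since $\det T_X = \omega_X^{-1}$ is ample (as $T_X$ is ample), for $e \gg 0$ the above expected dimension is strictly positive. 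A positive-dimensional family of morphisms fixing the image of $c_0$ cannot have irreducible image for all parameters; the standard bend-and-break lemma then forces the image to degenerate and yields a rational curve $\ell \subseteq X$ passing through $x$.

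Next I would sharpen the degree bound. Bend-and-break moreover guarantees that $\ell$ can be chosen with $-K_X \cdot \ell \le n+1$. Conversely, writing $\nu^*T_X \simeq \bigoplus_{i=1}^n \cO_{\PP^1}(a_i)$ for the normalization $\nu\colon \PP^1 \to \ell$, ampleness of $T_X$ forces every $a_i \ge 1$, and the nonzero map $T_{\PP^1} = \cO_{\PP^1}(2) \to \nu^*T_X$ lands in one summand, giving some $a_i \ge 2$; hence $-K_X \cdot \ell = \sum a_i \ge n+1$, so equality holds. Finally, I would parametrize the minimal rational curves through a general point $x$ and, using the rigidity provided by $\nu^*T_X \simeq \cO(2)\oplus\cO(1)^{\oplus(n-1)}$, show that this family is smooth and proper of dimension $n-1$ with the evaluation morphism birational onto $X$; a ramification analysis then upgrades this to an isomorphism with $\PP^n_k$.

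The main obstacle is the last step: passing from the existence of sufficiently many minimal rational curves to an actual isomorphism $X \simeq \PP^n_k$. One must control how two such curves can meet (showing they intersect only at the basepoint), verify that their tangent directions sweep out all of $\PP(T_{X,x})$, and rule out pathologies coming from inseparability in characteristic $p$; each of these is where the full strength of ampleness of $T_X$ (rather than merely $-K_X$) is used. The spreading-out reduction is also delicate, requiring one to check that ampleness of $T_X$ is preserved under specialization and that $\PP^n$ is the only possible specialization of a fiber with these numerical invariants.
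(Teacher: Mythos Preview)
The paper does not actually prove this theorem: it is stated as a \texttt{citedthm}, attributed to \cite[Thm.~8]{Mor79}, and the paper only gives a two-sentence description of Mori's strategy together with a picture. So there is no proof in the paper to compare against, and your proposal is really a (reasonable) reconstruction of Mori's original argument rather than of anything in this thesis.

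That said, your sketch and the paper's informal summary agree on the bend-and-break core---reduction modulo $p$, Frobenius amplification, and the production of rational curves through a given point---but they diverge on the endgame. The paper describes Mori's conclusion as showing that the blowup $\mu\colon \widetilde{X}\to X$ at $x_0$ carries a $\PP^1_k$-bundle structure over $\PP^{n-1}_k$, and then invoking \cite[Lem.~V.3.7.8]{Kol96} to conclude $X\simeq\PP^n_k$. Your final step instead passes through the evaluation morphism from the family of minimal rational curves and a ramification analysis. These are closely related packagings of the same geometry, but the $\PP^1$-bundle formulation is cleaner and sidesteps some of the inseparability worries you flag as obstacles; if you want to tighten your sketch, that is the route to take. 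The spreading-out step you call ``delicate'' is also handled slightly differently in Mori's paper: rather than checking that ampleness of $T_X$ persists under specialization and that $\PP^n$ is rigid, one verifies that the relevant numerical invariants (in particular $h^0$ of powers of $-K_X$) specialize correctly and are enough to pin down $\PP^n$.
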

The sufficiency of \cref{moricond} was first conjectured by
Frankel\index{Frankel, Theodore} \cite[Conj.]{Fra61} in the analytic context,
and by Hartshorne\index{Hartshorne, Robin} \cite[Prob.\ III.2.3]{Har70} in the
algebraic context.
The idea of Mori's proof is to produce many copies of $\PP^1_k$ inside $X$
passing through a point $x_0 \in X$ using \textsl{bend and
break}\index{bend and break} techniques.
\begin{figure}[t]
  \centering
  \tikzexternalenable
  \begin{tikzpicture}[scale=1]
    \begin{scope}[rotate around={-12:(0,3.75)}]
      \path[thick,umpoppy] (1,4) edge[bend right] (0,3.75) (0,3.75) edge[bend left] (-1,3.5);
    \end{scope}
    \path[thick,umpoppy] (1,5.5) edge[bend left=10] (0,5.25) (0,5.25) edge[bend right=20] (-0.75,5);
    \path[thick,umpoppy] (-1,6) edge[bend right] (0,5.75) (0,5.75) edge[bend left] (0.75,5.6);
    \path[thick,umpoppy] (-1.1,3.25) edge[bend left] (0,3) (0,3) edge[bend right] (1.1,2.875);

    \draw[ultra thick,umlightblue] (0,{tan(1.4 r)/3+4.5}) node[anchor=south] {\footnotesize $E$} -- (0,{tan(-1.4 r)/3+4.5});
    \draw[domain=-1.395:1.405,variable=\t,smooth,samples=200] plot ({\t},{tan(\t r)/3+4.5}) -- plot[domain=-1.395:1.405] ({\t},{-tan(\t r)/3+4.5}) -- cycle;

    \draw[ultra thick] (5,6.25) -- (5,2.75);
    \node[anchor=west] at (5.125,3) {$\mathbf{P}^{n-1}_k$};

    \draw[->] (0,2.25) -- node[anchor=east]{\scriptsize $\mu$} (0,1.25);
    \draw[->] (2,4.5) -- node[anchor=south]{\scriptsize $\mathbf{P}^1_k$-bundle} (4,4.5);

    \draw[rotate=5] (0,0) ellipse (2cm and 1cm);

    \node at (-2.75,3) {$\widetilde{X} = \operatorname{Bl}_{x_0}X$};
    \node at (-2.5,-0.25) {$X$};

    \path[thick,umpoppy] (1,0.5) edge[bend right] (0,0) (0,0) edge[bend left] (-1,-0.5)
      (-0.75,0.25) edge[bend left=20] (0,0) (0,0) edge[bend right=30] (0.75,-0.5)
      (-0.25,0.8) edge[bend right=15] (0,0) (0,0) edge[bend left=10] (0.25,-0.7);

    \fill (0,0) circle (2pt) node[anchor=west,yshift=-0.5pt] {\footnotesize $x_0$};
  \end{tikzpicture}
  \tikzexternaldisable
  \caption{Mori's characterization of $\PP^n_k$}
  \label{fig:morichar}
\end{figure}
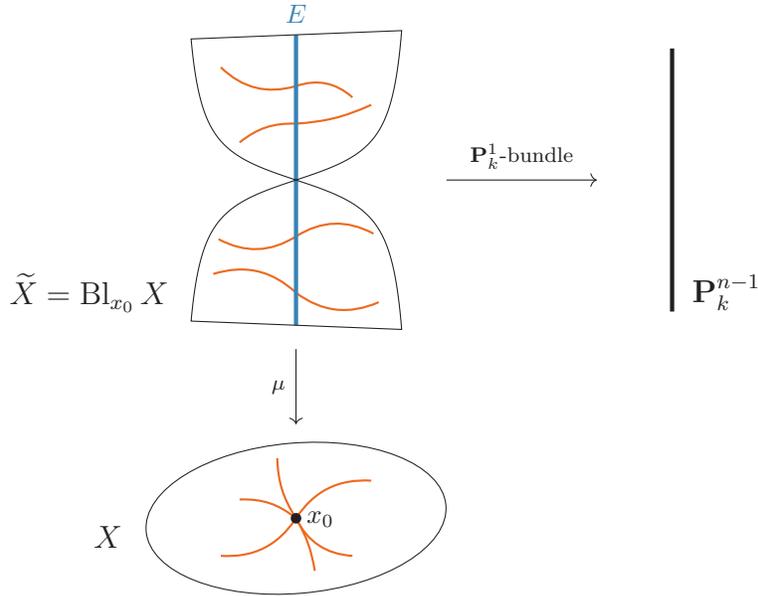
Letting $\mu\colon \widetilde{X} \to X$ be the blowup of $X$ at $x_0$, Mori
shows that $\widetilde{X}$ has the structure of a $\PP^1_k$-bundle over
$\PP^{n-1}_k$; see \cref{fig:morichar} for an illustration.
This $\PP^1_k$-bundle structure for $\widetilde{X}$ forces $X \simeq \PP^n_k$
\cite[Lem.\ V.3.7.8]{Kol96}.
An interesting feature of Mori's bend and break techniques is that in
characteristic zero, Mori's techniques require reducing modulo
$p$\index{reduction modulo $p$} and utilizing the Frobenius
morphism\index{Frobenius morphism, $F$}.
It is unknown whether one can prove \cref{thm:morichar} directly, without
reducing modulo $p$.\index{Mori, Shigefumi|)}
\medskip
\par The next result was actually known before Mori's theorem \ref{thm:morichar}.
The analogous result in positive characteristic, however, took much longer.
\begin{citedthm}[{\cites[Cor.\ to Thm.\ 1.1]{KO73}[Cor.\ 2]{KK00}}]
  \label{thm:kochar}
  Let\index{projective space, $\mathbf{P}^n$!Kobayashi and Ochiai's characterization of}
  $X$ be a smooth projective variety of dimension $n$ over an algebraically
  closed field $k$.
  If \cref{kocond} holds, then $X$ is isomorphic to $\PP^n_k$.
\end{citedthm}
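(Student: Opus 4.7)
The plan is to follow the classical Kobayashi--Ochiai approach: pin down the Hilbert polynomial of $H$ using Kodaira-type vanishing, then show that the linear system $\lvert H \rvert$ exhibits $X$ as $\PP^n_k$. Throughout, set $P(m) \coloneqq \chi(X, H^{\otimes m})$, a polynomial of degree $n$ in $m$.

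First I would apply Kodaira vanishing to obtain $H^i(X, H^{\otimes m}) = 0$ for all $i > 0$ and all $m \ge -n$. This is immediate from the hypothesis $\omega_X \simeq H^{\otimes -(n+1)}$: writing $H^{\otimes m} \simeq \omega_X \otimes H^{\otimes(m+n+1)}$ exhibits $H^{\otimes m}$ as $\omega_X$ tensored with an ample line bundle whenever $m \ge -n$.

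Next I would evaluate $P$ at $n+2$ consecutive integers. The vanishing above gives $P(0) = h^0(X, \cO_X) = 1$, and $P(m) = h^0(X, H^{\otimes m}) = 0$ for $m \in \{-1, -2, \ldots, -n\}$ since $H$ is ample and $m < 0$. Serre duality gives $P(-n-1) = \chi(X, \omega_X) = (-1)^n\chi(X,\cO_X) = (-1)^n$. Lagrange interpolation then forces $P(m) = \binom{m+n}{n}$, matching the Hilbert polynomial of $(\PP^n_k, \cO(1))$; in particular, $h^0(X, H) = n+1$ and $H^n = 1$.

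Third, I would realize the isomorphism $X \isoto \PP^n_k$ via $\lvert H \rvert$, proceeding by induction on $n$. For $n = 1$, the formulas $h^0(X, H) = 2$ and $\deg H = 1$ directly give $X \simeq \PP^1_k$ by Riemann--Roch. For $n \ge 2$, I would use Bertini in characteristic zero to select a smooth $D \in \lvert H \rvert$ through a prescribed point; adjunction yields $\omega_D^{-1} \simeq (H\rvert_D)^{\otimes n}$, so the inductive hypothesis identifies $(D, H\rvert_D)$ with $(\PP^{n-1}_k, \cO(1))$. The vanishing $H^1(X, \cO_X) = 0$ makes the restriction $H^0(X, H) \twoheadrightarrow H^0(D, H\rvert_D)$ surjective, so $\lvert H \rvert$ has no base points along $D$ and hence none on $X$. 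The resulting morphism $\varphi \colon X \to \PP^n_k$ then satisfies $\deg(\varphi) \cdot \deg \varphi(X) = H^n = 1$, so $\varphi$ is birational onto $\PP^n_k$; Zariski's main theorem promotes this to an isomorphism since $X$ is smooth.

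The main obstacle is that Kodaira vanishing fails in characteristic $p$ (cf.\ \cref{ex:raynaud}), so this argument works verbatim only in characteristic zero. To recover the theorem in positive characteristic, Kachi and Koll\'ar replace the vanishing step with Mori's bend-and-break technique (compare \cref{thm:morichar}) to produce extremal rays on $X$ whose contractions force the projective-space structure. In the spirit of this thesis, one could alternatively hope to bypass vanishing entirely by controlling $\varepsilon(H;x)$ via the Frobenius trace (as in \cref{thm:poscharseshsm}) and then invoking \cref{thm:charpn}, using that $\varepsilon(\omega_X^{-1}; x) = (n+1)\,\varepsilon(H;x)$.
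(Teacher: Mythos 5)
The paper does not actually prove this theorem: it is stated as a cited result, with the characteristic-zero case attributed to the complex-analytic methods of Kobayashi--Ochiai and the positive-characteristic case to Kachi--Koll\'ar's bend-and-break argument. Your characteristic-zero strategy is therefore a third route (the classical algebraic one via the Hilbert polynomial), and its first two steps are correct: Kodaira vanishing gives $H^i(X,H^{\otimes m})=0$ for $i>0$ and $m\ge -n$, the values $P(0)=1$ and $P(-1)=\cdots=P(-n)=0$ pin down $P(m)=\binom{m+n}{n}$, and hence $h^0(X,H)=n+1$ and $(H^n)=1$. Your attribution of the positive-characteristic case to bend and break also matches the paper.

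The gap is in the inductive step. Bertini in characteristic zero only guarantees that a general member of $\lvert H\rvert$ is smooth away from $\Bs(\lvert H\rvert)$, and you establish base-point-freeness only \emph{after} using the smooth member $D$ --- the argument is circular. If $\Bs(\lvert H\rvert)\ne\emptyset$, then every member of $\lvert H\rvert$ contains it and the general member may be singular there, so the inductive hypothesis (the theorem for \emph{smooth} varieties) cannot be applied to $D$. (Insisting that $D$ pass through a prescribed point only weakens what Bertini gives you, and is not needed for your argument.) The standard repair is to strengthen the induction: from $(H^n)=1$ and ampleness, every member of $\lvert H\rvert$ is a prime divisor (a decomposition $D=D_1+D_2$ into nonzero effective parts would force $(H^{n-1}\cdot D_i)=0$ for some $i$, contradicting the Nakai--Moishezon criterion), and one proves by induction the statement that an integral proper scheme $X$ of dimension $n$ with an ample $H$ satisfying $(H^n)=1$ and $h^0(X,H)\ge n+1$ is isomorphic to $(\PP^n_k,\cO(1))$, restricting to a possibly singular integral $D\in\lvert H\rvert$ and using $((H\rvert_D)^{n-1})=1$ together with $h^0(D,H\rvert_D)\ge n$ in place of adjunction and smoothness; this is Fujita's $\Delta$-genus-zero ladder argument (cf.\ the treatment in \cite{Kol96}). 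Once base-point-freeness is in hand, your concluding step (a finite morphism to $\PP^n_k$ of degree $(H^n)=1$ onto a normal target is an isomorphism) is fine. Finally, your proposed alternative in positive characteristic --- deducing the statement from \cref{thm:charpn} --- would require knowing $\varepsilon(H;x)\ge 1$ at some point of $X$, which does not follow from ampleness alone, so as written it is a hope rather than a proof.
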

This result is due to Kobayashi\index{Kobayashi, Shoshichi} and
Ochiai\index{Ochiai, Takushiro} in characteristic zero \cite{KO73}, and
to Kachi\index{Kachi, Yasuyuki} and
Koll\'ar\index{Kollar, Janos@Koll\'ar, J\'anos} in positive characteristic
\cite{KK00}.
The methods of \cite{KO73} are topological and complex analytic in nature,
while \cite{KK00} uses
Mori's bend and break\index{bend and break} techniques.
\cref{thm:kochar} illustrates the general philosophy that methods from
topology and complex analysis can often be replaced by Frobenius techniques in
positive characteristic.
\medskip
\par Finally, we consider the following:\index{Mori--Mukai conjecture|(}
\begin{conjecture}[{Mori\index{Mori, Shigefumi}--Mukai\index{Mukai, Shigeru}
  \cite[Conj.\ V.1.7]{Kol96}}]
  \label{conj:morimukai}\index{Mori--Mukai conjecture|textbf}
  Let $X$ be a smooth projective variety of dimension $n$ over an algebraically
  closed field $k$.
  If \cref{lengthcond} holds, then $X$ is isomorphic to $\PP^n_k$.
\end{conjecture}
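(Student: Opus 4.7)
The plan is to attempt a proof by combining Mori's bend-and-break techniques with Theorem \ref{thm:charpn}, which already characterizes $\PP^n_k$ under a Seshadri-type positivity hypothesis at a single point. Since \cref{lengthcond} asks for the degree bound on \emph{all} integral curves simultaneously, but without the multiplicity factor $e(\cO_{C,x})$ appearing in \cref{thm:charpn}, the strategy is to use the global hypothesis to produce a distinguished point $x$ at which the stronger Seshadri-type inequality holds automatically.

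First, I would invoke Mori's bend-and-break theorem on the Fano variety $X$ to produce, through each closed point $x \in X$, a rational curve $C_x$ with $0 < \deg(\omega_X^{-1}\rvert_{C_x}) \le n+1$. Under \cref{lengthcond}, the lower bound forces $\deg(\omega_X^{-1}\rvert_{C_x}) = n+1$, so minimal rational curves through every point have anti-canonical degree exactly $n+1$. Next, I would study the Chow scheme of such minimal rational curves and the associated tangent map at a very general point $x$. The goal would be to show that, at such an $x$, every minimal rational curve $C$ through $x$ is smooth at $x$, so that $e(\cO_{C,x}) = 1$, and that the locus swept by these curves is all of $X$.

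Granted this, the remaining step is to rule out the existence of an integral curve $C$ through $x$ with $\deg(\omega_X^{-1}\rvert_C) < e(\cO_{C,x})(n+1)$, which would obstruct the direct application of \cref{thm:charpn}. Once such curves are excluded, the hypothesis $\deg(\omega_X^{-1}\rvert_C) \ge e(\cO_{C,x})(n+1)$ of \cref{thm:charpn} holds at the chosen point $x$, so \cref{thm:charpn} yields $X \simeq \PP^n_k$.

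The main obstacle is precisely the last step: controlling the multiplicities of curves singular at $x$. The hypothesis in \cref{lengthcond} bounds $(\omega_X^{-1} \cdot C)$ but not the ratio $(\omega_X^{-1} \cdot C)/e(\cO_{C,x})$, and Miranda's example (\cref{ex:mirandaex}) warns that on general varieties these ratios can be arbitrarily small at special points even when degrees are bounded below. Ruling this out for Fano $X$ satisfying \cref{lengthcond} would require genuinely new input beyond bend-and-break---perhaps via a detailed deformation-theoretic study of the minimal rational curves (as in the arguments of Cho--Miyaoka--Shepherd-Barron and Kebekus over $\CC$), suitably adapted to positive characteristic via Frobenius techniques. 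I expect this multiplicity control at singular points to be the decisive difficulty; it is the essential gap between \cref{thm:charpn} and \cref{conj:morimukai}, and the reason the conjecture remains open.
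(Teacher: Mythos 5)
This statement is a \emph{conjecture}: the paper offers no proof of it, and explicitly records that it is known only in characteristic zero (by Cho--Miyaoka--Shepherd-Barron, using Kebekus's study of families of singular rational curves) and remains open in positive characteristic. So the relevant question is whether your proposed argument closes the gap, and it does not --- as you yourself acknowledge in your final paragraph. Your first steps are sound: bend and break on a Fano variety produces through each point a rational curve of anti-canonical degree at most $n+1$, and condition (3) forces equality for these minimal curves. But the decisive step --- passing from the unweighted bound $\deg(\omega_X^{-1}\rvert_C)\ge n+1$ to the multiplicity-weighted bound $\deg(\omega_X^{-1}\rvert_C)\ge e(\cO_{C,x})(n+1)$ for \emph{every} integral curve through some point $x$ --- is precisely the content of Question \ref{quest:wouldprovecmsb} in the paper. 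The only known affirmative answer to that question is in characteristic zero, and it is deduced \emph{from} the Cho--Miyaoka--Shepherd-Barron theorem itself (once $X\simeq\PP^n_k$ is known, the weighted inequality follows from B\'ezout as in Example \ref{ex:seshprojspace}), so it cannot be used as an input without circularity.

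Two further concrete obstructions to your plan. First, even granting Kebekus-type control of \emph{minimal} rational curves at a very general point, Theorem \ref{thm:charpn} demands the weighted inequality for \emph{all} integral curves through $x$, including non-rational curves and high-degree curves with large multiplicity at $x$; nothing in condition (3) or in the theory of minimal rational curves constrains the ratio $(\omega_X^{-1}\cdot C)/e(\cO_{C,x})$ for these, and Miranda-type behavior at special points is exactly what must be excluded. Second, the deformation-theoretic machinery you propose to adapt (Kebekus, and the arguments of \cite{CMSB02}) relies essentially on generic smoothness, which fails in positive characteristic since the Frobenius morphism is nowhere smooth --- this is the very obstruction the paper identifies in its discussion following Theorem \ref{thm:kkcharpn}, and the reason Theorem \ref{thm:charpn} builds the multiplicities into its hypothesis rather than deriving them. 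Your proposal is therefore a reasonable account of why the conjecture is hard, but it is not a proof, and no proof is currently known.
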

By using results of Kebekus\index{Kebekus, Stefan} \cite{Keb02} on families of
singular rational curves, Cho\index{Cho, Koji},
Miyaoka\index{Miyaoka, Yoichi}, and
Shepherd-Barron\index{Shepherd-Barron, N. I.} proved this conjecture in
characteristic zero \cite{CMSB02}.
More precisely, they showed the following statement, which is stronger than the
Mori--Mukai conjecture \ref{conj:morimukai} since Fano varieties are
uniruled \cite[Cor.\ IV.1.15]{Kol96}\index{Fano variety!is uniruled}.
\begin{citedthm}[{\cite[Cor.\ $0.4(11)$]{CMSB02}}]\label{thm:cmsb02}
  Let\index{Mori--Mukai conjecture!is true in characteristic zero}
  $X$ be a smooth projective variety of dimension $n$ over an
  algebraically closed field $k$ of characteristic zero.
  If $X$ is uniruled, and the inequality
  \[
    \deg\bigl(\omega_X^{-1}\rvert_C\bigr) \ge n+1
  \]
  holds for every rational curve $C \subseteq X$ passing through a general
  closed point $x_0 \in X$, then $X$ is isomorphic to $\PP^n_k$.
\end{citedthm}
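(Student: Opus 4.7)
The plan is to follow the Mori-style strategy of producing a sufficiently large family of rational curves through a general point $x_0$ and then identifying $X$ with $\PP^n_k$ through a tangent-map argument \emph{à la} Kebekus. Since $X$ is uniruled, the space $\mathrm{RatCurves}(X)$ is nonempty, and we may choose an irreducible component $H$ parametrizing a dominating family whose anticanonical degree $d = \deg(\omega_X^{-1}\rvert_C)$ on a general $[C]\in H$ is minimal. By Mori's bend-and-break, such a minimal dominating family satisfies $d \le n+1$. For a very general point $x_0 \in X$, standard deformation theory on $\overline{M}_{0,1}(X)$ gives
\[
  \dim H_{x_0} \ge d - 2 \ge n-1,
\]
where $H_{x_0}\subseteq H$ parametrizes the curves of the family passing through $x_0$. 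Combining this with the hypothesis $d \ge n+1$ shows $d = n+1$ and $\dim H_{x_0} = n-1$.

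Next I would invoke Kebekus' results \cite{Keb02} on the geometry of rational curves through a general point. Kebekus proves that for such a minimal family the members of $H_{x_0}$ are immersed at $x_0$ (no cusps there), so the \emph{tangent map}
\[
  \tau_{x_0}\colon H_{x_0} \dashrightarrow \PP(T_{x_0}X^{\vee})
\]
sending $[C]$ to its tangent direction at $x_0$ is a well-defined morphism, and is moreover finite. Since $\dim H_{x_0} = n-1 = \dim \PP(T_{x_0}X^{\vee})$, finiteness forces $\tau_{x_0}$ to be surjective. A second theorem of Kebekus asserts that $\tau_{x_0}$ is in fact birational: two distinct curves in the family through $x_0$ cannot share a tangent direction, because bend-and-break would produce a reducible degeneration of anticanonical degree $n+1$ containing a component of degree $\le n$ through $x_0$, contradicting our hypothesis on rational curves through $x_0$. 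Zariski's main theorem then upgrades birational plus finite to an isomorphism $\tau_{x_0}\colon H_{x_0} \longisoto \PP^{n-1}_k$.

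To conclude $X \simeq \PP^n_k$, I would mimic the final step of Mori's proof of \cref{thm:morichar}. Consider the universal family
\[
  \begin{tikzcd}[column sep=small]
    \mathcal{U}_{x_0} \rar{\mathrm{ev}} \dar[swap]{\pi} & X\\
    H_{x_0} \simeq \PP^{n-1}_k
  \end{tikzcd}
\]
of curves in $H_{x_0}$. Since every curve in $H_{x_0}$ is smooth at $x_0$ and of degree $n+1$ against $\omega_X^{-1}$, $\pi$ is a $\PP^1_k$-bundle with a canonical section $\sigma$ contracted by $\mathrm{ev}$ to $x_0$. The map $\mathrm{ev}$ factors through the blowup $\mu\colon\widetilde{X}\to X$ at $x_0$ to give $\widetilde{\mathrm{ev}}\colon \mathcal{U}_{x_0} \to \widetilde{X}$ with $\widetilde{\mathrm{ev}}(\sigma)$ equal to the exceptional divisor $E$. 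The key step, and the main obstacle, is showing that $\widetilde{\mathrm{ev}}$ is an isomorphism: one must rule out that two distinct curves in $H_{x_0}$ meet away from $x_0$, and that a curve in $H_{x_0}$ is singular away from $x_0$. Both are handled by another bend-and-break argument controlled by the sharp bound $\deg(\omega_X^{-1}\rvert_C) = n+1$; any excess intersection or singularity would split a member of $H_{x_0}$ into pieces of strictly smaller anticanonical degree through $x_0$, contradicting the hypothesis. Once $\widetilde{\mathrm{ev}}$ is known to be an isomorphism, $\widetilde{X}$ is a $\PP^1_k$-bundle over $\PP^{n-1}_k$ with a section contractible to a smooth point, and \cite[Lem.\ V.3.7.8]{Kol96} identifies $X$ with $\PP^n_k$.
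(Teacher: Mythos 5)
There is nothing in the paper to compare against here: \cref{thm:cmsb02} is stated as a \emph{cited} theorem, imported from \cite[Cor.\ 0.4(11)]{CMSB02} (with the input of \cite{Keb02}), and the thesis gives no proof of it. Your sketch is, in outline, exactly the strategy of that cited proof: pick a minimal dominating family, use bend-and-break plus the degree hypothesis to force $\deg(\omega_X^{-1}\rvert_C)=n+1$ and $\dim H_{x_0}=n-1$, invoke Kebekus' results on smoothness at $x_0$ and finiteness of the tangent map, upgrade to $H_{x_0}\simeq\PP^{n-1}_k$, and then identify $\mathrm{Bl}_{x_0}X$ with the universal $\PP^1$-bundle and conclude via \cite[Lem.\ V.3.7.8]{Kol96}, just as in the discussion of \cref{thm:morichar}. (One small bookkeeping point: to get $\dim H_{x_0}=n-1$ you need the upper bound $\dim H_{x_0}\le n-1$ from unsplitness as well as the lower bound $d-2$ from deformation theory; you only state the latter explicitly.)

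Judged as a self-contained argument, however, the two pivotal steps are asserted rather than proved, and they are precisely the mathematical content of \cite{CMSB02} and \cite{Keb02}. First, the birationality (indeed injectivity) of the tangent map $\tau_{x_0}$ when its image has full dimension is not a routine bend-and-break consequence: bend-and-break degenerates a positive-dimensional family of curves with two fixed points (or fixed tangency data), and two distinct members sharing a tangent direction at $x_0$ do not by themselves supply such a family; ruling this out, together with ruling out members singular away from $x_0$ and excess intersections of distinct members, is the delicate part of Kebekus' characterization paper and of CMSB's original argument. Second, the claim that $\widetilde{\mathrm{ev}}\colon\mathcal{U}_{x_0}\to\mathrm{Bl}_{x_0}X$ is an isomorphism likewise needs the full strength of those intersection-theoretic and degeneration arguments, not just the sharp bound $\deg(\omega_X^{-1}\rvert_C)=n+1$ quoted in passing. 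So your proposal is an accurate roadmap of the proof the paper cites, but as written it delegates the hard steps to ``a second theorem of Kebekus'' and a one-line breaking heuristic; if you intend it as a proof rather than a summary, those steps must either be proved or cited precisely.
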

Because of the assumption on the characteristic, we ask the following:
\begin{question}
  Is the Mori--Mukai conjecture \ref{conj:morimukai} true in positive
  characteristic?
\end{question}
\par In arbitrary characteristic, as far as we know the only result in this
direction is the following result due to Kachi\index{Kachi, Yasuyuki} and
Koll\'ar\index{Kollar, Janos@Koll\'ar, J\'anos}, which we state using the
language of divisors.
\begin{citedthm}[{\cite[Cor.\ 3]{KK00}}]\label{thm:kkcharpn}
  Let $X$ be a smooth projective variety of dimension $n$ over an
  algebraically closed field $k$.
  Suppose $K_X$ is not nef.
  If
  \begin{enumerate}[label=$(\alph*)$,ref=\ensuremath{\alph*}]
    \item\label{list:kkcond1} $(-K_X \cdot C) \ge n+1$ for every
      rational curve $C \subseteq X$; and
    \item\label{list:kkcond2} $(-K_X)^n \ge (n+1)^n$,
  \end{enumerate}
  then $X$ is isomorphic to $\PP^n_k$.
\end{citedthm}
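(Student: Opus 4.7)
The plan is to use the two numerical hypotheses to reduce to Kachi--Kollár's positive-characteristic Kobayashi--Ochiai characterization (\cref{thm:kochar}) by showing that $X$ has Picard number one and anticanonical index exactly $n+1$.

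First, since $K_X$ is not nef, Mori's bend-and-break yields an extremal rational curve $C_0 \subseteq X$ whose class generates a $K_X$-negative extremal ray $R$ of $\overline{\mathrm{NE}}(X)$ and satisfies $0 < (-K_X \cdot C_0) \le n+1$. Hypothesis~\cref{list:kkcond1} applied to $C_0$ forces the reverse inequality, so $(-K_X \cdot C_0) = n+1$ and the length $l(R) = n+1$. The family of deformations of $C_0$ is moreover unsplit: any nontrivial decomposition of $[C_0]$ as a sum of classes of effective curves with a rational summand would contribute, by~\cref{list:kkcond1}, at least $n+1$ to each summand's intersection with $-K_X$, and hence violate the equality $(-K_X \cdot C_0) = n+1$.

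Next, I would apply the Ionescu--Wiśniewski estimate to the Mori contraction $\varphi_R \colon X \to Y$ of $R$: writing $E$ for its exceptional locus and $F$ for a general fiber, one has $\dim E + \dim F \ge \dim X + l(R) - 1 = 2n$. Since $\dim F \le \dim E \le n$, this forces $E = X$ and $\dim F = n$, so $\varphi_R$ contracts $X$ to a point. Hence $\rho(X) = 1$, and $-K_X$ is ample since it is positive on the unique extremal ray of $\overline{\mathrm{NE}}(X)$. Writing $-K_X \equiv rH$ where $H$ is the ample generator of $N^1(X)/{\equiv}$ and $r \in \ZZ_{>0}$ is the index, the equation $r\,(H \cdot C_0) = n+1$ shows $r \mid (n+1)$, while hypothesis~\cref{list:kkcond2} gives $r^n\,(H^n) \ge (n+1)^n$.

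To conclude, I would argue that $r = n+1$, at which point \cref{thm:kochar} applied to $H$ yields $X \cong \PP^n_k$. Each proper divisor $r$ of $n+1$ can be ruled out by combining hypothesis~\cref{list:kkcond2} with \cref{thm:kochar} itself: for instance, $r = n$ forces $X$ to be a smooth quadric hypersurface, so $(H^n) = 2$ and $(-K_X)^n = 2 n^n$, but $2 n^n < (n+1)^n$ for every $n \ge 2$, contradicting~\cref{list:kkcond2}. The principal obstacle is carrying out this exclusion uniformly across every proper divisor $r$ of $n+1$: it demands effective upper bounds of the shape $(H^n) < ((n+1)/r)^n$ for Fano varieties of Picard number one and index $r$ in arbitrary positive characteristic, a setting in which the classification of low-index Fanos is considerably less developed than in characteristic zero, so the argument must lean heavily on the hypothesis $(-K_X)^n \ge (n+1)^n$ to pin down the index.
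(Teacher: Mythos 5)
The thesis itself offers no proof of this statement---it is quoted directly from \cite[Cor.\ 3]{KK00}---so your proposal must stand on its own, and as written it has two genuine gaps. The first is your use of the extremal contraction $\varphi_R$ and the Ionescu--Wi\'sniewski inequality over an algebraically closed field of arbitrary characteristic. Mori's cone theorem (existence of extremal rational curves with $0 < (-K_X\cdot C)\le n+1$) does hold for smooth projective varieties in any characteristic, but the existence of the contraction of a $K_X$-negative extremal ray rests on basepoint freeness, hence on vanishing theorems, and is not known in general in positive characteristic---which is precisely the setting where this theorem is needed in the thesis, since in characteristic zero the conclusion already follows from \cref{thm:cmsb02}. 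This step is repairable without the contraction: your unsplit family of deformations of $C_0$ through a fixed point sweeps out a locus of dimension at least $(-K_X\cdot C_0)-1=n$, hence all of $X$, and a characteristic-free lemma on unsplit families (cf.\ \cite[Prop.\ IV.3.13.3]{Kol96}) then shows every curve on $X$ is numerically proportional to $C_0$, giving $\rho(X)=1$ and ampleness of $-K_X$ by Kleiman's criterion.

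The second gap is the decisive one and you flag it yourself. Writing $-K_X\equiv rH$ with $r\mid n+1$, hypothesis \cref{list:kkcond2} only yields the lower bound $(H^n)\ge((n+1)/r)^n$; to force $r=n+1$ you would need the complementary upper bound $(H^n)<((n+1)/r)^n$ for every Picard-number-one Fano $n$-fold of index $r<n+1$, and no such bound is supplied or available. Your illustrative exclusion of $r=n$ is moot, since $n\nmid n+1$ for $n\ge 2$ (and even there you invoke the classification of index-$n$ Fanos, i.e.\ quadrics, which is a characteristic-zero input); the genuinely problematic divisors, e.g.\ $r=1$, would require knowing that $\PP^n_k$ maximizes $(-K_X)^n$ among Picard-number-one Fano $n$-folds---a statement of essentially the same depth as the theorem itself, not known in this generality even over $\CC$ and certainly not in characteristic $p$. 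There is also a smaller wrinkle: \cref{thm:kochar} needs a genuine isomorphism $\omega_X^{-1}\simeq H^{\otimes(n+1)}$, not merely numerical proportionality, so torsion in $\operatorname{Pic}(X)$ must be dealt with in characteristic $p$. Note finally that Kachi and Koll\'ar do not deduce this corollary from the Kobayashi--Ochiai-type statement \cref{thm:kochar}; both are derived from their main theorem on rational curves through a fixed point, with the hypothesis $(-K_X)^n\ge(n+1)^n$ entering there rather than through an index computation, so the route you propose is not a reconstruction of the cited argument and, as it stands, does not close.
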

The major issue in trying to mimic the proof in \cite{CMSB02} is the use of
deformation theory and, more specifically, the use of generic
smoothness\index{generic smoothness} in studying deformations of curves.
Note that generic smoothness is false in positive characteristic, since
the absolute Frobenius morphism\index{Frobenius morphism, $F$!is nowhere smooth}
$F\colon X \to X$ for a smooth variety $X$ is nowhere smooth \cite[Ex.\
III.10.5.1]{Har77}.
One way to interpret \cref{thm:charpn} is that we avoid issues with
generic smoothness and deformation theory by building singularities into the
statement of the Mori--Mukai conjecture \ref{conj:morimukai}.
The advantage of this modification is that \cref{thm:charpn} can be interpreted
in terms of Seshadri constants\index{Seshadri constants} in the following
manner.
\begin{customthm}{\ref*{thm:charpn}*}\label{thm:altcharpn}
  Let $X$ be a Fano variety of dimension $n$ over an
  algebraically closed field $k$ of positive characteristic. If there exists a
  closed point $x \in X$ with
  $\varepsilon(\omega_X^{-1};x) \ge (n+1)$,
  then $X$ is isomorphic to the $n$-dimensional projective space $\PP^n_k$.
\end{customthm}
\noindent Note that the conditions in \cref{thm:charpn} and in
\cref{thm:altcharpn} are equivalent by \cref{eq:seshinfcurves}.
\par Using this reinterpretation, we can show that \cref{thm:charpn} is a
consequence of \cref{thm:cmsb02} in characteristic zero, and a version of
\cref{thm:charpn} assuming a slightly stronger lower bound on
$\varepsilon(\omega_X^{-1};x)$ at \emph{all} points $x \in X$ is a consequence
of \cref{thm:kkcharpn}.
The statement in characteristic zero gives a different proof of \cite[Thm.\
1.7]{BS09}.
\begin{proposition}\label{prop:charpnallpts}
  Let $X$ be a Fano variety of dimension $n$ over an algebraically
  closed field $k$.
  Suppose one of the following is satisfied:
  \begin{enumerate}[label=$(\roman*)$,ref=\ensuremath{\roman*}]
    \item\label{prop:charpnallptschar0}
      We have $\Char k = 0$ and $\varepsilon(\omega_X^{-1};x) > n$ holds for a
      single closed point $x \in X$; or
    \item\label{prop:charpnallptscharp}
      We have $\Char k = p > 0$ and $\varepsilon(\omega_X^{-1};x) \ge n+1$ holds
      for \emph{all} closed points $x \in X$.
  \end{enumerate}
  Then, $X$ is isomorphic to $\PP^n_k$.
\end{proposition}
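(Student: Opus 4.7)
The plan is to reduce each case to a known characterization of projective space, using the translation between Seshadri constants and degrees of curves supplied by \cref{eq:seshinfcurves}, together with the self-intersection lower bound $(L^n) \ge \varepsilon(L;x)^n$ from \cite[Prop.\ 5.1.9]{Laz04a}. The Fano assumption makes $-K_X$ ample in both cases, so $K_X$ is not nef.

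For \cref{prop:charpnallptschar0}, I would appeal to \cref{thm:cmsb02}. The hypothesis $\varepsilon(\omega_X^{-1};x) > n$ at a single closed point should propagate to a Zariski-dense locus by lower semicontinuity of the Seshadri constant as a function of $x$ (cf.\ \cite[Ex.\ 5.1.11]{Laz04a}), so at a general closed point $y \in X$ one still has $\varepsilon(\omega_X^{-1};y) > n$. Then \cref{eq:seshinfcurves} yields $\deg(\omega_X^{-1}\rvert_C) > n \cdot e(\cO_{C,y}) \ge n$ for every integral curve $C \ni y$, and integrality of the degree upgrades this to $\deg(\omega_X^{-1}\rvert_C) \ge n+1$. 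Since $X$ is Fano, it is uniruled \cite[Cor.\ IV.1.15]{Kol96}, so \cref{thm:cmsb02} applies and delivers $X \simeq \PP^n_k$.

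For \cref{prop:charpnallptscharp}, I would verify the two numerical hypotheses of \cref{thm:kkcharpn}. Condition \cref{list:kkcond1} follows by picking any closed point $x$ on a given rational curve $C$ and invoking \cref{eq:seshinfcurves} to obtain
\[
  (-K_X \cdot C) = \deg\bigl(\omega_X^{-1}\rvert_C\bigr) \ge \varepsilon(\omega_X^{-1};x)\cdot e(\cO_{C,x}) \ge n+1.
\]
Condition \cref{list:kkcond2} follows at once from the self-intersection bound applied at any closed point $x \in X$, giving $(-K_X)^n \ge \varepsilon(\omega_X^{-1};x)^n \ge (n+1)^n$. Then \cref{thm:kkcharpn} yields $X \simeq \PP^n_k$.

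The reason two different theorems are invoked, and why a stronger pointwise hypothesis is needed in positive characteristic, is that \cref{thm:cmsb02} relies on deformation-theoretic techniques that fail in positive characteristic, whereas \cref{thm:kkcharpn} requires the global numerical bound $(-K_X)^n \ge (n+1)^n$ in addition to the curve inequality. I do not anticipate a serious technical obstacle in either case; the only delicate point is the semicontinuity step in \cref{prop:charpnallptschar0}, where one must confirm that the Seshadri bound at one closed point genuinely propagates to a locus large enough to meet the "general point" hypothesis of \cref{thm:cmsb02}.
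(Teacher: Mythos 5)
Your proof is correct and follows essentially the same route as the paper: case (i) reduces to \cref{thm:cmsb02} after propagating the Seshadri bound to a general point (the paper cites \cite[Lem.\ 1.4]{EKL95} for the openness of the locus $\{\varepsilon(\omega_X^{-1};x)>n\}$, which plays the role of your semicontinuity step), and case (ii) verifies conditions \cref{list:kkcond1} and \cref{list:kkcond2} of \cref{thm:kkcharpn} exactly as you do, via \cref{eq:seshinfcurves} and the bound $\varepsilon(\omega_X^{-1};x) \le \sqrt[n]{(-K_X)^n}$.
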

\begin{proof}
  For \cref{prop:charpnallptschar0}, we use \cref{thm:cmsb02}.
  It suffices to verify the condition $\deg(\omega_X^{-1}\rvert_C) > n$.
  First, the locus $\{x \in X \mid
  \varepsilon(\omega_X^{-1};x) > n\}$
  contains a Zariski open set \cite[Lem.\ 1.4]{EKL95}, hence we have
  $\varepsilon(\omega_X^{-1};x_0) > n$ at a general point $x_0 \in X$.
  By the alternative characterization of Seshadri constants in terms of curves
  in \eqref{eq:seshinfcurves}, we have the chain of inequalities
  \[
    n < \varepsilon(\omega_X^{-1};x_0) \le
    \frac{\deg(\omega_X^{-1}\rvert_C)}{e(\cO_{C,x_0})}
    \le \deg(\omega_X^{-1}\rvert_C)
  \]
  for any rational curve $C$ containing $x_0$.
  \par For \cref{prop:charpnallptscharp}, we use Theorem \ref{thm:kkcharpn}.
  The verification of condition \cref{list:kkcond1}
  proceeds as in \cref{prop:charpnallptschar0} by applying
  \eqref{eq:seshinfcurves} to a closed point
  $x \in C$ contained in a given rational curve $C \subseteq X$.
  For condition $(\ref{list:kkcond2})$, we use the inequality
  $\varepsilon(\omega_X^{-1};x) \le \sqrt[n]{(-K_X)^n}$,
  which is \cite[eq.\ 5.2]{Laz04a}.
  The inequality $\varepsilon(\omega_X^{-1};x) \ge n+1$ then implies condition
  \cref{list:kkcond2}.
\end{proof}
\par Given the similarity between the Mori--Mukai conjecture
\ref{conj:morimukai} and \cref{thm:charpn}, we ask the following:
\begin{question}\label{quest:wouldprovecmsb}
  Let $X$ be a Fano variety of dimension $n$ over an
  algebraically closed field $k$.
  If the inequality
  \[
    \deg\bigl(\omega_X^{-1}\rvert_C\bigr) \ge n+1
  \]
  holds for every curve $C \subseteq X$, then does there exist a closed
  point $x \in X$ with
  \[
    \deg(\omega_X^{-1}\rvert_C)\ge e(\cO_{C,x}) \cdot (n+1)
  \]
  for every curve $C \subseteq X$ containing $x$?
\end{question}
The answer to this question is ``yes'' in
characteristic zero by using \cref{thm:cmsb02}, since \cref{thm:cmsb02}
implies $X \simeq \PP^n_k$, and therefore the required positivity property on
$\omega_X^{-1}$ holds by \cref{ex:seshprojspace}.
If one could answer \cref{quest:wouldprovecmsb} affirmatively independently of
\cref{thm:cmsb02}, then \cite[Thm.\ 1.7]{BS09} would give an alternative proof of
the Mori--Mukai conjecture \ref{conj:morimukai} in characteristic zero, and
\cref{thm:charpn} would resolve their conjecture in positive
characteristic.\index{Mori--Mukai conjecture|)}
\medskip
\par Finally, we mention another conjectural characterization of projective
space that ties in with our discussion in \cref{s:intro}.
\begin{remark}
  In\index{Fujita, Takao!conjecture on characterizing $\PP^n$|(}
  \cref{s:intro}, we noted that the cohomology ring of $\PP^n_\CC$ is
  $\ZZ[h]/(h^{n+1})$; see \cref{eq:cohringpn}.
  Fujita\index{Fujita, Takao} conjectured that a smooth complex
  projective variety of dimension $n$ with this singular cohomology ring
  is isomorphic to $\PP^n_\CC$ \cite[Conj.\ C\textsubscript{n}]{Fuj80}.
  Fujita himself proved this conjecture in dimensions $n \le 5$ \cite[Thm.\
  1]{Fuj80} (under the additional assumption that $X$ is Fano), and Libgober and
  Wood proved this conjecture in dimensions $n \le 6$ \cite[Thm.\ 1]{LW90}.
  See also \cite[Thm.\ 2]{Deb15}.
  It is unclear what the right formulation of this conjecture would be in
  positive characteristic.%
  \index{Fujita, Takao!conjecture on characterizing $\PP^n$|)}
\end{remark}
\section{Proof of Theorem \ref{thm:charpn}}
We now turn to the proof of Theorem \ref{thm:charpn}. The main technical
tool is the notion of bundles of principal parts,
which are also known as jet bundles in the literature.
See \cite[\S4]{LT95} or \cite[\S16]{EGAIV4} for a detailed discussion.
\begin{definition}
  Let $X$ be a variety over an algebraically closed field $k$.
  Denote by $p_1$ and $p_2$ the projections
  \[
    \begin{tikzcd}[column sep=0]
      & X \times_k X\arrow{dl}[swap]{p_1}\arrow{dr}{p_2}\\
      X & & X
    \end{tikzcd}
  \]
  Let $\sI \subset \cO_{X \times X}$ be the ideal defining the diagonal, and
  let $L$ be a line bundle on $X$. For each integer $\ell \ge 0$, the
  \textsl{$\ell$th bundle of principal parts}%
  \index{bundle of principal parts, $\mathscr{P}^\ell(L)$|textbf}
  associated to $L$ is the sheaf
  \[
    \gls*{bundlepparts} \coloneqq p_{1*}(p_2^*L \otimes \cO_{X \times X}/\sI^{\ell+1}).
  \]
  Note that $\sP^0(L) \simeq L$, since the diagonal in $X \times X$ is isomorphic
  to $X$.
\end{definition}
We will use the following facts about these sheaves from \cite[\S4]{LT95},
under the assumption that $X$ is a smooth variety over an algebraically closed
field.
\begin{enumerate}[label=$(\alph*)$,ref=\ensuremath{\alph*}]
  \item There exists a short exact sequence
    \cite[n\textsuperscript{o}\ 4.2]{LT95}
    \begin{equation}\label{eq:sesbundlepp}
      0 \longrightarrow \Sym^\ell(\Omega_X) \otimes L \longrightarrow \sP^\ell(L)
      \longrightarrow \sP^{\ell-1}(L) \longrightarrow 0,
    \end{equation}
    where \gls*{cotangentbundle} denotes the cotangent bundle%
    \index{cotangent bundle, $\Omega_X$} on $X$.
    By using induction and this short exact sequence, we see that the sheaf
    $\sP^\ell(L)$ is a vector bundle for all integers $\ell \ge 0$.
  \item\label{property:bundlepp2}
    There exists an identification $\sP^\ell(L) \simeq p_{2*}(p_2^*L \otimes
    \cO_{X\times X}/\sI^{\ell+1})$, and by applying adjunction to the
    map $p_2^*L \to p_2^*L \otimes \cO_{X\times X}/\sI^{\ell+1}$, there is a
    morphism
    $d^\ell\colon L \to \sP^\ell(L)$
    of sheaves \cite[n\textsuperscript{o}\ 4.1]{LT95}, such that the
    diagram
    \[
      \begin{tikzcd}
        H^0(X,L) \rar{H^0(d^\ell)}\dar &
        H^0\bigl(X,\sP^\ell(L)\bigr) \dar\\
        H^0(X,L \otimes \cO_X/\fm_x^{\ell+1}) &
        H^0\bigl(X,\sP^\ell(L) \otimes \cO_X/\fm_x\bigr) \lar[swap]{\sim}
      \end{tikzcd}
    \]
    commutes for all closed points $x \in X$ \cite[Lem.\ 4.5(1)]{LT95}, where
    the vertical arrows are the restriction maps.
    Thus, if $L$ separates $\ell$-jets at $x$, then $\sP^\ell(L)$ is globally
    generated at $x$.
\end{enumerate}
\par We will also use the following description of the determinant of the
$\ell$th bundle of principal parts. This
description is stated in \cite[p.\ 1660]{DRS01}.
\begin{lemma}\label{lem:detofpp}
  Let\index{bundle of principal parts, $\mathscr{P}^\ell(L)$!determinant of}
  $X$ be a smooth variety of dimension $n$ over an algebraically closed
  field, and let $L$ be a line bundle on $X$.
  Then, for each $\ell \ge 0$, we have an isomorphism
  \[
    \det(\sP^{\ell}(L)) \simeq \bigl( \omega_X^\ell \otimes L^{\otimes(n+1)}
    \bigr)^{\frac{1}{n+1}\binom{n+\ell}{n}}.
  \]
\end{lemma}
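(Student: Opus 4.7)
The proof will proceed by induction on $\ell$, using the short exact sequence \eqref{eq:sesbundlepp} at each step. Before starting the induction, I would first rewrite the desired formula in an equivalent but cleaner form: since $\frac{\ell}{n+1}\binom{n+\ell}{n} = \binom{n+\ell}{n+1}$, the claim
\[
\det\bigl(\sP^\ell(L)\bigr) \simeq \bigl(\omega_X^\ell \otimes L^{\otimes(n+1)}\bigr)^{\frac{1}{n+1}\binom{n+\ell}{n}}
\]
is equivalent to the integer-exponent identity
\[
\det\bigl(\sP^\ell(L)\bigr) \simeq \omega_X^{\binom{n+\ell}{n+1}} \otimes L^{\otimes\binom{n+\ell}{n}}.
\]
This reformulation is what I would actually prove by induction.

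The base case $\ell = 0$ is immediate from $\sP^0(L) \simeq L$, since $\binom{n}{n+1} = 0$ and $\binom{n}{n} = 1$. For the inductive step, \eqref{eq:sesbundlepp} gives
\[
\det\bigl(\sP^\ell(L)\bigr) \simeq \det\bigl(\Sym^\ell(\Omega_X) \otimes L\bigr) \otimes \det\bigl(\sP^{\ell-1}(L)\bigr).
\]
Here I would use the standard identities $\mathrm{rk}(\Sym^\ell(\Omega_X)) = \binom{n+\ell-1}{\ell}$ and $\det(\Sym^\ell(\Omega_X)) = \omega_X^{\binom{n+\ell-1}{n}}$ (the latter comes from the fact that for a rank $n$ bundle $E$, $\det(\Sym^\ell E) = (\det E)^{\binom{n+\ell-1}{n-1}\ell/n} = (\det E)^{\binom{n+\ell-1}{n}}$), together with the general rule $\det(F \otimes L) = \det(F) \otimes L^{\otimes \mathrm{rk}(F)}$ for a line bundle $L$. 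This gives
\[
\det\bigl(\Sym^\ell(\Omega_X) \otimes L\bigr) \simeq \omega_X^{\binom{n+\ell-1}{n}} \otimes L^{\otimes\binom{n+\ell-1}{\ell}}.
\]

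Applying the inductive hypothesis to $\det(\sP^{\ell-1}(L))$ and combining, the exponent of $L$ is $\binom{n+\ell-1}{\ell} + \binom{n+\ell-1}{n}$ and the exponent of $\omega_X$ is $\binom{n+\ell-1}{n} + \binom{n+\ell-1}{n+1}$. The remaining routine step is then to verify, via Pascal's identity $\binom{m-1}{k-1} + \binom{m-1}{k} = \binom{m}{k}$ (noting $\binom{n+\ell-1}{\ell} = \binom{n+\ell-1}{n-1}$), that these sums collapse to $\binom{n+\ell}{n}$ and $\binom{n+\ell}{n+1}$ respectively, completing the induction.

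I do not expect any real obstacle here: the short exact sequence \eqref{eq:sesbundlepp} is already stated in the excerpt, and everything else is a rank/determinant bookkeeping exercise combined with Pascal's identity. The only subtle point is the initial conversion to integer exponents, which both explains why the fractional exponent in the stated formula makes sense and makes the inductive combinatorics transparent.
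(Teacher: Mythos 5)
Your proof is correct and takes essentially the same route as the paper's: induction on $\ell$ via the short exact sequence \eqref{eq:sesbundlepp}, computing $\det(\Sym^\ell(\Omega_X)\otimes L)$ and collapsing the exponents with Pascal's identity. The only difference is your preliminary conversion to the integer-exponent form $\omega_X^{\binom{n+\ell}{n+1}} \otimes L^{\otimes\binom{n+\ell}{n}}$, which is a harmless (and arguably cleaner) reformulation of the same bookkeeping the paper does with fractional exponents.
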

\begin{proof}
  We proceed by induction on $\ell \ge 0$. If $\ell = 0$, then $\sP^0(L) \simeq
  L$, so we are done.
  \par Now suppose $\ell > 0$. Since $X$ is smooth, the cotangent bundle
  $\Omega_X$\index{cotangent bundle, $\Omega_X$} has rank $n$, and we have
  isomorphisms
  \[
    \det\bigl(\Sym^\ell(\Omega_X) \otimes L\bigr)
    \simeq \det\bigl(\Sym^\ell(\Omega_X)\bigr) \otimes
    L^{\otimes\binom{n+\ell-1}{n-1}}_{\vphantom{X}}
    \simeq \omega_X^{\otimes\binom{n+\ell-1}{n}} \otimes
    L^{\otimes\binom{n+\ell-1}{n-1}}_{\vphantom{X}}.
  \]
  By induction and taking top exterior powers in the short exact sequence
  \eqref{eq:sesbundlepp}, we obtain
  \begin{align*}
    \det(\sP^{\ell}(L)) &\simeq \omega_X^{\otimes\binom{n+\ell-1}{n}} \otimes
    L^{\otimes\binom{n+\ell-1}{n-1}}_{\vphantom{X}} \otimes \det(\sP^{\ell-1}(L))\\
    &\simeq\omega_X^{\otimes\binom{n+\ell-1}{n}} \otimes
    L^{\otimes\binom{n+\ell-1}{n-1}}_{\vphantom{X}} \otimes \bigl(
      \omega_X^{\otimes(\ell-1)}
    \otimes L^{\otimes(n+1)} \bigr)^{\otimes \frac{1}{n+1}\binom{n+\ell-1}{n}}\\
    &\simeq\bigl( \omega_X^{\otimes\ell} \otimes L^{\otimes(n+1)}
    \bigr)^{\otimes\frac{1}{n+1}\binom{n+\ell}{n}}.
  \end{align*}
  Note that the last isomorphism holds because of the identities
  \begin{gather*}
    \binom{n+\ell-1}{n} + \frac{\ell-1}{n+1}\binom{n+\ell-1}{n} =
    \frac{n+\ell}{n+1} \binom{n+\ell-1}{n} =
    \frac{\ell}{n+1}\binom{n+\ell}{n},\\[0.5em]
    \binom{n+\ell-1}{n-1} + \binom{n+\ell-1}{n} = \binom{n+\ell}{n}
  \end{gather*}
  involving binomial coefficients.
\end{proof}
\par We now prove \cref{thm:charpn}.
We actually show the equivalent formulation in \cref{thm:altcharpn}.
To prove \cref{thm:altcharpn}, we mostly follow the proof of
\cite[Thm.\ 1.7]{BS09}, although we must be more
careful with tensor operations in positive characteristic.
\begin{proof}[Proof of \cref{thm:altcharpn}]
  \par We first show that $\sP^{n+1}(\omega_X^{-1})$ is a trivial bundle.
  First, $\omega_X^{-1} \simeq \omega_X \otimes (\omega_X^{-1})^{\otimes 2}$
  separates
  $(n+1)$-jets by \cref{thm:demseshsingular} (or the special cases in
  \cref{prop:demsesh,thm:poscharseshsm}) since 
  $\varepsilon(\omega_X^{-1};x) \ge n+1$.
  By property \cref{property:bundlepp2} of bundles of principal parts, we
  therefore have that $\sP^{n+1}(\omega_X^{-1})$ is globally
  generated at $x$.
  On the other hand, by \cref{lem:detofpp} applied to $L =
  \omega_X^{-1}$, we have an isomorphism $\det(\sP^{n+1}(\omega_X^{-1})) \simeq
  \cO_X$.
  Now to show that $\sP^{n+1}(\omega_X^{-1})$ is a trivial bundle,
  consider the following diagram:
  \[
    \begin{tikzcd}
      \det\bigl(\sP^{n+1}(\omega_X^{-1})\bigr) \rar{\sim}\dar[twoheadrightarrow]
      & \cO_X\dar[twoheadrightarrow]\\
      \det\bigl(\sP^{n+1}(\omega_X^{-1}) \otimes \cO_X/\fm_x\bigr) \rar{\sim}
      & \cO_X/\fm_x
    \end{tikzcd}
  \]
  Suppose the isomorphism in the top row is given by a non-vanishing global
  section
  \[
    s \in H^0\bigl(X,\det\bigl(\sP^{n+1}(\omega_X^{-1})\bigr)\bigr).
  \]
  Let $s_{1,x} \wedge s_{2,x} \wedge \cdots \wedge s_{r,x}$ be the image of $s$
  in $\det\bigl(\sP^{n+1}(\omega_X^{-1}) \otimes \cO_X/\fm_x\bigr)$, which gives
  the isomorphism in the bottom row.
  Then, since $\sP^{n+1}(\omega_X^{-1})$ is
  globally generated at $x$, each $s_{i,x}$ can be lifted to a global section
  $\widetilde{s}_i \in H^0\bigl(X,\sP^{n+1}(\omega_X^{-1})\bigr)$.
  Because the
  exterior product $\widetilde{s}_1 \wedge \widetilde{s}_2 \wedge \cdots \wedge
  \widetilde{s}_r$ does not vanish at $x$, this exterior product does not vanish
  anywhere, since $H^0(X,\cO_X) = k$ \cite[Thm.\ I.3.4$(a)$]{Har77}.
  Thus, the global sections
  $\widetilde{s}_i$ give a frame for $\sP^{n+1}(\omega_X^{-1})$, and therefore
  $\sP^{n+1}(\omega_X^{-1})$ is a trivial bundle.
  \par To show $X \simeq \PP^n_k$, we use a generalization of Mori's
  characterization of projective space \cite[Thm.\ V.3.2]{Kol96}.
  It suffices to show that for
  every non-constant morphism $f\colon \PP^1_k \to X$,
  the pull back $f^*T_X$ is a sum of line bundles of positive degree.
  Since every vector bundle on $\PP^1_k$ splits \cite[Exer.\
  V.2.6]{Har77}, we may write
  \[
    f^*(T_X) \simeq\bigoplus_{i=1}^n \cO(a_i) \qquad \text{and} \qquad
    f^*(\omega_{X}^{-1}) \simeq\cO(b),
  \]
  where $b$ is positive since $\omega_X^{-1}$ is ample.
  We want to show that each $a_i$ is positive.
  We have
  \[
    f^*(\Omega_X) \simeq f^*(T_X)^\vee \simeq\bigoplus_{i=1}^n \cO(-a_i).
  \]
  Dualizing the short exact sequence \eqref{eq:sesbundlepp} for $\ell = n+1$, we
  have the short exact sequence
  \[
    0 \longrightarrow \sP^n(\omega_X^{-1})^\vee \longrightarrow
    \sP^{n+1}(\omega_X^{-1})^\vee \longrightarrow (\Sym^{n+1}\Omega_X)^\vee
    \otimes \omega_X \longrightarrow 0.
  \]
  The quotient on the right is globally generated because it is a quotient of
  the trivial bundle $\sP^{n+1}(\omega_X^{-1})^\vee$.
  We have
  isomorphisms
  \begin{align*}
    f^*\bigl((\Sym^{n+1}\Omega_X)^\vee \otimes \omega_X\bigr) &\simeq
    \bigl(\Sym^{n+1} f^*(\Omega_X) \bigr)^\vee \otimes f^*(\omega_X)\\
    &\simeq
    \biggl(\Sym^{n+1} \bigoplus_{i=1}^n \cO(-a_i) \biggr)^\vee \otimes \cO(-b),
  \end{align*}
  and this bundle is globally generated since it is the pullback of a globally
  generated bundle.
  By expanding out the
  symmetric power on the right-hand side, we have a surjection
  \[
    f^*\bigl((\Sym^{n+1}\Omega_X)^\vee \otimes \omega_X\bigr)
    \longtwoheadrightarrow
    \bigoplus_{i=1}^n \cO\bigl((n+1)a_i-b\bigr),
  \]
  hence the direct sum on the right-hand side is also globally generated.
  Finally, this implies $(n+1)a_i - b \ge 0$,
  and therefore since $b > 0$, we have that $a_i > 0$ as required.
\end{proof}
\begin{remark}\label{rem:lz16}
  Liu\index{Liu, Yuchen} and Zhuang's\index{Zhuang, Ziquan|(}
  characteristic zero statement in \cite[Thm.\ 2]{LZ18} is
  stronger than \cref{thm:charpn}: it only assumes that $X$ is
  $\QQ$-Fano, and in particular that $X$ is not necessarily smooth.
  While a version of \cref{thm:poscharseshsm} holds for a large class of
  singular varieties (see 
  \cref{thm:demseshsingular})
  the rest of our approach does not generalize
  to the non-smooth setting, since
  Mori's characterization of projective space uses bend and
  break\index{bend and break} techniques.
  Zhuang has since proved \cite[Thm.\ 2]{LZ18} in positive characteristic
  by studying the global $F$-singularities of the blowup of
  $X$ at $x$ \cite[Thm.\ 3]{Zhu}.
  Zhuang has also shown a version of \cref{thm:charpn} using lower bounds on the
  moving Seshadri constant $\varepsilon(\lVert -K_X \rVert;x)$ without the
  assumption that $X$ is Fano, but
  only in characteristic zero \cite[Thm.\ 1.7]{Zhu18}.\index{Zhuang, Ziquan|)}
\end{remark}

\chapter{Preliminaries in arbitrary characteristic}\label{chap:prelims}
In this chapter, we collect some background material that will be used
throughout the rest of this thesis.
The only new result is \cref{prop:kur13prop27}, which describes how sufficiently
large twists of a
coherent sheaf by a big $\QQ$-Cartier divisor $D$ are globally generated away
from the augmented base locus of $D$.
\section{Morphisms essentially of finite type}
Recall that a ring homomorphism $A \to B$ is \textsl{essentially of finite type}
if $B$ is isomorphic (as $A$-algebras) to a localization of an $A$-algebra of
finite type \cite[(1.3.8)]{EGAIV1}.
The corresponding scheme-theoretic notion is the following:
\begin{citeddef}[{\cite[Def.\ 2.1$(a)$]{Nay09}}]\label{def:eft}
  Let $f\colon X \to Y$ be a morphism of schemes.
  \begin{enumerate}[label=$(\alph*)$,ref=\alph*]
    \item We say that $f$ is \textsl{locally essentially of finite
      type}\index{essentially of finite type morphism!locally|textbf} if
      there is an affine open covering
      \[
        Y = \bigcup_i \Spec A_i
      \]
      such that for every $i$, there is an affine open covering
      \[
        f^{-1}(\Spec A_i) = \bigcup_j \Spec B_{ij}
      \]
      for which the corresponding ring homomorphisms $A_i \to B_{ij}$ are
      essentially of finite type.\label{def:loceft}
    \item We say that $f$ is \textsl{essentially of finite type}%
      \index{essentially of finite type morphism|textbf} if it is
      locally essentially of finite type and quasi-compact.
  \end{enumerate}
\end{citeddef}
We will also use the following alternative characterization of these morphisms.
\begin{lemma}\label{lem:eftchara}
  A morphism $f\colon X \to Y$ of schemes is locally essentially of finite type
  (resp.\ essentially of finite type) if and only if for every affine open
  subset $\Spec A \subseteq Y$, there is an affine open covering (resp.\ finite
  affine open covering)
  \[
    f^{-1}(\Spec A) = \bigcup_i \Spec B_i
  \]
  for which the corresponding ring homomorphisms $A \to B_i$ are essentially of
  finite type.
\end{lemma}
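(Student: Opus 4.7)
The plan is to model this proof on the standard affine-local characterization of ``locally of finite type'' morphisms (cf.\ \cite[Prop.\ I.6.3.3]{EGAI}), verifying the relevant stability properties of essentially of finite type ring maps along the way. The ``if'' direction is essentially immediate: given any affine open covering $Y = \bigcup_i \Spec A_i$ (which exists since $Y$ is a scheme), the hypothesis applied to each $\Spec A_i$ yields an affine open covering of $f^{-1}(\Spec A_i)$ by spectra of rings essentially of finite type over $A_i$, so \cref{def:loceft} is satisfied. For the essentially of finite type case, the quasi-compactness hypothesis on $f$ is automatic from the hypothesis (every $f^{-1}(\Spec A)$ can be covered by finitely many affine opens, so is quasi-compact).

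For the ``only if'' direction, fix an affine open $\Spec A \subseteq Y$ and consider a point $x \in f^{-1}(\Spec A)$. First I would pick the covering guaranteed by \cref{def:loceft}: there exist $\alpha$ and $\beta$ with $f(x) \in \Spec A_\alpha$ and $x \in \Spec B_{\alpha\beta}$, where $A_\alpha \to B_{\alpha\beta}$ is essentially of finite type. By the standard fact that the intersection of two affine opens in a scheme can be covered by open subsets that are simultaneously principal opens of both, I can find $h \in A$ and $h' \in A_\alpha$ with $f(x) \in \Spec A_h = \Spec (A_\alpha)_{h'} \subseteq \Spec A \cap \Spec A_\alpha$. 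Then, choosing a principal open $\Spec (B_{\alpha\beta})_g$ of $\Spec B_{\alpha\beta}$ contained in the open subset $f^{-1}(\Spec A_h) \cap \Spec B_{\alpha\beta}$ and containing $x$, I obtain an affine open neighborhood of $x$ inside $f^{-1}(\Spec A)$.

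The key calculation is then to verify that the induced ring map $A \to (B_{\alpha\beta})_g$ is essentially of finite type. This factors as
\[
  A \longrightarrow A_h = (A_\alpha)_{h'} \longrightarrow (B_{\alpha\beta})_{h'g} = (B_{\alpha\beta})_g,
\]
where the first arrow is a localization (hence essentially of finite type), and the second is obtained from $A_\alpha \to B_{\alpha\beta}$ by localizing both sides, so is again essentially of finite type. Since essentially of finite type ring maps are closed under composition (both being a localization of a finite type algebra and composition are preserved), the composite is essentially of finite type. Letting $x$ vary gives an affine open covering of $f^{-1}(\Spec A)$ of the required form. When $f$ is additionally quasi-compact, $f^{-1}(\Spec A)$ is quasi-compact, so a finite subcovering exists.

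The main technical obstacle, such as it is, lies not in the covering argument (which is routine) but in cleanly stating and invoking the stability of ``essentially of finite type'' under localization of source and target and under composition. I would either record these as a small preliminary lemma or simply cite \cite[(1.3.8)]{EGAIV1} and \cite[Lem.\ 2.2]{Nay09}, where analogous bookkeeping is carried out; these are the same arithmetic facts needed to make the analogous proof work for ordinary finite type morphisms.
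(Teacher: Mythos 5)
Your proof is correct and follows essentially the same route as the paper: both reduce to the locally essentially of finite type case via quasi-compactness, use the standard fact that the overlap of two affine opens is covered by simultaneously principal opens (the paper cites \cite[Lem.\ 3.3]{GW10}), and conclude by stability of essentially of finite type homomorphisms under localization/base change and composition \cite[Prop.\ 1.3.9]{EGAIV1}. The only cosmetic difference is that you work pointwise and shrink to an extra principal open $\Spec(B_{\alpha\beta})_g$, whereas the paper observes directly that $f^{-1}(\Spec(A_i)_{h_k})$ is covered by the $\Spec(B_{ij})_{h_k}$.
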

\begin{proof}
  It suffices to show the statement for morphisms locally essentially of finite
  type since a similar statement holds for quasi-compactness
  \cite[p.\ 290]{EGAInew}.
  Moreover, the direction $\Leftarrow$ is clear, hence it remains to prove the
  direction $\Rightarrow$.
  \par Fix coverings for $f$ as in \cref{def:eft}\cref{def:loceft},
  and let $\Spec A \subseteq Y$ be an arbitrary open affine subset.
  By \cite[Lem.\ 3.3]{GW10}, there exist $g_k \in A$ such that $\Spec A =
  \bigcup_k \Spec A_{g_k}$ and such that $\Spec A_{g_k} =
  \Spec{(A_i)_{h_k}}$ as open subsets in $Y$ for some $h_k \in A_i$.
  The preimage of $\Spec{(A_i)_{h_k}}$ is covered by the $\Spec{(B_{ij})_{h_k}}$,
  and the compositions
  \[
    A \longrightarrow A_{g_k} \longisoto (A_i)_{h_k} \longrightarrow
    (B_{ij})_{h_k}
  \]
  are essentially of finite type since the class of ring homomorphisms
  essentially of finite type is stable under composition and base change
  \cite[Prop.\ 1.3.9]{EGAIV1}.
  We therefore use the affine open covering
  \[
    f^{-1}(\Spec A) = \bigcup_{i,j,k} \Spec{(B_{ij})_{h_k}}.\qedhere
  \]
\end{proof}
Using this characterization, we can show the following:
\begin{citedlem}[{\cite[(2.2)]{Nay09}}]\label{lem:nayak22}
  The\index{essentially of finite type morphism!under composition and base change}
  class of morphisms (locally) essentially of finite type is closed under
  composition and base change.
\end{citedlem}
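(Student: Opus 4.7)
The plan is to reduce both assertions to the corresponding statements for ring homomorphisms, namely that morphisms essentially of finite type between rings are stable under composition and base change (cited as \cite[Prop.\ 1.3.9]{EGAIV1} in the proof of \cref{lem:eftchara} above). The key tool is the local characterization of \cref{lem:eftchara}, which allows us to verify the property on an affine open cover of any affine open in the target, rather than on a single fixed cover.

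For composition, suppose $f\colon X \to Y$ and $g\colon Y \to Z$ are locally essentially of finite type, and fix an affine open $\Spec C \subseteq Z$. I would first apply \cref{lem:eftchara} to $g$ to obtain an affine open cover $g^{-1}(\Spec C) = \bigcup_i \Spec B_i$ such that each $C \to B_i$ is essentially of finite type. Then for each $i$, applying \cref{lem:eftchara} to $f$ yields an affine open cover $f^{-1}(\Spec B_i) = \bigcup_j \Spec A_{ij}$ such that each $B_i \to A_{ij}$ is essentially of finite type. The composed homomorphism $C \to A_{ij}$ is then essentially of finite type by \cite[Prop.\ 1.3.9]{EGAIV1}, and the $\Spec A_{ij}$ together cover $(g \circ f)^{-1}(\Spec C)$, so $g \circ f$ is locally essentially of finite type. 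For base change, given $f\colon X \to Y$ locally essentially of finite type and $h\colon Y' \to Y$, I would fix an affine open $\Spec A' \subseteq Y'$ mapping into an affine open $\Spec A \subseteq Y$, apply \cref{lem:eftchara} to cover $f^{-1}(\Spec A) = \bigcup_i \Spec B_i$ with each $A \to B_i$ essentially of finite type, and then observe that the preimage in $X \times_Y Y'$ is covered by $\Spec(B_i \otimes_A A')$, with the base-changed homomorphisms $A' \to B_i \otimes_A A'$ essentially of finite type again by \cite[Prop.\ 1.3.9]{EGAIV1}.

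Finally, to promote these results from the ``locally'' version to the unqualified version, I would appeal to the fact that quasi-compactness of morphisms of schemes is preserved under composition and base change (see, e.g., \cite[p.\ 290]{EGAInew}, already cited in the proof of \cref{lem:eftchara}). Combining this with the cases handled above gives the result for morphisms essentially of finite type. No step presents a genuine obstacle here; the only mild subtlety is that one must use the flexible cover statement of \cref{lem:eftchara} rather than the definitional cover of \cref{def:eft}, since the cover one starts with on $Y$ (in the composition argument) or on $Y$ pulled back via $h$ (in the base change argument) need not be compatible with the cover witnessing that $f$ is locally essentially of finite type.
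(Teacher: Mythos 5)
Your proposal is correct and follows essentially the same route as the paper: both arguments reduce to \cite[Prop.\ 1.3.9]{EGAIV1} via the flexible affine-local characterization of \cref{lem:eftchara} (for composition) and via covering the target of the base change by affines mapping into affines of $Y$ (for base change), with the quasi-compactness bookkeeping handled separately. The only cosmetic difference is that for base change the paper starts from the definitional cover of $f$ and covers $g^{-1}(\Spec A_i)$, whereas you start from an affine of $Y'$ lying over an affine of $Y$; these are interchangeable.
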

\begin{proof}
  It suffices to show the statement for morphisms locally essentially of finite
  type since the corresponding statement holds for quasi-compactness
  \cite[Prop.\ 6.1.5]{EGAInew}.
  \par For composition, let $f\colon X \to Y$ and $g\colon Y \to Z$ be locally
  essentially of finite type.
  Let $\Spec A \subseteq Z$ be an arbitrary affine open set.
  By \cref{lem:eftchara}, there exists an affine open covering
  \[
    g^{-1}(\Spec A) = \bigcup_i \Spec B_i
  \]
  where the corresponding ring homomorphisms $A \to B_i$ are essentially of
  finite type.
  Applying $f^{-1}$ and using \cref{lem:eftchara} again, there exists an
  affine open covering
  \[
    (g \circ f)^{-1}(\Spec A) = \bigcup_i f^{-1}(\Spec B_i) = \bigcup_{i,j}
    \Spec C_{ij},
  \]
  where the corresponding ring homomorphisms $B_i \to C_{ij}$ are essentially of
  finite type.
  The compositions $A \to B_i \to C_{ij}$ are essentially of finite type by
  \cite[Prop.\ 1.3.9$(i)$]{EGAIV1}, hence $g \circ f$ is locally essentially of
  finite type.
  \par For base change, let $f\colon X \to Y$ be locally essentially of finite
  type and fix coverings for $f$ as in \cref{def:eft}\cref{def:loceft}.
  Let $g\colon Y' \to Y$ be an arbitrary morphism of schemes, and denote the
  base change of $f$ by $f'\colon X' \to Y'$.
  Choose an affine open covering
  \[
    g^{-1}(\Spec A_i) = \bigcup_k \Spec C_{ik},
  \]
  in which case $Y' = \bigcup_{i,k} \Spec C_{ik}$.
  Then, the affine open covering
  \[
    f^{\prime-1}(\Spec C_{ik}) = \bigcup_{j} \Spec(B_{ij} \otimes_{A_i}
    C_{ik})
  \]
  is such that the corresponding ring homomorphisms $C_{ik} \to B_{ij}
  \otimes_{A_i} C_{ik}$ are essentially of finite type by base change
  \cite[Prop.\ 1.3.9$(ii)$]{EGAIV1}.
\end{proof}
\begin{remark}
  This notion of morphisms essentially of finite type is somewhat subtle.
  For example, even if $\Spec B \to \Spec A$ is essentially of finite type, it
  is not known whether the corresponding ring homomorphism $A \to B$ is
  essentially of finite type \cite[(2.3)]{Nay09}.
\end{remark}
\section{Cartier and Weil divisors}\label{sect:qrcartierweildiv}

We will work often with $\QQ$- or $\RR$-coefficients for both Cartier and Weil
divisors.
\begin{definition}[see {\cites[Def.\ 21.1.2]{EGAIV4}[\S1.3]{Laz04a}}]
  \label{def:qrcartierdiv}
  Let $X$ be a locally noetherian scheme.
  A \textsl{Cartier divisor}\index{Cartier divisor|textbf}
  on $X$ is an element of the abelian group
  \[
    \Div(X) \coloneqq H^0\bigl(X,\sK_X^*/\cO_X^*\bigr),\glsadd{cartierdivisors}
  \]
  where \gls*{merofuncsheaf} is the sheaf of total quotient rings of $\cO_X$
  \cite[p.\ 204]{Kle79}, and $\sK_X^*$ (resp.\ $\cO_X^*$) is the subsheaf of
  $\sK_X$ (resp.\ $\cO_X$) consisting of invertible sections.
  Concretely, a Cartier divisor is represented by the data $\{(U_i,f_i)\}_i$,
  where $f_i \in \sK_X^*(U_i)$ are local sections, and $X = \bigcup_i U_i$.
  A Cartier divisor $D$ is \textsl{effective}%
  \index{Cartier divisor!effective|textbf} if the functions $f_i \in
  \sK^*(U_i)$ are regular on $U_i$, i.e., if $f_i \in \cO_X(U_i)$.
  \par A \textsl{$\QQ$-Cartier divisor}%
  \index{Cartier divisor!Q-@$\mathbf{Q}$-|textbf} (resp.\ \textsl{$\RR$-Cartier
  divisor}\index{Cartier divisor!R-@$\mathbf{R}$-|textbf}) is an element of the
  group $\Div_\QQ(X) \coloneqq
  \Div(X) \otimes_\ZZ \QQ$ (resp.\ $\Div_\RR (X)\coloneqq \Div(X) \otimes_\ZZ
  \RR$).
  A $\QQ$-Cartier divisor (resp.\ $\RR$-Cartier divisor) is \textsl{effective}
  if it is a $\QQ_{\ge0}$-linear combination (resp.\ $\RR_{\ge0}$-linear
  combination) of effective Cartier divisors.
  A $\QQ$-Cartier divisor (resp.\ \textsl{$\RR$-Cartier divisor}) \textsl{is a
  Cartier divisor} if it is in the image of the map $\Div(X) \to
  \Div_\QQ(X)$ (resp.\ $\Div(X) \to \Div_\RR(X)$).
\end{definition}
\begin{definition}[see {\cites[\S21.6]{EGAIV4}[\S1.3]{Laz04a}}]
  Let $X$ be a locally noetherian scheme.
  A \textsl{Weil divisor}\index{Weil divisor|textbf}
  on $X$ is a formal $\ZZ$-linear combination of
  codimension $1$ cycles on $X$.
  These form an abelian group, which we denote by $\WDiv(X)$.%
  \glsadd{weildivisors}
  A Weil divisor $D$ on $X$ is \textsl{effective}%
  \index{Weil divisor!effective|textbf} if $D$ is a formal
  $\ZZ_{\ge0}$-linear combination of codimension $1$ cycles on $X$.
  \par A \textsl{$\QQ$-Weil
  divisor}\index{Weil divisor!Q-@$\mathbf{Q}$-|textbf} (resp.\
  \textsl{$\RR$-Weil divisor}\index{Weil divisor!R-@$\mathbf{R}$-|textbf}) is an
  element of the group $\WDiv_\QQ(X) \coloneqq \WDiv(X) \otimes_\ZZ \QQ$
  (resp.\ $\WDiv_\RR(X) \coloneqq \WDiv(X) \otimes_\ZZ \RR$).
  A $\QQ$-Weil divisor (resp.\ $\RR$-Weil divisor) is \textsl{effective}
  if it is a $\QQ_{\ge0}$-linear combination (resp.\ $\RR_{\ge0}$-linear
  combination) of effective Weil divisors.
  A $\QQ$-Weil divisor (resp.\ \textsl{$\RR$-Weil divisor}) \textsl{is a
  Weil divisor} if it is in the image of the map $\WDiv(X) \to
  \WDiv_\QQ(X)$ (resp.\ $\WDiv(X) \to \WDiv_\RR(X)$).
\end{definition}
If $X$ is a locally noetherian scheme, then there is a \textsl{cycle map}%
\index{cycle map, $\mathrm{cyc}$|textbf}
\begin{equation*}\label{eq:cyclemap}
  \cyc\colon \Div(X) \longrightarrow \WDiv(X)
\end{equation*}
sending a Cartier divisor to its associated Weil divisor; see
\cite[\S21.6]{EGAIV4}.
If $X$ is locally factorial, then the cycle map $\cyc$ is bijective%
\index{cycle map, $\mathrm{cyc}$!bijective on locally factorial schemes}
\cite[Thm.\ 21.6.9$(ii)$]{EGAIV4}, hence we can
identify Cartier and Weil divisors, as well as their corresponding versions
with $\QQ$- or $\RR$-coefficients.
\par Even if $X$ is not locally factorial, as long as $X$ is normal, we can pass
from Cartier divisors to Weil divisors:
\begin{definition}
  Let $X$ be a locally noetherian scheme.
  If $X$ is a normal, then the cycle map $\cyc$ is injective
  \cite[Thm.\ 21.6.9$(i)$]{EGAIV4}.%
  \index{cycle map, $\mathrm{cyc}$!injective on normal schemes}
  We then say that a Weil divisor (resp.\
  $\QQ$-Weil divisor, $\RR$-Weil divisor) \textsl{is Cartier} (resp.\
  \textsl{$\QQ$-Cartier,} \textsl{$\RR$-Cartier}) if it is in
  the image of $\cyc$ (resp.\ $\cyc \otimes_\ZZ \QQ$, $\cyc \otimes_\ZZ \RR$).
\end{definition}
Finally, we will use the following conventions for rounding up and down.
\begin{definition}[see {\cites[Def.\ 3.4.1]{BGGJKM}[Def.\ 9.1.2]{Laz04b}}]
  \label{def:roundup}
  Let $X$ be a locally noetherian scheme, and let $D \in \Div_\RR(X)$.
  A \textsl{decomposition}\index{decomposition|textbf}
  $\mathcal{D}$ of $D$ is an expression
  \[
    D = \sum_{i=1}^r a_iD_i
  \]
  for some $a_i \in \RR$ and Cartier divisors $D_i$.
  We note that such a decomposition is not unique, since $\Div(X)$ may have
  torsion.
  The \textsl{round-up}\index{round-up, $\lceil D \rceil$|textbf|(} and
  \textsl{round-down}\index{round-down, $\lfloor D \rfloor$|textbf|(} of $D$
  with respect to
  $\mathcal{D}$ are the Cartier divisors%
  \glsadd{roundup}\glsadd{rounddown}
  \[
    \lceil D \rceil_{\mathcal{D}} \coloneqq \sum_{i=1}^r \lceil a_i \rceil
    D_i \qquad \text{and} \qquad
    \lfloor D \rfloor_{\mathcal{D}} \coloneqq \sum_{i=1}^r \lfloor a_i \rfloor
    D_i,
  \]
  respectively.
  Note that the round-up and round-down depend on the decomposition
  $\mathcal{D}$.
  \par Now let $D \in \WDiv_\RR(X)$.
  By definition, we have
  \[
    D = \sum_{i=1}^r a_iD_i.
  \]
  Then, the \textsl{round-up} and \textsl{round-down} of $D$ are
  the Weil divisors
  \[
    \lceil D \rceil \coloneqq \sum_{i=1}^r \lceil a_i \rceil
    D_i \qquad \text{and} \qquad
    \lfloor D \rfloor \coloneqq \sum_{i=1}^r \lfloor a_i \rfloor
    D_i,
  \]
  respectively.%
  \index{round-up, $\lceil D \rceil$|textbf|)}
  \index{round-down, $\lfloor D \rfloor$|textbf|)}
\end{definition}
\section{Reflexive sheaves}
We will need some basic results on reflexive sheaves, which we collect here.
Our main reference is \cite[\S1]{Har94}.
\begin{definition}
  Let $\sF$ be a coherent sheaf on a scheme $X$.
  The \textsl{dual} of $\sF$ is $\sF^\vee\glsadd{fdual} \coloneqq
  \HHom_{\cO_X}(\sF,\cO_X)$.\index{dual, $\mathscr{F}^\vee$|textbf}
  We say that $\sF$ is \textsl{reflexive}\index{reflexive sheaf|textbf} if the
  natural map $\sF \to
  \sF^{\vee\vee}$ is an isomorphism.
  We say that $\sF$ is \textsl{normal}\index{normal sheaf|textbf} if for every
  open subset $U
  \subseteq X$ and every subset $Y \subseteq U$ of codimension $\ge 2$, the
  restriction map
  $\sF(U) \to \sF(U \smallsetminus Y)$
  is bijective.
\end{definition}
We note that all locally free sheaves are reflexive
\cite[Exer.\ II.5.1$(a)$]{Har77}.
\par By the following result, all reflexive sheaves on reasonably nice schemes
are normal.
Below, we recall that a noetherian scheme $X$ \textsl{satisfies $G_i$} for an
integer $i \ge 0$ if for every point $x \in X$ such that $\dim \cO_{X,x} \le i$,
the local ring $\cO_{X,x}$ is Gorenstein.
\begin{citedprop}[{\cite[Prop.\ 1.11]{Har94}}]\label{prop:reflexivenormal}
  Let\index{reflexive sheaf!is normal}
  $X$ be a noetherian scheme satisfying $G_1$ and $S_2$.
  Then, every reflexive sheaf $\sF$ is normal.
\end{citedprop}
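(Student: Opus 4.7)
The plan is to reduce to a depth/commutative-algebra statement and then grind out the bookkeeping on associated primes. First, since both the hypotheses on $X$ and the normal property are local, I may assume $X = \Spec A$ for a noetherian ring $A$ satisfying $G_1$ and $S_2$, and $\sF = \widetilde{M}$ for a finitely generated reflexive $A$-module $M$. Normality of $\widetilde{M}$ translates, via the local cohomology exact sequence
\[
  0 \longrightarrow H^0_Y(M) \longrightarrow M \longrightarrow \Gamma\bigl(U \smallsetminus Y,\widetilde{M}\bigr) \longrightarrow H^1_Y(M)
\]
for closed $Y \subseteq U$ of codimension $\ge 2$, into the assertion that $\operatorname{depth} M_\fp \ge 2$ whenever $\dim A_\fp \ge 2$, i.e., into Serre's condition $S_2$ on $M$. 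So it suffices to prove that every reflexive $A$-module is $S_2$.

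Since $M \cong (M^\vee)^\vee$ with $M^\vee \coloneqq \Hom_A(M,A)$, the problem reduces to showing that the dual $N^\vee$ of \emph{any} finitely generated $A$-module $N$ is $S_2$. Taking a finite presentation $A^m \to A^n \to N \to 0$ and applying $\Hom_A(-,A)$ yields an exact sequence
\[
  0 \longrightarrow N^\vee \longrightarrow A^n \longrightarrow A^m,
\]
so that $N^\vee$ is the kernel of the induced map, with image $I \subseteq A^m$. The short exact sequence $0 \to N^\vee \to A^n \to I \to 0$ and the standard depth lemma give
\[
  \operatorname{depth} N^\vee_\fp \;\ge\; \min\bigl(\operatorname{depth}(A^n)_\fp,\; \operatorname{depth} I_\fp + 1\bigr)
\]
for every prime $\fp \subset A$. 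At a prime $\fp$ with $\dim A_\fp \ge 2$, Serre's $S_2$ on $A$ gives $\operatorname{depth}(A^n)_\fp \ge 2$, and also forces $\Ass(A)$ to consist of minimal primes only; since $\Ass(I) \subseteq \Ass(A^m) = \Ass(A)$ and $\fp$ is non-minimal, it is not associated to $I$, so $\operatorname{depth} I_\fp \ge 1$. Plugging in yields $\operatorname{depth} N^\vee_\fp \ge 2$, giving $S_2$ for $N^\vee$ and hence for $M$.

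The main obstacle I anticipate is a careful verification of the equivalence between the sheaf-theoretic normal property and the depth-theoretic $S_2$ condition used above, particularly checking that the codimension-two test subsets $Y$ can be taken to range over the non-closed points as well, so that pointwise depth estimates at all primes of codimension $\ge 2$ really suffice. A secondary subtlety is that the $G_1$ hypothesis plays no role in the argument as sketched; this suggests either that $G_1$ is only needed for the converse implication (reflexive $\Leftrightarrow$ torsion-free and $S_2$) where one must reconstruct $M$ from its restriction to the Gorenstein locus, or that a more conceptual proof going through duals on the Gorenstein locus is preferred. I would first confirm that the direct depth argument above covers the one-sided implication required for \cref{prop:reflexivenormal}, and then consult the conceptual route if the exposition in surrounding material calls for it.
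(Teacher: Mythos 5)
Your argument is correct, and it is essentially the proof one finds in the cited source: the paper itself offers no proof of \cref{prop:reflexivenormal} beyond the reference to \cite[Prop.\ 1.11]{Har94}, whose argument is exactly your observation that a reflexive module is the dual of a finitely generated module, hence a second syzygy $0 \to N^\vee \to A^n \to A^m$, to which one applies the depth lemma together with $S_2$ (for the free term) and $S_1$ (to rule out $\fp \in \Ass(I)$), followed by the local-cohomological translation of normality into ``depth $\ge 2$ at every point of codimension $\ge 2$.'' You are also right that $G_1$ is not used in this implication; it is needed only for the converse direction and for results such as \cref{thm:har94112}, where one must recover $\sF$ from its restriction to the complement of a codimension-two set via duals computed on the Gorenstein locus.
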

We will also need the following:
\begin{citedlem}
  \label{lem:homreflexive}
  Let $X$ be a locally noetherian scheme satisfying $G_0$ and $S_1$.
  Let $\sF$ and $\sG$ be coherent sheaves on $X$.
  If $\sG$ is reflexive, then $\HHom_{\cO_X}(\sF,\sG)$ is also reflexive.
\end{citedlem}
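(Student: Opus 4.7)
The plan is to reduce the question to a problem in commutative algebra, since both reflexivity and the $\HHom$ construction are local on $X$. So assume $X = \Spec A$, $\sF = \widetilde{M}$, and $\sG = \widetilde{N}$, with $A$ a noetherian ring satisfying $G_0$ and $S_1$, and $N$ a reflexive finitely generated $A$-module. Since $A$ is noetherian, $M$ admits a finite presentation $A^{\oplus m} \to A^{\oplus n} \to M \to 0$; applying the left-exact contravariant functor $\Hom_A(-, N)$ gives
\[
  0 \longrightarrow \Hom_A(M, N) \longrightarrow N^{\oplus n} \overset{\phi}{\longrightarrow} N^{\oplus m},
\]
realizing $\Hom_A(M, N)$ as the kernel of a morphism $\phi$ between the finitely generated reflexive modules $N^{\oplus n}$ and $N^{\oplus m}$ (finite direct sums of reflexive modules being reflexive). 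Thus the lemma reduces to the assertion that the kernel $K$ of any morphism $\phi\colon P \to Q$ between reflexive $A$-modules is itself reflexive.

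To prove this, I will chase the biduality diagram
\[
  \begin{tikzcd}[ampersand replacement=\&]
    0 \arrow[r] \& K \arrow[r,hook] \arrow[d,"\eta_K"] \& P \arrow[r,"\phi"] \arrow[d,"\eta_P"] \& Q \arrow[d,"\eta_Q"] \\
    \& K^{\vee\vee} \arrow[r] \& P^{\vee\vee} \arrow[r,"\phi^{\vee\vee}"'] \& Q^{\vee\vee}
  \end{tikzcd}
\]
in which the maps $\eta$ are the natural biduality morphisms, and $\eta_P$ and $\eta_Q$ are isomorphisms by reflexivity of $P$ and $Q$. Commutativity of the left square, combined with the injectivity of $K \hookrightarrow P$ and the isomorphism $\eta_P$, immediately forces $\eta_K$ to be injective. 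For the reverse direction, observe that functoriality of $(-)^{\vee\vee}$ applied to $\phi \circ (K \hookrightarrow P) = 0$ implies that $\phi^{\vee\vee}$ annihilates the image of $K^{\vee\vee}$ in $P^{\vee\vee}$. Transporting this image along the isomorphism $P^{\vee\vee} \simeq P$, it must therefore lie in $\ker\phi = K$, producing a retraction $\rho\colon K^{\vee\vee} \to K$ with $\rho \circ \eta_K = \mathrm{id}_K$. This exhibits $\eta_K$ as a split injection.

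It remains to show $\eta_K$ is surjective, equivalently that $\rho$ is injective, equivalently that the natural map $K^{\vee\vee} \to P^{\vee\vee}$ is itself injective. This is the main technical obstacle, and is where the hypotheses $G_0$ and $S_1$ enter essentially. Since $(-)^{\vee\vee}$ is not left-exact in full generality, one must work to ensure that applying biduality to the inclusion $K \hookrightarrow P$ still yields an injection. Under $G_0 + S_1$, the dualized cokernel terms that obstruct this injectivity are controlled by $\Ext^1$-type sheaves supported in positive codimension; these admit no nonzero $\cO_X$-valued forms, so the required injectivity holds. Combined with the split injection established above, this shows $\eta_K$ is an isomorphism and completes the proof.
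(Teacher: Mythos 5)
Your route is genuinely different from the paper's, and it does work, but the step carrying all of the mathematical weight is asserted rather than proved. The paper's own proof is a two-line reduction: since $\sG$ is reflexive, tensor--hom adjunction gives $\HHom_{\cO_X}(\sF,\sG) \simeq \HHom_{\cO_X}(\sF \otimes_{\cO_X} \sG^\vee,\cO_X)$, which exhibits the sheaf as the dual of a coherent sheaf, and duals of coherent sheaves are reflexive by \cite[Cor.\ 1.8]{Har94}. You instead localize, take a finite presentation of $M$, and reduce to the claim that the kernel of a map between reflexive modules is reflexive --- in effect reproving the criterion that a coherent sheaf sitting in an exact sequence $0 \to \sF \to \sE \to \sG$ with $\sE$ reflexive and $\sG$ torsion-free is reflexive (cf.\ \cite[Prop.\ 1.6]{Har94}). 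Your argument is more self-contained and makes visible exactly where $G_0$ and $S_1$ are used; the paper's buys brevity by quoting a stronger black box. Everything through the construction of the retraction $\rho$ with $\rho \circ \eta_K = \id_K$ is correct and carefully done.

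The last paragraph, however, is the entire content of the claim and currently consists of an unproved assertion (``the dualized cokernel terms \ldots\ admit no nonzero $\cO_X$-valued forms''). To close it: dualizing $0 \to K \to P \to C \to 0$ with $C = P/K$ gives $0 \to C^\vee \to P^\vee \to I \to 0$ and $0 \to I \to K^\vee \to T \to 0$ with $T \subseteq \Ext^1_A(C,A)$, and dualizing once more identifies $\ker\bigl(K^{\vee\vee} \to P^{\vee\vee}\bigr)$ with $\Hom_A(T,A)$. Now $G_0$ says that at each minimal prime $\fp$ the localization $A_\fp$ is a zero-dimensional Gorenstein local ring, hence injective as a module over itself, so $\Ext^1_A(C,A)_\fp = 0$; thus $T$ is supported in codimension $\ge 1$. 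And $S_1$ says every associated prime of $A$ is minimal, so a nonzero map $T \to A$ is impossible: its image would be a nonzero submodule of $A$ whose associated primes all have positive codimension. This is exactly where both hypotheses enter, as you anticipated, but as written the reader has to supply this entire computation; either carry it out or replace the last paragraph by a citation to the relevant result of \cite{Har94}.
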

\begin{proof}
  Since $\sG$ is reflexive, we have
  \[
    \HHom_{\cO_X}(\sF,\sG) \simeq
    \HHom_{\cO_X}\bigl(\sF,\HHom_{\cO_X}(\sG^\vee,\cO_X)\bigr)
    \simeq \HHom_{\cO_X}(\sF \otimes_{\cO_X} \sG^\vee,\cO_X)
  \]
  where the second isomorphism is by tensor--hom adjunction.
  Since the dual of any coherent sheaf is reflexive \cite[Cor.\ 1.8]{Har94}, we
  are done.
\end{proof}
We will often use this fact to extend morphisms from the complement of
codimension at least two.%
\index{reflexive sheaf!determined by codimension one behavior|(}
\begin{corollary}\label{cor:extendsections}
  Let $X$ be a locally noetherian scheme satisfying $G_1$ and $S_2$,
  and let $\sF$ and $\sG$ be coherent sheaves on $X$ such
  that $\sF$ is reflexive.
  If $U \subseteq X$ is an open subset such that $\codim(X \smallsetminus U) \ge
  2$, then every morphism $\varphi\colon \sG\rvert_U \to \sF\rvert_U$ extends
  uniquely to a morphism $\widetilde{\varphi}\colon\sG \to \sF$.
\end{corollary}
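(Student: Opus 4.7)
The plan is to use Lemma~\ref{lem:homreflexive} together with Proposition~\ref{prop:reflexivenormal} to reduce the assertion to the defining property of a normal sheaf. First I would form the coherent sheaf $\sH \coloneqq \HHom_{\cO_X}(\sG,\sF)$. Since $X$ satisfies $G_1$ and $S_2$ (hence in particular $G_0$ and $S_1$), and $\sF$ is reflexive by hypothesis, Lemma~\ref{lem:homreflexive} applies and shows that $\sH$ is reflexive.

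Next, since $X$ satisfies $G_1$ and $S_2$, Proposition~\ref{prop:reflexivenormal} applied to $\sH$ shows that $\sH$ is a normal sheaf. Concretely, for the open set $U \subseteq X$ and the closed set $Y = X \smallsetminus U$ of codimension $\ge 2$, the restriction map
\[
  \sH(X) \longrightarrow \sH(X \smallsetminus Y) = \sH(U)
\]
is bijective. Under the identifications $\sH(X) = \Hom_{\cO_X}(\sG,\sF)$ and, since $\HHom$ commutes with restriction to an open subscheme, $\sH(U) = \Hom_{\cO_U}(\sG\rvert_U,\sF\rvert_U)$, this bijection says exactly that every morphism $\varphi\colon \sG\rvert_U \to \sF\rvert_U$ lifts uniquely to a morphism $\widetilde{\varphi}\colon \sG \to \sF$.

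There is essentially no obstacle here beyond verifying that the hypotheses of the two invoked results are met and that $\HHom$ restricts correctly to open subschemes; the latter is a standard property of sheaf-Hom for coherent sheaves on a locally noetherian scheme. I would conclude by simply writing out these identifications to produce $\widetilde{\varphi}$ and noting that uniqueness follows from injectivity of the restriction map on $\sH$.
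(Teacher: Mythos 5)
Your proposal is correct and is essentially identical to the paper's proof: both pass to $\HHom_{\cO_X}(\sG,\sF)$, invoke Lemma~\ref{lem:homreflexive} for its reflexivity, and then apply Proposition~\ref{prop:reflexivenormal} to extend the section uniquely across the codimension-$\ge 2$ locus.
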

\begin{proof}
  The morphism $\varphi$ corresponds to a section of the sheaf
  $\HHom_{\cO_X}(\sG,\sF)$ over $U$.
  The sheaf $\HHom_{\cO_X}(\sG,\sF)$ is reflexive by
  \cref{lem:homreflexive}, hence the section $\varphi$ extends uniquely to a
  section $\widetilde{\varphi}$ of $\HHom_{\cO_X}(\sG,\sF)$ over $X$ by
  \cref{prop:reflexivenormal}.
\end{proof}
The following result says that on noetherian schemes satisfying $G_1$ and $S_2$,
reflexive sheaves are determined by their codimension one behavior.
\begin{citedthm}[{\cite[Thm.\ 1.12]{Har94}}]\label{thm:har94112}
  Let $X$ be a noetherian scheme satisfying $G_1$ and $S_2$, and let $Y
  \subseteq X$ be a closed subset of codimension at least $2$.
  Then, the restriction functor induces an equivalence of categories
  \[
    \begin{tikzcd}[row sep=0,column sep=1.475em]
      \biggl\{
        \begin{tabular}{@{}c@{}}
          reflexive coherent\\
          $\cO_X$-modules
        \end{tabular}
      \biggr\} \rar &
      \biggl\{
        \begin{tabular}{@{}c@{}}
          reflexive coherent\\
          $\cO_{X \smallsetminus Y}$-modules
        \end{tabular}
      \biggr\}\\
      \sF \rar[mapsto] & \sF\rvert_{X \smallsetminus Y}
    \end{tikzcd}%
    \index{reflexive sheaf!determined by codimension one behavior|)}
  \]
\end{citedthm}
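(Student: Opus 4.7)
The plan is to establish the equivalence of categories by separately verifying that the restriction functor is fully faithful and essentially surjective, with both relying on the reflexivity machinery already developed in the excerpt.

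First I would show fully faithfulness directly from \cref{cor:extendsections}. Given reflexive coherent sheaves $\sF$ and $\sG$ on $X$, the set of morphisms $\sG \to \sF$ is identified with global sections of $\HHom_{\cO_X}(\sG,\sF)$, which is reflexive by \cref{lem:homreflexive} (here we use that $G_1$ and $S_2$ imply $G_0$ and $S_1$). Thus \cref{cor:extendsections}, applied with $U = X \smallsetminus Y$, gives that every morphism $\sG\rvert_U \to \sF\rvert_U$ extends uniquely to a morphism $\sG \to \sF$. This shows the restriction functor is a bijection on Hom-sets.

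Next I would handle essential surjectivity. Given a reflexive coherent sheaf $\sF_0$ on $U = X \smallsetminus Y$, the first step is to extend $\sF_0$ to some coherent sheaf on $X$. Since $X$ is noetherian and the open immersion $j\colon U \hookrightarrow X$ is quasi-compact and quasi-separated, the pushforward $j_*\sF_0$ is quasi-coherent, and on a noetherian scheme every quasi-coherent sheaf is the filtered colimit of its coherent subsheaves; picking one that generates $j_*\sF_0$ over $U$ produces a coherent sheaf $\sF'$ on $X$ with $\sF'\rvert_U \simeq \sF_0$. The candidate reflexive extension is then
\[
  \sF \coloneqq (\sF')^{\vee\vee}.
\]
The sheaf $\sF$ is reflexive because the double dual of any coherent sheaf is reflexive on a $G_0,S_1$ scheme (this is \cite[Cor.\ 1.8]{Har94}, already invoked in the proof of \cref{lem:homreflexive}). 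Restricting to $U$ commutes with dualization and with $\HHom$, so $\sF\rvert_U \simeq (\sF_0)^{\vee\vee} \simeq \sF_0$, the last isomorphism holding because $\sF_0$ was assumed reflexive.

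The step I would expect to be most delicate is the coherent extension $\sF'$ of $\sF_0$ to $X$; once this is in hand, the remainder is formal. If one prefers to bypass coherent extension, an alternative is to take $\sF \coloneqq (j_*\sF_0)^{\vee\vee}$ directly and use that the $\HHom$ of a quasi-coherent sheaf into a coherent sheaf is coherent when the source is finitely generated locally, but this circles back to the same finiteness question. Either way, the combination of the two halves above yields the stated equivalence of categories, with quasi-inverse given by $\sF_0 \mapsto (\widetilde{\sF_0})^{\vee\vee}$ for any coherent extension $\widetilde{\sF_0}$.
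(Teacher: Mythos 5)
The paper does not prove this statement itself; it is quoted directly from \cite[Thm.\ 1.12]{Har94}, and your argument is essentially the proof given there. Both halves are correct: full faithfulness follows from \cref{cor:extendsections} (uniqueness of the extension gives injectivity on Hom-sets, existence gives surjectivity), and essential surjectivity follows by extending $\sF_0$ to a coherent subsheaf $\sF'$ of $j_*\sF_0$ and passing to the double dual $(\sF')^{\vee\vee}$, which is reflexive by \cite[Cor.\ 1.8]{Har94} and restricts to $\sF_0^{\vee\vee} \simeq \sF_0$ since $\HHom$ and dualization commute with restriction to opens.
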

\section{Dualizing complexes and Grothendieck duality}
\label{sect:grothendieckduality}
The main references for this section are \cites{Har66}{Con00}, although we need
the extension of the theory to separated morphisms that are essentially of
finite type, following \cite{Nay09}.
In the statement below, recall that for a noetherian scheme $X$,
\gls*{derivedcat} denotes the derived category
of $\cO_X$-modules with quasi-coherent cohomology, and 
\gls*{boundedderivedcat} is the full subcategory of
$\mathbf{D}_{\mathrm{qc}}(X)$ whose objects are complexes $\sF$ such that
$\mathbf{h}^i\sF = 0$ for all $i \ll 0$.
\begin{theorem}[{\cite[Thm.\ 5.3]{Nay09}; cf.\ \cite[Cor.\ V.3.4]{Har66}}]
  \label{thm:nayakshriek}
  Let\index{Grothendieck, Alexander!duality|(}
  \gls*{sepeftcat} denote the subcategory of the category of
  schemes whose objects are noetherian schemes, and whose morphisms are
  separated and essentially of finite type morphisms of schemes.
  Then, there exists a contravariant $\mathbf{D}^+_{\mathrm{qc}}$-valued
  pseudofunctor $(-)^!$\glsadd{exceptionalpullback} on
  $\mathbf{S}_{\mathbf{e}}$ such that
  \begin{enumerate}[label=$(\roman*)$]
    \item For proper morphisms, $(-)^!$ is pseudofunctorially isomorphic to the
      right adjoint of the right-derived direct image pseudofunctor $\RR f_*$;
    \item For essentially \'etale morphisms, $(-)^!$ equals the inverse image
      pseudofunctor $(-)^*$;
    \item For every cartesian diagram
      \[
        \begin{tikzcd}
          U \rar{j}\dar[swap]{g} & X\dar{f}\\
          V \rar{i} & Y
        \end{tikzcd}
      \]
      of noetherian schemes, where $f$ is proper and $i$ is flat, there is a
      flat base change isomorphism $j^*f^! \overset{\sim}{\to} g^!i^*$.%
      \index{Grothendieck, Alexander!duality|)}
  \end{enumerate}
\end{theorem}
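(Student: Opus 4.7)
The plan is to follow the Deligne--Verdier--Hartshorne strategy for finite-type morphisms, as extended by Nayak to handle localizations. First I would reduce the construction to two elementary classes of morphisms: essentially \'etale morphisms (including arbitrary localizations of noetherian rings, which are flat and formally unramified), and proper morphisms. Any morphism in $\mathbf{S}_{\mathbf{e}}$ can be factored as a localization followed by a separated finite-type morphism; applying Nagata compactification to the finite-type part further factors it as a localization, an open immersion, and a proper morphism. On each piece $(-)^!$ is forced on us: set $f^! = f^*$ when $f$ is essentially \'etale (in particular for localizations and open immersions), and set $f^! = f^\times$ equal to the right adjoint of $\RR f_*$ when $f$ is proper, whose existence is the classical Grothendieck--Hartshorne duality theorem in \cite{Har66} (or its derivator-theoretic refinement in \cite{Con00}). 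Define $f^!$ for a general morphism as the composition of these pieces along a chosen factorization.

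The main obstacle, and the technical core of the theorem, is to show that this definition is independent of the factorization and that it assembles into a genuine pseudofunctor. Independence reduces, via a standard zig-zag argument, to the comparison of two factorizations dominated by a common refinement; the comparison input is flat base change for the $(-)^\times$ of a proper morphism against a flat morphism, which I would prove first in the strict finite-type setting and then bootstrap by noting that essentially \'etale base change can be realized as a filtered colimit of \'etale base changes together with flat localizations, both of which are covered by the classical base change isomorphism. This is where one must also verify that the compactification chosen in the middle step does not affect the answer, using the fact that $j^*f^\times \overset{\sim}{\to} g^\times i^*$ is an isomorphism when $j$ is an open immersion.

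Once independence of factorizations is established, pseudofunctoriality (i.e., coherent isomorphisms $(gf)^! \simeq f^! g^!$ satisfying the cocycle condition) follows by choosing compatible factorizations of $f$, $g$, and $gf$ and unwinding the definition; the cocycle identity reduces to the analogous identity for $(-)^\times$ on proper morphisms and for $(-)^*$ on essentially \'etale morphisms, together with the flat base change coherence used to commute the two types of pieces past one another. The remaining clauses of the theorem then read off the construction: on proper morphisms $(-)^!$ equals $(-)^\times$ by choice, on essentially \'etale morphisms $(-)^!$ equals $(-)^*$ by choice, and the flat base change isomorphism $j^* f^! \overset{\sim}{\to} g^! i^*$ for a cartesian square with $f$ proper and $i$ flat is exactly the input used to prove independence of factorizations, now promoted to a canonical isomorphism. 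The hardest bookkeeping is ensuring all of these natural isomorphisms are compatible; this is where Nayak's argument in \cite{Nay09} devotes most of its effort, and I would expect my proof to follow his approach rather than reinvent it.
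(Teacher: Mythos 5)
The paper does not prove this statement: it is quoted verbatim as a black box from \cite[Thm.\ 5.3]{Nay09}, so there is no in-paper argument to compare against. Your outline is a faithful summary of Nayak's actual strategy --- factor through a localizing immersion followed by a proper morphism, set $f^! = f^*$ on essentially \'etale morphisms and $f^! = f^\times$ on proper ones, and spend the real effort on independence of the factorization and the coherence of the base-change isomorphisms. The one place you understate the difficulty is the very first reduction: the existence of a global factorization of a separated essentially finite type morphism as a localizing immersion followed by a proper morphism is itself a compactification theorem of Nayak (his extension of Nagata's theorem), not a formal consequence of the affine-local definition of ``essentially of finite type,'' and it is a genuine ingredient rather than a preliminary remark.
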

We note that a morphism is \textsl{essentially \'etale}%
\index{essentially etale@essentially \'etale|textbf} (resp.\
\textsl{essentially smooth}\index{essentially smooth|textbf}) if it is
separated, formally \'etale (resp.\ formally smooth), and essentially of finite
type \cite[(5.1) and (5.4)]{Nay09}.
Note that for certain classes of morphisms, the pseudofunctor $(-)^!$ has
concrete descriptions; see \cite[Prop.\ III.6.5 and Thm.\ III.6.7]{Har66} for
finite morphisms, and see \cites[III.2]{Har66}[(5.4)]{Nay09} for essentially
smooth morphisms.
\medskip
\par \cref{thm:nayakshriek} allows us to define the following:
\begin{definition}\label{def:canonicalsheaf}
  Let $h\colon X \to \Spec k$ be an equidimensional scheme that is separated and
  essentially of finite type over a field $k$.
  The \textsl{normalized dualizing complex}%
  \index{dualizing complex, $\omega_X^\bullet$!normalized|textbf} for $X$ is
  $\gls*{dualizingcomplex}
  \coloneqq h^!k$, where $h^!$ is the functor in \cref{thm:nayakshriek}.
  The \textsl{canonical sheaf}%
  \index{canonical bundle or sheaf, $\omega_X$|textbf} on $X$ is the coherent
  sheaf
  \[
    \gls*{canonicalsheaf} \coloneqq \mathbf{h}^{-\dim X} \omega_X^\bullet.
  \]
  Note that the canonical sheaf is reflexive if $X$ satisfies $G_1$ and $S_2$,
  since it is $S_2$ by \cite[Lem.\ 1.3]{Har07}, hence reflexive \cite[Thm.\
  1.9]{Har94}.
  If $X$ is normal, we can therefore define a \textsl{canonical
  divisor}\index{canonical divisor, $K_X$|textbf} \gls*{canonicaldivisor}
  as a choice of Weil divisor whose associated sheaf $\cO_X(K_X)$ is isomorphic
  to $\omega_X$.
  Note that $K_X$ is only well-defined up to linear equivalence.
\end{definition}
\par When $X$ is essentially smooth, the canonical sheaf $\omega_X$ is
isomorphic to the invertible sheaf of top differential forms
$\bigwedge^{\dim X}\Omega_X$ \cites[III.2]{Har66}[(5.4)]{Nay09}.
When $X$ is Gorenstein, the canonical sheaf $\omega_X$ is invertible
\cites[Exer.\ V.9.7]{Har66}[(5.10)]{Nay09}.

\section{Base ideals and base loci}
We define the classical notions of base ideals and base loci for Cartier
divisors.
\begin{definition}[see {\cite[\S1.1.B]{Laz04a}}]\label{def:baseideal}
  Let $X$ be a scheme over a field $k$, and let $L$ be a line bundle on $X$.
  If $V \subseteq H^0(X,L)$ is a finite-dimensional $k$-vector space,
  then the associated projective space $\gls*{linearsystem} \coloneqq
  \PP(V^\vee)$ of one-dimensional subspaces of $V$ is called a \textsl{linear
  system}\index{linear system, $\lvert V \rvert$|textbf}.
  If $V = H^0(X,L)$, then $\lvert V \rvert$ is the \textsl{complete linear
  system}%
  \index{linear system, $\lvert V \rvert$!complete, $\lvert D \rvert$|textbf}
  associated to $L$.
  The \textsl{base ideal}\index{base!ideal, $\fb(\lvert V \rvert)$|textbf}
  of $\lvert V \rvert$ is
  \begin{equation}\label{eq:baseidealeval}
    \fb\bigl(\lvert V \rvert\bigr) \coloneqq \im\bigl(V \otimes_k L^{-1} 
    \xrightarrow{\eval} \cO_X\bigr).\glsadd{baseideal}
  \end{equation}
  The \textsl{base scheme}\index{base!scheme, $\Bs(\lvert V \rvert)$|textbf}
  of $\lvert V \rvert$ is the closed subscheme \gls*{basescheme} of $X$
  defined by $\fb(\lvert V \rvert)$, and the
  \textsl{base locus}\index{base!locus, $\Bs(\lvert V \rvert)_\red$|textbf}
  of $\lvert V \rvert$ is the underlying closed subset \gls*{baselocus} of
  $\Bs(\lvert V \rvert)$.
  \par If the line bundle $L$ is of the form $\cO_X(D)$ for a Cartier divisor
  $D$, then the complete linear system associated to $\cO_X(D)$ is denoted by
  \gls*{completelinearsystem}.
\end{definition}
Note that if $X$ is either projective over a field or reduced, then every line
bundle $L$ on $X$ is of the form $\cO_X(D)$ for a Cartier divisor $D$ \cite[Ex.\
1.1.5]{Laz04a}.
\medskip
\par We will need the following description for how base ideals transform under
birational morphisms.
\begin{lemma}\label{lem:baselocusnormal}
  Let\index{base!ideal, $\fb(\lvert V \rvert)$!under birational morphisms}
  $f\colon X' \to X$ be a birational morphism between complete varieties,
  where $X$ is normal.
  Then, for every Cartier divisor $D$ on $X$, we have
  \[
    f^{-1}\fb\bigl(\lvert D \rvert\bigr)\cdot\cO_{X'} = \fb\bigl(\lvert f^*D
    \rvert\bigr).
  \]
\end{lemma}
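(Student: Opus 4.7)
The plan is to reduce the identity to the compatibility of the evaluation map defining the base ideal with pullback along $f$. The two inputs I will need are (a) the fact that $f_*\cO_{X'} = \cO_X$, since $f\colon X' \to X$ is a proper (both varieties are complete) birational morphism to a normal variety, and (b) the projection formula for the line bundle $\cO_X(D)$.

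First I would use (a) and (b) to identify global sections: the projection formula gives
\[
  f_*\cO_{X'}(f^*D) \simeq f_*\bigl(f^*\cO_X(D)\bigr) \simeq \cO_X(D) \otimes f_*\cO_{X'} \simeq \cO_X(D),
\]
so taking global sections yields a canonical isomorphism $H^0(X',\cO_{X'}(f^*D)) \simeq H^0(X,\cO_X(D))$. Next, I would write the evaluation map from \cref{eq:baseidealeval} for $D$ on $X$,
\[
  H^0\bigl(X,\cO_X(D)\bigr) \otimes_k \cO_X(-D) \xrightarrow{\ \eval\ } \cO_X,
\]
whose image is by definition $\fb(\lvert D \rvert)$. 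Pulling this map back by $f$ (which is right exact, so preserves images, and which sends tensor products over $k$ to tensor products over $k$) gives a morphism
\[
  H^0\bigl(X,\cO_X(D)\bigr) \otimes_k \cO_{X'}(-f^*D) \longrightarrow \cO_{X'}
\]
whose image is precisely $f^{-1}\fb(\lvert D \rvert) \cdot \cO_{X'}$ (this is the definition of the inverse image ideal sheaf). Under the identification of global sections from the previous step, this pulled-back evaluation map is the evaluation map defining $\fb(\lvert f^*D \rvert)$, so its image also equals $\fb(\lvert f^*D \rvert)$, yielding the claimed equality.

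I do not expect any real obstacle: the only thing to verify carefully is that the identification $H^0(X',\cO_{X'}(f^*D)) \simeq H^0(X,\cO_X(D))$ is compatible with the evaluation maps on both sides, which follows from the naturality of the projection formula and the adjunction $f^* \dashv f_*$. The normality hypothesis on $X$ enters only through $f_*\cO_{X'} = \cO_X$; completeness is used to ensure $f$ is proper so that this holds and so that $H^0(X',-) = H^0(X,f_*-)$.
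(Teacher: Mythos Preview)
Your proposal is correct and follows essentially the same approach as the paper's proof: both use $f_*\cO_{X'}\simeq\cO_X$ (from normality of $X$) and the projection formula to identify $H^0(X,\cO_X(D))\simeq H^0(X',\cO_{X'}(f^*D))$, then obtain the result by pulling back the evaluation map \eqref{eq:baseidealeval}. You simply spell out in more detail what the paper states in a single sentence.
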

\begin{proof}
  Since $X$ is normal, we have $f_*\cO_{X'} \simeq \cO_X$ \cite[Proof of Cor.\
  III.11.4]{Har77}.
  By the projection formula, we then have $H^0(X,\cO_X(D)) =
  H^0(X',\cO_{X'}(f^*D))$, and the lemma then follows by pulling back the
  evaluation map \eqref{eq:baseidealeval}.
\end{proof}
We define the notion of a graded family of ideals, of which base ideals
will be an important example.
\begin{definition}[{see \cite[Def.\ 2.4.14]{Laz04b}}]
  Let $X$ be a locally noetherian scheme.
  A \textsl{graded family of ideals}%
  \index{graded family of ideals, $\mathfrak{a}_\bullet$|textbf}
  $\gls*{gradedfamily} = \{\fa_m\}_{m\in\NN}$ on
  $X$ is a collection of coherent ideal sheaves $\fa_m \subseteq \cO_X$ such
  that $\fa_0 = \cO_X$, and such that for all $m,n\ge0$, we have
  $\fa_m \cdot \fa_n \subseteq \fa_{m+n}$.
\end{definition}
We now describe how base ideals can form a graded family of ideals.
\begin{example}[{see \cite[Ex.\ 1.1.9]{Laz04a}}]\label{ex:triplelinsys}
  Let $X$ be a complete scheme over a field $k$, and let $D$ be a
  $\QQ$-Cartier divisor on $X$.
  We define a graded family of ideals \gls*{gradedfamilydivisor} by setting
  \[
    \fa_m(D) = \begin{cases}
      \fb\bigl(\lvert mD \rvert\bigr) & \text{if $mD$ is integral}\\
      \hfil 0 & \text{otherwise}
    \end{cases}
  \]
  where $\fb(\lvert mD \rvert)$ is the base ideal of the complete linear series
  $\lvert mD \rvert$ (\cref{def:baseideal}).
  Note that $\fa_\bullet(D)$ is a graded family since the multiplication map
  $H^0(X,mD) \otimes_k H^0(X,nD) \to H^0(X,(m+n)D)$ induces an inclusion
  $\fb(\lvert mD \rvert) \cdot \fb(\lvert nD \rvert) \subseteq \fb(\lvert (m+n)D
  \rvert)$.
\end{example}
\section{Asymptotic invariants of line bundles}
In this section, we review some aspects of the theory of asymptotic invariants
of Cartier divisors and their base loci.
We have taken care to work over arbitrary fields; see \cite{ELMNP05} for an
overview on the theory of asymptotic invariants for smooth complex varieties.
\subsection{Stable base loci}
We start by defining a stable ``asymptotic'' version of the base locus due to
Fujita\index{Fujita, Takao}.
\begin{citeddef}[{\cite[Def.\ 1.17]{Fuj83}}]
  Let $X$ be a complete scheme over a field, and let $D$ be a Cartier
  divisor on $X$.
  The \textsl{stable base locus}\index{stable base locus, $\SB(D)$|textbf}
  of $D$ is the closed subset
  \begin{equation}\label{eq:defstablebaselocus}
    \gls*{stablebaselocus} \coloneqq \bigcap_m \Bs\bigl( \lvert mD \rvert \bigr)_\red
  \end{equation}
  of $X$, where the intersection runs over every integer $m > 0$.
  The noetherian property implies $\SB(D) = \SB(nD)$ for every
  integer $n > 0$ \cite[Ex.\ 2.1.23]{Laz04a}, hence the formula
  \cref{eq:defstablebaselocus} can be used for $\QQ$-Cartier divisors
  $D$ by taking the intersection over every integer $m > 0$ such that
  $mD$ is a Cartier divisor.
\end{citeddef}
The stable base locus is not a numerical invariant of $D$, as we can see in the
following example.
\begin{example}[cf.\ {\cite[Ex.\ 10.3.3]{Laz04b}}]
  \index{stable base locus, $\SB(D)$!not a numerical invariant}
  Let $C$ be an elliptic curve, and
  let $P_1$ and $P_2$ be degree zero divisors on $C$ that are torsion and
  non-torsion, respectively.
  Then, $P_1$ is semiample, hence $\SB(P_1) = \emptyset$.
  On the other hand, we have $\SB(P_2) = C$ since no multiple of $P_2$ has
  global sections.
  \par Using this observation, we also construct an example where the stable
  base locus is not a numerical invariant, even for big and nef divisors.
  The construction below is due to
  Cutkosky\index{Cutkosky, Dale, construction of}; see
  \cite[\S2.3.B]{Laz04a}.
  Let $A$ be divisor on $C$ of degree $1$, and
  for every degree zero divisor $P$ on $C$, consider the projective space bundle
  \[
    X_P \coloneqq \PP\bigl( \cO_C(P) \oplus \cO_C(A+P)\bigr) \longrightarrow C.
  \]
  Denote by $\xi_P$ the divisor on $X_P$ corresponding to the tautological line
  bundle $\cO_{X_P}(1)$, which is big and nef by \cite[Lems.\
  2.3.2$(iii)$ and 2.3.2$(iv)$]{Laz04a}.
  Since the bundles defining $X_P$ only differ by a twist by the divisor $P$,
  the varieties $X_P$ are all naturally isomorphic to $X_0$, and
  under this identification, the divisors $\xi_P$ are numerically equivalent
  divisors on $X_0$.
  Now let $P_1$ and $P_2$ be as in the previous paragraph.
  Then, since $P_1$ is semiample, $\xi_{P_1}$ is also semiample by \cite[Lem.\
  2.3.2$(v)$]{Laz04a}, hence $\SB(\xi_{P_1}) = \emptyset$.
  On the other hand, $\SB(\xi_{P_2})$ contains the section $C \simeq
  \PP(\cO_C(P_2)) \subseteq X_{P_2}$ corresponding to the first projection
  $\cO_C(P_2) \oplus \cO_C(A+P_2) \to \cO_C(P_2)$, since $\SB(P_2) = C$.
\end{example}
\par We will see in the next subsection how one can define a numerically
invariant approximation of $\SB(D)$.
\subsection{Augmented base loci}\label{sect:bplus}
We define a numerically invariant upper approximation of the stable base locus,
which was first introduced by Nakamaye\index{Nakamaye, Michael} \cite{Nak00}.
\begin{definition}[see {\cite[Def.\ 1.2]{ELMNP06}}]
  Let $X$ be a projective scheme over a field, and let $D$ be an
  $\RR$-Cartier divisor on $X$.
  The \textsl{augmented base locus}
  \index{augmented base locus, $\Bplus(D)$|textbf} of $D$ is the closed subset
  \[
    \gls*{augmentedbaselocus} \coloneqq \bigcap_A \SB(D-A)
  \]
  of $X$, where the intersection runs over all ample $\RR$-Cartier
  divisors $A$ such that $D - A$ is a $\QQ$-Cartier divisor.
  If $X$ is a variety, then by \cite[Rem.\ 1.3]{ELMNP06}, we have
  \[
    \Bplus(D) = \bigcap_{D \equiv_\RR A + E} \Supp E
  \]
  where the intersections runs over all $\RR$-numerical equivalences $D
  \equiv_\RR A + E$ where $A$ is an ample $\QQ$-Cartier divisor and $E$ is an
  effective $\RR$-Cartier divisor.
\end{definition}
We note that $D$ is ample if and only if $\Bplus(D) = \emptyset$, and
if $X$ is a variety, then $D$ is big if and only if $\Bplus(D) \ne
X$; see \cite[Ex.\ 1.7]{ELMNP06}.%
\index{augmented base locus, $\Bplus(D)$!detects ampleness}%
\index{augmented base locus, $\Bplus(D)$!detects bigness}
\medskip
\par We will need the following birational transformation rule for
augmented base loci.
\begin{proposition}[cf.\ {\cite[Prop.\ 2.3]{BBP13}}]\label{prop:bbp23}
  \index{augmented base locus, $\Bplus(D)$!under birational morphisms}
  Let $f\colon X' \to X$ be a birational morphism between normal projective
  varieties.
  If $D$ is an $\RR$-Cartier divisor on $X$, then we have
  \begin{equation}\label{eq:bbp23prop}
    \Bplus(f^*D) = f^{-1}\bigl(\Bplus(D)\bigr) \cup \exc(f).
  \end{equation}
\end{proposition}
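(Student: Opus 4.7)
The plan is to prove the two inclusions separately, using the characterization of $\Bplus(D)$ as the intersection $\bigcap \Supp E$ over all $\RR$-numerical equivalences $D \equiv_\RR A + E$ with $A$ an ample $\QQ$-Cartier divisor and $E$ an effective $\RR$-Cartier divisor, noted just before the statement.

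For $\Bplus(f^*D) \subseteq f^{-1}(\Bplus(D)) \cup \exc(f)$, I take $x' \in X'$ outside the right-hand side, so $f(x') \notin \Bplus(D)$ and $x' \notin \exc(f)$. First I pick a decomposition $D \equiv_\RR A + E$ with $A$ ample on $X$, $E$ effective, and $f(x') \notin \Supp E$. Pulling back gives $f^*D \equiv_\RR f^*A + f^*E$ with $x' \notin \Supp f^*E$, but $f^*A$ is only big and nef. To extract an ample summand I would invoke Nakamaye's theorem: by the projection formula $(f^*A)^{\dim V}\cdot V = A^{\dim V}\cdot f_*V$, the null locus of $f^*A$ consists exactly of the subvarieties contracted by $f$, so $\Bplus(f^*A) = \exc(f)$. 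Since $x' \notin \exc(f) = \Bplus(f^*A)$, I can refine further as $f^*A \equiv_\RR A' + F$ with $A'$ ample on $X'$, $F$ effective, and $x' \notin \Supp F$. Adding the two decompositions yields $f^*D \equiv_\RR A' + (F + f^*E)$, which witnesses $x' \notin \Bplus(f^*D)$.

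For the reverse inclusion $f^{-1}(\Bplus(D)) \cup \exc(f) \subseteq \Bplus(f^*D)$, I split into two parts (reducing to the case $D$ big, since otherwise $\Bplus(f^*D) = X'$). First, $\exc(f) \subseteq \Bplus(f^*D)$: each $x' \in \exc(f)$ lies on some curve $C$ contracted by $f$, and the projection formula gives $f^*D \cdot C = 0$, so $(f^*D - A')\cdot C < 0$ for every ample $A'$ on $X'$, forcing $C \subseteq \SB(f^*D - A')$ and hence $x' \in C \subseteq \Bplus(f^*D)$. Second, $f^{-1}(\Bplus(D)) \subseteq \Bplus(f^*D)$, which I prove by contrapositive. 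Given an ample $\QQ$-Cartier $A'$ on $X'$ with $x' \notin \SB(f^*D - A')$, I would choose a small ample $\QQ$-Cartier $A$ on $X$ so that $A' - f^*A$ is still ample on $X'$, pass to a multiple $m \gg 0$ making $m(A' - f^*A)$ very ample, and pick an effective representative $G \in |m(A' - f^*A)|$ avoiding $x'$; this gives a linear equivalence $mA' \sim mf^*A + G$. A nonvanishing section of $km(f^*D - A')$ at $x'$ then corresponds, after multiplication by the canonical section of $kG$, to a section of $f^*(km(D - A))$ nonvanishing at $x'$. Using $f_*\cO_{X'} \simeq \cO_X$ (since $X$ is normal) and the projection formula, this descends to a section of $km(D - A)$ on $X$ nonvanishing at $f(x')$, showing $f(x') \notin \SB(D - A) \supseteq \Bplus(D)$.

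The main obstacle is the second part of the reverse inclusion, where the passage from numerical to linear equivalence together with the choice of an effective representative $G$ avoiding $x'$ requires care; the crucial ingredient is that openness of ampleness in the numerical class allows one to fit $f^*A$ strictly below $A'$, after which the projection formula does the work. The $\RR$-Cartier case is handled without extra work since the definition of $\Bplus$ already restricts attention to ample $\RR$-Cartier $A'$ with $D - A'$ being $\QQ$-Cartier, so the argument above applies verbatim.
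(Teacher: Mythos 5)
Your overall architecture is sound and, notably, it delivers exactly what the paper wants from this statement: the paper does not write out a proof but defers to \cite[Prop.\ 2.3]{BBP13} with $F=0$, precisely so that the negativity lemma is not needed, and your argument avoids that lemma entirely, replacing it with Nakamaye's theorem (\cref{thm:nakamaye}, available here in arbitrary characteristic via Birkar) to get $\Bplus(f^*A) \subseteq \exc(f)$, and with an explicit descent of sections through $f_*\cO_{X'} \simeq \cO_X$ in place of pushforward plus negativity. The first inclusion and the containment $\exc(f) \subseteq \Bplus(f^*D)$ are correct as written; for the latter you should record that every point of $\exc(f)$ lies on a contracted curve because the fibers over $\exc(f)$ are positive-dimensional by Zariski's main theorem, which uses the normality of $X$.

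The genuine gap is in the rationality bookkeeping of the last step, and your closing claim that the $\RR$-Cartier case goes through ``verbatim'' is where it hides. If $D$ is not $\QQ$-Cartier, then the divisors $A'$ occurring in the definition of $\Bplus(f^*D)$ are themselves irrational (they are ample $\RR$-Cartier divisors with $f^*D - A'$ $\QQ$-Cartier), so with your choice of a $\QQ$-Cartier $A$ on $X$ the divisor $A' - f^*A$ is not $\QQ$-Cartier: no multiple $m(A'-f^*A)$ is an integral very ample divisor, there is no linear system $\lvert m(A'-f^*A)\rvert$ from which to draw $G$, and $km(D-A)$ is not an integral Cartier divisor, so neither $\lvert km(D-A)\rvert$ nor $\SB(D-A)$ is defined and $\SB(D-A) \supseteq \Bplus(D)$ is not one of the terms in the defining intersection. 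The fix is to choose $A$ to be an ample $\RR$-Cartier divisor on $X$ with $D - A$ $\QQ$-Cartier; such $A$ exist and can be taken arbitrarily small (perturb the coefficients in a decomposition of $D$ to rational numbers and absorb the difference, together with a small rational multiple of a fixed ample Cartier divisor, into $A$). Then $f^*(D-A)$ is $\QQ$-Cartier, hence so is $A' - f^*A = f^*(D-A) - (f^*D - A')$, and every step of your construction goes through. With $A$ and $A'$ so chosen, your section-by-section argument is just the inclusion $\SB(B_1+B_2) \subseteq \SB(B_1) \cup \SB(B_2)$ applied to $f^*(D-A) = (f^*D - A') + (A' - f^*A)$, combined with $f^{-1}(\SB(D-A)) = \SB(f^*(D-A))$ from \cref{lem:baselocusnormal}, which packages the descent more cleanly.
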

The proof of \cite[Prop.\ 2.3]{BBP13} applies in this setting after setting $F =
0$ in their notation, since this makes the application of the negativity
lemma\index{negativity lemma} unnecessary.
\begin{remark}
  If one works over an algebraically closed field, then the augmented base locus
  on the left-hand side of \cref{eq:bbp23prop} can be replaced by $\Bplus(f^*D +
  F)$, where $F$ is any $f$-exceptional $\RR$-Cartier divisor on $X'$.
  The proof of this follows \cite[Prop.\ 2.3]{BBP13}, after proving the
  negativity lemma\index{negativity lemma} in arbitrary characteristic (see
  \cite[(2.3)]{Bir16}).
\end{remark}
We also need the following description for the augmented base locus for nef
Cartier divisors, which is originally due to Nakamaye\index{Nakamaye, Michael}
for smooth projective varieties over algebraically closed fields of
characteristic zero.
\begin{theorem}[{\cite[Thm.\ 1.4]{Bir17}; cf.\ \cite[Thm.\ 0.3]{Nak00}}]
  \label{thm:nakamaye}
  Let\index{augmented base locus, $\Bplus(D)$!for nef divisors}
  $X$ be a projective scheme over a field, and suppose
  $D$ is a nef $\RR$-Cartier divisor.
  Then, we have
  \[
    \Bplus(D) = \bigcup_{(L^{\dim V} \cdot V) = 0} V,
  \]
  where $V$ runs over all positive-dimensional subvarieties $V \subseteq
  X$ such that $(L^{\dim V} \cdot V) = 0$.
\end{theorem}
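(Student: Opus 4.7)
The plan is to prove the two inclusions separately, with the reverse inclusion (which is the content of Nakamaye's theorem proper) being the substantive direction.

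For the inclusion ``null locus $\subseteq \Bplus(D)$'': let $V$ be a positive-dimensional subvariety with $(D^{\dim V}\cdot V)=0$, and suppose for contradiction that $V\not\subseteq \Bplus(D)$. By the alternative description of $\Bplus(D)$ recalled before \cref{prop:bbp23}, there is a numerical equivalence $D\equiv_\RR A+E$ with $A$ an ample $\QQ$-Cartier divisor and $E$ an effective $\RR$-Cartier divisor such that $V\not\subseteq \Supp E$. Since $V$ is not contained in $\Supp E$, the restriction $E\rvert_V$ is a well-defined effective $\RR$-Cartier class. Expanding $(D^{\dim V}\cdot V)=((A+E)^{\dim V}\cdot V)$ by multilinearity, the pure term $(A^{\dim V}\cdot V)$ is strictly positive by the Nakai--Moishezon criterion, while each mixed term $(A^i\cdot E^{\dim V-i}\cdot V)$ with $i<\dim V$ is nonnegative because $A$ is nef and $E\rvert_V$ is effective. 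This forces $(D^{\dim V}\cdot V)>0$, a contradiction.

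For the reverse inclusion $\Bplus(D)\subseteq$ null locus, I would proceed by induction on $\dim X$. If $D$ is not big, then $(D^{\dim X})=0$, so $X$ itself lies in the null locus while $\Bplus(D)=X$, and the inclusion is trivial. If $D$ is big and nef, pick an irreducible component $Z$ of $\Bplus(D)$; it suffices to show $(D^{\dim Z}\cdot Z)=0$. Assume toward contradiction that $D\rvert_Z$ is big on $Z$. Then, applying a Fujita-type vanishing theorem valid in arbitrary characteristic (cf.\ the Szpiro--Lewin-M\'en\'egaux/Fujita vanishing mentioned in \cref{rem:szpirolmvanishing}, or its higher-dimensional analogue from \cite{Fuj83}), construct, for $m\gg 0$, enough sections of $\cO_X(mD)$ whose common base scheme detects $\Bplus(D)$. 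The key point is that the bigness of $D\rvert_Z$ lets one separate $Z$ from the rest of $\Bplus(D)$, producing an effective divisor $E\sim_\QQ mD - A$ for some ample $A$ with $Z\not\subseteq\Supp E$, contradicting that $Z$ is a component of $\Bplus(D)$. A clean way to organize the induction is to restrict to a general hyperplane section containing a chosen very general point of $Z$ and apply the inductive hypothesis, using \cref{prop:bbp23} to control how $\Bplus$ transforms.

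The main obstacle is precisely the reverse inclusion in positive characteristic: the Kawamata--Viehweg and Kodaira vanishing theorems, which power the original Nakamaye argument over $\CC$, are unavailable (cf.\ \cref{ex:raynaud}). The substitute is Fujita's asymptotic vanishing together with a careful induction, which is the route taken in \cite{Bir17}. One must also be careful that $X$ is allowed to be a non-reduced or reducible projective scheme over an arbitrary (possibly imperfect) field, so intersection numbers should be understood in the Kleiman sense as in \cite[App.\ B]{Kle05}, and the reduction to varieties is carried out by restricting to each irreducible component with its reduced structure.
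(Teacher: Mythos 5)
The thesis itself offers no proof of \cref{thm:nakamaye}: it is quoted verbatim as a cited theorem from \cite[Thm.\ 1.4]{Bir17}, so your proposal can only be measured against Birkar's argument. Within your write-up, the easy inclusion already has a concrete flaw. After writing $D \equiv_\RR A + E$ with $A$ ample, $E$ effective, and $V \not\subseteq \Supp E$, you expand $(D^{\dim V}\cdot V)$ by multilinearity and assert that every mixed term $(A^i\cdot E^{\dim V - i}\cdot V)$ is nonnegative ``because $A$ is nef and $E\rvert_V$ is effective.'' That is false as soon as $E$ appears with exponent at least two: effective divisors can have negative self-intersection (a $(-1)$-curve on a surface already gives $(E^2\cdot V)<0$ with $V=X$). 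The standard repair is to substitute $D\equiv A+E$ into one factor at a time, so that each step involves a single factor of $E$ against a product of nef classes and yields $(D^{\dim V}\cdot V)\ge(A^{\dim V}\cdot V)>0$; or, more simply, to note that $D\rvert_V\equiv A\rvert_V+E\rvert_V$ is nef and big on $V$ and that a big nef class has positive top self-intersection.

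The reverse inclusion is where the theorem lives, and your text for it is a plan rather than a proof: the sentence ``construct, for $m\gg0$, enough sections of $\cO_X(mD)$ whose common base scheme detects $\Bplus(D)$'' is precisely the assertion to be established. The mechanism that turns bigness of $D\rvert_Z$ on a component $Z$ of $\Bplus(D)$ into a contradiction is a lifting statement --- sections of a twist of $mD\rvert_Z$ must be extended to $X$ so as to remove $Z$ from the augmented base locus --- and over $\CC$ this is exactly where Kawamata--Viehweg vanishing enters Nakamaye's proof. Fujita's vanishing theorem by itself does not give surjectivity of the restriction map $H^0(X,\cO_X(mD-A))\to H^0(Z,\cO_Z((mD-A)\rvert_Z))$, and Birkar's argument replaces the vanishing input with a genuinely different inductive scheme (descending induction through chains of subvarieties, Frobenius techniques in positive characteristic, and a careful treatment of non-reduced structures and imperfect residue fields). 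Your fallback of restricting to a general hyperplane section and invoking \cref{prop:bbp23} also does not close the induction, since that proposition controls $\Bplus$ under birational pullback, not under restriction to a divisor; comparing $\Bplus(D)\cap H$ with $\Bplus(D\rvert_H)$ is itself a nontrivial problem. So the substantive direction remains unproved in your proposal.
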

We will also need the following result, which describes how $\Bplus(D)$ is the
locus where $D$ is ample.
Regularity in the proof below is in the sense of Castelnuovo and
Mumford;
see \cite[Def.\ 1.8.4]{Laz04a} for the definition.
\begin{proposition}[cf.\ {\cites[Prop.\ 2.7]{Kur13}[Lem.\ 7.12]{FM}}]
  \label{prop:kur13prop27}
  Let\index{augmented base locus, $\Bplus(D)$!detects ampleness}
  $X$ be a projective scheme over a field, and let $D$ be a $\QQ$-Cartier
  divisor on $X$ with a decomposition $\mathcal{D}$.
  Then, $\Bplus(D)$ is the smallest closed subset of $X$ such that for every
  coherent sheaf $\sF$ on $X$ and for every $\RR$-Cartier divisor $E$ with 
  decomposition $\mathcal{E}$, there exists an integer $n_0$ such
  that the sheaves
  \[
    \sF \otimes \cO_X\bigl(\lfloor E+nD \rfloor + P\bigr)
    \qquad \text{and} \qquad \sF \otimes \cO_X\bigl(\lceil E+nD
    \rceil + P\bigr)
  \]
  are globally
  generated on $X \smallsetminus \Bplus(D)$ for every integer $n \ge n_0$
  and every nef Cartier divisor $P$, where the rounding is done with respect to
  $\mathcal{D}$ and $\mathcal{E}$.
  \par If $X$ is normal, then the same conclusion holds for
  $\QQ$-Cartier $\QQ$-Weil divisors $D$ and $\RR$-Weil divisors $E$, where the
  rounding is done in the sense of $\RR$-Weil divisors.
\end{proposition}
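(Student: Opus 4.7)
The proof has two parts: showing that $\Bplus(D)$ satisfies the global generation property, and showing that it is the smallest closed subset doing so. The minimality is straightforward: given any such closed $Z$, apply the hypothesis with $\sF = \cO_X$, $P = 0$, and $E = -B$ (with its trivial decomposition), where $B$ is an ample Cartier divisor. For $n \ge n_0$ with $nD$ integral Cartier, the sheaf $\cO_X(nD - B)$ is globally generated on $X \smallsetminus Z$, so $\SB(D - \tfrac{1}{n}B) \subseteq Z$. Since $\tfrac{1}{n}B$ is ample, the definition of $\Bplus(D)$ yields $\Bplus(D) \subseteq \SB(D - \tfrac{1}{n}B) \subseteq Z$.

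For the substantive direction, a Noetherian argument applied to the defining intersection of $\Bplus(D)$ produces an ample $\QQ$-Cartier divisor $\widetilde{A}$ with $\Bplus(D) = \SB(D - \widetilde{A})$; clearing denominators yields an integer $r > 0$, an ample Cartier divisor $A = r\widetilde{A}$, and an effective Cartier divisor $F$ with $\Supp F = \Bplus(D)$ satisfying the key linear equivalence
\[
  rD \sim A + F.
\]
Passing to a common refinement of $\mathcal{D}$, $\mathcal{E}$, and decompositions of $A$ and $F$, and writing $n = qr + s$ with $0 \le s < r$, one verifies that
\[
  \cO_X\bigl(\lfloor E + nD \rfloor + P\bigr) \simeq \cO_X\bigl(\lfloor E + sD \rfloor + qA + qF + P\bigr),
\]
where $\lfloor E + sD \rfloor$ takes only finitely many values as $s$ ranges over $\{0,1,\ldots,r-1\}$. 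Now apply Fujita's vanishing theorem \cite[Thm.\ 1.4.35]{Laz04a} to the finite collection $\{\sF \otimes \cO_X(\lfloor E + sD \rfloor)\}_s$ and the ample divisor $A$: there exists $q_0$ such that for all $q \ge q_0$, all nef Cartier $P$, and all $s$, the sheaf $\sF \otimes \cO_X(\lfloor E + sD \rfloor + qA + P)$ is $0$-regular with respect to a fixed very ample divisor on $X$, hence globally generated on $X$. Multiplying these global sections by the canonical section $s_F^{\otimes q} \in H^0(X, \cO_X(qF))$, which is nonvanishing precisely on $X \smallsetminus \Bplus(D)$, produces global sections of $\sF \otimes \cO_X(\lfloor E + nD \rfloor + P)$ that generate it there.

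The ceiling version follows identically, since $\lceil E + nD \rceil - \lfloor E + nD \rfloor$ is an effective Cartier divisor taking only finitely many values determined by the decomposition, which can be absorbed into the bounded remainder $\lfloor E + sD \rfloor$. For the normal case with $\QQ$-Weil $D$ and $\RR$-Weil $E$, the argument is run on the open subset where $\lfloor E + nD \rfloor + P$ is locally Cartier---whose complement in $X$ has codimension $\ge 2$---and the generating sections extend to $X \smallsetminus \Bplus(D)$ by reflexivity using \cref{cor:extendsections}. The main obstacle throughout is bookkeeping the rounding compatibly across multiple decompositions, which the initial common refinement handles cleanly; beyond that, the argument is a standard Castelnuovo--Mumford regularity computation via Fujita vanishing, combined with the section-of-$F$ trick that replaces the ampleness of $A$ by ampleness \emph{away from} $\Bplus(D)$.
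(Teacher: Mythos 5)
Your minimality argument and the overall Fujita-vanishing/Castelnuovo--Mumford skeleton match the paper's proof, but there is a genuine gap at the key decomposition. You claim that from $\Bplus(D) = \SB(rD - A)$ one obtains an effective Cartier divisor $F$ with $rD \sim A + F$ and $\Supp F = \Bplus(D)$ (and your later use of the ``canonical section $s_F^{\otimes q}$, nonvanishing precisely on $X \smallsetminus \Bplus(D)$'' requires at least $\Supp F \subseteq \Bplus(D)$). No such $F$ exists in general: $\Bplus(D)$ is only the \emph{intersection} of the supports $\Supp E$ over all decompositions $D \equiv_\RR A + E$, not the support of any single member, and $\Bplus(D)$ may well have codimension $\ge 2$. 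For instance, if $D = f^*H$ is the pullback of an ample divisor under a small resolution $f\colon X \to Y$ of a threefold contracting a single curve $C$, then $\Bplus(D) = C$ by \cref{thm:nakamaye}, while the support of any effective divisor in $|q(rD-A)|$ is purely two-dimensional, so it can never be contained in $C$. Your single-section trick therefore fails exactly at the points of $\Supp F \smallsetminus \Bplus(D)$. The paper avoids this by using the \emph{entire} linear system: $\cO_X(q(rD-A))$ is globally generated away from $\Bs(|q(rD-A)|)_\red = \Bplus(D)$, and tensoring your globally generated sheaves $\sF \otimes \cO_X(\lfloor E + sD\rfloor + qA + P)$ with it yields sheaves globally generated away from $\Bplus(D)$; setting $q = m$ then recovers $\sF \otimes \cO_X(\lfloor E + nD\rfloor + P)$. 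This is a local fix to your argument, but as written the step is false.

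A secondary issue concerns the normal/Weil-divisor case: you propose to produce sections on the locus where $\lfloor E + nD\rfloor$ is Cartier and extend across the codimension-two complement by \cref{cor:extendsections}, but that corollary applies to reflexive sheaves, and $\sF \otimes \cO_X(\lfloor E + nD\rfloor + P)$ need not be reflexive for an arbitrary coherent $\sF$. The paper's route is more direct: the round-downs of $\RR$-Weil divisors are taken coefficient-wise, $\cO_X(\lfloor E + jD\rfloor)$ is coherent regardless, and Fujita vanishing plus the identity $\lfloor E + jD\rfloor + mrD = \lfloor E + (mr+j)D\rfloor$ for integral $mrD$ run verbatim, with no extension step needed.
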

\begin{proof}
  We first show that $\Bplus(D)$ satisfies the condition in the proposition.
  If $\Bplus(D) = X$, then the condition trivially holds.
  We therefore assume that $\Bplus(D) \ne X$.
  \par Let $A$ be an ample and free Cartier divisor on $X$.
  By \cite[Prop.\ 1.5]{ELMNP06} and \cite[Prop.\ 2.1.21]{Laz04a}, there exist
  positive integers $q$ and $r$ such that $qrD$ is a Cartier divisor and
  \begin{equation}\label{eq:bplusstabilize}
    \Bplus(D) = \SB(rD-A) = \Bs\bigl(\bigl\lvert q(rD-A)\bigr\rvert\bigr)_\red.
  \end{equation}
  After possibly replacing $A$ and $r$ by $qr$ and $qA$, respectively, we can
  assume that $r$ is an integer such that $rD$ is Cartier, and $\Bplus(D) =
  \Bs(rD-A)_\red$.
  \par Now we claim that there exists an integer $m_0$ such that
  $\sF \otimes \cO_X(mA+\lfloor E+jD \rfloor+P)$ (resp.\ $\sF \otimes
  \cO_X(mA+\lceil E+jD \rceil+P)$) is globally generated for every
  $m \ge m_0$, every $1 \le j < r$, and every nef Cartier divisor $P$, where
  $\lfloor E+jD \rfloor$ (resp.\ $\lceil E+jD \rceil$)
  should either be interpreted in the sense of $\RR$-Cartier
  divisors with respect to the decomposition $\mathcal{D}$ and $\mathcal{E}$,
  or interpreted in
  the sense of $\RR$-Cartier $\RR$-Weil divisors in the situation when $X$ is
  normal.
  By Fujita's vanishing theorem\index{vanishing theorem!Fujita}
  \cite[Thm.\ 5.1]{Fuj83}, there exists an integer $m_1$ such that for all
  integers $m \ge m_1$ and all $i > 0$, we have
  \begin{align*}
    H^i\bigl(X,\sF \otimes \cO_X\bigl(mA+\lfloor E+jD \rfloor+P\bigr)\bigr) &= 0\\
    H^i\bigl(X,\sF \otimes \cO_X\bigl(mA+\lceil E+jD \rceil+P\bigr)\bigr) &= 0
  \end{align*}
  for all $0 \le j < r$, and every nef Cartier divisor $P$.
  Thus, if $m \ge m_1 + \dim X$, then the coherent sheaf $\sF \otimes
  \cO_X(mA+\lfloor E+jD \rfloor+P)$ (resp.\ $\sF \otimes
  \cO_X(mA+\lceil E+jD \rceil+P)$) is
  $0$-regular with respect to $A$,
  hence is globally generated by \cite[Thm.\ 1.8.5$(i)$]{Laz04a}.
  It therefore suffices to set $m_0 = m_1 + \dim X$.
  \par To prove that $\Bplus(D)$ satisfies the condition in the proposition,
  we note that by the above, the sheaves
  \begin{align*}
    \sF \otimes \cO_X\bigl(mA+\lfloor E+jD \rfloor+P\bigr) &\otimes
    \cO_X\bigl(q(rD-A)\bigr)\\
    \sF \otimes \cO_X\bigl(mA+\lceil E+jD \rceil+P\bigr) &\otimes
    \cO_X\bigl(q(rD-A)\bigr)
  \end{align*}
  are globally generated away from
  $\Bplus(D)$ 
  for all $m \ge m_0$, all $q \ge 1$, all $0 \le j < r$, and every
  nef Cartier divisor $P$.
  Setting $q=m$, we see that the sheaves 
  \begin{align*}
    \sF \otimes \cO_X\bigl(mrD+\lfloor E+jD \rfloor+P\bigr) &\simeq
    \sF \otimes \cO_X\bigl(\lfloor E+(mr+j)D \rfloor+P\bigr)\\
    \sF \otimes \cO_X\bigl(mrD+\lceil E+jD \rceil+P\bigr) &\simeq
    \sF \otimes \cO_X\bigl(\lceil E+(mr+j)D \rceil+P\bigr)
  \end{align*}
  are globally generated away from $\Bplus(D)$ 
  for all $m \ge m_0$, all $0 \le j < r$, and every nef
  Cartier divisor $P$.
  It therefore suffices to set $n_0 = m_0r$.
  \par Finally, we show $\Bplus(D)$ is the smallest closed subset satisfying
  the condition in the proposition.
  Let $x \in \Bplus(D)$; it suffices to show that for $\sF = \cO_X(-A)$ where
  $A$ is ample, the sheaf $\sF \otimes \cO_X(mD)
  = \cO_X(nD-A)$ is not globally generated at $x$
  for all $n \ge 0$ such that $nD$ is a Cartier divisor.
  This follows from \cite[Prop.\ 1.5]{ELMNP06} since $x \in \Bplus(D)$.
\end{proof}
\subsection{Asymptotic cohomological functions}
\label{sect:burgosgil}
We now review K\"uronya's\index{Kuronya, Alex@K\"uronya, Alex} asymptotic
cohomological functions with suitable modifications to work over arbitrary
fields, following \cite[\S2]{Kur06} and \cite[\S3]{BGGJKM}.
Asymptotic cohomological functions are defined as follows.
\begin{citeddef}[{\cite[Def.\ 3.4.6]{BGGJKM}}]\label{def:asymptoticcoh}
  Let $X$ be a projective scheme of dimension $n$ over a field.
  For every integer $i \ge 0$, the \textsl{$i$th asymptotic cohomological
  function}%
  \index{asymptotic cohomological function, $\widehat{h}^i(X,D)$|textbf}
  $\widehat{h}^i(X,-)$ on $X$ is the function defined by setting
  \[
    \gls*{asymptoticcohfunction} \coloneqq \limsup_{m \to \infty}
    \frac{h^i\bigl(X,\cO_X\bigl(\lceil mD
    \rceil_{\mathcal{D}}\bigr)\bigr)}{m^n/n!}
  \]
  for an $\RR$-Cartier divisor $D$,
  where $\mathcal{D}$ is a decomposition of $D$ (see \cref{def:roundup}).
  The numbers $\widehat{h}^i(X,D)$ only
  depend on the $\RR$-linear equivalence class of $D$ and are independent of
  the decomposition $\mathcal{D}$ by \cite[Rem.\ 3.4.5]{BGGJKM}, hence
  $\widehat{h}^i(X,-)$ gives rise to a well-defined function
  $\Div_\RR(X) \to \RR$ and $\Div_\RR(X)/\mathord{\sim_\RR} \to \RR$.
\end{citeddef}
\par A key property of asymptotic cohomological functions is the following:
\begin{citedprop}[{\cite[Prop.\ 3.4.8]{BGGJKM}}]\label{prop:nahomogcont}
  Let\index{asymptotic cohomological function, $\widehat{h}^i(X,D)$!continuity}
  \index{asymptotic cohomological function, $\widehat{h}^i(X,D)$!homogeneity}
  $X$ be a projective scheme of dimension $n$ over a field.
  For every $i \ge 0$, the function $\widehat{h}^i(X,-)$ on
  $\Div_\RR(X)$ is homogeneous of degree $n$, and is
  continuous on every finite-dimensional $\RR$-subspace of
  $\Div_\RR(X)$ with respect to every norm.
\end{citedprop}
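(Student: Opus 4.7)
The plan is to derive both homogeneity and continuity from a single ``gradient'' estimate comparing $h^i(X,mD)$ for nearby integer Cartier divisors $D$. The base inequality comes from the short exact sequence
\[
  0 \longrightarrow \cO_X(F) \longrightarrow \cO_X(F+E) \longrightarrow \cO_E(F+E) \longrightarrow 0
\]
for an effective Cartier divisor $E$, which yields $|h^i(X,F+E)-h^i(X,F)| \le h^i(E,(F+E)\rvert_E) + h^{i-1}(E,(F+E)\rvert_E)$. When $F=mF_0$ varies linearly in an integer Cartier divisor $F_0$, both terms are of order $m^{n-1}$, since $\dim E \le n-1$ and cohomology of a coherent sheaf on a complete scheme of dimension $d$ twisted by any line bundle grows at most as $m^d$ \cite[Ex.~1.2.20]{Laz04a}. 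Writing an arbitrary Cartier divisor as a difference of very ample effective divisors and iterating, I would deduce the following: for any integer Cartier divisors $D_1,D_2$ written in a fixed finite basis of Cartier divisors with $|D_1-D_2|_\infty \le N$, there exists a constant $C$ independent of $m$ and $N$ such that $|h^i(X,mD_1)-h^i(X,mD_2)| \le C\cdot N\cdot m^{n-1}$.

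For integer homogeneity, fix a Cartier divisor $D$ and positive integer $c$. A direct computation gives $\widehat{h}^i(X,cD) = c^n\,\limsup_{m} f(mc)$ where $f(j) \coloneqq h^i(X,jD)/(j^n/n!)$. The base estimate (with $N=1$) yields $|f(j+1)-f(j)| = O(j^{-1})$, so a pigeonhole argument on residues modulo $c$ shows that the full $\limsup$ of $f$ equals its $\limsup$ along any arithmetic progression; hence $\widehat{h}^i(X,cD) = c^n\widehat{h}^i(X,D)$. Rational homogeneity follows by applying integer homogeneity to the subsequence $m=qk$ when $t=p/q$, using that $\lceil mtD\rceil_{\mathcal{D}} = kpD$ exactly; the base estimate absorbs rounding errors from the decomposition $\mathcal{D}$ as $O(m^{n-1})$ contributions.

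For continuity on a finite-dimensional subspace $V\subseteq\Div_\RR(X)$, I fix a basis $D_1,\dots,D_r$ of $V$ consisting of Cartier divisors and equip $V$ with the sup-norm $\|\sum a_jD_j\| \coloneqq \max_j |a_j|$; equivalence of norms on $V$ reduces the general case to this one. For $D,D'\in V$ with $\|D-D'\| < \varepsilon$, the round-ups $\lceil mD\rceil_{\mathcal{D}}$ and $\lceil mD'\rceil_{\mathcal{D}}$ differ by a $\ZZ$-combination of the $D_j$ with every coefficient of absolute value at most $m\varepsilon + 1$. The base estimate with $N=m\varepsilon+1$ then gives
\[
  \bigl|h^i\bigl(X,\lceil mD\rceil_{\mathcal{D}}\bigr)-h^i\bigl(X,\lceil mD'\rceil_{\mathcal{D}}\bigr)\bigr| \le C(m\varepsilon+1)m^{n-1} = O(\varepsilon m^n) + O(m^{n-1}).
\]
Dividing by $m^n/n!$ and passing to $\limsup$ yields $|\widehat{h}^i(X,D)-\widehat{h}^i(X,D')| \le C'\varepsilon$, proving continuity on $V$. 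Combining continuity with rational homogeneity and density of $\QQ$ in $\RR$ extends homogeneity of degree $n$ to all real scalars, completing the proof. The main technical obstacle is ensuring that the base estimate scales \emph{linearly} in the coefficient bound $N$: invoking the SES once with the large effective divisor $NE$ would introduce a filtration by $\cO_E(-kE)$ for $k=0,\dots,N-1$ whose cohomology contributions could aggregate to $O(N\cdot m^{n-1})$ only if one tracks them carefully, since a naive iteration might allow the constants to grow with $N$. Keeping the dependence separately linear in $N$ and polynomial in $m$ is what makes the continuity constant $C'$ uniform in $\varepsilon$.
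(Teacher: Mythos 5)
The thesis does not prove this proposition --- it is quoted directly from \cite[Prop.~3.4.8]{BGGJKM} --- so there is no internal argument to compare against. Your proof is essentially the standard one from that reference (going back to K\"uronya): a one-step estimate from the restriction sequence $0 \to \cO_X(F) \to \cO_X(F+E) \to \cO_E(F+E) \to 0$, iterated over a fixed collection of very ample effective divisors, followed by elementary $\limsup$ bookkeeping. The architecture and all three applications (integer, then rational, then real homogeneity via continuity; Lipschitz continuity on a subspace spanned by Cartier divisors) are sound.

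One statement does need repair. Your base estimate as literally written --- $\lvert h^i(X,mD_1)-h^i(X,mD_2)\rvert \le C\,N\,m^{n-1}$ whenever $\lvert D_1-D_2\rvert_\infty \le N$, with $C$ independent of $m$ and $N$ --- is false: taking $D_2=0$ and $D_1$ ample with $N=\lvert D_1\rvert_\infty$ would force $h^0(X,mD_1)=O(m^{n-1})$. What the iteration actually proves, and what your applications actually invoke, is: for integer Cartier divisors $F,F'$ supported on the fixed basis with all coefficients bounded by $Rm$ and with $\lvert F-F'\rvert_\infty \le N$, one has $\lvert h^i(X,F)-h^i(X,F')\rvert \le C_R\,N\,m^{n-1}$, with $C_R$ depending on $R$ and the fixed data but not on $m$ or $N$; here the number of steps is $O(N)$ and each step costs $O(m^{n-1})$ because every intermediate divisor still has coefficients $O(m)$. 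This is exactly the form you use with $(F,F')=((j+1)D,jD)$, $N=\lvert D\rvert_\infty$ (not $N=1$) in the homogeneity step, and with $(F,F')=(\lceil mD\rceil,\lceil mD'\rceil)$, $N=m\varepsilon+1$ in the continuity step, so the logic survives once the lemma is restated. Three smaller points: a finite-dimensional subspace $V\subseteq\Div_\RR(X)$ need not have a basis of Cartier divisors, but it sits inside the span of the Cartier divisors appearing in decompositions of a spanning set, and continuity there suffices; the growth bound from \cite[Ex.~1.2.20]{Laz04a} is needed in its multigraded form, uniform in all coefficients simultaneously, which follows by the same induction on $\dim\Supp$; and over a finite ground field one should pass to a suitable multiple of the very ample divisors so that effective representatives avoid the associated points of $X$, keeping the restriction sequences exact.
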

\cref{prop:nahomogcont} shows that \cref{def:asymptoticcoh}
is equivalent to K\"uronya's original definition in \cite[Def.\ 2.1]{Kur06},
and that when $i = 0$, the asymptotic cohomological function
$\widehat{h}^i(X,D)$ matches the \textsl{volume function}%
\index{volume, $\vol_X(D)$} \gls*{volume} from
\cite[\S2.2]{Laz04a}.
\cref{prop:nahomogcont} also allows us to prove that asymptotic cohomological
functions behave well with respect to generically finite morphisms.
\begin{proposition}[cf.\ {\cite[Prop.\ 2.9(1)]{Kur06}}]\label{prop:kur291}
  Let\index{asymptotic cohomological function, $\widehat{h}^i(X,D)$!under generically finite morphisms}
  $f\colon Y \to X$ be a surjective morphism of projective
  varieties, and consider an $\RR$-Cartier divisor $D$ on $X$.
  Suppose $f$ is generically finite of degree $d$.
  Then, for every $i$, we have
  \[
    \widehat{h}^i(Y,f^*D) = d \cdot \widehat{h}^i(X,D).
  \]
\end{proposition}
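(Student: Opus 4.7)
My plan is to first reduce to the case where $D$ is an integer Cartier divisor. Since $\widehat{h}^i$ is homogeneous of degree $n$ and continuous on finite-dimensional $\RR$-subspaces of $\Div_\RR(X)$ by \cref{prop:nahomogcont}, and since pullback sends any decomposition $\mathcal{D}$ of $D$ to a decomposition $f^*\mathcal{D}$ of $f^*D$ that commutes with componentwise rounding, it suffices to establish the identity $\widehat{h}^i(Y, f^*D) = d \cdot \widehat{h}^i(X, D)$ when $D$ is an integer Cartier divisor: homogeneity extends it to $\QQ$-Cartier divisors, and density together with continuity then propagate the equality to all of $\Div_\RR(X)$.

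For integer Cartier $D$, I would apply the Leray spectral sequence for $f$ combined with the projection formula to obtain
\[
  E_2^{p,q} = H^p\bigl(X, R^q f_*\cO_Y \otimes \cO_X(mD)\bigr) \Longrightarrow H^{p+q}\bigl(Y, f^*\cO_X(mD)\bigr).
\]
Because $f$ is generically finite, there is an open dense $U \subseteq X$ over which $f$ is finite, hence affine, so $R^q f_*\cO_Y$ is supported on the proper closed subset $Z \coloneqq X \smallsetminus U$ of dimension at most $n-1$ for every $q > 0$. By the standard polynomial growth bound on cohomology \cite[Ex.\ 1.2.20]{Laz04a}, $h^p(X, R^q f_*\cO_Y \otimes \cO_X(mD)) = O(m^{n-1})$ for each $q > 0$, so only the column $q = 0$ contributes at top order, giving
\[
  h^i\bigl(Y, f^*\cO_X(mD)\bigr) = h^i\bigl(X, f_*\cO_Y \otimes \cO_X(mD)\bigr) + O(m^{n-1}).
\]

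The heart of the proof, and the main technical obstacle, is the general comparison assertion that for any coherent sheaf $\sG$ of generic rank $r$ on a projective variety $X$ of dimension $n$ and any Cartier divisor $D$,
\[
  \limsup_{m\to\infty} \frac{h^i(X, \sG \otimes \cO_X(mD))}{m^n/n!} = r \cdot \widehat{h}^i(X, D);
\]
applied with $\sG = f_*\cO_Y$, which is torsion-free of generic rank $d$ since $Y$ is integral and $f$ is dominant, this closes the argument. To prove the assertion, I would first absorb the torsion subsheaf of $\sG$ into the $O(m^{n-1})$ error and reduce to the torsion-free case. Next, I would choose an ample effective Cartier divisor $A$ and $k \gg 0$ such that $\sG \otimes \cO_X(kA)$ is globally generated, and take $r$ generic global sections to form a short exact sequence
\[
  0 \longrightarrow \cO_X^{\oplus r} \longrightarrow \sG \otimes \cO_X(kA) \longrightarrow Q \longrightarrow 0
\]
in which the left map is injective by torsion-freeness of $\sG$ and $Q$ is torsion. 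Twisting by $\cO_X(mD - kA)$ and taking cohomology yields
\[
  \bigl\lvert h^i\bigl(X, \sG \otimes \cO_X(mD)\bigr) - r \cdot h^i\bigl(X, \cO_X(mD - kA)\bigr) \bigr\rvert = O(m^{n-1}),
\]
since $Q \otimes \cO_X(mD-kA)$ remains supported on a proper subvariety. Finally, the exact sequence $0 \to \cO_X(mD - kA) \to \cO_X(mD) \to \cO_{kA}(mD) \to 0$ shows that $|h^i(X, \cO_X(mD - kA)) - h^i(X, \cO_X(mD))| = O(m^{n-1})$ as well, since $\cO_{kA}$ is supported on a divisor. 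Dividing by $m^n/n!$ and taking $\limsup$ then produces the claimed identity.
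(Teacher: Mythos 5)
Your proposal is correct and follows essentially the same route as the paper: the reduction from $\RR$-Cartier to Cartier divisors via homogeneity and continuity (\cref{prop:nahomogcont}) is exactly the paper's argument, and your treatment of the Cartier case (Leray spectral sequence plus the rank-$r$ comparison lemma proved by choosing $r$ generic sections of a globally generated twist) is precisely the argument of \cite[Prop.\ 2.9(1)]{Kur06}, which the paper cites rather than reproduces. The only quibble is that the injectivity of $\cO_X^{\oplus r} \to \sG \otimes \cO_X(kA)$ follows because its kernel is a torsion subsheaf of the torsion-free sheaf $\cO_X^{\oplus r}$ (the map being generically an isomorphism), not from torsion-freeness of $\sG$ itself.
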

\begin{proof}
  The proof of \cite[Prop.\ 2.9(1)]{Kur06} works in our setting with the
  additional hypothesis that $D$ is a Cartier divisor.
  It therefore suffices to reduce to this case.
  If the statement holds for Cartier divisors $D$, then it also
  holds for $D \in \Div_\QQ(X)$ by homogeneity of 
  $\widehat{h}^i$ (\cref{prop:nahomogcont}).
  Moreover, the subspace of $\Div_\RR(X)$ spanned by the Cartier divisors
  appearing in a decomposition of $D$ is finite-dimensional, hence by
  approximating each coefficient in $D$ by rational numbers,
  \cref{prop:nahomogcont} implies the statement for $D \in \Div_\RR(X)$ by
  continuity.
\end{proof}
\begin{remark}\label{rem:kur291reductions}
  We will repeatedly use the same steps as in the proof of
  \cref{prop:kur291} to prove statements about $\widehat{h}^i(X,D)$ for
  arbitrary $\RR$-Cartier divisors by reducing to the case when $D$ is a Cartier
  divisor.
  If $D$ is an $\RR$-Cartier divisor, we can write $D$ as the limit of
  $\QQ$-Cartier divisors by approximating each coefficient in a decomposition of
  $D$ by rational numbers, and continuity of asymptotic cohomological functions
  (\cref{prop:nahomogcont}) then allows us to reduce to the case when $D$ is a
  $\QQ$-Cartier divisor.
  By homogeneity of asymptotic cohomology functions (\cref{prop:nahomogcont}),
  one can then reduce to the case when $D$ is a Cartier divisor.
\end{remark}
We also need the following:
\begin{proposition}[Asymptotic Serre duality; cf.\ {\cite[Cor.\ 2.11]{Kur06}}]
  \label{prop:asympoticserre}
  Let\index{asymptotic cohomological function, $\widehat{h}^i(X,D)$!duality for}
  $X$ be a projective variety of dimension $n$, and let $D$ be an
  $\RR$-Cartier divisor on $X$.
  Then, for every $0 \le i \le n$, we have
  \[
    \widehat{h}^i(X,D) = \widehat{h}^{n-i}(X,-D).
  \]
\end{proposition}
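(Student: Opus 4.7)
My plan is to reduce the problem to classical Serre duality on a regular projective cover obtained via de Jong's theorem on alterations, and then absorb the canonical bundle by a perturbation argument. The first step is to reduce to the case where $D$ is an integral Cartier divisor: using the continuity and homogeneity of asymptotic cohomological functions as in \cref{rem:kur291reductions}, both sides of the identity are homogeneous of degree $n$ and continuous on finite-dimensional subspaces of $\Div_\RR(X)$, so it suffices to verify the identity for Cartier divisors.

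Next, I would invoke de Jong's theorem \cite{dJ96} to produce a generically finite, surjective morphism $f\colon Y \to X$ of some degree $d$, where $Y$ is a regular (hence Gorenstein and Cohen--Macaulay) projective variety of dimension $n$. By \cref{prop:kur291}, we have $\widehat{h}^j(Y, f^*E) = d \cdot \widehat{h}^j(X, E)$ for any $\RR$-Cartier divisor $E$ on $X$ and any $0 \le j \le n$, so it is enough to establish the duality $\widehat{h}^i(Y, f^*D) = \widehat{h}^{n-i}(Y, -f^*D)$ on $Y$.

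On the regular projective variety $Y$, the canonical sheaf $\omega_Y$ is invertible, and classical Serre duality applied to the line bundle $\cO_Y(mf^*D)$ yields
\[
h^i\bigl(Y, \cO_Y(mf^*D)\bigr) = h^{n-i}\bigl(Y, \omega_Y \otimes \cO_Y(-mf^*D)\bigr).
\]
To conclude, I would show that twisting by the fixed line bundle $\omega_Y$ does not affect the leading-order growth of cohomology. Writing $\omega_Y \simeq \cO_Y(E_1 - E_2)$ for effective Cartier divisors $E_1, E_2$ on $Y$ and using the standard sequences $0 \to \cO_Y(-E_j) \to \cO_Y \to \cO_{E_j} \to 0$ suitably twisted, the resulting error terms come from cohomology of sheaves supported on the $(n-1)$-dimensional divisors $E_j$, which grow at most as $O(m^{n-1})$ by \cite[Ex.\ 1.2.20]{Laz04a} and so become negligible after dividing by $m^n/n!$. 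This gives
\[
\widehat{h}^i(Y, f^*D) = \widehat{h}^{n-i}(Y, -f^*D),
\]
which transports back to $X$ by the previous paragraph.

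The main obstacle is that $X$ is permitted to be singular and non-Cohen--Macaulay in arbitrary characteristic, so classical Serre duality is not directly available on $X$: the normalized dualizing complex $\omega_X^\bullet$ may have several nonzero cohomology sheaves, and the coherent sheaf $\omega_X$ need not even be invertible. The appeal to alterations sidesteps both the unavailability of resolutions of singularities in positive characteristic and the potentially pathological dualizing complex on $X$, at the cost of the degree factor $d$, which cancels symmetrically on the two sides of the desired identity.
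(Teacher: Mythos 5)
Your proposal is correct and follows essentially the same route as the paper: reduce to Cartier divisors by homogeneity and continuity, pass to a regular alteration via de Jong and \cref{prop:kur291}, apply classical Serre duality on the alteration, and absorb the twist by $\omega_Y$. The only cosmetic difference is that the paper cites \cite[Lem.\ 3.2.1]{BGGJKM} for the fact that twisting by a fixed line bundle does not change asymptotic cohomology, whereas you spell out that standard argument with the exact sequences for $E_1$ and $E_2$.
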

\begin{proof}
  As in \cref{rem:kur291reductions}, it suffices to consider the
  case when $D$ is a Cartier divisor.
  Let $f\colon Y \to X$ be a regular alteration of degree $d$ \cite[Thm.\
  4.1]{dJ96}.
  We then have
  \[
    \widehat{h}^i(Y,f^*D) = \limsup_{m \to \infty}
    \frac{h^{n-i}\bigl(Y,\cO_Y\bigl(K_Y-f^*(mD)\bigr)\bigr)}{m^n/n!}
    = \widehat{h}^{n-i}(Y,-f^*D)
  \]
  by Serre duality and \cite[Lem.\ 3.2.1]{BGGJKM}, respectively.
  By \cref{prop:kur291}, the left-hand side is equal to
  $d\cdot\widehat{h}^i(X,D)$ and the right-hand side is equal to
  $d\cdot\widehat{h}^{n-i}(X,-D)$, hence the statement follows after dividing by
  $d$.
\end{proof}
\subsection{Restricted volumes}
We will also need the following variant of the volume function $\vol_X(D) =
\widehat{h}^0(X,D)$.
\begin{citeddef}[{\cite[Def.\ 2.1]{ELMNP09}}]\label{def:restrictedvolume}
  Let $X$ be a projective variety of dimension $n$ over a field $k$, and let $V
  \subseteq X$ be a subvariety of dimension $d \ge 1$.
  Consider a $\QQ$-Cartier divisor $D$ on $X$.
  The \textsl{restricted volume}%
  \index{volume, $\vol_X(D)$!restricted, $\vol_{X \vert V}(D)$|textbf}
  of $D$ along $V$ is
  \[
    \gls*{restrictedvolume} \coloneqq \limsup_{m \to \infty} \frac{h^0(X\vert
    V,\cO_X(mD))}{m^d/d!},
  \]
  where
  \[
    H^0\bigl(X\vert V,\cO_X(mD)\bigr) \coloneqq
    \im\Bigl(H^0\bigl(X,\cO_X(mD)\bigr) \to
    H^0\bigl(V,\cO_V(mD\rvert_V)\bigr)\Bigr),\glsadd{restrictedsections}
  \]
  and $h^0(X\vert V,\cO_X(mD)) \coloneqq \dim_k H^0(X\vert V,\cO_X(mD))$.%
  \glsadd{dimrestrictedsections}
\end{citeddef}
\section{Log pairs and log triples}
To simplify notation, we will use the following conventions
for log pairs and log triples.
Recall that if $R$ is a ring, then $R^\circ$\glsadd{rcirc} is the
complement of the union of the minimal primes of $R$.
\begin{definition}\label{def:triples}
  A \textsl{log triple}
  \index{log triple, $(X,\Delta,\mathfrak{a}_\bullet^\lambda)$|textbf}
  $\gls*{triple}$ consists of
  \begin{enumerate}[label=$(\roman*)$,ref=\roman*]
    \item an excellent reduced noetherian scheme $X$;
    \item an $\RR$-Weil divisor $\Delta$ on $X$; and
    \item a symbol $\fa_\bullet^\lambda$ where $\fa_\bullet$ is a graded family
      of ideals on $X$ such that for every open affine subset $U = \Spec R
      \subseteq X$, we have $\fa_m(U) \cap R^\circ \ne \emptyset$ for some $m >
      0$, and $\lambda$ is a real number;
  \end{enumerate}
  where we assume that $X$ is normal and integral if $\Delta \ne 0$.
  We say that $(X,\Delta,\fa_\bullet^\lambda)$ is \textsl{effective} if $\Delta
  \ge 0$ and $\lambda \ge 0$.
  We drop $\lambda$ from our notation if $\lambda = 1$.
  If $\fa_\bullet = \{\fa^m\}_{m=0}^\infty$ for some fixed ideal sheaf $\fa$,
  then we denote the log triple by $(X,\Delta,\fa^t)$ where $t = \lambda$.
  If $X = \Spec R$ for a ring $R$, then we denote
  the log triple by $(R,\Delta,\fa_\bullet^\lambda)$,
  and denote by $R(\lfloor \Delta \rfloor)$ (resp.\
  $R(\lceil \Delta \rceil)$) the ring of global sections of $\cO_{\Spec
  R}(\lfloor \Delta \rfloor)$ (resp.\ $\cO_{\Spec R}(\lceil \Delta \rceil)$).
  A \textsl{log pair}%
  \index{log pair, $(X,\Delta)$ or $(X,\mathfrak{a}_\bullet^\lambda)$|textbf}
  $(X,\Delta)$ (resp.\ $(X,\fa_\bullet^\lambda)$) is a
  log triple such that $\fa_m = \cO_X$ for all $m$ (resp.\ $\Delta = 0$).
  \par We will often call log triples (resp.\ log pairs) \textsl{triples}
  (resp.\ \textsl{pairs}) when there is no risk of confusion.
\end{definition}
\section{Singularities of pairs and triples}
We will need the notion of singularities of log pairs and log triples.
We mostly follow the conventions of \cite[\S3]{Kol97}, with some adaptations to
work with log triples as well.
\begin{definition}[Discrepancies; cf.\ {\cite[Defs.\ 3.3 and 3.4]{Kol97}}]
  \label{def:discrepancy}
  Let $(X,\Delta,\fa^t)$ be a log triple, where $X$ is a normal variety and
  $K_X+\Delta$ is $\RR$-Cartier.
  Write $\Delta = \sum d_iD_i$.
  Suppose $f\colon Y \to X$ is a birational morphism from a normal
  variety $Y$, and choose canonical divisors $K_Y$ and $K_X$ such that $f_*K_Y =
  K_X$.
  In this case, we may write
  \begin{equation}\label{eq:kollar331}
    K_Y = f^*(K_X+\Delta) + \sum_E a(E,X,\Delta)E,
  \end{equation}
  where the $E$ are distinct prime Weil divisors over $X$.
  The right-hand side is not unique since we allow non-exceptional divisors to
  appear on the right-hand side.
  To make the sum on the right-hand side unique, we adopt the convention that a
  non-exceptional divisor $E$ appears on the right-hand side of
  \cref{eq:kollar331} if and only if $E = f_*^{-1}D_i$ for some $i$, in which
  case we set $a(E,X,\Delta) = -d_i$.
  \par For each $E$, the real number $a(E,X,\Delta)$ is called the
  \textsl{discrepancy of $E$} with respect to $(X,\Delta)$.
  Note that if $f'\colon Y' \to X$ is another birational morphism and $E'
  \subseteq Y'$ is the birational transform of $E$, then $a(E,X,\Delta) =
  a(E',X,\Delta)$, hence the discrepancy of $E$ only depends on $E$ and not on
  $Y$.
  \par The \textsl{discrepancy of $E$} with respect to $(X,\Delta,\fa^t)$%
  \index{discrepancy of $E$, $a(E,X,\Delta,\mathfrak{a}^t)$|textbf} is
  \[
    \gls*{discrepancyofE} \coloneqq a(E,X,\Delta) -
    t\cdot\ord_E(\fa)
  \]
  where $\ord_E$\glsadd{ordE} is the divisorial valuation on the function field
  of $X$ defined by $E$.
  \par The \textsl{total discrepancy} of $(X,\Delta,\fa^t)$%
  \index{total discrepancy, $\totaldiscrep(X,\Delta,\mathfrak{a}^t)$|textbf}
  is
  \[
    \totaldiscrep(X,\Delta,\fa^t) \coloneqq \inf_{f\colon Y \to X} \bigl\{
      a(E,X,\Delta,\fa^t) \bigm\vert
      \text{$E$ is a Weil divisor on $Y$}
    \bigr\}\glsadd{totaldiscrepancy}
  \]
  where the infimum runs over all birational morphisms $f\colon Y \to X$
  as above.
\end{definition}
\begin{definition}[Singularities of pairs and triples; cf.\ {\cite[Def.\ 3.5]{Kol97}}]
  Let $(X,\Delta,\fa^t)$ be a log triple, where $X$ is a normal
  variety and $K_X+\Delta$ is $\RR$-Cartier.
  We say that $(X,\Delta,\fa^t)$ is
  \textsl{sub-klt}\index{sub-klt|textbf} if $\totaldiscrep(E,X,\Delta,\fa^t)
  > -1$, and is
  \textsl{sub-log canonical}\index{sub-log canonical|textbf} if
  $\totaldiscrep(E,X,\Delta,\fa^t) \ge -1$.
  A sub-klt (resp.\ sub-log canonical) log triple $(X,\Delta,\fa^t)$ is
  \textsl{klt}\index{klt|textbf} (resp.\ \textsl{log
  canonical}\index{log canonical|textbf}) if $(X,\Delta,\fa^t)$ is
  effective.
  \par When we say that $(X,\Delta,\fa^t)$ is sub-klt (resp.\ sub-log canonical,
  klt, log canonical) at a point $x \in X$, we mean that there exists an open
  neighborhood $U \subseteq X$ of $x$ such that
  $(U,\Delta\rvert_U,\fa\rvert_U^t)$ is sub-klt (resp.\ sub-log canonical, klt,
  log canonical).
  \par We note that \textsl{klt} is short for \textsl{Kawamata log terminal}.
\end{definition}
\par Next, we recall the following:
\begin{definition}
  A \textsl{log resolution}\index{log resolution|textbf}
  of a log triple $(X,\Delta,\fa^t)$ is a projective,
  birational morphism $f\colon Y \to X$, with $Y$ regular, such that
  \begin{enumerate}[label=$(\roman*)$,ref=\roman*]
    \item We have $f^{-1}\fa\cdot\cO_Y = \cO_Y(-F)$ for an effective Cartier
      divisor $F$;
    \item If $\Delta = \sum_i d_iD_i$, and $\widetilde{D}_i$ is the strict
      transform of $D_i$, then the divisor $\ExcDiv(f) + F + \sum_i
      \widetilde{D}_i$ has simple normal crossing support, where $\ExcDiv(f)$ is
      the sum of exceptional divisors of $f$.\label{def:logressnc}
  \end{enumerate}
\end{definition}
Note that log resolutions exist for varieties over a field of characteristic
zero \cites{Hir64a}{Hir64b}, and even for reduced noetherian quasi-excellent
$\QQ$-schemes \cite{Tem18}.
\begin{remark}\label{rem:logresexists}
  Since the existence of log resolutions is not stated explicitly in
  \cite{Tem18}, we describe how this follows from results therein.
  First, apply the
  principalization result in \cite[Thm.\ 1.1.11]{Tem18} to the closed subscheme
  $\Supp\Delta \cup Z(\fa)$ to obtain a resolution $g \colon X' \to X$ such that
  $g^{-1}(\Supp\Delta \cup Z(\fa))$ is a divisor with simple normal crossing
  support.
  Then, one can apply \cite[Thm.\ 1.1.9]{Tem18} to the
  subscheme $g^{-1}(\Supp\Delta \cup Z(\fa)) \cup \ExcDiv(g)$ to ensure
  that the simple normal crossing condition in \cref{def:logressnc} holds.
\end{remark}
The result below says that to check what singularities a given log triple
has, it suffices to check on a log resolution.
\begin{lemma}\label{thm:singpairsvialogres}
  Let $(X,\Delta,\fa^t)$ be a log triple, and consider a log resolution
  $f\colon Y \to X$ for $(X,\Delta,\fa^t)$.
  Choose canonical divisors $K_Y$ and $K_X$ such that $f_*K_Y = K_X$, and write
  \[
    K_Y - f^*(K_X+\Delta) - tF = \sum_E a(E,X,\Delta)E
  \]
  using our conventions in \cref{def:discrepancy} for the right-hand side,
  where $F$ is the effective Cartier divisor defined by $f^{-1}\fa\cdot\cO_Y$.
  Then, we have that
  $(X,\Delta,\fa^t)$ is sub-klt (resp.\ sub-log canonical) if and only if
  $\min_E\{a(E,X,\Delta)\} > -1$ (resp.\ $\ge -1$), where $E$ runs over all
  prime divisors on $Y$.
\end{lemma}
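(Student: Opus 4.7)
The plan is to reduce the question to the standard statement that on a log resolution, discrepancies of further divisors are bounded below by the discrepancies of divisors already visible on the resolution. Throughout I will write $a(E) \coloneqq a(E,X,\Delta,\fa^t)$ for brevity.

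The direction $(\Rightarrow)$ is immediate from \cref{def:discrepancy}: the total discrepancy is the infimum over all birational models $Y' \to X$, so in particular taking $Y' = Y$ the given log resolution, the inequality $\totaldiscrep > -1$ (resp.\ $\ge -1$) forces $\min_E a(E) > -1$ (resp.\ $\ge -1$), where $E$ runs over prime divisors on $Y$.

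For the converse $(\Leftarrow)$, I would argue as follows. Given any birational morphism $f'\colon Y' \to X$ from a normal variety and a prime divisor $E' \subseteq Y'$, I need to bound $a(E')$ from below by $\min_E a(E)$ (or strictly, in the sub-klt case). Choose a normal variety $Z$ that dominates both $Y$ and $Y'$, fitting into
\[
  \begin{tikzcd}
    & Z \arrow{dl}[swap]{h} \arrow{dr}{h'} \\
    Y \arrow{dr}[swap]{f} & & Y' \arrow{dl}{f'} \\
    & X
  \end{tikzcd}
\]
and, after further blowups if needed, assume that $h$ is itself a log resolution of $(Y, \Delta_Y + F_Y)$, where $\Delta_Y + F_Y$ denotes the full boundary divisor on $Y$ (namely the sum $\sum_E (-a(E)) E$, combining strict transforms of components of $\Delta$, the exceptional divisors of $f$, and the divisor $F$). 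Since discrepancies depend only on the valuation, $a(E') = a(\widetilde{E}')$ where $\widetilde{E}'$ is the strict transform of $E'$ on $Z$. Thus it suffices to prove that every prime divisor $\widetilde{E}$ on $Z$ satisfies $a(\widetilde{E}) \ge \min_E a(E)$, with strict inequality whenever $\min_E a(E) > -1$.

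The heart of the proof is then the following elementary computation on the log resolution $h\colon Z \to Y$. Using $K_Y - f^*(K_X+\Delta) - tF = \sum_E a(E) E$ on $Y$, pulling back via $h$ and adding $K_Z$, we obtain
\[
  K_Z - (f \circ h)^*(K_X + \Delta) - t\,(f \circ h)^{-1}\fa\cdot \cO_Z
  = K_Z - h^*\Bigl(-\sum_E a(E) E\Bigr) - h^*K_Y,
\]
so for a prime divisor $\widetilde{E}$ on $Z$ the coefficient $a(\widetilde{E})$ equals $a_{K_Z/Y}(\widetilde{E}) + \sum_E a(E)\,\ord_{\widetilde{E}}(h^*E)$, where $a_{K_Z/Y}(\widetilde{E})$ denotes the coefficient of $\widetilde{E}$ in the relative canonical divisor $K_Z - h^*K_Y$. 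Since $h$ is a log resolution of the SNC divisor $\sum_E E$ on $Y$ (where $E$ here ranges over the SNC support $\ExcDiv(f) + F + \sum_i \widetilde{D}_i$), the standard discrepancy computation for blowups of smooth centers in SNC divisors gives $a_{K_Z/Y}(\widetilde{E}) \ge \sum_E \ord_{\widetilde{E}}(h^*E) - 1$ (with equality on strict transforms yielding $a_{K_Z/Y}(\widetilde{E}) = 0$). Combining, one deduces
\[
  a(\widetilde{E}) \ge \min_E a(E) + \Bigl(1 + \min_E a(E)\Bigr)\Bigl(\ord_{\widetilde{E}}\bigl(\textstyle\sum_E h^*E\bigr) - 1\Bigr),
\]
which is at least $\min_E a(E)$ whenever $\min_E a(E) \ge -1$, and strictly greater when $\min_E a(E) > -1$ and $\widetilde{E}$ is exceptional for $h$; for $\widetilde{E}$ equal to the strict transform of some $E$ on $Y$, the inequality $a(\widetilde{E}) = a(E) \ge \min_E a(E)$ is trivial. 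The main obstacle is the verification of the relative discrepancy inequality for blowups in SNC configurations; this is a well-known calculation (see, e.g., \cite[Lem.\ 2.29]{KM98}) but is the only substantive input beyond the definitions.
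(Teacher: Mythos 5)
Your argument is correct in substance, but it takes a genuinely different route from the paper. The paper's proof is a two-line reduction: it observes that $(X,\Delta,\fa^t)$ is sub-klt (resp.\ sub-log canonical) if and only if the pair $(Y,\Delta_Y+tF)$ is, where $K_Y+\Delta_Y = f^*(K_X+\Delta)$, and then invokes \cite[Cor.\ 3.13]{Kol97} for the SNC pair on $Y$. You instead re-prove the content of that cited corollary: you pass to a common resolution $Z$ of $Y$ and an arbitrary model $Y'$, and control discrepancies on $Z$ via the single-blowup formula for smooth centers in an SNC configuration. Your key identity $a(\widetilde{E}) = a_{K_Z/Y}(\widetilde{E}) + \sum_E a(E)\,\ord_{\widetilde{E}}(h^*E)$ and the bound $a_{K_Z/Y}(\widetilde{E}) \ge \sum_E \ord_{\widetilde{E}}(h^*E) - 1$ (which is exactly the statement that a reduced SNC divisor is log canonical) do combine to give your displayed inequality, so the approach is sound. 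What the paper's route buys is brevity and a clean separation of the triple-to-pair reduction from the SNC computation; what yours buys is self-containedness, at the cost of having to carry out the induction over iterated blowups hidden in the inequality for $a_{K_Z/Y}$.

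Two small points to tighten. First, your combined inequality degenerates when $\widetilde{E}$ lies over none of the boundary components (i.e.\ $\ord_{\widetilde{E}}(\sum_E h^*E) = 0$): it then only yields $a(\widetilde{E}) \ge -1$, which is not enough for the sub-klt direction. That case is harmless because an $h$-exceptional divisor over the regular variety $Y$ has $a_{K_Z/Y}(\widetilde{E}) \ge 1$, but you should say so explicitly rather than let it fall out of the unified bound. Second, for the backward direction you need $Z \to Y$ to be realizable as a composite of blowups along smooth centers in normal crossings with the accumulated boundary; this is available in the settings where the lemma is applied (log resolutions exist), but it is an input worth flagging since the definition of total discrepancy quantifies over arbitrary normal models.
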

\begin{proof}
  The statement for log pairs is \cite[Cor.\ 3.13]{Kol97}.
  The statement for log triples then follows, since $(X,\Delta,\fa^t)$ is
  sub-klt (resp.\ sub-log canonical) if and only $(Y,\Delta_Y+tF)$ is sub-klt
  (resp.\ sub-log canonical), where $\Delta_Y$ is defined by $K_Y+\Delta_Y =
  f^*(K_X+\Delta)$.
\end{proof}
\subsection{Log canonical thresholds}
We also define the following:
\begin{definition}[Log canonical threshold; {cf.\ \cite[Def.\ 8.1]{Kol97}}]
  \label{def:lct}
  Let $(X,\Delta,\fa)$ be a triple and let $x \in X$ be a closed point.
  The \textsl{log canonical threshold}%
  \index{log canonical threshold, $\lct_x((X,\Delta);\fa)$|textbf}
  of $(X,\Delta)$ at $x$ with respect
  to $\fa$ is
  \[
    \lct_x\bigl( (X,\Delta);\fa \bigr) \coloneqq \sup\bigl\{c \in \RR_{\ge0}
    \bigm\vert
    \text{$(X,\Delta,\fa^c)$ is sub-log canonical at $x$}
    \bigr\},\glsadd{lct}
  \]
  where if $(X,\Delta)$ is not sub-log canonical, then we set
  $\lct_x((X,\Delta);\fa) = -\infty$.
  If $\fa = \cO_X(-D)$ for a Cartier divisor $D$, then we denote
  \[
    \lct_x\bigl( (X,\Delta);D\bigr) \coloneqq
    \lct_x\bigl((X,\Delta);\cO_X(-D)\bigr).
  \]
  We also drop $\Delta$ from our notation if $\Delta = 0$.
\end{definition}
Log canonical thresholds can be computed on a log resolution:
\begin{proposition}[cf.\ {\cite[Prop.\ 8.5]{Kol97}}]\label{prop:lctvialogres}
  Let\index{log canonical threshold, $\lct_x((X,\Delta);\fa)$!via log resolutions}
  $(X,\Delta,\fa)$ be a log triple such that $(X,\Delta)$ is sub-log
  canonical, and let $x \in X$ be a closed point.
  Consider a log resolution $f\colon Y \to X$ for $(X,\Delta,\fa)$.
  Using our conventions in \cref{def:discrepancy}, write
  \[
    K_Y - f^*(K_X+\Delta) = \sum_j a_jE_j \qquad \text{and} \qquad F = \sum_j
    b_jE_j,
  \]
  where $F$ is the effective Cartier divisor defined by $f^{-1}\fa\cdot\cO_Y$.
  Then,
  \[
    \lct_x\bigl( (X,\Delta);\fa \bigr) = \min_{\{j \mid f(E_j)=\{x\}\}} \biggl\{
    \frac{a_j+1}{b_j} \biggr\}.
  \]
\end{proposition}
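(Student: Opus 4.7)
The plan is to use Lemma~\ref{thm:singpairsvialogres} to translate sub-log canonicity of $(X,\Delta,\fa^c)$ at $x$ directly into a numerical condition on the coefficients of the given log resolution $f$. Since the log resolution conditions depend only on $\Delta$ and $\fa$ and not on any exponent, $f$ is simultaneously a log resolution of $(X,\Delta,\fa^c)$ for every $c \ge 0$, and the discrepancies are read off from
\[
  K_Y - f^*(K_X+\Delta) - cF = \sum_j (a_j - cb_j)\, E_j.
\]

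First I would apply Lemma~\ref{thm:singpairsvialogres} to the restricted log resolution $f^{-1}(U) \to U$ for an open neighborhood $U \ni x$. This shows that $(U,\Delta\rvert_U,\fa\rvert_U^c)$ is sub-log canonical iff $a_j - cb_j \ge -1$ for every $j$ with $f(E_j) \cap U \ne \emptyset$. As $U$ is free to shrink around $x$, the divisors $E_j$ with $x \notin \overline{f(E_j)}$ drop out, so $(X,\Delta,\fa^c)$ is sub-log canonical at $x$ iff $a_j - cb_j \ge -1$ holds for every $j$ with $x \in f(E_j)$.

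Next I would reduce the constraint to the subset of $j$ with $f(E_j) = \{x\}$. For an $E_j$ with $x \in f(E_j)$ and $\dim f(E_j) \ge 1$: if $b_j = 0$, the inequality reads $a_j \ge -1$, which is already guaranteed by the sub-log canonicity of $(X,\Delta)$ via Lemma~\ref{thm:singpairsvialogres} and so imposes no new constraint on $c \ge 0$. If $b_j > 0$, a further blow-up of $Y$ in a smooth center sitting over $x$ inside $f(E_j)$, inside the simple normal crossing divisor $\mathrm{ExcDiv}(f) + F + \sum \widetilde{D}_i$, produces a new exceptional divisor $E'$ with $f'(E') = \{x\}$ whose coefficients $a',b'$ satisfy $(a'+1)/b' \le (a_j+1)/b_j$ by the standard discrepancy formula for blow-ups of regular centers contained in an snc boundary. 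Thus the constraint from $E_j$ is subsumed by a constraint coming from a divisor over $\{x\}$ on a (possibly refined) log resolution, and in particular does not affect the supremum computed over log resolutions; applied to $f$ itself, which already has such refining divisors available after further blow-up, this means it suffices to take the minimum over $\{j \mid f(E_j) = \{x\}\}$.

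With this reduction in hand, the conclusion is immediate: $\lct_x((X,\Delta);\fa)$ is the supremum of those $c \ge 0$ for which $a_j - cb_j \ge -1$ holds for every $j$ with $f(E_j) = \{x\}$. Indices with $b_j = 0$ contribute nothing, while those with $b_j > 0$ contribute the inequality $c \le (a_j+1)/b_j$, and the supremum is attained as the stated minimum. The main obstacle is the reduction step: one must argue carefully that divisors with positive-dimensional image through $x$ never strictly sharpen the threshold, which amounts to the blow-up comparison above. Everything else is a direct translation via Lemma~\ref{thm:singpairsvialogres} and elementary manipulations of the linear inequalities $a_j - cb_j \ge -1$.
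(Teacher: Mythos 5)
Your first step is correct and is essentially all the paper's own (one-line) proof contains: localizing \cref{thm:singpairsvialogres} to small neighborhoods of $x$, and using that $f$ is projective so each $f(E_j)$ is closed, one finds that $(X,\Delta,\fa^c)$ is sub-log canonical at $x$ if and only if $a_j - cb_j \ge -1$ for every $j$ with $x \in f(E_j)$, whence
\[
  \lct_x\bigl((X,\Delta);\fa\bigr) = \min_{\{j \mid x \in f(E_j),\ b_j > 0\}}
  \biggl\{ \frac{a_j+1}{b_j} \biggr\}.
\]
The problem is your second step. The blow-up inequality you invoke goes in the wrong direction: if $y \in E_j \cap f^{-1}(x)$ is a closed point lying on no other component of $\ExcDiv(f) + F + \sum_i \widetilde{D}_i$, then blowing up $y$ produces $E'$ with $a' = \dim Y - 1 + a_j$ and $b' = \mult_y F = b_j$, so
\[
  \frac{a'+1}{b'} = \frac{\dim Y + a_j}{b_j} > \frac{a_j+1}{b_j},
\]
and more generally a smooth center of codimension $k$ contained exactly in $E_{j_1},\dots,E_{j_r}$ gives $(a'+1)/b' = (k+\sum_i a_{j_i})/(\sum_i b_{j_i})$, which (using $k \ge r$ and $a_{j_i}+1 \ge 0$) is never smaller than $\min_i (a_{j_i}+1)/b_{j_i}$. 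So constraints coming from divisors with positive-dimensional center through $x$ are \emph{not} subsumed by divisors with center $\{x\}$; they are strictly weakened by further blow-ups.

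In fact the reduction cannot be repaired, because the formula as printed (with $f(E_j) = \{x\}$) is not what your step one proves, and it fails in examples: take $X = \AA^2_k$, $\Delta = 0$, $\fa = (y)$, $x$ the origin, and $f = \id_X$, which is a legitimate log resolution. There is no $j$ with $f(E_j) = \{x\}$, so the right-hand side is a minimum over the empty set, yet $\lct_x(\AA^2_k;(y)) = 1$, the binding divisor being $\{y=0\}$ itself, whose center is the whole line. The correct index set is $\{j \mid x \in f(E_j)\}$, which is how \cite[Prop.\ 8.5]{Kol97} is stated; your first step already proves that corrected statement, and the second step should be deleted rather than fixed.
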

\begin{proof}
  This follows from \cref{thm:singpairsvialogres}, since $(X,\Delta,\fa^c)$ is
  sub-log canonical if and only if $a_j-cb_j\ge-1$ for all $j$.
\end{proof}
We compute one example of a log canonical threshold.
\begin{example}[Cuspidal cubic]\label{ex:cuspidalcubic}
  Let\index{cuspidal cubic|(}
  $k$ be an algebraically closed field, and consider\index{cuspidal cubic}
  the cuspidal cubic $C = \{x^2+y^3=0\} \subseteq \AA^2_k$.
  \begin{figure}[t]
    \centering
    \tikzexternalenable
    \begin{tikzpicture}[scale=1.2]
      \draw[domain=-1:1,variable=\t] plot ({(\t)^2},{(\t)^3});
      \node[anchor=west] at (1,-1) {$C$};
      
      \draw[->] (2.25,0) -- (1.5,0);
      
      \draw[domain=-0.75:0.75,variable=\t] plot ({2*(\t)^2+2.75},{(\t)});
      \node[anchor=west] at (3.875,-0.75) {$\tilde{C}$};
      \draw (2.75,-1) -- (2.75,1);
      \node[anchor=north] at (2.75,-1) {\footnotesize $E_1$};

      \draw (6.5,-1) -- (6.5,1);
      \node[anchor=north] at (6.5,-1) {\footnotesize $E_1$};
      \draw (5.5,0.7) -- (7.5,-0.7) node[anchor=west] {\footnotesize $\tilde{C}$};
      \draw (5.5,-0.7) -- (7.5,0.7) node[anchor=west] {\footnotesize $E_2$};
      \draw[->] (5,0) -- (4.25,0);

      \draw (9.25,0) -- (11.75,0) node[anchor=west] {\footnotesize $E_3$};
      \draw (9.5,1) -- (9.5,-1) node[anchor=north] {\footnotesize $E_1$};
      \draw (10.5,1) -- (10.5,-1) node[anchor=north,yshift=1.25pt] {\footnotesize $\tilde{C}$};
      \draw (11.5,1) -- (11.5,-1) node[anchor=north] {\footnotesize $E_2$};
      \draw[->] (8.75,0) -- (8,0);

      \node (a) at (0.5,-1.75) {$\AA^2_k$};
      \node (b) at (3.25,-1.75) {$X$};
      \node (c) at (6.5,-1.75) {$Y$};
      \node (d) at (10.5,-1.75) {$W$};

      \path[->] (d) edge (c) (c) edge (b) (b) edge (a);

      \path[->] (d) edge[bend left=10,transform canvas={yshift=-0.15cm}]
      node[fill=white] {$\pi$} (a);
    \end{tikzpicture}
    \tikzexternaldisable
    \caption{Log resolution of a cuspidal cubic}
    \label{fig:cuspidalcubic}
    \index{cuspidal cubic|ff{}}
  \end{figure}
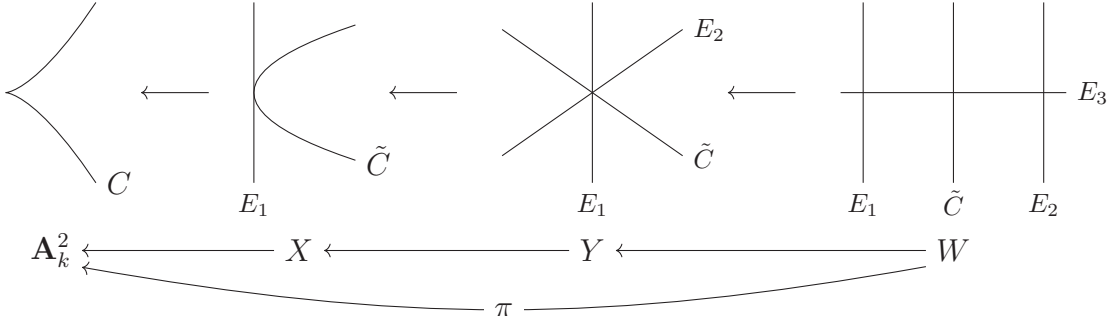
  We would like to compute the log canonical threshold $\lct_0(\AA^2_k;C)$,
  where $0 \in \AA^2_k$ is the origin.
  First, there is a log resolution $\pi\colon W \to \AA^2_k$ as in
  \cref{fig:cuspidalcubic}, which is constructed as a sequence of blowups at the
  intersection of the divisors shown, where
  \[
    \pi^*C = 2E_1 + 3E_2 + 6E_3 + \widetilde{C} \qquad \text{and} \qquad
    K_W - \pi^*K_{\AA^2_k} = E_1 + 2E_2 + 4E_3.
  \]
  By \cref{prop:lctvialogres}, we then have
  \[
    \lct_0(\AA^2_k;C) = \min\biggl\{
      \frac{1+1}{2},\frac{2+1}{3},\frac{4+1}{6}
    \biggr\} = \frac{5}{6}.\index{cuspidal cubic|)}
  \]
\end{example}
\section{Multiplier ideals}
We briefly review the theory of multiplier ideals.
Multiplier ideals were first defined by Nadel\index{Nadel, Alan Michael}
in the analytic setting \cite[Def.\ 2.5]{Nad90}.
We recommend \cite[Pt.\ 3]{Laz04b} for an overview on this topic.
We work in the more general setting of excellent $\QQ$-schemes, following
\cite[\S2]{dFM09} and \cite[App.\ A]{JM12}.
\begin{citeddef}[{\cite[Def.\ 9.3.60]{Laz04b}}]
  Let $(X,\Delta,\fa^t)$ be an effective log triple such that $X$ is a
  $\QQ$-scheme and such that $K_X+\Delta$ is $\RR$-Cartier.
  Fix a log resolution $f\colon Y \to X$ of $(X,\Delta,\fa^t)$ so that
  $\fa \cdot \cO_Y = \cO_{\widetilde{X}}(-D)$ for an effective divisor $D$.
  Note that such a log resolution exists by \cref{rem:logresexists}.
  The \textsl{multiplier ideal}%
  \index{multiplier ideal, $\mathcal{J}(X,\Delta,\mathfrak{a}^t)$|textbf} is
  \begin{align*}
    \gls*{multiplierideal} &\coloneqq
    f_*\cO_Y \bigl(K_Y - \bigl\lfloor f^*(K_X+\Delta) +
    tD \bigr\rfloor \bigr).
    \intertext{This definition does not depend on the choice of log resolution;
    see \cites[Thm.\ 9.2.18]{Laz04b}[Prop.\ 2.2]{dFM09}.\endgraf
    If $(X,\Delta,\fb^s)$ is another effective log triple on $X$, then we can
    analogously define the multiplier ideal}
    \cJ(X,\Delta,\fa^t\cdot\fb^s) &\coloneqq
    f_*\cO_Y \bigl(K_Y - \bigl\lfloor f^*(K_X+\Delta) +
    tD + sF \bigr\rfloor \bigr).
  \end{align*}
  where $f$ is a simultaneous log resolution for $(X,\Delta,\fa)$ and
  $(X,\Delta,\fb)$ such that $\fb \cdot \cO_Y = \cO_{\widetilde{X}}(-F)$ for an
  effective divisor $F$.
\end{citeddef}
Note that if $(X,\Delta,\fa^t)$ is an effective log triple such that $X$ is a
normal variety and $K_X+\Delta$ is $\RR$-Cartier, then $(X,\Delta,\fa^t)$ is klt
if and only if $\cJ(X,\Delta,\fa^t) = \cO_X$ \cite[p.\ 165]{Laz04a}.
See \cite[Ex.\ 9.2.30, Prop.\ 9.2.32, and p.\ 185]{Laz04b} for some other basic
properties of multiplier ideals, which carry over to the setting of excellent
$\QQ$-schemes.%
\index{multiplier ideal, $\mathcal{J}(X,\Delta,\mathfrak{a}^t)$!basic properties of}
Some more subtle properties in our context are checked in \cites[Prop.\
2.3]{dFM09}[App.\ A]{JM12}.
In particular, the subadditivity theorem of
Demailly\index{Demailly, Jean-Pierre}--%
Ein\index{Ein, Lawrence}--%
Lazarsfeld\index{Lazarsfeld, Robert} \cite[Thm.\ on p.\ 137]{DEL00} holds for
regular excellent $\QQ$-schemes; see \cite[Thm.\ A.2]{JM12}.%
\index{subadditivity theorem!for multiplier ideals}
\par We will also need an asymptotic version of multiplier ideals for graded
families of ideals.
Note that an asymptotic version of multiplier ideals first appeared in the work
of Siu \cite[pp.\ 668--669]{Siu98} in the analytic setting.
\begin{definition}[see {\cite[Def.\ 11.1.15]{Laz04a}}]
  \label{def:asymptoticmultideal}
  Let $(X,\Delta,\fa_\bullet^\lambda)$ be an effective log triple such that $X$
  is a $\QQ$-scheme and such that $K_X+\Delta$ is $\RR$-Cartier.
  If $m$ and $r$ are positive integers, then
  \begin{equation*}
    \cJ(X,\Delta,\fa_{m}^{\lambda/m})
    = \cJ\bigl(X,\Delta,(\fa_m^r)^{\lambda/(mr)}\bigr)
    \subseteq \cJ\bigl(X,\Delta,\fa_{mr}^{\lambda/(mr)}\bigr),
  \end{equation*}
  by \cite[Rem.\ 9.2.4 and Prop.\ 9.2.32$(iii)$]{Laz04b}, and by the graded
  property $\fa_m^r \subseteq \fa_{mr}$.
  Thus, the set of ideals
  \begin{equation}\label{eq:multidealposet}
    \bigl\{\cJ(X,\Delta,\fa_{m}^{\lambda/m})\bigr\}_{m=1}^\infty
  \end{equation}
  is partially ordered, and has a unique maximal element by the noetherian
  property that coincides with $\cJ(X,\Delta,\fa_m^{\lambda/m})$
  for $m$ sufficiently large and divisible.
  The \textsl{asymptotic multiplier ideal}%
  \index{asymptotic multiplier ideal, $\mathcal{J}(X,\Delta,\mathfrak{a}_\bullet^\lambda)$|textbf}
  \[
    \gls*{asymptoticmultiplierideal} \subseteq \cO_X
  \]
  is the maximal element of the partially ordered set \eqref{eq:multidealposet}.
  \par If $(X,\Delta,\fb_\bullet^\lambda)$ is another effective log triple on
  $X$, then we can analogously define the multiplier ideal
  $\cJ(X,\Delta,\fa_\bullet^\lambda\cdot\fb_\bullet^\mu)$.
\end{definition}
The following examples of multiplier ideals will be the most useful in our
applications.
\begin{example}[see {\cites[Defs.\ 9.2.10 and 11.1.2]{Laz04b}}]
  \label{ex:multidealdiv}
  Suppose $(X,\Delta)$ is an effective log pair where $X$ is a complete scheme
  over a field of characteristic zero.
  If $D$ is a Cartier divisor such that $H^0(X,\cO_X(mD)) \ne 0$ for some
  positive integer $m$, then for every real number $t \ge 0$, we set%
  \index{multiplier ideal, $\mathcal{J}(X,\Delta,\mathfrak{a}^t)$!of a divisor, $\mathcal{J}(X,\Delta,t \cdot \lvert D \rvert)$|textbf} 
  \begin{align*}
    \cJ\bigl(X,\Delta,t \cdot \lvert D \rvert\bigr) &\coloneqq
    \cJ\bigl(X,\Delta,\fb\bigl(\lvert D \rvert\bigr)^t\bigr).%
    \glsadd{multidealofls}
    \intertext{If $D$ is a $\QQ$-Cartier divisor such that $H^0(X,\cO_X(mD)) \ne
    0$ for some sufficiently divisible $m > 0$, then for every real number
    $\lambda \ge 0$, we set}
    \cJ\bigl(X,\Delta,\lambda \cdot \lVert D \rVert\bigr) &\coloneqq
    \cJ\bigl(X,\Delta,\fa_\bullet(D)^\lambda\bigr),\glsadd{asymmultidealofls}
  \end{align*}
  \index{asymptotic multiplier ideal, $\mathcal{J}(X,\Delta,\mathfrak{a}_\bullet^\lambda)$!of a $\mathbf{Q}$-divisor, $\mathcal{J}(X,\Delta,\lambda \cdot \lVert D \rVert)$|textbf}%
  where $\fa_\bullet(D)$ is the graded family of ideals defined in
  \cref{ex:triplelinsys}.
\end{example}
Finally, we have the following uniform global generation result for the
(asymptotic) multiplier ideals defined in Example \ref{ex:multidealdiv}.
\begin{theorem}[cf.\ {\cite[Prop.\ 9.4.26 and Cor.\ 11.2.13]{Laz04b}}]
  \label{thm:multidealuniformgg}
  Let\index{multiplier ideal, $\mathcal{J}(X,\Delta,\mathfrak{a}^t)$!uniform global generation of|(}%
  \index{asymptotic multiplier ideal, $\mathcal{J}(X,\Delta,\mathfrak{a}_\bullet^\lambda)$!uniform global generation of|(}
  $(X,\Delta)$ be an effective log pair where $X$ is a normal projective
  variety over a field $k$ of characteristic zero and $\Delta$ is an
  effective $\QQ$-Weil divisor such that $K_X+\Delta$ is $\QQ$-Cartier.
  Let $D$, $L$, and $H$ be Cartier divisors on $X$ such that $H$ is ample and
  free.
  If $\lambda$ is a non-negative real number such that
  $L - (K_X + \Delta + \lambda \cdot D)$
  is ample, then the sheaves
  \[
    \cJ\bigl(X,\Delta,\lambda\cdot\lvert D \rvert\bigr) \otimes \cO_X(L+dH)
    \quad \text{and} \quad
    \cJ\bigl(X,\Delta,\lambda\cdot\lVert D \rVert\bigr) \otimes \cO_X(L+dH)
  \]
  are globally generated for every integer $d > \dim X$.%
  \index{asymptotic multiplier ideal, $\mathcal{J}(X,\Delta,\mathfrak{a}_\bullet^\lambda)$!uniform global generation of|)}%
  \index{multiplier ideal, $\mathcal{J}(X,\Delta,\mathfrak{a}^t)$!uniform global generation of|)}
\end{theorem}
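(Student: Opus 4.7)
The plan is to reduce both global generation statements to appropriate cohomology vanishings via Castelnuovo--Mumford regularity with respect to $H$, and then establish those vanishings through Nadel vanishing (the multiplier-ideal form of the Kawamata--Viehweg vanishing theorem, available because $\Char k = 0$ and log resolutions exist by \cref{rem:logresexists}).

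First I would argue that it suffices to prove, for every $i > 0$, the vanishing
\[
  H^i\bigl(X,\cJ(X,\Delta,\lambda\cdot\lvert D\rvert)\otimes\cO_X(L+(d-i)H)\bigr) = 0
\]
for all integers $d > n\coloneqq\dim X$. Indeed, since $H$ is ample and free, Castelnuovo--Mumford regularity \cite[Thm.\ 1.8.5]{Laz04a} reduces global generation of $\cJ(X,\Delta,\lambda\cdot\lvert D\rvert)\otimes\cO_X(L+dH)$ to $0$-regularity with respect to $H$ of that sheaf, which is precisely the vanishing above in the range $1\le i\le n$, guaranteed by the assumption $d>n$.

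Next, to establish each such vanishing I would pick a common log resolution $f\colon Y\to X$ for $(X,\Delta)$ and for $\fb(\lvert D\rvert)$, write $\fb(\lvert D\rvert)\cdot\cO_Y=\cO_Y(-F)$, and by definition compute
\[
  \cJ\bigl(X,\Delta,\lambda\cdot\lvert D\rvert\bigr)\otimes\cO_X(L+(d-i)H)
  \simeq f_*\cO_Y\bigl(K_Y - \lfloor f^*(K_X+\Delta) + \lambda F\rfloor + f^*(L+(d-i)H)\bigr).
\]
By the local vanishing statement \cite[Thm.\ 9.4.1]{Laz04b} (a consequence of Grauert--Riemenschneider), the higher direct images of this sheaf vanish, so the Leray spectral sequence reduces the desired vanishing on $X$ to the corresponding vanishing on $Y$. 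The exponent on $Y$ can be rewritten as $K_Y$ plus a divisor whose fractional part has simple normal crossing support and whose integral part is numerically equivalent to $f^*\bigl(L+(d-i)H - (K_X+\Delta+\lambda D)\bigr)$ up to the appropriate round-up. Now $(d-i)H$ is nef since $d-i\ge 0$, and $L-(K_X+\Delta+\lambda D)$ is ample by hypothesis, so the sum is ample, and the Kawamata--Viehweg vanishing theorem \cite[Thm.\ 9.1.18]{Laz04b} applied on $Y$ finishes the vanishing step.

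For the asymptotic statement, I would exploit the definition of the asymptotic multiplier ideal: by \cref{def:asymptoticmultideal}, we have
\[
  \cJ\bigl(X,\Delta,\lambda\cdot\lVert D\rVert\bigr) = \cJ\bigl(X,\Delta,\fa_m(D)^{\lambda/m}\bigr)
\]
for all $m$ sufficiently large and divisible. Fix such an $m$ so that $mD$ is Cartier, and apply the non-asymptotic case to the Cartier divisor $mD$ in place of $D$ and to $\lambda/m$ in place of $\lambda$. The hypothesis that $L-(K_X+\Delta+\lambda D) = L-(K_X+\Delta+(\lambda/m)\cdot mD)$ is ample is exactly what the non-asymptotic case demands, so the same argument delivers global generation of $\cJ(X,\Delta,(\lambda/m)\cdot\lvert mD\rvert)\otimes\cO_X(L+dH)$, which equals the asymptotic version.

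The main technical obstacle is the careful bookkeeping in the Kawamata--Viehweg step: one must verify that after absorbing fractional parts into the simple normal crossing boundary on $Y$, the remaining integral divisor is still numerically equivalent to the pullback of an ample (or at least big and nef) $\QQ$-divisor, so that Kawamata--Viehweg actually applies. Everything else is standard, but the rounding has to be handled precisely so that no positivity is lost in the transition from $X$ to $Y$.
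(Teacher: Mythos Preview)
Your argument is essentially the standard proof over an algebraically closed field (this is exactly what \cite[Prop.\ 9.4.26 and Rem.\ 9.4.27]{Laz04b} do), and the paper does not reprove this. Instead, the paper's proof takes a different route: it first reduces the asymptotic statement to the non-asymptotic one just as you do, then \emph{cites} Lazarsfeld's result for the case $k = \overline{k}$, and spends the rest of the proof on a faithfully flat base change argument to the algebraic closure. The point is that $k$ is an arbitrary field of characteristic zero, and the references you invoke (\cite[Thms.\ 9.1.18 and 9.4.1]{Laz04b}) are stated over $\CC$; the paper handles this by checking that multiplier ideals, ampleness, and global generation are all compatible with the extension $k \subseteq \overline{k}$ (using \cite[Prop.\ 1.9]{JM12} for multiplier ideals and noting that $\overline{X}$ is a disjoint union of normal varieties even if not irreducible).

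Your direct approach would also work once you justify Kawamata--Viehweg and local vanishing over non-closed $k$, which is itself a base change argument of the same flavor---so in the end both proofs hinge on the same descent. One small bookkeeping point: in your Kawamata--Viehweg step, the divisor on $Y$ is not quite numerically equivalent to $f^*(L+(d-i)H-(K_X+\Delta+\lambda D))$; there is an extra nef term $\lambda M$ coming from the moving part $M = f^*D - F$ of $\lvert f^*D\rvert$, which is what makes the argument go through (since $f^*(\text{ample})$ is only big and nef, and adding the nef $\lambda M$ preserves this). You flagged the bookkeeping as the main obstacle, and this is precisely where it bites.
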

\begin{proof}
  By choosing $n$ sufficiently divisible such that
  \[
    \cJ\bigl(X,\Delta,\lambda\cdot\lVert D \rVert\bigr)
    = \cJ\bigl(X,\Delta,(\lambda/n)\cdot\lvert nD \rvert\bigr),
  \]
  it suffices to consider the case for the usual multiplier ideals.
  Global generation follows from Nadel vanishing\index{vanishing theorem!Nadel}
  when $k$ is algebraically
  closed \cite[Prop.\ 9.4.26 and Rem.\ 9.4.27]{Laz04b}, hence it suffices to
  reduce to this case.
  \par Let $\pi\colon \overline{X} \coloneqq X \times_k \overline{k} \to X$
  denote the base extension to the algebraic closure $\overline{k}$ of $k$.
  Since the extension $k \subseteq \overline{k}$ is faithfully flat, the
  sheaf $\cJ(X,\Delta,\lambda\cdot\lvert D \rvert) \otimes \cO_X(L+dH)$ is
  globally generated if its pullback to $\overline{X}$ is globally generated.
  The pullback to $\overline{X}$ is isomorphic to
  \begin{equation}\label{eq:multidealafterbasechange}
    \cJ\bigl(\overline{X},\pi^*\Delta,\lambda\cdot\lvert \pi^*D \rvert\bigr)
    \otimes \cO_{\overline{X}}(\pi^*L+d\pi^*H)
  \end{equation}
  since the formation of multiplier ideals commutes with faithfully flat base
  change \cite[Prop.\ 1.9]{JM12}.
  Moreover, the $\RR$-Cartier divisor
  \[
    \pi^*\bigl(L - (K_X + \Delta + \lambda \cdot D)\bigr)
    = \pi^*L - (K_{\overline{X}} + \pi^*\Delta + \lambda \cdot \pi^*D)
  \]
  is ample and $\pi^*H$ is ample and free by faithfully flat base change
  \cite[Cor.\ 2.7.2]{EGAIV2}.
  To apply the special case when $k$ is algebraically closed, we note that while
  $\overline{X}$ may not be irreducible, it is still the disjoint union of
  normal varieties \cite[Rem.\ on pp.\ 64--65]{Mat89}.
  Thus, by applying \cite[Prop.\ 9.4.26 and Rem.\ 9.4.27]{Laz04b} to each
  connected component of $\overline{X}$ individually, we see that the sheaf in
  \cref{eq:multidealafterbasechange} is globally generated.
\end{proof}

\chapter{Preliminaries in positive characteristic}\label{chap:poschar}
In this chapter, we review some preliminaries on commutative algebra and
algebraic geometry in positive characteristic.
See \cite{ST12} and \cite{TW18} for overviews on the topic.
See also \cite{PST17} and \cite{Pat18} for more geometric applications.
\par The only new material is a new, short proof of the subadditivity theorem
for test ideals (\cref{thm:testidealsubadditivity}), and some material on
$F$-pure triples in \cref{sect:fsingpairs}.
\section{Conventions on the Frobenius morphism}
\label{sect:frobconventions}
We start by establishing our conventions for the Frobenius morphism.
\begin{definition}
  Let $X$ be a scheme of characteristic $p > 0$.
  The \textsl{(absolute) Frobenius
  morphism}\index{Frobenius morphism, $F$|textbf} is the morphism
  $F\colon X \to X$ of schemes given by the identity on points
  and the $p$-power map
  \[
    \begin{tikzcd}[row sep=0,column sep=1.475em]
      \cO_X(U) \rar & F_*\cO_X(U)\\
      f \rar[mapsto] & f^p
    \end{tikzcd}
  \]
  on structure sheaves for every open subset $U \subseteq X$.
  If $R$ is a ring of characteristic $p > 0$, we denote the corresponding ring
  homomorphism by $F\colon R \to F_*R$.
  For every integer $e \ge 0$, the $e$th iterate of the Frobenius morphism
  is denoted by $\gls*{frobenius}\colon X \to X$ and $F^e\colon R\to F^e_*R$.
  If $\fa \subseteq \cO_X$ is a coherent ideal sheaf, we define the
  \textsl{$e$th Frobenius power} $\fa^{[p^e]}$\glsadd{frobeniuspower}
  \index{Frobenius power, $\mathfrak{a}^{[p^e]}$|textbf} to be the inverse
  image of $\fa$ under the
  $e$th iterate of the Frobenius morphism.
  Locally, if $\fa$ is generated by $(h_i)_{i \in I}$, then $\fa^{[p^e]}$ is
  generated by $(h_i^{p^e})_{i \in I}$.
\end{definition}
We note that the notation $\gls*{frobpushforward}$ is used to remind us that
the $R$-algebra structure on $F^e_*R$ is given by the ring homomorphism $F^e$.
\section{The pigeonhole principle}
A\index{pigeonhole principle|(}
surprisingly important fact in this thesis is the following combinatorial
result based on the pigeonhole principle.
\begin{lemma}[cf.\ {\cite[Lem.\ 2.4$(a)$]{HH02}}]\label{lem:monomials}
  Let $R$ be a commutative ring of characteristic $p > 0$.
  Then, for any ideal $\fa$ generated by $n$ elements and for all non-negative
  integers $e$ and $\ell$, we have the sequence of inclusions
  \begin{equation}
    \fa^{\ell p^e+n(p^e-1)+1} \subseteq (\fa^{\ell+1})^{[p^e]} \subseteq
    \fa^{(\ell+1)p^e}.\label{eq:calemincl}
  \end{equation}
  Moreover, if $R$ is a regular local ring of dimension $n$ and
  $\fm$ is the maximal ideal of $R$, then
  \[
    \fm^{\ell p^e+n(p^e-1)} \not\subseteq (\fm^{\ell+1})^{[p^e]}.
  \]
\end{lemma}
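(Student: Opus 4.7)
The plan is to reduce everything to manipulations of exponent vectors of monomials. Fix generators $h_1, \ldots, h_n$ of $\fa$, so that $(\fa^{\ell+1})^{[p^e]}$ is generated by the elements $h_1^{p^e b_1} \cdots h_n^{p^e b_n}$ with $\sum b_i = \ell+1$. The right-hand inclusion in \cref{eq:calemincl} is then immediate, since each such generator lies in $\fa^{(\ell+1)p^e}$.

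For the left inclusion, a typical generator of $\fa^{\ell p^e + n(p^e-1)+1}$ is a monomial $h_1^{a_1} \cdots h_n^{a_n}$ with $\sum_i a_i \ge \ell p^e + n(p^e-1) + 1$. Write $a_i = p^e q_i + r_i$ by the division algorithm, with $0 \le r_i \le p^e - 1$. Then $\sum_i r_i \le n(p^e-1)$, so $p^e \sum_i q_i = \sum_i a_i - \sum_i r_i \ge \ell p^e + 1$. Dividing and using integrality gives $\sum_i q_i \ge \ell+1$, and hence $h_1^{p^e q_1} \cdots h_n^{p^e q_n} \in (\fa^{\ell+1})^{[p^e]}$ divides our monomial. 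This is the pigeonhole step.

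For the non-containment, choose a regular system of parameters $x_1, \ldots, x_n$ for $\fm$. The point is that $(\fm^{\ell+1})^{[p^e]}$ is generated by monomials in the $x_i$, namely by $x_1^{p^e c_1} \cdots x_n^{p^e c_n}$ with $\sum c_i \ge \ell+1$. After passing to the $\fm$-adic completion $\widehat{R} \simeq k[[x_1, \ldots, x_n]]$ (faithful flatness preserves both containment and non-containment in ideals), I will exploit the standard fact that in a formal power series ring over a field, a monomial lies in a monomial ideal if and only if it is divisible by one of the generating monomials. Thus $x_1^{a_1} \cdots x_n^{a_n} \in (\fm^{\ell+1})^{[p^e]}$ forces the existence of $c_i$ with $p^e c_i \le a_i$ and $\sum c_i \ge \ell+1$, i.e., $\sum_i \lfloor a_i/p^e \rfloor \ge \ell+1$.

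I will then exhibit the explicit witness $x_1^{\ell p^e + (p^e-1)} x_2^{p^e-1} \cdots x_n^{p^e-1}$, which has total degree $\ell p^e + n(p^e-1)$, but whose floor-sum is $\ell + 0 + \cdots + 0 = \ell$, failing the criterion. The main obstacle is the justification of the divisibility criterion for monomial ideals in a power series ring; I will obtain it by expanding each coefficient in a purported expression as a power series in the $x_i$, matching the coefficient of the target monomial on both sides, and concluding that some summand of that expansion must itself be a scalar multiple of the target.
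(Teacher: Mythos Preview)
Your proof is correct and essentially identical to the paper's: the same division-algorithm pigeonhole for the left inclusion, the same witness monomial $x_1^{\ell p^e + (p^e-1)} x_2^{p^e-1}\cdots x_n^{p^e-1}$ for the non-containment, and the same reduction to the completion via the Cohen structure theorem. You spell out the monomial-membership criterion in $k\llbracket x_1,\ldots,x_n\rrbracket$ more explicitly than the paper does, but there is no substantive difference.
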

\begin{proof}
  The second inclusion in \cref{eq:calemincl} is clear by the definition of
  Frobenius powers.
  We want to show the first inclusion.
  Let $y_1,y_2,\ldots,y_n$ be a set of generators for
  $\fa$.
  The ideal $\fa^{\ell p^e+n(p^e-1)+1}$ is generated
  by all elements of the form
  \begin{alignat}{4}
    &\prod_{i=1}^{n} y_i^{a_i} &&\quad& \text{such that} &\quad 
    \sum_{i=1}^n a_i &{}={}& \ell p^e+n(p^e-1)+1,
    \label{eq:monoordpower}
    \intertext{and the ideal $(\fa^{\ell+1})^{[p^e]}$ is generated by all
    elements of the form}
    &\prod_{i=1}^{n} y_i^{p^e b_i} &&\quad& \text{such that} &\quad 
    \sum_{i=1}^n b_i &{}={}& \ell+1.\label{eq:monomixedpower}
  \end{alignat}
  We want to show that the elements \cref{eq:monoordpower} are divisible by
  some elements of the form \cref{eq:monomixedpower}.
  By the division algorithm, we may write $a_i = a_{i,0} + p^e a_{i}'$ for some
  non-negative integers $a_{i,0}$ and $a_i'$
  such that $0 \le a_{i,0} \le p^{e}-1$.
  We then have
  \[
    \prod_{i=1}^{n} y_i^{a_i} = \prod_{i=1}^n y_i^{a_{i,0}} \cdot
    \prod_{i=1}^n y_i^{p^e a_{i}'},
  \]
  and since $a_{i,0} \le p^e - 1$, we have that $\sum_{i=1}^n a_{i,0} \le n(p^e -
  1)$.
  Thus, we have the inequality
  \[
    \ell p^e + n(p^e - 1) + 1 = \sum_{i=1}^n a_i \le n(p^e - 1) +
    \sum_{i=1}^n p^e a_i',
  \]
  which implies $\ell + p^{-e} \le \sum_{i=1}^n a_i'$. Since the right-hand side
  of this inequality is an integer, we have that $\ell + 1 \le
  \sum_{i=1}^n a_i'$, i.e., the element $\prod_{i=1}^n y_i^{p^ea_i'}$ is
  divisible by one of the form \cref{eq:monomixedpower}. Thus, each element of
  the form in \cref{eq:monoordpower} is divisible by one of the form in
  \cref{eq:monomixedpower}.
  \par Now suppose $R$ is a regular local ring of dimension $n$, and $\fm$
  is the maximal ideal of $R$. Let $y_1,y_2,\ldots,y_n$ be a regular system of
  parameters. Then, we have
  \[
    y_{i_0}^{\ell p^e} \cdot \prod_{i=1}^n y_i^{p^e-1} \in
    \fm^{\ell p^e+n(p^e-1)}
  \]
  for all $i_0 \in \{1,2,\ldots,n\}$. This monomial
  does not lie in $(\fm^{\ell+1})^{[p^e]}$ since its image is not in the
  extension of $(\fm^{\ell+1})^{[p^e]}$ in the completion of $R$ at $\fm$,
  which is isomorphic to a formal power series ring with variables
  $y_1,y_2,\ldots,y_n$ by the Cohen structure theorem.
\end{proof}
We moreover show that asymptotically, the number $n$ of elements generating
$\fa$ can be replaced by the analytic spread\index{analytic spread} of $\fa$.
See \cite[Def.\ 5.1.5]{HS06} for the definition of analytic spread.
\begin{lemma}\label{lem:mur23improved}
  Let $(R,\fm,k)$ be a noetherian local ring of characteristic $p > 0$.
  Then, for every ideal $\fa$ of analytic spread $h$,
  there exists an integer $t \ge 0$ such that for all non-negative integers
  $e$ and $\ell$, we have the sequence of inclusions
  \begin{equation}\label{eq:inclusions}
    \fa^{\ell p^e+h(p^e-1)+1+t} \subseteq (\fa^{\ell+1})^{[p^e]} \subseteq
    \fa^{(\ell+1)p^e}.
  \end{equation}
  In particular, if $\fa = \fm$, then \cref{eq:inclusions} holds for $h = \dim
  R$.
\end{lemma}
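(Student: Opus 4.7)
The plan is to reduce the statement to \cref{lem:monomials} applied to a minimal reduction $\fb$ of $\fa$, exploiting the fact that when $R/\fm$ is infinite, $\fa$ admits a reduction generated by exactly $h = \ell(\fa)$ elements (see \cite[\S8.3]{HS06}). Recall that if $\fb \subseteq \fa$ is a reduction, then by definition there exists an integer $r \ge 0$ with $\fa^{r+1} = \fb \fa^r$, and iterating gives $\fa^{r+n} = \fb^n \fa^r \subseteq \fb^n$ for every $n \ge 0$.

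First I would reduce to the case when the residue field is infinite. The standard trick is to replace $R$ by $R(x) \coloneqq R[x]_{\fm R[x]}$, which is a faithfully flat noetherian local extension of $R$ with infinite residue field $k(x)$, and for which $\ell(\fa R(x)) = \ell(\fa)$. Formation of Frobenius powers commutes with flat base change, so $(\fa^{\ell+1})^{[p^e]} R(x) = \bigl((\fa R(x))^{\ell+1}\bigr)^{[p^e]}$; combined with faithful flatness, the inclusion in $R$ we want is equivalent to its counterpart in $R(x)$. Thus we may assume $R/\fm$ is infinite.

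Next, choose a minimal reduction $\fb \subseteq \fa$ generated by exactly $h$ elements, and fix $r \ge 0$ with $\fa^{r+1} = \fb \fa^r$. Set $t \coloneqq r$. Applying \cref{lem:monomials} to the $h$-generated ideal $\fb$ yields
\[
  \fb^{\ell p^e + h(p^e-1)+1} \subseteq (\fb^{\ell+1})^{[p^e]} \subseteq (\fa^{\ell+1})^{[p^e]}
\]
for all $e,\ell \ge 0$, where the last inclusion uses $\fb \subseteq \fa$. Combining this with $\fa^{t+n} \subseteq \fb^n$ applied to $n = \ell p^e + h(p^e-1)+1$ gives the first inclusion of the lemma; the second inclusion $(\fa^{\ell+1})^{[p^e]} \subseteq \fa^{(\ell+1)p^e}$ is immediate from the definition of Frobenius powers. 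For the final claim, when $\fa = \fm$, the fiber cone $\mathcal{F}(\fm) = \operatorname{gr}_\fm R$ has Krull dimension $\dim R$, so $\ell(\fm) = \dim R$ and the preceding argument applies with $h = \dim R$.

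The main obstacle is making the reduction to the infinite residue field case fully rigorous: one must check that the analytic spread and both sides of the desired inclusion behave correctly under the $R \to R(x)$ base change. This is routine once one recalls that $R \to R(x)$ is faithfully flat, that $\fm R(x)$ is the maximal ideal of $R(x)$, and that $(IJ)R(x) = (IR(x))(JR(x))$ for ideals $I, J$ of $R$; the remainder of the proof is a direct application of \cref{lem:monomials} and the defining property of reductions.
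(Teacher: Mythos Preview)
Your proposal is correct and follows essentially the same approach as the paper: reduce to infinite residue field via the faithfully flat extension $R \to R[x]_{\fm R[x]}$, choose a minimal reduction of $\fa$ generated by $h$ elements, and apply \cref{lem:monomials} to that reduction. The paper's proof differs only in notation and in citing \cite[Cor.\ 8.3.9]{HS06} for the identification $\ell(\fm) = \dim R$, whereas you observe directly that the fiber cone of $\fm$ is $\operatorname{gr}_\fm R$.
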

\begin{proof}
  The right inclusion in \cref{eq:inclusions} is clear as in
  \cref{lem:monomials}.
  It therefore suffices to prove the left inclusion in \cref{eq:inclusions}.
  We first reduce to the case when $k$ is infinite.
  Consider the ring $S = R[x]_{\fm R[x]}$ as in \cite[\S8.4]{HS06}.
  Then, $S$ is a noetherian local ring of characteristic $p > 0$ such that $R
  \subseteq S$ is faithfully flat and $S/\fm S \simeq k(x)$ is
  infinite.
  Since we can check the inclusions in \cref{eq:inclusions} after a faithfully
  flat extension \cite[Thm.\ 7.5$(ii)$]{Mat89}, and since analytic spread does
  not change after passing to $S$ \cite[Lem.\ 8.4.2(4)]{HS06}, we can replace
  $R$ with $S$ to assume that $k$ is infinite.
  \par We now prove the left inclusion in \cref{eq:inclusions} under the
  assumption that $k$ is infinite.
  Recall that since $k$ is infinite, there exists
  an ideal $\fq \subseteq \fa$ called a \textsl{minimal reduction
  ideal}\index{minimal reduction ideal} and an
  integer $t > 0$ such that $\fq$ is generated by $h$ elements, and
  $\fa^{s+t} = \fq^s\cdot\fa^t$ for every integer $s \ge 0$; see \cite[Def.\
  1.2.1 and Prop.\ 8.3.7]{HS06}.
  Setting $s = \ell p^e + h(p^e-1)+1$, we have
  \[
    \fa^{\ell p^e + h(p^e-1)+1+t} = \fq^{\ell p^e + h(p^e-1)+1} \cdot \fa^t
    \subseteq (\fq^{\ell+1})^{[p^e]} \cdot \fa^t \subseteq
    (\fa^{\ell+1})^{[p^e]}
  \]
  for all non-negative integers $e$ and $\ell$, where the first
  inclusion holds by \cref{lem:monomials}.
  The special case for $\fa = \fm$ follows from \cite[Cor.\ 8.3.9]{HS06}.%
\index{pigeonhole principle|)}
\end{proof}
\section{\textit{F}-finite schemes}
As mentioned in \cref{sect:poschardifficult}, in positive characteristic, one
often needs to restrict or reduce to the case when the Frobenius morphism is
finite.
We isolate this class of schemes.
\begin{definition}
  Let $X$ be a scheme of characteristic $p > 0$.
  We say that $X$ is \textsl{$F$-finite}\index{F-finite@$F$-finite|textbf} if
  the (absolute) Frobenius morphism $F\colon X \to X$ is finite.
  We say that a ring $R$ of characteristic $p > 0$ is \textsl{$F$-finite} if
  $\Spec R$ is $F$-finite, or equivalently if $F\colon R \to F_*R$ is
  module-finite.
\end{definition}
\par Note that a field $k$ is $F$-finite if and only if $[k:k^p] <
\infty$.\index{F-finite@$F$-finite!field}
$F$-finite schemes are ubiquitous in geometric contexts because of the
following:
\begin{example}[see {\cites[p.\ 999]{Kun76}[Ex.\ 2.1]{BMS08}}]
  \label{ex:eftoverffin}
  If $X$ is a
  scheme that is locally essentially of finite type over an
  $F$-finite scheme of characteristic $p > 0$, then $X$ is $F$-finite.
  In particular, schemes essentially of finite type over perfect or $F$-finite
  fields are $F$-finite.
\end{example}
\par If a scheme $X$ of characteristic $p > 0$ is $F$-finite, then Grothendieck
duality (\cref{thm:nayakshriek}) can be applied to the Frobenius morphism since
it is finite.
The $F$-finiteness condition implies other desirable conditions as well.
\begin{citedthm}[{\cites[Thm.\ 2.5]{Kun76}[Rem.\ 13.6]{Gab04}}]
  \label{thm:ffiniteaffine}
  Let\index{F-finite@$F$-finite!implies excellent}%
  \index{F-finite@$F$-finite!has a dualizing complex}%
  \index{dualizing complex, $\omega_X^\bullet$!on $F$-finite schemes}
  $R$ be a noetherian $F$-finite ring of characteristic $p > 0$.
  Then, $R$ is excellent and is isomorphic to a quotient of a regular
  ring of finite Krull dimension.
  In particular, $R$ admits a dualizing complex.
\end{citedthm}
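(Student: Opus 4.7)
The plan is to split the theorem into three assertions and handle them in sequence: (a) $R$ has finite Krull dimension and is a quotient of a regular ring of finite Krull dimension; (b) $R$ is excellent; and (c) $R$ admits a dualizing complex. The dualizing complex claim follows formally from (a), so the real content lies in (a) and (b), corresponding to Gabber's and Kunz's results respectively.

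For the quotient-of-regular-ring assertion, I would exploit the fact that $F$-finiteness means $F_*R$ is a finitely generated $R$-module, which in turn gives $R$ a finite $p$-basis type structure. The local picture is transparent: at a maximal ideal $\fm$, the completion $\widehat{R}_\fm$ has an $F$-finite coefficient field $k$ by Cohen's structure theorem, so $\widehat{R}_\fm$ is a quotient of the regular $F$-finite ring $k[[x_1,\ldots,x_n]]$. The finite Krull dimension bound comes out of controlling the rank of $F_*R$: over a field $k$ with $[k:k^p]=p^d$, an $F$-finite domain of dimension $n$ has $F_*R$ of generic rank $p^{n+d}$, giving a uniform bound on $\dim R$ in terms of this rank. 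Globalizing the local Cohen-structure picture into a single regular ring $S$ of finite Krull dimension surjecting onto $R$ is the key Gabber-style step: one uses the finitely many generators of $F_*R$ over $R$ together with a finite generating set of $R$ to write down an explicit polynomial (or power-series–like) regular ring presenting $R$.

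For excellence, I would follow Kunz's strategy and verify the three required properties: (i) universal catenarity, which follows from being a quotient of a regular ring of finite Krull dimension by step (a); (ii) openness of the regular locus, which uses Kunz's theorem that a noetherian ring is regular if and only if its Frobenius is flat, combined with generic freeness of the finitely generated $R$-module $F_*R$ on $\Spec R$; and (iii) geometric regularity of formal fibers. Property (iii) is the most delicate: one shows that for any prime $\fp$, the completion $\widehat{R}_\fp$ is again $F$-finite (inheriting this from $R$), and then uses that regularity in the formal fibers can be detected via the Frobenius, reducing the geometric-regularity condition on residue field extensions back to flatness statements already established. For the dualizing complex, once $R = S/I$ with $S$ regular of finite Krull dimension, $S$ is Gorenstein of finite injective dimension so $\omega_S^\bullet = S[\dim S]$ is dualizing for $S$, and $\mathbf{R}\Hom_S(R, \omega_S^\bullet)$ then serves as a dualizing complex for $R$ by standard base-change properties.

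The main obstacle will be Gabber's global construction in step (a): while the local-at-a-point or complete-local picture is immediate from Cohen's theorem, producing a single regular ring of finite Krull dimension that surjects onto all of $R$ requires a careful argument patching $p$-bases, and controlling the Krull dimension of this regular ring uniformly. Once step (a) is in hand, step (b) reduces to verifiable module-theoretic statements about $F_*R$, and step (c) is a one-line consequence.
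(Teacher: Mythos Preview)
The paper does not prove this theorem: it is stated as a \texttt{citedthm} with attribution to Kunz \cite[Thm.\ 2.5]{Kun76} for excellence and Gabber \cite[Rem.\ 13.6]{Gab04} for the quotient-of-regular-ring assertion, and no argument is given beyond a pointer to \cite{Har66} for the definition of a dualizing complex. There is therefore nothing in the paper to compare your proposal against.

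As a standalone sketch of the cited results, your outline is reasonable and identifies the correct ingredients: Kunz's characterization of regularity via flatness of Frobenius drives the openness of the regular locus and the analysis of formal fibers, Gabber's argument globalizes the local Cohen-structure picture, and the dualizing complex follows formally. One minor point on logical dependence: you derive universal catenarity in (b) from step (a), i.e., from Gabber's later result, whereas Kunz's original 1976 proof of excellence is self-contained and does not rely on any presentation as a quotient of a regular ring. This is not a gap---your ordering is valid---but it obscures the fact that the two cited results are independent. You are also right to flag Gabber's global patching as the genuinely nontrivial step; your description of it is accurate in spirit but would need the actual construction (via a suitable smooth cover or Witt-vector-type thickening) to become a proof.
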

See \cite[Def.\ on p.\ 258]{Har66} for the definition of a dualizing
complex.\index{dualizing complex, $\omega_X^\bullet$}
\section{\emph{F}-singularities of pairs and triples}\label{sect:fsingpairs}
We now define $F$-singularities for log triples in the sense of
\cref{def:triples}.
These are common generalizations of the notions for log pairs $(X,\Delta)$ and
$(X,\fa^t)$ due to Hara\index{Hara, Nobuo}--Watanabe\index{Watanabe, Kei-ichi}
\cite{HW02} and Takagi\index{Takagi, Shunsuke} \cite{Tak04inversion},
respectively.
While an equivalent definition of strong $F$-regular triples has appeared
before (see \cref{rem:rounding}), the definition of $F$-pure triples appears to
be new.
\par We note that we assume $F$-finiteness throughout.
See \cref{app:nonffinfsings} for an overview on $F$-singularities for
rings, where we work without $F$-finiteness assumptions. 
\begin{definition}[cf.\ {\cites[Def.\ 2.1]{HW02}[Def.\ 3.1]{Tak04inversion}}]
  \label{def:fsingpair}
  Let $(R,\Delta,\fa^t)$ be an effective log triple such that $R$ is an
  $F$-finite local ring of characteristic $p > 0$.
  \begin{enumerate}[label=$(\alph*)$,ref=\alph*]
    \item\label{def:fpurepair}
      The triple $(R,\Delta,\fa^t)$ is
      \textsl{$F$-pure}\index{F-pure@$F$-pure!triple|textbf} if there exists an
      integer $e' > 0$ such that for all $e \ge e'$, there exists an element $d
      \in \fa^{\lfloor (p^e-1)t \rfloor}$ for which the composition
      \begin{equation}\label{eq:fpurecomp}
        R \overset{F^e}{\longrightarrow} F^e_*R \longhookrightarrow
        F^e_*R\bigl(\lfloor (p^e-1)\Delta \rfloor\bigr) \xrightarrow{F^e_*(-
        \cdot d)} F^e_*R\bigl(\lfloor (p^e-1)\Delta \rfloor\bigr)
      \end{equation}
      splits as an $R$-module homomorphism.
    \item\label{def:sharpfpurepair}
      The triple $(R,\Delta,\fa^t)$ is
      \textsl{sharply $F$-pure}\index{F-pure@$F$-pure!sharply!triple|textbf} if
      there exists an
      integer $e > 0$ and an element $d
      \in \fa^{\lceil (p^e-1)t \rceil}$ for which the composition
      \begin{equation}\label{eq:sharpfpurecomp}
        R \overset{F^e}{\longrightarrow} F^e_*R \longhookrightarrow
        F^e_*R\bigl(\lceil (p^e-1)\Delta \rceil\bigr) \xrightarrow{F^e_*(-
        \cdot d)} F^e_*R\bigl(\lceil (p^e-1)\Delta \rceil\bigr)
      \end{equation}
      splits as an $R$-module homomorphism.
    \item\label{def:sfrpair}
      The triple $(R,\Delta,\fa^t)$ is \textsl{strongly $F$-regular}%
      \index{F-regular@$F$-regular!strongly!triple|textbf} if for all
      $c \in R^\circ$, there exists an integer $e > 0$ and an element $d \in
      \fa^{\lfloor(p^e-1)t\rfloor}$ for which the composition
      \begin{equation}\label{eq:sfrcomp}
        R \overset{F^e}{\longrightarrow} F^e_*R \longhookrightarrow
        F^e_*R\bigl(\lfloor (p^e-1)\Delta \rfloor\bigr) \xrightarrow{F^e_*(-
        \cdot cd)} F^e_*R\bigl(\lfloor (p^e-1)\Delta \rfloor\bigr)
      \end{equation}
      splits as an $R$-module homomorphism.
  \end{enumerate}
  \par Now suppose that $(X,\Delta,\fa^t)$ is an effective log triple such that
  $X$ is an $F$-finite scheme of characteristic $p > 0$, and let $x \in X$ be a
  point.
  The triple $(X,\Delta,\fa^t)$ is \textsl{$F$-pure} (resp.\
  \textsl{sharply $F$-pure}, \textsl{strongly $F$-regular}) at $x$ if the
  localized triple
  \(
    (\cO_{X,x},\Delta\rvert_{\Spec
    \cO_{X,x}},\fa_x^t)
  \)
  is $F$-pure (resp.\ sharply $F$-pure, strongly $F$-regular).
  The triple $(X,\Delta,\fa^t)$ is \textsl{$F$-pure} (resp.\ \textsl{sharply
  $F$-pure}, \textsl{strongly
  $F$-regular}) if it is $F$-pure (resp.\ strongly $F$-regular) at every point
  $x \in X$.
\end{definition}
\begin{remark}
  A triple $(R,0,R^1)$ as in \cref{def:fsingpair} is $F$-pure if
  and only if $R$ is $F$-pure in the sense of Hochster\index{Hochster, Melvin}%
  --Roberts\index{Roberts, Joel L.} (since
  $F$-purity and $F$-splitting coincide $F$-finite rings; see
  \cref{fig:fsingsdiagram}), and is strongly $F$-regular if and only if $R$ is
  strongly $F$-regular in the sense of Hochster\index{Hochster, Melvin}--Huneke%
  \index{Hochster, Craig}
  (\cref{def:fsingssplit}\cref{def:fsingssplitreg}).
\end{remark}
\par We collect some basic properties of $F$-singularities for triples.
\begin{proposition}[cf.\ {\cites[Prop.\ 2.2]{HW02}[Prop.\ 3.3]{Tak04inversion}}]
  \label{prop:fsingbasic}
  Let $(R,\Delta,\fa^t)$ be an effective log triple such that $R$ is an
  $F$-finite local ring of characteristic $p > 0$.
  \begin{enumerate}[label=$(\roman*)$,ref=\roman*]
    \item If $(R,\Delta,\fa^t)$ is $F$-pure (resp.\ sharply $F$-pure, strongly
      $F$-regular), then
      so is $(R,\Delta',\fb^s)$ for every triple such that $\Delta' \le \Delta$,
      $\fb \supseteq \fa$, and $s \in [0,t]$.\label{prop:fsingssmaller}
    \item If $(R,\Delta,\fa^t)$ is $F$-pure, then $\lceil \Delta \rceil$ is
      reduced, i.e., the nonzero coefficients of $\lceil \Delta \rceil$ are
      equal to $1$.\label{prop:fpurered}
    \item $(R,\Delta,\fa^t)$ is strongly $F$-regular if and only if for all $c
      \in R^\circ$, there exists an integer $e' > 0$ such that for all $e \ge
      e'$, there exists $d \in \fa^{\lceil p^e t\rceil}$ for which the
      composition
      \begin{equation}\label{eq:sfrceilingscomp}
        R \overset{F^e}{\longrightarrow} F^e_*R \longhookrightarrow
        F^e_*R\bigl(\lceil p^e\Delta \rceil\bigr) \xrightarrow{F^e_*(- \cdot
        cd)} F^e_*R\bigl(\lceil p^e\Delta \rceil\bigr)
      \end{equation}
      splits as an $R$-module homomorphism.\label{prop:sfrceilings}
    \item We have the implications\label{prop:sfrimpliesfpure}
      \[
        \begin{tikzcd}[column sep=-1em]
          \text{strongly $F$-regular}
          \arrow[Rightarrow]{rr}\arrow[Rightarrow]{dr} & & \text{sharply
          $F$-pure}\arrow[dashed,Rightarrow]{dl}\\
          & \text{$F$-pure}
        \end{tikzcd}
      \]
      where the dashed implication holds when $\Delta$ is Cartier and $\fa$ is
      locally principal.
  \end{enumerate}
\end{proposition}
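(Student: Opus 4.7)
The plan is to address the four parts separately, with part (iii) serving as the technical core. For part (i), I will observe that each weakening of the input data relaxes the relevant containments: $\Delta' \le \Delta$ gives an inclusion $F^e_*R(\lfloor(p^e-1)\Delta'\rfloor) \hookrightarrow F^e_*R(\lfloor(p^e-1)\Delta\rfloor)$ and similarly for ceilings, while $\fb \supseteq \fa$ together with $s \le t$ yields $\fa^{\lfloor(p^e-1)t\rfloor} \subseteq \fb^{\lfloor(p^e-1)s\rfloor}$. Taking an existing splitting $\phi$ of \eqref{eq:fpurecomp} for $(R,\Delta,\fa^t)$ with element $d$, I would precompose $\phi$ with the sheaf inclusion to produce the required splitting for $(R,\Delta',\fb^s)$, noting that the same $d$ now sits in the larger target ideal power. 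The identical recipe handles sharp $F$-purity, and preserving the choice of $c \in R^\circ$ it also handles strong $F$-regularity.

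For part (ii), I would argue by contradiction. Assume $\Delta$ has a coefficient $a > 1$ along some prime divisor $D$; by part (i) the triple $(R, aD)$ is still $F$-pure, and localizing at the generic point of $D$ reduces to showing that over a DVR $(V, (\pi))$, the triple $(V, aD)$ is not $F$-pure when $a > 1$. Setting $N = \lfloor(p^e-1)a\rfloor$ and writing $N = qp^e + r$ with $0 \le r < p^e$, the module $F^e_*(\pi^{-N}V)$ is $V$-free with basis $\{\pi^{-N+i}\}_{i=0}^{p^e-1}$, and inside it $1 \in V$ equals $\pi^q \star \pi^{-N+r}$ under the Frobenius-twisted scalar action. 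Any $V$-linear splitting $\phi$ to $V$ would thus force $\phi(1) = \pi^q \cdot \phi(\pi^{-N+r})$, which is a unit only when $q = 0$; but $a > 1$ forces $N \ge p^e$ once $(p^e - 1)a \ge p^e$, giving $q \ge 1$ and a contradiction.

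For part (iii), the direction $(\Leftarrow)$ is immediate: since $\lceil p^e t\rceil \ge \lfloor(p^e-1)t\rfloor$ and $\lceil p^e\Delta\rceil \ge \lfloor(p^e-1)\Delta\rfloor$, restricting a splitting of \eqref{eq:sfrceilingscomp} along the sheaf inclusion $F^e_*R(\lfloor(p^e-1)\Delta\rfloor) \hookrightarrow F^e_*R(\lceil p^e\Delta\rceil)$ yields a splitting of \eqref{eq:sfrcomp} with the same element. The direction $(\Rightarrow)$ is more delicate and proceeds by iteration. Given $c \in R^\circ$, strong $F$-regularity furnishes a splitting $\phi_0$ at some $e_0$ with element $cd_0$ and $d_0 \in \fa^{\lfloor(p^{e_0}-1)t\rfloor}$; iterating $\phi_n \coloneqq \phi_0 \circ F^{e_0}_*\phi_0 \circ \cdots \circ F^{(n-1)e_0}_*\phi_0$ gives a splitting at $n e_0$ whose accumulated divisor is $\sum_{i=0}^{n-1} p^{ie_0}\lfloor(p^{e_0}-1)\Delta\rfloor$ and whose element is $cd_0^{1 + p^{e_0} + \cdots + p^{(n-1)e_0}}$. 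Since $\lfloor(p^{e_0}-1)t\rfloor$ differs from $(p^{e_0}-1)t$ by less than $1$ and $p^{ne_0}$ grows geometrically, for $n$ large these dominate $\lceil p^{ne_0}\Delta\rceil$ and $\lceil p^{ne_0}t\rceil$ respectively, with any small residual deficit absorbed by re-applying strong $F$-regularity to $cr$ for a chosen $r \in \fa$. Intermediate values of $e$ between consecutive multiples of $e_0$ are handled by one further Frobenius twist supplied by the same mechanism.

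For part (iv), the solid implications both follow from part (iii) applied with $c = 1$: strong $F$-regularity yields splittings of \eqref{eq:sfrceilingscomp} for all large $e$, and restricting along $F^e_*R(\lfloor(p^e-1)\Delta\rfloor) \hookrightarrow F^e_*R(\lceil p^e\Delta\rceil)$ recovers \eqref{eq:fpurecomp} while restricting along $F^e_*R(\lceil(p^e-1)\Delta\rceil) \hookrightarrow F^e_*R(\lceil p^e\Delta\rceil)$ recovers \eqref{eq:sharpfpurecomp}, since in either case $d \in \fa^{\lceil p^e t\rceil}$ already lies in the smaller target ideal power. For the dashed implication, the extra hypotheses force $\lceil(p^e-1)\Delta\rceil = \lfloor(p^e-1)\Delta\rfloor$ (integer coefficients) and make the sheaves in \eqref{eq:fpurecomp} and \eqref{eq:sharpfpurecomp} coincide, while the relevant ideal powers differ only by a single factor of a local generator; iterating a single sharp $F$-pure splitting as in part (iii) then produces splittings for all large $e$, yielding $F$-purity. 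The main obstacle will be the iteration argument in part (iii), where bookkeeping between floors, ceilings, and accumulated ideal powers requires careful quantitative estimates; the local DVR computation for part (ii) is also somewhat delicate to set up correctly.
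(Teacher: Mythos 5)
Parts (i), (ii), and (iv) are essentially fine. For (i) your argument is the same as the paper's (the splitting conditions simply weaken). For (ii) the paper just cites \cite[Prop.~2.2(4)]{HW02}, whereas you give the underlying direct argument: localize at the generic point of the offending divisor and compute on a DVR. Your computation is correct (and the claim that $F^e_*(\pi^{-N}V)$ is free on $\{\pi^{-N+i}\}$ is unnecessary --- all you use is $F^e_*(1)=\pi^q\cdot F^e_*(\pi^{-N+r})$, which holds regardless of the residue field); this is a legitimate, more self-contained route. Part (iv) follows from (iii) exactly as you say, and the dashed implication is cited in the paper to \cite[Prop.~3.5]{Sch08}.

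The problem is the forward direction of (iii), which is the technical heart of the proposition, and your iteration scheme does not close the gap it needs to close. After $n$ iterations of a splitting at level $e_0$ with divisor $\lfloor(p^{e_0}-1)\Delta\rfloor$, the accumulated divisor is
\[
  \sum_{i=0}^{n-1}p^{ie_0}\bigl\lfloor(p^{e_0}-1)\Delta\bigr\rfloor
  \;=\;\frac{p^{ne_0}-1}{p^{e_0}-1}\bigl\lfloor(p^{e_0}-1)\Delta\bigr\rfloor .
\]
Writing a coefficient of $\Delta$ as $\delta$ and $\lfloor(p^{e_0}-1)\delta\rfloor=(p^{e_0}-1)\delta-\epsilon$ with $\epsilon\in[0,1)$, this equals $(p^{ne_0}-1)\delta-\tfrac{(p^{ne_0}-1)\epsilon}{p^{e_0}-1}$, which is \emph{always} strictly less than $p^{ne_0}\delta\le\lceil p^{ne_0}\delta\rceil$; so the iterate never dominates $\lceil p^{ne_0}\Delta\rceil$, and a splitting through a smaller divisor cannot be restricted to give one through a larger divisor. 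Worse, when $\epsilon>0$ the shortfall grows like $p^{ne_0}/(p^{e_0}-1)$, i.e.\ proportionally to $p^{ne_0}$ --- in the extreme case $(p^{e_0}-1)\delta<1$ the floor is $0$ and the iteration contributes nothing at all along that component. This is not a ``small residual deficit,'' and re-applying strong $F$-regularity to $cr$ for a fixed $r\in\fa$ cannot absorb it: the deficit depends on the final level $n$, while strong $F$-regularity only hands you a splitting at some $e$ you do not control, so you cannot choose the correction in advance. The same unbounded shortfall occurs on the ideal side between $\lfloor(p^{e_0}-1)t\rfloor\cdot\tfrac{p^{ne_0}-1}{p^{e_0}-1}$ and $\lceil p^{ne_0}t\rceil$.

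The paper's proof resolves exactly this point with a different mechanism: it fixes once and for all nonzero elements $a\in R(-2\lceil\Delta\rceil)$ and $b\in\fa^{2\lceil t\rceil}$, observes that $a\cdot R(\lceil p^e\Delta\rceil)\subseteq R(\lfloor(p^e-1)\Delta\rfloor)$ and $b\cdot\fa^{\lfloor(p^e-1)t\rfloor}\subseteq\fa^{\lceil p^et\rceil}$ hold \emph{uniformly in $e$} (the discrepancy between the ceiling-at-$p^e$ and floor-at-$(p^e-1)$ data is bounded independently of $e$), and then applies strong $F$-regularity a single time to the product $abc'$. The passage from the one exponent $e'$ so obtained to all $e\ge e'$ is done not by iterating the twisted splitting but by composing with iterates of the plain Frobenius splitting of $R$ (available by part (i)), suitably twisted. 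You should replace your iteration by this fixed-multiplier argument, or find another way to produce a correction whose size is independent of the target exponent.
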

\begin{proof}
  \cref{prop:fsingssmaller} follows since the splitting conditions for
  $(R,\Delta',\fb^s)$ are weaker than those for $(R,\Delta,\fa^t)$.
  For \cref{prop:fpurered}, we note that $(R,\Delta)$ is $F$-pure by
  \cref{prop:fsingssmaller}, hence \cref{prop:fpurered} follows from
  \cite[Prop.\ 2.2(4)]{HW02}.
  \par For \cref{prop:sfrceilings}, we note that $\Leftarrow$ is clear.
  For $\Rightarrow$, the case when $\Delta = 0$ is shown in \cite[Prop.\
  3.3(3)]{Tak04inversion}, hence it suffices to consider when $\Delta
  \ne 0$, in which case $R$ is a normal domain by our conventions in
  \cref{def:triples}.
  Let $c' \in R^\circ$ be arbitrary; we want to show that for $c = c'$, the
  composition \cref{eq:sfrceilingscomp} splits for some $d \in \fa^{\lceil p^et
  \rceil}$ for all $e \gg 0$.
  Choose nonzero elements $a \in R(-2\lceil \Delta \rceil)$ and $b \in
  \fa^{2\lceil t \rceil}$, in which case
  \begin{gather*}
    a \cdot R\bigl(\lceil p^e\Delta \rceil\bigr) \subseteq R\bigl(\lceil
    p^e\Delta \rceil - 2\lceil \Delta \rceil\bigr) \subseteq R\bigl(\lfloor
    (p^e-1)\Delta \rfloor\bigr)\\
    b \cdot \fa^{\lfloor(p^e-1)t\rfloor} \subseteq \fa^{2\lceil t \rceil +
    \lfloor(p^e-1)t\rfloor} \subseteq \fa^{\lceil p^et \rceil}
  \end{gather*}
  for every integer $e > 0$.
  By the assumption that $(R,\Delta,\fa^t)$ is strongly $F$-regular, there
  exist $e' > 0$ and $d' \in \fa^{\lfloor(p^{e'}-1)t\rfloor}$ such that the
  composition \cref{eq:sfrcomp} splits for $e = e'$ and with $c = abc'$.
  This composition factors as
  \begin{align*}
    R \overset{F^{e'}}{\longrightarrow} F^{e'}_*R &\longhookrightarrow
    F^{e'}_*R\bigl(\lceil p^{e'}\Delta \rceil\bigr)\\
    &\xrightarrow{F^{e'}_*(-\cdot
    bc'd')} F^{e'}_*R\bigl(\lceil p^{e'}\Delta \rceil\bigr)
    \xrightarrow{F^{e'}_*(- \cdot a)} F^{e'}_*R\bigl(\lfloor (p^{e'}-1)\Delta
    \rfloor\bigr),
  \end{align*}
  hence the composition of the first three homomorphisms splits.
  Now since $R$ is $F$-pure by \cref{prop:fsingssmaller}, the homomorphism
  \[
    F^{e'}_*R\bigl(\lceil p^{e'}\Delta \rceil\bigr)
    \xrightarrow{F^{e'}_*(F^{e-e'}(\lceil p^{e'}\Delta \rceil))}
    F^{e}_*R\bigl(p^{e-e'}\lceil p^{e'}\Delta \rceil\bigr),
  \]
  which is obtained by twisting the $(e-e')$th iterate of the Frobenius
  homomorphism by $\lceil p^{e'}\Delta \rceil$ and applying $F^{e'}_*$, also
  splits for every $e \ge e'$.
  The composition
  \begin{align*}
    R \overset{F^{e'}}{\longrightarrow} F^{e'}_*R \longhookrightarrow
    F^{e'}_*R\bigl(\lceil p^{e'}\Delta \rceil\bigr)
    &\xrightarrow{F^{e'}_*(-\cdot
    bc'd')} F^{e'}_*R\bigl(\lceil p^{e'}\Delta \rceil\bigr)\\
    &\xrightarrow{F^{e'}_*(F^{e-e'}(\lceil p^{e'}\Delta \rceil))}
    F^e_*R\bigl(p^{e-e'}\lceil p^{e'}\Delta \rceil\bigr)
  \end{align*}
  therefore splits for $e \ge e'$.
  Finally, this composition factors as
  \[
    R \overset{F^e}{\longrightarrow} F^e_*R \longhookrightarrow
    F^e_*R\bigl(\lceil p^e\Delta \rceil\bigr)
    \xrightarrow{F^e_*(-\cdot
    (bc'd')^{p^{e-e'}})} F^e_*R\bigl(\lceil p^e\Delta \rceil\bigr)
    \longhookrightarrow F^e_*R\bigl(p^{e-e'}\lceil p^{e'}\Delta \rceil\bigr),
  \]
  hence the composition \cref{eq:sfrceilingscomp} splits for $c = c'$ and
  \[
    d = b^{p^{e-e'}}(c')^{p^{e-e'}-1}(d')^{p^{e-e'}} \in \bigl(
    \fa^{2\lceil t \rceil + \lfloor (p^{e'}-1)t \rfloor} \bigr)^{p^{e-e'}}
    \subseteq \bigl( \fa^{\lceil p^{e'} t \rceil} \bigr)^{p^{e-e'}} \subseteq
    \fa^{\lceil p^et \rceil}.
  \]
  \par Finally, for \cref{prop:sfrimpliesfpure}, we note that strong
  $F$-regularity implies sharp $F$-purity by \cref{prop:sfrceilings}, and the
  dashed implication holds by \cite[Prop.\ 3.5]{Sch08}.
\end{proof}
\begin{remark}
  It seems to be unknown whether sharp $F$-purity implies $F$-purity in general
  \cite[Ques.\ 3.8]{Sch08}.
\end{remark}
\begin{example}\label{rem:rounding}
  While \cref{prop:fsingbasic}\cref{prop:sfrceilings} shows that the rounding
  ``$\lfloor (p^e-1)- \rfloor$'' in \cref{def:fsingpair}\cref{def:sfrpair} can
  be replaced by ``$\lceil (p^e-1) - \rceil$'' (this is the convention in
  \cites[Def.\ 2.11]{Sch10}[Def.\ 3.2]{Sch10ref}), this is not the case for
  $F$-purity.
  For example, the pair
  \[
    \bigl(\FF_2\llbracket x,y,z \rrbracket,(x^2+y^5+z^5)^{1/2}\bigr)
  \]
  from \cite[Ex.\ 4.3]{MY09} is $F$-pure but not sharply $F$-pure by \cite[Thm.\
  4.1]{Her12}.
\end{example}
\subsection{The trace of Frobenius}
We now describe variants of the Grothendieck trace map associated to the
Frobenius morphism, and its relationship to $F$-singularities.
This material is essentially contained in \cite{Sch09Fadj}, although we use some
of the notation of \cite[\S2]{Tan15trace} and \cite[\S2.3]{CTX15}.
\begin{citedprop}[{\cite[Def.-Prop.\ 2.5]{CTX15}}]\label{prop:ctx25}
  Let $X$ be a normal scheme essentially of finite type over an $F$-finite field
  of characteristic $p > 0$, let $D$ be an effective Weil divisor on $X$,
  and let $e$ be a positive integer.
  Then, there exists a homomorphism
  \[
    \Tr_{X,D}^e \colon F^e_*\bigl(\cO_X\bigl((1-p^e)K_X-D\bigr)\bigr)
    \longrightarrow \cO_X\glsadd{traceoffrobenius}
  \]
  of $\cO_X$-modules that fits into a commutative diagram
  \begin{equation}\label{eq:ctxcommdiag}
    \begin{tikzcd}[column sep=large]
      F^e_*\bigl(\cO_X\bigl((1-p^e)K_X-D\bigr)\bigr)
      \rar{\Tr_{X,D}^e}\arrow{d}[left]{\theta}[sloped,above]{\sim} & \cO_X
      \arrow{d}[sloped,above]{\sim}\\
      \HHom_{\cO_X}\bigl(F^e_*\bigl(\cO_X(D)\bigr),\cO_X\bigr)
      \rar{(F^e_D)^*} & \HHom_{\cO_X}(\cO_X,\cO_X)
    \end{tikzcd}
  \end{equation}
  of $\cO_X$-modules, where the left vertical arrow is an isomorphism of
  $(F^e_*\cO_X,\cO_X)$-bimodules and the right vertical arrow is an isomorphism
  of $\cO_X$-modules.
\end{citedprop}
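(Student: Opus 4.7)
The plan is to construct both the isomorphism $\theta$ and the trace $\Tr^e_{X,D}$ via Grothendieck duality for the finite Frobenius $F^e$, using reflexivity to propagate from the regular locus to all of $X$. Since $X$ is essentially of finite type over an $F$-finite field, $X$ itself is $F$-finite (see \cref{ex:eftoverffin}), so $F^e \colon X \to X$ is a finite morphism. Applying Grothendieck duality for finite morphisms (\cref{thm:nayakshriek}(i)) yields the natural adjunction isomorphism
\[
  \mathcal{H}om_{\cO_X}\bigl(F^e_*\sF,\cO_X\bigr) \longisoto F^e_*\,\mathcal{H}om_{\cO_X}\bigl(\sF,F^{e!}\cO_X\bigr),
\]
and the sought-after $\theta$ will be obtained by taking $\sF = \cO_X(D)$ and then identifying the right-hand side.

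Next I would identify $F^{e!}\cO_X \simeq \cO_X((1-p^e)K_X)$ as reflexive $\cO_X$-modules. On the regular locus $U \subseteq X$, which is open with complement of codimension $\ge 2$ by normality, the Frobenius $F^e\rvert_U$ is finite flat by Kunz's theorem, so the relative dualizing sheaf formula for flat finite morphisms gives
\[
  F^{e!}\cO_U \simeq \omega_U \otimes (F^e\rvert_U)^*\omega_U^{-1} \simeq \cO_U\bigl((1-p^e)K_U\bigr).
\]
Both $F^{e!}\cO_X$ (which is $S_2$, being computable from the dualizing complex via \cref{def:canonicalsheaf}) and $\cO_X((1-p^e)K_X)$ are reflexive, so this isomorphism on $U$ extends uniquely to $X$ by \cref{thm:har94112}. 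Similarly, $\mathcal{H}om_{\cO_X}(\cO_X(D),\cO_X((1-p^e)K_X))$ is reflexive by \cref{lem:homreflexive}, agrees on $U$ with $\cO_U((1-p^e)K_U-D\rvert_U)$ since $\cO_X(D)$ is invertible on $U$, and hence is globally isomorphic to $\cO_X((1-p^e)K_X - D)$ by \cref{thm:har94112}.

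Combining these identifications defines $\theta$ as an isomorphism of $(F^e_*\cO_X,\cO_X)$-bimodules (the bimodule structure being visible from the duality adjunction). I would then \emph{define} $\Tr^e_{X,D}$ as the composition along the left and bottom of the diagram \cref{eq:ctxcommdiag}, so that commutativity holds tautologically; concretely, $\Tr^e_{X,D}$ sends a local section to the image of $1 \in \cO_X$ under the corresponding homomorphism obtained from $\theta$ and precomposition with $F^e_D \colon \cO_X \hookrightarrow F^e_*\cO_X(D)$.

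The main obstacle will be verifying that the identification $F^{e!}\cO_X \simeq \cO_X((1-p^e)K_X)$ is natural enough that the resulting $\Tr^e_{X,D}$, when $D = 0$, recovers the classical Grothendieck trace $\Tr^e_X \colon F^e_*\omega_X \to \omega_X$ of \cref{sect:grothendieckduality} under the chosen isomorphism $\omega_X \simeq \cO_X(K_X)$. This compatibility is essentially bookkeeping in Grothendieck's formalism, but requires one to pin down the isomorphism $F^{e!}\cO_X \simeq \cO_X((1-p^e)K_X)$ in a canonical way on $U$ (via the explicit Cartier-operator description on the smooth locus, as in \cref{eq:tracemaplocal}) and then invoke uniqueness of the reflexive extension from \cref{cor:extendsections} to transfer it to all of $X$.
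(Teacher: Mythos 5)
Your proposal is correct and follows essentially the same route as the paper: apply Grothendieck duality for the finite Frobenius, identify the relevant $\HHom$ sheaf with $F^e_*\cO_X((1-p^e)K_X-D)$ by restricting to the regular locus and extending via reflexivity (\cref{lem:homreflexive,thm:har94112}), and define $\Tr^e_{X,D}$ so that the diagram commutes by construction. The only cosmetic difference is that the paper first twists by $\omega_X$ and invokes $F^{e!}\omega_X \simeq \omega_X$, whereas you compute $F^{e!}\cO_X \simeq \cO_X((1-p^e)K_X)$ directly on the regular locus; these are equivalent, and your final compatibility concern about recovering the classical trace for $D=0$ is not actually required by the statement, which only asserts existence of a map fitting into the commutative diagram.
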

\begin{proof}
  Consider the composition map
  \[
    \cO_X \overset{F^e}{\longrightarrow} F^e_*\cO_X \longhookrightarrow
    F^e_*\cO_X(D),
  \]
  which we denote by $F^e_D$.
  Applying the contravariant functor $\HHom_{\cO_X}(-,\cO_X)$, we have the
  top arrow in the commutative diagram
  \[
    \begin{tikzcd}
      \HHom_{\cO_X}\bigl(F^e_*\bigl(\cO_X(D)\bigr),\cO_X\bigr)
      \rar{(F^e_D)^*} &\HHom_{\cO_X}(\cO_X,\cO_X)\\
      \HHom_{\cO_X}\bigl(F^e_*\bigl(\cO_X(p^eK_X+D)\bigr),\cO_X(K_X)\bigr)
      \arrow{u}[sloped,above]{\sim}
      \rar &\HHom_{\cO_X}\bigl(\cO_X(K_X),\cO_X(K_X)\bigr)\arrow{u}[sloped,above]{\sim}
    \end{tikzcd}
  \]
  where the vertical arrows are isomorphisms by restricting to the regular
  locus of $X$, by the fact that $\cO_X(K_X)$ is a reflexive sheaf, and by
  \cref{lem:homreflexive,thm:har94112}.
  By Grothendieck duality for finite morphisms (see \cref{thm:nayakshriek}),
  the sheaf in the bottom left corner satisfies
  \begin{align*}
    \HHom_{\cO_X}\bigl(F^e_*\bigl(\cO_X(p^eK_X+D)\bigr),
    \cO_X(K_X)\bigr)
    &\simeq
    F^e_*\HHom_{\cO_X}\bigl(\cO_X(p^eK_X+D),F^{e!}\cO_X(K_X)\bigr)\\
    &\simeq F^e_*\HHom_{\cO_X}\bigl(\cO_X(p^eK_X+D),\cO_X(K_X)\bigr)\\
    &\simeq F^e_*\cO_X\bigl( (1-p^e)K_X-D\bigr)
  \end{align*}
  where the second isomorphism follows from the fact that $F^{e!}\omega_X \simeq
  \omega_X$ by \cref{def:canonicalsheaf,thm:nayakshriek}, and the last
  isomorphism follows from restricting to the regular locus of $X$ and using the
  reflexivity of the sheaves involved \cite[Prop.\ 2.7]{Har94}.
  We can therefore define $\theta$ to be the composition of isomorphisms
  \begin{align*}
    F^e_*\cO_X\bigl( (1-p^e)K_X-D\bigr)
    &\simeq \HHom_{\cO_X}\bigl(F^e_*\bigl(\cO_X(p^eK_X+D)\bigr),
    \cO_X(K_X)\bigr)\\
    &\simeq \HHom_{\cO_X}\bigl(F^e_*\bigl(\cO_X(D)\bigr),\cO_X\bigr).
  \end{align*}
  Note that $\theta$ is an isomorphism of left-$F^e_*\cO_X$-modules by tracing
  through these isomorphisms, where the left-$F^e_*\cO_X$-module structure comes
  from precomposition by multiplication by an element in $F^e_*\cO_X$.
\end{proof}
We then use \cref{prop:ctx25} to prove the following characterization of
$F$-singularities.
\begin{corollary}\label{cor:fsingstrace}
  Let $(X,\Delta,\fa^t)$ be an effective log triple such that $X$ is $F$-finite
  and of characteristic $p > 0$.
  \begin{enumerate}[label=$(\roman*)$,ref=\roman*]
    \item The triple $(X,\Delta,\fa^t)$ is $F$-pure if and only if there exists
      an integer $e' > 0$ such that for all $e \ge e'$, the morphism
      \begin{align*}
        \MoveEqLeft[5]F^e_*\bigl(\fa^{\lfloor (p^e-1)t \rfloor} \cdot
        \cO_X\bigl((1-p^e)K_X-\lfloor (p^e-1)\Delta\rfloor\bigr)\bigr)\\
        &\longhookrightarrow
        F^e_*\bigl(\cO_X\bigl((1-p^e)K_X-\lfloor
        (p^e-1)\Delta\rfloor\bigr)\bigr)
        \xrightarrow{\Tr_{X,\lfloor
        (p^e-1)\Delta\rfloor}^e} \cO_X
      \end{align*}
      is surjective.\label{cor:fsingstracefpure}
    \item The triple $(X,\Delta,\fa^t)$ is sharply $F$-pure if and only if there
      exists an integer $e > 0$ such that the morphism
      \begin{align*}
        \MoveEqLeft[5]F^e_*\bigl(\fa^{\lceil (p^e-1)t \rceil} \cdot
        \cO_X\bigl((1-p^e)K_X-\lceil (p^e-1)\Delta\rceil\bigr)\bigr)\\
        &\longhookrightarrow
        F^e_*\bigl(\cO_X\bigl((1-p^e)K_X-\lceil
        (p^e-1)\Delta\rceil\bigr)\bigr)
        \xrightarrow{\Tr_{X,\lceil
        (p^e-1)\Delta\rceil}^e} \cO_X
      \end{align*}
      is surjective.
    \item The triple $(X,\Delta,\fa^t)$ is strongly $F$-regular if and only if
      for every Cartier divisor $E$ on $X$, there there exists an
      integer $e > 0$ such that the morphism
      \begin{align*}
        \MoveEqLeft[5]F^e_*\bigl(\fa^{\lfloor (p^e-1)t \rfloor} \cdot
        \cO_X\bigl((1-p^e)K_X-\lfloor (p^e-1)\Delta\rfloor-E\bigr)\bigr)\\
        &\longhookrightarrow
        F^e_*\bigl(\cO_X\bigl((1-p^e)K_X-\lfloor
        (p^e-1)\Delta\rfloor-E\bigr)\bigr)
        \xrightarrow{\Tr_{X,\lfloor
        (p^e-1)\Delta\rfloor+E}^e} \cO_X
      \end{align*}
      is surjective.
  \end{enumerate}
\end{corollary}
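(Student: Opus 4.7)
The three parts follow a common template, so I would first set up a single translation mechanism and then adapt it to the three rounding/parameter conventions. The fundamental observation is that sheaf surjectivity can be checked stalk-locally, so it suffices to fix a point $x \in X$ and, on the local ring $R = \cO_{X,x}$, show that the splitting condition of \cref{def:fsingpair} is equivalent to the element $1 \in R$ lying in the image at $x$ of the relevant trace morphism. The essential input is \cref{prop:ctx25}, whose isomorphism
\[
\theta\colon F^e_*\bigl(\cO_X((1-p^e)K_X - D)\bigr) \overset{\sim}{\longrightarrow} \HHom_{\cO_X}\bigl(F^e_*\cO_X(D),\cO_X\bigr)
\]
is $F^e_*\cO_X$-linear and compatible with $\Tr^e_{X,D}$ via the identification of $\HHom(\cO_X,\cO_X)$ with $\cO_X$ by evaluation at $1$.

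For part \cref{cor:fsingstracefpure}, fix $e$ and set $D = \lfloor(p^e-1)\Delta\rfloor$. Because $R$ is cyclic as an $R$-module, a splitting of the composition \cref{eq:fpurecomp} exists if and only if there is an $R$-linear map $\psi\colon F^e_*R(D) \to R$ with $\psi(d) = 1$, where $d \in \fa^{\lfloor(p^e-1)t\rfloor}$. Via $\theta$, such $\psi$'s correspond bijectively to sections $s \in F^e_*R((1-p^e)K_X - D)$; tracing through the commutative diagram \cref{eq:ctxcommdiag} and the $F^e_*R$-linearity of $\theta$ shows that $\psi(d) = \theta^{-1}(d\cdot\psi)(1)$ corresponds to $\Tr^e_{X,D}(ds)$. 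Hence the splitting exists for some $d \in \fa^{\lfloor(p^e-1)t\rfloor}$ precisely when
\[
1 \in \Tr^e_{X,D}\bigl(F^e_*\bigl(\fa^{\lfloor(p^e-1)t\rfloor}\cdot R((1-p^e)K_X - D)\bigr)\bigr)_x,
\]
which is the stalk-level surjectivity of the morphism in \cref{cor:fsingstracefpure}. Running this equivalence over all $x$ and all $e \ge e'$ matches the quantifier structure of \cref{def:fsingpair}\cref{def:fpurepair}. Part (ii) is obtained verbatim after replacing every occurrence of $\lfloor(p^e-1)-\rfloor$ with $\lceil(p^e-1)-\rceil$ and adjusting quantifiers to match \cref{def:fsingpair}\cref{def:sharpfpurepair}.

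For part \cref{def:sfrpair}, the same translation produces, for each pair $(c,d)$ witnessing strong $F$-regularity at $x$, a section $s$ with $\Tr^e_{X,D}(cds) = 1$. The role of $c$ is then played by an effective Cartier divisor: a choice of $c \in R^\circ$ is equivalent to the local datum of an effective Cartier divisor $E$ with local equation $c$, and multiplication by $c$ realizes the inclusion $\cO_X((1-p^e)K_X - D - E) \hookrightarrow \cO_X((1-p^e)K_X - D)$. Under this correspondence, the trace for $D+E$ factors as the inclusion followed by $\Tr^e_{X,D}$, so stalk-surjectivity of the morphism in part (iii) at $x$ is equivalent to the splitting condition at $x$ for the local equation of $E$. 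The main obstacle is the sheafification step for (iii): while the definition of strong $F$-regularity provides an integer $e$ at each stalk depending on the chosen $c$, the corollary asks for a single $e$ that works simultaneously at every stalk for a fixed global $E$. This is resolved by quasi-compactness of $X$ together with \cref{prop:fsingbasic}\cref{prop:sfrceilings}, which guarantees that if the splitting holds for some $e$ it continues to hold for every larger exponent (with an appropriately adjusted $d$); taking the maximum of the finitely many $e$'s arising from a finite affine cover yields the required uniform exponent.
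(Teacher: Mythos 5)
Your proposal is correct and follows essentially the same route as the paper: both arguments rest on \cref{prop:ctx25} to convert surjectivity of the trace (precomposed with multiplication by elements of the appropriate power of $\fa$, via the $F^e_*\cO_X$-linearity of $\theta$) into the splitting conditions of \cref{def:fsingpair}, checked stalk by stalk. Your explicit treatment of the uniformity of $e$ in part (iii) — openness of the surjectivity locus, quasi-compactness, and stability under increasing $e$ via \cref{prop:fsingbasic}\cref{prop:sfrceilings} — is a point the paper's proof leaves implicit, but it does not change the method.
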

\begin{proof}
  We first consider the case of a pair $(X,\Delta)$.
  Let $D$ stand for one of $\lfloor (p^e-1)\Delta\rfloor$, $\lceil
  (p^e-1)\Delta\rceil$, or $\lfloor (p^e-1)\Delta\rfloor+E$.
  By \cref{prop:ctx25}, we see that $\Tr^e_{X,D}$ is surjective
  if and only if
  \begin{align*}
    (F^e_D)^*\colon
    \HHom_{\cO_X}\bigl(F^e_*\bigl(\cO_X(D)\bigr),\cO_X\bigr) &\longrightarrow
    \HHom_{\cO_X}(\cO_X,\cO_X)
    \intertext{is surjective.
    Since $X$ is $F$-finite, these morphisms are surjective if and only if
    for every $x \in X$, the morphism}
    (F^e_D)^*\colon
    \Hom_{\cO_{X,x}}\bigl(F^e_*\bigl(\cO_{X,x}(D)\bigr),\cO_{X,x}\bigr)
    &\longrightarrow \Hom_{\cO_{X,x}}(\cO_{X,x},\cO_{X,x})
  \end{align*}
  is surjective.
  Finally, this condition is equivalent to the splitting of the map
  \[
    \cO_{X,x} \longrightarrow F^e_*\cO_{X,x} \longhookrightarrow
    F^e_*\bigl(\cO_{X,x}(D)\bigr),
  \]
  hence all three statements follow by comparing this condition to
  \cref{def:fsingpair}.
  \par Finally, the case for a triple $(X,\Delta,\fa^t)$ follows from the fact
  that under the isomorphism $\theta$ in \cref{prop:ctx25}, the multiplication
  $F^e_*(-\cdot d)$ or $F^e_*(-\cdot cd)$ in \cref{def:fsingpair} corresponds to
  precomposition of the trace $\Tr^e_{X,D}$ by multiplication by an element in
  $\fa^{\lfloor (p^e-1)t \rfloor}$ or $\fa^{\lceil (p^e-1)t \rceil}$.
\end{proof}
\subsection{\textit{F}-pure thresholds}
We define the $F$-pure threshold, which is the positive
characteristic analogue of the log canonical threshold.
\begin{citeddef}[{\cite[Def.\ 2.1]{TW04}}]
  Let $(X,\Delta,\fa)$ be an effective log triple such that $X$ is $F$-finite
  of characteristic $p > 0$.
  The \textsl{$F$-pure threshold}%
  \index{F-pure threshold@$F$-pure threshold, $\fpt_x((X,\Delta);\fa)$|textbf}
  of the pair $(X,\Delta)$ with respect to $\fa$
  at a point $x \in X$ is
  \[
    \fpt_x\bigl( (X,\Delta);\fa\bigr) \coloneqq \sup\bigl\{ c \in \RR_{\ge 0}
    \bigm\vert \text{the triple}\ (X,\Delta,\fa^c)\ \text{is $F$-pure at
    $x$}\bigr\},\glsadd{fpurethreshold}
  \]
  where if $(X,\Delta)$ is not $F$-pure at $x$,
  then we set $\fpt_x((X,\Delta);\fa) = -\infty$.
\end{citeddef}
$F$-pure thresholds can be very different from log canonical thresholds, even
for the same defining equation.
\begin{citedex}[{\cite[Ex.\ 4.3]{MTW05}}]
  Let\index{cuspidal cubic|(}
  $R = k\llbracket x,y\rrbracket$ with maximal ideal $\fm$, where $k$ is an
  $F$-finite field of characteristic $p > 0$, and let $f = x^2 + y^3$.
  The $F$-pure threshold then depends on the characteristic of $k$:
  \[
    \fpt_\fm\bigl(\Spec R;x^2 + y^3\bigr) =
    \begin{dcases}
      \frac{1}{2} & \text{if}\ p = 2\\
      \frac{2}{3} & \text{if}\ p = 3\\
      \frac{5}{6} & \text{if}\ p \equiv 1 \bmod 3\\
      \frac{5}{6} - \frac{1}{6p} & \text{if}\ p \equiv 2 \bmod 3\
      \text{and}\ p \ne 2
    \end{dcases}
  \]
  We see that as $p \to \infty$, the $F$-pure threshold approaches the log
  canonical threshold as computed in \cref{ex:cuspidalcubic}, as predicted by
  \cref{thm:takagiredmodp}.\index{cuspidal cubic|)}
  For more examples of similar phenomena, see \cites[Exs.\ 2.4 and
  2.5]{TW04}[\S4]{MTW05}{CHSW16}.
\end{citedex}
\section{Test ideals}
We review the theory of test ideals, which are the positive characteristic
analogues of multiplier ideals.
Test ideals for rings were originally defined by Hochster%
\index{Hochster, Melvin} and Huneke\index{Huneke, Craig} \cite[Def.\ 8.22]{HH90}
using tight closure, and
versions for pairs and triples were first
defined by Hara\index{Hara, Nobuo}--Yoshida\index{Yoshida, Ken-ichi}
\cite[Def.-Thm.\ 6.5]{HY03} and Takagi\index{Takagi, Shunsuke} \cites[Def.\
2.6]{Tak04}[Def.\ 2.2]{Tak08} using generalized versions of tight closure; see
\cref{rem:testidealsviatightclosure}.
While test ideals can be defined in this way without $F$-finiteness assumptions,
we will assume $F$-finiteness throughout and define test ideals using the notion
of $F$-compatibility, following Schwede\index{Schwede, Karl} \cite{Sch10}.
We recommend \cites[\S6]{ST12}[\S5]{TW18} for surveys on this topic.
\medskip
\par We start with the following definition.
\begin{citeddef}[{\cite[Def.\ 3.1]{Sch10}}]
  Let $(R,\Delta,\fa^t)$ be an effective log triple such that $R$ is an
  $F$-finite ring of characteristic $p > 0$.
  An ideal $J \subseteq R$ is \textsl{uniformly
  $(\Delta,\fa^t,F)$-compatible}%
  \index{uniformly $(\Delta,\mathfrak{a}^t,F)$-compatible|textbf}
  if for every integer $e > 0$ and
  every $\varphi \in \Hom_R(F^e_*R(\lceil(p^e-1)\Delta\rceil),R)$, we have
  \begin{equation}\label{eq:fcompatdef}
    \varphi\Bigl(F^e_*\bigl(J \cdot \fa^{\lceil t(p^e-1)\rceil}
    \bigr)\Bigr) \subseteq J.
  \end{equation}
  We drop $\Delta$ or $\fa^t$ from our notation when working with pairs or the
  ring itself.
  \par If $(R,\Delta,\fb^s)$ is another effective log triple on $R$, then we can
  analogously
  define uniform $(\Delta,\fa^t \cdot \fb^s,F)$-compatibility by using the ideal
  $\fa^{\lceil t(p^e-1)\rceil}\cdot\fb^{\lceil s(p^e-1)\rceil}$ in
  \cref{eq:fcompatdef}.
\end{citeddef}
We can now define test ideals.
\begin{citeddef}[{\cite[Def.\ 3.1 and Thm.\ 6.3]{Sch10}}]\label{def:testideal}
  Let $(R,\Delta,\fa^t)$ be an effective log triple such that $R$ is an
  $F$-finite ring of characteristic $p > 0$.
  The \textsl{test ideal}%
  \index{test ideal, $\tau(X,\Delta,\mathfrak{a}^t)$|textbf}
  \[
    \tau(R,\Delta,\fa^t) \subseteq R\glsadd{testideal}
  \]
  is the smallest ideal which is uniformly
  $(\Delta,\fa^t,F)$-compatible and whose intersection with
  $R^\circ$ is nonempty.
  We drop $\Delta$ or $\fa^t$ from our notation when working with pairs or the
  ring itself.
  We also often drop the ring $R$ from our notation if it is clear from context.
  \par If $(R,\Delta,\fb^s)$ is another effective log triple on $R$, then we can
  analogously define the test ideal $\tau(R,\Delta,\fa^t\cdot\fb^s)$ as the
  smallest uniformly $(\Delta,\fa^t \cdot \fb^s,F)$-compatible ideal that
  intersects $R^\circ$.
\end{citeddef}
The test ideal as defined in \cref{def:testideal} exists since it
matches the earlier notion (see \cref{rem:testidealsviatightclosure}) defined
using tight closure \cite[Thm.\ 6.3]{Sch10}.
We briefly describe a direct proof of existence, following \cite{Sch11}.
The key ingredient is the following:
\begin{definition}[cf.\ {\cite[Def.\ 3.19]{Sch11}}]\label{def:hdbstest}
  Let $(R,\Delta,\fa^t)$ be an effective log triple such that
  $R$ is an $F$-finite ring of characteristic $p > 0$.
  An element $c \in R^\circ$ is a \textsl{big sharp test element}%
  \index{big sharp test element|textbf} for
  $(R,\Delta,\fa^{t})$ if, for every $d \in R^\circ$, there exists
  $\varphi \in \Hom_R(F^e_*R(\lceil(p^e-1)\Delta\rceil),R)$ for some integer $e
  > 0$ such that
  \begin{equation}\label{eq:hdbstest}
    c \in \varphi\Bigl(F^e_*\bigl(d \cdot \fa^{\lceil
    t(p^e-1)\rceil} \bigr)\Bigr).
  \end{equation}
  If $c$ is a big sharp test element, then $c'c$ is also for all $c' \in
  R^\circ$ by considering the composition $(c' \cdot -) \circ \varphi$ for
  $\varphi$ as in \cref{eq:hdbstest}.
  \par If $(R,\Delta,\fb^s)$ is another effective log triple on $R$, then we can
  analogously define the big sharp test elements for $(R,\Delta,\fa^{t})$ and
  $(R,\Delta,\fb^{s})$ by
  using the ideal $\fa^{\lceil t(p^e-1)\rceil}\cdot\fb^{\lceil s(p^e-1)\rceil}$
  in \cref{eq:hdbstest}.
\end{definition}
Various versions of test elements were shown to exist in the context of tight
closure; see \cites[\S6]{HH90}[Thm.\ 6.4]{HY03}[Thm.\ 2.5(2)]{Tak04}[Thm.\ on
p.\ 90]{Hoc07}.
Big sharp test elements as defined in \cref{def:hdbstest} exist
by \cites[Lem.\ 2.17]{Sch10}[Prop.\ 3.21]{Sch11}.%
\index{big sharp test element!existence of}
Assuming this fact, we can show that test ideals exist.
Note that the description \cref{eq:haratakagi} is originally due to
Hara\index{Hara, Nobuo} and Takagi\index{Takagi, Shunsuke} \cite[Lem.\
2.1]{HT04}.
\begin{citedthm}[{\cite[Thm.\ 3.18]{Sch11}}]
  \label{thm:testidealexist}
  Let\index{test ideal, $\tau(X,\Delta,\mathfrak{a}^t)$!existence of|(}
  $(R,\Delta,\fa^{t})$ be an effective log triple such that
  $R$ is an $F$-finite ring of characteristic $p > 0$.
  Then, for every choice of big sharp test element $c \in R^\circ$ for the
  triple $(R,\Delta,\fa^{t})$, we have
  \begin{equation}\label{eq:haratakagi}
    \tau(R,\Delta,\fa^{t})
    = \adjustlimits{\sum^\infty}_{e = 0}{\sum}_{\varphi_e}
    \varphi_e\Bigl(F^e_*\bigl(c\cdot\fa^{\lceil t(p^e-1)\rceil}
    \bigr)\Bigr),
  \end{equation}
  where $\varphi_e$ ranges over all elements in
  $\Hom_R(F^e_*R(\lceil(p^e-1)\Delta\rceil),R)$.
  In particular, the test ideal $\tau(R,\Delta,\fa^{t})$ exists.
  If $(R,\Delta,\fb^s)$ is another effective log triple, then
  $\tau(R,\Delta,\fa^{t}\cdot \fb^s)$ exists by replacing $c$ with a big sharp
  test element for
  $(R,\Delta,\fa^{t})$ and $(R,\Delta,\fb^{s})$ and by
  using the ideal $\fa^{\lceil t(p^e-1)\rceil}\cdot\fb^{\lceil s(p^e-1)\rceil}$
  in \cref{eq:haratakagi}.%
  \index{test ideal, $\tau(X,\Delta,\mathfrak{a}^t)$!existence of|)}
\end{citedthm}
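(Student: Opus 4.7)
The plan is to write $J$ for the right-hand side of the claimed formula and to verify directly that $J = \tau(R,\Delta,\fa^t)$ by establishing the three defining properties: $J \cap R^\circ \neq \emptyset$, $J$ is uniformly $(\Delta,\fa^t,F)$-compatible, and any ideal with these two properties contains $J$. The last assertion of the theorem (existence of $\tau$) is an automatic consequence, since the displayed formula is manifestly an ideal.

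The non-emptiness in $R^\circ$ is immediate: the summand indexed by $e = 0$ and $\varphi_0 = \id \in \Hom_R(R,R)$ already contains $c$, which lies in $R^\circ$ by hypothesis. For minimality, suppose $I \subseteq R$ is uniformly $(\Delta,\fa^t,F)$-compatible with $I \cap R^\circ \neq \emptyset$. I would pick any $d \in I \cap R^\circ$ and invoke the defining property of big sharp test elements (\cref{def:hdbstest}) to produce, for this $d$, some $e > 0$ and $\varphi \in \Hom_R(F^e_*R(\lceil(p^e-1)\Delta\rceil),R)$ with $c \in \varphi(F^e_*(d \cdot \fa^{\lceil t(p^e-1)\rceil}))$. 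Uniform compatibility of $I$ then forces $c \in I$, after which compatibility applied to each summand $\varphi_e(F^e_*(c \cdot \fa^{\lceil t(p^e-1)\rceil})) \subseteq \varphi_e(F^e_*(I \cdot \fa^{\lceil t(p^e-1)\rceil})) \subseteq I$ yields $J \subseteq I$.

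The remaining task, and the main technical step, is to prove that $J$ itself is uniformly $(\Delta,\fa^t,F)$-compatible. Given $\psi \in \Hom_R(F^{e_0}_*R(\lceil(p^{e_0}-1)\Delta\rceil),R)$, a generator $j = \varphi_e(F^e_*(ca))$ of $J$ with $a \in \fa^{\lceil t(p^e-1)\rceil}$, and $b \in \fa^{\lceil t(p^{e_0}-1)\rceil}$, I would rewrite $\psi(F^{e_0}_*(bj))$ as the value of a new trace-type map on $c$ times an element of a deeper Frobenius power of $\fa$. Using $R$-linearity of $\varphi_e$ to absorb $b$ as $b^{p^e}$ inside, and the pushforward identity $F^{e_0}_*(\varphi_e(F^e_*(-))) = (F^{e_0}_*\varphi_e)(F^{e_0+e}_*(-))$, one obtains
\[
  \psi\bigl(F^{e_0}_*(bj)\bigr) = \bigl(\psi \circ F^{e_0}_*\varphi_e\bigr)\bigl(F^{e_0+e}_*(c \cdot b^{p^e} a)\bigr).
\]
The crucial point is that the composition $\widetilde{\varphi} \coloneqq \psi \circ F^{e_0}_*\varphi_e$ is a genuine element of $\Hom_R(F^{e_0+e}_*R(\lceil(p^{e_0+e}-1)\Delta\rceil),R)$. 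Via the Grothendieck-duality identification of \cref{prop:ctx25}, $\psi$ and $\varphi_e$ correspond to sections of Frobenius-twisted reflexive sheaves, their product after pushforward is a section of the combined twisted sheaf, and the coefficient-wise inequality
\[
  p^{e_0}\lceil(p^e-1)\Delta\rceil + \lceil(p^{e_0}-1)\Delta\rceil \ge \lceil(p^{e_0+e}-1)\Delta\rceil
\]
ensures the product lands in the module corresponding to the correct divisor. The parallel ideal inequality $p^e\lceil t(p^{e_0}-1)\rceil + \lceil t(p^e - 1)\rceil \ge \lceil t(p^{e_0+e}-1)\rceil$ shows $b^{p^e}a \in \fa^{\lceil t(p^{e_0+e}-1)\rceil}$, so the displayed right-hand side is a generator of $J$ at level $e_0+e$; the joint-triple version simply inserts an analogous inequality for $\fb$.

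The main obstacle is the bookkeeping in this last step: interpreting $\psi \circ F^{e_0}_*\varphi_e$ as a homomorphism for the \emph{correct} boundary $\lceil(p^{e_0+e}-1)\Delta\rceil$, rather than merely as an $R$-linear map out of the a priori smaller submodule $F^{e_0+e}_*R(\lceil(p^e-1)\Delta\rceil)$. This is where $F$-finiteness is essential --- it guarantees that Frobenius is a finite morphism, so that Grothendieck duality for finite morphisms (\cref{sect:grothendieckduality}) applies and yields \cref{prop:ctx25}, which in turn translates the comparison of Hom sets across a divisor increment into the elementary comparison of sections of twisted canonical sheaves. When $\Delta = 0$ the composition lives trivially in the correct Hom set, so the entire compatibility argument collapses to the pushforward identity together with the single ideal inequality above.
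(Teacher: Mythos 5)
Your proposal is correct and follows the same logic as the paper's own argument: every uniformly $(\Delta,\fa^t,F)$-compatible ideal meeting $R^\circ$ contains $c$ by the definition of a big sharp test element, and the right-hand side of \cref{eq:haratakagi} is the smallest uniformly compatible ideal containing $c$. The paper's proof simply asserts this last claim and delegates the verification to the cited reference, whereas you correctly supply the missing step --- the uniform compatibility of the sum itself via the composition rule $\psi \circ F^{e_0}_*\varphi_e$, the coefficient-wise ceiling inequalities, and the duality identification of \cref{prop:ctx25} needed to place the composite in $\Hom_R(F^{e_0+e}_*R(\lceil(p^{e_0+e}-1)\Delta\rceil),R)$ when $\Delta \ne 0$.
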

\begin{proof}
  By definition of a big sharp test element (\cref{def:hdbstest}), we
  have $c \in J$ for every $(\Delta,\fa^t,F)$-compatible ideal $J
  \subseteq R$.
  On the other hand, the ideal on the right-hand side of \cref{eq:haratakagi}
  is the smallest $(\Delta,\fa^{t},F)$-compatible ideal containing
  $c$, hence must coincide with $\tau(R,\Delta,\fa^{t})$.
  The proof for $\tau(R,\Delta,\fa^{t}\cdot\fb^s)$ is similar.
\end{proof}
To define test ideals on schemes, we use the following consequence of the proof
of the existence of big sharp test elements.
\begin{citedprop}[{\cite[Prop.\ 3.23$(ii)$]{Sch11}}]\label{prop:testideallocal}
  Let $(R,\Delta,\fa^t)$ be an effective log triple such that $R$ is
  an $F$-finite ring of characteristic $p > 0$.
  For every multiplicative set $W \subseteq R$, we have
  \[
    W^{-1}\,\tau(R,\Delta,\fa^t)
    = \tau\bigl(W^{-1}R,\Delta\rvert_{\Spec
    W^{-1}R},(W^{-1}\fa)^t\bigr),
  \]
  and similarly for $\tau(R,\Delta,\fa^t\cdot\fb^s)$.%
  \index{test ideal, $\tau(X,\Delta,\mathfrak{a}^t)$!localizes}
\end{citedprop}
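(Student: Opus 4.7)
The plan is to prove this by leveraging the explicit description of the test ideal from \cref{thm:testidealexist}. Recall that formula states
\[
  \tau(R,\Delta,\fa^t) = \sum_{e \ge 0}\sum_{\varphi_e}
  \varphi_e\bigl(F^e_*\bigl(c\cdot\fa^{\lceil t(p^e-1)\rceil}\bigr)\bigr),
\]
where $c \in R^\circ$ is any big sharp test element and $\varphi_e$ ranges over $\Hom_R(F^e_*R(\lceil(p^e-1)\Delta\rceil),R)$. Since localization commutes with sums and with taking images under $R$-linear maps, the strategy is to show (i) localization commutes with each $\Hom_R(-,R)$ on the right-hand side, and (ii) a big sharp test element for $(R,\Delta,\fa^t)$ localizes to a big sharp test element for the triple $(W^{-1}R,\Delta\rvert_{\Spec W^{-1}R},(W^{-1}\fa)^t)$. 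Then both sides of the asserted equality arise by applying \cref{thm:testidealexist} in two ways to the same sum.

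For (i), the point is that since $R$ is $F$-finite, the module $F^e_*R(\lceil(p^e-1)\Delta\rceil)$ is finitely presented as an $R$-module. Therefore the natural map
\[
  W^{-1}\Hom_R\bigl(F^e_*R\bigl(\lceil(p^e-1)\Delta\rceil\bigr),R\bigr)
  \longrightarrow
  \Hom_{W^{-1}R}\bigl(F^e_*(W^{-1}R)\bigl(\lceil(p^e-1)\Delta\rvert_{\Spec
  W^{-1}R}\rceil\bigr),W^{-1}R\bigr)
\]
is an isomorphism by standard commutative algebra, and it is compatible with the formation of images. This gives the bijection between the two indexing sets of $\varphi_e$'s needed to match the two sums after localization.

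For (ii), given any $d \in (W^{-1}R)^\circ$, I would lift $d$ to an element $\tilde d \in R$ (using that the minimal primes of $W^{-1}R$ are precisely the extensions of those minimal primes of $R$ disjoint from $W$, so $\tilde d$ avoids those minimal primes). The element $\tilde d$ may lie in some minimal primes of $R$ meeting $W$, but these become zero after inverting $W$; one then replaces $\tilde d$ by $\tilde d + w\tilde d'$ for a suitable $w \in W$ lying in all the ``bad'' minimal primes and $\tilde d' \in R^\circ$ chosen by prime avoidance, so that the resulting element lies in $R^\circ$ and still agrees with $d$ up to a unit of $W^{-1}R$. Applying the big sharp test property of $c \in R^\circ$ to this modified lift produces a $\varphi$ with the required property, and localizing $\varphi$ yields the witness for $c/1$ in $W^{-1}R$.

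The main obstacle I anticipate is part (ii): the careful lifting argument and bookkeeping involving minimal primes meeting and not meeting $W$. Step (i) is essentially formal once one invokes $F$-finiteness to get finite presentation, and the remainder of the argument is a clean comparison of the two sum formulas. After (i) and (ii) are established, the proof for $\tau(R,\Delta,\fa^t\cdot\fb^s)$ is identical, using the extended formula from \cref{thm:testidealexist} in which $\fa^{\lceil t(p^e-1)\rceil}$ is replaced by $\fa^{\lceil t(p^e-1)\rceil}\cdot\fb^{\lceil s(p^e-1)\rceil}$.
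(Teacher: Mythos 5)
The paper does not actually prove this proposition --- it is quoted directly from \cite[Prop.\ 3.23$(ii)$]{Sch11} --- but the surrounding text (``we use the following consequence of the proof of the existence of big sharp test elements'') indicates that the intended argument is the one you outline: localize the sum formula of \cref{thm:testidealexist} and check that the test element survives localization. Your step (i) is fine: $F^e_*R(\lceil(p^e-1)\Delta\rceil)$ is finitely generated over the noetherian ring $R$ by $F$-finiteness, so $\Hom_R(-,R)$ commutes with localization, and since every $W^{-1}R$-linear map differs from some localized $\varphi_e$ only by a unit of $W^{-1}R$, the two sums of images coincide.

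The gap is in step (ii), in the perturbation $\tilde d \mapsto \tilde d + w\tilde d'$ with $w \in W$. If a minimal prime $P$ of $R$ meets $W$ and contains $\tilde d$ --- the very case the modification is meant to repair --- then your requirement that $w$ lie in the bad minimal primes forces $w\tilde d' \in P$, so $\tilde d + w\tilde d' \in P$ and the new element is still not in $R^\circ$. Moreover, the claim that the result ``agrees with $d$ up to a unit of $W^{-1}R$'' is false: $w$ is a \emph{unit} in $W^{-1}R$, so $w\tilde d'/1$ is a nonzero element not lying in $(d)$ in general, and adding it genuinely changes the class of $d$. The correct perturbation is by an element of $\ker(R \to W^{-1}R)$, which equals $\bigcap_j Q_j$ (the intersection of the minimal primes disjoint from $W$) because $R$ is reduced by \cref{def:triples}. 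Writing $d = \tilde d/w_0$ and discarding the unit $w_0$, one has $\tilde d \notin Q_j$ for all $j$; by prime avoidance choose $a \in \bigcap_j Q_j$ with $a \notin P$ for every minimal prime $P$ containing $\tilde d$ and $a \in P$ for every other minimal prime $P$ (possible since no minimal prime meeting $W$ can contain the intersection of the remaining minimal primes). Then $\tilde d + a \in R^\circ$ and $\tilde d + a$ has the \emph{same} image as $\tilde d$ in $W^{-1}R$, which is exactly what your argument needs. Alternatively, one can sidestep the lifting entirely: Schwede's construction \cite[Prop.\ 3.21]{Sch11} produces a single $c$ that is simultaneously a big sharp test element for $R$ and for all of its localizations, and this is how \cite[Prop.\ 3.23]{Sch11} actually proceeds.
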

We can now define test ideals on schemes.
\begin{definition}
  Let $(X,\Delta,\fa^t)$ be an effective log triple such that $X$ is
  an $F$-finite scheme of characteristic $p > 0$.
  By \cref{prop:testideallocal}, we can define the
  \textsl{test ideal}\index{test ideal, $\tau(X,\Delta,\mathfrak{a}^t)$|textbf}
  $\gls*{testideal} \subseteq \cO_X$
  locally on every open affine subset $U = \Spec R \subseteq X$ by
  \[
    \tau(X,\Delta,\fa^t)(U) =
    \tau\bigl(R,\Delta\rvert_U,\fa(U)^t\bigr).
  \]
  We drop $\Delta$ or $\fa^t$ from our notation when working with pairs or the
  scheme itself.
  We also often drop the scheme $X$ from our notation if it is clear from
  context.
  \par If $(X,\Delta,\fb^s)$ is another effective log triple on $X$, then we can
  analogously define the test ideal $\tau(X,\Delta,\fa^t\cdot\fb^s)$.
\end{definition}
We now state some properties of test ideals that we will use often, which are
reminiscent of those for multiplier ideals in \cite[\S9.2]{Laz04b}.
\begin{proposition}[see {\cite[Prop.\ 5.6]{TW18}}]
  \label{thm:testidealprops}
  Let\index{test ideal, $\tau(X,\Delta,\mathfrak{a}^t)$!basic properties of|(}
  $(X,\Delta,\fa^t)$ be an effective log triple such that $X$ is an
  $F$-finite scheme of characteristic $p > 0$.
  \begin{enumerate}[label=$(\roman*)$,ref=\roman*]
    \item\label{thm:testideallarger}
      If $(X,\fb)$ is a log pair on $X$, then $\tau(\Delta,\fa^t) \cdot
      \fb\subseteq \tau(\Delta,\fa^t \cdot \fb)$.
    \item\label{thm:testidealincl}
      Let $(X,\Delta',\fb^s)$ be another effective log triple on $X$.
      If $\Delta \ge \Delta'$ and $\fa^{\lceil t(p^e-1) \rceil}
      \subseteq \fb^{\lceil s(p^e-1) \rceil}$ for every integer $e >
      0$, then $\tau(\Delta,\fa^t) \subseteq \tau(\Delta',\fb^s)$.
    \item\label{thm:testidealunambig}
      For every non-negative real number $s$, we have $\tau(\Delta,\fa^s \cdot
      \fa^t) = \tau(\Delta,\fa^{s+t})$.
    \item\label{thm:testidealunambiginteger}
      For every non-negative integer $m$, we have $\tau(\Delta,(\fa^m)^t) =
      \tau(\Delta,\fa^{mt})$.
    \item\label{thm:testidealperturb}
      There exists $\varepsilon > 0$ such that for all $s \in
      [t,t+\varepsilon]$, we have $\tau(\Delta,\fa^t) = \tau(\Delta,\fa^s)$.
    \item\label{thm:testidealperturbdiv}
      Suppose that $X$ is normal.
      For every effective Cartier divisor $D$ on $X$, there exists
      $\varepsilon > 0$ such that for all $\delta \in [0,\varepsilon]$, we have
      $\tau(\Delta,\fa^t) = \tau(\Delta+\delta D,\fa^t)$.
    \item\label{thm:testidealstrongfreg}
      The triple $(X,\Delta,\fa^t)$ is strongly $F$-regular if and only if
      $\tau(\Delta,\fa^t) = \cO_X$.\index{test ideal, $\tau(X,\Delta,\mathfrak{a}^t)$!basic properties of|)}
  \end{enumerate}
\end{proposition}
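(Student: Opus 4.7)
The plan is to work throughout with the Hara--Takagi description \cref{eq:haratakagi} of the test ideal, together with the elementary Frobenius transport identity $b\cdot\varphi(F^e_*x) = \varphi(F^e_*(b^{p^e}x))$ valid for every $R$-linear map $\varphi$ out of $F^e_*R$. I will start with \cref{thm:testidealstrongfreg}, which is essentially immediate: unwinding \cref{def:fsingpair}\cref{def:sfrpair} together with \cref{prop:fsingbasic}\cref{prop:sfrceilings} and comparing with \cref{eq:haratakagi} shows that $1 \in \tau(\Delta,\fa^t)$ if and only if for every $c \in R^\circ$ some $\varphi_e(F^e_*(c\cdot d)) = 1$ with $d \in \fa^{\lceil t(p^e-1)\rceil}$, and by \cref{cor:fsingstrace} this is precisely the splitting defining strong $F$-regularity.

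Next I would address \cref{thm:testideallarger,thm:testidealincl}, both of which follow directly from the Hara--Takagi formula. For \cref{thm:testideallarger}, I would fix a big sharp test element $c$ valid for both $(R,\Delta,\fa^t)$ and $(R,\Delta,\fa^t\cdot\fb)$ and observe that for $b \in \fb$ and a generator $\varphi_e(F^e_*(cx))$ of $\tau(\Delta,\fa^t)$ with $x \in \fa^{\lceil t(p^e-1)\rceil}$, the transport identity gives $b\cdot\varphi_e(F^e_*(cx)) = \varphi_e(F^e_*(b^{p^e}cx))$ with $b^{p^e} \in \fb^{p^e} \subseteq \fb^{p^e-1}$, placing this element in $\tau(\Delta,\fa^t\cdot\fb)$. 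For \cref{thm:testidealincl}, the inequality $\lceil(p^e-1)\Delta'\rceil \le \lceil(p^e-1)\Delta\rceil$ induces a restriction homomorphism of the relevant Hom modules, and combining this with the exponent containment on the $\fa$-factor places every Hara--Takagi generator of $\tau(\Delta,\fa^t)$ into $\tau(\Delta',\fb^s)$. I would then prove \cref{thm:testidealunambig,thm:testidealunambiginteger}: the Hara--Takagi formula gives one inclusion via $\lceil s(p^e-1)\rceil+\lceil t(p^e-1)\rceil \ge \lceil(s+t)(p^e-1)\rceil$ (resp.\ $m\lceil t(p^e-1)\rceil \ge \lceil mt(p^e-1)\rceil$), and for the reverse inclusion I would absorb the bounded discrepancy into the test element by replacing $c$ with $c\cdot a$ for $a \in \fa\cap R^\circ$ (resp.\ $a\in\fa^m\cap R^\circ$), which remains a big sharp test element by \cref{def:hdbstest} and shifts the $\fa$-exponent by exactly the deficit.

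The main obstacles will be the continuity statements \cref{thm:testidealperturb,thm:testidealperturbdiv}. By \cref{thm:testidealincl}, $s \mapsto \tau(\Delta,\fa^s)$ is non-increasing, so the chain $\{\tau(\Delta,\fa^{t+1/n})\}_{n\ge 1}$ is ascending and stabilizes at some $N$ by noetherianity; the subtle point is to identify this stable value with $\tau(\Delta,\fa^t)$ itself. To handle this, I would choose a finite set of Hara--Takagi generators $\varphi_{e_i}(F^{e_i}_*(c\cdot x_i))$ of $\tau(\Delta,\fa^t)$, $i=1,\dots,k$; for each $i$ with $t(p^{e_i}-1)\notin\ZZ$, the equality $\lceil s(p^{e_i}-1)\rceil = \lceil t(p^{e_i}-1)\rceil$ holds for all $s$ in a right-neighborhood $(t,t+\delta_i]$, so that generator survives in $\tau(\Delta,\fa^s)$. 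For the finitely many remaining indices where $t(p^{e_i}-1)\in\ZZ$, the exponent jumps by exactly $1$ for any $s>t$, and I plan to absorb this unit deficit by regenerating $\tau(\Delta,\fa^t)$ using the shifted test element $c\cdot a$ with $a\in\fa\cap R^\circ$, exactly as in the reverse direction of \cref{thm:testidealunambig}; taking $\varepsilon = \min_i \delta_i$ then yields constancy on $[t,t+\varepsilon]$. The proof of \cref{thm:testidealperturbdiv} will proceed in parallel, now applying the Hom-module restriction from \cref{thm:testidealincl} to the inequality $\lceil(p^e-1)\Delta\rceil \le \lceil(p^e-1)(\Delta+\delta D)\rceil$, and handling the integer-coefficient exceptional cases for the finitely many relevant $e$ by absorbing a local equation for the Cartier divisor $D$ into the test element. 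The delicate bookkeeping in these rounding-versus-absorption comparisons is where I expect the argument to be least transparent.
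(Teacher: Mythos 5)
Your treatment of \cref{thm:testideallarger}--\cref{thm:testidealunambiginteger} and \cref{thm:testidealstrongfreg} is essentially the paper's own argument: everything is run through the Hara--Takagi description \cref{eq:haratakagi}, with \cref{thm:testideallarger} via $\fb\cdot\varphi_e(F^e_*(-)) = \varphi_e(F^e_*(\fb^{[p^e]}\cdot -))$ and $\fb^{[p^e]}\subseteq\fb^{p^e-1}$, \cref{thm:testidealincl} via the inclusion of Hom-modules (the paper phrases this through uniformly compatible ideals rather than generators, but the content is the same), and the reverse inclusion in \cref{thm:testidealunambig} by multiplying the big sharp test element by an element of $\fa\cap R^\circ$ to absorb the rounding deficit $\lceil s(p^e-1)\rceil+\lceil t(p^e-1)\rceil-\lceil(s+t)(p^e-1)\rceil\in\{0,1\}$. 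Where you genuinely diverge is \cref{thm:testidealperturb,thm:testidealperturbdiv}: the paper does not prove these but cites \cite[Lem.\ 6.1]{ST14} and \cite[Prop.\ 2.14(2)]{Sat18} (whose arguments reduce to the case where $(p^e-1)(K_X+\Delta)$ is Cartier), whereas you give a direct argument from noetherianity --- finitely many Hara--Takagi generators involve only finitely many exponents $e_i$, and for $s-t\le 1/(p^{e_i}-1)$ the jump $\lceil s(p^{e_i}-1)\rceil-\lceil t(p^{e_i}-1)\rceil$ is at most one and can be absorbed exactly as in \cref{thm:testidealunambig}. This is sound and arguably more self-contained than the route through the references.

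Two points need tightening before this compiles into a proof. First, to conclude that a generator $\varphi_{e_i}(F^{e_i}_*(ca\,x_i))$ lands in $\tau(\Delta,\fa^s)$ you must know that $ca$ is a big sharp test element for the \emph{perturbed} triple; a big sharp test element for $(R,\Delta,\fa^t)$ need not be one for $(R,\Delta,\fa^s)$ with $s>t$, since the defining condition \cref{eq:hdbstest} becomes harder as the exponent grows. The fix is to choose $c$ from the start to be a big sharp test element for $(R,\Delta,\fa^{t+1})$ (resp.\ $(R,\Delta+D,\fa^t)$ in \cref{thm:testidealperturbdiv}), which is then one for every intermediate triple. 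Second, your case split in \cref{thm:testidealperturb} does not quite parse: you cannot ``regenerate'' the test ideal with the shifted element $ca$ only for the exceptional indices, because changing the test element changes the generating set. Instead, use $ca$ uniformly, extract the finite generating set afterwards, and note that $a\,x_i\in\fa^{\lceil t(p^{e_i}-1)\rceil+1}\subseteq\fa^{\lceil s(p^{e_i}-1)\rceil}$ handles both cases at once; the same reorganization applies to the local equation of $D$ in \cref{thm:testidealperturbdiv}. With these adjustments your argument goes through.
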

We will define strong $F$-regularity in \cref{def:fsingpair}\cref{def:sfrpair}.
\begin{proof}
  Since test ideals are defined locally, it suffices to consider the case when
  $X = \Spec R$.
  Fix a big sharp test element $c \in R^\circ$.
  We will freely use the description of the test ideal in
  \cref{thm:testidealexist}.
  \par To show \cref{thm:testideallarger}, we note that
  \begin{align*}
    \tau(\Delta,\fa^t) \cdot \fb &= \adjustlimits{\sum^\infty}_{e =
    0}{\sum}_{\varphi_e}
    \varphi_e\Bigl( F^e_*\bigl(c \cdot \fa^{\lceil t(p^e-1)
    \rceil}\bigr)\Bigr) \cdot \fb\\
    &= \adjustlimits{\sum^\infty}_{e =
    0}{\sum}_{\varphi_e}
    \varphi_e\Bigl( F^e_*\bigl(c \cdot \fa^{\lceil t(p^e-1)
    \rceil} \cdot \fb^{[p^e]}\bigr)\Bigr)\\
    &\subseteq \adjustlimits{\sum^\infty}_{e =
    0}{\sum}_{\varphi_e}
    \varphi_e\Bigl( F^e_*\bigl(c \cdot \fa^{\lceil t(p^e-1)
    \rceil} \cdot \fb^{p^e-1}\bigr)\Bigr)
    = \tau(\Delta,\fa^t \cdot \fb).
  \end{align*}
  \par To show \cref{thm:testidealincl}, it suffices to note that if an ideal
  $J \subseteq R$ is $(\Delta',\fb^{s},F)$-compatible, then $J$ is
  $(\Delta,\fa^{t},F)$-compatible, since
  \begin{gather*}
    \varphi\Bigl(F^e_*\bigl(J \cdot \fa^{\lceil t(p^e-1)\rceil}
    \bigr)\Bigr) \subseteq 
    \varphi\Bigl(F^e_*\bigl(J \cdot \fb^{\lceil s(p^e-1)\rceil}
    \bigr)\Bigr) \subseteq J
    \shortintertext{for all}
    \varphi \in
    \Hom_R\bigl(F^e_*R\bigl(\bigl\lceil(p^e-1)\Delta\bigr\rceil\bigr),R\bigr)
    \subseteq
    \Hom_R\bigl(F^e_*R\bigl(\bigl\lceil(p^e-1)\Delta'\bigr\rceil\bigr),R\bigr).
  \end{gather*}
  \par To show \cref{thm:testidealunambig}, we first note that the inclusion
  $\subseteq$ holds by \cref{thm:testidealincl} since
  \[
    \fa^{\lceil s(p^e-1) \rceil} \cdot \fa^{\lceil t(p^e-1) \rceil}
    \subseteq \fa^{\lceil (s+t)(p^e-1) \rceil}
  \]
  for every integer $e > 0$.
  To show the reverse inclusion $\supseteq$, note that
  \begin{align*}
    \emptyset \ne{} \MoveEqLeft[3]\fa^{\lceil s(p^e-1) \rceil + \lceil t(p^e-1)
    \rceil - \lceil (s+t)(p^e-1) \rceil} \cap R^\circ\\
    &\subseteq \Bigl( \bigl(\fa^{\lceil
      s(p^e-1) \rceil} \cdot \fa^{\lceil t(p^e-1) \rceil}\bigr) : \fa^{\lceil
    (s+t)(p^e-1) \rceil} \Bigr) \cap R^\circ,
  \end{align*}
  hence we can choose an element $c'$ in the set on the right-hand side.
  Then, the product $cc'$ is a big sharp test element, hence
  \begin{align*}
    \tau(\Delta,\fa^{s+t})
    &= \adjustlimits{\sum^\infty}_{e=0}{\sum}_{\varphi_e}
    \varphi_e\Bigl( F^e_*\bigl(cc' \cdot
    \fa^{\lceil (s+t)(p^e-1) \rceil} \bigr)\Bigr)\\
    &\subseteq \adjustlimits{\sum^\infty}_{e=0}{\sum}_{\varphi_e}
    \varphi_e\Bigl( F^e_*\bigl(c
    \cdot \fa^{\lceil s(p^e-1) \rceil} \cdot \fa^{\lceil t(p^e-1)
    \rceil} \bigr)\Bigr)
    = \tau(\Delta,\fa^s\cdot\fa^t).
  \end{align*}
  \cref{thm:testidealunambiginteger} then follows from applying
  \cref{thm:testidealunambig} $m$ times.
  \par See \cite[Lem.\ 6.1]{ST14} and \cite[Prop.\ 2.14(2)]{Sat18} for proofs
  of \cref{thm:testidealperturb} and \cref{thm:testidealperturbdiv},
  respectively.
  Note that in the proof of \cite[Prop.\ 2.14(2)]{Sat18}, one should follow the
  proof of \cite[Lem.\ 6.1]{ST14} to reduce to the case when
  $(p^e-1)(K_X+\Delta)$ is Cartier for some integer $e > 0$.
  \par See \cite[Prop.\ 3.23$(iii)$]{Sch11} for a proof of
  \cref{thm:testidealstrongfreg}.
\end{proof}
We give a new proof of the following very important property of test ideals.
\begin{theorem}[Subadditivity {\cite[Thm.\ 6.10(2)]{HY03}}]
  \label{thm:testidealsubadditivity}
  Let\index{subadditivity theorem!for test ideals|(}
  $(X,\fa^t)$ and $(X,\fb^s)$ be two effective log pairs where $X$ is an
  $F$-finite regular scheme of characteristic $p > 0$.
  Then, we have
  \[
    \tau(\fa^t \cdot \fb^s) \subseteq \tau(\fa^t) \cdot \tau(\fb^s).
  \]
\end{theorem}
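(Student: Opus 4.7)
The plan is to give a short proof using the explicit description of test ideals on regular $F$-finite schemes via the $p^{-e}$-th root operation on ideals, due to Blickle--Musta\c{t}\u{a}--Smith. Since test ideals commute with localization by \cref{prop:testideallocal}, I work locally on $\Spec R$ where $R$ is regular and $F$-finite. For an ideal $I \subseteq R$, define $I^{[1/p^e]}$ to be the smallest ideal $J \subseteq R$ with $I \subseteq J^{[p^e]}$; this is well-defined precisely because Frobenius is flat, by Kunz's theorem.

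The first ingredient is the formula
\[
  \tau(\fa^t) = \bigcup_{e \ge 0} \bigl(\fa^{\lceil t p^e \rceil}\bigr)^{[1/p^e]},
\]
together with its two-ideal analogue $\tau(\fa^t \cdot \fb^s) = \bigcup_{e \ge 0} (\fa^{\lceil t p^e \rceil} \fb^{\lceil s p^e \rceil})^{[1/p^e]}$; each union stabilizes for $e$ sufficiently large. I would invoke this as a known consequence of the Hara--Takagi description (\cref{thm:testidealexist}) combined with standard adjunction between the pushforward $F^e_*$ and the $(-)^{[1/p^e]}$-operation on a regular $F$-finite ring.

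The second ingredient is the multiplicativity estimate
\[
  (IJ)^{[1/p^e]} \subseteq I^{[1/p^e]} \cdot J^{[1/p^e]}.
\]
This follows from a one-line argument: from the defining inclusions $I \subseteq (I^{[1/p^e]})^{[p^e]}$ and $J \subseteq (J^{[1/p^e]})^{[p^e]}$, together with the elementary identity $(I^{[1/p^e]})^{[p^e]} \cdot (J^{[1/p^e]})^{[p^e]} = (I^{[1/p^e]} \cdot J^{[1/p^e]})^{[p^e]}$ (valid in any characteristic $p$ ring, since Frobenius commutes with products of ideals), we obtain $IJ \subseteq (I^{[1/p^e]} \cdot J^{[1/p^e]})^{[p^e]}$, and minimality in the definition of $(IJ)^{[1/p^e]}$ yields the claim.

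Combining these, for $e$ chosen large enough so that both test-ideal formulas stabilize, we conclude
\[
  \tau(\fa^t \cdot \fb^s) = \bigl(\fa^{\lceil t p^e \rceil} \fb^{\lceil s p^e \rceil}\bigr)^{[1/p^e]} \subseteq \bigl(\fa^{\lceil t p^e \rceil}\bigr)^{[1/p^e]} \cdot \bigl(\fb^{\lceil s p^e \rceil}\bigr)^{[1/p^e]} = \tau(\fa^t) \cdot \tau(\fb^s),
\]
proving the theorem. The main obstacle is not in the body of the argument---which reduces to the short calculation above---but in setting up the first ingredient, namely reconciling the $F$-compatibility-based definition of test ideals adopted in this paper with the $[1/p^e]$-operation description. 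This equivalence is standard on regular $F$-finite schemes, but one needs to state it precisely and possibly record a short derivation before invoking it to keep the proof self-contained.
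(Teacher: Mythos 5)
Your argument is correct, but it takes a genuinely different route from the one in the paper. The paper's proof stays entirely inside the uniform $F$-compatibility framework used to \emph{define} test ideals here: it invokes the characterization (for regular $R$) that an ideal $J$ is uniformly $(\fa^t\cdot\fb^s,F)$-compatible if and only if $\fa^{\lceil t(p^e-1)\rceil}\cdot\fb^{\lceil s(p^e-1)\rceil}\subseteq(J^{[p^e]}\mathbin{:}J)$ for all $e$, and then a two-line colon-ideal computation, using $(I_1\mathbin{:}J_1)\cdot(I_2\mathbin{:}J_2)\subseteq(I_1I_2\mathbin{:}J_1J_2)$ and $(IJ)^{[p^e]}=I^{[p^e]}J^{[p^e]}$, shows that $\tau(\fa^t)\cdot\tau(\fb^s)$ is itself uniformly $(\fa^t\cdot\fb^s,F)$-compatible, hence contains the smallest such ideal $\tau(\fa^t\cdot\fb^s)$. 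Your proof is instead the Blickle--Musta\c{t}\u{a}--Smith route via the $(-)^{[1/p^e]}$ operation, which the paper explicitly flags in the remark following the theorem as ``another approach'' (citing \cite[Prop.\ 2.11$(iv)$]{BMS08}) that it deliberately avoids in order to remain consistent with its chosen definition. Your multiplicativity step $(IJ)^{[1/p^e]}\subseteq I^{[1/p^e]}\cdot J^{[1/p^e]}$ and the well-definedness of $I^{[1/p^e]}$ via Kunz's theorem are both correct, and you rightly identify the real cost of your approach: one must first reconcile the compatibility-based definition (\cref{def:testideal}, \cref{thm:testidealexist}) with the formula $\tau(\fa^t)=\bigcup_e(\fa^{\lceil tp^e\rceil})^{[1/p^e]}$, which involves both the adjunction identifying $\sum_{\varphi_e}\varphi_e(F^e_*I)$ with $I^{[1/p^e]}$ on a regular $F$-finite ring and the passage from exponents $\lceil t(p^e-1)\rceil$ with a test element to $\lceil tp^e\rceil$ without one. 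What the paper's argument buys is that it needs no such translation and generalizes immediately to pairs of graded families (as noted in its remark); what yours buys is a conceptually transparent reduction of subadditivity to the elementary inclusion $(IJ)^{[1/p^e]}\subseteq I^{[1/p^e]}J^{[1/p^e]}$, at the price of importing the BMS description.
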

\begin{proof}
  By \cref{prop:testideallocal}, it suffices to consider the case when $X =
  \Spec R$ for a regular local ring $R$.
  By \cite[Prop.\ 3.11]{Sch10}, for a regular ring $R$, an ideal $J \subseteq R$
  is uniformly
  $(\fa^t\cdot\fb^s,F)$-compatible if and only if for every integer $e \ge
  0$, we have
  \[
    \fa^{\lceil t(p^e-1) \rceil} \cdot \fb^{\lceil s(p^e-1) \rceil} \subseteq
    (J^{[p^e]}\mathbin{:}J).
  \]
  It therefore suffices to show the chain of inclusions
  \begin{align*}
    \fa^{\lceil t(p^e-1) \rceil} \cdot \fb^{\lceil s(p^e-1) \rceil}
    &\subseteq \bigl( \tau(\fa^t)^{[p^e]} \mathbin{:} \tau(\fa^t) \bigr) \cdot \bigl(
    \tau(\fb^s)^{[p^e]} \mathbin{:} \tau(\fb^s) \bigr)\\
    &\subseteq \Bigl( \bigl( \tau(\fa^t) \cdot \tau(\fb^s) \bigr)^{[p^e]} \mathbin{:}
    \bigl( \tau(\fa^t)^{[p^e]} \cdot \tau(\fb^s)^{[p^e]} \bigr)\Bigr)
  \end{align*}
  since $\tau(\fa^t \cdot \fb^s)$ is the smallest
  $(\fa^t\cdot\fb^s,F)$-compatible ideal by
  definition.
  The first inclusion follows from the fact that $\tau(\fa^t)$ and
  $\tau(\fb^s)$ are uniformly $(\fa^t,F)$- and
  $(\fb^s,F)$-compatible, respectively.
  The second inclusion follows from the fact that in general,
  $(I_1 \mathbin{:} J_1) \cdot (I_2 \mathbin{:} 
  J_2) \subseteq (I_1I_2 \mathbin{:} J_1J_2)$.
\end{proof}
\begin{remark}
  Subadditivity (\cref{thm:testidealsubadditivity}) was originally
  proved by Hara and Yoshida \cite[Thm.\ 6.10(2)]{HY03} using tight closure.
  We have included a proof purely in the language of $F$-compatibility to be
  consistent with our choice of definition (\cref{def:testideal}); see
  \cite[Prop.\ 2.11$(iv)$]{BMS08} for another approach.
  Our proof can also be used to show a more general form of subadditivity: if
  $(X,\fa_\bullet)$ and $(X,\fb_\bullet)$ are two pairs as in \cite[Def.\
  2.3]{Sch10}, then
  \[
    \tau(\fa_\bullet \cdot \fb_\bullet) \subseteq
    \tau(\fa_\bullet) \cdot \tau(\fb_\bullet).
  \]
  Here, the test ideal is as described in \cite[Thm.\
  6.3]{Sch11}.
  We have avoided this notation since it clashes with that of
  asymptotic test ideals below.\index{subadditivity theorem!for test ideals|)}
\end{remark}
Because of the formal properties of test ideals in \cref{thm:testidealprops}, we
can define the following asymptotic version of test ideals.
\begin{citeddef}[{\cite[Prop.-Def.\ 2.16]{Sat18}}]
  \label{def:asymptotictestideal}
  Let $(X,\Delta,\fa_\bullet^\lambda)$ be an effective log triple such that $X$
  is $F$-finite and of characteristic $p > 0$.
  If $m$ and $r$ are positive integers, then
  \begin{equation*}
    \tau(X,\Delta,\fa_{m}^{\lambda/m})
    = \tau\bigl(X,\Delta,(\fa_m^r)^{\lambda/(mr)}\bigr)
    \subseteq \tau\bigl(X,\Delta,\fa_{mr}^{\lambda/(mr)}\bigr),
  \end{equation*}
  by Propositions \ref{thm:testidealprops}\cref{thm:testidealunambiginteger}
  and \ref{thm:testidealprops}\cref{thm:testidealincl}, and by the graded
  property $\fa_m^r \subseteq \fa_{mr}$.
  Thus, the set of ideals
  \begin{equation}\label{eq:testidealposet}
    \bigl\{\tau(X,\Delta,\fa_{m}^{\lambda/m})\bigr\}_{m=1}^\infty
  \end{equation}
  is partially ordered, and has a unique maximal element by the noetherian
  property that coincides with $\tau(X,\Delta,\fa_m^{\lambda/m})$
  for $m$ sufficiently large and divisible.
  The \textsl{asymptotic test ideal}\index{asymptotic test ideal, $\tau(X,\Delta,\fa_\bullet^{\lambda})$}
  \[
    \tau(X,\Delta,\fa_\bullet^{\lambda}) \subseteq
    \cO_X\glsadd{asymptotictestideal}
  \]
  is the maximal element of the partially ordered set \eqref{eq:testidealposet}.
  \par If $(X,\Delta,\fb_\bullet^\mu)$ is another effective log triple on
  $X$, then we can analogously define the test ideal
  $\tau(X,\Delta,\fa_\bullet^\lambda\cdot\fb_\bullet^\mu)$.
\end{citeddef}
Asymptotic test ideals satisfy properties analogous to those in
\cref{thm:testidealprops,thm:testidealsubadditivity}.
\begin{remark}
  \cref{def:asymptotictestideal} is due to
  Musta\c{t}\u{a}\index{Mustata, Mircea@Musta\c{t}\u{a}, Mircea} when $X$ is
  regular and $\Delta = 0$ \cite[pp.\ 540--541]{Mus13}.
  An asymptotic version of the test ideal was first defined by
  Hara\index{Hara, Nobuo}
  \cite[Prop.-Def.\ 2.9]{Har05}, although this ideal differs from that in
  \cref{def:asymptotictestideal} in general; see \cite[Rem.\
  1.4]{TY08}.
\end{remark}
\par The following examples of test ideals will be the most useful in our
applications.
\begin{example}[see {\cite[Def.\ 2.36]{Sat18}}]\label{ex:testidealdiv}
  Suppose $(X,\Delta)$ is an effective log pair where $X$ is a complete scheme
  over an $F$-finite field of characteristic $p > 0$.
  If $D$ is a Cartier divisor such that $H^0(X,\cO_X(D)) \ne 0$, then for every
  real number $t \ge 0$, we set%
  \index{test ideal, $\tau(X,\Delta,\mathfrak{a}^t)$!of a divisor, $\tau(X,\Delta,t\cdot\lvert D \rvert)$|textbf}
  \begin{align*}
    \tau\bigl(X,\Delta,t \cdot \lvert D \rvert\bigr) &\coloneqq
    \tau\bigl(X,\Delta,\fb\bigl(\lvert D \rvert\bigr)^t\bigr).%
    \glsadd{testidealofls}
    \intertext{If $D$ is a $\QQ$-Cartier divisor such that $H^0(X,\cO_X(mD)) \ne
    0$ for some $m > 0$ such that $mD$ is Cartier, then for every real number
    $\lambda \ge 0$, we set}
    \tau\bigl(X,\Delta,\lambda \cdot \lVert D \rVert\bigr) &\coloneqq
    \tau\bigl(X,\Delta,\fa_\bullet(D)^\lambda\bigr),\glsadd{asymtestidealofls}
  \end{align*}
  \index{asymptotic test ideal, $\tau(X,\Delta,\fa_\bullet^{\lambda})$!of a $\QQ$-divisor, $\tau(X,\Delta,\lambda \cdot \lVert D \rVert)$|textbf}%
  where $\fa_\bullet(D)$ is the graded family of ideals defined in
  \cref{ex:triplelinsys}.
\end{example}
Finally, we have the following uniform global generation result for
(asymptotic) test ideals.
\begin{theorem}[{\cite[Prop.\ 4.1]{Sat18}; cf.\ 
  \cites[Thm.\ 4.3]{Sch14}[Thm.\ 4.1]{Mus13}}]
  \label{thm:testidealuniformgg}
  Let%
  \index{test ideal, $\tau(X,\Delta,\mathfrak{a}^t)$!uniform global generation of}%
  \index{asymptotic test ideal, $\tau(X,\Delta,\fa_\bullet^{\lambda})$!uniform global generation of}
  $(X,\Delta)$ be an effective log pair where $X$ is a normal projective
  variety over an $F$-finite field of characteristic $p > 0$ and $\Delta$ is an
  effective $\QQ$-Weil divisor such that $K_X+\Delta$ is $\QQ$-Cartier.
  Let $D$, $L$, and $H$ be Cartier divisors on $X$ such that $H$ is ample and
  free.
  If $\lambda$ is a non-negative real number such that
  $L - (K_X + \Delta + \lambda \cdot D)$
  is ample, then the sheaves
  \[
    \tau\bigl(X,\Delta,\lambda\cdot\lvert D \rvert\bigr) \otimes \cO_X(L+dH)
    \quad \text{and} \quad
    \tau\bigl(X,\Delta,\lambda\cdot\lVert D \rVert\bigr) \otimes \cO_X(L+dH)
  \]
  are globally generated for every integer $d > \dim X$.
\end{theorem}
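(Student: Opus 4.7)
The plan is to adapt the Frobenius--regularity argument (cf.\ Smith~\cite{Smi97}, Keeler~\cite{Kee08}, Schwede~\cite{Sch14}, Musta\c{t}\u{a}~\cite{Mus13}) by combining three ingredients: Castelnuovo--Mumford regularity, the Frobenius trace map from \cref{prop:ctx25}, and Fujita's vanishing theorem \cite[Thm.\ 5.1]{Fuj83}. First, I would reduce to the non-asymptotic statement: by \cref{def:asymptotictestideal}, the asymptotic test ideal agrees with $\tau(X,\Delta,(\lambda/m)\cdot\lvert mD\rvert)$ for $m$ sufficiently large and divisible, and replacing $(D,\lambda)$ by $(mD,\lambda/m)$ preserves the ampleness of $L-(K_X+\Delta+\lambda D)$. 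Hence it suffices to treat the non-asymptotic test ideal $\tau\coloneqq\tau(X,\Delta,\lambda\cdot\lvert D\rvert)$.

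Next, since $H$ is ample and free, Castelnuovo--Mumford regularity (\cite[Thm.\ 1.8.5]{Laz04a}) reduces the global generation of $\tau\otimes\cO_X(L+dH)$ for every $d>n\coloneqq\dim X$ to proving
\[
H^i\bigl(X,\tau\otimes\cO_X(L+jH)\bigr)=0 \qquad \text{for all } i\ge 1 \text{ and } j\ge 1.
\]
Using the description of $\tau$ via big sharp test elements (\cref{thm:testidealexist}) together with the identification of Hom sheaves as Frobenius pushforwards under the trace map (\cref{prop:ctx25,cor:fsingstrace}), for $e$ sufficiently large one obtains a surjection
\[
F^e_*\Bigl(\fb\bigl(\lvert D\rvert\bigr)^{\lceil\lambda(p^e-1)\rceil}\otimes\cO_X\bigl((1-p^e)K_X-\lceil(p^e-1)\Delta\rceil\bigr)\Bigr)\longtwoheadrightarrow\tau.
\]
Tensoring with $\cO_X(L+jH)$ and applying the projection formula (using $F^{e*}\cO_X(L+jH)\simeq\cO_X(p^e(L+jH))$) converts the required vanishing into $H^i(X,\sG_{e,j})=0$, where
\[
\sG_{e,j}\coloneqq\fb\bigl(\lvert D\rvert\bigr)^{\lceil\lambda(p^e-1)\rceil}\otimes\cO_X\bigl((1-p^e)K_X-\lceil(p^e-1)\Delta\rceil+p^e(L+jH)\bigr),
\]
since the Frobenius is affine and so cohomology commutes with $F^e_*$.

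Setting $M\coloneqq L-(K_X+\Delta+\lambda D)$, which is ample by hypothesis, and using a surjection $\cO_X(-kD)^{\oplus N_k}\twoheadrightarrow\fb(\lvert D\rvert)^k$ for $k=\lceil\lambda(p^e-1)\rceil$ coming from iterated evaluation maps, one sees that $\sG_{e,j}$ is a quotient of a direct sum of sheaves of the form $\omega_X\otimes\cO_X(p^eM+p^ejH+\Xi_e)$ for a divisor $\Xi_e$ whose coefficients lie in a bounded range independent of $e$ (each rounding error is in $[0,1)$, and the $\QQ$-Weil divisor $\Delta$ has bounded denominators). Since only finitely many $\Xi_e$ occur up to $\ZZ$-linear equivalence, Fujita's vanishing theorem applied to the ample divisor $M$ and nef divisor $jH$ yields a single $e\gg 0$ for which $H^i(X,\omega_X\otimes\cO_X(p^eM+p^ejH+\Xi_e))=0$ for all $i\ge 1$ uniformly in $j\ge 0$, giving the desired vanishing.

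The main obstacle is the trace-map step: carefully setting up the surjection onto $\tau$ while tracking the rounded boundary $\lceil(p^e-1)\Delta\rceil$ and the ideal power $\lceil\lambda(p^e-1)\rceil$, and verifying that twisting by $\cO_X(L+jH)$ commutes with the trace via the projection formula. Uniformity in $j$ at the final Fujita step is also essential, since Mumford regularity demands vanishing for infinitely many twists $L+jH$ using a \emph{single} $e$; plain Serre vanishing would only suffice for a single $j$ at a time and would therefore be inadequate.
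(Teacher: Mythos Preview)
The paper does not prove this theorem; it is quoted from \cite[Prop.\ 4.1]{Sat18}. Your strategy matches the approach in the cited references, but there are two genuine gaps in the execution.

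First, the displayed surjection
\[
F^e_*\Bigl(\fb(\lvert D\rvert)^{\lceil\lambda(p^e-1)\rceil}\otimes\cO_X\bigl((1-p^e)K_X-\lceil(p^e-1)\Delta\rceil\bigr)\Bigr)\longtwoheadrightarrow\tau
\]
is false in general: the image of this trace map is typically \emph{larger} than $\tau$. For instance, take $\Delta=0$, $\fb=\cO_X$, and $X$ $F$-split but not strongly $F$-regular; then the trace is surjective onto $\cO_X$ while $\tau\subsetneq\cO_X$. The description in \cref{thm:testidealexist} has a big sharp test element $c$ in the source, which you have silently dropped. The surjection actually used in \cites{Sch14}{Sat18} carries $\tau$ itself in the source,
\[
F^e_*\Bigl(\tau\cdot\fb(\lvert D\rvert)^{\lceil\lambda(p^e-1)\rceil}\otimes\cO_X\bigl((1-p^e)K_X-\lceil(p^e-1)\Delta\rceil\bigr)\Bigr)\longtwoheadrightarrow\tau,
\]
valid for every $e$ by the minimality of $\tau$ among uniformly $(\Delta,\fb^\lambda,F)$-compatible ideals. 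This breaks your subsequent reduction: with $\tau$ in the source, resolving $\fb^k$ by $\cO_X(-kD)^{\oplus N_k}$ no longer lands you on line bundles of the form $\omega_X\otimes\cO_X(p^eM+\cdots)$, but on $\tau\otimes\cO_X(\cdots)$.

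Second, even granting a surjection $F^e_*\sG_{e,j}\twoheadrightarrow\tau\otimes\cO_X(L+jH)$, the vanishing $H^i(X,\sG_{e,j})=0$ does \emph{not} imply $H^i(X,\tau\otimes\cO_X(L+jH))=0$; a surjection of sheaves gives no control on higher cohomology of the quotient. The references sidestep this entirely: they argue global generation of the \emph{source} of the corrected trace surjection directly (since $F^e$ is a homeomorphism, global generation passes to $F^e_*$ and then to any quotient). With $\tau$ in the source, Fujita's vanishing theorem is applied to the \emph{fixed coherent sheaf} $\tau$ twisted by $p^eM$ plus nef, together with Castelnuovo--Mumford regularity with respect to $H$, to get global generation uniformly in $j$ for a single large $e$.
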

\begin{remark}[Test ideals via tight closure]
  \label{rem:testidealsviatightclosure}
  We\index{test ideal, $\tau(X,\Delta,\mathfrak{a}^t)$!via tight closure|(}
  briefly recall an alternative definition for test ideals via tight closure,
  following \cite[\S2]{Tak08} and \cite[\S2.2]{Sch10}.
  Let $(R,\Delta,\fa^t)$ be an effective log triple such that $R$ is a ring of
  characteristic $p > 0$, and let $\iota\colon N \hookrightarrow M$ be an
  inclusion of $R$-modules.
  For every integer $e > 0$, let
  \[
    N_M^{[p^e],\Delta} \coloneqq \im\Bigl( N \otimes_R F^e_*R \xrightarrow{\iota
      \otimes_R \id} M \otimes_R F^e_*R \longrightarrow
      M \otimes_R F_*^eR\bigl(\lfloor (p^e-1)\Delta \rfloor \bigr)
    \Bigr).\glsadd{frobeniuspower}
  \]
  The \textsl{$(\Delta,\fa^t)$-tight closure}
  \index{tight closure, $N^*_M$!for pairs and triples, $N^{*(\Delta,\fa^t)}_M$}
  of $N$ in $M$ is the $R$-module
  \[
    N^{*(\Delta,\fa^t)}_M \coloneqq \biggl\{
      z \in M \biggm\vert
      \begin{tabular}{@{}c@{}}
        there exists $c\in R^\circ$ such that\\
        $z \otimes c\fa^{\lceil p^et \rceil} \subseteq N_M^{[p^e],\Delta}$
        for all $e \gg0$
      \end{tabular}
    \biggr\}.\glsadd{generalizedtightclosure}
  \]
  Now let $E \coloneqq \bigoplus_\fm E_R(R/\fm)$ be the direct sum of the
  injective hulls of the residue fields $R/\fm$ for every maximal ideal $\fm
  \subseteq R$.
  The \textsl{test ideal} of $(R,\Delta,\fa^t)$ is
  \[
    \tau(R,\Delta,\fa^t) \coloneqq \Ann_R\bigl(
    0_E^{*(\Delta,\fa^t)} \bigr) \subseteq R.
  \]
  By \cite[Thm.\ 6.3]{Sch10}, this test ideal is equal to the test ideal defined
  in \cref{def:testideal} as long as $R$ is $F$-finite.%
  \index{test ideal, $\tau(X,\Delta,\mathfrak{a}^t)$!via tight closure|)}
\end{remark}
\begin{remark}[Big vs.\ finitistic test ideals]
  Our test ideals correspond to the (\textsl{non-finitistic} or \textsl{big})
  test ideal defined by Lyubeznik\index{Lyubeznik, Gennady} and
  Smith\index{Smith, Karen E.} \cite[\S7]{LS01} when $\Delta = 0$,
  $\fa = R$, and $t = 1$, instead of the original (\textsl{finitistic}) test
  ideal defined by Hochster\index{Hochster, Melvin} and
  Huneke\index{Huneke, Craig} \cite[Def.\ 8.22]{HH90}.
  Note that the definitions for test ideals of pairs in \cites[Def.\
  1.1]{HY03}[Def.\ 2.1]{Tak04} specialize to the finitistic test ideal.
  The corresponding non-finitistic notion first appears in \cite[Def.\
  1.4]{HT04}.
\end{remark}
\section{Reduction modulo \texorpdfstring{$\mathfrak{p}$}{p}}
Finally, we review the theory of reduction modulo $\fp$, and the relationship
between singularities in characteristic zero and characteristic $p > 0$.
What follows is a small part of the general discussion in \cite[\S8]{EGAIV3}.
\begin{citedsetup}[{\cite[(8.2.2), (8.5.1), and (8.8.1)]{EGAIV3}}]
  \label{setup:spreadingout}
  We will denote by $\{(S_\lambda,u_{\lambda\mu})\}_{\lambda \in \Lambda}$
  a filtered inverse system of schemes with affine transition morphisms
  $u_{\lambda\mu}\colon S_\mu \to S_\lambda$ for $\lambda \le \mu$, where
  $\Lambda$ has a unique minimal element $0$.
  We then set $S \coloneqq \varprojlim_{\lambda \in \Lambda} S_\lambda$ with
  projection morphisms $u_\lambda\colon S \to S_\lambda$.
  \par Now suppose an element $\alpha \in \Lambda$ and schemes
  $X_\alpha$ and $Y_\alpha$ over $S_\alpha$ are given.
  We then denote by
  \begin{alignat*}{2}
    \omit\hfil$\displaystyle\bigl\{(X_\lambda,v_{\lambda\mu})\bigr\}_{\lambda
    \in \Lambda}$\hfil\ignorespaces &\qquad\text{and}\qquad&
    \omit\hfil$\displaystyle\bigl\{(Y_\lambda,w_{\lambda\mu})\bigr\}_{\lambda
    \in \Lambda}$\hfil\ignorespaces
  \intertext{the inverse systems induced by $\{(S_\lambda,u_{\lambda\mu})\}$,
  where}
    \begin{aligned}
      X_\lambda &\coloneqq X_\alpha \times_{S_\alpha} S_\lambda\\
      v_{\lambda\mu} &\coloneqq \id_{X_\alpha} \times u_{\lambda\mu}
    \end{aligned}
    &\qquad\text{and}\qquad&
    \begin{aligned}
      Y_\lambda &\coloneqq Y_\alpha \times_{S_\alpha} S_\lambda\\
      w_{\lambda\mu} &\coloneqq \id_{Y_\alpha} \times u_{\lambda\mu}
    \end{aligned}
  \end{alignat*}
  for $\alpha \le \lambda \le \mu$.
  The inverse limits of these inverse systems are $X = X_\alpha
  \times_{S_\alpha} S$ and $Y = Y_\alpha \times_{S_\alpha} S$, respectively,
  with projection morphisms $v_\lambda\colon X \to X_\lambda$ and
  $w_\lambda\colon Y \to Y_\lambda$.
  We then have the following canonical map of sets:
  \begin{equation}\label{eq:egaiv38811}
    \varinjlim_{\lambda \in \Lambda}
    \Hom_{S_\lambda}(X_\lambda,Y_\lambda) \longrightarrow \Hom_S(X,Y).
  \end{equation}
  \par Similarly, suppose an element $\alpha \in \Lambda$, a scheme $X_\alpha$,
  and $\cO_{X_\alpha}$-modules $\sF_\alpha$ and
  $\sG_\alpha$ are given.
  We then denote by
  \begin{alignat*}{2}
    \omit\hfil$\displaystyle\{\sF_\lambda\}_{\lambda
    \in \Lambda}$\hfil\ignorespaces &\qquad\text{and}\qquad&
    \omit\hfil$\displaystyle\{\sG_\lambda\}_{\lambda
    \in \Lambda}$\hfil\ignorespaces
  \intertext{the inverse systems induced by $\{(X_\lambda,v_{\lambda\mu})\}$ and
  $\{(Y_\lambda,w_{\lambda\mu})\}$, where}
    \sF_\lambda \coloneqq v_{\alpha\lambda}^*(\sF_\alpha)
    &\qquad\text{and}\qquad&
    \sG_\lambda \coloneqq w_{\alpha\lambda}^*(\sG_\alpha)
  \end{alignat*}
  for $\alpha \le \lambda \le \mu$.
  Note that these families verify the conditions $\sF_\mu =
  v_{\lambda\mu}^*(\sF_\lambda)$ and $\sG_\mu =
  w_{\lambda\mu}^*(\sG_\lambda)$.
  The $\cO_X$-modules $\sF = v_\alpha^*(\sF_\alpha)$
  and $\sG = w_\alpha^*(\sG_\alpha)$ then satisfy $\sF =
  v_\lambda^*(\sF_\lambda)$ and $\sG = w_\lambda^*(\sG_\lambda)$ for every
  $\lambda \ge \alpha$, and we
  have the following canonical map of abelian groups: 
  \begin{equation}\label{eq:egaiv38513}
    \varinjlim_{\lambda \in \Lambda}
    \Hom_{\cO_{X_\lambda}}(\sF_\lambda,\sG_\lambda) \longrightarrow
    \Hom_{\cO_{X}}(\sF,\sG).
  \end{equation}
\end{citedsetup}
\begin{theorem}[Spreading out; see
  {\cite[Thms.\ 8.8.2 and 8.5.2]{EGAIV3}}]
  \label{thm:spreadingout}
  Fix\index{spreading out|(} notation as in \cref{setup:spreadingout}.
  \begin{enumerate}[label=$(\roman*)$,ref=\roman*]
    \item\label{thm:spreadingoutschemes}
      Suppose $S_0$ is quasi-compact and quasi-separated.
      For every scheme $X$ of finite presentation over $S$, there exists
      $\lambda \in \Lambda$, a scheme $X_\lambda$ of finite presentation over
      $S_\lambda$, and an $S$-isomorphism $X \isoto X_\lambda \times_{S_\lambda}
      S$.
    \item\label{thm:spreadingoutmorphisms}
      Suppose $X_\alpha$ is quasi-compact (resp.\ quasi-compact and
      quasi-separated) over $S_\alpha$, and $Y_\alpha$ is locally of finite type
      (resp.\ locally of finite presentation) over $S_\alpha$ for some $\alpha
      \in \Lambda$.
      Then, the map \cref{eq:egaiv38811} is injective (resp.\ bijective).
    \item\label{thm:spreadingoutsheaves}
      Suppose $X_\alpha$ is quasi-compact and quasi-separated over
      $S_\alpha$, and that $S_\alpha$ is quasi-compact and quasi-separated.
      For every quasi-coherent $\cO_X$-module $\sF$ of finite
      presentation, there exists $\lambda \in \Lambda$ and a quasi-coherent
      $\cO_{X_\lambda}$-module $\sF_\lambda$ of finite presentation such that
      $\sF$ is isomorphic to $u_\lambda^*(\sF_\lambda)$.
    \item\label{thm:spreadingoutsheafmorphisms}
      Suppose $X_\alpha$ is quasi-compact (resp.\ quasi-compact and
      quasi-separated) and that $\sF_\lambda$ is quasi-coherent of finite type
      (resp.\ of finite presentation) and $\sG_\lambda$ is quasi-coherent for
      some $\alpha \in \Lambda$.
      Then, the map \cref{eq:egaiv38513} is injective (resp.\ bijective).
  \end{enumerate}
\end{theorem}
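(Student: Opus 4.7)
The plan is to handle all four parts by reducing to an affine statement about filtered colimits of rings and modules, and then to globalize via a finite Čech-type bookkeeping argument in which quasi-compactness and quasi-separatedness are exactly what guarantee finiteness at every stage.

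First I would dispose of the affine versions of \cref{thm:spreadingoutmorphisms} and \cref{thm:spreadingoutsheafmorphisms}. Write $X_\alpha = \Spec A_\alpha$ and $Y_\alpha = \Spec B_\alpha$, so that $X = \Spec A$ with $A = \varinjlim A_\lambda$. An $S$-morphism $X \to Y$ corresponds to an $A_\alpha$-algebra map $B_\alpha \to A$. Pick finitely many generators of $B_\alpha$; their images lie in some $A_\lambda$, producing an $A_\alpha$-algebra map $B_\alpha \to A_\lambda$. When $B_\alpha$ is finitely presented the relations are also finite, and after enlarging $\lambda$ once more one obtains a morphism of $A_\lambda$-algebras $B_\lambda \to A_\lambda$ that base changes back to the original map. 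Injectivity follows from the same observation applied to the generators. The analogous argument for \cref{eq:egaiv38513} uses that a quasi-coherent sheaf of finite type is locally the cokernel of a map of free modules of finite rank, so morphisms are described by finitely many elements that descend to some $A_\lambda$.

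Next I would globalize \cref{thm:spreadingoutmorphisms} and \cref{thm:spreadingoutsheafmorphisms}. Cover $X_\alpha$ by finitely many affine opens $\{U_i\}$; by quasi-separatedness each intersection $U_i \cap U_j$ is itself quasi-compact, hence coverable by finitely many affines $\{U_{ijk}\}$. Using the affine case, I descend the restriction of the given morphism (or sheaf map) on each $U_i$ to some $X_\lambda$, and then invoke injectivity of the affine map \cref{eq:egaiv38811} (respectively \cref{eq:egaiv38513}) on each $U_{ijk}$ to force the local pieces to agree after finitely many further enlargements of $\lambda$. Since the index set is finite, a common $\lambda$ suffices to assemble the pieces into an $S_\lambda$-morphism.

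For \cref{thm:spreadingoutschemes} and \cref{thm:spreadingoutsheaves}, I would cover $X$ by finitely many affine opens of finite presentation over $S$; each such affine is the inverse limit of corresponding affines over the $S_\lambda$ by the affine version already proved. The gluing data on pairwise intersections, and the cocycle condition on triple intersections, are morphisms between objects of finite presentation, which descend by \cref{thm:spreadingoutmorphisms} to some common index $\lambda$ (enlarging once more to ensure the cocycle identity holds there). The same template produces \cref{thm:spreadingoutsheaves}: local presentations $\cO_{U_i}^{n_i} \to \cO_{U_i}^{m_i} \to \sF\rvert_{U_i} \to 0$ and the gluing isomorphisms on overlaps are finite data descending by \cref{thm:spreadingoutsheafmorphisms}.

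The main obstacle will be the bookkeeping of the globalization step rather than any single descent: each local statement is elementary filtered-colimit commutative algebra, but one must verify that all choices of indices $\lambda$ can be made uniformly. This is precisely what the finite covers afforded by the quasi-compactness and quasi-separatedness hypotheses deliver, and the argument would fail without them. For a fully detailed verification I would refer to \cite[Thms.\ 8.8.2 and 8.5.2]{EGAIV3}.
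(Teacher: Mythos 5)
Your proposal is correct in outline and is essentially the standard filtered-limit argument of \cite[Thms.\ 8.5.2 and 8.8.2]{EGAIV3}, which is all the paper itself offers for this statement (it is quoted with a citation and no proof). The only point your sketch elides is that for a non-affine target $Y_\alpha$ in part $(ii)$ one must also verify that an affine piece of $X$ whose image lies in a chosen affine open of $Y$ continues to map into the corresponding affine open of $Y_\lambda$ for some finite $\lambda$ (a small constructibility/spreading-out argument for open immersions); aside from that, the affine reduction and the finite \v{C}ech bookkeeping are exactly as in EGA.
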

\begin{table}[t]
  \centering
  \begin{tabular}[h]{ll}
    \toprule
    \multicolumn{1}{c}{Property of morphism of schemes} &
    \multicolumn{1}{c}{Proof}
    \\\cmidrule(lr){1-1}\cmidrule(lr){2-2}
    closed immersion & \cite[Thm.\ 8.10.5$(iv)$]{EGAIV3}\\
    flat & \cite[Thm.\ 11.2.6$(ii)$]{EGAIV3}\\
    projective & \cite[Thm.\ 8.10.5$(xiii)$]{EGAIV3}\\
    proper & \cite[Thm.\ 8.10.5$(xii)$]{EGAIV3}\\
    separated & \cite[Thm.\ 8.10.5$(v)$]{EGAIV3}\\
    smooth & \cite[Thm.\ 17.7.8$(ii)$]{EGAIV4}\\
    \midrule
    \multicolumn{1}{c}{Property of sheaf} & \multicolumn{1}{c}{Proof}
    \\\cmidrule(lr){1-1}\cmidrule(lr){2-2}
    flat & \cite[Thm.\ 11.2.6$(ii)$]{EGAIV3}\\
    locally free of rank $n$ & \cite[Prop.\ 8.5.5]{EGAIV3}\\
    \bottomrule
  \end{tabular}
  \caption{Some properties preserved under spreading out}
  {\footnotesize We assume that $S_0$ is quasi-compact and quasi-separated\\
  and that $X_\alpha$ and $Y_\alpha$ are of finite presentation over
  $S_\alpha$.}
  \label{table:spreadingout}
  \index{spreading out!properties under|ttindex{}}
\end{table}
We give the resulting objects in \cref{thm:spreadingout} a name.
\begin{definition}\label{def:spreadingout}
  Fix notation as in \cref{setup:spreadingout}.
  We say that $X_\lambda$ (resp.\
  $\sF_\lambda$) is a \textsl{model}\index{model|textbf|(}
  of $X$ (resp.\ $\sF$) over $S_\lambda$ in
  the situation of \cref{thm:spreadingout}\cref{thm:spreadingoutschemes} (resp.\
  \ref{thm:spreadingout}\cref{thm:spreadingoutsheaves}).
  If in the situation of
  \cref{thm:spreadingout}\cref{thm:spreadingoutmorphisms} (resp.\
  \ref{thm:spreadingout}\cref{thm:spreadingoutsheafmorphisms}), the map
  in \cref{eq:egaiv38811} (resp.\ \cref{eq:egaiv38513}) is bijective,
  and $f_\lambda$ (resp.\ $\varphi_\lambda$) is a lift of $f \in \Hom_S(X,Y)$
  (resp.\ $\varphi \in \Hom_{\cO_X}(\sF,\sG)$) under this map, then we also say
  that $f_\lambda$ (resp.\ $\varphi_\lambda$) is a
  \textsl{model}\index{model|textbf|)} of $f$ (resp.\ $\varphi$) over
  $S_\lambda$.
  \par Now let $\mathcal{P}$ be a property of schemes (resp.\ morphisms of
  schemes, modules, morphisms of modules).
  If a model $X_\lambda$ (resp.\ $f_\lambda$, $\sF_\lambda$, $\varphi_\lambda$)
  can always be chosen such that $X$ (resp.\ $f$, $\sF$, $\varphi$) has
  $\mathcal{P}$ if and only if $X_\lambda$ (resp.\ $f_\lambda$, $\sF_\lambda$,
  $\varphi_\lambda$) has $\mathcal{P}$, then we say that $\mathcal{P}$ is
  \textsl{preserved under spreading out}.\index{spreading out|)}
\end{definition}
We record in \cref{table:spreadingout} some properties of schemes, morphisms,
sheaves, and morphisms of sheaves that can be descended to a model that we will
use.
See the properties labeled (IND) in \cite[App.\ C]{GW10} and the properties in
the ``spreading out'' column in \cite[App.\ C.1, Table 1]{Poo17} for more
exhaustive lists.
\medskip
\par We now specialize to the case where $S = \Spec k$ for a field $k$ of
characteristic zero.
\begin{definition}\label{def:reductionmodulop}
  Let $k$ be a field of characteristic zero, and write $k =
  \varinjlim_{\lambda \in \Lambda} A_\lambda$, where the rings $A_\lambda$ are
  finite type extensions of $\ZZ$ in $k$.
  Let $S_\lambda = \Spec A_\lambda$ in \cref{setup:spreadingout}.
  Given models over $S_\lambda$ as in \cref{def:spreadingout}, for every closed
  point $\fp \in \Spec A_\lambda$,
  we say that $X_{\fp} \coloneqq X_\lambda \times_{A_\lambda} \kappa(\fp)$
  (resp.\ $\sF_\fp \coloneqq \sF\rvert_{X_\fp}$, $f_\fp \coloneqq
  f_\lambda\rvert_{X_\fp} \colon X_\fp \to Y_\fp$, $\varphi_\fp \coloneqq
  \varphi\rvert_{\sF_\fp}\colon \sF_\fp \to \sG_\fp$) is the
  \textsl{reduction modulo $\fp$}\index{reduction modulo $\fp$|textbf}
  of $X$ (resp.\ $\sF$, $f$, $\varphi$).
  \par Now let $\mathcal{P}$ be a property of schemes (resp.\ morphisms of
  schemes, modules, morphisms of modules).
  If a model $X_\lambda$ (resp.\ $f_\lambda$, $\sF_\lambda$, $\varphi_\lambda$)
  can always be chosen such that $X$ (resp.\ $f$, $\sF$, $\varphi$) has
  $\mathcal{P}$ if and only if $X_\fp$ (resp.\ $f_\fp$, $\sF_\fp$,
  $\varphi_\fp$) has $\mathcal{P}$ for every $\fp \in \Spec A_\lambda$, then we
  say that $\mathcal{P}$ is \textsl{preserved under reduction modulo $\fp$}.
\end{definition}
\par An important feature of reduction modulo $\fp$ is the following:
\begin{lemma}\label{lem:reductionmodulopischarp}
  With notation as in \cref{setup:spreadingout,def:reductionmodulop}, for every
  $\lambda \in \Lambda$, the residue fields $\kappa(\fp)$ of $A_\lambda$ are
  finite fields for every $\fp \in \Spec A_\lambda$.
  Moreover, the set $\{\Char \kappa(\fp)\}_{\fp \in \Spec A_\lambda} \subseteq
  \NN$ is unbounded for every $\lambda \in \Lambda$.
\end{lemma}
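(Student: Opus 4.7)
The plan is to deduce both statements from the fact that a finitely generated $\ZZ$-algebra is a Jacobson ring, combined with the general form of Zariski's lemma (the Nullstellensatz over $\ZZ$).

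For the first assertion, I would argue as follows. Fix a closed point $\fp \in \Spec A_\lambda$, so that $\fp$ is a maximal ideal and $\kappa(\fp) = A_\lambda/\fp$ is a field that is finitely generated as a $\ZZ$-algebra. The structure map $\ZZ \to \kappa(\fp)$ cannot be injective: otherwise $\QQ$ would sit inside the finitely generated $\ZZ$-algebra $\kappa(\fp)$, but $\QQ$ is not itself finitely generated over $\ZZ$ (any finitely generated $\ZZ$-subalgebra of $\QQ$ lies in $\ZZ[1/N]$ for some integer $N$, which does not contain $1/q$ for primes $q \nmid N$). Hence the kernel is $(p)$ for some prime $p$, and $\kappa(\fp)$ is a finitely generated $\FF_p$-algebra that is a field. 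By Zariski's lemma it is a finite extension of $\FF_p$, so $\kappa(\fp)$ is a finite field.

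For the second assertion, I would argue by contradiction. Suppose the residue characteristics $\{\Char \kappa(\fp)\}_{\fp}$ were contained in a finite set of primes $\{p_1,\ldots,p_r\}$. Then every maximal ideal of $A_\lambda$ contains some $p_i$, so the product $N \coloneqq p_1 p_2 \cdots p_r$ lies in every maximal ideal of $A_\lambda$, i.e.\ in the Jacobson radical. Since $A_\lambda$ is finitely generated over $\ZZ$ it is a Jacobson ring, and so its Jacobson radical coincides with its nilradical. But $A_\lambda \subseteq k$ is an integral domain of characteristic zero, so its nilradical is zero, forcing $N = 0$ in $A_\lambda$. This contradicts the fact that $N$ is a nonzero integer and $A_\lambda$ has characteristic zero.

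The argument is essentially algebraic folklore, and neither step poses a real obstacle; the only point requiring care is ensuring the Jacobson property and Zariski's lemma are cited in a form valid over $\ZZ$ rather than just over a field, which is standard.
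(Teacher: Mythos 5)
Your proof is correct (reading ``every $\fp$'' as ``every closed point $\fp$'', as both you and the paper implicitly do), but for the second assertion it takes a genuinely different route. The paper disposes of the first claim by citation to EGA, and proves unboundedness via Chevalley's theorem: the image of the finite-type morphism $\Spec A_\lambda \to \Spec \ZZ$ is constructible and contains the generic point (since $\ZZ \to A_\lambda$ is injective), hence is open and therefore contains $(p)$ for all but finitely many primes $p$. Your Jacobson-radical argument --- a common multiple $N$ of finitely many residue characteristics would lie in every maximal ideal, hence in the nilradical of the domain $A_\lambda$, forcing $N=0$ --- is purely ring-theoretic and avoids constructibility; the paper's argument yields slightly more (all but finitely many primes actually occur as residue characteristics), while yours yields exactly infinitude, which suffices. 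One point to tighten in your first step: the observation that $\QQ$ is not finitely generated over $\ZZ$ does not by itself contradict $\QQ \subseteq \kappa(\fp)$ with $\kappa(\fp)$ finitely generated, since a subring of a finitely generated algebra need not be finitely generated. You need the intermediate step that $\kappa(\fp)$ would be a finitely generated $\QQ$-algebra that is a field, hence a finite extension of $\QQ$ by Zariski's lemma, so that the Artin--Tate lemma applied to $\ZZ \subseteq \QQ \subseteq \kappa(\fp)$ forces $\QQ$ to be finitely generated over $\ZZ$. This is precisely the content of the ``Nullstellensatz over $\ZZ$'' you invoke at the outset, so the issue is one of exposition rather than substance.
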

\begin{proof}
  The first statement is \cite[Lem.\ 10.4.11.1]{EGAIV3}.
  For the second, consider the morphism $u_\lambda\colon \Spec A_\lambda \to
  \Spec \ZZ$, which is of finite type.
  By Chevalley's theorem\index{Chevalley, Claude!theorem} \cite[Thm.\
  1.8.4]{EGAIV1}, the image of $u_\lambda$ is constructible.
  Moreover, since $\ZZ \to A_\lambda$ is injective, the morphism $u_\lambda$ is
  dominant, and in particular the image contains $(0) \in \Spec \ZZ$.
  Thus, the image of $u_\lambda$ is open, and therefore contains points $\fp \in
  \Spec A_\lambda$ with residue fields of unbounded characteristic.
\end{proof}
We record in \cref{table:reductionmodulop} some properties of schemes,
morphisms, sheaves, and morphisms of sheaves that are preserved under reduction
modulo $\fp$.
Note that these properties are constructible on $\Spec A_\lambda$, hence for
arbitrary models, as long as the original object over $k$ satisfied the property
listed, these properties will hold when $\Char\kappa(\fp)$ is sufficiently
large.
\begin{table}[t]
  \centering
  \begin{tabular}{ll}
    \toprule
    \multicolumn{1}{c}{Property of scheme} & \multicolumn{1}{c}{Proof}
    \\\cmidrule(lr){1-1}\cmidrule(lr){2-2}
    dimension $n$ (when $X$ irreducible) & \cite[Cor.\ 9.5.6]{EGAIV3}\\
    geometrically irreducible & \cite[Thm.\ 9.7.7$(i)$]{EGAIV3}\\
    geometrically normal & \cite[Prop.\ 9.9.4$(iii)$]{EGAIV3}\\
    geometrically reduced & \cite[Thm.\ 9.7.7$(iii)$]{EGAIV3}\\
    \midrule
    \multicolumn{1}{c}{Property of sheaf} & \multicolumn{1}{c}{Proof}
    \\\cmidrule(lr){1-1}\cmidrule(lr){2-2}
    (very) ample over $k$ (when $X/k$ proper) & \cite[Prop.\ 9.6.3]{EGAIV3}\\
    \midrule
    \multicolumn{1}{c}{Property of morphism of sheaves} &
    \multicolumn{1}{c}{Proof}
    \\\cmidrule(lr){1-1}\cmidrule(lr){2-2}
    bijective & \cite[Cor.\ 9.4.5]{EGAIV3}\\
    injective & \cite[Cor.\ 9.4.5]{EGAIV3}\\
    surjective & \cite[Cor.\ 9.4.5]{EGAIV3}\\
    \bottomrule
  \end{tabular}
  \caption{Some properties preserved under reduction modulo $\fp$}
  \label{table:reductionmodulop}
  \index{reduction modulo $\fp$!properties under|ttindex{}}
\end{table}
\medskip
\par We will also need to spread out more than what we have discussed above.
We discuss these operations below.
\begin{remark}[Spreading out and reduction modulo $\fp$ for other objects]
  \label{rem:spreadingoutothers}
  Fix notation as in \cref{def:reductionmodulop}.
  We will freely use the properties in
  \cref{table:spreadingout,table:reductionmodulop}.
  \begin{enumerate}[label=$(\alph*)$,ref=\alph*]
    \item (Ideal sheaves)\index{spreading out!ideal sheaves}
      Let $\fa \subseteq \cO_X$ be a coherent ideal sheaf.
      We can then spread out $\fa$ and the inclusion into $\cO_X$ to a model
      $\fa_\lambda \to \cO_{X_\lambda}$.
      We can further assume that $\fa_\fp \to \cO_{X_\fp}$ is injective for all
      $\fp \in \Spec A_\lambda$.
    \item (Cartier divisors)\index{spreading out!Cartier divisors}
      Let $D$ be an effective Cartier divisor on $X$.
      We can then spread out the ideal sheaf $\cO_X(-D)$ to a model
      $\cO_{X_\lambda}(-D_\lambda)$ on $X_\lambda$, which remains invertible.
      Thus, $D_\fp$ is an effective Cartier divisor for all $\fp \in
      \Spec A_\lambda$, since $\cO_{X_\fp}(-D_\fp) \to \cO_{X_\fp}$ is injective
      for all $\fp \in \Spec A_\lambda$.
      This can be extended to arbitrary Cartier divisors and to $\QQ$- and
      $\RR$-coefficients by linearity.
    \item (Weil divisors)\index{spreading out!Weil divisors}
      Suppose $X$ is irreducible, and suppose
      $D$ is a prime Weil divisor on $X$.
      Then, one can find $\lambda \in \Lambda$ such that $X$ and
      $D$ have models $X_\lambda$ and $D_{\lambda}$
      over $S_\lambda$ such that every $D_{\fp}$ is a prime Weil divisor
      (by preserving dimension, integrality, and the fact that $D
      \hookrightarrow X$ is a closed immersion) on $X_\fp$ (by preserving
      irreducibility and dimension of $X$).
      This can be extended to arbitrary Weil divisors and to $\QQ$- and
      $\RR$-coefficients by linearity.
      \par If $X$ is normal, and $D$ is a Cartier divisor (resp.\ $\QQ$-Cartier
      divisor, $\RR$-Cartier divisor) on $X$, then by simultaneously choosing
      models for a Cartier divisor (resp.\ $\QQ$-Cartier divisor,
      $\RR$-Cartier divisor) and the Weil divisor (resp.\ $\QQ$-Weil divisor,
      $\RR$-Weil divisor) associated to it, we can preserve the
      property of being a Cartier divisor (resp.\ $\QQ$-Cartier divisor,
      $\RR$-Cartier divisor) under reduction modulo $\fp$.
  \end{enumerate}
\end{remark}
\subsection{Singularities vs.\ \emph{F}-singularities}
We can now define the following notions in characteristic zero obtained via
reduction modulo $\fp$.
See \cref{def:finjective} for the definition of $F$-injective singularities in
positive characteristic.
\begin{definition}\label{def:densefsings}
  Fix notation as in
  \cref{setup:spreadingout,def:reductionmodulop,rem:spreadingoutothers}.
  Let $X$ be a scheme of finite type over a field $k$ of characteristic zero.
  We say that $X$ is of \textsl{$F$-injective type} (resp.\ \textsl{dense
  $F$-injective type})%
  \index{dense F-injective type@dense $F$-injective type|textbf}
  if there exists a model
  $X_\lambda$ over $A_\lambda$ such that $X_\fp$ is $F$-injective for an open
  dense (resp.\ dense) set of closed points $\fp \in \Spec A_\lambda$.
  \par Now let $(X,\Delta,\fa)$ be an effective log triple such that $X$ is
  normal and of finite type over a field $k$ of characteristic zero.
  Fix models $X_\lambda$, $\Delta_\lambda$, and $\fa_\lambda$ over $\Spec
  A_\lambda$.
  We say that $(X,\Delta,\fa)$ is of \textsl{$F$-pure type} (resp.\
  \textsl{dense $F$-pure type}%
  \index{dense F-pure type@dense $F$-pure type|textbf})
  if $(X_\fp,\Delta_\fp,\fa_\fp^t)$ is $F$-pure
  for an open dense (resp.\ dense) set of closed points $\fp \in \Spec A_\lambda$.
  We say that $(X,\Delta,\fa)$ is
  of \textsl{strongly $F$-regular type} (resp.\ 
  \textsl{dense strongly $F$-regular
  type}\index{dense strongly F-regular type@dense strongly $F$-regular type|textbf})
  if $(X_\fp,\Delta_\fp,\fa_\fp^t)$ is strongly $F$-regular
  for an open dense (resp.\ dense) set of closed points $\fp \in \Spec A_\lambda$.
\end{definition}
One can define similar notions for all $F$-singularities of rings and of pairs
and triples.
The notions defined above are those that appear in the sequel.
\medskip
\par We will need the following result connecting singularities of pairs and
$F$-singularities of pairs, which relates multiplier ideals and test
ideals under reduction modulo $\fp$.
For rings, this result is due to
Smith\index{Smith, Karen E.} \cite[Thm.\ 3.1]{Smi00} and Hara\index{Hara, Nobuo}
\cite[Thm.\ 5.9]{Har01}, and for pairs, this result is due to
Takagi\index{Takagi, Shunsuke} \cite[Thm.\ 3.2]{Tak04} and
Hara\index{Hara, Nobuo}--Yoshida\index{Yoshida, Ken-ichi}
\cite[Thm.\ 6.8]{HY03}.
\begin{figure}[t]
  \centering
  \begin{tikzcd}
    \text{klt} \rar[Rightarrow]\dar[Rightarrow]
    & \text{rational} \dar[Rightarrow]\arrow[Leftrightarrow,bend left=25]{rrr}
    & & \text{strongly $F$-regular} \rar[Rightarrow]\dar[Rightarrow]\arrow[Leftrightarrow,bend right=25,end anchor={[xshift=0.75em]north}]{lll}
    & \text{$F$-rational}\dar[Rightarrow]\\
    \text{log canonical} \rar[Rightarrow]\arrow[Leftarrow,bend right=25]{rrr}
    & \text{Du Bois}
    & & \text{$F$-pure} \rar[Rightarrow]
    & \text{$F$-injective}\arrow[Rightarrow,bend left=25]{lll}
  \end{tikzcd}
  \caption{Singularities vs.\ \emph{F}-singularities}
  {\footnotesize The left- and right-hand sides of the diagram are connected via
  reduction modulo $\fp$.\\
  This is a simplified version of \cite[Fig.\ on p.\ 86]{ST14}.
  See \cite[p.\ 86]{ST14} for references for each implication.}
  \label{fig:singsvsfsings}
  \index{klt!and reduction modulo $\fp$|ff{}}
  \index{log canonical!and reduction modulo $\fp$|ff{}}
  \index{rational singularities!and reduction modulo $\fp$|ff{}}
  \index{Du Bois singularities!and reduction modulo $\fp$|ff{}}
\end{figure}
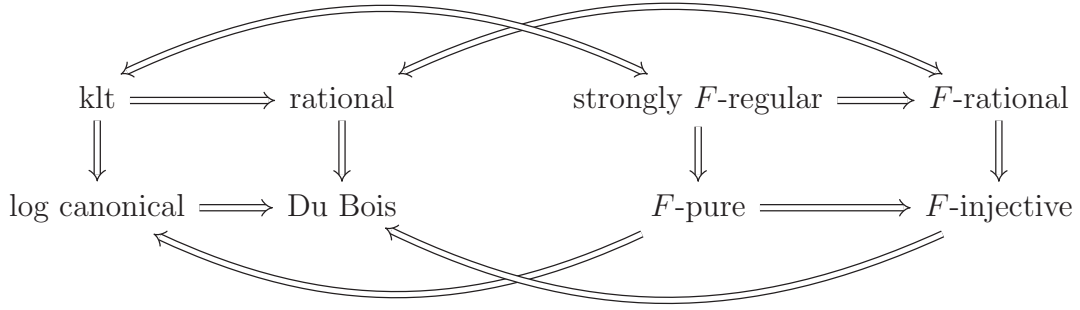
There are many more results describing how singularities and
$F$-singularities are related, which we will not state explicitly;
see \cref{fig:singsvsfsings} for a summary of what is known.
\begin{theorem}[see {\cite[Thm.\ 2.5]{Tak08}}]\label{thm:takagiredmodp}
  Let\index{multiplier ideal, $\mathcal{J}(X,\Delta,\mathfrak{a}^t)$!and reduction modulo $\fp$}
  $(X,\Delta,\fa)$ be an effective log triple such that $X$ is normal and
  finite type over a field $k$ of characteristic zero, and such
  that $K_X+\Delta$ is $\RR$-Cartier.
  With notation as in
  \cref{setup:spreadingout,def:reductionmodulop,rem:spreadingoutothers}, fix
  models $X_\lambda$, $\Delta_\lambda$, and $\fa_\lambda$ over $\Spec
  A_\lambda$.
  Then, for all $t \ge 0$, we have
  \begin{equation}\label{eq:multisuniversaltest}
    \cJ\bigl( (X,\Delta);\fa^t\bigr)_\fp = \tau\bigl(
    (X_\fp,\Delta_\fp);\fa_\fp^t\bigr)
  \end{equation}
  when $\Char \kappa(\fp)$ is sufficiently large.
  In particular, $(X,\Delta,\fa^t)$ is klt if and only if $(X,\Delta,\fa^t)$ is
  of strongly $F$-regular type.
  Moreover, for every sequence
  of closed points $\fp \in \Spec A_\lambda$ such that the characteristic of
  $\kappa(\fp)$ goes to infinity, we have that the limit of the $F$-pure
  thresholds $\fpt_x((X_\fp,\Delta_\fp);\fa_\fp)$ is the log canonical threshold
  $\lct_x( (X,\Delta);\fa)$.
\end{theorem}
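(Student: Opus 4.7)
The plan is to prove the equality \cref{eq:multisuniversaltest} by pushing forward from a common log resolution on both sides, then deduce the klt/strongly $F$-regular equivalence and the limit statement as corollaries. I would work throughout in the setup of \cref{setup:spreadingout} and \cref{rem:spreadingoutothers}, spreading out all the geometric data simultaneously.

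First I would choose a log resolution $f\colon Y \to X$ of $(X,\Delta,\fa)$, which exists by \cref{rem:logresexists}, and write $\fa\cdot\cO_Y = \cO_Y(-F)$ with $F$ an effective Cartier divisor such that $K_Y - f^*(K_X+\Delta)+F+\Supp(f_*^{-1}\Delta) + \ExcDiv(f)$ has simple normal crossings support. By definition the multiplier ideal equals
\[
  \cJ(X,\Delta,\fa^t) = f_*\cO_Y\bigl(K_Y - \lfloor f^*(K_X+\Delta)+tF\rfloor\bigr).
\]
By relative Kawamata--Viehweg vanishing in characteristic zero \cite[Thm.\ 9.4.1]{Laz04b}, the higher direct images $R^if_*\cO_Y(K_Y-\lfloor f^*(K_X+\Delta)+tF\rfloor)$ vanish for $i>0$. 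Next I would spread out $X$, $\Delta$, $\fa$, $Y$, $f$, $F$, and all the relevant decompositions over some $\Spec A_\lambda$; by \cref{table:spreadingout,table:reductionmodulo}, for $\Char\kappa(\fp)\gg0$ the morphism $f_\fp\colon Y_\fp\to X_\fp$ remains a birational morphism from a smooth variety, the divisor $F_\fp$ remains effective Cartier, and the boundary divisors remain simple normal crossing. By cohomology and base change applied to the pushforward whose higher direct images vanish generically, for all sufficiently large $p$ we have
\[
  \cJ(X,\Delta,\fa^t)_\fp = f_{\fp*}\cO_{Y_\fp}\bigl(K_{Y_\fp} - \bigl\lfloor f_\fp^*(K_{X_\fp}+\Delta_\fp)+tF_\fp\bigr\rfloor\bigr).
\]

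The heart of the argument is then to show that the right-hand side also equals the test ideal $\tau(X_\fp,\Delta_\fp,\fa_\fp^t)$. This is a transformation rule for test ideals under log resolutions in positive characteristic, and I would prove it via the Grothendieck trace of Frobenius (\cref{prop:ctx25} and \cref{cor:fsingstrace}). The key local statement is that on the smooth scheme $Y_\fp$ with SNC boundary $B_\fp = \lfloor f_\fp^*(K_{X_\fp}+\Delta_\fp)+tF_\fp\rfloor - K_{Y_\fp}$, the iterated trace $\Tr^e_{Y_\fp,(p^e-1)B_\fp}$ has image $\cO_{Y_\fp}(-B_\fp)$ — a computation in local coordinates analogous to \cref{eq:tracemaplocal}. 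Composing with $f_{\fp*}$ and using the fact that trace commutes with proper pushforward together with Grothendieck duality (\cref{thm:nayakshriek}), one identifies $f_{\fp*}\cO_{Y_\fp}(-B_\fp)$ with the smallest $(\Delta_\fp,\fa_\fp^t,F)$-compatible ideal containing a big sharp test element, which by \cref{thm:testidealexist} is exactly $\tau(X_\fp,\Delta_\fp,\fa_\fp^t)$. Combining these gives \cref{eq:multisuniversaltest}.

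The klt/strongly $F$-regular equivalence is then immediate: $(X,\Delta,\fa^t)$ is klt iff $\cJ(X,\Delta,\fa^t) = \cO_X$ (recalled after \cref{def:asymptoticmultideal}), and by \cref{thm:testidealprops}\cref{thm:testidealstrongfreg}, $(X_\fp,\Delta_\fp,\fa_\fp^t)$ is strongly $F$-regular iff $\tau(X_\fp,\Delta_\fp,\fa_\fp^t) = \cO_{X_\fp}$; the ideals being proper is detected by checking whether a specific element of $\cO_X$ lies in them, and the correspondence of these elements under reduction modulo $\fp$ then gives the statement using \cref{lem:reductionmodulopischarp} to ensure one can always find such $\fp$ of arbitrarily large characteristic. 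For the final limit statement, I would combine the first part with the inequality $\fpt_x((X_\fp,\Delta_\fp);\fa_\fp) \le \lct_x((X,\Delta);\fa)$, which follows from \cref{prop:fsingbasic}\cref{prop:sfrimpliesfpure} together with the correspondence of klt with strong $F$-regularity applied at the threshold level, and with a matching lower bound obtained by noting that for every $c < \lct_x((X,\Delta);\fa)$ the triple $(X,\Delta,\fa^c)$ is klt at $x$, so by the first part $(X_\fp,\Delta_\fp,\fa_\fp^c)$ is strongly $F$-regular and hence sharply $F$-pure at $x_\fp$ for all sufficiently large $p$, forcing $\fpt_{x_\fp} \ge c$ in the limit.

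The main obstacle will be the transformation rule for test ideals in the second paragraph: unlike in characteristic zero, one cannot invoke a clean vanishing theorem, so the identification $f_{\fp*}\cO_{Y_\fp}(-B_\fp) = \tau(X_\fp,\Delta_\fp,\fa_\fp^t)$ requires a careful trace-theoretic argument, tracking how multiplication by elements of $\fa_\fp^{\lceil t(p^e-1)\rceil}$ interacts with the SNC structure on $Y_\fp$ and with Grothendieck duality for the birational morphism $f_\fp$.
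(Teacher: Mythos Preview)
The paper's proof is much shorter than what you propose: it cites \cite[Thm.\ 2.5]{Tak08} directly for the equality \cref{eq:multisuniversaltest} and the klt/strongly $F$-regular equivalence, and only supplies an argument for the final limit statement about $F$-pure thresholds. So the first two paragraphs of your proposal are a sketch of Takagi's theorem itself rather than a proof building on it. Your outline is broadly the right strategy for that theorem (spread out a log resolution, use relative Kawamata--Viehweg plus cohomology and base change to identify $\cJ_\fp$ as a pushforward, then identify the same pushforward with $\tau$), and you correctly flag the transformation rule for test ideals as the crux. But the sentence claiming that ``trace commutes with proper pushforward together with Grothendieck duality'' identifies $f_{\fp*}\cO_{Y_\fp}(-B_\fp)$ with the test ideal is where the entire content of Hara--Yoshida/Takagi lies; the honest argument requires tracking Frobenius actions on local cohomology and uses in an essential way that $p\gg 0$, not merely the formal compatibility of trace with pushforward. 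As written, that step is a restatement of the goal rather than an argument for it.

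For the limit statement, your approach and the paper's are close in spirit. The paper gets the upper bound $\fpt_{x_\fp}\le \lct_x$ for \emph{every} $\fp$ from the inclusion $\tau\subseteq\cJ$ valid in all characteristics (citing \cite[Thm.\ 6.7]{Sch10}), rather than from \cref{prop:fsingbasic}\cref{prop:sfrimpliesfpure} as you suggest; your route via strongly $F$-regular $\Rightarrow$ sharply $F$-pure would need an extra step to pass from klt to log canonical at the threshold. The lower bound argument (for each $c<\lct$, apply \cref{eq:multisuniversaltest} at $t=c$ to force $\fpt_{x_\fp}\ge c$ for $p\gg 0$) is the same in both.
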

Note that implicit in the statement of \cref{thm:takagiredmodp} is that both
objects in \cref{eq:multisuniversaltest} make sense.
For the left-hand side, this requires choosing a model of a log resolution as
well, from which one obtains a model of $\cJ( (X,\Delta);\fa^t)$.
We also note that the characteristic of $\kappa(\fp)$ is unbounded by
\cref{lem:reductionmodulopischarp}.
\begin{proof}
  All but the last part of the statement of \cref{thm:takagiredmodp} is proved
  in \cite[Thm.\ 2.5]{Tak08}.
  To prove this last statement, let $\{\fp_i\}_{i \in \NN}$ be a sequence of
  closed points in $\Spec A_\lambda$ such that $\Char\kappa(\fp_i) \to \infty$
  as $i \to \infty$.
  We claim that for every $s \ge 0$, we have
  \begin{equation}\label{eq:lctseqcond}
    \lct_x\bigl((X,\Delta);\fa\bigr) \ge 
    \fpt_x\bigl((X_{\fp_i},\Delta_{\fp_i});\fa_{\fp_i}\bigr) > s
  \end{equation}
  for $i \gg 0$.
  The first inequality automatically holds since the inclusion $\supseteq$ in
  \cref{eq:multisuniversaltest} holds for every $\fp \in \Spec A_\lambda$; see
  \cite[Thm.\ 6.7]{Sch10}.
  The second inequality holds for $i \gg 0$ since \cref{eq:multisuniversaltest}
  holds for $s = t$ when $\Char\kappa(\fp_i)$ is sufficiently large.
\end{proof}

\chapter{The ampleness criterion of de~Fernex--K\"uronya--Lazarsfeld}
\label{ch:dfkl}
In this chapter, we prove a criterion for ampleness using asymptotic
cohomological functions (\cref{thm:dfkl41}), which is originally due to
de Fernex\index{de Fernex, Tommaso},
K\"uronya\index{Kuronya, Alex@K\"uronya, Alex}, and
Lazarsfeld\index{Lazarsfeld, Robert} over the complex numbers \cite[Thm.\
4.1]{dFKL07}.
A key ingredient is a lemma asserting that the base ideals associated to
multiples of a non-nef divisor grow at least like powers of an ideal defining a
curve (\cref{prop:dfkl31}).
This material is mostly from \cite{Mur}, with some modifications in the proof of
\cref{prop:dfkl31} using ideas from \cite[Lems.\ 4.3 and 4.4]{MPST}.
\par We briefly describe the main
difficulties in adapting the proof of \cite[Thm.\ 4.1]{dFKL07} to positive
characteristic.
First, the proof of \cite[Prop.\ 3.1]{dFKL07} requires resolutions of
singularities, and because of this, we need to adapt the proof to use
alterations instead.
Second, we need to replace asymptotic multiplier ideals with asymptotic test
ideals in the same proof, which requires reducing to the case when the ground
field is $F$-finite by using the gamma construction (\cref{thm:gammaconstintro}).
Finally, \cite{dFKL07} uses the assumption that the ground field is uncountable
to choose countably many very general divisors that facilitate an inductive
argument.
Our version of \cite[Thm.\ 4.1]{dFKL07} therefore needs to reduce to this case.
\section{Motivation and statement}
We start by motivating the statement of our ampleness criterion.
Let $X$ be a projective variety of dimension $n > 0$.
For every Cartier divisor $L$ on $X$, we have
\begin{align*}
  h^i\bigl(X,\cO_X(mL)\bigr) &= O(m^n)
  \intertext{for every $i$; see \cite[Ex.\ 1.2.20]{Laz04a}.
  In \cite[Thm.\ 4.1]{dFKL07}, de Fernex, K\"uronya, and Lazarsfeld asked when
  the higher cohomology groups have submaximal growth, i.e., when}
  h^i\bigl(X,\cO_X(mL)\bigr) &= o(m^n).
\end{align*}
They proved that over the complex numbers, ample Cartier
divisors $L$ are characterized by having submaximal growth of higher cohomology
groups for small perturbations of $L$.
\par We prove the following version of their result, which is valid over
arbitrary fields, and in particular, is valid over possibly imperfect fields of
positive characteristic.
\begin{customthm}{E}\label{thm:dfkl41}
  Let $X$ be a projective variety of dimension $n > 0$ over a field $k$.
  Let $L$ be an $\RR$-Cartier divisor on $X$, and consider the following
  property:
  \begin{enumerate}[label=$(\star)$,ref=\star]
    \item\label{thm:dfkl41cond}
      There exists a very ample Cartier divisor $A$ on $X$ and a
      real number $\varepsilon > 0$ such that
      \[
        \widehat{h}^i(X,L-tA) \coloneqq \limsup_{m \to \infty}
        \frac{h^i\bigl(X,\cO_X\bigl(\lceil m(L-tA) \rceil\bigr)\bigr)}{m^n/n!} =
        0
      \]
      for all $i > 0$ and for all $t \in [0,\varepsilon)$.
  \end{enumerate}
  Then, $L$ is ample if and only if $L$ satisfies $(\ref{thm:dfkl41cond})$ for
  some pair $(A,\varepsilon)$.
\end{customthm}
We note that one can have $\widehat{h}^i(X,L) = 0$ for all $i > 0$ without $L$
being ample, or even pseudoeffective, as seen in the following example.
\begin{citedex}[{\cite[Ex.\ 3.3]{Kur06}}]\label{ex:hhatabvar}
  Let $A$ be a abelian variety of dimension $g$ over an algebraically closed
  field $k$, and let $L$ be a line bundle on $A$.
  We recall that $K(L) \subseteq A$ is defined to be the maximal closed
  subscheme of $A$ such that the
  \textsl{Mumford bundle}\index{Mumford, David, bundle|textbf}
  \[
    \Lambda(L) \coloneqq m^*(L) \otimes p_1^*(L)^{-1} \otimes p_2^*(L)^{-1}
  \]
  is trivial on $A \times_k A$ \cite[p.\ 115]{Mum08}, where $m$ is the
  multiplication map and $p_1,p_2$ are the first and second projections,
  respectively.
  We also recall that $L$ is \textsl{non-degenerate} if $K(L)$ is finite
  \cite[p.\ 145n]{Mum08}.\index{non-degenerate line bundles and divisors|textbf}
  By Mumford's index theorem \cite[Thm.\ on p.\ 140]{Mum08}, we have
  \begin{equation}\label{eq:mumindex}
    h^i(A,L) = \begin{cases}
      (-1)^{i(L)}\cdot(L^g) & \text{if $i = i(L)$}\\
      \hfil 0 & \text{otherwise}
    \end{cases}
  \end{equation}
  for non-degenerate line bundles $L$, where $i(L)$ is the index of
  $L$ \cite[p.\ 145]{Mum08}.
  In particular, this holds for ample line bundles $L$ on $A$ by \cite[App.\ 1
  on p.\ 57]{Mum08}, in which case $i(L) = 0$ by the proof of \cite[Thm.\ on p.\
  140]{Mum08}.
  \begin{figure}[t]
    \centering
    \tikzexternalenable
    \begin{tikzpicture}

      \newcommand{\radiusx}{2}
      \newcommand{\radiusy}{.66}
      \newcommand{\theheight}{2}

      \coordinate (a) at (-{\radiusx*sqrt(1-(\radiusy/\theheight)*(\radiusy/\theheight))},{\radiusy*(\radiusy/\theheight)});

      \coordinate (b) at ({\radiusx*sqrt(1-(\radiusy/\theheight)*(\radiusy/\theheight))},{\radiusy*(\radiusy/\theheight)});

      \coordinate (a1) at (-{\radiusx*sqrt(1-(\radiusy/\theheight)*(\radiusy/\theheight))},{2*\theheight-\radiusy*(\radiusy/\theheight)});

      \coordinate (b1) at ({\radiusx*sqrt(1-(\radiusy/\theheight)*(\radiusy/\theheight))},{2*\theheight-\radiusy*(\radiusy/\theheight)});

      \draw[fill=gray!30,thick] (a)--(b1)--(a1)--(b)--cycle;

      \fill[gray!5] circle (\radiusx{} and \radiusy);

      \begin{scope}
      \clip ([xshift=-2mm]a) rectangle ($(b)+(2mm,-2*\radiusy)$);
      \draw[thick] circle (\radiusx{} and \radiusy);
      \end{scope}

      \begin{scope}
      \clip ([xshift=-2mm]a) rectangle ($(b)+(2mm,2*\radiusy)$);
      \draw[thick] circle (\radiusx{} and \radiusy);
      \end{scope}

      \fill[gray!5] (0,2*\theheight) circle (\radiusx{} and \radiusy);

      \begin{scope}
      \clip ([xshift=-2mm]a1) rectangle ($(b1)+(2mm,-2*\radiusy)$);
      \draw[thick] (0,2*\theheight) circle (\radiusx{} and \radiusy);
      \end{scope}

      \begin{scope}
      \clip ([xshift=-2mm]a1) rectangle ($(b1)+(2mm,2*\radiusy)$);
      \draw[thick] (0,2*\theheight) circle (\radiusx{} and \radiusy);
      \end{scope}

      \node at (0,2*\theheight) {\footnotesize $\widehat{h}^0 \ne 0$};
      \node at (0,0) {\footnotesize $\widehat{h}^2 \ne 0$};

    \end{tikzpicture}
    \tikzexternaldisable
    \caption{Asymptotic cohomological functions on an abelian surface}
    {\footnotesize Illustration from \cite[Fig.\ 4]{ELMNP05}}
    \label{fig:elmnpabeliansurface}
  \end{figure}
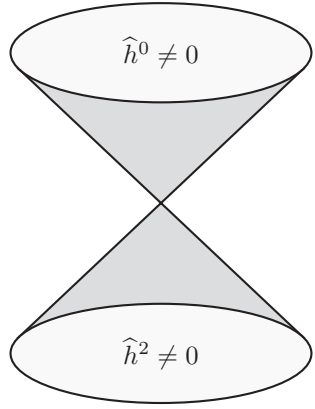
  \par Now let $\xi$ be a nef $\RR$-Cartier divisor on $A$.
  Then, $\xi$ can be written as the limit of ample $\QQ$-Cartier divisors on
  $A$.
  Thus, by using \cref{eq:mumindex} and the 
  homogeneity and continuity of asymptotic cohomological functions
  (see \cref{rem:kur291reductions}), we have
  \[
    \widehat{h}^i(A,\xi) = \begin{cases}
      (\xi^g) & \text{if $i = 0$}\\
      \hfil 0 & \text{otherwise}
    \end{cases}
  \]
  and we note that $(\xi^g) = 0$ if $\xi$ is nef but not ample \cite[Cor.\
  1.5.18]{Laz04a}.
  By asymptotic Serre duality (\cref{prop:asympoticserre}), we therefore see
  that for nef but not ample $\RR$-Cartier divisors $\xi$, we have
  $\widehat{h}^i(A,-\xi) = 0$ for all $i$, even though $-\xi$ is not ample, or
  even pseudoeffective.
  \par We now illustrate this phenomenon in a more concrete situation.
  Recall that if $X$ is a complete scheme over a field, then the
  \textsl{N\'eron--Severi space} is the $\RR$-vector space
  \begin{equation}\label{eq:neronseveri}
    \gls*{neronseveri} \coloneqq \Div_\RR(X)/\mathord{\equiv_\RR},
  \end{equation}
  where $\equiv_\RR$ denotes $\RR$-linear equivalence.
  This vector space is finite-dimensional by \cite[Prop.\ 2.3]{Cut15}.
  Now if $A$ is an abelian surface, then the ample cone in $N^1_\RR(X)$ is
  $\{\xi \in N^1_\RR(X) \mid \widehat{h}^0(\xi) \ne 0\}$.
  By \cite[Lem.\ 1.5.4]{Laz04a}, the classes $-\xi$ considered above for nef but
  not ample $\RR$-Cartier divisors $\xi$ correspond to classes in the boundary
  of the cone $\{\xi \in N^1_\RR(X) \mid \widehat{h}^2(\xi) \ne 0\}$.
  See \cref{fig:elmnpabeliansurface} for an illustration of the case when
  the Picard rank $\rho(A)$ of $A$ is $3$.
  We note that if $A = E \times_k E$ for a sufficiently general elliptic curve
  $E$, then $\rho(A) = 3$.
  This follows from the fact that $\End_k(E) \otimes_\ZZ \QQ \simeq \QQ$ for
  sufficiently general $E$ by a
  theorem of Deuring\index{Deuring, Max} \cite[Thm.\ on p.\ 201]{Mum08}, hence
  $\rho(A) = 3$ by a lemma of Murty\index{Murty, V. Kumar}
  \cite[Prop.\ 2.3]{Laf}.
\end{citedex}
\section{A lemma on base loci}\label{sect:lemonbaseloci}
A key ingredient in our proof of \cref{thm:dfkl41} is the following result on
base loci, which is the analogue of \cite[Prop.\ 3.1]{dFKL07} over arbitrary
fields.
The lemma says that base ideals associated to multiples of non-nef divisors
grow like powers of an ideal defining a curve.
\begin{proposition}\label{prop:dfkl31}
  Let $V$ be a normal projective variety of dimension at least two over
  a field $k$.
  Let $D$ be a Cartier divisor on $V$, and suppose there exists an integral
  curve $Z \subseteq V$ such that $(D \cdot Z) < 0$.
  Denoting by $\fa \subseteq \cO_V$ the ideal sheaf defining $Z$,
  there exist positive integers $q$ and $c$ such that for every integer $m
  \ge c$, we have
  \[
    \fb\bigl(\lvert mqD \rvert \bigr) \subseteq \fa^{m-c}.
  \]
\end{proposition}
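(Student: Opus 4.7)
The plan is to transfer the claim to a birational model of $V$ on which $Z$ is the image of an exceptional Cartier divisor, and then bound the vanishing order along a well-chosen component of that divisor. First, I would take $\mu \colon V' \to V$ to be the normalization of the blowup of $V$ along $\fa$, so that $\fa \cdot \cO_{V'} = \cO_{V'}(-E)$ for an effective Cartier divisor $E$ on $V'$ with $\mu(E) = Z$ set-theoretically. Because $V$ is normal, \cref{lem:baselocusnormal} gives $\fb(\lvert \mu^*(mqD) \rvert) = \mu^{-1}\fb(\lvert mqD \rvert) \cdot \cO_{V'}$, so the desired inclusion $\fb(\lvert mqD \rvert) \subseteq \fa^{m-c}$ reduces to showing that for some prime component $F$ of $E_{\red}$ surjecting onto $Z$, every pullback section of $\cO_{V'}(\mu^*(mqD))$ vanishes along $F$ to order at least $m-c$.

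To produce test curves on $F$, I would fix a very ample Cartier divisor $H$ on $V'$ and, for $N \gg 0$, form complete intersection curves $\tau \subseteq F$ cut out by $(\dim F - 1)$ general divisors of $\lvert NH \rvert$ restricted to $F$. By iterated Bertini, such $\tau$ fill out an algebraic family covering $F$ whose general member is irreducible and not contained in any fiber of $F \to Z$; in particular, a general $\tau$ satisfies $\mu(\tau) = Z$, so that
\[
(\mu^*D \cdot \tau) = \deg(\mu\rvert_\tau) \cdot (D \cdot Z) < 0.
\]

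The key step is the following dichotomy. Suppose the pullback of $s \in H^0(V, \cO_V(mqD))$ vanishes along $F$ to order exactly $k$, and let $t_F$ be a local equation for $F$ at its generic point. Then $(s/t_F^k)\rvert_F$ is a nonzero section $\sigma$ of $(\mu^*(mqD) - kF)\rvert_F$, and a general $\tau$ in the moving family is not contained in $\{\sigma = 0\}$; consequently $\sigma\rvert_\tau$ is a nonzero section of a line bundle of nonnegative degree, i.e.,
\[
0 \;\le\; \bigl((\mu^*(mqD) - kF) \cdot \tau\bigr) = mq\,\deg(\mu\rvert_\tau)\,(D \cdot Z) - k\,(F \cdot \tau).
\]
Since the first summand is strictly negative, $(F \cdot \tau)$ must be negative and
\[
k \;\ge\; \frac{mq\,\deg(\mu\rvert_\tau)\,\lvert D \cdot Z \rvert}{\lvert F \cdot \tau \rvert}.
\]
Choosing $q$ so that $q\,\deg(\mu\rvert_\tau)\,\lvert D \cdot Z \rvert \ge \lvert F \cdot \tau \rvert$ — numerical data depending only on $V$, $D$, $Z$, $H$, and $N$ — forces $k \ge m$ for every nonzero $s$, which gives $\fb(\lvert mqD \rvert) \subseteq \fa^m$ and hence the proposition with, say, $c = 1$.

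The main obstacle will be guaranteeing that the moving family $\{\tau\}$ exists with the required irreducibility and domination properties on a possibly singular, possibly nonreduced projective variety $F$, and that the factorization $s = t_F^k \cdot s'$ is valid when $V'$ is singular along $F$; both are handled by further normalized blowups of $V'$ to arrange that the generic point of $F$ lies in the regular locus, combined with Bertini applied to sufficiently ample linear systems to guarantee irreducibility of the cut-out curves. The surface case $\dim V = 2$ avoids these complications entirely, since one may take $\mu = \id$ and $F = Z$, and the whole argument collapses to the elementary intersection computation $((mqD - kZ) \cdot Z) = mq(D \cdot Z) - k(Z^2)$ iterated in $k$.
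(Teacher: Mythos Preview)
Your intersection argument on a component $F$ dominating $Z$ is close in spirit to Step~2 of the paper's proof, but the reduction you assert at the outset does not hold. Vanishing to order at least $m-c$ along a \emph{single} component $F$ of $E_{\red}$ only gives
\[
  \fb\bigl(\lvert mqD \rvert\bigr) \subseteq \mu_*\cO_{V'}\bigl(-(m-c)F\bigr) \cap \cO_V,
\]
which is the valuation ideal for $v_F$ and is not in general contained in a power of $\fa$. To land in $\fa^{m-c}$ you need $\fb(\lvert \mu^*(mqD) \rvert) \subseteq \cO_{V'}(-m'E)$ for some $m'$ linear in $m$, i.e.\ simultaneous vanishing along \emph{every} component of $E$; only then does $\mu_*\cO_{V'}(-m'E) = \overline{\fa^{m'}} \subseteq \fa^{m'-c}$ finish the job, as in the paper's Step~1.

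The substantive gap is that your curve argument gives nothing for components $F'$ of $E_{\red}$ that contract to a point of $Z$: any curve $\tau' \subseteq F'$ has $\mu_*\tau' = 0$, so $(\mu^*D \cdot \tau') = 0$ and the inequality $\bigl((\mu^*(mqD) - kF')\cdot\tau'\bigr) \ge 0$ yields no lower bound on $k$. Such vertical components do arise---certainly once you pass to the ``further normalized blowups'' you invoke to make $F$ Cartier, and already on the regular alteration the paper uses. The paper bridges this by working with asymptotic test ideals (multiplier ideals in characteristic zero) rather than individual sections: uniform global generation produces the bound on horizontal components exactly as in your sketch, and then \emph{subadditivity} $\tau(\lVert m\ell D' \rVert) \subseteq \tau(\lVert \ell D' \rVert)^m$ forces high-order vanishing at the points where a vertical component meets an already-controlled neighbor, propagating the containment across all of $E'_{\red}$ via the connectedness of $\mu^{-1}(Z)$. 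That propagation mechanism is the essential ingredient your proposal is missing.
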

Here, $\fb(\lvert D \rvert)$ denotes the base ideal of the Cartier
divisor $D$; see \cref{def:baseideal}.
\par To use Bertini theorems, we need to reduce to the case when the ground
field $k$ is infinite.
Moreover, in positive characteristic, we use asymptotic test ideals instead of
asymptotic multiplier ideals, which requires also reducing to the case where the
ground field is $F$-finite.
\begin{lemma}\label{lem:dfkl31ffinred}
  To prove \cref{prop:dfkl31}, we may assume that the ground field $k$ is
  infinite, and in positive characteristic, we may also assume that $k$ is
  $F$-finite.
\end{lemma}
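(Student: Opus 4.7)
The plan is to reduce \cref{prop:dfkl31} to the stated special case via faithfully flat base change to a suitable field extension, and then descend the desired inclusion of ideal sheaves. The proof should proceed in three stages: first reduce to a geometrically nice situation, then base change to an infinite (and when needed, $F$-finite) field, and finally descend.

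\textbf{Stage 1 (geometric integrality).} Since $V$ is proper over $k$, the field $k' \coloneqq H^0(V,\cO_V)$ is a finite extension of $k$, and $V$ is a geometrically integral, geometrically normal, projective variety over $k'$. Replacing $k$ by $k'$ does not affect the statement, so we may assume $V$ is geometrically integral and geometrically normal over $k$.

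\textbf{Stage 2 (base change).} Let $K \supseteq k$ be a faithfully flat field extension to be chosen. Set $V_K \coloneqq V \times_k K$ with projection $\pi\colon V_K \to V$, and set $D_K \coloneqq \pi^*D$, $Z_K \coloneqq Z \times_k K$, $\fa_K \coloneqq \pi^{-1}\fa \cdot \cO_{V_K}$. Provided $K/k$ is separable (and in positive characteristic, we further extend by the gamma construction \cref{thm:gammaconstintro}, which is faithfully flat and preserves normality and integrality of geometrically integral and geometrically normal schemes), $V_K$ is a normal projective variety over $K$ and $Z_K \subseteq V_K$ is an integral curve with ideal sheaf $\fa_K$. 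Intersection numbers are preserved under flat base change, so $(D_K \cdot Z_K) = (D \cdot Z) < 0$. Thus the hypotheses of \cref{prop:dfkl31} hold for the triple $(V_K, D_K, Z_K)$ over $K$.

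\textbf{Stage 3 (descent).} Suppose the conclusion of \cref{prop:dfkl31} holds for $(V_K, D_K, Z_K)$, giving positive integers $q, c$ with $\fb(\lvert mqD_K \rvert) \subseteq \fa_K^{m-c}$ for all $m \ge c$. Because the formation of base ideals commutes with flat base change (from $\pi^*\cO_V(mqD) \simeq \cO_{V_K}(mqD_K)$ and flatness of $H^0$-pullback under faithfully flat base change of projective schemes, cf. the evaluation map \cref{eq:baseidealeval}), and because pullback commutes with powers of ideal sheaves, the inclusion reads $\pi^{-1}\fb(\lvert mqD \rvert)\cdot\cO_{V_K} \subseteq \pi^{-1}\fa^{m-c}\cdot\cO_{V_K}$. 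Faithful flatness of $\pi$ then gives $\fb(\lvert mqD \rvert) \subseteq \fa^{m-c}$ on $V$, yielding the desired conclusion over $k$.

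It remains only to choose $K$. For the infinite reduction, take $K = k(t)$, a separable extension. In positive characteristic we compose this with the gamma construction applied to $k(t)$ to further arrange $K$ to be $F$-finite (and still infinite). The main technical point requiring care is verifying that the gamma construction can be applied compatibly with the purely transcendental extension while preserving geometric normality and integrality of $V$ and integrality of $Z$; this is the primary obstacle, and will be handled by appealing to the properties of the gamma construction established in \cref{thm:gammaconstintro} and the fact that geometric normality and geometric integrality are stable under arbitrary field extensions.
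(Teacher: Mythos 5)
Your overall architecture (base change to a larger field, then faithfully flat descent of the inclusion of base ideals into powers of $\fa$) is the same as the paper's, and Stages 2--3 are essentially correct: intersection numbers, base ideals, and inclusions of ideal sheaves all behave well under faithfully flat base change, so the descent step goes through. The problem is Stage 1, which contains a genuine error in positive characteristic. For a normal projective variety $V$ over $k$, the finite extension $k' = H^0(V,\cO_V)$ only makes $V$ \emph{geometrically connected} over $k'$; it does \emph{not} make $V$ geometrically integral or geometrically normal. Over an imperfect field these can fail even when $H^0(V,\cO_V) = k$ (Chevalley-type examples such as the regular but not geometrically regular hypersurface $y^2 + x^p - a$ discussed in \cref{app:gamma} illustrate exactly this phenomenon). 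Since your Stages 2 and later lean on ``geometric normality and geometric integrality are stable under arbitrary field extensions'' to control $V_K$, the argument as written has no valid starting point.

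The fix --- and the route the paper takes --- is to never invoke geometric normality at all. For the reduction to an infinite field one uses the purely transcendental extension $k \subseteq k(x)$: here $V \times_k k(x)$ is a localization of $V \times_k \AA^1_k$, which is manifestly integral and normal because $V$ is (cover by $U_j \times_k \Spec k[x]$), and likewise $Z \times_k k(x)$ is an integral curve; no separability or geometric hypotheses are needed. For the $F$-finiteness reduction one then applies the gamma construction (\cref{thm:gammaconst}) \emph{simultaneously} to $V \times_k k(x)$ and $Z \times_k k(x)$, choosing a cofinite $\Gamma$ for which both integrality and normality are preserved; the entire point of the gamma construction is that it produces such a purely inseparable $F$-finite extension \emph{without} assuming geometric normality (if $V$ were geometrically normal one could simply pass to the perfect closure, which is exactly what the construction is designed to avoid). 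With Stage 1 deleted and these two substitutions made, your descent argument in Stage 3 completes the proof.
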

\begin{proof}
  We first construct a sequence $k \subseteq k' \subseteq K$
  of two field extensions such that $V \times_k K$ is integral and normal, where
  $k'$ is infinite and $K$ is $F$-finite in positive characteristic.
  If $k$ is already infinite, then let $k' = k$.
  Otherwise, consider the purely transcendental extension $k \subseteq k(x)$.
  To show that $V \times_k k'$ is integral and normal, let $\bigcup_j U_j$ be an
  affine open covering of $V$.
  Then, $V \times_k k'$ is covered by affine open subsets that are localizations
  of the normal varieties $U_j \times_k \Spec k[x]$,
  which pairwise intersect, hence $V \times_k k'$ is integral and normal.
  The same argument shows that $Z \times_k k'$ is an integral curve.
  We set $K = k'$ in characteristic zero, and in positive characteristic, the
  gamma construction (\cref{thm:gammaconst}) shows that there is a
  field extension $k' \subseteq K$ such that $K$ is $F$-finite,
  $V \times_k K$ is integral and normal, and $Z \times_k K$ is integral.
  Note that $K$ is infinite since it contains the infinite field $k'$.
  \par We now show that the special case when $k$ is infinite and $F$-finite
  implies the general case.
  Let $\pi\colon V \times_k k' \to V$ be the first projection morphism, which we
  note is faithfully flat by base change.
  Since $(\pi^*D \cdot \pi^*Z) = (D \cdot Z) < 0$ by \cite[Prop.\ B.17]{Kle05},
  the special case of \cref{prop:dfkl31} implies 
  \[
    \fb\bigl(\lvert mq\,\pi^*D \rvert\bigr) \subseteq (\pi^{-1}\fa \cdot
    \cO_{V \times_k k'})^{m-c}.
  \]
  Then, since $\pi$ is faithfully flat and since $\fb(\lvert mq\,\pi^*D \rvert )
  = \pi^{-1} \fb(\lvert mqD \rvert) \cdot \cO_{V \times_k k'}$ by flat base
  change, we have $\fb(\lvert mqD \rvert ) \subseteq \fa^{m-c}$ by \cite[Thm.\
  7.5$(ii)$]{Mat89}.
\end{proof}
\begin{remark}
  When $k$ is $F$-finite of characteristic $p > 0$, then one can set $K$ to be
  $k(x^{1/p^\infty})$ in the proof of
  \cref{lem:dfkl31ffinred}, since integrality and normality are preserved under
  limits of schemes with affine and flat transition morphisms \cite[Cor.\
  5.13.4]{EGAIV2}.
\end{remark}
We now focus on proving \cref{prop:dfkl31} in positive characteristic; see
\cref{rmk:dfkl31char0} for the characteristic zero case.
We have incorporated some ideas from \cite[Lems.\ 4.3 and 4.4]{MPST}.
In the proof below, we will use the fact \cite[Lem.\ B.12]{Kle05} that if
$W$ is a one-dimensional subscheme of a complete scheme $X$ over a field, and if
$D$ is a Cartier divisor on $X$, then
\begin{equation}\label{eq:nonredintersection}
  (D \cdot W) = \sum_\alpha \length_{\cO_{X,\eta_\alpha}} \bigl(
  \cO_{W_\alpha,\eta_\alpha} \bigr) \cdot (D \cdot W_\alpha),
\end{equation}
where the $W_\alpha$ are the one-dimensional components of $W$ with generic
points $\eta_\alpha \in W_\alpha$.
\begin{proof}[Proof of \cref{prop:dfkl31} in positive characteristic]
  By \cref{lem:dfkl31ffinred}, it suffices to consider the case when the ground
  field $k$ is infinite and $F$-finite.
  The statement is trivial if $H^0(V,\cO_V(mD)) = 0$ for every integer $m > 0$,
  since in this case $\fb(\lvert mqD \rvert) = 0$ for all positive integers
  $m,q$.
  We therefore assume $H^0(V,\cO_V(mD)) \ne 0$ for some integer $m > 0$.
  \par We first set some notation.
  Let $\eta\colon V_1 \to V$ be the normalized blowup of $Z \subseteq
  V$, and denote $E \coloneqq \eta^{-1}(Z)$.
  Consider a regular alteration $\varphi\colon V' \to V_1$ for
  $(V_1,E)$ as in \cite[Thm.\ 4.1]{dJ96}, and set $D' \coloneqq (\eta \circ
  \varphi)^*D$.
  Note that in this case, $E' \coloneqq \varphi^*E = (\eta \circ
  \varphi)^{-1}(Z)$ is a Cartier divisor with simple normal crossing support.
  The proof proceeds in four steps.
  \begin{step}\label{step:dfkl31mpst}
    It suffices to show that there exists a positive integer $a$ such that
    for every integer $m > 0$, we have
    \begin{equation}\label{eq:mpstgoal}
      \fb\bigl(\lvert maD' \rvert\bigr) \subseteq \cO_{V'}(-mE'_\red).
    \end{equation}
  \end{step}
  Consider the commutative diagram
  \[
    \begin{tikzcd}
      V' \rar{\varphi_2}\arrow{dr}[swap]{\varphi} & V_2\dar{\varphi_1}\\
      & V_1 \rar{\eta} &V
    \end{tikzcd}
  \]
  where the triangle is the Stein factorization for $\varphi$ \cite[Cor.\
  III.11.5]{Har77}.
  Note that by construction of the Stein factorization, the scheme $V_2$ is a
  normal projective variety.
  Now by setting $b$ to be the largest coefficient appearing in $E'$, we see that
  $\cO_{V'}(-bE_\red) \subseteq \cO_{V'}(-E)$.
  Thus, we have
  \begin{equation}\label{eq:mpstintermediate}
    \varphi_2^{-1}\fb\bigl(\lvert
    mab\,(\eta \circ \varphi_1)^*D \rvert\bigr) \cdot \cO_{V'} =
    \fb\bigl(\lvert mabD' \rvert\bigr) \subseteq
    \cO_{V'}(-mbE'_\red) \subseteq \cO_{V'}(-mE')
  \end{equation}
  by \cref{eq:mpstgoal}, where the first equality holds by
  \cref{lem:baselocusnormal} since $\varphi_2$ is birational.
  Setting $q = ab$ and pushing forward by $\varphi_2$, we have
  \begin{align*}
    \fb\bigl(\lvert mq\,(\eta \circ \varphi_1)^*D \rvert\bigr) &=
    \fb\bigl(\lvert mq\,(\eta \circ \varphi_1)^*D \rvert\bigr) \cdot
    \varphi_{2*}\cO_{V'}\\
    &= \varphi_{2*}\Bigl( \varphi_2^{-1}\fb\bigl(\lvert
    mq\,(\eta \circ \varphi_1)^*D \rvert\bigr) \cdot \cO_{V'}\Bigr)
    \subseteq \cO_{V_2}(-m\varphi_1^*E)
  \end{align*}
  where the first equality and last inclusion hold by the fact that $V_2$ is
  normal, hence $\varphi_{2*}\cO_{V'} = \cO_{V_2}$ \cite[Proof of Cor.\
  III.11.4]{Har77}, and the second equality holds by definition of restriction
  of scalars.
  Next, we push forward by $\varphi_1$ and intersect with the subsheaf
  $\cO_{V_1} \subseteq \varphi_{1*}\cO_{V_2}$ to obtain the chain of inclusions
  \begin{align*}
    \fb\bigl( \lvert mq\,\eta^*D \rvert\bigr) &\subseteq
    \varphi_{1*}\Bigl(\fb\bigl(\lvert mq\,(\eta \circ \varphi_1)^*D
    \rvert\bigr)\Bigr) \cap \cO_{V_1}\\
    &\subseteq \varphi_{1*}\bigl(\cO_{V_2}(-m\,\varphi_1^*E)\bigr) \cap
    \cO_{V_1} = \cO_{V_1}(-mE),
  \end{align*}
  where the last equality holds by the fact that $\varphi_1$ is finite, hence
  integral, and then by properties of integral closure \cite[Props.\ 1.5.2 and
  1.6.1]{HS06}.
  Finally, we push forward by $\eta$ to obtain
  \[
    \fb\bigl( \lvert mqD \rvert\bigr) \subseteq
    \eta_*\fb\bigl( \lvert mq\,\eta^*D \rvert\bigr)
    \subseteq \overline{\fa^m},
  \]
  where $\overline{\fa^m}$ is the integral closure of $\fa^m$ \cite[Rem.\
  9.6.4]{Laz04b}.
  By \cite[Cor.\ 1.2.5]{HS06}, there exists an integer $c$ such that
  $\overline{\fa^{\ell+1}} = \fa \cdot \overline{\fa^\ell}$ for all
  $\ell \ge c$ \cite[Cor.\ 1.2.5]{HS06}.
  We therefore have  $\overline{\fa^m} \subseteq
  \fa^{m-c}$ for all $m \ge c$, concluding \cref{step:dfkl31mpst}.
  \medskip
  \par In the rest of the proof, we consider another Stein factorization
  \cite[Cor.\ III.11.5]{Har77}
  \begin{equation}\label{eq:step2factor}
    \begin{tikzcd}
      V' \rar{\mu}\arrow{dr}[swap]{\psi} & \widetilde{V}\dar{\nu}\\
      & V
    \end{tikzcd}
  \end{equation}
  this time for the morphism $\psi = \eta\circ\varphi$, in which case
  $\widetilde{V}$ is a normal projective variety.
  Let $\widetilde{Z} \coloneqq \nu^{-1}(Z)$ be the scheme-theoretic inverse
  image of $Z$ under $\nu$, and write
  \[
    \widetilde{Z} = \bigcup_\alpha \widetilde{Z}_\alpha
  \]
  where $\widetilde{Z}_\alpha$ are the irreducible components of
  $\widetilde{Z}$.
  Since $\nu$ is finite, every $\widetilde{Z}_\alpha$ is one-dimensional and
  dominates $Z$, hence the projection formula and
  \cref{eq:nonredintersection} imply $\nu^*D \cdot \widetilde{Z}_\alpha < 0$.
  We also note that $E' = \mu^{-1}(\widetilde{Z})$ is a Cartier divisor with
  simple normal crossing support by the factorization \cref{eq:step2factor}.
  \par We also fix the following notation.
  Fix a very ample Cartier divisor $H$ on $V'$, and set $A =
  K_{V'} + (\dim V' + 1)H$.
  For every subvariety $W \subseteq V'$, a \textsl{complete intersection
  curve} is a curve formed by taking the intersection of $\dim W - 1$
  hyperplane sections in $\bigl\lvert H\rvert_W \bigr\rvert$, and a
  \textsl{general complete intersection curve} is one formed by taking these
  hyperplane sections to be general in $\bigl\lvert H\rvert_W \bigr\rvert$.
  For each positive integer $q$, we will consider the asymptotic test ideal
  \[
    \tau\bigl(V',\lVert qD' \rVert\bigr) = \tau\bigl(\lVert qD'
    \rVert\bigr) \subseteq \cO_{V'}.
  \]
  By uniform global generation for test ideals (\cref{thm:testidealuniformgg}),
  the sheaf
  \begin{equation}\label{eq:prop31gg}
    \tau\bigl(\lVert qD' \rVert\bigr) \otimes \cO_{V'}(qD'+A)
  \end{equation}
  is globally generated for every integer $q > 0$.
  \begin{step}\label{step:dfkl31step2}
    There exists an integer $\ell_0 > 0$ such that for every integer $\ell >
    \ell_0$ and for every irreducible component $F$ of $E'_\red$ that dominates
    $(Z_\alpha)_\red$ for some $\alpha$, we have
    \[
      \tau\bigl(\lVert \ell D' \rVert\bigr) \subseteq \cO_{V'}(-F).
    \]
  \end{step}
  Let $C \subseteq F$ be a general complete intersection curve; note that
  $C$ is integral by Bertini's theorem \cite[Thm.\ 3.4.10 and Cor.\
  3.4.14]{FOV99} and dominates $(Z_\alpha)_\red$ for some $\alpha$, hence $(D'
  \cdot C) < 0$ by the projection formula and \eqref{eq:nonredintersection}.
  If for some integer $q > 0$, the curve $C$ is not contained in the zero locus
  of $\tau(\lVert qD' \rVert)$, then the fact that the sheaf \eqref{eq:prop31gg}
  is globally generated implies
  \[
    \bigl( (qD' + A) \cdot C \bigr) \ge 0.
  \]
  Letting $\ell_{0F} = -(A \cdot C)/(D' \cdot C)$, we see that the ideal
  $\tau(\lVert \ell D' \rVert)$ vanishes everywhere along $C$ for every integer
  $\ell > \ell_{0F}$.
  By varying $C$, the ideal $\tau(\lVert \ell D' \rVert)$ must vanish everywhere
  along $F$ for every integer $\ell > \ell_{0F}$, hence we can set $\ell_0 =
  \max_F\{\ell_{0F}\}$.
  \begin{step}\label{step:dfkl31step3}
    Let $E'_i$ be an irreducible component of $E'_\red$ not dominating
    $Z_\alpha$ for every $\alpha$.
    Suppose $E'_j$ is another irreducible component of $E'_\red$ such that
    $E'_i \cap E'_j \ne \emptyset$ and for which there exists an integer $\ell_j$
    such that for every integer $\ell > \ell_j$, we have
    \begin{align*}
      \tau\bigl(\lVert \ell D' \rVert\bigr) &\subseteq \cO_{V'}(-E'_j).
      \intertext{Then, there is an integer $\ell_i \ge \ell_j$ such that for
      every integer $\ell > \ell_i$, we have}
      \tau\bigl(\lVert \ell D' \rVert\bigr) &\subseteq \cO_{V'}(-E'_i).
    \end{align*}
  \end{step}
  Let $C \subseteq E'_i$ be a complete intersection curve.
  By the assumption that $E'$ has simple normal crossing support, there exists
  at least one closed point $P \in C \cap E_j'$.
  For every $\ell > \ell_j$ and every $m > 0$, we have the sequence of
  inclusions
  \begin{equation}\label{eq:dfkl31incl}
    \begin{aligned}
      \MoveEqLeft[4]\Bigl( \tau\bigl(\lVert m\ell D'\rVert\bigr) \otimes
      \cO_{V'}(m\ell D'+A) \Bigr) \cdot \cO_C
      \subseteq \Bigl( \tau\bigl(\lVert \ell D'\rVert\bigr)^m \otimes
      \cO_{V'}(m\ell D'+A) \Bigr) \cdot \cO_C\\
      &\subseteq \Bigl( \cO_{V'}(-mE_j') \otimes
      \cO_{V'}(m\ell D'+A) \Bigr) \cdot \cO_C
      \subseteq \cO_C(A\rvert_C - mP)
    \end{aligned}
  \end{equation}
  where the first two inclusions follow from subadditivity
  (\cref{thm:testidealsubadditivity}) and by assumption, respectively.
  The last inclusion holds since $C$ maps to a closed point in $V$, hence
  $\cO_C(D') = \cO_C$.
  By the global generation of the sheaf in \eqref{eq:prop31gg} for $q = m\ell$,
  the inclusion \eqref{eq:dfkl31incl} implies that for every integer $\ell >
  \ell_j$, if $\tau(\lVert m\ell D' \rVert)$ does not vanish everywhere along
  $C$, then $(A \cdot C) \ge m$.
  Choosing $\ell_i = \ell_j \cdot ((A \cdot C) + 1 )$, we see that
  $\tau(\lVert \ell D' \rVert)$ vanishes everywhere along $C$ for every integer
  $\ell > \ell_i$.
  By varying $C$, we have $\tau(\lVert \ell D' \rVert) \subseteq
  \cO_{V'}(-E_i')$ for every integer $\ell > \ell_i$.
  \begin{step}\label{lem:dfkl32}
    There exists a positive integer $a$ such that
    for every integer $m > 0$, we have
    $\fb(\lvert maD' \rvert)
    \subseteq \cO_{V'}(-mE'_\red)$.
  \end{step}
  Write
  \[
    E'_\red = \bigcup_j \bigcup_{i \in I_j} E_{ij}',
  \]
  where the $E_{ij}'$ are the irreducible components of $E'_\red$, and the
  $\bigcup_{i \in I_j} E_{ij}'$ are the connected components of $E'_\red$.
  Since $V$ is normal, each preimage $\mu^{-1}(Z_\alpha)$ is connected by
  Zariski's main theorem \cite[Cor.\ III.11.4]{Har77}, hence each connected
  component $\bigcup_{i \in I_j} E_{ij}'$ of $E'_\red$ contains an irreducible
  component $E_{i_0j}'$ that dominates $(Z_\alpha)_\red$ for some $\alpha$.
  By \cref{step:dfkl31step2}, there exists an integer $\ell_0$
  such that for every $j$, we have $\tau(\lVert \ell D' \rVert) \subseteq
  \cO_{V'}(-E'_{i_0j})$ for every integer $\ell > \ell_0$.
  For each $j$, by applying \cref{step:dfkl31step3} $(\lvert I_j
  \rvert-1)$ times to the $j$th connected component $\bigcup_{i \in I_j}
  E_{ij}'$ of $E'$, we can find $\ell_j$ such that
  $\tau(\lVert \ell D' \rVert) \subseteq \cO_{V'}(-E_{ij}')$
  for every $i \in I_j$ and for every integer $\ell > \ell_j$.
  Setting $a = \max_j\{\ell_j\}+1$, we have $\tau(\lVert aD'
  \rVert) \subseteq \cO_{V'}(-E'_\red)$.
  Thus, for every integer $m > 0$, we have
  \[
    \fb\bigl(\lvert maD' \rvert\bigr) \subseteq \tau\bigl(\lvert
    maD'\rvert\bigr) \subseteq \tau\bigl(\lVert maD'\rVert\bigr) \subseteq
    \tau\bigl(\lVert aD'\rVert\bigr)^m \subseteq \cO_{V'}(-mE'_\red),
  \]
  where the first inclusion follows by the fact that $V'$ is regular hence
  strongly $F$-regular (Propositions
  \ref{thm:testidealprops}\cref{thm:testideallarger} and
  \ref{thm:testidealprops}\cref{thm:testidealstrongfreg}), the second
  inclusion is by definition of the asymptotic test ideal, and the third
  inclusion is by subadditivity (\cref{thm:testidealsubadditivity}).
  This concludes the proof of \cref{lem:dfkl32}, hence also of
  \cref{prop:dfkl31} by \cref{step:dfkl31mpst}.
\end{proof}
\begin{remark}\label{rmk:dfkl31char0}
  When $\Char k = 0$, it suffices to replace asymptotic test ideals in the proof
  above with asymptotic multipliers ideals $\cJ(\lVert D \rVert)$ as defined in
  \cref{def:asymptoticmultideal} by replacing \cref{thm:testidealprops},
\cref{thm:testidealsubadditivity}, and \cref{thm:testidealuniformgg}
  with \cite[Prop.\ 2.3]{dFM09}, \cite[Thm.\ A.2]{JM12}, and
  \cref{thm:multidealuniformgg}, respectively.
\end{remark}
\section{Proof of Theorem \ref{thm:dfkl41}}\label{sect:dfkl41proof}
We now prove Theorem \ref{thm:dfkl41}.
We first note that the direction $\Rightarrow$ in \cref{thm:dfkl41}
follows from existing results.
\begin{proof}[Proof of $\Rightarrow$ in \cref{thm:dfkl41}]
  Let $A$ be a very ample Cartier divisor.
  Then, for all $t$ such that $L-tA$ is ample, we have $\widehat{h}^i(X,L-tA) =
  0$ by Serre vanishing and by homogeneity and
  continuity (see \cref{rem:kur291reductions}).
\end{proof}
For the direction $\Leftarrow$, it suffices to show \cref{thm:dfkl41} for
Cartier divisors $L$ by continuity and homogeneity (see \cref{rem:kur291reductions}).
We also make the following two reductions.
Recall that an $\RR$-Cartier divisor $L$ on $X$ \textsl{satisfies
\cref{thm:dfkl41cond}} for a pair $(A,\varepsilon)$ consisting of a very
ample Cartier divisor $A$ on $X$ and a real number $\varepsilon > 0$ if
$\widehat{h}^i(X,L-tA) = 0$ for all $i > 0$ and all $t \in [0,\varepsilon)$.
\begin{lemma}\label{lem:dfklfieldred}
  To prove the direction $\Leftarrow$ in \cref{thm:dfkl41}, we may assume
  that the ground field $k$ is uncountable.
\end{lemma}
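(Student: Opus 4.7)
The plan is to argue by faithfully flat base change. Fix an uncountable index set $I$ and let $K \coloneqq k(\{t_\alpha\}_{\alpha \in I})$ be the purely transcendental extension of $k$ by $\lvert I \rvert$-many indeterminates. Then $K$ is uncountable, and $K/k$ is faithfully flat, so the projection $\pi\colon X_K \coloneqq X \times_k K \to X$ is faithfully flat. Because purely transcendental extensions preserve integrality and separatedness (one checks this on affine opens, as in the proof of \cref{lem:dfkl31ffinred}), $X_K$ is still an integral projective variety of dimension $n$ over $K$. I set $L_K \coloneqq \pi^*L$ and, given any pair $(A,\varepsilon)$ for which $L$ satisfies $(\ref{thm:dfkl41cond})$, set $A_K \coloneqq \pi^*A$, which remains a very ample Cartier divisor on $X_K$ by \cref{table:spreadingout} (or directly from the definition of very ampleness).

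The crucial point is that the asymptotic cohomological functions are insensitive to this field extension. Fix a decomposition $\mathcal{D}$ of $L-tA$; then $\pi^*\mathcal{D}$ is a decomposition of $L_K - tA_K$, and $\lceil m(L_K - tA_K) \rceil_{\pi^*\mathcal{D}} = \pi^*\lceil m(L-tA) \rceil_\mathcal{D}$ since rounding is defined componentwise on the chosen decomposition. Flat base change for cohomology of proper morphisms (applied to the flat extension $k \hookrightarrow K$) gives
\[
  H^i\bigl(X_K,\cO_{X_K}\bigl(\lceil m(L_K-tA_K) \rceil\bigr)\bigr) \simeq
  H^i\bigl(X,\cO_X\bigl(\lceil m(L-tA) \rceil\bigr)\bigr) \otimes_k K,
\]
so the corresponding dimensions over $K$ and $k$ agree, and hence $\widehat{h}^i(X_K, L_K - tA_K) = \widehat{h}^i(X,L-tA)$ for every $i$ and every $t$. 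Consequently, if $L$ satisfies $(\ref{thm:dfkl41cond})$ for $(A,\varepsilon)$, then $L_K$ satisfies $(\ref{thm:dfkl41cond})$ for $(A_K,\varepsilon)$ on $X_K$.

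Granting the direction $\Leftarrow$ over the uncountable field $K$, we conclude that $L_K$ is ample on $X_K$. Since $\pi$ is faithfully flat and quasi-compact, ampleness descends from $L_K$ to $L$: this is an instance of faithfully flat descent for ampleness \cite[Prop.~2.7.2]{EGAIV2}. Therefore $L$ is ample on $X$, as desired.

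I do not anticipate any serious obstacle here; the only mild subtlety is bookkeeping to ensure that $X_K$ remains a variety (handled by choosing a purely transcendental extension) and that the round-up operation commutes with the flat pullback (handled by choosing a compatible decomposition). The genuine work of the $\Leftarrow$ direction is then carried out over the uncountable field $K$, where \cref{prop:dfkl31} and the very-general-point arguments of \cite{dFKL07} are available.
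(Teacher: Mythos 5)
Your proposal is correct and follows essentially the same route as the paper: pass to an uncountable purely transcendental extension, check the base change stays an integral variety, observe that $\widehat{h}^i$ and very ampleness of $A$ are insensitive to the extension by flat base change, and descend ampleness along the faithfully flat projection. The only cosmetic difference is that you verify invariance of $\widehat{h}^i$ for $\RR$-divisors directly via a compatible decomposition, whereas the paper first treats Cartier divisors and then invokes homogeneity and continuity; both are valid.
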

\begin{proof}
  Consider the purely transcendental extension
  \[
    k' \coloneqq k(x_\alpha)_{\alpha \in A}
  \]
  where $\{x_\alpha\}_{\alpha \in A}$ is an uncountable set of indeterminates;
  note that $k'$ is uncountable by construction.
  We claim that $X \times_k k'$ is integral.
  Let $\bigcup_j U_j$ be an affine
  open covering of $X$.
  Then, $X \times_k k'$ is covered by affine open subsets that are localizations
  of the integral varieties $U_j \times_k \Spec k[x_\alpha]_{\alpha \in A}$,
  which pairwise intersect, hence $X \times_k k'$ is integral.
  \par Now suppose $X$ is a projective variety over $k$, and let $L$ be an
  Cartier divisor satisfying \cref{thm:dfkl41cond} for some pair
  $(A,\varepsilon)$.
  Let
  \[
    \pi\colon X \times_k k' \longrightarrow X
  \]
  be the first projection map, which we note is faithfully flat by base change.
  Then, the pullback $\pi^*A$ of $A$ is very ample,
  and to show that $L$ is ample, it suffices to show that $\pi^*L$ is ample by
  flat base change and Serre's criterion for ampleness.
  By the special case of \cref{thm:dfkl41} over the ground field $k'$, it
  therefore suffices to show that $\pi^*L$ satisfies \cref{thm:dfkl41cond}
  for the pair $(\pi^*A,\varepsilon)$.
  \par We want to show that for every $i > 0$ and for all
  $t \in [0,\varepsilon)$, we have
  \begin{equation}\label{eq:hhatinvfieldext}
    \widehat{h}^i(X,L-tA) = \widehat{h}^i\bigl(X \times_k K,\pi^*(L-tA)\bigr) =
    0.
  \end{equation}
  For every $D \in \Div(X)$ and every $i \ge 0$, the number $h^i(X,\cO_X(D))$ is
  invariant under ground field extensions by flat base change,
  hence $\widehat{h}^i(X,D)$ is also.
  By homogeneity and continuity (see \cref{rem:kur291reductions}),
  the number $\widehat{h}^i(X,D)$ is also invariant under
  ground field extensions for $D \in \Div_\RR(X)$, hence
  \cref{eq:hhatinvfieldext} holds.
\end{proof}
\begin{lemma}\label{lem:nonnefcounterex}
  To prove the direction $\Leftarrow$ in \cref{thm:dfkl41}, it suffices
  to show that every Cartier divisor satisfying \cref{thm:dfkl41cond} is nef.
\end{lemma}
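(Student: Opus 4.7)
The plan is to show that if $L$ satisfies $(\ref{thm:dfkl41cond})$ for a pair $(A,\varepsilon)$, then not only is $L$ itself nef (by the hypothesis), but so is $L - tA$ for every $t \in [0,\varepsilon)$; writing $L = (L - tA) + tA$ for some $t \in (0,\varepsilon)$ will then exhibit $L$ as the sum of a nef and an ample $\RR$-Cartier divisor, which is ample by Kleiman's criterion. By the homogeneity and continuity of $\widehat{h}^i$ (\cref{prop:nahomogcont}) together with the usual approximation trick of \cref{rem:kur291reductions}, it suffices to treat the case when $L$ is a Cartier divisor.

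The first step is to verify the shifting property: for every $t \in [0,\varepsilon)$, the $\RR$-Cartier divisor $L - tA$ satisfies $(\ref{thm:dfkl41cond})$ with the pair $(A, \varepsilon - t)$, since for any $s \in [0,\varepsilon - t)$ we have
\[
  \widehat{h}^i\bigl(X,(L-tA) - sA\bigr) = \widehat{h}^i\bigl(X, L - (t+s)A\bigr) = 0
\]
for all $i > 0$, by the assumption on $L$ and the fact that $t + s \in [0,\varepsilon)$. The second step is to reduce this to a statement about Cartier divisors so that the hypothesis of the lemma applies. Choose a rational number $t \in (0,\varepsilon)$ and a positive integer $m$ with $mt \in \ZZ$, so that $m(L - tA) = mL - (mt)A$ is an honest Cartier divisor. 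By homogeneity of $\widehat{h}^i$ and the shifting property above (applied with $s/m$ in place of $s$), the Cartier divisor $m(L - tA)$ satisfies $(\ref{thm:dfkl41cond})$ with the pair $(A, m(\varepsilon - t))$. The hypothesis of the lemma then implies that $m(L - tA)$ is nef, hence $L - tA$ is nef as an $\RR$-Cartier divisor.

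Finally, since $t > 0$ and $A$ is very ample, $tA$ is an ample $\RR$-Cartier divisor, so
\[
  L = (L - tA) + tA
\]
is the sum of a nef and an ample $\RR$-Cartier divisor. By Kleiman's criterion for ampleness of $\RR$-Cartier divisors (see \cite[Thm.\ 1.4.23 and Cor.\ 1.4.10]{Laz04a}), $L$ is ample. I expect no genuine obstacle here; the only subtlety is keeping track of the reductions to the Cartier case so that the hypothesis of the lemma, which is stated only for Cartier divisors, is legitimately applicable, and this is handled by the homogeneity argument above.
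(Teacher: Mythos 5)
Your proof is correct and follows essentially the same route as the paper: fix a rational $t\in(0,\varepsilon)$ and an integer $m$ with $mt\in\ZZ$, use homogeneity of $\widehat{h}^i$ to see that the Cartier divisor $m(L-tA)$ again satisfies $(\ref{thm:dfkl41cond})$, conclude it is nef by the hypothesis, and then write $L=(L-tA)+tA$ as nef plus ample. The only cosmetic difference is that the paper performs the reduction from $\RR$-Cartier to Cartier divisors just before stating the lemma rather than inside its proof.
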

\begin{proof}
  Suppose $L$ is a Cartier divisor satisfying \cref{thm:dfkl41cond} for a
  pair $(A,\varepsilon)$.
  Choose $\delta \in (0,\varepsilon) \cap \QQ$ and let $m$ be a positive integer
  such that $m\delta$ is an integer.
  Then, the Cartier divisor $m(L-\delta A)$ is nef since
  \begin{align*}
    \widehat{h}^i\bigl(X,m(L-\delta A) - tA\bigr)
    &= \widehat{h}^i\bigl(X,mL - (t+m\delta)A\bigr)\\
    &= m \cdot \widehat{h}^i\Bigl(X,L - \Bigl(\frac{t}{m}+\delta\Bigr)A\Bigr)
    = 0
  \end{align*}
  for all $t \in [0,m\varepsilon-\delta)$ by homogeneity
  (\cref{prop:nahomogcont}).
  Thus, the Cartier divisor $L = (L-\delta A) + \delta A$ is ample by
  \cite[Cor.\ 1.4.10]{Laz04a}.
\end{proof}
We will also need the following result to allow for an inductive proof.
Note that the proof in \cite{dFKL07} works in our setting.
\begin{citedlem}[{\cite[Lem.\ 4.3]{dFKL07}}]\label{lem:dfkl43}
  Let $X$ be a projective variety of dimension $n > 0$ over an uncountable
  field, and let $L$ be a Cartier divisor on $X$.
  Suppose $L$ satisfies \cref{thm:dfkl41cond} for a pair $(A,\varepsilon)$,
  and let $E \in \lvert A \rvert$ be a very general divisor.
  Then, the restriction $L\rvert_E$ satisfies \cref{thm:dfkl41cond} for the
  pair $(A\rvert_E,\varepsilon)$.
\end{citedlem}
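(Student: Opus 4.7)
The plan is to reduce to showing, for a fixed rational $t \in [0,\varepsilon)$ and fixed $i > 0$, that $\widehat{h}^i(E,(L-tA)|_E) = 0$ for $E$ outside a proper closed subset $Z_{i,t} \subset |A|$. Once this is done, a very general $E$ lying outside the countable union $\bigcup_{i,t} Z_{i,t}$ over a countable dense set of $t$'s will satisfy the conclusion for all rational $t$ simultaneously, and then for all $t \in [0,\varepsilon)$ by continuity of $\widehat{h}^i$ on the finite-dimensional subspace of $\mathrm{Div}_\RR(E)$ spanned by $L|_E$ and $A|_E$ (\cref{prop:nahomogcont}). This is the step that uses uncountability of $k$.

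For fixed $(i,t)$, my starting point is the restriction short exact sequence
\[
  0 \longrightarrow \cO_X(D_m - A) \longrightarrow \cO_X(D_m) \longrightarrow \cO_E(D_m|_E) \longrightarrow 0, \qquad D_m := \lceil m(L-tA) \rceil,
\]
which on taking cohomology yields
\[
  h^i(E, D_m|_E) \le h^i(X, D_m) + h^{i+1}(X, D_m - A).
\]
By hypothesis $\widehat{h}^i(X, L-tA) = \widehat{h}^{i+1}(X, L-tA) = 0$, and \cref{prop:nahomogcont} lets us absorb the shift by $A$ into a perturbation of $t$ of order $1/m$ (still in $[0,\varepsilon)$ for $m \gg 0$), so both summands on the right are $o(m^n)$ and therefore $h^i(E, D_m|_E) = o(m^n)$.

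The main obstacle is that this naive bound only delivers $o(m^n)$, whereas $\widehat{h}^i$ on $E$ is normalized by $m^{n-1}/(n-1)!$, so we actually need the sharper bound $h^i(E, D_m|_E) = o(m^{n-1})$. To close this gap, the plan is to work with the universal divisor $\mathcal{E} \subset X \times |A|$ with projections $\sigma\colon \mathcal{E} \to X$ and $\pi\colon \mathcal{E} \to |A|$. By semicontinuity, for $E$ outside a proper closed subset of $|A|$ the value $h^i(E, D_m|_E)$ equals the generic fiber rank of $R^i\pi_*\,\sigma^*\cO_X(D_m)$; and since $\sigma$ is the blowup of $X$ along the (codimension $\ge 2$) base locus of $|A|$, the projection formula gives $\mathbf{R}\sigma_*\cO_{\mathcal{E}} = \cO_X$, so the Leray spectral sequence for $\pi$ relates $H^\bullet(\mathcal{E},\sigma^*\cO_X(D_m))$ to $H^\bullet(X,\cO_X(D_m))$. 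Unwinding this, the generic rank of $R^i\pi_*$ is controlled by $h^i(X,D_m)$ and $h^{i+1}(X,D_m)$ divided by a factor that reflects the positive-dimensional parameter space $|A|$, which together with the interval hypothesis $\widehat{h}^j(X, L-\tau A) = 0$ for $\tau \in [0,\varepsilon)$ is what produces the improvement from $o(m^n)$ to $o(m^{n-1})$. The delicate point, and the step I expect to take real care, is this bookkeeping of the family argument and the identification of the locus $Z_{i,t} \subset |A|$ where semicontinuity could fail; once this is handled, the conclusion $\widehat{h}^i(E,(L-tA)|_E) = 0$ drops out, and the countable-intersection argument above completes the proof.
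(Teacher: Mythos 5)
Your reduction is the right one: fix $i>0$ and a rational $t\in[0,\varepsilon)$, prove the vanishing on $E$ for all $E$ outside a proper closed subset of $\lvert A\rvert$, take the countable union over $(i,t)$, and use continuity and non-negativity of $\widehat{h}^i$ (\cref{prop:nahomogcont}) to pass from a dense set of $t$'s to all of $[0,\varepsilon)$; and you have correctly isolated the real difficulty, namely that a single restriction sequence only gives $h^i(E,D_m\rvert_E)=o(m^n)$ while the normalization on the $(n-1)$-dimensional variety $E$ requires $o(m^{n-1})$. (For the record, the thesis does not reprove this lemma; it cites \cite[Lem.\ 4.3]{dFKL07} and asserts that proof carries over, so the comparison below is with that argument.)

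The mechanism you propose for gaining the missing factor of $m$ does not work. The Leray spectral sequence for $\pi\colon\mathcal{E}\to\lvert A\rvert$ only compares the abutment $H^\bullet(\mathcal{E},\sigma^*\cO_X(D_m))\simeq H^\bullet(X,\cO_X(D_m))$ with the groups $H^p(\lvert A\rvert,R^q\pi_*(-))$, and smallness of these puts essentially no upper bound on the \emph{generic rank} of $R^i\pi_*\sigma^*\cO_X(D_m)$: already $\cO_{\PP^N}(-1)^{\oplus r}$ has vanishing cohomology in every degree yet arbitrary rank $r$. There is no ``division by a factor reflecting the positive-dimensional parameter space''; since $\lvert A\rvert$ is a fixed space independent of $m$, nothing in this construction can manufacture a denominator that grows with $m$. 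Relatedly, your argument never genuinely engages the hypothesis that $\widehat{h}^j(X,L-\tau A)=0$ for $\tau$ ranging over a whole interval, and that is precisely what must be exploited. The proof in \cite{dFKL07} gains the factor of $m$ by using roughly $\delta m$ divisors rather than one: for small rational $\delta$ with $t+\delta<\varepsilon$, one either telescopes the restriction sequences for $\cO_X(D_m-jE)$, $j=0,\dots,\lfloor\delta m\rfloor$, or equivalently uses the complex $\bigotimes_{\beta=1}^{r}(\cO_X\to\cO_{E_\beta})\otimes\cO_X(D_m)$ with $r\approx\delta m$ very general $E_\beta\in\lvert A\rvert$, exactly as in the proof of \cref{thm:dfkl41} in \cref{sect:dfkl41proof}. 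The sum of the $r$ essentially equal terms $h^i(E_\beta,\cdot)$ is then bounded by cohomology on $X$ of the divisors $m(L-(t+j/m)A)$ in degrees $i$ and $i+1$ --- this is where the vanishing over all of $[t,t+\delta]\subseteq[0,\varepsilon)$ enters, giving $o(m^{n+1})$ --- plus the contributions of the pairwise intersections $E_\beta\cap E_{\beta'}$, which live in dimension $n-2$ and contribute $O(\delta^2m^{n})$ in total. Dividing by $r\approx\delta m$ and letting $\delta\to0$ yields $h^i(E,D_m\rvert_E)=o(m^{n-1})$. Some form of this ``linearly many divisors'' averaging is the key idea, and it is the piece your proposal is missing.
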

We can now show the direction $\Leftarrow$ in \cref{thm:dfkl41}; by
\cref{lem:nonnefcounterex}, we need to show that every Cartier divisor
satisfying \cref{thm:dfkl41cond} is nef.
Recall that by \cref{lem:dfklfieldred}, we may assume that the ground field
$k$ is uncountable.
Our proof follows that in \cite[pp.\ 450--454]{dFKL07} after reducing to a
setting where \cref{prop:dfkl31} applies, although we have to
be more careful in positive characteristic.
\begin{proof}[Proof of $\Leftarrow$ in \cref{thm:dfkl41}]
  We proceed by induction on $\dim X$.
  Suppose $\dim X = 1$; we will show the contrapositive.
  If $L$ is not nef, then $\deg L < 0$ and $-L$ is ample.
  Thus, by asymptotic Serre duality (\cref{prop:asympoticserre}), we
  have $\widehat{h}^1(X,L) = \widehat{h}^0(X,-L) \ne 0$,
  hence \cref{thm:dfkl41cond} does not hold for every choice of
  $(A,\varepsilon)$.
  \par We now assume $\dim X \ge 2$.
  Suppose by way of contradiction that there is a non-nef Cartier
  divisor $L$ satisfying \cref{thm:dfkl41cond}, and let $Z \subseteq X$ be an
  integral curve such that $(L \cdot Z) < 0$.
  Our goal is to show that
  \begin{equation}\label{eq:dfklgoal}
    \widehat{h}^1(X,L-\delta A) \ne 0
  \end{equation}
  for $0 < \delta \ll 1$, contradicting \cref{thm:dfkl41cond}.
  Let $F \in \lvert A \rvert$ be a very general divisor.
  By Bertini's theorem \cite[Thm.\ 3.4.10 and Cor.\ 3.4.14]{FOV99}, we may
  assume that $F$ is a subvariety of $X$, in which case by
  inductive hypothesis and \cref{lem:dfkl43}, we have that $L\rvert_F$
  is ample.
  Since ampleness is an open condition in families \cite[Cor.\ 9.6.4]{EGAIV3},
  there exists an integer $b > 0$ such that $bL$ is very ample along the
  generic divisor $F_\eta \in \lvert A \rvert$.
  By possibly replacing $b$ with a multiple, we may also assume that
  $mbL\rvert_{F_\eta}$ has vanishing higher cohomology for every integer
  $m > 0$.
  Since the ground field $k$ is uncountable, we can then choose a sequence of
  very general Cartier divisors $\{E_\beta\}_{\beta=1}^\infty \subseteq \lvert
  A \rvert$ such that the following properties hold:
  \begin{enumerate}[label=$(\alph*)$]
    \item $E_\beta$ is a subvariety of $X$ for all $\beta$
      (by Bertini's theorem \cite[Thm.\ 3.4.10 and Cor.\ 3.4.14]{FOV99});
    \item For all $\beta$, $bL\rvert_{E_\beta}$ is very ample
      and $mbL\rvert_{E_\beta}$ has vanishing higher cohomology for every
      integer $m > 0$ (by the constructibility of very ampleness in families
      \cite[Prop.\ 9.6.3]{EGAIV3} and by semicontinuity); and
    \item For every positive integer $r$, the $k$-dimension of cohomology groups
      of the form
      \begin{equation}\label{eq:indepcohgp}
        H^j\bigl(E_{\beta_1} \cap E_{\beta_2} \cap \cdots \cap
          E_{\beta_r},\cO_{E_{\beta_1} \cap E_{\beta_2} \cap \cdots \cap
        E_{\beta_r}}(mL)\bigr)
      \end{equation}
      for non-negative integers $j$ and $m$ is independent of the $r$-tuple
      $(\beta_1,\beta_2,\ldots,\beta_r)$ (by semicontinuity; see \cite[Prop.\
      5.5]{Kur06}).
  \end{enumerate}
  We will denote by $h^j(\cO_{E_1 \cap E_2 \cap \cdots \cap E_r}(mL))$ the
  dimensions of the cohomology groups \cref{eq:indepcohgp}.
  By homogeneity (\cref{prop:nahomogcont}), we can
  replace $L$ by $bL$ so that $L\rvert_{E_\beta}$ is very ample
  with vanishing higher cohomology for all $\beta$.
  \par To show \cref{eq:dfklgoal}, we now follow the proof in
  \cite[pp.\ 453--454]{dFKL07} with appropriate modifications.
  Given positive integers $m$ and $r$, consider the complex
  \begin{align*}
    K^\bullet_{m,r} \coloneqq{}& \biggl(
    \bigotimes_{\beta=1}^r\bigl(\cO_{X} \longrightarrow
    \cO_{E_\beta}\bigr) \biggr) \otimes \cO_{X}(mL)\\
    ={}& \biggl\{ \cO_{X}(mL) \longrightarrow
    \bigoplus_{\beta=1}^r \cO_{E_\beta}(mL) \longrightarrow \bigoplus_{1
    \le \beta_1 < \beta_2 \le r} \cO_{E_{\beta_1} \cap E_{\beta_2}}(mL)
    \longrightarrow \cdots \biggr\}.
  \end{align*}
  By \cite[Cor.\ 4.2]{Kur06}, this complex is acyclic away from
  $\cO_{X}(mL)$, hence is a resolution for
  $\cO_{X}(mL - rA)$.
  In particular, we have
  \[
    H^j\bigl(X,\cO_{X}(mL - rA)\bigr) =
    \HH^j(X,K^\bullet_{m,r}).
  \]
  The right-hand side is computed by an $E_1$-spectral sequence whose first page
  is shown in \cref{fig:hypercohspectralsequence}.
  \begin{figure}[t]
    \centering
    \tikzexternalenable
    \begin{tikzpicture}
      \matrix (m) [matrix of math nodes,commutative diagrams/every cell,column
      sep=1.8em,transform shape,nodes={scale=0.9}]{
        \strut\makebox[1em][c]{$\mathllap{E_1}$} &[-1em] &
        \hphantom{\bigoplus\limits_{\beta=1}^r
        H^0\bigl(\cO_{E_\beta}(mL)\bigr)}
        & &[-1.75em]
        &[-1.75em] {}\\[-0.75em]
        \vdots & \vdots & & & \\
        2 & \vphantom{\bigoplus\limits^r}
        H^2\bigl(\cO_{X}(mL)\bigr)\\
        1 & \vphantom{\bigoplus\limits^r}
        H^1\bigl(\cO_{X}(mL)\bigr) &
        & & & \vphantom{\bigoplus\limits^r}\hphantom{\cdots}\\
        0 & H^0\bigl(\cO_{X}(mL)\bigr)
        \vphantom{\bigoplus\limits_{\beta=1}^r} & \bigoplus\limits_{\beta=1}^r
        H^0\bigl(\cO_{E_\beta}(mL)\bigr) & \bigoplus\limits_{1
        \le \beta_1 < \beta_2 \le r} H^0\bigl(\cO_{E_{\beta_1} \cap
        E_{\beta_2}}(mL)\bigr) & \cdots
        &\vphantom{\bigoplus\limits_{\beta=1}^r}\\
        \quad\strut & 0 & 1 & 2 & \cdots&\strut \\};
      \draw[<-] (m-1-1.east) node[anchor=south]{$q$} -- (m-6-1.east);
      \draw[->] (m-6-1.north) -- (m-6-6.north) node[anchor=west]{$p$};
      \draw (m-1-3.west) |- (m-4-6.south west);
      \draw[draw=none,fill=gray,fill opacity=0.2] (m-1-3.west) rectangle
        node[midway,opacity=1] {\Large $0$} (m-4-6.south west);
      \path[commutative diagrams/.cd, every arrow, every label]
        (m-5-2) edge node {$v_{m,r}$} (m-5-3)
        (m-5-3) edge node {$u_{m,r}$} (m-5-4);
    \end{tikzpicture}
    \tikzexternaldisable
    \caption{Hypercohomology spectral sequence computing $H^j(X,\cO_X(mL-rA))$}
    \label{fig:hypercohspectralsequence}
  \end{figure}
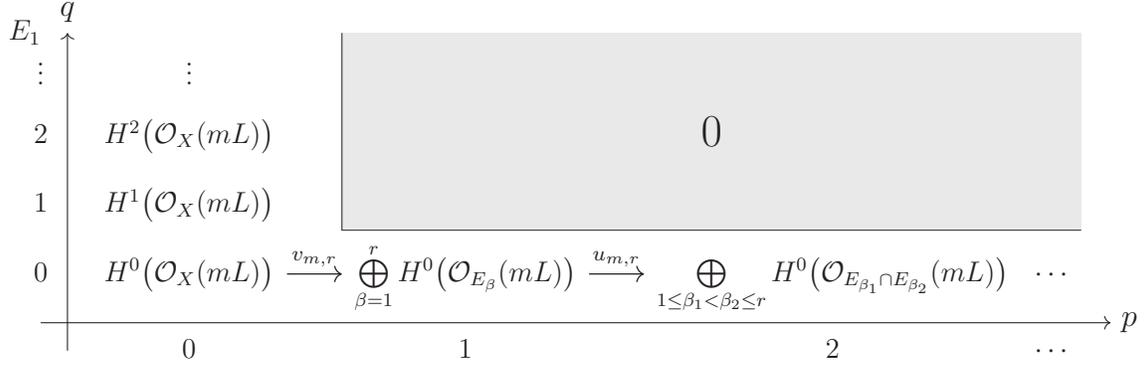
  This spectral sequence yields a natural inclusion
  \begin{equation}\label{eq:dfkl41incl}
    \frac{\ker(u_{m,r})}{\im(v_{m,r})} \subseteq
    H^1\bigl(X,\cO_{X}(mL - rA)\bigr).
  \end{equation}
  \par We want to bound the left-hand side of \cref{eq:dfkl41incl} from
  below.
  First, there exists a constant $C_1 > 0$ such that $h^0(\cO_{E_1\cap
  E_2}(mL)) \le C_1 \cdot m^{n-2}$ for all $m \gg 0$ \cite[Ex.\
  1.2.20]{Laz04a}.
  Thus, we have
  \[
    \codim\Bigl(\ker(u_{m,r}) \subseteq \bigoplus\limits_{\beta=1}^r
    H^0\bigl(E_\beta,\cO_{E_\beta}(mL)\bigr) \Bigr)
    \le C_2 \cdot r^2m^{n-2}
  \]
  for some $C_2$ and for all $m \gg 0$.
  Now by \cref{prop:dfkl31}, there are positive integers $q$ and
  $c$ such that $\fb(\lvert mqL \rvert) \subseteq \fa^{m-c}$
  for all $m > c$, where $\fa$ is the ideal sheaf of $Z$.
  By replacing $L$ by $qL$, we can assume that this
  inclusion holds for $q = 1$.
  The morphism $v_{m,r}$ therefore fits into the following commutative diagram:
  \[
    \begin{tikzcd}
      H^0\bigl(X,\cO_{X}(mL) \otimes
      \fa^{m-c}\bigr) \rar{v'_{m,r}} \dar[equal]
      & \operatorname{\smash{\displaystyle\bigoplus\limits_{\beta=1}^r}}
      H^0\bigl(E_\beta,\cO_{E_\beta}(m\otimes
      \fa^{m-c}\bigr) \dar[hook]\\
      H^0\bigl(X,\cO_{X}(mL)\bigr)
      \rar{v_{m,r}} &
      \operatorname{\smash{\displaystyle\bigoplus\limits_{\beta=1}^r}}
      H^0\bigl(E_\beta,\cO_{E_\beta}(mL)\bigr)\\[-1em]
    \end{tikzcd}
  \]
  \par We claim that there exists a constant $C_3 > 0$ such that for all $m \gg
  0$,
  \begin{equation}\label{eq:dfkl15}
    \codim\Bigl(H^0\bigl(E_\beta,\cO_{E_\beta}(mL) \otimes
      \fa^{m-c}\bigr) \subseteq H^0\bigl(E_\beta,\cO_{E_\beta}(mL)\bigr)
    \Bigr) \ge C_3 \cdot m^{n-1}.
  \end{equation}
  Granted this, we have
  \[
    \dim\biggl( \frac{\ker(u_{m,r})}{\im(v_{m,r})} \biggr) \ge C_4 \cdot \bigl(
    rm^{n-1} - r^2m^{n-2}\bigr)
  \]
  for some constant $C_4 > 0$ and for all $m \gg 0$.
  Fixing a rational number $0 < \delta \ll 1$ and setting $r = m\delta$ for an
  integer $m > 0$ such that $m\delta$ is an integer, we then see that
  there exists a constant $C_5 > 0$ such that
  \[
    h^1\bigl(X,\cO_{X}\bigl(m(L - \delta
    A) \bigr)\bigr) \ge C_5 \cdot \delta m^n
  \]
  for all $m \gg 0$, contradicting \cref{thm:dfkl41cond}.
  \par It remains to show \cref{eq:dfkl15}.
  Since the vanishing locus of $\fa$ may have no $k$-rational points, we will
  pass to the algebraic closure of $k$ to bound the codimension on
  the left-hand side of \eqref{eq:dfkl15} from below.
  Let $\overline{E}_\beta \coloneqq E_\beta \times_k \overline{k}$, and denote
  by $\pi\colon \overline{E}_\beta \to E_\beta$ the projection
  morphism.
  Note that
  \begin{align*}
    \MoveEqLeft[3] \codim\Bigl(H^0\bigl(E_\beta,\cO_{E_\beta}(mL)
    \otimes \fa^{m-c}\bigr) \subseteq
  H^0\bigl(E_\beta,\cO_{E_\beta}(mL)\bigr) \Bigr)\\
    &= \codim\Bigl(H^0\bigl(\overline{E}_\beta,\cO_{\overline{E}_\beta}(m\,
      \pi^*L) \otimes \pi^{-1}\fa^{m-c} \cdot \cO_{\overline{E}_\beta}
      \bigr) \subseteq H^0\bigl(\overline{E}_\beta,\cO_{\overline{E}_\beta}(m\,
      \pi^*L)\bigr)\Bigr)
  \intertext{by the flatness of $k \subseteq \overline{k}$.
  Since $\cO_{\overline{E}_\beta}(\pi^*L)$ is very ample by base change, we
  can choose a closed point $x \in
  Z(\pi^{-1}\fa \cdot \cO_{\overline{E}_\beta}) \cap \overline{E}_\beta$, in
  which case $\cO_{\overline{E}_\beta}(m\,\pi^*L)$ separates $(m-c)$-jets at $x$
  by \cite[Proof of Lem.\ 3.7]{Ito13} (see also \cref{lem:ito37}).
  Finally, the dimension of the space of $(m-c)$-jets at $x$ is at least that
  for a regular point of a variety of dimension $n$, hence}
    \MoveEqLeft[3] \codim\Bigl(H^0\bigl(\overline{E}_\beta,
    \cO_{\overline{E}_\beta}(m\,\pi^*L) \otimes \pi^{-1}\fa^{m-c} \cdot
    \cO_{\overline{E}_\beta} \bigr) \subseteq
    H^0\bigl(\overline{E}_\beta,\cO_{\overline{E}_\beta}(m\,\pi^*L)\bigr)
    \Bigr)\\
    &\ge \codim\Bigl(H^0\bigl(\overline{E}_\beta,\cO_{\overline{E}_\beta}(m\,
      \pi^*L) \otimes \fm_x^{m-c} \cdot \cO_{\overline{E}_\beta} \bigr)
      \subseteq H^0\bigl(\overline{E}_\beta,\cO_{\overline{E}_\beta}(m\,
      \pi^*L)\bigr)\Bigr)\\
      &\ge \binom{m-c+n}{n-1} \ge C_3 \cdot m^{n-1}
  \end{align*}
  for some constant $C_3 > 0$ and all $m \gg 0$, as required.
\end{proof}

\chapter{Moving Seshadri constants}\label{chapter:movingseshadri}
Moving Seshadri constants were defined by
Nakamaye\index{Nakamaye, Michael} \cite{Nak03} as a generalization of the
Seshadri constant introduced in
\cref{sect:seshadriconstants} to arbitrary $\RR$-Cartier divisors.
In this chapter, we extend basic results on moving Seshadri constants from
\cites{Nak03}[\S6]{ELMNP09} to the setting of possibly singular
varieties over arbitrary fields.
These results are new even for complex projective varieties that are not smooth.
Some of this material will appear in joint work with Mihai
Fulger\index{Fulger, Mihai} \cite{FM2}.
\section{Definition and basic properties}
\label{sect:movingseshdef}
Following \cite{ELMNP09}, we define the moving Seshadri constant as follows:
\begin{definition}[cf.\ {\cite[Def.\ 6.1]{ELMNP09}}]\label{def:movingsesh}
  Let $X$ be a normal projective variety over a field $k$ and let $D$ be an
  $\RR$-Cartier divisor on $X$.
  Consider a $k$-rational point $x \in X$.
  If $x \notin \Bplus(D)$, then the \textsl{moving Seshadri constant}
  \index{Seshadri constant, moving, $\varepsilon(\lVert D \rVert;x)$|textbf} of
  $D$ at $x$ is
  \begin{equation}\label{eq:movingseshdef}
    \varepsilon\bigl(\lVert D \rVert;x\bigr) \coloneqq \sup_{f^*D \equiv_\RR A +
    E} \varepsilon(A;x)\glsadd{movingseshadri}
  \end{equation}
  where the supremum runs over all birational morphisms $f\colon X' \to X$ from
  normal projective varieties $X'$ that are isomorphisms over a neighborhood of
  $x$, and $\RR$-numerical equivalences $f^*D \equiv_\RR A + E$ where $A$ is an
  ample $\RR$-Cartier divisor and $E$ is an effective $\RR$-Cartier divisor such
  that $x \notin f(\Supp(E))$.
  If $x \in \Bplus(D)$, then we set $\varepsilon(\lVert D \rVert;x) = 0$.
  By definition, $x \in \Bplus(D)$ if and only if $\varepsilon(\lVert D
  \rVert;x) = 0$.\index{Seshadri constant, moving, $\varepsilon(\lVert D \rVert;x)$!detects $\Bplus(D)$}
\end{definition}
Note that $\RR$-numerical equivalences of the form in \cref{eq:movingseshdef}
exist since $f^{-1}(x) \notin \Bplus(f^*D)$ by \cref{prop:bbp23}.
\medskip
\par We collect some elementary properties of moving Seshadri constants.
Recall that if $x \in X$ is a closed point, then \gls*{bigconex}
denotes the open convex subcone of the big
cone consisting of big $\RR$-Cartier divisor classes $D \in N^1_\RR(X)$ such
that $x \notin \Bplus(D)$ \cite[Def.\ 5.1]{ELMNP09}.
Here $N^1_\RR(X)$ is the \textsl{N\'eron--Severi space} associated to $X$
defined in \cref{eq:neronseveri}.
\begin{proposition}[cf.\ {\cite[Prop.\ 6.3 and Rem.\ 6.5]{ELMNP09}}]
  \label{prop:movingseshelem}
  Let $X$ be a normal projective variety over a field $k$ and let $x \in X$ be a
  $k$-rational point.
  Then, the function
  \[
    \begin{tikzcd}[row sep=0,column sep=1.6em]
      \Bigcone^{\{x\}}_\RR(X) \rar & \RR_{>0}\\
      D \rar[mapsto] & \varepsilon\bigl(\lVert D \rVert;x\bigr)
    \end{tikzcd}
  \]
  is continuous.\index{Seshadri constant, moving, $\varepsilon(\lVert D \rVert;x)$!is continuous}
  Moreover, if $D$ is an $\RR$-Cartier divisor, then we have the following:
  \begin{enumerate}[label=$(\roman*)$,ref=\roman*]
    \item\label{prop:movingseshvol}
      \index{Seshadri constant, moving, $\varepsilon(\lVert D \rVert;x)$!relationship to volume and Hilbert--Samuel multiplicity}
      $\varepsilon(\lVert D \rVert;x) \le (\vol_X(D)/e(\cO_{X,x}))^{1/\dim X}$.
    \item\label{prop:movingseshnumequiv}
      \index{Seshadri constant, moving, $\varepsilon(\lVert D \rVert;x)$!is a numerical invariant}
      If $D$ and $E$ are numerically equivalent $\RR$-Cartier divisors, then
      $\varepsilon(\lVert D \rVert;x) = \varepsilon(\lVert E \rVert;x)$.
    \item\label{prop:movingseshhomog}
      \index{Seshadri constant, moving, $\varepsilon(\lVert D \rVert;x)$!is homogeneous}
      $\varepsilon(\lVert \lambda D \rVert;x) = \lambda \cdot
      \varepsilon(\lVert D \rVert;x)$ for every positive real number $\lambda$.
    \item\label{prop:movingseshadd}
      \index{Seshadri constant, moving, $\varepsilon(\lVert D \rVert;x)$!is concave}
      If $D'$ is another $\RR$-Cartier divisor such that $x \notin \Bplus(D)
      \cup \Bplus(D')$, then
      \[
        \varepsilon\bigl(\lVert D + D' \rVert;x) \ge \varepsilon\bigl(\lVert D
        \rVert;x\bigr) + \varepsilon\bigl(\lVert D' \rVert;x\bigr).
      \]
    \item\label{prop:movingseshbignef}
      \index{Seshadri constant, moving, $\varepsilon(\lVert D \rVert;x)$!is e(D;x) when D is nef@is $\varepsilon(D;x)$ when $D$ is nef}
      \index{Seshadri constant, $\varepsilon(D;x)$!is e(AbsD;x) when D is nef@is $\varepsilon(\lVert D \rVert;x)$ when $D$ is nef}
      If $D$ is a nef $\RR$-Cartier divisor, then
      $\varepsilon(\lVert D \rVert;x) = \varepsilon(D;x)$.
  \end{enumerate}
\end{proposition}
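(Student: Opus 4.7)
The plan is to verify the five numbered properties essentially in the order \ref{prop:movingseshnumequiv}, \ref{prop:movingseshhomog}, \ref{prop:movingseshadd}, \ref{prop:movingseshbignef}, \ref{prop:movingseshvol}, and then to derive continuity from concavity and local boundedness. Parts \ref{prop:movingseshnumequiv} and \ref{prop:movingseshhomog} will fall straight out of the definition: $\Bplus(D)$ depends only on the numerical class of $D$, the set of allowable decompositions $f^{*}D \equiv_\RR A + E$ is cut out purely by numerical conditions on $A$ and $E$, and scaling by $\lambda > 0$ bijects decompositions for $D$ with decompositions for $\lambda D$ while multiplying $\varepsilon(A;x)$ by $\lambda$ via the homogeneity of the ordinary Seshadri constant \cite[Ex.\ 5.1.4]{Laz04a}.

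For \ref{prop:movingseshadd}, given decompositions $f^{*}D \equiv_\RR A + E$ and $g^{*}D' \equiv_\RR A' + E'$, I would pass to a common normal projective birational model $h\colon Y \to X$ dominating both $f$ and $g$, chosen to be an isomorphism over a neighborhood of $x$ (e.g.\ the normalization of the graph of the rational map). Adding the pullbacks of the two decompositions gives $h^{*}(D+D') \equiv_\RR (\tilde A + \tilde A') + (\tilde E + \tilde E')$ with $\tilde A + \tilde A'$ ample and $x$ avoiding the image of $\tilde E + \tilde E'$, and superadditivity of the ordinary Seshadri constant on ample classes (immediate from \cref{eq:seshinfcurves} applied to each curve through $x$) yields $\varepsilon(\tilde A + \tilde A';x) \ge \varepsilon(\tilde A;x) + \varepsilon(\tilde A';x)$; taking suprema gives the result.

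The key step is \ref{prop:movingseshbignef}, and this is where I expect the only real work. For $\le$, each admissible decomposition satisfies $f^{*}D - A \equiv_\RR E$ with $E$ effective and $x \notin f(\Supp E)$; since $f$ is an isomorphism near $x$, any integral curve $C$ through $x$ is not contained in $\Supp E$, so $(E\cdot C) \ge 0$ and \cref{eq:seshinfcurves} gives $\varepsilon(A;x) \le \varepsilon(f^{*}D;x) = \varepsilon(D;x)$. For $\ge$, since $x \notin \Bplus(D)$, pick one decomposition $D \equiv_\RR A_0 + E_0$ with $A_0$ ample and $x \notin \Supp E_0$, and for $t \in (0,1)$ form the new decomposition
\[
  D \equiv_\RR \bigl((1-t)D + tA_0\bigr) + tE_0,
\]
whose ample part $A_t = (1-t)D + tA_0$ satisfies $\varepsilon(A_t;x) \ge (1-t)\varepsilon(D;x) + t\,\varepsilon(A_0;x)$ again by \cref{eq:seshinfcurves} and superadditivity on the nef cone; letting $t \to 0^{+}$ concludes the argument. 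The subtlety here is that the usual proof via small perturbation $D + \delta H$ lives in the wrong cone, so one must rewrite $D$ itself as $A_t + tE_0$ using the hypothesis $x \notin \Bplus(D)$, rather than approximating from outside the nef cone.

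Part \ref{prop:movingseshvol} then follows by combining the bound $\varepsilon(A;x)^n \le \vol_X(A)/e(\cO_{X,x})$ for ample $A$ (a consequence of \cite[Prop.\ 5.1.9]{Laz04a}, valid at a $k$-rational point via pullback to the blowup and Hilbert--Samuel multiplicities) with the inequalities $\vol_{X'}(A) \le \vol_{X'}(f^{*}D) = \vol_X(D)$ and $e(\cO_{X',f^{-1}(x)}) = e(\cO_{X,x})$, the latter since $f$ is an isomorphism near $x$. Finally, continuity on $\Bigcone^{\{x\}}_\RR(X)$ is a formal consequence of \ref{prop:movingseshhomog} and \ref{prop:movingseshadd}: positive homogeneity together with superadditivity on an open convex cone makes $\varepsilon(\lVert\cdot\rVert;x)$ concave on that cone, and the upper bound from \ref{prop:movingseshvol} combined with the continuity of the volume function \cite[Cor.\ 2.2.45]{Laz04a} shows it is locally bounded above; a concave, locally bounded function on an open convex subset of a finite-dimensional real vector space is automatically continuous there.
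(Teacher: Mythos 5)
Your proposal follows the paper's proof essentially step for step: parts \cref{prop:movingseshvol}--\cref{prop:movingseshadd} are reduced to the corresponding properties of ordinary Seshadri constants via \cref{eq:seshinfcurves} and \cite[Prop.\ 5.1.9]{Laz04a}; continuity is deduced from homogeneity plus superadditivity exactly as in \cite[Rem.\ 5.4]{ELMNP09}, which the paper cites for this; and for \cref{prop:movingseshbignef} your decomposition $D \equiv_\RR A_t + tE_0$ with $A_t = (1-t)D + tA_0$ is precisely the paper's $A_n = \frac{1}{n}A + \frac{n-1}{n}D$ with $t = 1/n$. If anything, your limiting inequality is slightly cleaner than the paper's \cref{eq:decompseshapprox}, since you apply superadditivity of the infimum to the two nef classes $(1-t)D$ and $tA_0$ rather than introducing the term $\frac{1}{n}\inf_C\{(-E\cdot C)/e(\cO_{C,x})\}$, whose finiteness requires a further remark.

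There is one step that fails as literally written. In \cref{prop:movingseshadd}, after passing to a common model $h = f\circ p = g\circ q\colon Y \to X$, the classes $\tilde A = p^*A$ and $\tilde A' = q^*A'$ are only nef and big (pullbacks of ample divisors under a nontrivial birational morphism are never ample), so $h^*(D+D') \equiv_\RR (\tilde A + \tilde A') + (\tilde E + \tilde E')$ is \emph{not} an admissible decomposition in the sense of \cref{def:movingsesh}. The repair is standard and uses the same convex-combination device as your argument for \cref{prop:movingseshbignef}: choose an effective $p$-exceptional divisor $F_p$ with $p^*A - \epsilon F_p$ ample for $0 < \epsilon \ll 1$ (and likewise $F_q$); since $p$ and $q$ are isomorphisms over a neighborhood of $x$, the images $h(\Supp F_p)$ and $h(\Supp F_q)$ avoid $x$, so absorbing $\epsilon F_p + \epsilon F_q$ into the effective part gives an admissible decomposition with ample part $(p^*A - \epsilon F_p) + (q^*A' - \epsilon F_q)$; writing this ample part as a convex combination with $p^*A + q^*A'$ and letting $\epsilon \to 0$ recovers the desired lower bound $\varepsilon(A;f^{-1}(x)) + \varepsilon(A';g^{-1}(x))$. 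With this correction the argument is complete.
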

\begin{proof}
  \cref{prop:movingseshvol}--\cref{prop:movingseshadd} follow by
  definition and from the
  analogous properties for usual Seshadri constants;
  for
  \cref{prop:movingseshvol}, the analogous property is \cite[Prop.\
  5.1.9]{Laz04a}.
  The continuity property follows from \cref{prop:movingseshhomog} and
  \cref{prop:movingseshadd} by \cite[Rem.\ 5.4]{ELMNP09}.
  \par We now prove \cref{prop:movingseshbignef}, following \cite[Rem.\
  6.5]{ELMNP09}.
  If $x \in \Bplus(D)$, then $\varepsilon(\lVert D \rVert;x) = 0$ by definition,
  while $\varepsilon(D;x) = 0$ by combining \cref{thm:nakamaye} and \cite[Prop.\
  5.1.9]{Laz04a}.
  It therefore suffices to consider the case when $x \notin \Bplus(D)$.
  As in \cref{def:movingsesh}, choose a birational morphism
  $f\colon X' \to X$ with a decomposition $f^*D \equiv_\RR A +
  E$.
  We have
  \[
    \varepsilon(D;x) = \varepsilon\bigl(f^*D;f^{-1}(x)\bigr) \ge
    \varepsilon\bigl(A;f^{-1}(x)\bigr)
  \]
  where the first equality holds since $D$ is nef and $f$ is an isomorphism at
  $x$.
  The second inequality holds by combining \cref{eq:seshinfcurves} and the fact
  that $x \notin f(\Supp(E))$, hence $E \cdot C \ge 0$ for every integral curve
  $C \subseteq X'$ passing through $f^{-1}(x)$.
  Taking the supremum over all $f$ as in \cref{def:movingsesh}, we have the
  inequality $\varepsilon(D;x) \ge \varepsilon(\lVert D \rVert;x)$.
  For the opposite inequality, write $D \equiv_\RR A +
  E$ with $A$ an ample $\RR$-Cartier divisor and $E$ an
  effective $\RR$-Cartier divisor not containing $x$ in its support.
  For every integer $n \ge 1$, we can write
  \[
    D \equiv_\RR \frac{1}{n}A + \frac{n-1}{n}D + \frac{1}{n}E,
  \]
  hence setting $A_n \coloneqq \frac{1}{n}A + \frac{n-1}{n}D$, we have $D
  \equiv_\RR A_n + \frac{1}{n}E$ for an ample $\RR$-Cartier divisor $A_n$ and a
  fixed effective $\RR$-Cartier divisor $E$.
  We therefore have
  \begin{equation}\label{eq:damineq}
    \varepsilon\bigl(\lVert D \rVert;x\bigr) \ge \varepsilon(A_n;x)
  \end{equation}
  for all $n$.
  Now we note that using the characterization in \cref{eq:seshinfcurves}, we
  have
  \begin{equation}\label{eq:decompseshapprox}
    \varepsilon(A_n;x) = 
    \inf_{C \ni x} \biggl\{ \frac{(A_n \cdot C)}{e(\cO_{C,x})} \biggr\}
    \ge \inf_{C \ni x} \biggl\{ \frac{(D \cdot C)}{e(\cO_{C,x})} \biggr\} +
    \frac{1}{n} \inf_{C \ni x} \biggl\{\frac{(-E \cdot
    C)}{e(\cO_{C,x})} \biggr\}.
  \end{equation}
  Taking the limit $n \to \infty$ in \cref{eq:damineq}, we obtain the
  inequality $\varepsilon(\lVert D \rVert;x) \ge \varepsilon(D;x)$.
\end{proof}
\begin{remark}
  We note that the continuity statement in
  \cref{prop:movingseshelem} is \emph{not} the
  analogue of \cite[Thm.\ 6.2]{ELMNP09}, which states that if $X$ is a smooth
  complex projective variety and $x \in X$ is a closed point, then the function
  $D \mapsto \varepsilon(\lVert D \rVert;x)$ is continuous on the entire
  N\'eron--Severi space $N^1_\RR(X)$.
  A proof of this statement would require extending the main results about
  restricted volume functions in \cite{ELMNP09} to our setting.
\end{remark}
\section{Alternative descriptions}
\label{sect:movingseshaltdefs}
We now give alternative characterizations of the moving Seshadri constant
defined in \cref{def:movingsesh}.
\subsection{Nakamaye's description}
\index{Seshadri constant, moving, $\varepsilon(\lVert D \rVert;x)$!Nakamaye's
description of|(}
Moving Seshadri constants were first defined by Nakamaye by decomposing the
complete linear system $\lvert D \rvert$ into its moving and fixed parts on a
birational model of $X$.
The following is a version of his definition that works over arbitrary fields.
\begin{definition}[cf.\ {\cite[Def.\ 0.4]{Nak03}}]\label{def:movingseshnakamaye}
  Let $X$ be a normal projective variety over a field $k$ and let $D$ be a
  $\QQ$-Cartier divisor on $X$.
  Let $x \notin \SB(D)$ be a $k$-rational point.
  For every integer $m \ge 1$ such that $mD$ is Cartier and $x \notin \Bs(\lvert
  mD \rvert)$, let $\pi_m\colon X_m
  \to X$ be a morphism from a normal projective variety $X_m$ that is an
  isomorphism over a neighborhood of $x$, and such that $\pi_m^{-1}\fb(\lvert mD
  \rvert)\cdot\cO_{X_m} = \cO_{X_m}(-F_m)$ for an effective Cartier divisor
  $F_m$.
  We can then write
  \[
    \bigl\lvert \pi_m^*(mD) \bigr\rvert = \lvert M_m \rvert + F_m,
  \]
  where $\lvert M_m \rvert$ is the moving part and $F_m$ is the fixed part of
  the linear system $\lvert \pi_m^*(mD) \rvert$, respectively.
  We then set
  \[
    \varepsilon_N\bigl(\lVert D \rVert;x\bigr) \coloneqq
    \limsup_{m \to \infty} \frac{\varepsilon\bigl(M_m;\pi_m^{-1}(x)\bigr)}{m}
    \glsadd{movingseshadrinakamaye}
  \]
  where the limit supremum is taken over all $m$ such that $mD$ is
  integral.
  \par To make sure that $\varepsilon_N(\lVert D \rVert;x)$ is well-defined, we
  show that $\varepsilon(M_m;\pi_m^{-1}(x))$ does not depend on the
  choice of morphism $\pi_m$.
  First, any two morphisms $\pi_m\colon X_m \to X$
  and $\pi_m' \colon X_m' \to X$ as above can be dominated by a morphism
  $\pi_m''\colon X_m'' \to X$ satisfying the same properties, and the normality
  of the varieties $X,X_m,X_m',X_m''$ imply that the moving parts on $X_m$ and
  $X_m'$ pullback to the moving part on $X_m''$.
  Since $\pi_m,\pi_m',\pi_m''$ are all isomorphisms in a neighborhood of $x$, we
  see that the Seshadri constants of $M_m,M_m',M_m''$ are equal, hence
  $\varepsilon(M_m;\pi_m^{-1}(x))$ does not depend on the choice of
  $\pi_m$.
\end{definition}
We now show that the limit supremum used to define $\varepsilon_N(\lVert D
\rVert;x)$ is equal to a limit.
\begin{lemma}\label{lem:movingseshnakamayelimit}
  With notation as in \cref{def:movingseshnakamaye}, we have
  \[
    \varepsilon_N\bigl(\lVert D \rVert;x\bigr) =
    \lim_{m \to \infty} \frac{\varepsilon\bigl(M_m;\pi_m^{-1}(x)\bigr)}{m}
    = \sup_m \frac{\varepsilon\bigl(M_m;\pi_m^{-1}(x)\bigr)}{m}.
  \]
\end{lemma}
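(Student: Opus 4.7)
The strategy is to establish superadditivity of the sequence $a_m := \varepsilon(M_m; \pi_m^{-1}(x))$ on the semigroup $S$ of ``valid'' indices $m$ (those for which $mD$ is Cartier and $x \notin \Bs(\lvert mD \rvert)$), and then pass to the ratios $a_m/m$ by Fekete-type reasoning. First I would observe that $S$ is a subsemigroup of $\NN$: the inclusion $\fb(\lvert mD \rvert) \cdot \fb(\lvert nD \rvert) \subseteq \fb(\lvert (m+n)D \rvert)$ recalled in \cref{ex:triplelinsys} implies that if $x$ lies outside both base loci then it lies outside $\Bs(\lvert (m+n)D \rvert)$ as well. Since $x \notin \SB(D)$, $S$ is nonempty, and $\NN \cdot p \subseteq S$ for every $p \in S$.

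The crucial step is superadditivity $a_{m+n} \geq a_m + a_n$ for $m, n \in S$. Pick a normal projective variety $X'$ and a birational morphism $\pi\colon X' \to X$ factoring through each of $\pi_m, \pi_n, \pi_{m+n}$ via morphisms $\rho_m, \rho_n, \rho_{m+n}$, chosen so that $\pi$ is an isomorphism over a neighborhood of $x$; such an $X'$ can be obtained, e.g., as the normalization of the closure of the diagonal of $X_{\mathrm{sm}}$ in $X_m \times_X X_n \times_X X_{m+n}$. Write $y := \pi^{-1}(x)$. Pulling the ideal-sheaf inclusion above back to $X'$ yields the divisor inequality $\rho_{m+n}^*F_{m+n} \leq \rho_m^*F_m + \rho_n^*F_n$, which combined with
\[
\pi^*((m+n)D) = \rho_{m+n}^*M_{m+n} + \rho_{m+n}^*F_{m+n} = \rho_m^*M_m + \rho_n^*M_n + \rho_m^*F_m + \rho_n^*F_n
\]
gives $\rho_{m+n}^*M_{m+n} = \rho_m^*M_m + \rho_n^*M_n + E$, where $E$ is an effective Cartier divisor with $y \notin \Supp E$ (since $y$ avoids the support of each $\rho_\star^*F_\star$). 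Each $M_m$ is basepoint-free on $X_m$, hence nef, and each $\rho_\star$ is an isomorphism near $y$, so the intersection-theoretic formula \cref{eq:seshinfcurves} gives $\varepsilon(\rho_\star^*M_\star; y) = a_\star$. Since intersection numbers are additive and every curve through $y$ meets the effective divisor $E$ non-negatively, one obtains
\[
a_{m+n} = \varepsilon(\rho_{m+n}^*M_{m+n}; y) \geq \varepsilon(\rho_m^*M_m; y) + \varepsilon(\rho_n^*M_n; y) = a_m + a_n.
\]

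With superadditivity in hand, both conclusions follow by elementary manipulations. The inequality $\limsup_m a_m/m \leq \sup_m a_m/m$ is trivial. Conversely, iterated superadditivity gives $a_{km_0} \geq k a_{m_0}$, hence $a_{km_0}/(km_0) \geq a_{m_0}/m_0$ for every $k \geq 1$ and every $m_0 \in S$; thus $\limsup_m a_m/m \geq \limsup_k a_{km_0}/(km_0) \geq a_{m_0}/m_0$, and taking the supremum over $m_0 \in S$ yields $\limsup \geq \sup$, giving the first identity. To upgrade $\limsup$ to a genuine $\lim$, fix $m_0 \in S$ and note that for $N \in S$ sufficiently large we may write $N = q m_0 + r$ with $r \in S \cup \{0\}$ (possible because the numerical semigroup $S$ is cofinite inside $d\NN$, where $d = \gcd(S)$); superadditivity then gives $a_N \geq q a_{m_0}$, and since $a_m/m$ is uniformly bounded above by $(\vol_X(D)/e(\cO_{X,x}))^{1/\dim X}$ from \cref{prop:movingseshelem}\cref{prop:movingseshvol}, dividing by $N$ and letting $N \to \infty$ yields $\liminf_N a_N/N \geq a_{m_0}/m_0$; taking the supremum over $m_0$ forces $\liminf \geq \sup$, so the three quantities coincide.

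The main technical obstacle will be the combinatorial bookkeeping around $S$ in the final Fekete step, since superadditivity holds only on $S$ rather than on all of $\NN$; this is routine but requires a moment's care to ensure the decomposition $N = q m_0 + r$ respects membership in $S$. The geometric heart of the argument—producing the common model $X'$ and extracting $\rho_{m+n}^*M_{m+n} - \rho_m^*M_m - \rho_n^*M_n \geq 0$ off a neighborhood of $y$ from the multiplication of sections—is where all the content lies.
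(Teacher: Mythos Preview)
Your proposal is correct and follows essentially the same approach as the paper: build a common birational model on which $M_{m+n} = M_m + M_n + E$ with $E$ effective and missing the point, deduce superadditivity of $\varepsilon(M_m;\pi_m^{-1}(x))$, and invoke Fekete's lemma. The paper is terser---it simply cites Fekete's lemma after establishing superadditivity---whereas you spell out the numerical-semigroup version of the Fekete argument by hand. One small note: the uniform upper bound you invoke from \cref{prop:movingseshelem}\cref{prop:movingseshvol} is not needed (the inequality $\liminf_N a_N/N \ge a_{m_0}/m_0$ follows directly from $a_N \ge q\,a_{m_0}$ and $q/N \to 1/m_0$), and that proposition bounds $\varepsilon(\lVert D\rVert;x)$ rather than $a_m/m$ directly, so you would need a separate line to justify it if you kept it.
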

\begin{proof}
  Let $m$ and $n$ be positive integers such that $mD$ and $nD$ are Cartier
  divisors.
  Choose $\pi\colon X' \to X$ that satisfies the properties in
  \cref{def:movingseshnakamaye} for $\lvert
  mD \rvert$, $\lvert nD \rvert$, and $\lvert (m+n)D \rvert$, for example by
  blowup up the base loci for all three linear systems, and then taking a
  normalization.
  Since we have $M_{m+n} = M_m + M_n + E$ for some effective divisor $E$ with
  $\pi^{-1}(x) \notin \Supp(E)$, we deduce that
  \[
    \varepsilon\bigl(M_{m+n};\pi^{-1}(x)\bigr) \ge
    \varepsilon\bigl(M_{m};\pi^{-1}(x)\bigr) +
    \varepsilon\bigl(M_{n};\pi^{-1}(x)\bigr).
  \]
  Thus, the sequence $\{\varepsilon(M_m;\pi_m^{-1}(x))\}_m$ is superadditive,
  and Fekete's lemma\index{Fekete, Michael, lemma of} \cite[Pt.\ I,
  n\textsuperscript{o} 98]{PS98} implies that the limit supremum is equal to the
  limit and the supremum.
\end{proof}
Nakamaye's definition coincides with the one in \cref{def:movingsesh} for $x
\notin \SB(D)$.
\begin{proposition}[cf.\ {\cite[Prop.\ 6.4]{ELMNP09}}]
  \label{prop:movingseshnakamaye}
  Let $X$ be a normal projective variety over a field $k$ and let $D$ be a
  $\QQ$-Cartier divisor.
  If $x \notin \SB(D)$ is a $k$-rational point, then $\varepsilon(\lVert D
  \rVert;x) = \varepsilon_N(\lVert D \rVert;x)$.
\end{proposition}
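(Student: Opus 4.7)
For the direction $\varepsilon(\lVert D\rVert;x)\ge\varepsilon_N(\lVert D\rVert;x)$, I would fix $m$ with $mD$ Cartier and $x\notin\Bs(\lvert mD\rvert)$, take a model $\pi_m\colon X_m\to X$ as in Definition~\ref{def:movingseshnakamaye} giving $\pi_m^*(mD)=M_m+F_m$, and note that $M_m$ is basepoint-free (hence nef) and $F_m$ is effective with $\pi_m^{-1}(x)\notin\Supp F_m$. Proposition~\ref{prop:movingseshelem}\cref{prop:movingseshbignef} applied to the nef divisor $M_m$ gives $\varepsilon(M_m;\pi_m^{-1}(x))=\varepsilon(\lVert M_m\rVert;\pi_m^{-1}(x))$; the degenerate case $\pi_m^{-1}(x)\in\Bplus(M_m)$ forces both sides to zero by Theorem~\ref{thm:nakamaye} combined with \eqref{eq:seshinfcurves}, so assume otherwise. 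For each $\delta>0$, Definition~\ref{def:movingsesh} applied on $X_m$ supplies a birational $\sigma\colon X_m'\to X_m$ that is an isomorphism near $\pi_m^{-1}(x)$, together with a decomposition $\sigma^*M_m\equiv_\RR A'+E'$ with $A'$ ample, $E'$ effective, $\sigma^{-1}(\pi_m^{-1}(x))\notin\Supp E'$, and $\varepsilon(A';\sigma^{-1}(\pi_m^{-1}(x)))\ge\varepsilon(M_m;\pi_m^{-1}(x))-\delta$. Then $(\pi_m\circ\sigma)^*D\equiv_\RR A'/m+(E'+\sigma^*F_m)/m$ is a valid competitor for $\varepsilon(\lVert D\rVert;x)$, so $\varepsilon(\lVert D\rVert;x)\ge(\varepsilon(M_m;\pi_m^{-1}(x))-\delta)/m$; letting $\delta\to0$ and taking the supremum over $m$ via Lemma~\ref{lem:movingseshnakamayelimit} concludes this direction.

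For the reverse inequality, fix a competitor $f^*D\equiv_\RR A+E$ from Definition~\ref{def:movingsesh}; it suffices to show $\varepsilon(A;f^{-1}(x))\le\varepsilon_N(\lVert D\rVert;x)$. Writing $E=\sum s_iD_i$ with $s_i\in\RR_{>0}$ and $D_i$ effective Cartier, I first approximate the $s_i$ by positive rationals $s_i'$ to obtain an effective $\QQ$-Cartier $E_0\coloneqq\sum s_i'D_i$ with $\Supp E_0=\Supp E$, and set $A_0\coloneqq f^*D-E_0$, which is $\QQ$-Cartier since $D$ is and is numerically equivalent to $A+(E-E_0)$. For sufficiently fine approximation, Kleiman's criterion gives that $A_0$ is ample, and continuity of the Seshadri constant on the ample cone yields $\varepsilon(A_0;f^{-1}(x))\to\varepsilon(A;f^{-1}(x))$; after renaming I thus reduce to the case $f^*D=A+E$ as $\QQ$-Cartier divisors. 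For $m$ divisible enough, $mA$ is very ample and $mE$ is effective Cartier, so $\lvert mA\rvert+mE\subseteq\lvert mf^*D\rvert$ yields $\fb(\lvert mf^*D\rvert)\supseteq\cO_{X'}(-mE)$. Let $\rho\colon\tilde X\to X'$ be the normalization of the blowup of $\fb(\lvert mf^*D\rvert)$; by Lemma~\ref{lem:baselocusnormal} and $x\notin\Bs\lvert mD\rvert$, the composition $\tilde\pi\coloneqq f\circ\rho$ is a valid model for $\lvert mD\rvert$ in the sense of Definition~\ref{def:movingseshnakamaye}, and $\rho$ is an isomorphism near $\tilde y\coloneqq\tilde\pi^{-1}(x)$. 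Writing $\tilde\pi^*(mD)=\tilde M_m+\tilde F_m$, the ideal containment pulls back to $\tilde F_m\le\rho^*(mE)$, so $\tilde M_m=\rho^*(mA)+E''$ with $E''\coloneqq\rho^*(mE)-\tilde F_m$ effective and $\tilde y\notin\Supp E''$. The intersection formula \eqref{eq:seshinfcurves}, together with the fact that each integral curve through $\tilde y$ projects birationally via $\rho$ onto a curve through $f^{-1}(x)$ with matching Hilbert--Samuel multiplicity, yields $\varepsilon(\tilde M_m;\tilde y)\ge m\,\varepsilon(A;f^{-1}(x))$; dividing by $m$ and invoking Lemma~\ref{lem:movingseshnakamayelimit} closes the argument.

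The main obstacle is the rational-approximation step in the second direction, namely converting the $\RR$-numerical relation $f^*D\equiv_\RR A+E$ into an honest $\QQ$-Cartier equality while preserving ampleness of the ample part, effectivity of the effective part with $f^{-1}(x)$ off its support, and the near-value of $\varepsilon(A;f^{-1}(x))$. Naïvely perturbing $A$ directly fails because the numerically trivial discrepancy between $f^*D-E$ and $A$ need not be representable as a small $\QQ$-Cartier divisor; the fix is to perturb only the nonnegative coefficients of $E$ and define the ample part residually as $A_0\coloneqq f^*D-E_0$, after which Kleiman's criterion preserves ampleness and continuity of Seshadri constants on the ample cone preserves the value. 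A secondary but nontrivial point is that the model $\tilde X$ must simultaneously dominate $X'$, resolve $\fb(\lvert mD\rvert)$, and remain an isomorphism near $f^{-1}(x)$; the latter is ensured by Lemma~\ref{lem:baselocusnormal} together with the standing hypothesis $x\notin\SB(D)$, which provides $m$ with $x\notin\Bs\lvert mD\rvert$ so that $\fb(\lvert mf^*D\rvert)$ is trivial near $f^{-1}(x)$.
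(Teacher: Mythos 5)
Your proof is correct, and its overall skeleton---establishing each inequality by turning a decomposition on one side into a competitor for the other---is the same as the paper's, but both halves are executed differently. For $\varepsilon(\lVert D\rVert;x)\ge\varepsilon_N(\lVert D\rVert;x)$, the paper first reduces by homogeneity to the case where $\lvert D\rvert$ is birational onto its image and treats $x\in\Bplus(D)$ separately; it then writes the big and nef $M_m\equiv_\QQ A+E$ by Kodaira's lemma and passes to the averaged divisors $A_n=\frac{1}{n}A+\frac{n-1}{n}M_m$ to kill the error term. You instead invoke the already-proved nef case $\varepsilon(M_m;\pi_m^{-1}(x))=\varepsilon(\lVert M_m\rVert;\pi_m^{-1}(x))$ from \cref{prop:movingseshelem}\cref{prop:movingseshbignef}, extract a near-optimal decomposition on a model over $X_m$, and compose with $\pi_m$; this is cleaner, and it buys you the freedom to skip both the birationality reduction and the separate $\Bplus$ analysis. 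For the reverse inequality your argument is essentially the paper's (pull back, take the normalized blowup of $\fb(\lvert f^*(mD)\rvert)$, and bound the moving part below by $\rho^*(mA)$), but your preliminary rational-approximation step---replacing the $\RR$-numerical relation $f^*D\equiv_\RR A+E$ by an honest equality $f^*D=A_0+E_0$ of $\QQ$-Cartier divisors before asserting $\lvert mA_0\rvert+mE_0\subseteq\lvert mf^*D\rvert$---is a genuine improvement: the paper's claim that $M_m\sim mA+F'_m$ with $F'_m\le mE$ tacitly requires exactly this upgrade from numerical equivalence to equality, so your extra care patches a small imprecision in the source. The one ingredient you use that the paper does not is continuity of $\varepsilon(\,\cdot\,;f^{-1}(x))$ on the ample cone of $X'$, which is legitimate: it follows from superadditivity and homogeneity of the Seshadri constant together with the finite-dimensionality of $N^1_\RR(X')$.
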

\begin{proof}
  By \cref{prop:movingseshelem}\cref{prop:movingseshhomog} and
  \cref{lem:movingseshnakamayelimit}, both invariants are homogeneous with
  respect to taking integer multiples of $D$.
  It therefore suffices to consider the case when $D$ is integral, $\SB(D) =
  \Bs(\lvert D \rvert)_\red$, and $\lvert D \rvert$ induces a rational map
  birational onto its image.
  Let $m$ be a positive integer, and let $\pi_m\colon X_m \to X$ be as in
  \cref{def:movingseshnakamaye}.
  Writing $\lvert \pi_m^*(mD) \rvert = \lvert M_m \rvert + F_m$, we note that by
  assumption on $x$, we have that $\pi_m^{-1}(x) \notin \Supp(F_m)$.
  \par First, suppose that $x \in \Bplus(D)$.
  By assumption, $x \notin \SB(D) = \Bs(\lvert D \rvert)_\red$, hence
  $\pi_m^{-1}(x) \notin
  \Bs(\lvert \pi_m^*D \rvert)_\red = \Supp(F_m)$ by the normality of $X$.
  Thus, \cref{prop:bbp23} implies $\pi_m^{-1}(x) \in
  \Bplus(M_m)$, and \cref{thm:nakamaye} implies there exists a
  subvariety $V \subseteq X_m$ such that $\pi_m^{-1}(x) \in V$ and
  $(M_m^{\dim V} \cdot V) = 0$.
  We therefore see that $\varepsilon(M_m;\pi_m^{-1}(x)) = 0$ by \cite[Prop.\
  5.1.9]{Laz04a}.
  Since this is true for every $m$, we have that $\varepsilon_N(\lVert D
  \rVert;x) = 0$, hence $\varepsilon(\lVert D \rVert;x) =
  \varepsilon_N(\lVert D \rVert;x)$ when $x \in \Bplus(D)$.
  \par In the rest of the proof, we therefore assume that $x \notin \Bplus(D)$.
  Since $\lvert M_m \rvert$ induces a map birational onto the image of $X'$ by
  the assumption that $\lvert D \rvert$ induces a rational map birational onto
  its image, we
  may write $M_m \equiv_\QQ A + E$, where $A$ and $E$ are ample
  and effective $\QQ$-Cartier divisors, respectively, such that
  $\pi_m^{-1}(x) \notin \Supp(E)$ \cite[Cor.\ 2.2.7]{Laz04a}.
  For every integer $n \ge 1$, we can write
  \[
    M_m \equiv_\QQ \frac{1}{n}A + \frac{n-1}{n}M_m + \frac{1}{n}E,
  \]
  hence setting $A_n \coloneqq \frac{1}{n}A + \frac{n-1}{n}M_m$, we have $M_m
  \equiv_\QQ A_n + \frac{1}{n}E$ for an ample $\QQ$-Cartier divisor $A_n$ and a
  fixed effective $\QQ$-Cartier divisor $E$.
  By \cref{def:movingsesh}, we see that
  \begin{align*}
    \varepsilon\bigl(\lVert D \rVert;x\bigr) &\ge
    \frac{\varepsilon\bigl(A_n;\pi_m^{-1}(x)\bigr)}{m}
    \intertext{for every $n$.
    Using \cref{eq:decompseshapprox} and taking the limit as $n \to \infty$, we
    see that}
    \varepsilon\bigl(\lVert D \rVert;x\bigr) &\ge
    \frac{\varepsilon\bigl(M_m;\pi_m^{-1}(x)\bigr)}{m},
  \end{align*}
  hence $\varepsilon(\lVert D \rVert;x) \ge
  \varepsilon_N(\lVert D \rVert;x)$.
  \par We now show the reverse inequality.
  Let $f\colon X' \to X$ and $f^*D \equiv_\RR A+E$ be as in
  \cref{def:movingsesh}.
  Fix $m$ such that $mA$ is a very ample Cartier divisor.
  By taking a normalized blowup of the base locus of $f^*(mD)$, which by
  assumption is an isomorphism in a neighborhood of $f^{-1}(x)$, we can write
  \[
    \lvert f^*(mD) \rvert = \lvert M_m \rvert + F_m
  \]
  as in \cref{def:movingseshnakamaye}.
  Since $mA$ is free, we have $M_m \sim mA + F'_m$ for an effective Cartier
  divisor $F'_m$ such that $F'_m \le mE$, hence $f^{-1}(x) \notin
  \Supp(F'_m)$.
  We therefore have
  \[
    \frac{\varepsilon\bigl(M_m;\pi_m^{-1}(x)\bigr)}{m} \ge
    \varepsilon\bigl(A;f^{-1}(x) \bigr),
  \]
  hence $\varepsilon_N(\lVert D \rVert;x) \ge
  \varepsilon(\lVert D \rVert;x)$.
\end{proof}
\index{Seshadri constant, moving, $\varepsilon(\lVert D \rVert;x)$!Nakamaye's
description of|)}
\subsection{A description in terms of jet separation}
We now show that just as for usual Seshadri constants of ample Cartier divisors
at regular closed points on projective varieties \cite[Thm.\ 5.1.17]{Laz04a},
moving Seshadri constants can be described using separation of jets.
Note that this description (Proposition \ref{prop:jetsep}) is new even in
characteristic zero for singular points.
\par Recall from \cref{def:sepljets} that if $X$ is a scheme, $x \in X$ is a
closed point, and $\ell \ge -1$ is an integer, a coherent sheaf $\sF$
\textsl{separates $\ell$-jets} at $x$ if the restriction morphism
\[
    H^0(X,\sF) \longrightarrow H^0(X,\sF \otimes
    \cO_X/\fm_x^{\ell+1})
\]
is surjective, and that we denote by $s(\sF;x)$ the largest integer $\ell \ge
-1$ such that $\sF$ separates $\ell$-jets at $x$.
We can then define moving Seshadri constants using jet separation.
\begin{definition}\label{def:jetsesh}
  Let $X$ be a projective variety over a field $k$ and let $D$ be a
  $\QQ$-Cartier divisor on $X$.
  Consider a $k$-rational point $x \in X$ with defining ideal $\fm_x \subseteq
  \cO_X$.
  We set
  \[
    \varepsilon_{\textup{jet}}\bigl(\lVert D \rVert;x \bigr) \coloneqq \limsup_{m \to \infty}
    \frac{s(mD;x)}{m},\glsadd{movingseshadrijets}
  \]
  where the limit supremum runs over all integers $m \ge 1$ such that $mD$ is
  integral.
\end{definition}
\par We now prove that the limit supremum used to define $\varepsilon_{\textup{jet}}(\lVert D
\rVert;x )$ is equal to a limit.
\begin{lemma}[{\cite[Lem.\ 6.4]{FM}; cf.\ \cite[Proof of Lem.\ 3.7]{Ito13}}]
  \label{lem:ito37}
  Let $X$ be a scheme, and let $\sF$ and $\sG$ be coherent sheaves on $X$.
  Then, for every closed point $x \in X$ such that $s(\sF;x) \ge 0$ and
  $s(\sG;x) \ge 0$, we have
  \[
    s(\sF;x) + s(\sG;x) \le s(\sF \otimes \sG;x).
  \]
  With notation in \cref{def:jetsesh}, we therefore have
  \[
    \varepsilon_{\textup{jet}}\bigl(\lVert D \rVert;x \bigr) = \lim_{m \to \infty}
    \frac{s(mD;x)}{m} = \sup_{m\ge1} \frac{s(mD;x)}{m}.
  \]
\end{lemma}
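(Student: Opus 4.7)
The plan is to set $\ell = s(\sF;x)$ and $m = s(\sG;x)$ and show that $\sF \otimes \sG$ separates $(\ell+m)$-jets at $x$, i.e., that the restriction map
\[
  H^0(X, \sF \otimes \sG) \longrightarrow H^0\bigl(X, \sF \otimes \sG \otimes \cO_X/\fm_x^{\ell+m+1}\bigr)
\]
is surjective. Abbreviating $M = \sF_x$, $N = \sG_x$, $R = \cO_{X,x}$, and $\fm = \fm_x$, the target is the stalk $(M \otimes_R N)/\fm^{\ell+m+1}(M \otimes_R N)$ of a skyscraper sheaf at $x$, so I would prove by decreasing induction on $k \in \{0, 1, \ldots, \ell+m+1\}$ that the image of the restriction contains the submodule
\[
  I_k \coloneqq \fm^k(M \otimes_R N)/\fm^{\ell+m+1}(M \otimes_R N);
\]
the base case $k = \ell+m+1$ is immediate.

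The key observation for the inductive step is that since $\fm^k = \fm^a \cdot \fm^b$ as ideals whenever $a+b = k$, the submodule $\fm^k(M \otimes_R N)$ is spanned as an $R$-module by pure tensors $f \otimes g$ with $f \in \fm^a M$ and $g \in \fm^b N$, for any preferred decomposition $k = a+b$. I would choose $a = \min(k,\ell)$ and $b = k-a$, so that $b \le m$ whenever $k \le \ell + m$. Writing $\tau \in I_k$ as $\sum_i f_i \otimes g_i$ with $f_i \in \fm^a M$ and $g_i \in \fm^b N$, the hypotheses yield global sections $\tilde f_i \in H^0(X,\sF)$ and $\tilde g_i \in H^0(X,\sG)$ with $\tilde f_{i,x} - f_i \in \fm^{\ell+1}M$ and $\tilde g_{i,x} - g_i \in \fm^{m+1}N$. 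Expanding
\[
  \tilde f_{i,x} \otimes \tilde g_{i,x} - f_i \otimes g_i = (\tilde f_{i,x}-f_i)\otimes g_i + f_i \otimes(\tilde g_{i,x}-g_i) + (\tilde f_{i,x}-f_i)\otimes(\tilde g_{i,x}-g_i),
\]
the three terms lie in $\fm^{\ell+1+b}(M\otimes N)$, $\fm^{a+m+1}(M\otimes N)$, and $\fm^{\ell+m+2}(M\otimes N)$ respectively. The choices $a = \min(k,\ell)$ and $b = k-a$ force $\ell+1+b \ge k+1$ and $a+m+1 \ge k+1$, so the entire error lies in $I_{k+1}$ and is liftable by the inductive hypothesis.

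For the second statement, I would apply the main inequality with $\sF = \cO_X(mD)$ and $\sG = \cO_X(nD)$ to obtain superadditivity of the sequence $\{s(mD;x)\}$ over positive integers $m$ such that $mD$ is Cartier (restricted to those $m$ with $s(mD;x) \ge 0$; if no such $m$ exists, every term equals $-1$ and limit, limit supremum, and supremum all equal $0$). Fekete's lemma \cite[Pt.\ I, n\textsuperscript{o} 98]{PS98} then yields the asserted equalities.

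The principal obstacle is the bookkeeping in the inductive step: a naive lifting of a single pure tensor $f \otimes g$ produces error terms of orders only $\ell+1+b$ and $a+m+1$, each controlled only when the opposite factor already has sufficient order. The balance $a = \min(k,\ell)$ is dictated precisely by the need to make both orders simultaneously at least $k+1$, which is feasible exactly when $k \le \ell + m$ — in turn justifying the target $\ell + m$ for the jet order of $\sF \otimes \sG$.
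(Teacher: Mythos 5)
Your argument is correct, and it reaches the inequality $s(\sF;x)+s(\sG;x)\le s(\sF\otimes\sG;x)$ by a genuinely different route from the paper's, even though both rest on the same $\fm_x$-adic filtration. The paper first proves an auxiliary criterion --- $\sF$ separates $\ell$-jets at $x$ if and only if $H^0(X,\fm_x^i\sF)\to H^0(X,\fm_x^i\sF/\fm_x^{i+1}\sF)$ is surjective for every $i\in\{0,1,\dots,\ell\}$ --- by its own induction and a snake-lemma diagram chase, and then, for each $k=i+j$ with $i\le s(\sF;x)$ and $j\le s(\sG;x)$, deduces surjectivity onto $\fm_x^{k}(\sF\otimes\sG)/\fm_x^{k+1}(\sF\otimes\sG)$ from the multiplication map on graded pieces. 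You instead work directly with the definition: you write generators of $\fm_x^k(M\otimes_R N)$ as pure tensors via the balanced decomposition $k=a+b$ with $a\le\ell$, $b\le m$, lift each factor using the jet-separation hypotheses, and absorb the three error terms into $I_{k+1}$ by decreasing induction; this needs only that the image of the restriction map is an additive subgroup, so no reformulation is required. Your version is more self-contained and elementary, while the paper's criterion isolates the graded-piece surjectivity as a reusable statement about a single sheaf. The Fekete step is the same in both, including the caveat about those $m$ with $s(mD;x)=-1$, which you flag just as the paper does.
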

\begin{proof}
  We first show that a coherent sheaf $\sF$ separates $\ell$-jets if and
  only if
  \begin{equation}\label{eq:mimiplus1}
    H^0(X,\fm_x^i\sF) \longrightarrow H^0(X,\fm_x^i\sF/\fm_x^{i+1}\sF)
  \end{equation}
  is surjective for every $i \in \{0,1,\ldots,\ell\}$.
  We proceed by induction on $\ell$.
  If $\ell = 0$, then there is nothing to show.
  Now suppose $\ell > 0$.
  By induction and the fact that a coherent sheaf separating $\ell$-jets also
  separates all lower order jets, it suffices to show that if $\sF$
  separates $(\ell-1)$-jets, then $\sF$ separates $\ell$-jets if and only if
  \cref{eq:mimiplus1} is surjective for $i = \ell$.
  Consider the commutative diagram
  \[
    \begin{tikzcd}
      0 \rar & \fm_x^\ell\sF \rar\dar & \sF \rar\dar & \sF/\fm_x^\ell\sF
      \rar\dar[equal] & 0\\
      0 \rar & \fm_x^\ell\sF/\fm_x^{\ell+1}\sF \rar & \sF/\fm_x^{\ell+1}\sF \rar
      & \sF/\fm_x^\ell\sF \rar & 0
    \end{tikzcd}
  \]
  Taking global sections, we obtain the diagram
  \[
    \begin{tikzcd}[column sep=1.475em]
      0 \rar & H^0(X,\fm_x^\ell\sF) \rar\dar & H^0(X,\sF) \rar\dar &
      H^0(X,\sF/\fm_x^\ell\sF) \dar[equal] \rar & 0\\
      0 \rar & H^0(X,\fm_x^\ell\sF/\fm_x^{\ell+1}\sF) \rar &
      H^0(X,\sF/\fm_x^{\ell+1}\sF) \rar & H^0(X,\sF/\fm_x^\ell\sF)
    \end{tikzcd}
  \]
  where the top row remains exact by the assumption that $\sF$ separates
  $(\ell-1)$-jets.
  By the snake lemma, we see that the left vertical arrow is surjective if and
  only if the middle vertical arrow is surjective, as desired.
  \par We now prove the lemma.
  Suppose $\sF$ separates $i$-jets and $\sG$ separates $j$-jets.
  We then have the commutative diagram
  \[
    \begin{tikzcd}
      H^0(X,\fm_x^i\sF) \otimes H^0(X,\fm_x^j\sG) \rar \dar &
      H^0(X,\fm_x^i\sF/\fm_x^{i+1}\sF \otimes
      \fm_x^j\sG/\fm_x^{j+1}\sG)\dar\\
      H^0\bigl(X,\fm_x^{i+j}(\sF \otimes \sG)\bigr) \rar &
      H^0\bigl(X, \fm_x^{i+j}(\sF \otimes
      \sG)/\fm_x^{i+j+1}(\sF \otimes \sG)\bigr)
    \end{tikzcd}
  \]
  Since the top horizontal arrow is surjective by assumption, and the right
  vertical arrow is surjective, essentially by the surjectivity of 
  \[
    \fm_x^i/\fm_x^{i+1} \otimes
    \fm_x^j/\fm_x^{j+1} \simeq (\fm_x^{i}\otimes\fm_x^j) \otimes
    \cO_X/\fm_x \longtwoheadrightarrow
    \fm_x^{i+j}/\fm_x^{i+j+1},
  \]
  we see that the composition from the top left corner to the bottom right
  corner is surjective, hence the bottom horizontal arrow is surjective.
  By running through all combinations of integers $i \le s(\sF;x)$ and
  $j \le s(\sG;x)$, we see that $s(\sF;x) + s(\sG;x) \le
  s(\sF \otimes \sG;x)$ by the argument in the previous
  paragraph.
  \par The last statement about $\varepsilon_{\textup{jet}}(\lVert D \rVert;x)$ follows from
  Fekete's lemma \cite[Pt.\ I, n\textsuperscript{o} 98]{PS98}, since we have
  shown the superadditivity of the
  sequence $\{s( mD ;x)\}_{m \ge 1}$ provided that $x \notin
  \SB(D)$.
  If $x \in \SB(D)$, then $s(mD;x) = -1$ for all $m \ge 1$ such that $mD$ is
  integral, hence the limit, limit supremum, and supremum are all equal to zero.
\end{proof}
The constant $\varepsilon_{\textup{jet}}(\lVert D \rVert;x)$ detects $\Bplus(D)$.
\begin{lemma}\label{lem:jetseshbplus}
  Let $X$ be a normal projective variety over a field $k$ and let $D$ be a
  $\QQ$-Cartier divisor on $X$.
  Consider a $k$-rational point $x \in X$.
  Then, $x \in \Bplus(D)$ if and only if $\varepsilon_{\textup{jet}}(\lVert D
  \rVert;x) = 0$.
\end{lemma}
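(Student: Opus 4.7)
I will prove both implications separately. The forward direction $\varepsilon_{\textup{jet}}(\lVert D \rVert; x) > 0 \Rightarrow x \notin \Bplus(D)$ combines the Nakamaye-type description of the moving Seshadri constant from \cref{def:movingseshnakamaye,prop:movingseshnakamaye} with Nakamaye's theorem \cref{thm:nakamaye}. Assume $\varepsilon_{\textup{jet}}(\lVert D \rVert; x) > 0$. The supremum description in \cref{lem:ito37} gives $m_0 \ge 1$ with $m_0 D$ integral and $s(m_0 D; x) \ge 1$, so $x \notin \Bs(|m_0 D|)$ and hence $x \notin \SB(D)$. Let $\pi \colon X' \to X$ be a morphism as in \cref{def:movingseshnakamaye} that is an isomorphism over a neighborhood of $x$ and satisfies $\pi^{-1}\fb(|m_0 D|)\cdot\cO_{X'} = \cO_{X'}(-F)$, and write $|\pi^*(m_0 D)| = |M| + F$, so the moving part $M$ is base-point-free and hence nef. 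Since $\pi^{-1}(x) \notin F$, the section cutting out $F$ is nonzero at $\pi^{-1}(x)$, so $s(M; \pi^{-1}(x)) = s(m_0 D; x) \ge 1$. The crux is to deduce $\varepsilon(M; \pi^{-1}(x)) > 0$: if it were zero, \cref{thm:nakamaye} would produce a positive-dimensional subvariety $V \ni \pi^{-1}(x)$ with $(M^{\dim V} \cdot V) = 0$, but because $|M|$ separates $1$-jets at $\pi^{-1}(x)$, the morphism $\phi_M \colon X' \to \PP^r$ associated to $|M|$ is an immersion there. Its differential is then injective on the nonzero Zariski tangent space $T_{\pi^{-1}(x)} V$, which forces $\phi_M|_V$ to be generically finite onto a positive-dimensional subvariety of $\PP^r$; the projection formula then gives $(M^{\dim V} \cdot V) > 0$, a contradiction. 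Hence $\varepsilon_N(\lVert D \rVert; x) \ge \varepsilon(M; \pi^{-1}(x))/m_0 > 0$, and \cref{prop:movingseshnakamaye} gives $\varepsilon(\lVert D \rVert; x) > 0$, i.e., $x \notin \Bplus(D)$ by definition of the moving Seshadri constant.

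For the backward direction, assume $x \notin \Bplus(D)$. By \cite[Prop.\ 1.5]{ELMNP06} and the description of $\Bplus(D)$ as the intersection $\bigcap_A \SB(D-A)$ over ample $\QQ$-Cartier $A$, there exists such an $A$ with $D-A$ $\QQ$-Cartier, together with an integer $m \ge 1$ making $m(D-A), mA, mD$ all Cartier and $x \notin \Bs(|m(D-A)|)$. Fix $s_0 \in H^0(X, m(D-A))$ with $s_0(x) \ne 0$, and enlarge $m$ by a factor $N$ so that $NmA$ is very ample. Using the embedding $X \hookrightarrow \PP^r = \PP(H^0(X, NmA)^\vee)$ and the fact that sections of $\cO_{\PP^r}(j)$ separate $j$-jets at every closed point of $\PP^r$, one obtains $s(jNmA; x) \ge j$ for every $j \ge 1$. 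Since $s_0^{jN}$ does not vanish at $x$, multiplication by $s_0^{jN}$ induces an isomorphism of stalks $\cO_X(jNmA)_x \cong \cO_X(jNmD)_x$ and therefore preserves $j$-jets modulo $\fm_x^{j+1}$; as $t$ ranges over $H^0(X, jNmA)$, the sections $s_0^{jN} \cdot t \in H^0(X, jNmD)$ realize every $j$-jet at $x$. Thus $s(jNmD; x) \ge j$, and dividing by $jNm$ yields $\varepsilon_{\textup{jet}}(\lVert D \rVert; x) \ge 1/(Nm) > 0$.

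The main obstacle will be the application of \cref{thm:nakamaye} at a possibly singular point of the obstructing subvariety $V$ in the forward direction. Because $V$ is a positive-dimensional reduced subscheme of $X'$, its Zariski tangent space $T_{\pi^{-1}(x)} V$ is nonzero, and because $d\phi_M$ is injective on the ambient tangent space $T_{\pi^{-1}(x)} X'$, it remains injective on $T_{\pi^{-1}(x)} V$. This, together with upper semi-continuity of fiber dimension, forces $\phi_M(V)$ to have dimension $\dim V$ near $\phi_M(\pi^{-1}(x))$ and hence positive degree in $\PP^r$, delivering the needed contradiction. Keeping $\pi$ an isomorphism over a neighborhood of $x$ is equally important, as it matches the jet spaces $\cO_X/\fm_x^{j+1}$ and $\cO_{X'}/\fm_{\pi^{-1}(x)}^{j+1}$ under pullback and makes the identification $s(m_0 D; x) = s(M; \pi^{-1}(x))$ immediate; notably, no new vanishing theorems are required.
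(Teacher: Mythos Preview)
Your proof is correct. The backward direction (showing $x \notin \Bplus(D) \Rightarrow \varepsilon_{\textup{jet}}(\lVert D\rVert;x) > 0$) is essentially the paper's argument: both find an ample $A$ with $x \notin \Bs(\lvert q(rD-A)\rvert)$, then use that very ample bundles separate jets together with superadditivity (\cref{lem:ito37}) to produce positive jet separation for a multiple of $D$. Your version is slightly more explicit, but the content is the same.

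The forward direction is where the approaches genuinely diverge, though both rest on Nakamaye's theorem applied to the moving part $M$ on a model $\pi\colon X' \to X$. The paper argues directly by contradiction on $X'$: assuming $x \in \Bplus(D)\smallsetminus\SB(D)$ and $s(mD;x) > 0$, it uses \cref{prop:bbp23} to place $\pi_m^{-1}(x)$ in $\Bplus(M_m)$, obtains a subvariety $W$ with $(M_m^{\dim W}\cdot W) = 0$ from \cref{thm:nakamaye}, and then exhibits a specific member of $\lvert M_m\rvert$ (coming from an $E \in \lvert mD\rvert$ passing through $x$ but not containing $\pi_{m*}W$) that forces $(M_m^{\dim W}\cdot W) > 0$. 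You instead show $\varepsilon(M;\pi^{-1}(x)) > 0$ via a differential argument (injectivity of $d\phi_M$ at $\pi^{-1}(x)$ plus upper semicontinuity of fiber dimension), and then route through $\varepsilon_N$ and \cref{prop:movingseshnakamaye} to conclude $x \notin \Bplus(D)$. Your tangent-space argument is arguably more transparent than the paper's divisor-chasing, and invoking \cref{prop:movingseshnakamaye} is legitimate and non-circular (that proposition does not depend on the lemma at hand). The trade-off is that your proof leans on the earlier machinery of $\varepsilon_N$ and its comparison with $\varepsilon(\lVert\,\cdot\,\rVert;x)$, while the paper's proof is self-contained on the blowup; in effect, you are reusing the ``$x \in \Bplus(D) \Rightarrow \varepsilon_N = 0$'' step already established inside \cref{prop:movingseshnakamaye}.
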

\begin{proof}
  For $\Leftarrow$, note that $\varepsilon_{\textup{jet}}(\lVert D
  \rVert;x) = 0$ implies $s(mD;x) \le 0$ for all $m$ such that $mD$ is integral,
  since \cref{lem:ito37} implies $\varepsilon_{\textup{jet}}(\lVert D \rVert;x)$ is a
  supremum.
  If $x \in \SB(D)$, then $x \in \Bplus(D)$ as well, hence it suffices to
  consider the case when $x \notin \SB(D)$.
  Suppose $x \notin \Bplus(D)$, and let $A$ be a very ample Cartier divisor on
  $X$.
  By \cite[Prop.\ 1.5]{ELMNP06} and \cite[Prop.\ 2.1.21]{Laz04a}, there exist
  positive integers $q,r$ such that
  \[
    \Bplus(D) = \SB(rD-A) = \Bs\bigl(\bigl\lvert q(rD-A)\bigr\rvert\bigr)_\red.
  \]
  Since $x \notin \Bplus(D)$, we see that $\lvert q(rD-A) \rvert$ is
  basepoint-free at $x$.
  Moreover, since $\cO_X(qA)$ separates $1$-jets at $x$ by the very ampleness of
  $A$, we see that $\cO_X(qrD)$ separates $1$-jets at $x$ by \cref{lem:ito37}, a
  contradiction.
  \par For $\Rightarrow$, we note that if $x \in \SB(D)$, then $s(mD;x) = -1$
  for all $m$ such that $mD$ is integral,
  hence it suffices to consider the case when $x \in \Bplus(D) \smallsetminus
  \SB(D)$.
  Suppose $s(mD;x) > 0$ for some integer $m > 0$.
  By possibly replacing $m$ with a large and divisible enough multiple, we may
  assume that $\SB(mD) = \Bs(\lvert mD \rvert)_\red$ by \cref{lem:ito37}
  and \cite[Prop.\ 2.1.21]{Laz04a}.
  Then, for every subvariety $V \subseteq X$ containing $x$, we have $V
  \not\subseteq \Bs(\lvert mD \rvert)_\red$.
  Moreover, since $\cO_X(mD)$ separates tangent
  directions at $x$, there exists $E \in \lvert mD \rvert$ not containing $V$,
  in which case $(E^{\dim V} \cdot V) > 0$.
  \par Now let $\pi_m\colon X_m \to X$ be the normalized blowup of $\fb(\lvert
  mD \rvert)$, and write $\lvert \pi_m^*(mD)\rvert = \lvert M_m \rvert + F_m$
  where $\cO_{X_m}(-F_m) = \fb(\lvert mD \rvert) \cdot \cO_{X_m}$.
  By \cref{prop:bbp23} and the definition of the augmented base
  locus, we have
  \[
    \pi_m^{-1}(x) \in \Bplus(\pi_m^*D) \subseteq \Bplus(M_m) \cup \Supp(F_m).
  \]
  The fact that $x \notin \SB(D)$ implies $\pi_m^{-1}(x) \in \Bplus(M_m)$.
  By \cref{thm:nakamaye}, there therefore exists a subvariety $W \subseteq
  X_m$ such that $\pi_m^{-1}(x) \in W$ and $(M_m^{\dim W} \cdot W) = 0$.
  Now choose $E \in \lvert mD \rvert$ as in the previous paragraph for $V =
  \pi_{m*}W$.
  Since $F_m = (\pi_m)^{-1}\Bs(\lvert mD \rvert)$, we have that $\pi_m^*E -
  F_m$ is an effective Cartier divisor that contains $\pi_m^{-1}(x)$ but does
  not contain $W$, hence
  \[
    (M_m^{\dim W} \cdot W) = \bigl( (\pi_m^*E - F_m)^{\dim W} \cdot W \bigr) >
    0,
  \]
  a contradiction.
  We therefore have $\varepsilon_{\textup{jet}}(\lVert D \rVert;x) = 0$ if $x \in \Bplus(D)$.
\end{proof}
This lemma has the following consequence:
\begin{corollary}\label{cor:jetseshample}
  Let $X$ be a normal projective variety over an algebraically closed field $k$,
  and let $D$ be a $\QQ$-Cartier divisor on $X$.
  Then, $D$ is ample if and only if $\varepsilon_{\textup{jet}}(\lVert D \rVert;x) > 0$ for
  every closed point $x \in X$.
\end{corollary}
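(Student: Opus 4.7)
The plan is to prove this by chaining together two facts established earlier: the characterization of ampleness via augmented base loci, and \cref{lem:jetseshbplus}, which characterizes membership in $\Bplus(D)$ in terms of vanishing of $\varepsilon_{\textup{jet}}(\lVert D \rVert;x)$.

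First, I would recall from \cref{sect:bplus} that a Cartier (hence $\QQ$-Cartier) divisor $D$ on a projective scheme is ample if and only if $\Bplus(D) = \emptyset$. Thus it suffices to show that $\Bplus(D) = \emptyset$ if and only if $\varepsilon_{\textup{jet}}(\lVert D \rVert;x) > 0$ for every closed point $x \in X$. The forward direction is immediate from \cref{lem:jetseshbplus}: if $\Bplus(D) = \emptyset$, then no closed point $x$ lies in $\Bplus(D)$, so $\varepsilon_{\textup{jet}}(\lVert D \rVert;x) > 0$ for all such $x$.

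For the reverse direction, suppose $\varepsilon_{\textup{jet}}(\lVert D \rVert;x) > 0$ for every closed point $x \in X$. By \cref{lem:jetseshbplus}, no closed point of $X$ lies in the closed subset $\Bplus(D) \subseteq X$. Because $k$ is algebraically closed and $X$ is of finite type over $k$, closed points are dense in every nonempty closed subset of $X$ (a consequence of the Nullstellensatz), so $\Bplus(D) = \emptyset$, and therefore $D$ is ample.

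I expect no serious obstacle here: the entire content of the corollary is packaged into \cref{lem:jetseshbplus} together with the standard fact that ampleness is detected by the emptiness of $\Bplus$. The only minor subtlety is the reduction from ``$\Bplus(D)$ contains no closed point'' to ``$\Bplus(D) = \emptyset$,'' which uses the algebraic closedness of $k$ (equivalently, the density of closed points in any closed subvariety of a finite-type $k$-scheme).
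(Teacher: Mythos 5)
Your proof is correct and follows exactly the paper's own argument: combine \cref{lem:jetseshbplus} with the fact that ampleness is equivalent to $\Bplus(D)=\emptyset$ (from \cite[Ex.\ 1.7]{ELMNP06}). Your extra remark about closed points being dense in closed subsets over an algebraically closed field is a fair point to make explicit, but it is the same proof.
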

\begin{proof}
  By \cref{lem:jetseshbplus}, we have that $\varepsilon_{\textup{jet}}(\lVert D
  \rVert;x) > 0$ for every closed point $x \in X$ if and only if $\Bplus(D) =
  \emptyset$.
  This condition is equivalent to the ampleness of $D$ by \cite[Ex.\
  1.7]{ELMNP06}.
\end{proof}
We now collect some properties of $\varepsilon_{\textup{jet}}(\lVert D
\rVert;x)$ analogous to those in \cref{prop:movingseshelem}.
Below, $\vol_{X \vert V}(D)$ denotes the restricted volume of $D$ along a
subvariety $V$, as defined in \cref{def:restrictedvolume}.
\begin{proposition}\label{prop:jetseshelem}
  Let $X$ be a projective variety over a field $k$ and let $x \in X$ be a
  $k$-rational point.
  Then, the function
  \begin{equation}\label{eq:jetseshfunction}
    \begin{tikzcd}[row sep=0,column sep=1.6em]
      \Bigcone^{\{x\}}_\QQ(X) \rar & \RR_{>0}\\
      D \rar[mapsto] & \varepsilon_{\textup{jet}}\bigl(\lVert D \rVert;x\bigr)
    \end{tikzcd}
  \end{equation}
  is continuous, and extends to a continuous function
  $\Bigcone^{\{x\}}_\RR(X) \to \RR_{>0}$.
  Moreover, if $D$ is a $\RR$-Cartier divisor, then we have the following:
  \begin{enumerate}[label=$(\roman*)$,ref=\roman*]
    \item\label{prop:jetseshvol}
      $\varepsilon_{\textup{jet}}(\lVert D \rVert;x) \le
      (\vol_{X \vert V}(D)/e(\cO_{V,x}))^{1/\dim V}$ for every
      positive-dimensional subvariety $V \subseteq X$ containing $x$;
    \item\label{prop:jetseshnumequiv}
      If $D$ and $E$ are numerically equivalent $\RR$-Cartier divisors, then
      $\varepsilon_{\textup{jet}}(\lVert D \rVert;x) = \varepsilon_{\textup{jet}}(\lVert E \rVert;x)$;
    \item\label{prop:jetseshhomog}
      $\varepsilon_{\textup{jet}}(\lVert \lambda D \rVert;x) = \lambda \cdot
      \varepsilon_{\textup{jet}}(\lVert D \rVert;x)$ for every positive real number
      $\lambda$;
    \item\label{prop:jetseshadd}
      If $D'$ is another $\RR$-Cartier divisor such that $x \notin \Bplus(D)
      \cup \Bplus(D')$, then
      \[
        \varepsilon_{\textup{jet}}\bigl(\lVert D + D' \rVert;x) \ge
        \varepsilon_{\textup{jet}}\bigl(\lVert D \rVert;x\bigr) +
        \varepsilon_{\textup{jet}}\bigl(\lVert D' \rVert;x\bigr).
      \]
  \end{enumerate}
\end{proposition}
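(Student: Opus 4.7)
The plan is to establish properties (iii), (iv), and (i) for $\QQ$-Cartier divisors directly from the definition, identify $\varepsilon_{\textup{jet}}(\lVert - \rVert;x)$ with the moving Seshadri constant $\varepsilon(\lVert - \rVert;x)$ of \cref{def:movingsesh} so as to deduce (ii), and then derive continuity on $\Bigcone^{\{x\}}_\QQ(X)$ from homogeneity and superadditivity, extending to $\Bigcone^{\{x\}}_\RR(X)$ by density.

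Homogeneity (iii) for a positive rational $\lambda = p/q$ follows directly by restricting the limsup defining $\varepsilon_{\textup{jet}}(\lVert \lambda D \rVert;x)$ to integers $m$ divisible by $q$ and reindexing by $m' = m\lambda$. Superadditivity (iv) is immediate from \cref{lem:ito37}: for $m$ large and divisible enough that $mD$, $mD'$, and $m(D+D')$ are Cartier, we have $s(m(D+D');x) \ge s(mD;x) + s(mD';x)$, and dividing by $m$ and taking limits gives the inequality. For the volume bound (i), if $x \in \Bplus(D)$ then the bound holds trivially by \cref{lem:jetseshbplus}. Otherwise, the surjection $\cO_X/\fm_x^{\ell+1} \twoheadrightarrow \cO_V/(\fm_x\cdot\cO_V)^{\ell+1}$ shows that whenever $\cO_X(mD)$ separates $\ell$-jets at $x$, the restricted sections $H^0(X\vert V,\cO_X(mD))$ surject onto $H^0\bigl(V,\cO_V(mD\rvert_V) \otimes \cO_V/(\fm_x \cdot \cO_V)^{\ell+1}\bigr)$, whose $k$-dimension is asymptotic to $e(\cO_{V,x})\,\ell^{\dim V}/(\dim V)!$ as $\ell \to \infty$. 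Setting $\ell = s(mD;x)$, dividing by $m^{\dim V}/(\dim V)!$, and passing to the limit in $m$ yields (i).

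To prove (ii), I would establish the equality $\varepsilon_{\textup{jet}}(\lVert D \rVert;x) = \varepsilon(\lVert D \rVert;x)$ for every $\QQ$-Cartier divisor $D$, from which numerical invariance follows via \cref{prop:movingseshelem}\cref{prop:movingseshnumequiv}. Both invariants vanish when $x \in \Bplus(D)$, by \cref{lem:jetseshbplus} and by definition, respectively, so I may assume $x \notin \Bplus(D)$, hence also $x \notin \SB(D)$ since $\SB(D) \subseteq \Bplus(D)$. The inequality $\varepsilon_{\textup{jet}}(\lVert D \rVert;x) \le \varepsilon(\lVert D \rVert;x)$ follows by applying (i) with $V = C$ an integral curve through $x$, using $\vol_{X\vert C}(D) \le (D \cdot C)$, and invoking Nakamaye's description (\cref{prop:movingseshnakamaye}) to reduce to the nef case handled by \cref{eq:seshinfcurves} and \cref{prop:movingseshelem}\cref{prop:movingseshbignef}. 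For the reverse inequality, I would again apply Nakamaye's description: with $\pi_m\colon X_m \to X$ a normalized blowup resolving $\fb(\lvert mD \rvert)$ that is an isomorphism near $x$, and $\lvert \pi_m^*(mD) \rvert = \lvert M_m \rvert + F_m$ the moving-fixed decomposition, the global sections of $\cO_{X_m}(M_m)$ are in bijection with those of $\cO_X(mD)$ and $F_m$ is locally trivial at $\pi_m^{-1}(x)$, so $s(mD;x) = s(M_m;\pi_m^{-1}(x))$; combining this identity with an asymptotic comparison between jet separation and the Seshadri constant for the nef and big divisor $M_m$ at the point $\pi_m^{-1}(x)$ outside $\Bplus(M_m)$, together with \cref{lem:movingseshnakamayelimit}, produces the bound $\varepsilon(\lVert D \rVert;x) \le \varepsilon_{\textup{jet}}(\lVert D \rVert;x)$.

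Finally, continuity of $\varepsilon_{\textup{jet}}(\lVert - \rVert;x)$ on $\Bigcone^{\{x\}}_\QQ(X)$ follows from homogeneity (iii) and the concavity implied by superadditivity (iv) via the argument of \cite[Rem.\ 5.4]{ELMNP09}, and uniform continuity on rational classes yields the unique continuous extension to the open cone $\Bigcone^{\{x\}}_\RR(X)$; properties (i)--(iv) for $\RR$-Cartier divisors then follow from their $\QQ$-Cartier counterparts by density together with the identification $\varepsilon_{\textup{jet}}(\lVert - \rVert;x) = \varepsilon(\lVert - \rVert;x)$ and the corresponding properties in \cref{prop:movingseshelem}. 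The main obstacle I anticipate is the asymptotic comparison between jet separation and Seshadri constants for the nef and big class $M_m$ on $X_m$ at the possibly singular point $\pi_m^{-1}(x)$ over a possibly imperfect base field: the classical statement \cite[Thm.\ 5.1.17]{Laz04a} assumes ampleness, smoothness of the point, and an algebraically closed ground field, so one must adapt it by incorporating the Hilbert--Samuel multiplicity $e(\cO_{X,x})$ into the bounds, working with a decomposition $M_m \equiv_\RR A + E$ with $A$ ample and $E$ effective avoiding $\pi_m^{-1}(x)$ as in the proof of \cref{prop:movingseshelem}\cref{prop:movingseshbignef}, and performing a flat base change of the type used in \cref{lem:dfklfieldred} to reduce to the algebraically closed case.
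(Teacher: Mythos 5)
Your treatment of (i), (iii), (iv), and the continuity/extension step coincides with the paper's: homogeneity by reindexing the limit, superadditivity from \cref{lem:ito37}, the volume bound from the commutative square comparing restricted sections on $V$ with jets at $x$, and continuity on $\Bigcone^{\{x\}}_\QQ(X)$ from (iii) and (iv) via \cite[Rem.\ 5.4]{ELMNP09}, followed by extension to $\RR$-classes by density. The problem is your route to numerical invariance (ii). You propose to deduce it from the identification $\varepsilon_{\textup{jet}}(\lVert D \rVert;x) = \varepsilon(\lVert D \rVert;x)$, but in the paper that identification is \cref{prop:jetsep}, which is proved \emph{after} and \emph{using} the present proposition: the nef case \cref{prop:jetsep}\cref{prop:jetsepbignef} begins by reducing to an ample Cartier divisor ``by continuity and homogeneity,'' and continuity is available only once (ii) makes $\varepsilon_{\textup{jet}}$ well defined on numerical classes. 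Your own sketch inherits the same difficulty: the reverse inequality requires comparing $\varepsilon(M_m;\pi_m^{-1}(x))$ with jet separation of $M_m$, where $M_m$ is merely free and nef, and the Fujita-vanishing argument that converts a large Seshadri constant into jet separation needs $q_0\mu^*M_m - p_0E$ to be \emph{ample} on the blowup (\cref{lem:seshample} requires ampleness of the divisor downstairs), which can fail for nef non-ample classes; the only escape is the continuity reduction you cannot yet invoke.

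Your proposed repair --- decomposing $M_m \equiv_\RR A + E$ with $A$ ample and $E$ effective avoiding the point --- does not close this gap, because numerical equivalence induces no map on global sections: knowing that $\cO(nA)$ separates many jets says nothing about $\cO(nM_m)$ without a mechanism for transferring section-level information across a numerically trivial difference. That mechanism is exactly what (ii) asserts, and it is what the paper's direct proof supplies via \cref{prop:kur13prop27}: writing $E \sim D + P$ with $P$ numerically trivial, uniform global generation of $\cO_X(jD + iP)$ away from $\Bplus(D)$ for a single $j$ and \emph{all} integers $i$ simultaneously, combined with \cref{lem:ito37}, gives $s(mD;x) \le s((m+j)E;x)$, and (ii) follows after dividing by $m$ and symmetrizing. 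You should prove (ii) directly this way (or by an equivalent use of Fujita-type uniformity) before any appeal to moving Seshadri constants; with (ii) established, the rest of your argument goes through as written.
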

\begin{proof}
  We will prove \cref{prop:jetseshvol}--\cref{prop:jetseshadd} for
  $\QQ$-Cartier divisors $D,D',E$ and $\lambda \in \QQ_{>0}$.
  Then, \cref{prop:jetseshnumequiv} will imply that the function
  \cref{eq:jetseshfunction} is well-defined, and the fact that it extends to a
  continuous function on $\Bigcone^{\{x\}}_\RR(X)$ follows from
  \cref{prop:jetseshhomog} and
  \cref{prop:jetseshadd} by \cite[Rem.\ 5.4]{ELMNP09}, since
  $\varepsilon_{\textup{jet}}(\lVert A \rVert;x) > 0$ for ample $A$
  (\cref{cor:jetseshample}).
  Finally, the 
  general case for \cref{prop:jetseshvol}--\cref{prop:jetseshadd}
  will follow by continuity.
  \par We first prove \cref{prop:jetseshhomog} when $D$ is a
  $\QQ$-Cartier divisor and $\lambda \in \QQ_{>0}$.
  We have
  \begin{alignat*}{3}
    \lambda \cdot \varepsilon_{\textup{jet}}\bigl(\lVert D \rVert;x\bigr)
    &={}& \lambda \cdot \lim_{m \to \infty} \frac{s(mD;x)}{m}
    &{}= \lim_{m \to \infty} \frac{s(mD;x)}{m/\lambda}\\
    &={}& \omit\hfil$\displaystyle\lim_{m \to \infty}
    \frac{s(m\lambda D;x)}{m}$\hfil\ignorespaces
    &{}= \varepsilon_{\textup{jet}}\bigl(\lVert \lambda D \rVert;x\bigr)
  \end{alignat*}
  where the third equality holds since both sides are equal to the limits
  running over all $m$ sufficiently divisible.
  To show the remaining properties, then, it suffices to consider the case when
  $D,D',E$ are Cartier divisors.
  \par Next, we prove \cref{prop:jetseshvol} when $D$ is a Cartier divisor.
  Since the inequality trivially holds when $\varepsilon_{\textup{jet}}(\lVert D \rVert;x) =
  0$, we may assume that $\varepsilon_{\textup{jet}}(\lVert D \rVert;x) > 0$.
  In this case, we have
  \begin{align*}
    \frac{\vol_{X \vert V}(D)}{\mult_xV} &= \lim_{m \to \infty}
    \frac{h^0\bigl(X \vert V,\cO_X(mD)\bigr)}{m^{\dim V}/(\dim V)!}
    \cdot \lim_{\ell \to \infty} \frac{(\ell+1)^{\dim V}/(\dim
    V)!}{h^0\bigl(V,\cO_V(mD) \otimes \cO_V/\fm_x^{\ell+1}\bigr)}\\
    &= \lim_{m \to \infty}
    \frac{h^0\bigl(X \vert V,\cO_X(mD)\bigr)}{h^0\bigl(V,\cO_V(mD) \otimes
    \cO_V/\fm_x^{s(mD;x)+1}\bigr)} \cdot
    \biggl(\frac{s(mD;x)+1}{m}\biggr)^{\dim X},
  \end{align*}
  where the second equality follows from setting $\ell = s(mD;x)$, and then from
  the fact that $s(mD;x) \to \infty$ as $m \to \infty$.
  By definition of $s(mD;x)$ and the commutativity of the diagram
  \[
    \begin{tikzcd}
      H^0\bigl(X,\cO_X(mD)\bigr) \rar\dar & H^0\bigl(X,\cO_X(mD) \otimes
      \cO_X/\fm_x^{\ell+1}\bigr)\dar[twoheadrightarrow]\\
      H^0\bigl(V,\cO_V(mD)\bigr) \rar & H^0\bigl(V,\cO_V(mD) \otimes
      \cO_V/\fm_x^{\ell+1}\bigr)
    \end{tikzcd}
  \]
  we have $h^0(X \vert V,\cO_X(mD)) \ge h^0(V,\cO_V(mD) \otimes
  \cO_V/\fm_x^{s(mD;x)+1})$.
  Thus,
  \[
    \frac{\vol_{X \vert V}(D)}{\mult_xV} \ge \lim_{m \to \infty}
    \biggl(\frac{s(mD;x)+1}{m}\biggr)^{\dim V}
    = \varepsilon_{\textup{jet}}\bigl(\lVert D \rVert;x)^{\dim V}.
  \]
  \par We now prove \cref{prop:jetseshnumequiv}.
  First, recall that $\Bplus(D)$ only depends on the numerical class of $D$, and
  that if $x \in \Bplus(D)$, then $\varepsilon_{\textup{jet}}(\lVert D \rVert;x) = 0$ by
  \cref{lem:jetseshbplus}.
  We can therefore assume that $x \notin \Bplus(D)$.
  By assumption, there exists a numerically trivial Cartier divisor $P$ such
  that $D \sim E + P$, and \cref{prop:kur13prop27} implies that there
  exists a positive integer $j$ such that $\cO_X(jD+iP)$ is globally generated
  away from $\Bplus(D)$ for all integers $i$.
  For every $m$, we therefore see that
  \[
    s(mD;x) \le s\bigl( (m+j)D+(m+j)P;x\bigr) = s(iE;x)
  \]
  by setting $i = m+j$, where the inequality follows from \cref{lem:ito37}
  since $\cO_X(jD+(m+j)P)$ separates $0$-jets at $x$.
  Dividing by $m$ and taking limits, we see that $\varepsilon_{\textup{jet}}(\lVert D
  \rVert;x) \le \varepsilon_{\textup{jet}}(\lVert E \rVert;x)$.
  Repeating the argument above after switching the roles of $D$ and $E$, we have
  $\varepsilon_{\textup{jet}}(\lVert D \rVert;x) = \varepsilon_{\textup{jet}}(\lVert E
  \rVert;x)$.
  \par Finally, \cref{prop:jetseshadd} follows from \cref{lem:ito37}.
\end{proof}
\begin{remark}
  In \cref{prop:jetseshelem}\cref{prop:jetseshvol}, one can ask
  whether
  \[
    \varepsilon_{\textup{jet}}\bigl(\lVert D\rVert;x\bigr) =
    \inf_{V \ni x} \biggl\{\frac{\vol_{X \vert
    V}(D)}{e(\cO_{V,x})}\biggr\}^{1/\dim V},
  \]
  where the infimum runs over all subvarieties $V \subseteq X$ containing $x$.
  This holds for smooth varieties over the complex numbers \cite[Prop.\
  6.7]{ELMNP09}, or when $D$ is nef \cite[Prop.\ 5.1.9]{Laz04a}.
  A proof of this statement in the generality of Proposition
  \ref{prop:jetseshelem} would require extending the main results about
  restricted volume functions in \cite{ELMNP09} to our setting.
\end{remark}
We can now prove our comparison result.
Note that we do not know of any examples where the equalities in
the statement below do not hold without the additional assumptions on $D$ and
$X$.
\begin{proposition}[cf.\ {\cite[Prop.\ 6.6]{ELMNP09}}]\label{prop:jetsep}
  Let $X$ be a projective variety over a field $k$, and let $D$ be a
  $\QQ$-Cartier divisor.
  For every $k$-rational point $x \in X$, we have
  \begin{enumerate}[label=$(\roman*)$,ref=\roman*]
    \item $\varepsilon(D;x) = \varepsilon_{\textup{jet}}(\lVert D \rVert;x)$ if
      $D$ is nef and $x \notin \Bplus(D)$, and\label{prop:jetsepbignef}
    \item $\varepsilon(\lVert D \rVert;x) = \varepsilon_{\textup{jet}}(\lVert D \rVert;x)$ if
      $X$ is normal.\label{prop:jetseps2}
  \end{enumerate}
\end{proposition}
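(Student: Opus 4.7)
Both parts first reduce to the case $x \notin \Bplus(D)$: when $x \in \Bplus(D)$, all three invariants vanish --- the first by combining \cref{thm:nakamaye} with \cref{eq:seshinfcurves} for nef $D$ (as in (i)), the second by definition, and the third by \cref{lem:jetseshbplus}. By continuity (Propositions \ref{prop:movingseshelem} and \ref{prop:jetseshelem}) and homogeneity of all three invariants we may further assume $D$ is a Cartier divisor.

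For part (i), the easy inequality $\varepsilon_{\textup{jet}}(\lVert D \rVert;x) \le \varepsilon(D;x)$ follows from the restricted-volume estimate \cref{prop:jetseshelem}\cref{prop:jetseshvol} applied to integral curves $C \ni x$: asymptotic Riemann--Roch on the normalization of $C$ gives $\vol_{X|C}(D) \le (D \cdot C)$ for nef $D$, and infimizing over $C$ recovers the curve-based description \cref{eq:seshinfcurves} of $\varepsilon(D;x)$. For the reverse direction I would adapt the classical argument of Lazarsfeld \cite[Thm.\ 5.1.17]{Laz04a}. For every rational $\delta \in (0,\varepsilon(D;x))$, the divisor $\mu^*D - \delta E$ on the blowup $\mu\colon \widetilde{X} \to X$ at $x$ is nef; moreover it is big for $\delta$ sufficiently small, since $x \notin \Bplus(D)$ already forces $D$ to be big and contributes enough positivity on $\widetilde{X}$. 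Fujita's vanishing theorem then guarantees $H^1(\widetilde{X}, \omega_{\widetilde{X}} \otimes \cO_{\widetilde{X}}(\mu^*(mD) - \lfloor m\delta \rfloor E)) = 0$ for all $m \gg 0$, and pushing these sections down to $X$ via the Leray spectral sequence produces sections of $\cO_X(mD)$ separating approximately $\lfloor m\delta \rfloor$ jets at $x$. Dividing by $m$ and letting $m \to \infty$ yields $\varepsilon_{\textup{jet}}(\lVert D \rVert;x) \ge \delta$, and then $\delta \to \varepsilon(D;x)^{-}$.

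For part (ii), I would use Nakamaye's description (\cref{prop:movingseshnakamaye} and \cref{lem:movingseshnakamayelimit}). For each $m$ such that $mD$ is Cartier and $x \notin \Bs(\lvert mD \rvert)$, choose a normal projective birational model $\pi_m\colon X_m \to X$ resolving $\fb(\lvert mD \rvert)$ and iso over a neighborhood of $x$, and write $\lvert \pi_m^*(mD) \rvert = \lvert M_m \rvert + F_m$. The moving part $M_m$ is globally generated, hence nef, and \cref{prop:bbp23} combined with $\pi_m^{-1}(x) \notin \Supp(F_m)$ yields $\pi_m^{-1}(x) \notin \Bplus(M_m)$. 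Applying part (i) to $M_m$ at $\pi_m^{-1}(x)$ identifies $\varepsilon(M_m;\pi_m^{-1}(x))$ with $\sup_{k}s(kM_m;\pi_m^{-1}(x))/k$. Next, the normality of $X$ together with the defining property of $F_m$ gives a canonical bijection $H^0(X,mD) \simeq H^0(X_m,M_m)$; since the canonical section of $F_m$ is a unit at $\pi_m^{-1}(x)$, this bijection preserves jet separation, so $s(M_m;\pi_m^{-1}(x)) = s(mD;x)$. The analogous inclusion $H^0(X_m,kM_m) \hookrightarrow H^0(X_m,k\pi_m^*(mD)) \simeq H^0(X,kmD)$ yields $s(kM_m;\pi_m^{-1}(x)) \le s(kmD;x)$. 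Combining these bounds with \cref{lem:ito37} and passing to suprema over $m$ and $k$ yields both inequalities $\varepsilon(\lVert D \rVert;x) \le \varepsilon_{\textup{jet}}(\lVert D \rVert;x)$ and $\varepsilon(\lVert D \rVert;x) \ge \varepsilon_{\textup{jet}}(\lVert D \rVert;x)$, completing part (ii).

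The principal obstacle is the reverse inequality in part (i). Lazarsfeld's classical result assumes $X$ is smooth and works over $\CC$; in our generality (arbitrary field, possibly singular point $x$, $X$ not even necessarily normal), one must replace Kawamata--Viehweg vanishing by Fujita vanishing and handle carefully the fact that the ideals $\fm_x^{\ell}$ and $\mu_*\cO_{\widetilde{X}}(-\ell E)$ agree only up to integral closure at a singular $x$, introducing an $O(1)$ correction in the number of jets separated that is asymptotically negligible but must be tracked throughout the calculation.
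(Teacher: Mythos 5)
Your part (ii) is essentially the paper's argument (Nakamaye decomposition, the jet-preserving bijection $H^0(X,mD)\simeq H^0(X_m,M_m)$ coming from $x_m\notin\Supp F_m$, then part (i) applied to the free divisor $M_m$), and your "easy" inequality in (i) is a genuinely different and arguably cleaner route: the paper instead deduces $\varepsilon_{\textup{jet}}(\lVert D\rVert;x)\le\varepsilon(D;x)$ from the ampleness criterion \cref{thm:dfkl41}, producing non-vanishing $H^1(X,\cO_X(mq_1D)\otimes\fm_x^{mp_1})$ for slopes $p_1/q_1$ just above $\varepsilon(D;x)$, whereas your restricted-volume bound along curves, $\vol_{X\mid C}(D)\le (D\cdot C)$ together with \cref{prop:jetseshelem}\cref{prop:jetseshvol} and \cref{eq:seshinfcurves}, avoids that machinery entirely.

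The gap is in the reverse inequality of (i). First, you apply Fujita's vanishing theorem to $\mu^*D-\delta E$ knowing only that it is nef and big; Fujita vanishing requires the divisor being scaled to be \emph{ample}, and nef-and-big is genuinely insufficient in positive characteristic (cf.\ \cref{rem:szpirolmvanishing,ex:raynaudfujita}). You must first use continuity and homogeneity of both invariants to reduce to $D$ an ample Cartier divisor (as the paper does); then $q_0\mu^*D-p_0E$ is honestly ample for $p_0/q_0<\varepsilon(D;x)$ by \cref{lem:seshample}, and Fujita vanishing applies with the nef summand $P=q_1\mu^*D$ absorbing the remainder $m=nq_0+q_1$. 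Second, the twist by $\omega_{\widetilde{X}}$ is both unjustified (on the blowup of a possibly singular point there is no clean formula for $\omega_{\widetilde{X}}$) and harmful: vanishing of $H^1(\widetilde{X},\omega_{\widetilde{X}}\otimes\cO_{\widetilde{X}}(\mu^*(mD)-\lfloor m\delta\rfloor E))$ would give jet separation for $\omega_X\otimes\cO_X(mD)$, not for $\cO_X(mD)$, and $\varepsilon_{\textup{jet}}(\lVert D\rVert;x)$ is defined through $s(mD;x)$; there is no easy way to strip off the $\omega_X$ afterwards. The correct vanishing to establish is $H^1(\widetilde{X},\cO_{\widetilde{X}}(m\mu^*D-np_0E))=0$ with no canonical twist, which via the Leray spectral sequence for $\mu$ gives $H^1(X,\cO_X(mD)\otimes\fm_x^{np_0})=0$ and hence separation of $(np_0-1)$-jets. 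With those two corrections your plan coincides with the paper's proof of this direction.
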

We first show that the case when $D$ is nef implies the general case,
under the assumption that $X$ is normal.
\begin{proof}[Proof that \cref{prop:jetsepbignef} implies
  \cref{prop:jetseps2}]
  Since both sides are zero if $x \in \Bplus(D)$
  (\cref{def:movingsesh,lem:jetseshbplus}), we may assume that $x \notin
  \Bplus(D)$.
  By homogeneity (\cref{prop:movingseshelem,prop:jetseshelem}),
  it suffices to consider the case when $D$ is a Cartier divisor and both sides
  are positive, in which case we still have $x \notin \Bplus(D)$.
  Note that in particular, we have $x \notin \SB(D)$.
  \par For every integer $m \ge 1$ such that $x \notin \Bs(\lvert
  mD \rvert)$, let $\pi_m\colon X_m \to X$ be as in
  \cref{def:movingseshnakamaye}, and write
  \[
    \bigl\lvert\pi_m^*(mD)\bigr\rvert = \lvert M_m \rvert + F_m,
  \]
  where $\lvert M_m \rvert$ is the moving part and $F_m$ is the fixed part of
  the linear system $\lvert\pi_m^*(mD)\rvert$.
  Since $X$ is normal, we have $\pi_{m*}\cO_{X_m} \simeq \cO_X$ 
  \cite[Proof of Cor.\ III.11.4]{Har77}.
  Note that the base ideal of $\lvert\pi_m^*(mD)\rvert$ is
  $\cO_{X_m}(-F_m)$, and in particular, we have $x_m \coloneqq \pi_m^{-1}(x)
  \notin \Supp F_m$.
  Thus, the inclusion
  \[
    H^0\bigl(X_m,\cO_{X_m}(M_m)\bigr) \subseteq
    H^0\bigl(X_m,\pi_m^*\cO_X(mD)\bigr)
  \]
  induced by multiplication by $F_m$ is a bijection,
  and $M_m$ is a free Cartier divisor.
  We then have the commutative diagram
  \begin{equation}\label{eq:jetsepkeydiag}
    \begin{tikzcd}
      H^0\bigl(X_m,\cO_{X_m}(nM_m)\bigr) \rar\arrow[hook]{d}[left]{\cdot
      nF_m} & H^0\bigl(X_m,\cO_{X_m}(nM_m)
      \otimes \cO_X/\fm_{x_m}^{\ell+1}\bigr)\arrow{d}[left]{\cdot
      nF_m}[above,sloped]{\sim}\\
      H^0\bigl(X_m,\pi_m^*\cO_X(mnD)\bigr) \rar & H^0\bigl(X_m,\pi_m^*\cO_X(mnD)
      \otimes \cO_X/\fm_{x_m}^{\ell+1}\bigr)\\
      H^0\bigl(X,\cO_X(mnD)\bigr) \rar\arrow{u}[below,sloped]{\sim}
      & H^0\bigl(X,\cO_X(mnD) \otimes \cO_X/\fm_x^{\ell+1}\bigr)
      \arrow{u}[below,sloped]{\sim}
    \end{tikzcd}
  \end{equation}
  for all integers $n \ge 1$ and $\ell \ge -1$, where the top left vertical
  arrow is an isomorphism for $n = 1$ by the discussion above,
  the bottom left vertical arrow is an isomorphism by the fact that
  $\pi_{m*}\cO_{X_m} \simeq \cO_X$, and the right vertical arrows are
  isomorphisms by the fact that $x \notin \Supp F_m$ and $\pi_m$ is an
  isomorphism in a neighborhood of $x$, respectively.
  \par To show the inequality $\ge$ in \cref{prop:jetseps2}, let $m$ be
  such that $\varepsilon(M_m;x_m) > 0$, in which case $x_m \notin \Bplus(M_m)$
  by \cref{thm:nakamaye} and \cite[Prop.\ 5.1.9]{Laz04a}.
  Note that this property holds for all sufficiently large $m$ by
  \cref{prop:movingseshnakamaye}.
  We then have the chain of inequalities
  \[
    \varepsilon\bigl(\lVert D \rVert;x\bigr)
    \ge \frac{\varepsilon(M_m;x_m)}{m}
    \ge \frac{s(M_m;x_m)}{m}
    = \frac{s(mD;x)}{m}
  \]
  for all such $m$, where the second inequality follows from
  \cref{prop:jetsepbignef}, and the equality follows from the commutativity
  of the diagram \cref{eq:jetsepkeydiag}.
  Taking the limit as $m \to \infty$, we have the inequality $\ge$ in
  \cref{prop:jetseps2}.
  \par To show the inequality $\le$ in \cref{prop:jetseps2}, let $\delta > 0$ be
  arbitrary.
  For $m \gg 0$ and $n \gg 0$, we have the following chain of inequalities:
  \[
    \varepsilon\bigl(\lVert D \rVert;x\bigr) \le \frac{\varepsilon(M_m;x_m)}{m}
    + \frac{\delta}{2} \le \frac{s(nM_m;x_m)}{mn} + \delta \le
    \frac{s(mnD;x_m)}{mn} + \delta.
  \]
  For the middle inequality, we need $m$ to be sufficiently large such that
  $\varepsilon(M_m;x_m) > 0$ as in the previous paragraph, in which case the
  inequality follows from \cref{prop:jetsepbignef} for $n \gg 0$.
  The last inequality follows from the commutativity of the diagram
  \cref{eq:jetsepkeydiag}.
  Taking the limit as $m \to \infty$, and using the fact that $\delta$ was
  arbitrary, we have the inequality $\le$ in \cref{prop:jetseps2}.
\end{proof}
To prove \cref{prop:jetsepbignef}, we need the following elementary lemma:
\begin{lemma}\label{lem:jetsepelem}
  Let $X$ be a projective variety of dimension $n$, and let $x \in X$ be a
  closed point with defining ideal $\fm_x \subseteq \cO_X$.
  Let $L$ be a Cartier divisor on $X$.
  \begin{enumerate}[label=$(\roman*)$,ref=\roman*]
    \item\label{lem:jetsepelem1}
      If $L$ is ample, then for $m$ sufficiently large, we have
      $H^i(X,\cO_X(mL) \otimes \mathfrak{m}_x^a) = 0$
      for all $i > 1$ and $a \ge 0$.
    \item\label{lem:jetsepelem2}
      If $H^1(X,\cO_X(mL) \otimes \mathfrak{m}_x^a) \ne 0$ for
      some $a,m > 0$, then $H^1(X,\cO_X(mL) \otimes
      \mathfrak{m}_x^{a+1}) \ne 0$.
  \end{enumerate}
\end{lemma}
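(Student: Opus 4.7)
My plan for \cref{lem:jetsepelem} is to reduce both statements to short exact sequences whose quotients are torsion sheaves supported at the single point $x$, so that their higher cohomology vanishes trivially.

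For \cref{lem:jetsepelem1}, I would tensor the short exact sequence $0 \to \fm_x^a \to \cO_X \to \cO_X/\fm_x^a \to 0$ with the locally free sheaf $\cO_X(mL)$ to obtain
\[
0 \longrightarrow \fm_x^a \otimes \cO_X(mL) \longrightarrow \cO_X(mL) \longrightarrow (\cO_X/\fm_x^a) \otimes \cO_X(mL) \longrightarrow 0.
\]
Since $\cO_X/\fm_x^a$ has finite length and is supported at the closed point $x$, the rightmost term is a skyscraper sheaf with vanishing higher cohomology. The associated long exact sequence therefore produces an injection
\[
H^i\bigl(X, \fm_x^a \otimes \cO_X(mL)\bigr) \longhookrightarrow H^i\bigl(X, \cO_X(mL)\bigr)
\]
for every $i \ge 2$. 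Applying Serre vanishing to the ample Cartier divisor $L$ supplies some $m_0$ such that $H^i(X, \cO_X(mL)) = 0$ for all $i > 0$ and every $m \ge m_0$; the desired vanishing then holds uniformly in $a$ for all such $m$.

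For \cref{lem:jetsepelem2}, I would run the analogous argument on the next graded piece, tensoring $0 \to \fm_x^{a+1} \to \fm_x^a \to \fm_x^a/\fm_x^{a+1} \to 0$ with $\cO_X(mL)$ to obtain
\[
0 \longrightarrow \fm_x^{a+1} \otimes \cO_X(mL) \longrightarrow \fm_x^a \otimes \cO_X(mL) \longrightarrow (\fm_x^a/\fm_x^{a+1}) \otimes \cO_X(mL) \longrightarrow 0.
\]
The cokernel is again a finite length sheaf supported at $x$, so it has no higher cohomology, and the long exact sequence produces a surjection
\[
H^1\bigl(X, \fm_x^{a+1} \otimes \cO_X(mL)\bigr) \longtwoheadrightarrow H^1\bigl(X, \fm_x^a \otimes \cO_X(mL)\bigr).
\]
The claim is then immediate: if the target is nonzero, so is the source.

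I do not expect any genuine obstacle in either part; the only small point to verify in \cref{lem:jetsepelem1} is that the threshold $m_0$ is independent of $a$, which is automatic since the injection above is induced by the natural inclusion $\fm_x^a \otimes \cO_X(mL) \hookrightarrow \cO_X(mL)$, and so is compatible as $a$ varies.
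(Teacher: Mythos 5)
Your proposal is correct and follows essentially the same route as the paper: both parts are handled by the long exact sequences attached to $0 \to \fm_x^a \otimes \cO_X(mL) \to \cO_X(mL) \to (\cO_X/\fm_x^a)\otimes\cO_X(mL) \to 0$ and $0 \to \fm_x^{a+1}\otimes\cO_X(mL) \to \fm_x^a\otimes\cO_X(mL) \to (\fm_x^a/\fm_x^{a+1})\otimes\cO_X(mL) \to 0$, using that the quotients have zero-dimensional support together with Serre vanishing. Your added remark that the threshold $m_0$ is uniform in $a$ is a nice touch, though as you note it is automatic.
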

\begin{proof}
  For \cref{lem:jetsepelem1}, consider the exact sequence
  \[
    H^{i-1}\bigl(X,\cO_X(mL) \otimes
    \mathcal{O}_X/\mathfrak{m}_x^a\bigr) \longrightarrow
    H^{i}\bigl(X,\cO_X(mL) \otimes
    \mathfrak{m}_x^a\bigr) \longrightarrow
    H^{i}\bigl(X,\cO_X(mL)\bigr).
  \]
  Note that the left-hand term vanishes if $i > 1$ since
  $\mathcal{O}_X/\mathfrak{m}_x^a$ has zero-dimensional support.
  We have that $H^i(X,\cO_X(mL)) = 0$ for all $m$ sufficiently large by Serre
  vanishing, hence the exact sequence implies $H^i(X,\cO_X(mL) \otimes
  \mathfrak{m}_x^a) = 0$ as well.
  \par For \cref{lem:jetsepelem2}, consider the exact sequence
  \[
    H^{1}\bigl(X,\cO_X(mL) \otimes \mathfrak{m}_x^{a+1}\bigr)
    \longrightarrow 
    H^{1}\bigl(X,\cO_X(mL) \otimes \mathfrak{m}_x^a\bigr)
    \longrightarrow H^{1}\bigl(X,\cO_X(mL) \otimes
    \mathfrak{m}_x^a/\mathfrak{m}_x^{a+1}\bigr).
  \]
  The sheaf $\mathfrak{m}_x^a/\mathfrak{m}_x^{a+1}$ has zero-dimensional
  support, hence the right-hand term vanishes, and the desired non-vanishing
  follows.
\end{proof}
We can now prove \cref{prop:jetsep}\cref{prop:jetsepbignef}.
Part of the proof below was suggested by Harold Blum\index{Blum, Harold},
following the strategy in \cite[Thm.\ 2.3]{Fuj18}.
\begin{proof}[Proof of \cref{prop:jetsep}\,\cref{prop:jetsepbignef}]
  By continuity and homogeneity (\cref{prop:jetseshelem}),
  it suffices to consider the case when $D$ is an ample Cartier divisor.
  \par We prove the inequality $\ge$ in \cref{prop:jetsepbignef}.
  Let $0 < \delta \ll 1$ be arbitrary, and fix positive integers $p_0,q_0$ such
  that
  \[
    0 < \frac{p_0}{2q_0} < \varepsilon(D;x) <
    \frac{p_0}{q_0} < \varepsilon(D;x) + \delta.
  \]
  Then, denoting by $\mu\colon \widetilde{X} \to X$ the blowup at $x$
  with exceptional divisor $E$, we have that $A \coloneqq 2q_0\mu^*D - p_0E$ is
  ample while $B \coloneqq q_0\mu^*D - p_0E$ is not ample by
  \cref{lem:seshample}.
  By \cref{thm:dfkl41} and the homogeneity of asymptotic cohomological functions
  (\cref{prop:nahomogcont}), for some integer $r \gg 2$ and for
  some $i \ge 1$, we have that
  \[
    H^i\bigl(\widetilde{X},\cO_{\widetilde{X}}\bigl(mr\bigl(B -
    (1/r)A\bigr)\bigr)\bigr) \ne 0
  \]
  for infinitely many $m$.
  Now
  \begin{align*}
    mr\bigl(B - (1/r)A\bigr) &= mrq_0\bigl(1-(2/r)\bigr)\mu^*D -
    mrp_0\bigl(1-(1/r)\bigr)E\\
    &= m\bigl( (r-2)q_0\mu^*D - (r-1)p_0E\bigr),
  \end{align*}
  and defining $q_1 = (r-2)q_0$ and $p_1 = (r-1)p_0$, the Leray spectral
  sequence applied to the blowup morphism $\mu$ \cite[Lem.\ 5.4.24]{Laz04a}
  implies
  \[
    H^i\bigl(X,\cO_{X}(mq_1D) \otimes \fm_{x}^{mp_1}\bigr) \simeq
    H^i\bigl(\widetilde{X},\cO_{\widetilde{X}}\bigl(m(q_1\mu^*D -
    p_1E)\bigr)\bigr) \ne 0
  \]
  for infinitely many $m$.
  By \cref{lem:jetsepelem}\cref{lem:jetsepelem1}, we must have $i = 1$.
  Since
  \[
    H^1\bigl(X,\cO_{X}(mq_1D)\bigr) = 0
  \]
  for all $m \gg 0$ by Serre vanishing,
  this implies that $mq_1D$ does not separate $(mp_1-1)$-jets at $x$, hence
  $mp_1 - 1 > s(mq_1D;x)$ for infinitely many $m$.
  Dividing the inequality by $mq_1$ and taking limits as $m \to \infty$, we have
  \[
    \varepsilon(D;x) + \delta > \frac{p_0}{q_0} > \frac{(r-1)p_0}{(r-2)q_0} =
    \frac{p_1}{q_1} \ge \lim_{m \to \infty} \frac{s(mq_1D;x)}{mq_1} =
    \varepsilon_{\textup{jet}}\bigl(\lVert D \rVert;x\bigr),
  \]
  where the limit runs over all $m$ sufficiently large and divisible, and the
  last equality holds by the fact that $\varepsilon_{\textup{jet}}(\lVert D
  \rVert;x)$ is computed by a limit (\cref{lem:ito37}).
  Finally, since $\delta$ was arbitrary, the inequality $\ge$ in
  \cref{prop:jetsepbignef} follows.
  \par We now prove the inequality $\le$ in \eqref{prop:jetsepbignef}.
  Let $0 < \delta \ll 1$ be arbitrary, and fix positive integers $p_0,q_0$ such
  that
  \[
    \varepsilon(D;x) - \delta < \frac{p_0}{q_0} <
    \varepsilon(D;x).
  \]
  Then, denoting by $\mu\colon \widetilde{X} \to X$ the blowup at $x$ with
  exceptional divisor $E$, we have that $q_0\mu^*D - p_0E$ is ample, hence by
  Fujita's vanishing theorem \cite[Thm.\ 5.1]{Fuj83} there exists a natural
  number $n_0$ such that
  \[
    H^1\bigl(\widetilde{X},\cO_{\widetilde{X}}\bigl(n(q_0\mu^*D -
    p_0E) + P\bigr) \bigr) = 0
  \]
  for every integer $n \ge n_0$ and all nef Cartier divisors $P$ on
  $\widetilde{X}$.
  Now let $m \ge n_0q_0$ be an integer, and write $m = nq_0 + q_1$ with $0 \le
  q_1 < q_0$ and $n \ge n_0$.
  Applying the vanishing above for $P = q_1\mu^*D$, we have that
  \[
    H^1\bigl(\widetilde{X},\cO_{\widetilde{X}}(m\mu^*D - np_0E )
    \bigr) = 0.
  \]
  By the Leray spectral sequence applied to the blowup morphism $\mu$
  \cite[Lem.\ 5.4.24]{Laz04a}, we have an isomorphism
  \[
    H^1\bigl(\widetilde{X},\cO_{\widetilde{X}}\bigl(m\mu^*D - np_0E \bigr)
    \bigr) \simeq H^1\bigl(X,\cO_{X}(mD) \otimes \fm_{x}^{np_0} \bigr)
    \bigr) = 0
  \]
  for $m \gg 0$ (which implies $n \gg 0$).
  Thus, we see that $\cO_X(mD)$ separates $(np_0-1)$-jets at $x$.
  Now consider the following chain of inequalities:
  \begin{align*}
    \frac{s(mD;x)}{m} \ge \frac{np_0-1}{m}
    &\ge \frac{np_0-1}{(n+1)q_0}
    = \frac{n}{n+1} \cdot \frac{p_0}{q_0} - \frac{1}{(n+1)q_0}\\
    &> \frac{n}{n+1}\bigl(\varepsilon(D;x) - \delta \bigr) -
    \frac{1}{(n+1)q_0}.
  \end{align*}
  Taking the limit as $m \to \infty$, we have that $n \to \infty$ as well, hence
  $\varepsilon_{\textup{jet}}(\lVert D \rVert;x) \ge \varepsilon(D;x) - \delta$.
  Finally, since $\delta$ was arbitrary, the inequality $\le$ in
  \cref{prop:jetsepbignef} follows.
\end{proof}
\begin{remark}
  We give an alternative proof of \cref{prop:jetsep}\cref{prop:jetsepbignef} in
  \cite[Thm.\ 6.3]{FM}.
\end{remark}
\section{A generalization of Theorem \ref{thm:poscharseshsm}}
\label{sect:proofofms31finj}
Our goal in this section is to prove the following generalization of
\cref{thm:poscharseshsm}.
\begin{theorem}\label{thm:demseshsingular}
  Let\index{Seshadri constant, $\varepsilon(D;x)$!criterion for separation of jets|(}
  $X$ be a projective variety of dimension $n$ over a field $k$,
  and let $L$ be a line bundle on $X$.
  Let $x \in X$ be a $k$-rational point such that either $X$ has singularities
  of dense $F$-injective type\index{F-injective@$F$-injective!type} at $x$ in
  characteristic zero, or $X$ has $F$-injective\index{F-injective@$F$-injective}
  singularities at $x$ in positive characteristic.
  Suppose that for some integer $\ell \ge 0$, one of the following holds:
  \begin{enumerate}[label=$(\roman*)$,ref=\roman*]
    \item $L$ is nef and $\varepsilon(L;x) > n + \ell$;
      or\label{thm:demseshsingularample}
    \item $X$ is normal and $\varepsilon(\lVert L \rVert;x) > n + \ell$.%
      \label{thm:demseshsingularmoving}
  \end{enumerate}
  Then, the sheaf $\omega_X \otimes L$ separates $\ell$-jets at $x$.%
  \index{Seshadri constant, $\varepsilon(D;x)$!criterion for separation of jets|)}
\end{theorem}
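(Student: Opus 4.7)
The plan is to adapt the Frobenius-trace argument in the proof of \cref{thm:poscharseshsm} by incorporating three new ingredients. First, I would reduce to the case where $k$ is $F$-finite of characteristic $p > 0$. In positive characteristic, this uses the scheme-theoretic gamma construction from \cref{app:gamma}: base-change $(X, L, x)$ to an $F$-finite extension $k^\Gamma \supseteq k$ chosen so that $F$-injectivity at the lifted point and all numerical data (including the Seshadri-type bounds) are preserved, so that separation of $\ell$-jets descends by flat base change. In characteristic zero, I would spread $(X, L, x)$ out over a finitely generated $\ZZ$-subalgebra $A \subseteq k$ via \cref{thm:spreadingout}, and then use the dense $F$-injective type hypothesis to extract a dense family of closed points $\fp \in \Spec A$ where the reduction is $F$-injective at $x_\fp$; separation of $\ell$-jets is constructible and descends from these reductions to the generic fiber.

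Second, in case \cref{thm:demseshsingularmoving} I would use Nakamaye's description (\cref{prop:movingseshnakamaye}) to pass to a birational model on which the moving bound becomes an honest Seshadri bound for a nef line bundle. Since $\varepsilon(\lVert L \rVert; x) > 0$ implies $x \notin \Bplus(L) \supseteq \SB(L)$, there exist a positive integer $m$ and a proper birational morphism $\pi \colon X' \to X$ of normal projective varieties, an isomorphism over a neighborhood of $x$, with $\lvert \pi^*(mL) \rvert = \lvert M_m \rvert + F_m$ where $M_m$ is globally generated (hence nef), $x' \coloneqq \pi^{-1}(x) \notin \Supp F_m$, and $\varepsilon(M_m; x')/m > n + \ell$. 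Because $\pi$ is a local isomorphism at $x$, the variety $X'$ is $F$-injective at $x'$; combined with the inclusion $\pi_*\omega_{X'} \subseteq \omega_X$, the fact that $x' \notin \Supp F_m$, and \cref{lem:ito37}, separation of $\ell$-jets for $\omega_X \otimes L$ at $x$ reduces to separation of $(m\ell + O(1))$-jets for $\omega_{X'} \otimes M_m$ at $x'$.

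The heart of the proof is then a trace-map computation on $X'$ (or on $X$ itself in case \cref{thm:demseshsingularample}) parallel to the diagrams \cref{eq:poscharseshsmdiag1,eq:poscharseshsmdiag2}. I would use the Grothendieck trace $\Tr^e_{X'} \colon F^e_* \omega_{X'} \to \omega_{X'}$ from \cref{prop:ctx25}, which is surjective at $x'$ by $F$-injectivity and local duality (\cref{cor:fsingstrace}); since the relevant quotient sheaves $\omega_{X'} \otimes {L'}^{\otimes p^e}/(\fm_{x'}^{\ell+1})^{[p^e]}$ are supported at $x'$, this pointwise trace surjectivity is enough for the diagram chase on $H^0$. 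Combined with the pigeonhole inclusion $\fm_{x'}^{\ell p^e + n(p^e-1)+1+t} \subseteq (\fm_{x'}^{\ell+1})^{[p^e]}$ from \cref{lem:mur23improved}, where the constant $t$ absorbs the possible irregularity of $\cO_{X',x'}$, the theorem reduces to the vanishing
\[
  H^1\bigl(X', \omega_{X'} \otimes {L'}^{\otimes p^e} \otimes \fm_{x'}^{\ell p^e + n(p^e-1)+1+t}\bigr) = 0
  \qquad \text{for all } e \gg 0,
\]
where $L' = M_m$ or $L'=L$, as appropriate.

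The hardest step will be this $H^1$-vanishing, since neither Kodaira-type vanishing theorems nor the smooth-blowup identification $\fm_{x'}^N = \mathbf{R}\mu_*\cO_{\widetilde{X'}}(-NE)$ used in \cref{thm:poscharseshsm} are available on a singular $X'$. I would address it by dominating $X'$ by a regular alteration $f \colon Y \to X'$ via \cite{dJ96}, transporting the vanishing to $Y$ via the Leray spectral sequence and projection formula, and then applying Fujita's vanishing theorem \cite[Thm.\ 5.1]{Fuj83} on the blowup of $Y$ at a preimage of $x'$, with the requisite ampleness of $\mu^* f^* L' - (n+\ell) E$ furnished by \cref{lem:seshample} and the Seshadri bound. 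Descending the resulting vanishing back to $X'$ is where $F$-injectivity at $x'$ plays its essential role, guaranteeing that sections constructed upstairs on $Y$ can be traced down to yield the sections of $\omega_X \otimes L$ that we want on $X$.
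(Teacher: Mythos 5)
Your opening reduction (gamma construction to reach an $F$-finite field in characteristic $p$; spreading out and reduction modulo $\fp$ in characteristic zero) matches the paper. The gap is at what you call the heart of the proof: the reduction to the vanishing of $H^1\bigl(X',\omega_{X'}\otimes L'^{\otimes p^e}\otimes\fm_{x'}^{\ell p^e+n(p^e-1)+1+t}\bigr)$ and the proposed proof of that vanishing. At a singular point the quasi-isomorphism $\fm_{x'}^{N}\simeq\mathbf{R}\mu_*\cO(-NE)$ fails, and a regular alteration $f\colon Y\to X'$ does not transport the vanishing: $f$ is generically finite of degree $d>1$ in general, $\cO_{X'}\to\mathbf{R}f_*\cO_Y$ need not split (certainly not when $p\mid d$), there is no Leray spectral sequence computing $H^1(X',\sG)$ from $H^1(Y,f^*\sG)$, and $f^{-1}\fm_{x'}^{N}\cdot\cO_Y$ differs from the $N$th power of the ideal of a point upstairs by error terms that grow with $N\sim(n+\ell)p^{e}$ — so Fujita vanishing, which fixes the coherent sheaf while the ample twist grows, does not apply. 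Moreover, ``descending sections by $F$-injectivity'' is not a mechanism here: $F$-injectivity gives surjectivity at $x'$ of the trace of the \emph{Frobenius} of $X'$, not of the trace of an alteration. There is also a problem upstream in your case \cref{thm:demseshsingularmoving} reduction: producing sections of $\omega_{X'}\otimes M_m\approx\omega_X\otimes L^{\otimes m}$ says nothing about $\omega_X\otimes L$ itself, since the only trace taking $L^{\otimes m}$ back to $L$ is the Frobenius trace, which forces $m=p^{ge}$.

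The paper avoids cohomological vanishing entirely; this is the missing idea. In \cref{thm:murcimproved} the surjectivity of
\[
  H^0\bigl(X,\sF \otimes \cO_X(p^{ge}L)\bigr) \longrightarrow
  H^0\bigl(X,\sF \otimes \cO_X(p^{ge}L)/(\fm_x^{\ell+1})^{[p^{ge}]}(\sF \otimes \cO_X(p^{ge}L))\bigr)
\]
— the step you try to obtain from $H^1$-vanishing — is proved directly from jet separation: by \cref{prop:jetsep} the hypotheses give $\varepsilon_{\textup{jet}}(\lVert L\rVert;x)>n+\ell$; Fekete superadditivity (\cref{lem:ito37}) produces $m_0$ and $d_e$ with $\cO_X(m_0d_eL)$ separating $(\ell p^{ge}+n(p^{ge}-1)+t)$-jets at $x$ and $p^{ge}-m_0d_e\to\infty$; and \cref{prop:kur13prop27} makes $\sF\otimes\cO_X((p^{ge}-m_0d_e)L)$ globally generated at $x\notin\Bplus(L)$, so tensoring and applying \cref{lem:ito37} again, together with the pigeonhole inclusion of \cref{lem:mur23improved}, yields the surjection. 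After that, the Frobenius-trace diagram chase with $\sF=\omega_X$ (trace surjective at $x$ by $F$-injectivity) is exactly as you describe, and no birational model is needed in case \cref{thm:demseshsingularmoving}. If you replace your $H^1$-vanishing step by this argument, your outline becomes the paper's proof.
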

We will first show the statement in positive characteristic, from which we will
deduce the characteristic zero case via reduction modulo $p$.
\subsection{Proof in positive characteristic}
We state the main technical result that will imply
\cref{thm:demseshsingular}.
\begin{theorem}\label{thm:murcimproved}
  Let $X$ be a projective variety of dimension $n$ over a field $k$ of
  characteristic $p > 0$, and let $L$ be a Cartier divisor on $X$.
  Consider a $k$-rational point $x \in X \smallsetminus \Bplus(L)$ and consider
  a coherent sheaf $\sF$ on $X$ together with a morphism $\tau\colon F_*^g\sF
  \to \sF$ that is surjective at $x$.
  If $\varepsilon_{\textup{jet}}(\lVert L \rVert;x) > n+\ell$ for an integer $\ell \ge 0$,
  then $\sF \otimes \cO_X(L)$ separates $\ell$-jets at $x$.
\end{theorem}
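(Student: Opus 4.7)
My plan is to adapt the proof of \cref{thm:poscharseshsm}, using the iterates of the given map $\tau\colon F^g_*\sF \to \sF$ in place of the Grothendieck trace of Frobenius. Composing $\tau$ with its Frobenius pushforwards $F^g_*\tau,F^{2g}_*\tau,\ldots$ yields morphisms $\tau^e\colon F^{ge}_*\sF \to \sF$ that remain surjective at $x$ because $\tau$ is and $F^{ge}$ is a homeomorphism. Twisting by $\cO_X(L)$ and invoking the projection formula (together with $F^{ge\,*}\cO_X(L) \simeq \cO_X(p^{ge}L)$) produces morphisms
\[
  \tau^e_L\colon F^{ge}_*\bigl(\sF \otimes \cO_X(p^{ge}L)\bigr) \longrightarrow \sF \otimes \cO_X(L),
\]
each surjective at $x$. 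The pigeonhole principle in the form of \cref{lem:mur23improved} then supplies a constant $t$ (depending only on $\fm_x$) such that $\fm_x^{M_e} \subseteq (\fm_x^{\ell+1})^{[p^{ge}]}$ for $M_e = \ell p^{ge} + h(p^{ge}-1) + 1 + t$, where $h\le n$ is the analytic spread of $\fm_x$. Assembling the two commutative diagrams analogous to \cref{eq:poscharseshsmdiag1,eq:poscharseshsmdiag2}---the first built from $\tau^e_L$ (whose cokernel has support disjoint from $x$, so that the induced map on the relevant skyscraper quotients at $x$ is surjective by a snake lemma argument), the second from the pigeonhole inclusion---reduces the desired $\ell$-jet separation of $\sF \otimes \cO_X(L)$ at $x$ to the vanishing
\[
  H^1\bigl(X, \fm_x^{M_e} \cdot \sF \otimes \cO_X(p^{ge}L)\bigr) = 0
\]
for some sufficiently large $e$.

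To establish this vanishing I would exploit the hypothesis $\varepsilon_{\textup{jet}}(\lVert L \rVert;x) > n+\ell$ together with $x \notin \Bplus(L)$. After normalization if necessary, \cref{prop:jetsep} yields $\varepsilon(\lVert L \rVert;x) > n+\ell$, so \cref{def:movingsesh} supplies a birational morphism $\mu\colon X' \to X$ from a normal projective variety, an isomorphism over a neighborhood of $x$, together with an $\RR$-numerical equivalence $\mu^*L \equiv_\RR A+N$ in which $A$ is an ample $\RR$-Cartier divisor, $N$ is effective with $x \notin \mu(\Supp N)$, and $\varepsilon(A;\mu^{-1}(x)) > n+\ell$. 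Blowing up $\mu^{-1}(x)$ on $X'$ produces $\nu\colon Y \to X'$ with exceptional divisor $E$; \cref{lem:seshample} guarantees that $\nu^*A - (n+\ell)E$ is ample on $Y$. Using that $\mu\circ\nu$ coincides near $x$ with the blowup of $X$ at $x$---so that the projection formula and the identification $\mathbf{R}(\mu\circ\nu)_*\cO_Y(-M_eE) \simeq \fm_x^{M_e}$ apply locally around $x$ while $N$ contributes nothing there---the Leray spectral sequence rewrites the desired $H^1$ as a group of the form $H^1\bigl(Y, (\mu\circ\nu)^*\sF \otimes \cO_Y(p^{ge}(\nu^*A-(n+\ell)E) + P)\bigr)$, for a nef correction $P$ absorbing both $N$ and the $O(1)$ slack between $M_e$ and $(n+\ell)p^{ge}$. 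Fujita's vanishing theorem \cite[Thm.\ 5.1]{Fuj83}, applied to the coherent sheaf $(\mu\circ\nu)^*\sF$ and the ample divisor $\nu^*A-(n+\ell)E$, then produces the desired vanishing for $e \gg 0$.

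The principal obstacle is this last step. Unlike in \cref{thm:poscharseshsm}---where $L$ was ample and one blew up $x$ on $X$ itself---the weaker hypothesis $x \notin \Bplus(L)$ forces a detour through a birational model $\mu\colon X' \to X$, and one must carefully track how the ideal sheaf $\fm_x^{M_e}$, the two-step modification $\mu\circ\nu$, and the divisor $N$ interact, including the subtle identification $\mathbf{R}(\mu\circ\nu)_*\cO_Y(-M_eE) \simeq \fm_x^{M_e}$, which at a singular point of $X$ requires an additional local regularity argument. A secondary difficulty is that $\sF$ is an arbitrary coherent sheaf rather than a line bundle, so the Kodaira-type vanishing used in \cref{prop:demsesh} must be replaced by Fujita's vanishing theorem; this is compatible with the approach because the ample divisor $\nu^*A-(n+\ell)E$ is independent of $e$, while only the nef twist $P$ grows with $e$.
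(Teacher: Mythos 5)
Your first half---iterating $\tau$ to get $\tau^e\colon F^{ge}_*\sF \to \sF$ surjective at $x$, twisting by $\cO_X(L)$, invoking the pigeonhole principle, and assembling the two commutative diagrams---matches the paper's argument exactly. The gap is in how you propose to finish: you reduce to the vanishing $H^1\bigl(X,\fm_x^{M_e}\cdot\sF\otimes\cO_X(p^{ge}L)\bigr)=0$ and try to prove it with Fujita vanishing on a birational model, and this cannot work. The decomposition $\mu^*L \equiv_\RR A+N$ furnished by the definition of the moving Seshadri constant is only a \emph{numerical} equivalence, and $H^1$ is not a numerical invariant, so it gives no control over the cohomology of the actual sheaf $\cO_Y(p^{ge}\mu^*L - M_eE)$; moreover $p^{ge}N$ is effective but not nef, so it can neither be absorbed into the nef twist $P$ of Fujita's theorem nor (since it grows with $e$) into the fixed coherent sheaf. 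Worse, the intermediate vanishing you aim for is false in the stated generality: the sequence $0\to\fm_x^{M}\sG\to\sG\to\sG/\fm_x^{M}\sG\to0$ shows that $H^1(X,\fm_x^{M}\sG)$ surjects onto $H^1(X,\sG)$, and for $L$ big but not nef with its non-nef locus away from $x$ (which $x\notin\Bplus(L)$ permits) one has $h^1(X,\cO_X(mL))\ne0$ for infinitely many $m$---this non-vanishing is precisely what \cref{prop:dfkl31} and the proof of \cref{thm:dfkl41} quantify. Finally, the identification $\fm_x^{M}\simeq\mathbf{R}\mu_*\cO(-ME)$ requires $X$ regular at $x$, and the passage through \cref{prop:jetsep} and normalization changes $X$, $x$, and $\sF$ in ways you do not account for; the theorem assumes neither normality nor smoothness.

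The paper's proof never touches higher cohomology. Since $\varepsilon_{\textup{jet}}(\lVert L\rVert;x)$ is by definition a normalized jet-separation number, the hypothesis $\varepsilon_{\textup{jet}}(\lVert L\rVert;x)>n+\ell$ directly produces, via \cref{lem:ito37}, an integer $m_0$ and a sequence $d_e$ with $p^{ge}-m_0d_e\to\infty$ such that $\cO_X(m_0d_eL)$ separates $(\ell p^{ge}+n(p^{ge}-1)+t)$-jets at $x$. Because $x\notin\Bplus(L)$ and $p^{ge}-m_0d_e\to\infty$, \cref{prop:kur13prop27} makes $\sF\otimes\cO_X((p^{ge}-m_0d_e)L)$ globally generated at $x$ for $e\gg0$, and the superadditivity of jet separation under tensor products (\cref{lem:ito37} again) then shows that $\sF\otimes\cO_X(p^{ge}L)$ separates $(\ell p^{ge}+n(p^{ge}-1)+t)$-jets at $x$. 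By the pigeonhole inclusion this is exactly the surjectivity of the $H^0\to H^0$ restriction map modulo $(\fm_x^{\ell+1})^{[p^{ge}]}$ that your diagrams require. Replacing your vanishing argument with this direct jet-separation argument closes the proof.
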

We note that a coherent sheaf $\sF$ on $X$ together with a morphism $\tau\colon
F_*^g\sF \to \sF$ is an example of a \textsl{Cartier module}%
\index{Cartier, Pierre!module|textbf} as defined in
\cite{BB11}. 
\begin{proof}
  We proceed in a sequence of steps, following the outline of the proof of
  \cite[Thm.\ 3.1]{MS14} and \cite[Thm.\ C]{Mur18}.
  \par We first claim that for every integer $t > 0$, there exist a positive
  integer $m_0$ and a sequence $\{d_e\}$ such that $\cO_X(m_0d_eL)$ separates
  $(\ell p^{ge}+n(p^{ge}-1)+t)$-jets at $x$ for all $e > 0$, and such that
  $p^{ge} - m_0d_e \to \infty$ as $e \to \infty$.
  Let $0 < \delta \ll 1$.
  By \cref{lem:ito37}, there exists an $m_0$ such that
  \[
    \frac{s(m_0L;x)}{m_0} > (1+\delta)(n+\ell).
  \]
  Now for every integer $e > 0$, let
  \[
    d_e = \biggl\lceil \frac{\ell p^{ge}+n(p^{ge}-1)+t}{s(m_0L;x)}
    \biggr\rceil.
  \]
  By the superadditivity property (\cref{lem:ito37}), we have
  \[
    s(m_0d_eL;x) \ge d_e \cdot s(m_0L;x) \ge \ell p^{ge}+n(p^{ge}-1)+t,
  \]
  hence $\cO_X(m_0d_eL)$ separates $(\ell p^{ge}+n(p^{ge}-1)+t)$-jets at $x$.
  We now claim that $p^{ge}-m_0d_e \to \infty$ as $e \to \infty$.
  Note that
  \begin{align*}
    p^{ge} - m_0d_e &= p^{ge} - m_0 \cdot \biggl\lceil \frac{\ell
    p^{ge}+n(p^{ge}-1)+t}{s(m_0H;x)} \biggr\rceil\\
    &\ge p^{ge} - \bigl(\ell p^{ge}+n(p^{ge}-1)+t\bigr) \cdot
    \frac{m_0}{s(m_0H;x)} - m_0\\
    &\ge p^{ge} - \bigl(\ell p^{ge}+n(p^{ge}-1)+t\bigr) \cdot
    \frac{1}{(1+\delta)(n+\ell)} - m_0
  \intertext{and as $e \to \infty$, we have}
    \lim_{e \to \infty} (p^{ge} - m_0d_e) &\ge \lim_{e \to \infty} \biggl( p^{ge}
    - (\ell p^{ge}+n(p^{ge}-1)+t) \cdot \frac{1}{(1+\delta)(n+\ell)} -
    m_0 \biggr)\\
    &= \lim_{e \to \infty} p^{ge}\biggl( 1 - \frac{1}{1+\delta} \biggr) - m_0 =
    \infty.
  \end{align*}
  We therefore see that $\cO_X(m_0d_eL)$ separates $(\ell
  p^{ge}+n(p^{ge}-1)+t)$-jets at $x$, and that $p^{ge}-m_0d_e \to \infty$ as $e
  \to \infty$.
  \par We now show that there exist a positive integer $e$ such that the
  restriction morphism
  \begin{equation}\label{eq:peljetscartierversion}
    H^0\bigl(X,\sF \otimes \cO_X(p^{ge}L)\bigr) \longrightarrow
    H^0\biggl(X,\frac{\sF \otimes
      \cO_X(p^{ge}L)}{(\fm_x^{\ell+1})^{[p^{ge}]}(\sF \otimes
    \cO_X(p^{ge}L))}\biggr)
  \end{equation}
  is surjective.
  By \cref{lem:mur23improved}, there exists an integer $t \ge 0$ such that
  \begin{equation}\label{eq:mur23improvedapp}
    \fm_x^{\ell p^{ge}+n(p^{ge}-1)+1+t} \subseteq (\fm_x^{\ell+1})^{[p^{ge}]},
  \end{equation}
  for all $e > 0$.
  Now let $m_0$ and $\{d_e\}$ as in the previous paragraph.
  Since $x \notin \Bplus(L)$ and since $p^{ge}-m_0d_e \to \infty$,
  \cref{prop:kur13prop27} implies $\sF \otimes \cO_X( (p^{ge}-m_0d_e)L)$ is
  globally generated at $x$ for some $e \gg 0$.
  Since $\cO_X(m_0d_eL)$ separates $(\ell p^{ge}+n(p^{ge}-1)+t)$-jets at $x$,
  \cref{lem:ito37} implies
  \[
    \sF \otimes \cO_X(p^{ge}L) \simeq \sF \otimes
    \cO_X\bigl((p^{ge}-m_0d_e)L\bigr) \otimes \cO_X(m_0d_eL)
  \]
  separates $(\ell p^{ge}+n(p^{ge}-1)+t)$-jets at $x$.
  The inclusion \cref{eq:mur23improvedapp} then implies the surjectivity of
  \cref{eq:peljetscartierversion}.
  \par We now use the $e$th iterate $\tau^e$ of the morphism $\tau$ defined as
  the composition
  \[
    F_*^{ge}\sF \xrightarrow{F_*^{g(e-1)}\tau} F_*^{g(e-1)}\sF
    \xrightarrow{F_*^{g(e-2)}\tau} \cdots
    \overset{\tau}{\longrightarrow} \sF
  \]
  to take out the factors of $p^{ge}$.
  Note that $\tau^e$ is surjective at $x$ by assumption, since the Frobenius and
  its iterates are affine morphisms.
  Twisting $\tau^e$ by $\cO_X(L)$, we have a morphism
  \begin{align*}
    F_*^{ge}(\sF \otimes \cO_X(p^{ge}L)) &\longrightarrow \sF \otimes \cO_X(L)
    \intertext{that is surjective at $x$,
    and by considering the $\cO_X$-module structure on $F_*^{ge}(\sF
    \otimes \cO_X(p^{ge}L))$, we obtain a morphism}
    F_*^{ge}\bigl((\fm_x^{\ell+1})^{[p^{ge}]}\bigl( \sF \otimes \cO_X(p^{ge}L)
    \bigr) \bigr) &\longrightarrow \fm_x^{\ell+1} \bigl( \sF \otimes
    \cO_X(L)\bigr).
  \end{align*}
  We therefore have the commutative diagram
  \[
    \begin{tikzcd}
      0\dar & 0\dar\\
      F_*^{ge}\bigl((\fm_x^{\ell+1})^{[p^{ge}]}\bigl( \sF \otimes \cO_X(p^{ge}L)\bigr)
      \bigr) \rar \dar & \fm_x^{\ell+1}\bigl(\sF \otimes \cO_X(L)\bigr)\dar\\
      F_*^{ge}\bigl(\sF \otimes \cO_X(p^{ge}L)\bigr) \rar \dar
      & \sF \otimes \cO_X(L) \dar\\
      F_*^{ge}\biggl(\dfrac{\sF \otimes \cO_X(p^{ge}L)}{(\fm_x^{\ell+1})^{[p^{ge}]}(\sF
      \otimes \cO_X(p^{ge}L))}\biggr) \rar \dar
      & \dfrac{\sF \otimes \cO_X(L)}{\fm_x^{\ell+1}(\sF
      \otimes \cO_X(L))} \dar\\
      0 & 0
    \end{tikzcd}
  \]
  where the horizontal arrows are induced by $\tau^e$, and are therefore
  surjective at $x$.
  Note that the left column is exact since the Frobenius morphism $F$ is affine.
  Taking global sections in the bottom square, we obtain the
  following commutative square:
  \[
    \begin{tikzcd}
      H^0\bigl(X,\sF \otimes \cO_X(p^{ge}L)\bigr) \rar
      \dar[twoheadrightarrow,swap]{\varphi}
      & H^0\bigl(X,\sF \otimes \cO_X(L)\bigr) \dar\\
      H^0\biggl(X,\dfrac{\sF \otimes
      \cO_X(p^{ge}L)}{(\fm_x^{\ell+1})^{[p^{ge}]}(\sF \otimes
      \cO_X(p^{ge}L))}\biggr) \rar[twoheadrightarrow]{\psi}
      & H^0\biggl(X,\dfrac{\sF \otimes \cO_X(L)}{\fm_x^{\ell+1}(\sF
      \otimes \cO_X(L))}\biggr)
    \end{tikzcd}
  \]
  where $\psi$ is surjective since the corresponding morphism of sheaves is
  a surjective morphism of skyscraper sheaves supported at $x$.
  Since the restriction map $\varphi$ is surjective by the previous paragraph,
  the right vertical map is necessarily surjective by the commutativity of the
  diagram.
  Thus, the sheaf $\sF \otimes \cO_X(L)$ indeed separates $\ell$-jets at
  $x$.
\end{proof}
To prove \cref{thm:demseshsingular}, we need the following elementary lemma.
\begin{lemma}\label{lem:jetseshfieldext}
  Let $X$ be a projective variety over a field $k$, let $\sF$ be a coherent
  sheaf on $X$, and let $x \in X$ be a $k$-rational point.
  Consider a field extension $k \subseteq k'$ such that $X \times_k k'$ is a
  variety, and such that denoting by
  \[
    \pi\colon X \times_k k' \longrightarrow X
  \]
  the first projection morphism, the inverse image $\pi^{-1}(x)$ of $x$ consists
  of a single $k'$-rational point.
  Then, for every integer $\ell\ge 0$, the sheaf $\sF$ separates
  $\ell$-jets at $x$ if and only if $\pi^*\sF$ separates $\ell$-jets at
  $\pi^{-1}(x)$.
  In particular, $\varepsilon_{\textup{jet}}(\lVert D \rVert;x) = \varepsilon_{\textup{jet}}(\lVert
  \pi^*D \rVert;\pi^{-1}(x))$ for every $\QQ$-Cartier divisor $D$ on $X$.
\end{lemma}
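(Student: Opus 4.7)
The plan is to deduce the jet-separation equivalence from faithfully flat base change along $\pi$, and then obtain the equality of jet-theoretic Seshadri constants by applying this equivalence to each multiple $mD$ and passing to the limit. Throughout, write $x' \coloneqq \pi^{-1}(x)$, which by hypothesis is a single $k'$-rational point of $X \times_k k'$.

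First, I would verify the sheaf-theoretic identity $\pi^{-1}\fm_x \cdot \cO_{X \times_k k'} = \fm_{x'}$. Since $x$ is $k$-rational, the scheme-theoretic fiber of $\pi$ over $x$ is
\[
  \{x\} \times_X (X \times_k k') \simeq \Spec(k \otimes_k k') = \Spec k',
\]
which is precisely the reduced structure on the single point $x'$; hence $\pi^{-1}\fm_x \cdot \cO_{X \times_k k'} = \fm_{x'}$, and taking the $(\ell+1)$st power yields $\pi^*(\sF/\fm_x^{\ell+1}\sF) \simeq \pi^*\sF/\fm_{x'}^{\ell+1}\pi^*\sF$. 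Next, because $k \subseteq k'$ is faithfully flat, so is $\pi$, and flat base change gives canonical isomorphisms
\begin{align*}
  H^0(X \times_k k',\pi^*\sF) &\simeq H^0(X,\sF) \otimes_k k',\\
  H^0\bigl(X \times_k k',\pi^*\sF/\fm_{x'}^{\ell+1}\pi^*\sF\bigr)
  &\simeq H^0\bigl(X,\sF/\fm_x^{\ell+1}\sF\bigr) \otimes_k k'.
\end{align*}
Under these isomorphisms, the restriction morphism for $\pi^*\sF$ at $x'$ is identified with $\rho \otimes_k k'$, where $\rho$ denotes the restriction morphism for $\sF$ at $x$. Faithful flatness then implies that $\rho$ is surjective if and only if $\rho \otimes_k k'$ is, yielding the desired equivalence for $\ell$-jet separation.

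For the last assertion, applying this equivalence to $\sF = \cO_X(mD)$ for each positive integer $m$ with $mD$ integral yields $s(mD;x) = s(m\pi^*D;x')$, and $mD$ is integral precisely when $m\pi^*D$ is. Dividing by $m$ and taking the limit supremum produces $\varepsilon_{\textup{jet}}(\lVert D \rVert;x) = \varepsilon_{\textup{jet}}(\lVert \pi^*D \rVert;x')$. The only genuine subtlety is the identification $\pi^{-1}\fm_x \cdot \cO_{X \times_k k'} = \fm_{x'}$: without the assumption that $\pi^{-1}(x)$ consists of a single $k'$-rational point, the scheme-theoretic fiber could fail to be reduced, and one would only obtain an equality of radicals, which would not suffice to compare powers.
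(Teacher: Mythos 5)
Your proof is correct and follows the same route as the paper's: faithfully flat base change identifies the restriction morphism for $\pi^*\sF$ at $\pi^{-1}(x)$ with $\rho \otimes_k k'$, and faithful flatness transfers surjectivity in both directions; your explicit verification that $\pi^{-1}\fm_x \cdot \cO_{X \times_k k'} = \fm_{x'}$ is exactly the point the paper leaves implicit. The only (immaterial) difference is that you handle $\QQ$-Cartier divisors directly via the limit supremum over integral multiples, whereas the paper first treats Cartier divisors and then invokes homogeneity.
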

\begin{proof}
  The first statement follows from faithfully flat base change, which also
  implies $\varepsilon_{\textup{jet}}(\lVert D \rVert;x) = \varepsilon_{\textup{jet}}(\lVert
  \pi^*D \rVert;\pi^{-1}(x))$ for Cartier divisors.
  We then obtain the same equality for $\QQ$-Cartier divisors by homogeneity
  (\cref{prop:jetseshelem}\cref{prop:jetseshhomog}).
\end{proof}
We now prove \cref{thm:demseshsingular}.
\begin{proof}[Proof of \cref{thm:demseshsingular} in positive characteristic]
  In either setting, we note that
  \[
    \varepsilon_{\textup{jet}}\bigl(\lVert L \rVert;x\bigr) > n+\ell
  \]
  by \cref{prop:jetsep}, where for \cref{thm:demseshsingularample}, we note that
  $\varepsilon(L;x) > n+\ell$ implies $x \notin \Bplus(L)$ by
  \cref{thm:nakamaye} and \cite[Prop.\ 5.1.9]{Laz04a}.
  We therefore need to check that the rest of the hypotheses in
  \cref{thm:murcimproved} can be satisfied.
  \par We first claim that we can reduce to the case when $k$ is $F$-finite.
  By \cref{thm:gammaconst} applied simultaneously to $X$, $\Spec \cO_{X,x}$, and
  $\{x\}$, there exists
  a field extension $k \subseteq k^\Gamma$ such that denoting the projection
  morphism by $\pi^\Gamma \colon X^\Gamma \to X$, the scheme $X^\Gamma$ is a
  variety and $x^\Gamma \coloneqq (\pi^\Gamma)^{-1}(x)$ is a closed
  $k^\Gamma$-rational point such that $\cO_{X^\Gamma,x^\Gamma}$ is
  $F$-injective.
  The formation of $\omega_X$ is compatible with ground field extensions
  \cite[Cor.\ V.3.4$(a)$]{Har66} as is the nefness of $L$ \cite[Prop.\
  B.17]{Kle05}, and
  $\varepsilon_{\textup{jet}}(\lVert L \rVert;x)$ is invariant under the ground field
  extension $k \subseteq k^\Gamma$ by \cref{lem:jetseshfieldext}.
  Since the condition that $\omega_X \otimes \cO_X(L)$ separates $\ell$-jets can
  also be checked after base change to $k^\Gamma$ by \cref{lem:jetseshfieldext},
  it therefore suffices to consider the case when $k$ is $F$-finite.
  In this case, we can apply \cref{thm:murcimproved} for $\sF = \omega_X$,
  since the trace morphism $\Tr_X\colon F_*\omega_X \to \omega_X$ is surjective
  at $x$ by the $F$-injectivity of $\cO_{X,x}$ (\cref{lem:finjffin}).
\end{proof}
\begin{remark}\label{rem:frobseshconst}
  The original proof of \cref{thm:poscharseshsm} in \cite[Thm.\ A]{Mur18}
  inspired the proof of \cref{thm:murcimproved} given above.
  The idea is that the surjectivity of restriction maps of the form in
  \cref{eq:mur23improvedapp} can be detected by \textsl{Frobenius--Seshadri
  constants,} which are a positive-characteristic version of Seshadri
  constants introduced in \cite{MS14} and \cite{Mur18}.
  These constants are defined as follows: Let $L$ be a Cartier divisor on a
  complete variety $X$ over a field $k$, and let $x \in X$ be a $k$-rational
  point.
  Denote by $s_F^\ell(mL;x)$ the largest integer $e \ge 0$ such that
  the restriction map
  \[
    H^0\bigl(X,\cO_X(mL)\bigr) \longrightarrow H^0\bigl(X,\cO_X(mL) \otimes
    \cO_X/(\fm_x^{\ell+1})^{[p^e]}\bigr)
  \]
  is surjective.
  Then, the \textsl{$\ell$th Frobenius--Seshadri constant} of $L$ at $x$ is
  \[
    \varepsilon_F^\ell(L;x) \coloneqq \limsup_{m \to \infty}
    \frac{p^{s^\ell_F(mL;x)}-1}{m/(\ell+1)}.\glsadd{frobsesh}
  \]
  See \cites[\S2]{MS14}[\S2]{Mur18} for basic properties of these constants.
  In particular, lower bounds of the form $\varepsilon_F^\ell(L;x) > \ell + 1$
  imply $\omega_X \otimes L$ separates $\ell$-jets at $x$ \cite[Thm.\ C]{Mur18}.
  One can then deduce \cref{thm:demseshsingular} since the pigeonhole principle
  (\cref{lem:mur23improved}) implies
  \[
    \frac{\ell+1}{\ell+n} \cdot \varepsilon_{\textup{jet}}\bigl(\lVert L
    \rVert;x\bigr) \le \varepsilon^\ell_F(L;x) \le
    \varepsilon_{\textup{jet}}\bigl(\lVert L \rVert;x\bigr),
  \]
  where $n = \dim X$.
  See the proof of \cite[Prop.\ 2.9]{Mur18}.
\end{remark}
\subsection{Proof in characteristic zero}
\label{sect:proofofdemfinjtype}
To prove \cref{thm:demseshsingular} in characteristic zero, we fix the following
notation.
\begin{setup}\label{setup:demfinjtype}
  Let $X$ be a projective variety over a field $k$ of characteristic zero, and
  consider a $k$-rational point $x \in X$ with defining ideal $\fm_x \subseteq
  \cO_X$.
  We then have a commutative diagram
  \[
    \begin{tikzcd}
      \Spec k \rar[hook]{i_x}\arrow[equal]{dr} & X \dar{\pi}\\
      & \Spec k
    \end{tikzcd}
  \]
  where $i_x$ is the closed embedding corresponding to point $x$.
  By spreading out the entire diagram as in \cref{thm:spreadingout}, there
  exists a domain $A_\lambda \subseteq k$ that is of finite type over $\ZZ$ and
  a commutative diagram
  \[
    \begin{tikzcd}
      \Spec A_\lambda \rar[hook]{i_{x,\lambda}}\arrow[equal]{dr} & X_\lambda
      \dar{\pi_\lambda}\\
      & \Spec A_\lambda
    \end{tikzcd}
  \]
  that base changes to the commutative diagram above, where $\pi_\lambda$ is of
  finite type.
  After possibly enlarging $A_\lambda$ by inverting finitely many elements, and
  with notation as in \cref{def:reductionmodulop}, we can assume the following
  properties by \cref{table:spreadingout,table:reductionmodulop}:
  \begin{enumerate}[label=$(\alph*)$]
    \item $i_{x,\lambda}$ is a closed embedding;
    \item $\pi_\lambda$ is flat and projective; and
    \item $X_\fp$ is integral for every closed point $\fp \in \Spec A_\lambda$.
  \end{enumerate}
  We will denote the ideal sheaf defining the image of $i_{x,\lambda}$ as
  $\fm_{x_\lambda}$, and the corresponding subscheme by $x_\lambda$.
  By \cref{rem:spreadingoutothers}, we can also spread out coherent sheaves,
  Cartier divisors, and $\QQ$-Cartier divisors from $X$ to $X_\lambda$.
\end{setup}
We will also need the following result, which describes how separation of jets
and how $\varepsilon_{\textup{jet}}(\lVert D \rVert;x)$ behave under reduction
modulo $\fp$.
We note that in the description of different loci below, we allow $\fq$ to be
non-closed points in $\Spec A_\lambda$.
\begin{lemma}\label{lem:jetsepmodp}
  Let $X$ and $\fm_x$ be as in \cref{setup:demfinjtype}, with models
  $X_\lambda$ and $\fm_{x\lambda}$ over $\Spec A_\lambda$,
  respectively.
  \begin{enumerate}[label=$(\roman*)$,ref=\roman*]
    \item\label{lem:jetsepmodpsheaf}
      Let $\sF$ be a coherent sheaf on $X$ together with a model
      $\sF_\lambda$ over $\Spec A_\lambda$.
      Let $\ell \ge -1$ be an integer, and 
      suppose that $\sF_\lambda$ and $\sF_\lambda/
      \fm_{x_\lambda}^{\ell+1}\sF_\lambda$ are flat and cohomologically flat in
      degree zero over $\Spec A_\lambda$.
      Then, the locus
      \[
        \bigl\{ \fq \in \Spec A_\lambda \bigm\vert \sF_\fq\ \text{separates
        $\ell$-jets at $x_\fq$} \bigr\}
      \]
      is open in $\Spec A_\lambda$.
    \item\label{lem:jetsepmodpdiv}
      Let $D$ be a $\QQ$-Cartier divisor on $X$ together with a model
      $D_\lambda$ over $\Spec A_\lambda$.
      Then, for every integer $m > 0$ such that $mD_\lambda$ is Cartier, the
      locus
      \[
        \bigl\{ \fq \in \Spec A_\lambda \bigm\vert mD_\fq\ \text{separates
        $\ell$-jets at $x_\fq$} \bigr\}
      \]
      contains an open set in $\Spec A_\lambda$ for every integer $\ell \ge -1$.
    \item\label{lem:jetsepmodpsesh}
      Let $D$ be a $\QQ$-Cartier divisor on $X$ together with a model
      $D_\lambda$ over $\Spec A_\lambda$, and let $\delta > 0$ be a real number
      such that $\varepsilon_{\textup{jet}}( \lVert D \rVert;x) > \delta$.
      Then, the locus
      \begin{equation}\label{eq:jetsepseshlocus}
        \bigl\{ \fq \in \Spec A_\lambda \bigm\vert
          \varepsilon_{\textup{jet}}\bigl( \lVert D_\fq \rVert;x_\fq\bigr) >
        \delta \bigr\}
      \end{equation}
      contains a non-empty open set in $\Spec A_\lambda$.
  \end{enumerate}
\end{lemma}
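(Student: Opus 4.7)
The plan for \cref{lem:jetsepmodpsheaf} is to realize the locus of interest as the complement of the support of a coherent sheaf on $\Spec A_\lambda$. The restriction morphism $\sF_\lambda \to \sF_\lambda/\fm_{x_\lambda}^{\ell+1}\sF_\lambda$ on $X_\lambda$ pushes forward to a morphism
\[
  \pi_{\lambda*}\sF_\lambda \longrightarrow \pi_{\lambda*}\bigl(\sF_\lambda/\fm_{x_\lambda}^{\ell+1}\sF_\lambda\bigr)
\]
of coherent $A_\lambda$-modules, whose cokernel $\mathcal{C}$ is coherent over $A_\lambda$. Because both sheaves are assumed flat and cohomologically flat in degree zero over $\Spec A_\lambda$, the semicontinuity and base change theorem (see \cite[Cor.\ III.12.9]{Har77}) guarantees that the formation of each pushforward commutes with base change to the fiber over $\fq$, hence so does the cokernel. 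Thus $\sF_\fq$ separates $\ell$-jets at $x_\fq$ precisely when $\mathcal{C} \otimes_{A_\lambda} \kappa(\fq) = 0$, i.e., when $\fq \notin \Supp \mathcal{C}$, and this complement is open.

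For \cref{lem:jetsepmodpdiv}, the plan is to verify the flatness hypotheses of \cref{lem:jetsepmodpsheaf} on a suitable open subset of $\Spec A_\lambda$ and then invoke part \cref{lem:jetsepmodpsheaf}. The sheaf $\sF_\lambda \coloneqq \cO_{X_\lambda}(mD_\lambda)$ is a line bundle on $X_\lambda$, hence flat over $A_\lambda$ because $\pi_\lambda$ is flat. By generic flatness \cite[Thm.\ 6.9.1]{EGAIV2} applied to $\pi_\lambda$ together with the coherent quotient $\sF_\lambda/\fm_{x_\lambda}^{\ell+1}\sF_\lambda$, this quotient becomes flat over $A_\lambda$ after shrinking $\Spec A_\lambda$. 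Cohomological flatness in degree zero is then automatic for the quotient, since it is supported along the section $x_\lambda$, so its higher cohomology vanishes fiberwise and $h^0$ coincides with the (locally constant) Euler characteristic. For $\sF_\lambda$ itself, upper semicontinuity of $\fq \mapsto h^0(X_\fq,\sF_\fq)$ lets us shrink further to a non-empty open on which this function is locally constant; Grauert's theorem then supplies cohomological flatness in degree zero. Applying part \cref{lem:jetsepmodpsheaf} over the resulting open set produces the desired locus.

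Part \cref{lem:jetsepmodpsesh} is then a quick consequence of part \cref{lem:jetsepmodpdiv} combined with \cref{lem:ito37}. Since $\varepsilon_{\textup{jet}}(\lVert D \rVert;x) = \sup_{m \ge 1} s(mD;x)/m > \delta$, we can fix an integer $m > 0$ such that $mD$ is Cartier and $\ell \coloneqq s(mD;x)$ satisfies $\ell/m > \delta$; by construction $\cO_X(mD)$ separates $\ell$-jets at $x$. Part \cref{lem:jetsepmodpdiv} produces a non-empty open set $U \subseteq \Spec A_\lambda$ on which $mD_\fq$ separates $\ell$-jets at $x_\fq$, so for every $\fq \in U$ we have
\[
  \varepsilon_{\textup{jet}}\bigl(\lVert D_\fq \rVert;x_\fq\bigr) \ge \frac{s(mD_\fq;x_\fq)}{m} \ge \frac{\ell}{m} > \delta,
\]
which places $U$ inside the locus \eqref{eq:jetsepseshlocus}.

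The main obstacle is the cohomological flatness verification for the line bundle $\sF_\lambda$ in \cref{lem:jetsepmodpdiv}; while generic flatness is standard, one must be careful to shrink $\Spec A_\lambda$ so that $R^0\pi_{\lambda*}\sF_\lambda$ genuinely commutes with base change to every fiber over the open set, which requires a constancy-of-$h^0$ argument rather than merely semicontinuity. Fortunately, our freedom to invert finitely many elements of $A_\lambda$ (as in \cref{setup:demfinjtype}) combined with the upper semicontinuity of Betti numbers in flat proper families makes this step routine once set up.
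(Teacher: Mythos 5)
Your proposal is correct and follows essentially the same route as the paper: part (i) is the support of the cokernel of $\pi_{\lambda*}\sF_\lambda \to \pi_{\lambda*}(\sF_\lambda/\fm_{x_\lambda}^{\ell+1}\sF_\lambda)$ via cohomology and base change, part (ii) is generic flatness plus part (i), and part (iii) picks $m$ with $s(mD;x)/m > \delta$ and applies part (ii). The one place you are slightly too quick is in (iii), where you assert that part (ii) produces a \emph{non-empty} open set: since (ii) only guarantees that the locus contains an open set, you should add (as the paper does) that the generic point of $\Spec A_\lambda$ lies in that locus by flat base change along $\Frac A_\lambda \subseteq k$, because $mD$ separates $\lfloor m\delta\rfloor+1$ jets at $x$ over $k$.
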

\begin{proof}
  We first prove \cref{lem:jetsepmodpsheaf}.
  By cohomology and base change \cite[Cor.\ 8.3.11]{Ill05},
  the locus where $\sF_\fq$ does not separate $\ell$-jets at $x_\fq$ is
  \[
    \Supp\Bigl(\coker\bigl(
    \pi_{\lambda*}\sF_\lambda \longrightarrow
    \pi_{\lambda*}(\sF_\lambda/
    \fm_{x_\lambda}^{\ell+1}\sF_\lambda)
    \bigr)\Bigr),
  \]
  where $\pi_\lambda \colon X_\lambda \to \Spec A_\lambda$ is as in
  \cref{setup:demfinjtype}.
  Since $\pi_\lambda$ is proper, both direct image sheaves are coherent, and the
  cokernel above is also coherent.
  Thus, the support of this cokernel is closed in $\Spec A_\lambda$, which
  implies \cref{lem:jetsepmodpsheaf}.
  \cref{lem:jetsepmodpdiv} then follows from \cref{lem:jetsepmodpsheaf} by
  setting $\sF = \cO_X(mD)$,
  after possibly enlarging $A_\lambda$ by inverting finitely many elements to
  assume that $\cO_{X_\lambda}(mD_\lambda)$ and $\cO_{X_\lambda}(mD_\lambda)
  \otimes \cO_{X_\lambda}/\fm_{x_\lambda}^{\ell+1}$ are flat and cohomologically
  flat in degree zero over $\Spec A_\lambda$ by generic flatness \cite[Thm.\
  6.9.1]{EGAIV2} and by \cite[Cor.\ 8.3.11]{Ill05}.
  \par We now show \cref{lem:jetsepmodpsesh}.
  By \cref{lem:ito37}, $\varepsilon_{\textup{jet}}(\lVert D \rVert;x)$
  is a limit, hence there exists an integer $m > 0$ such that $mD$ is a
  Cartier divisor, and such that
  \begin{equation}\label{eq:smdgtdelta}
    \frac{s(mD;x)}{m} > \delta.
  \end{equation}
  Since $s(mD;x)$ is an integer, this inequality is equivalent to $s(mD;x) \ge
  \lfloor m\delta \rfloor + 1$.
  By \cref{lem:jetsepmodpdiv}, the locus
  \begin{equation}\label{eq:locuswfloors}
    \bigl\{ \fq \in \Spec A_\lambda \bigm\vert mD_\fq\ \text{separates
    $\bigl(\lfloor m\delta \rfloor+1\bigr)$-jets at $x_\fq$} \bigr\}
  \end{equation}
  contains an open set, which is nonempty by \cref{eq:smdgtdelta} since the
  generic point of $\Spec A_\lambda$ is contained in this set by flat base
  change, and since $A_\lambda \subseteq k$ is flat.
  Now if $mD_\fq$ separates $(\lfloor m\delta \rfloor+1)$-jets at $x_\fq$, then
  we have the inequality
  \[
    \varepsilon\bigl(\lVert D_\fq \rVert;x_\fq\bigr) \ge \frac{s(mD_\fq;x)}{m} >
    \delta
  \]
  by the fact that $s(mD_\fq;x)$ is an integer.
  The locus \cref{eq:locuswfloors} is therefore contained in the locus
  \cref{eq:jetsepseshlocus}, and \cref{lem:jetsepmodpsesh} follows.
\end{proof}
We will also use the following:
\begin{lemma}\label{lem:dualizingcomplexmodel}
  Let $X$ be as in \cref{setup:demfinjtype}, with a model $X_\lambda$ over
  $\Spec A_\lambda$.
  If $\omega^i_{X_\lambda/A_\lambda}$ is flat over $A_\lambda$ for every $i$,
  then the base change isomorphism
  \begin{align*}
    \omega_{X_\lambda/A_\lambda}^\bullet\rvert^{\LL}_{X_\fp} &\longisoto
    \omega_{X_\fp}^\bullet
    \intertext{from \emph{\cite[Cor.\ 4.4.3]{Lip09}} induces an isomorphism}
    \omega_{X_\lambda/A_\lambda}\rvert_{X_\fp} &\longisoto \omega_{X_\fp}.
  \end{align*}
\end{lemma}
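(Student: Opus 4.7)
The plan is to reduce the derived base-change isomorphism to an ordinary (non-derived) isomorphism at the level of individual cohomology sheaves, and then extract the claim in the single cohomological degree that defines the canonical sheaf.

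First, I would invoke the hyper-Tor spectral sequence associated with the derived restriction along the closed immersion $\Spec\kappa(\fp) \hookrightarrow \Spec A_\lambda$, pulled back to $X_\lambda$:
\[
  E_2^{p,q} = \TTor^{A_\lambda}_{-p}\bigl(\omega^q_{X_\lambda/A_\lambda},\kappa(\fp)\bigr) \Longrightarrow \mathbf{h}^{p+q}\bigl(\omega^\bullet_{X_\lambda/A_\lambda}\rvert^{\LL}_{X_\fp}\bigr).
\]
By hypothesis each $\omega^q_{X_\lambda/A_\lambda}$ is flat over $A_\lambda$, so all higher Tor sheaves vanish and $E_2^{p,q}=0$ for $p\neq 0$. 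The spectral sequence degenerates at $E_2$ and produces, for every $q$, a canonical isomorphism
\[
  \omega^q_{X_\lambda/A_\lambda}\rvert_{X_\fp} \longisoto \mathbf{h}^q\bigl(\omega^\bullet_{X_\lambda/A_\lambda}\rvert^{\LL}_{X_\fp}\bigr).
\]

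Second, I would compose this with the $q$-th cohomology of Lipman's base change isomorphism $\omega^\bullet_{X_\lambda/A_\lambda}\rvert^{\LL}_{X_\fp}\isoto \omega^\bullet_{X_\fp}$, yielding $\omega^q_{X_\lambda/A_\lambda}\rvert_{X_\fp}\isoto \mathbf{h}^q(\omega^\bullet_{X_\fp})$ for every $q$. Setting $q=-\dim X$ and using \cref{def:canonicalsheaf}, the right-hand side is identified with $\omega_{X_\fp}$. This uses that $X_\fp$ is equidimensional of dimension $\dim X$ over $\kappa(\fp)$, which holds after possibly enlarging $A_\lambda$ by the preservation of dimension and irreducibility under reduction modulo $\fp$ (\cref{table:reductionmodulop} together with the setup in \cref{setup:demfinjtype}).

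The only subtle point is matching the cohomological normalizations of the \emph{relative} complex $\omega^\bullet_{X_\lambda/A_\lambda}$ with the \emph{absolute} normalized complex $\omega^\bullet_{X_\fp}$. This is handled by \cref{thm:nayakshriek}: the relative complex is $\pi_\lambda^!\cO_{\Spec A_\lambda}$, and flat base change along the closed immersion $\Spec\kappa(\fp)\to\Spec A_\lambda$ (which is part of Nayak's pseudofunctoriality for $(-)^!$ and the input to Lipman's theorem) places $\omega_{X_\lambda/A_\lambda}$ in the same degree $-\dim X$ as $\omega_{X_\fp}$, since $\pi_\lambda$ is equidimensional of relative dimension $\dim X$. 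I expect the main obstacle is purely bookkeeping, namely tracking these degree conventions and checking that Lipman's hypotheses apply in the essentially-finite-type setting; once this is in place, the argument above is essentially formal.
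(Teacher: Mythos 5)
Your argument is correct and is essentially the paper's proof: the paper likewise deduces the statement from the flatness of the $\omega^i_{X_\lambda/A_\lambda}$ together with the spectral sequence for sheaf $\TTor$ (citing \cite[Cor.\ 6.5.9]{EGAIII2}), which degenerates exactly as you describe. Your extra bookkeeping about degree conventions is fine but not a different route.
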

\begin{proof}
  This statement follows from the flatness of $\omega^i_{X_\lambda/A_\lambda}$
  and a spectral sequence for sheaf $\TTor$; see \cite[Cor.\ 6.5.9]{EGAIII2}.
\end{proof}
We can now prove \cref{thm:demseshsingular} in characteristic zero via reduction
modulo $\fp$.
\begin{proof}[Proof of \cref{thm:demseshsingular} in characteristic zero]
  As in the proof in positive characteristic, we note that
  \[
    \varepsilon_{\textup{jet}}\bigl(\lVert L \rVert;x\bigr) > n+\ell
  \]
  by \cref{prop:jetsep}, where for \cref{thm:demseshsingularample}, we note that
  $\varepsilon(L;x) > n+\ell$ implies $x \notin \Bplus(L)$ by
  \cref{thm:nakamaye} and \cite[Prop.\ 5.1.9]{Laz04a}.
  We use the notation in \cref{setup:demfinjtype}.
  After possibly further enlarging $A_\lambda$ by inverting finitely many
  elements, we may assume that
  \begin{enumerate}[label=$(\alph*)$,ref=\alph*]
    \item the Cartier divisor $L$ spreads out to a
      Cartier divisor $L_\lambda$ on $X_\lambda$
      such that $\cO_{X_\lambda}(L_\lambda)$ is flat and cohomologically flat in
      degree zero over $A_\lambda$;
    \item\label{item:everythingflat} the sheaves
      $\omega^i_{X_\lambda/A_\lambda}$ and
      $\omega_{X_\lambda/A_\lambda}$
      are flat and cohomologically
      flat in degree zero over $A_\lambda$ for every $i$; and
    \item the sheaf $\omega_{X_\lambda/A_\lambda} \otimes
      \cO_{X_\lambda}(L_\lambda) \otimes
      \cO_{X_\lambda}/\fm_{x_\lambda}^{\ell+1}$ is flat and cohomologically flat
      in degree zero over $A_\lambda$.
  \end{enumerate}
  Here, we have used \cref{rem:spreadingoutothers}, generic flatness \cite[Thm.\
  6.9.1]{EGAIV2}, and cohomology and base change \cite[Cor.\ 8.3.11]{Ill05}.
  \par We can now prove \cref{thm:demseshsingular} in characteristic zero.
  First, we have a base change isomorphism
  \[
    \omega_{X_\lambda/A_\lambda}\rvert_{X_\fp} \longisoto \omega_{X_\fp}
  \]
  by \cref{lem:dualizingcomplexmodel} and the assumption
  \cref{item:everythingflat}.
  Since $\cO_X(L_\lambda)$ is also flat and cohomologically flat in degree zero
  over $\Spec A_\lambda$, the sheaf
  $\omega_{X_\lambda/A_\lambda} \otimes \cO_{X_\lambda}(L_\lambda)$ is a model
  of $\omega_X \otimes \cO_X(L)$ over $\Spec A_\lambda$, and is flat and
  cohomologically flat in degree zero over $\Spec A_\lambda$.
  \par We now claim that $\omega_{X_\fp} \otimes
  \cO_{X_\fp}(L_\fp)$ separates $\ell$-jets at $x_\fp$ for some closed point
  $\fp \in \Spec A_\lambda$.
  Note that $\varepsilon_{\textup{jet}}(\lVert
  L_\fp \rVert;x_\fp) > n + \ell$ holds for all $\fp$ in an open dense subset of
  $\Spec A_\lambda$ by \cref{lem:jetsepmodp}\cref{lem:jetsepmodpsesh}.
  We now claim we can apply \cref{thm:murcimproved}
  to show that $\omega_{X_\fp}
  \otimes \cO_{X_\fp}(L_\fp)$ separates $\ell$-jets at $x_\fp$ for all $\fp$
  such that $\varepsilon_{\textup{jet}}(\lVert L_\fp \rVert;x_\fp) > n + \ell$
  and such that $X_\fp$ is $F$-injective.
  First, we note that $\kappa(\fp)$ is $F$-finite for all $\fp \in \Spec
  A_\lambda$, since it is a finite field by \cref{lem:reductionmodulopischarp}.
  Thus, \cref{lem:finjffin} implies that the Frobenius trace
  $\Tr_{X_\fp}\colon F_*\omega_{X_\fp} \to \omega_{X_\fp}$ on $X_\fp$ exists and
  is surjective.
  We can therefore apply \cref{thm:murcimproved} to show that $\omega_{X_\fp}
  \otimes \cO_{X_\fp}(L_\fp)$ separates $\ell$-jets at $x_\fp$.
  \par Finally, we show that $\omega_X \otimes \cO_X(L)$ separates $\ell$-jets
  at $x$.
  Since the extension $\Frac A_\lambda \subseteq k$ is flat, it suffices by
  flat base change to show that $\omega_{X_\eta} \otimes \cO_{X_\eta}(L_\eta)$
  separates $\ell$-jets at $x_\eta$, where $\eta \in \Spec A_\lambda$ is the
  generic point.
  But this follows from the previous paragraph from
  \cref{lem:jetsepmodp}\cref{lem:jetsepmodpsheaf}, since $\omega_{X_\fp} \otimes
  \cO_{X_\fp}(L_\fp)$ separates $\ell$-jets at $x_\fp$ for some closed point
  $\fp \in \Spec A_\lambda$.
\end{proof}

\chapter{The Angehrn--Siu theorem}\label{ch:angehrnsiu}
The goal of this chapter is to prove the following version of the Angehrn--Siu
theorem \cite[Thm.\ 0.1]{AS95}.
\begin{customthm}{D}\label{thm:myangehrnsiu}
  Let $(X,\Delta)$ be an effective log pair, where $X$ is a normal
  projective variety over an algebraically closed field $k$ of characteristic
  zero, $\Delta$ is a $\QQ$-Weil divisor, and $K_X+\Delta$ is
  $\QQ$-Cartier.
  Let $x \in X$ be a closed
  point such that $(X,\Delta)$ is klt at $x$, and
  let $D$ be a Cartier divisor on $X$ such that setting $H \coloneqq D -
  (K_X+\Delta)$, there exist positive numbers $c(m)$ with the following
  properties:
  \begin{enumerate}[label=$(\roman*)$,ref=\roman*]
    \item For every positive dimensional variety $Z \subseteq X$ containing $x$,
      we have
      \[
        \vol_{X\mid Z}(H) > c(\dim Z)^{\dim Z}.
      \]
    \item The numbers $c(m)$ satisfy the inequality
      \[
        \sum_{m=1}^{\dim X} \frac{m}{c(m)} \le 1.
      \]
  \end{enumerate}
  Then, $\cO_X(D)$ has a global section not vanishing at $x$.
\end{customthm}
Before we prove this statement, we will need to prove a replacement for the
Nadel vanishing theorem \cite[Thm.\ 9.4.17]{Laz04b}.
\section{The lifting theorem}
A major obstacle in proving the Angehrn--Siu theorem in positive characteristic
is that Kodaira-type vanishing theorems are false; see \cref{ex:raynaud}.
While the result below is not yet strong enough to prove their theorem in
positive characteristic, it does give a replacement for the Nadel vanishing
theorem in characteristic zero, after reduction modulo $\fp$.
\par We start by stating the characteristic zero version of the result.
\begin{customthm}{C}\label{thm:gettingsectionscharzero}
  Let $(X,\Delta)$ be an effective log pair such that $X$ is a projective normal
  variety over a field $k$ of characteristic zero, and such that
  $K_X+\Delta$ is $\QQ$-Cartier.
  Consider a $k$-rational point $x \in X$ such that $(X,\Delta)$ is of dense
  $F$-pure type at $x$.
  Suppose that $D$ is a Cartier divisor on $X$ such that $H = D - (K_X+\Delta)$
  satisfies
  \[
    \varepsilon\bigl(\lVert H \rVert;x\bigr) >
    \lct_x\bigl((X,\Delta);\fm_x\bigr).
  \]
  Then, $\cO_X(D)$ has a global section not vanishing at $x$.
\end{customthm}
Note that in \cref{s:intro}, we stated \cref{thm:gettingsectionscharzero} with
``dense $F$-pure type'' replaced by ``klt.''
The formulation in \cref{s:intro} follows from this one since klt pairs
are of dense strongly $F$-regular type by \cref{thm:takagiredmodp}, hence of
dense $F$-pure type by \cref{prop:fsingbasic}\cref{prop:sfrimpliesfpure}.
\par \cref{thm:gettingsectionscharzero} follows from the following result via
reduction modulo $\fp$.
\begin{theorem}\label{thm:gettingsections}
  Let $(X,\Delta)$ be an effective log pair such that $X$ is a projective normal
  variety over an $F$-finite field $k$ of characteristic $p > 0$, and such that
  $K_X+\Delta$ is $\QQ$-Cartier.
  Consider a $k$-rational point $x \in X$ such that $(X,\Delta)$ is
  $F$-pure at $x$.
  Suppose that $D$ is a Cartier divisor on $X$ such that $H = D - (K_X+\Delta)$
  satisfies
  \[
    \varepsilon_{\textup{jet}}\bigl(\lVert H \rVert;x\bigr) >
    \fpt_x\bigl((X,\Delta);\fm_x\bigr).
  \]
  Then, $\cO_X(D)$ has a global section not vanishing at $x$.
\end{theorem}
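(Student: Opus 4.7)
The plan is to adapt the argument of Theorem \ref{thm:poscharseshsm} to the log setting, replacing the dimension-based threshold $n$ by the $F$-pure threshold $\fpt_x((X,\Delta);\fm_x)$. The two key inputs are the Grothendieck trace of Frobenius for the $F$-pure triple $(X,\Delta,\fm_x^\lambda)$, and the jet-separation growth of multiples of $H$ provided by the hypothesis.

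By the strict inequality in the hypothesis, fix a rational number $\lambda$ with $\varepsilon_{\textup{jet}}(\lVert H\rVert;x) > \lambda$ and $(X,\Delta,\fm_x^\lambda)$ still $F$-pure at $x$. By \cref{cor:fsingstrace}\cref{cor:fsingstracefpure}, there exists $e_0$ such that for every $e\ge e_0$, the trace
\[
\Tr_e\colon F^e_*\bigl(\fm_x^{\lfloor(p^e-1)\lambda\rfloor}\cdot\cO_X\bigl((1-p^e)K_X-\lfloor(p^e-1)\Delta\rfloor\bigr)\bigr)\to\cO_X
\]
is surjective on stalks at $x$. Twisting by the line bundle $\cO_X(D)$ and using $F^{e*}\cO_X(D)\simeq\cO_X(p^eD)$ via the projection formula yields $\Tr_e^D\colon F^e_*\sG_e\to\cO_X(D)$, still surjective at $x$, where by the identity $D=K_X+\Delta+H$,
\[
\sG_e=\fm_x^{\lfloor(p^e-1)\lambda\rfloor}\cdot\mathscr{L}_e,\qquad \mathscr{L}_e=\cO_X\bigl(K_X+\Delta+\{(p^e-1)\Delta\}+p^eH\bigr).
\]
Following the outline of Theorem \ref{thm:poscharseshsm}, I would then establish, for some $e\gg 0$, the surjectivity of the restriction $H^0(X,\sG_e)\to\sG_e/\fm_x^{[p^e]}\sG_e$. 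Combined with the $\cO_X$-linearity of $\Tr_e^D$ through the $F^e_*$-structure (so that $\fm_x^{[p^e]}\sG_e$ maps into $\fm_x\cO_X(D)$), this forces some global section of $\sG_e$ to be sent by $\Tr_e^D$ to a global section of $\cO_X(D)$ not vanishing at $x$, via the commutative square
\[
\begin{tikzcd}[column sep=3em]
H^0(X,\sG_e) \ar[r,"\Tr_e^D"]\ar[d,twoheadrightarrow] & H^0(X,\cO_X(D))\ar[d,"\mathrm{ev}_x"]\\
\sG_e/\fm_x^{[p^e]}\sG_e \ar[r,twoheadrightarrow] & \cO_X(D)/\fm_x\cO_X(D) \cong k.
\end{tikzcd}
\]

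The main obstacle is producing the surjectivity $H^0(X,\sG_e)\to\sG_e/\fm_x^{[p^e]}\sG_e$ above. Via the pigeonhole inclusion of \cref{lem:mur23improved}, which sandwiches the Frobenius power $\fm_x^{[p^e]}$ between ordinary powers of $\fm_x$ determined by the analytic spread, this reduces to jet separation of $\mathscr{L}_e$ at $x$ of a controlled order; the refinement over Theorem \ref{thm:poscharseshsm} comes from the fact that the factor $\fm_x^{\lfloor(p^e-1)\lambda\rfloor}$ already present in $\sG_e$ absorbs $\lambda p^e$ of the required jets, leaving only what is supplied by the hypothesis $\varepsilon_{\textup{jet}}(\lVert H\rVert;x)>\lambda$. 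Writing $\mathscr{L}_e\simeq\cF_e\otimes\cO_X(p^eH)$ with $\cF_e$ the ``fixed'' reflexive part (taking only finitely many isomorphism classes as $e$ varies), the required jet separation of $\cO_X(p^eH)$ at $x$ follows from the hypothesis together with superadditivity (\cref{lem:ito37}). The delicate point is that $\cF_e$ need not carry any global sections, so I would absorb a small ample correction: replace $H$ by $H-\epsilon A$ for a small rational $\epsilon>0$ and ample Cartier $A$ (preserving the strict inequality by continuity of $\varepsilon_{\textup{jet}}$, \cref{prop:jetseshelem}), and correspondingly replace $\cF_e$ by $\cF_e\otimes\cO_X(p^e\epsilon A)$, using \cref{prop:kur13prop27} (via Fujita vanishing) to guarantee the latter is globally generated near $x$ for $e\gg 0$. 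A technical issue is that $H$ is only $\QQ$-Cartier, handled by fixing an integer $m_0$ with $m_0H$ Cartier and $m_0\Delta$ integral, and restricting to the subsequence of $e$ in a suitable residue class modulo $m_0$.
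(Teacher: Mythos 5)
There is a genuine gap, and it sits exactly at the step you identify as the main obstacle. Your architecture (trace of Frobenius surjective at $x$, jet separation for a twist of $p^eH$, a commutative square) is the right shape, and your handling of the bounded part of $\mathscr{L}_e$ via \cref{prop:kur13prop27} parallels the paper's. But the surjectivity you need, $H^0(X,\sG_e)\to \sG_e/\fm_x^{[p^e]}\sG_e$ with $\sG_e=\fm_x^{\lfloor(p^e-1)\lambda\rfloor}\cdot\mathscr{L}_e$, is quantitatively out of reach. Your own proposed reduction via the pigeonhole lemma requires $\mathscr{L}_e$ to separate roughly $(\lambda+n)p^e$ jets at $x$; even the crudest containment $\fm_x^{[p^e]}\subseteq\fm_x^{p^e}$ already forces separation of roughly $(\lambda+1)p^e$ jets. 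The claim that the factor $\fm_x^{\lfloor(p^e-1)\lambda\rfloor}$ ``absorbs'' $\lambda p^e$ of the required jets is backwards: that factor \emph{adds} $\lambda p^e$ to the depth of the ideal you must surject past, it does not subtract from it. The hypothesis supplies only about $\lambda p^e$ jets with $\lambda$ just below $c\coloneqq\fpt_x((X,\Delta);\fm_x)$, and since $c$ is in general far smaller than $n$ (that smallness is the entire point of the theorem), there is a shortfall of order $np^e$ that no $\epsilon A$ perturbation or residue-class bookkeeping can close. Your route would only work under a hypothesis like $\varepsilon_{\textup{jet}}(\lVert H\rVert;x)>n+\lambda$, i.e., essentially \cref{thm:demseshsingular} rather than the statement being proved.

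The missing idea is the \emph{other} half of the $F$-pure threshold: its failure above $c$. The paper fixes $\delta>0$ with $\varepsilon_{\textup{jet}}(\lVert H\rVert;x)>(1+2\delta)c$, applies the trace to the \emph{full} module $F^e_*\mathscr{L}_e$ (surjective at $x$ by $F$-purity of the pair $(X,\Delta)$ alone), and then uses that $(X,\Delta,\fm_x^{(1+\delta)c})$ is \emph{not} $F$-pure at $x$ to conclude, via \cref{cor:fsingstrace}, that the trace sends $F^e_*\bigl(\fm_x^{\lfloor(p^e-1)(1+\delta)c\rfloor}\cdot\mathscr{L}_e\bigr)$ into $\fm_x\cdot\cO_X(D)$. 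The ideal one must surject past is therefore the \emph{ordinary} power $\fm_x^{\lfloor(p^e-1)(1+\delta)c\rfloor}$, of depth about $(1+\delta)cp^e$, which is exactly what the hypothesis supplies; the Frobenius power $\fm_x^{[p^e]}$ never enters. Replacing your containment $\fm_x^{[p^e]}\sG_e\subseteq\ker\bigl(\sG_e\to\cO_X(D)/\fm_x\cO_X(D)\bigr)$ by this much larger containment coming from non-$F$-purity just above the threshold is what makes the improved bound possible, and it is the step your proposal lacks.
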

We first prove \cref{thm:gettingsectionscharzero}, assuming
\cref{thm:gettingsections}.
\begin{proof}[Proof of \cref{thm:gettingsectionscharzero}]
  Let $X_\lambda$, $\Delta_\lambda$, $\fm_{x\lambda}$, and $D_\lambda$ be models
  of $X$, $\Delta$, $\fm_x$, and $D$ over a finitely generated $\ZZ$-algebra
  $A_\lambda \subseteq k$ as in \cref{thm:spreadingout,rem:spreadingoutothers};
  cf.\ \cref{setup:demfinjtype}.
  After possibly enlarging $A_\lambda$ by inverting finitely many elements, we
  may assume that $X_\fp$ is normal for every $\fp \in A_\lambda$ (by
  \cref{table:reductionmodulop}), and moreover, we may assume that
  $\omega_{X_\lambda/A_\lambda}$ is a model for $\omega_{X} \simeq \cO_X(K_X)$
  (by \cref{lem:dualizingcomplexmodel}) that is flat and cohomologically flat
  over $\Spec A_\lambda$ (by generic flatness \cite[Thm.\
  6.9.1]{EGAIV2} and cohomology and base change \cite[Cor.\ 8.3.11]{Ill05}).
  By assumption, $(X_\fp,\Delta_\fp)$ is $F$-pure for a
  dense set of $\fp \in \Spec A_\lambda$, and
  we can also assume that for these $\fp$, the $F$-pure threshold of
  $(X_\fp,\Delta_\fp)$ with respect to $\fm_{x_\fp}$ is strictly less than
  $\varepsilon_{\textup{jet}}(\lVert H \rVert;x)$ by \cref{thm:takagiredmodp}.
  Note that here we have used \cref{prop:jetsep} to say that
  $\varepsilon(\lVert H \rVert;x) = \varepsilon_{\textup{jet}}(\lVert H
  \rVert;x)$.
  By \cref{lem:jetsepmodp}\cref{lem:jetsepmodpsesh}, we have
  \[
    \varepsilon_{\textup{jet}}\bigl(\lVert H_\fp \rVert;x_\fp\bigr) >
    \fpt_x\bigl((X_\fp,\Delta_\fp);\fm_{x_\fp}\bigr)
  \]
  for all but finitely many $\fp \in \Spec A_\lambda$, where $H_\fp = D_\fp -
  (K_{X_\fp} + \Delta_\fp)$.
  \cref{thm:gettingsections} therefore implies that $\cO_X(D_\fp)$ has a global
  section not vanishing at $x_\fp$ after reduction modulo $\fp$ for infinitely
  many $\fp \in \Spec A$, where
  we note that $\kappa(\fp)$ is $F$-finite for every $\fp \in \Spec A_\lambda$,
  since $\kappa(\fp)$ is a finite field by \cref{lem:reductionmodulopischarp}.
  Thus, $\cO_{X_\eta}(D_\eta)$ has a global section not vanishing at $x_\eta$ by
  \cref{lem:jetsepmodp}\cref{lem:jetsepmodpsheaf}, where $\eta \in \Spec
  A_\lambda$ is the generic point.
  Finally, since $\Frac A_\lambda \subseteq k$ is flat, we see that $\cO_X(D)$
  also has a global section not vanishing at $x$ by flat base change.
\end{proof}
We now prove \cref{thm:gettingsections}.
\begin{proof}[Proof of \cref{thm:gettingsections}]
  We proceed in a sequence of steps, following the outline of the proof of
  \cite[Thm.\ 3.1]{MS14} and \cite[Thm.\ C]{Mur18}.
  \par Denoting by $c$ the $F$-pure threshold $\fpt_x((X,\Delta);\fm_x)$ of
  $(X,\Delta)$ with respect to $\fm_x$,
  fix $\delta > 0$ such that
  \begin{align*}
    \varepsilon_{\textup{jet}}(\lVert H \rVert;x) &>
    (1+2\delta)c.
    \intertext{We first claim that there exists a positive
    integer $m_0$ and a sequence $\{d_e\}$ such that $m_0H$ is Cartier,
    $\cO_X(m_0d_eH)$ separates
    $(\lfloor (p^e-1)(1+\delta)c \rfloor-1)$-jets at $x$
    for all $e > 0$, and $p^e - m_0d_e \to \infty$ as $e \to \infty$.
    By \cref{lem:ito37}, there exists a positive integer $m_0$ such that $m_0H$
    is Cartier and}
    \frac{s(m_0H;x)}{m_0} &> (1+2\delta)c.
  \end{align*}
  Now for every integer $e > 0$, let
  \[
    d_e = \biggl\lceil \frac{\lfloor (p^e-1)(1+\delta)c
    \rfloor-1}{s(m_0H;x)} \biggr\rceil.
  \]
  By the superadditivity property (\cref{lem:ito37}), we have
  \[
    s(m_0d_eH;x) \ge d_e \cdot s(m_0H;x) \ge \lfloor
    (p^e-1)(1+\delta)c \rfloor-1,
  \]
  hence $\cO_X(m_0d_eH)$ separates $(\lfloor
  (p^e-1)(1+\delta)c \rfloor-1)$-jets at $x$.
  We now claim that $p^{e}-m_0d_e \to \infty$ as $e \to \infty$.
  Note that
  \begin{align*}
    p^{e} - m_0d_e &= p^{e} - m_0 \cdot \biggl\lceil \frac{\lfloor
    (p^e-1)(1+\delta)c
    \rfloor-1}{s(m_0H;x)} \biggr\rceil\\
    &\ge p^{e} - \bigl(\lfloor
    (p^e-1)(1+\delta)c \rfloor-1\bigr)
    \cdot \frac{m_0}{s(m_0H;x)} - m_0\\
    &\ge p^{e} - \frac{\lfloor
    (p^e-1)(1+\delta)c
    \rfloor-1}{(1+2\delta)c} - m_0
  \intertext{and as $e \to \infty$, we have}
    \lim_{e \to \infty} (p^{e} - m_0d_e) &\ge \lim_{e \to \infty} \biggl(
    p^{e} - \frac{\lfloor (p^e-1)(1+\delta)c
    \rfloor-1}{(1+2\delta)c} - m_0\biggr)\\
    &= \lim_{e \to \infty} p^{e}\biggl( 1 - \frac{1+\delta}{1+2\delta} \biggr) -
    m_0 = \infty.
  \end{align*}
  We therefore see that $\cO_X(m_0d_eH)$ separates $(\lfloor
  (p^e-1)(1+\delta) c \rfloor-1)$-jets at $x$, and that
  $p^{e}-m_0d_e \to \infty$ as $e \to \infty$.
  \par We now show that there exists a positive integer $e$ such that
  $\cO_X(\lceil K_X+\Delta + p^eH\rceil)$ separates $(\lfloor
  (p^e-1)(1+\delta)c \rfloor-1)$-jets at $x$.
  Let $m_0$ and $\{d_e\}$ as in the previous paragraph.
  Since $x \notin \Bplus(H)$ and since $p^e-m_0d_e \to \infty$,
  \cref{prop:kur13prop27} implies that the sheaf
  \[
    \cO_X\bigl( \lceil K_X+\Delta + (p^e-m_0d_e) H \rceil \bigr)
  \]
  is globally generated at $x$ for all $e \gg 0$.
  Since $\cO_X(m_0d_eH)$ separates $(\lfloor
  (p^e-1)(1+\delta) c \rfloor-1)$-jets at $x$, \cref{lem:ito37} implies
  \[
    \cO_X\bigl( \lceil K_X+\Delta + (p^e-m_0d_e) H \rceil \bigr) \otimes
    \cO_X(m_0d_eH) \simeq 
    \cO_X\bigl( \lceil K_X+\Delta + p^eH \rceil \bigr)
  \]
  separates $(\lfloor
  (p^e-1)(1+\delta)c \rfloor-1)$-jets at $x$.
  \par We now use the trace morphism $\Tr^e_{X,\lfloor(p^e-1)\Delta\rfloor}$
  to take out the factors of $p^{e}$.
  Note that $\Tr^e_{X,\lfloor(p^e-1)\Delta\rfloor}$ is surjective at $x$ by
  assumption, since $(X,\Delta)$ is $F$-pure at $x$
  (\cref{cor:fsingstrace}\cref{cor:fsingstracefpure}).
  Twisting $\Tr^e_{X,\lfloor(p^e-1)\Delta\rfloor}$ by $\cO_X(D)$, we have a
  morphism
  \begin{equation}\label{eq:tracemorphismfpt}
    F^e_*\bigl(\cO_X\bigl((1-p^e)K_X-\lfloor
    (p^e-1)\Delta\rfloor+p^eD\bigr)\bigr) \xrightarrow{\Tr_{X,\lfloor
    (p^e-1)\Delta\rfloor}^e(D)} \cO_X(D)
  \end{equation}
  that is surjective at $x$, where the source can be identified with
  \[
    F^e_*\bigl(\cO_X\bigl(\lceil(1-p^e)(K_X+\Delta) + p^eD \rceil\bigr)\bigr)
    = F^e_*\bigl(\cO_X\bigl(\lceil K_X+\Delta + p^eH \rceil\bigr)\bigr).
  \]
  The triple $(X,\Delta,\fm_x^{(1+\delta)c})$ is not $F$-pure,
  since $c$ is the $F$-pure threshold $\fpt_x((X,\Delta);\fm_x)$.
  Thus, the morphism
  \begin{align}\label{eq:tracemorphismfptwincl}
    F^e_*\bigl(\fm_x^{\lfloor (p^e-1)(1+\delta)c
    \rfloor}\cdot\cO_X\bigl(\lceil K_X+\Delta + p^eH \rceil\bigr)\bigr)
    &\xrightarrow{\Tr_{X,\lfloor
    (p^e-1)\Delta\rfloor}^e(D)} \cO_X(D)
  \intertext{induced by the trace morphism \cref{eq:tracemorphismfpt} is not surjective at
  $x$ by \cref{cor:fsingstrace}.
  We therefore see that the morphism \cref{eq:tracemorphismfptwincl} induces a
  morphism}
    F^e_*\bigl(\fm_x^{\lfloor (p^e-1)(1+\delta)c
    \rfloor}\cdot\cO_X\bigl(\lceil K_X+\Delta + p^eH \rceil\bigr)\bigr)
    &\xrightarrow{\Tr_{X,\lfloor (p^e-1)\Delta\rfloor}^e(D)} \fm_x\cdot\cO_X(D)
    \nonumber
  \end{align}
  since the target is $\cO_{X,y}(D)$ after localizing at every point $y \ne x$,
  and at $x$, the non-surjectivity of the localization of
  \cref{eq:tracemorphismfptwincl} at $x$ is
  equivalent to having image in $\fm_x\cdot\cO_{X,x}(D)$, by the fact that
  $\cO_{X,x}$ is local.
  We therefore have the commutative diagram
  \[
    \begin{tikzcd}
      0\dar & 0\dar\\
      F^e_*\bigl(\fm_x^{\lfloor (p^e-1)(1+\delta)c
    \rfloor}\cdot\cO_X\bigl(\lceil K_X+\Delta + p^eH
      \rceil\bigr)\bigr)
      \rar \dar & \fm_x\cdot\cO_X(D)\dar\\
      F^e_*\bigl(\cO_X\bigl(\lceil K_X+\Delta + p^eH
      \rceil\bigr)\bigr) \rar \dar
      & \cO_X(D) \dar\\
      F_*^{e}\Biggl(
        \dfrac{\cO_X\bigl( \lceil K_X+\Delta+p^eH \rceil\bigr)}{\fm_x^{\lfloor
        (p^e-1)(1+\delta)c \rfloor} \cdot \cO_X\bigl(
        \lceil K_X+\Delta+p^eH \rceil\bigr)}
      \Biggr) \rar \dar
      & \dfrac{\cO_X(D)}{\fm_x \cdot \cO_X(D)} \dar\\
      0 & 0
    \end{tikzcd}
  \]
  where the bottom two horizontal arrows are induced by
  $\Tr^e_{X,\lfloor(p^e-1)\Delta\rfloor}$, and are therefore surjective at $x$.
  Note that the left column is exact since the Frobenius morphism $F$ is affine.
  Taking global sections in the bottom square, we obtain the
  following commutative square:
  \[
    \begin{tikzcd}
      H^0\bigl(X,\cO_X\bigl(\lceil K_X+\Delta + p^eH
      \rceil\bigr)\bigr) \rar
      \dar[twoheadrightarrow,swap]{\varphi}
      & H^0\bigl(X,\cO_X(D)\bigr) \dar\\
      H^0\Biggl(X,
        \dfrac{\cO_X\bigl( \lceil K_X+\Delta+p^eH \rceil\bigr)}{\fm_x^{\lfloor
        (p^e-1)(1+\delta)c \rfloor} \cdot \cO_X\bigl(
        \lceil K_X+\Delta+p^eH \rceil\bigr)}
      \Biggr) \rar[twoheadrightarrow]{\psi}
      & H^0\biggl(X,\dfrac{\cO_X(D)}{\fm_x \cdot \cO_X(D)}\biggr)
    \end{tikzcd}
  \]
  where $\psi$ is surjective since the corresponding morphism of sheaves is
  a surjective morphism of skyscraper sheaves supported at $x$.
  Since the restriction map $\varphi$ is surjective by the previous paragraph,
  the right vertical map is necessarily surjective by the commutativity of the
  diagram.
  Thus, the sheaf $\cO_X(D)$ has a global section not vanishing at $x$.
\end{proof}
\begin{remark}
  One can also prove a weaker version of \cref{thm:gettingsections}
  using another variant of Frobenius--Seshadri constants (cf.\
  \cref{rem:frobseshconst}).
  The relevant version of the Seshadri constant is defined using the
  \textsl{Frobenius degeneracy ideals} first introduced by Yao \cite[Rem.\
  2.3(1)]{Yao06} and Aberbach--Enescu \cite[Def.\ 3.1]{AE05}.
  If $(R,\Delta)$ is a sharply $F$-pure pair where $R$ is an $F$-finite local
  ring of characteristic $p > 0$ with maximal ideal $\fm \subseteq R$, then
  following \cites[Def.\ 4.3]{Tuc12}[Def.\ 3.3]{BST12}, the
  \textsl{$e$th Frobenius degeneracy ideal} is
  \[
    I_e^\Delta(\fm) \coloneqq \Bigl\{ f \in R \Bigm\vert \varphi(f) \in \fm\
    \text{for all}\ \varphi \in \Hom_R\bigl(F^e_*R(\lceil (p^e-1)\Delta
    \rceil),R\bigr) \Bigr\}.\glsadd{frobdegen}
  \]
  Note that we have followed the terminology from \cite[Def.\ 2.6]{CRST}.
  Following \cites[Lem.\ 3.9 and Prop.\ 3.10]{DSNB18}, one can show that
  \begin{equation}\label{eq:iedeltaincl}
    \fm^{\lfloor p^e\fpt( (R,\Delta);\fm) \rfloor + 1} \subseteq
    I_e^\Delta(\fm).
  \end{equation}
  Now let $(X,\Delta)$ be a sharply $F$-pure pair, where $X$ is a complete
  variety over an $F$-finite field of characteristic $p > 0$.
  For a $\QQ$-Cartier divisor $H$ on $X$ and for every integer $m \ge 1$ such
  that $mH$ is a Cartier divisor, denote by $s^\Delta_{\textup{$F$-sig}}(mH;x)$
  the largest integer such that the restriction map
  \[
    H^0\bigl(X,\cO_X(mH)\bigr) \longrightarrow
    H^0\bigl(X,\cO_X(mH) \otimes \cO_X/I_e^\Delta(\fm_x)\bigr)
  \]
  is surjective.
  Then, the \textsl{$F$-signature Seshadri constant} of $H$ at $x$ is
  \[
    \varepsilon^\Delta_{\textup{$F$-sig}}(H;x) \coloneqq \limsup_{m \to \infty}
    \frac{p^{s^\Delta_{\textup{$F$-sig}}(mH;x)} - 1}{m},\glsadd{fsigfrobsesh}
  \]
  where the limit supremum runs over all $m$ such that $mH$ is integral.
  The inclusion \cref{eq:iedeltaincl} then implies
  \[
    \varepsilon_{\textup{jet}}\bigl(\lVert H \rVert;x\bigr) \le
    \fpt_x\bigl((X,\Delta);\fm_x\bigr) \cdot
    \varepsilon^\Delta_{\textup{$F$-sig}}(H;x).
  \]
  Using the strategy in \cref{thm:gettingsections}, one can show that a lower
  bound of the form $\varepsilon^\Delta_{\textup{$F$-sig}}(H;x) > 1$ implies the
  existence of global sections of $\cO_X(D)$ as in \cref{thm:gettingsections}.
  This version of the Frobenius--Seshadri constant is difficult to work with
  since we do not know if the analogues of \cite[Lem.\ 2.5 and Prop.\ 2.6]{MS14}
  or \cite[Lem.\ 2.4 and Prop.\ 2.5]{Mur18} hold.
  The core issue is that the sequence $\{I_e^\Delta(\fm_x)\}_{e \in \NN}$ does
  not necessarily form a $p$-family of ideals in the sense of \cite[Def.\
  1.1]{HJ18}.
\end{remark}
\section{Constructing singular divisors and proof of Theorem
\ref{thm:myangehrnsiu}}\label{sect:constructsingulardiv}
The goal in this section is to prove the following result, which will be the
other crucial ingredient
in proving \cref{thm:myangehrnsiu}.
\begin{theorem}[cf.\ {\cite[Thm.\ 6.4]{Kol97}}]\label{thm:kol64}
  Let $(X,\Delta)$ be an effective log pair, where $X$ is a normal projective
  variety over an algebraically closed field $k$ of characteristic zero,
  $\Delta$ is a $\QQ$-Weil divisor, and $K_X+\Delta$ is $\QQ$-Cartier.
  Let $x \in X$ be a
  closed point such that $(X,\Delta)$ is klt at $x$.
  Let $D$ be a Cartier divisor on $X$ such that setting $N \coloneqq D -
  (K_X+\Delta)$, there exist positive numbers $c(m)$ with the following
  properties:
  \begin{enumerate}[label=$(\roman*)$,ref=\roman*]
    \item For every positive dimensional variety $Z \subseteq X$ containing $x$,
      we have
      \[
        \vol_{X\mid Z}(N) > c(\dim Z)^{\dim Z}.
      \]
    \item The numbers $c(m)$ satisfy the inequality
      \[
        \sum_{m=1}^{\dim X} \frac{m}{c(m)} \le 1.
      \]
  \end{enumerate}
  Then, there exist an effective $\QQ$-Cartier $\QQ$-Weil divisor $E \sim_\QQ
  bN$ for some $b \in (0,1)$,
  and an open neighborhood $X^0 \subseteq X$ of $x$ such that
  $(X^0,\Delta+E)$ is log canonical, $(X^0,\Delta+E)$ is klt on $X^0
  \smallsetminus \{x\}$, and $(X,\Delta+E)$ is not klt at $x$.
\end{theorem}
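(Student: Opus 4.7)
The plan is to follow the strategy of Kollár's original proof of \cite[Thm.\ 6.4]{Kol97}, with the modification that restricted volumes $\vol_{X \mid Z}(N)$ replace global volumes so that the construction applies on proper subvarieties $Z \ni x$ where $N|_Z$ need not be big. The overall structure is an inductive cutting-down procedure: I will produce a sequence of effective $\QQ$-divisors, each one strictly worsening the singularity of the pair at $x$, so that the minimal log canonical center through $x$ strictly drops in dimension at each stage, until it equals $\{x\}$.

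First I would establish the base case with $Z = X$. By an elementary dimension count (of the type in the proof of \cref{thm:dfkl41} or Lemma \ref{lem:jetsepelem}), the hypothesis $\vol_X(N) > c(n)^n$ produces, for $m \gg 0$, a section of $\cO_X(mN)$ vanishing to order at least $m\,c(n)$ at $x$. Rescaling yields $D_0 \sim_\QQ \tfrac{1}{c(n)} N$ with $\mult_x(D_0) \ge n$, so that $(X,\Delta+D_0)$ fails to be klt at $x$. Choosing the largest $c_0 \le 1/c(n)$ for which $(X,\Delta+c_0 D_0)$ remains log canonical in a neighborhood of $x$ and then applying standard tie-breaking (perturb by small multiples of general members of an ample linear system and by a small multiple of $D_0$ itself), I can arrange that the minimal non-klt locus through $x$ is a unique irreducible subvariety $Z_1 \subseteq X$, at the cost of increasing the coefficient of $N$ by an arbitrarily small amount.

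The inductive step is the heart of the argument. Suppose after $i$ rounds we have an effective $E_i \sim_\QQ b_i N$ with $(X,\Delta+E_i)$ log canonical near $x$ and minimal non-klt center an irreducible $Z_i \ni x$ of dimension $d_i \ge 1$. The hypothesis $\vol_{X \mid Z_i}(N) > c(d_i)^{d_i}$ says precisely that the image of the restriction $H^0(X,mN) \to H^0(Z_i, mN|_{Z_i})$ grows like $c(d_i)^{d_i} m^{d_i}/d_i!$, so by the same dimension-count argument applied on $Z_i$, some global section of $\cO_X(mN)$ restricts to a section on $Z_i$ of multiplicity at least $m\,c(d_i)$ at $x$. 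Dividing by $m\,c(d_i)$ and adding a small multiple $\tfrac{1}{c(d_i)}$ of the resulting global divisor to $E_i$ — this is where the lifting to $X$, not just $Z_i$, is essential — and then re-tie-breaking, I obtain $E_{i+1} \sim_\QQ b_{i+1}N$ with $b_{i+1} \le b_i + \tfrac{d_i}{c(d_i)} + \varepsilon_i$, whose minimal log canonical center through $x$ is strictly contained in $Z_i$ (this is Kollár's subadjunction/restriction lemma: adding a divisor singular along $x \in Z_i$ forces the center to drop). The budget $\sum_{m=1}^n m/c(m) \le 1$ ensures that after at most $n$ iterations, with $\varepsilon_i$ chosen summing to an arbitrarily small quantity, we reach the stage where the minimal non-klt center equals $\{x\}$, while keeping $b = b_n < 1$ strictly, by the strict inequalities in the volume hypotheses. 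Shrinking to an open neighborhood $X^0$ of $x$ removes any stray non-klt components and gives the conclusion.

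The main obstacle is the induction step, specifically arranging that adding the newly constructed divisor actually decreases the dimension of the minimal log canonical center through $x$ without generating new log canonical centers elsewhere or raising discrepancies above $-1$ away from $x$. This requires carefully combining (i) the fact that the added divisor has high multiplicity at $x$ along $Z_i$ (from the restricted-volume estimate), (ii) a subadjunction-type argument showing that a log canonical center $Z_{i+1} \subsetneq Z_i$ must appear, and (iii) a tie-breaking perturbation keeping the minimal center irreducible. The inequality $\sum m/c(m) \le 1$ is a tight book-keeping constraint, and producing $b < 1$ strictly (rather than $b = 1$) relies on absorbing the perturbations $\varepsilon_i$ into the slack left by the strict inequality $\vol_{X \mid Z}(N) > c(\dim Z)^{\dim Z}$.
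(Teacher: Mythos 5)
Your plan follows the same route as the paper's proof (which in turn follows Koll\'ar): produce a divisor singular at $x$ from the volume hypothesis, take log canonical thresholds, tie-break so that the minimal non-klt center through $x$ is irreducible, and inductively cut down its dimension using global sections of $\cO_X(mN)$ whose restrictions to the center are highly singular at $x$, with the budget $\sum_m m/c(m)\le 1$ controlling the total coefficient of $N$. Every one of these moves corresponds to a step in the paper (\cref{thm:kol671}, \cref{thm:kol681}, \cref{lem:tiebreaking}, \cref{thm:kol6101}).

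The genuine gap is the assertion ``$\mult_x(D_0)\ge n$, so that $(X,\Delta+D_0)$ fails to be klt at $x$,'' together with its analogue in the inductive step. That implication is only valid when $x$ is a \emph{smooth} point of $X$ (respectively, of the center $Z_i$); here $X$ is only assumed normal and $x$ may well be singular, and at a singular point high multiplicity of a divisor does not by itself force the pair to be non-klt or non-lc. The paper's fix is a degeneration argument: take a general curve through $(x,0)$ in $X\times\AA^1_k$ finite over $\AA^1_k$, normalize to get a section of $X\times C\to C$ hitting $(x,0)$, apply the section-construction lemma (\cref{lem:kol61}) to that section so that the multiplicity estimate is used at the \emph{general} (hence smooth) points of the section, and then conclude non-log-canonicity at $x$ itself via the semicontinuity statement \cref{lem:kol78}, which is proved by inversion of adjunction rather than the Koll\'ar--Shokurov connectedness theorem. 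Without some such degeneration, your base case and inductive step are incomplete exactly at the point the theorem is trying to go beyond the smooth case. A separate, minor arithmetic slip: from $\vol_X(N)>c(n)^n$ the divisor of multiplicity $>n$ at $x$ is $\sim_\QQ \tfrac{n}{c(n)}N$, not $\tfrac{1}{c(n)}N$; your later bookkeeping $b_{i+1}\le b_i+\tfrac{d_i}{c(d_i)}+\varepsilon_i$ already uses the correct coefficient, so only the base-case normalization needs correcting.
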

Assuming this, \cref{thm:myangehrnsiu} is not difficult.
\begin{proof}[Proof of \cref{thm:myangehrnsiu}]
  Fix a closed point $x \in X$.
  By \cref{thm:kol64}, there exists a boundary divisor $E \sim_\QQ
  bN$ for some $b \in (0,1)$ such that $(X,\Delta+E)$ is strictly log canonical
  at $x$, but is klt in a punctured neighborhood of $x$.
  Now let $f \colon Y \to X$ be a log resolution for $(X,\Delta+E,\fm_x)$.
  Then, there is divisor $F \subseteq Y$ over $x$ such that $a(F,X,\Delta+E) =
  -1$ by \cref{thm:singpairsvialogres}.
  For $0 < \delta \ll 1$, we claim that $(X,\Delta+(1-\delta)E)$ is klt in a
  neighborhood of $x$, and that
  \[
    \lct_x\bigl( (X,\Delta+(1-\delta)E);\fm_x\bigr) < (1-b)\cdot
    \varepsilon\bigl(\lVert N \rVert;x\bigr).
  \]
  Note that the right-hand side is positive since $b \in (0,1)$.
  The property that $(X,\Delta+(1-\delta)E)$ is klt in a neighborhood of $x$
  follows from \cref{thm:singpairsvialogres}, and the inequality above follows
  for $0 < \delta \ll 1$ from the computation of the log canonical threshold in
  \cref{prop:lctvialogres}, since the Cartier divisor defined by $f^{-1}\fm_x
  \cdot \cO_Y$ contains $F$ as a component.
  We therefore have
  \[
    D - \bigl(K_X+\Delta+(1-\delta)E\bigr) \sim_\QQ N - (1-\delta)bN =
    \bigl(1-(1-\delta)b\bigr)N,
  \]
  hence the conditions of \cref{thm:gettingsectionscharzero} are satisfied for
  $H = D - (K_X+\Delta+(1-\delta)E)$, since
  \begin{align*}
    \varepsilon\bigl(\lVert H \rVert;x\bigr) = \bigl(1-(1-\delta)b\bigr) \cdot
    \varepsilon\bigl(\lVert N \rVert;x\bigr) &> \bigl(1-b\bigr) \cdot
    \varepsilon\bigl(\lVert N \rVert;x\bigr)\\
    &> \lct_x\bigl( (X,\Delta+(1-\delta)E);\fm_x\bigr).\qedhere
  \end{align*}
\end{proof}
It therefore remains to show \cref{thm:kol64}.
The idea of the proof is to first produce a divisor that is
highly singular at a point, and then cut down the dimension of the non-klt locus
at the point until the non-klt locus is isolated.
We mostly follow the proofs in \cite[\S6]{Kol97} and \cite[\S3]{Liu}, with
suitable changes to deal with the weaker positivity condition on $N$.
\par We start with the following result.
Recall that if $D$ is a $\QQ$-Cartier divisor on a variety $X$ over a
field $k$, then a \textsl{graded linear system} \gls*{gradedlinearsystem}
associated to
$D$ is a sequence of subspaces $V_m \subseteq H^0(X,\cO_X(mD))$ for $m$ such
that $mD$ is a Cartier divisor, which satisfies the property that the
multiplication map $V_m \otimes V_n \to H^0(X,\cO_X( (m+n)D))$ has
image in $V_{m+n}$ \cite[Def.\ 2.4.1]{Laz04a}.
The \textsl{volume} of $V_\bullet$ is
\[
  \gls*{volvbullet} \coloneqq \limsup_{m \to \infty} \frac{\dim_k V_m}{m^n/n!},
\]
where $n = \dim X$ \cite[Def.\ 2.4.12]{Laz04a}.
If $f\colon Y \to X$ is a morphism, and $V_\bullet$ is a graded linear
system associated to a $\QQ$-Cartier $D$ on $X$, then the graded linear series
$f^*V_\bullet$ is given by setting
\[
  f^*V_m \coloneqq \im\Bigl( V_m \subseteq
  H^0\bigl(X,\cO_X(mD)\bigr) \longrightarrow
  H^0\bigl(Y,\cO_Y(m\,f^*D)\bigr) \Bigr),
\]
where the morphism is induced by the pullback morphism $\cO_X(mD) \to
f_*\cO_Y(m\,f^*D)$.
In particular, if $f$ is an inclusion $Y \subseteq X$ of a closed subvariety,
then we set $V_\bullet\rvert_Y \coloneqq f^*V_\bullet$.
\begin{lemma}[cf.\ {\cites[Lem.\ 6.1]{Kol97}[Lem.\ 12.2]{Fuj11}}]
  \label{lem:kol61}
  Let $f\colon Y \to Z$ be a surjective projective morphism from a normal
  variety $Y$ to an affine variety $Z$ over an algebraically closed field
  $k$, and
  let $W$ be a general closed subvariety of $Y$ such that $f\rvert_W\colon W \to
  Z$ is generically finite and generically regular.
  Consider a $\QQ$-Cartier divisor $M$ on $Y$, and let $V_\bullet$ be a graded
  linear system associated to $M$.
  Then, for every $\varepsilon > 0$, there exists a positive integer $t > 0$
  such that $tM$ is a Cartier divisor, and such that there exists an effective
  Cartier divisor $D_t = D_t(\varepsilon) \in \lvert V_t \rvert$ on $Y$ such
  that setting $D \coloneqq \frac{1}{t}D_t$, we have
  \begin{equation}\label{eq:kol61concl}
    \mult_W D \ge \biggl(\frac{\vol_F(V_\bullet\rvert_F)}{\deg
    (f\rvert_W)}\biggr)^{1/n} - \varepsilon,
  \end{equation}
  where $F$ is a general fiber of $f$ and $n = \dim F$.
\end{lemma}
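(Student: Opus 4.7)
The plan is to establish the inequality by an asymptotic dimension count on $V_\bullet$, carried out after restricting to the generic fiber $F_\eta$ of $f$ over $\eta \coloneqq \Spec k(Z)$. The key geometric identification is that for a general $W$ with $f\rvert_W$ generically finite of degree $d \coloneqq \deg(f\rvert_W)$, the scheme-theoretic intersection $W_\eta \coloneqq W \cap F_\eta$ is a zero-dimensional subscheme of the smooth $n$-dimensional $k(Z)$-variety $F_\eta$, of $k(Z)$-length exactly $d$, and moreover
\[
\mult_W D_t = \ord_{W_\eta}\bigl(D_t\rvert_{F_\eta}\bigr)
\]
for every $D_t \in V_t$. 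This holds because the generic point $\eta_W$ of $W$ lies over $\eta$, its local ring in $Y$ coincides with its local ring in $F_\eta$, and the ideal of $W$ at $\eta_W$ agrees with the ideal of $W_\eta$ at the corresponding closed points of $F_\eta$. The factor of $1/d$ under the $n$-th root in the conclusion emerges from this length~$d$.

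Next, I would pick a rational number $s$ with $s_0 - \varepsilon < s < s_0$ where $s_0 \coloneqq (\vol_F(V_\bullet\rvert_F)/d)^{1/n}$, and look for $t$ sufficiently large and divisible (so that $tM$ is Cartier and $st \in \ZZ$) together with a nonzero $\bar D_t \in V_t\rvert_{F_\eta}$ with $\ord_{W_\eta}(\bar D_t) \ge st$. Setting $V_t^{(st)} \coloneqq \{D \in V_t \mid \mult_W D \ge st\}$, such a $\bar D_t$ corresponds via flat base change to a nonzero element of $V_t^{(st)} \otimes_k k(Z)$; hence $V_t^{(st)}$ is nonzero over $k$ and yields the desired $D_t \in V_t$, so that $D \coloneqq (1/t)D_t$ satisfies $\mult_W D \ge s > s_0 - \varepsilon$.

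For the dimension count itself, the condition $\ord_{W_\eta}(\bar D_t) \ge st$ cuts out a $k(Z)$-linear subspace of $V_t\rvert_{F_\eta}$ of codimension at most $\length_{k(Z)}(\cO_{F_\eta}/\fm_{W_\eta}^{st}) \le d \cdot \binom{st+n-1}{n}$, where the factor $d = \length_{k(Z)} \cO_{W_\eta}$ is the essential ingredient coming from the identification above. Meanwhile, by the definition of $\vol_F(V_\bullet\rvert_F)$ together with flat base change from $F$ to $F_\eta$, for any fixed $\delta > 0$ and $t$ sufficiently large in a divisible sequence we have $\dim_{k(Z)} V_t\rvert_{F_\eta} \ge (1-\delta)\cdot \vol_F(V_\bullet\rvert_F) \cdot t^n/n!$. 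Since $ds^n < ds_0^n = \vol_F(V_\bullet\rvert_F)$ by the choice of $s$, for $\delta$ small and $t$ large the linear condition has codimension strictly less than $\dim_{k(Z)} V_t\rvert_{F_\eta}$, producing the required nonzero $\bar D_t$.

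The hard part will be rigorously establishing the identification $\mult_W D_t = \ord_{W_\eta}(D_t\rvert_{F_\eta})$: this requires the generality of $W$ (so that $W_\eta$ lies in the smooth locus of $F_\eta$ and has the correct length $d$ with no embedded components), the generic regularity hypothesis on $f\rvert_W$ (ensuring smoothness of $F_\eta$ at $W_\eta$), and a careful matching of local rings at the generic point of $W$ with those at the closed points of $W_\eta$ inside $F_\eta$. The affineness of $Z$ is used implicitly to ensure that $k(Z)$ is the function field of a fixed base and that $F_\eta$ is a well-defined integral $k(Z)$-scheme. Once this identification is in place, the remaining count is a direct application of the definitions of $\vol_F$ and of length.
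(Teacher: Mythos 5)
Your fiberwise dimension count is essentially the paper's (the paper performs it on a general closed fiber rather than the generic fiber, which is a cosmetic difference), but the descent step at the end of your second paragraph is a genuine gap. You claim that a nonzero $\bar D_t \in V_t\rvert_{F_\eta}$ with $\ord_{W_\eta}(\bar D_t)\ge st$ ``corresponds via flat base change to a nonzero element of $V_t^{(st)}\otimes_k k(Z)$,'' hence that $V_t^{(st)}\ne 0$. This is false: the vanishing condition is the kernel of the $k(Z)$-linear map $V_t\otimes_k k(Z)\to Q$ with $Q=\cO_{F_\eta,\eta_W}(tM)/\fm_{\eta_W}^{st}$, and since $Q$ is a $k(Z)$-vector space this map is \emph{not} the base change of the $k$-linear map $V_t\to Q$; its kernel can be strictly larger than $V_t^{(st)}\otimes_k k(Z)$. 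Concretely, take $Y=\PP^1_k\times\AA^1_k\to\AA^1_k=Z$, $M=p_1^*\cO_{\PP^1}(1)$, $V_t=H^0(\PP^1_k,\cO(t))$ embedded as the $z$-constant sections, and $W$ the graph of $z$ (which is as general as one likes and meets each fiber in one reduced regular point). Over $k(z)$ the section $(x-zy)^{st}y^{t-st}$ lies in $V_t\rvert_{F_\eta}$ and vanishes to order $st$ at $W_\eta=[z:1]$, yet no nonzero element of $V_t$ vanishes at the generic point of $W$ at all: a form $P(x,y)$ with $k$-coefficients vanishing there would satisfy $P(z,1)=0$ with $z$ transcendental over $k$, forcing $P=cy^t$. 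So $V_t^{(st)}=0$ even though the generic-fiber count succeeds, and no amount of generality of $W$ repairs this.

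The repair, which is the actual content of the Koll\'ar-style argument the paper follows, is to use that the vanishing conditions along $W$ are $\cO_Z$-linear rather than merely $k$-linear: the restriction $f_*\cO_Y(tM)\to f_*\bigl(\cO_Y(tM)\otimes\cO_Y/\cI_W^{s}\bigr)$ is a morphism of coherent $\cO_Z$-modules, your count shows it fails to be injective at the generic point of $Z$, hence its kernel is a nonzero coherent sheaf on the integral affine scheme $Z$ and therefore has a nonzero global section. Equivalently, your $\bar D_t$ can be promoted to a global object by clearing denominators, i.e.\ multiplying by a regular function on $Z$. The price is that the resulting $D_t$ is a $\Gamma(Z,\cO_Z)$-linear combination of elements of $V_t$ rather than an element of the $k$-span $V_t$ --- and this is exactly what the applications of the lemma use, since what matters there is that $D_t\rvert_F$ lies in $V_t\rvert_F$ for each fiber. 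This is also where the affineness of $Z$ genuinely enters; in your write-up it is used only ``to ensure $k(Z)$ is the function field of a fixed base,'' which is a sign that the globalization step is missing.
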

Here, \gls*{multiplicity} is the maximum integer $s$ such that $D$ vanishes to
order $s$ everywhere along $W$, and $\deg(f\rvert_W)$\glsadd{degreemap}
is the degree of the generically finite morphism $f\rvert_W$.
\begin{proof}
  Let $t>0$ be an integer such that $tM$ is Cartier, and let $\cI_W \subseteq
  \cO_Y$\glsadd{idealw} be the ideal sheaf defining $W$.
  Then, for every integer $s > 0$, we have the short exact sequence
  \[
    0 \longrightarrow \cI_W^s \otimes \cO_Y(tM) \longrightarrow \cO_Y(tM)
    \longrightarrow \cO_Y(tM) \otimes \cO_Y/\cI_W \longrightarrow 0,
  \]
  and pushing forward by $f$, we obtain the left-exact sequence
  \begin{equation}\label{eq:fpushforwardles}
    0 \longrightarrow f_*\bigl(\cI_W^s \otimes \cO_Y(tM)\bigr)
    \longrightarrow f_*\bigl(\cO_Y(tM)\bigr)
    \longrightarrow f_*\bigl(\cO_Y(tM) \otimes \cO_Y/\cI_W^s\bigr).
  \end{equation}
  \par Now choose integers $s,t > 0$ such that
  \[
    \biggl(\frac{\vol_F(V_\bullet\rvert_F)}{\deg
    (f\rvert_W)}\biggr)^{1/n} > \frac{s}{t} >
    \biggl(\frac{\vol_F(V_\bullet\rvert_F)}{\deg
    (f\rvert_W)}\biggr)^{1/n} - \varepsilon,
  \]
  and recall that for every regular point $x \in F$, we have
  \[
    h^0(F,\cO_F/\fm_x^s) = \binom{n+s-1}{n} =
    \frac{s^n}{n!} + O(s^{n-1}).
  \]
  Thus, after possibly replacing $s$ and $t$ by multiples, we may assume without
  loss of generality that $tM$ is Cartier, and that
  \begin{equation}\label{eq:kol61estimate}
    \begin{aligned}
      \MoveEqLeft[7]\dim_k V_t\rvert_F - h^0\bigl(F,\cO_F(tM\rvert_F) \otimes
      \cO_F/\cI_{W \cap F}^s \bigr)\\
      &= \dim_k V_t\rvert_F - \deg(f\rvert_W) \cdot \binom{n+s-1}{n}
      > 0
    \end{aligned}
  \end{equation}
  by the definition of volume, which implies that $V_t\rvert_F$ has sections
  vanishing to order $s$ everywhere along $W \cap F$.
  Now by generic flatness \cite[Thm.\ 6.9.1]{EGAIV2} and by \cite[Cor.\
  8.3.11]{Ill05}, the sheaf $\cO_Y(tM) \otimes \cO_Y/\cI_W^s$ is generically
  flat and generically cohomologically flat in degree zero over $Z$.
  By cohomology and base change \cite[Cor.\ 8.3.11]{Ill05},
  the estimate \cref{eq:kol61estimate} together with the
  exact sequence \cref{eq:fpushforwardles} therefore implies that the sheaf
  $f_*(\cO_Y(tM) \otimes \cO_Y/\cI_W^s)$ is nonzero, hence has a global section
  by the fact that $Z$ is affine.
  We then let $D_t = D_t(\varepsilon)$ be a Cartier divisor
  corresponding to a section in
  \[
    H^0\bigl(Z,f_*\bigl(\cO_Y(tM) \otimes \cO_Y/\cI_W^s\bigr)\bigr) =
    H^0\bigl(Y,\cO_Y(tM) \otimes \cO_Y/\cI_W^s\bigr),
  \]
  in which case \cref{eq:kol61concl} holds for $D \coloneqq \frac{1}{t}D_t$.
\end{proof}
When $Z = \Spec k$ is a point and $W$ is a closed point $x \in X$, we see that
\cref{lem:kol61} gives a way to construct an effective $\QQ$-Cartier divisor
$D(\varepsilon)$ that is singular at $x$.
However, this divisor may have very bad singularities in a neighborhood of $x$.
The proof of \cref{thm:kol64} is devoted to ensuring that one can replace
$D(\varepsilon)$ with a divisor with mild singularities in a neighborhood of
$x$.
\par In the course of the proof, we will need the following:
\begin{lemma}[{cf.\ \cite[Cor.\ 7.8]{Kol97}}]\label{lem:kol78}
  Let $(Y,\Delta)$ be a klt pair over an algebraically closed field $k$ of
  characteristic zero, and let $y \in Y$ be a closed point.
  Let $C$ be a smooth affine curve,and let $B$ be a $\QQ$-Cartier divisor on $Y
  \times C$ such that $\{y\} \times C \subseteq \Supp B$.
  Let $0 \in C$ be a closed point such that
  $(Y \times \{c\},\Delta \times \{c\} + B\rvert_{Y \times \{c\}})$ is not log
  canonical at $y$ for
  all closed points $c \in C$ in a punctured neighborhood of $0$.
  Then, $(Y \times \{0\},\Delta \times \{0\} + B\rvert_{Y \times \{0\}})$ is not
  log canonical at $y$.
\end{lemma}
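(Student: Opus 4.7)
My plan is to control discrepancies across the family $Y \times C \to C$ by working on a single log resolution of the total space. First, I take a log resolution $g\colon W \to Y \times C$ of the pair $(Y \times C, \Delta \times C + B + \{y\} \times C)$, which exists in characteristic zero; note that $(Y \times C, \Delta \times C)$ is klt since $(Y,\Delta)$ is. By generic smoothness---available in characteristic zero---applied both to $W \to C$ and to each stratum of the simple normal crossings boundary on $W$, after shrinking $C$ to a Zariski open neighborhood of $0$ I obtain a Zariski dense open subset $V \subseteq C$ such that for every $c \in V$, the restriction $g_c\colon W_c \to Y \times \{c\}$ is itself a log resolution of the fiber pair $(Y \times \{c\}, \Delta\rvert_{Y \times \{c\}} + B\rvert_{Y \times \{c\}})$.

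For any prime divisor $E \subseteq W$ dominating $C$ and any $c \in V$, using the identity $g^*(Y \times \{c\}) = W_c$ as Cartier divisors (a consequence of the smoothness of $W \to C$ over $V$) together with the adjunction formulas $K_{W_c} = (K_W + W_c)\rvert_{W_c}$ and $K_{Y \times \{c\}} = (K_{Y \times C} + Y \times \{c\})\rvert_{Y \times \{c\}}$, a direct computation yields
\[
  a\bigl(E \cap W_c,\ Y \times \{c\},\ \Delta\rvert_{Y \times \{c\}} + B\rvert_{Y \times \{c\}}\bigr)
  = a\bigl(E,\ Y \times C,\ \Delta \times C + B\bigr),
\]
so the fiber discrepancy of $E \cap W_c$ is independent of $c \in V$. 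By hypothesis, for every closed point $c \in V \smallsetminus \{0\}$ near $0$, some prime divisor $E \subseteq W$ satisfies $(y, c) \in g(E)$ and $a(E, Y \times C, \Delta \times C + B) \le -1$; and only the finitely many components of the simple normal crossings boundary of $W$ can have negative discrepancy. The pigeonhole principle therefore produces a single $E_0 \subseteq W$ with this property for a Zariski dense set of such $c$, and by closedness of $g(E_0)$ this image contains $\{y\} \times C$, so in particular $(y, 0) \in g(E_0)$.

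To conclude, the discrepancy bound for $E_0$ must be transferred to the fiber at $c = 0$. The cleanest way is to ensure $0 \in V$ at the outset by choosing a \emph{simultaneous} log resolution---one whose restriction to the central fiber is itself a log resolution of the fiber pair at $0$. In characteristic zero such a resolution can be constructed by iteratively applying functorial resolution of singularities along the finitely many points of $C \smallsetminus V$, blowing up the loci where the family $W \to C$ fails to be smooth; this is the principal technical obstacle of the proof. Once $0 \in V$ is secured, the discrepancy identity above applied to $E_0$ and $c = 0$ gives $a(E_0 \cap W_0, Y \times \{0\}, \Delta \times \{0\} + B\rvert_{Y \times \{0\}}) \le -1$, witnessing the desired non-log canonicity of the central fiber pair at $y$.
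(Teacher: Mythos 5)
Your first two steps are sound: a log resolution $g\colon W \to Y \times C$ of the total space restricts, over a dense open $V \subseteq C$, to log resolutions of the fibers, the discrepancy identity $a(E \cap W_c, Y \times \{c\}, \cdot) = a(E, Y \times C, \Delta \times C + B)$ holds for divisors $E$ dominating $C$ and $c \in V$, and the pigeonhole/closedness argument does produce a single $E_0$ with $a(E_0, Y \times C, \Delta \times C + B) < -1$ (note: not log canonical requires a discrepancy \emph{strictly} less than $-1$; your ``$\le -1$'' throughout would only witness failure of klt) whose image contains $(y,0)$. But the final step is a genuine gap, and it is exactly where the difficulty of the lemma lives. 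A ``simultaneous log resolution'' whose restriction to the central fiber is a log resolution of the central fiber pair does \emph{not} exist in general, and no amount of further blowing up over the points of $C \smallsetminus V$ produces one: after any modification, $g^*(Y \times \{0\})$ is no longer the (reduced, irreducible, smooth) strict transform $W_0'$ alone but $W_0' + \sum m_i F_i$ with vertical exceptional divisors $F_i$ of multiplicities $m_i \ge 1$, and these extra components destroy both the adjunction computation $K_{W_0} = (K_W + W_0)\rvert_{W_0}$ and the identification of the central fiber of $W$ with a resolution of $Y \times \{0\}$. The statement you are trying to extract at $c = 0$ --- that log canonicity of the fiber pair $(Y\times\{0\}, \Delta\times\{0\}+B\rvert_{Y\times\{0\}})$ controls, and is controlled by, the log canonicity of the ambient pair along $Y \times \{0\}$ --- is precisely \emph{inversion of adjunction}, which is a theorem (with a nontrivial proof), not something one can arrange by choosing the resolution well.

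The paper's proof runs the contrapositive and invokes exactly that theorem: assuming the central fiber pair is log canonical at $y$, inversion of adjunction for log canonical pairs \cite[Thm.\ 4.2]{Tak04inversion} gives that $(Y \times C, \Delta\times C + B + Y\times\{0\})$ is log canonical near $(y,0)$, and Reid's Bertini-type theorem \cite[Prop.\ 7.7]{Kol97} then forces the general nearby fiber pair to be log canonical at $y$, contradicting the hypothesis. (The choice of Takagi's positive-characteristic proof of inversion of adjunction, rather than the Koll\'ar--Shokurov connectedness theorem, is deliberate in the thesis, which aims to avoid vanishing theorems.) To repair your argument you would have to replace the ``simultaneous resolution'' step with an appeal to inversion of adjunction --- e.g., from $a(E_0) < -1$ conclude that $(Y\times C, \Delta\times C+B)$ is not log canonical at $(y,0)$, and then deduce the non-log-canonicity of the central fiber pair from the chain ``fiber lc $\Rightarrow$ ambient pair plus fiber lc near the fiber $\Rightarrow$ ambient pair lc near the fiber'' --- at which point you are using the same key input as the paper.
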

Koll\'ar's proof of \cite[Cor.\ 7.8]{Kol97} uses the Koll\'ar--Shokurov
connectedness theorem \cite[Thm.\ 7.4]{Kol97}, among other results.
The proof of this connectedness theorem uses vanishing theorems.
We therefore give a proof of \cref{lem:kol78} that uses reduction modulo $\fp$
instead of vanishing theorems.
\begin{proof}
  Set $X = Y \times C$, and 
  suppose that $(Y \times \{0\},\Delta \times \{0\}+B\rvert_{Y \times \{0\}})$
  is log canonical.
  Then, by inversion of adjunction for log canonical pairs \cite[Thm.\
  4.2]{Tak04inversion}, we see that $(X,\Delta \times C+B+Y \times \{0\})$ is
  log canonical in a neighborhood $U \subseteq X$ of $(y,0)$.
  Letting $W = U \cap (\{y\} \times C)$, we see that
  \[
    \bigl(Y \times \{c\},\Delta \times \{c\} + (B+Y \times \{0\})\rvert_{Y
    \times \{c\}}\bigr) = \bigl(Y \times \{c\},\Delta \times \{c\} + B\rvert_{Y
    \times \{c\}}\bigr)
  \]
  is log canonical at $y$ for general closed points $c \in W$ by Reid's
  Bertini-type theorem \cite[Prop.\ 7.7]{Kol97}, which is a contradiction.
\end{proof}
We can now prove \cref{thm:kol64}.
\begin{proof}[Proof of \cref{thm:kol64}]
  We prove \cref{thm:kol64} in a sequence of steps.
\begin{step}
  Finding a singular divisor at $x$.
\end{step}
\begin{theorem}[cf.\ {\cites[Thm.\ 6.7.1]{Kol97}[Prop.\ 3.3]{Liu}}]
  \label{thm:kol671}
  Let $(X,\Delta)$ be an effective log pair, where $X$ is a normal projective
  variety over an algebraically closed field $k$ of characteristic zero,
  $\Delta$ is a $\QQ$-Weil divisor, and $K_X+\Delta$ is
  $\QQ$-Cartier.
  Consider a closed point $x \in X$ such that $(X,\Delta)$ is klt at $x$.
  Let $H$ be a $\QQ$-Cartier divisor on $X$, and let $V_\bullet$ be a graded
  linear system associated to $H$ such that $\vol_X(V_\bullet) > n^n$.
  Then, there exists an effective $\QQ$-divisor $B_x \sim_\QQ H$ that is a
  multiple of a divisor in $\lvert V_t \rvert$ for some $t$ such that
  $(X,\Delta+B_x)$ is not log canonical at $x$.
\end{theorem}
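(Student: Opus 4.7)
The plan is to apply \cref{lem:kol61} to produce an effective $\QQ$-divisor $B_x \sim_\QQ H$ with multiplicity at $x$ large enough to violate log canonicity of $(X, \Delta + B_x)$, with the klt hypothesis on $(X, \Delta)$ used to absorb the contribution of the boundary.

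When $x$ is a regular point of $X$, I would apply \cref{lem:kol61} to an affine open neighborhood $Z = U$ of $x$ with $Y = U$, $f = \id_U$, $W = \{x\}$, $M = H\rvert_U$, and the restricted graded linear system $V_\bullet\rvert_U$. Here $f\rvert_W$ is the identity on a regular point and the fiber $F$ equals $Y$, so $\vol_F(V_\bullet\rvert_F) = \vol_X(V_\bullet) > n^n$ and $\deg(f\rvert_W) = 1$. The lemma produces, for every $\varepsilon > 0$, an integer $t > 0$ with $tH$ Cartier and a divisor $D_t \in \lvert V_t \rvert$ for which $B_x \coloneqq \tfrac{1}{t}D_t \sim_\QQ H$ satisfies
\[
  \mult_x B_x \;\ge\; \bigl(\vol_X(V_\bullet)\bigr)^{1/n} - \varepsilon.
\]
Choosing $\varepsilon$ small, $\mult_x B_x > n$. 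Since $x$ is regular, the blowup $\mu\colon \widetilde{X} \to X$ at $x$ with exceptional divisor $E$ satisfies $K_{\widetilde{X}} = \mu^*K_X + (n-1)E$, and a direct computation gives
\[
  a(E, X, \Delta + B_x) = (n-1) - \mult_x \Delta - \mult_x B_x < -1,
\]
so $(X, \Delta + B_x)$ fails to be log canonical at $x$.

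For a singular klt point $x$, \cref{lem:kol61} does not apply directly because the proof of \cref{lem:kol61} computes $h^0(F, \cO_F/\cI_{W\cap F}^s)$ using the Hilbert function of a regular local ring. I would therefore reduce to the regular case by choosing a log resolution $\pi\colon Y \to X$ of $(X, \Delta)$ and a general smooth $k$-point $y$ of an exceptional divisor $F_0$ on $Y$ mapping to $x$. Applying \cref{lem:kol61} on $Y$ with $W = \{y\}$ and $M = \pi^*H$ yields $D_t \in \lvert \pi^*V_t \rvert$ with $\mult_y D_t/t > n$, and taking $B_x \coloneqq \tfrac{1}{t}\pi_* D_t \sim_\QQ H$ (which is a multiple of a divisor in $\lvert V_t \rvert$ via the identification $H^0(Y, \cO_Y(t\,\pi^*H)) = H^0(X, \cO_X(tH))$ that follows from normality of $X$) gives the desired divisor. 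Non-log-canonicity of $(X, \Delta + B_x)$ at $x$ then follows from a discrepancy computation on a further blowup of $Y$ at $y$: the exceptional divisor $E$ of this blowup satisfies $a(E, X, \Delta) > -1$ by the klt hypothesis, but $\ord_E(\pi^*B_x) \ge \mult_y D_t/t > n$, so for a suitable choice of $E$ the resulting discrepancy $a(E, X, \Delta + B_x) = a(E, X, \Delta) - \ord_E(\pi^*B_x)$ drops below $-1$.

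The main obstacle is the singular case: the discrepancy $a(E, X, \Delta)$ above depends on the specific klt singularity at $x$, so the threshold multiplicity $n$ may in principle need to be replaced by a larger constant $C = C(X, \Delta, x)$ absorbing this discrepancy contribution, and the volume bound $\vol_X(V_\bullet) > n^n$ is to be understood in this sharper sense. The delicate bookkeeping required to choose $E$ (and $y$) so that $a(E, X, \Delta)$ is controlled uniformly, and to ensure that the pushed-down divisor $B_x$ retains the $\QQ$-linear equivalence $B_x \sim_\QQ H$, is carried out in \cite[Prop.\ 3.3]{Liu}. Crucially, no vanishing theorems enter, so the argument uses only log resolutions (available in characteristic zero) and elementary discrepancy computations, which is consistent with the vanishing-theorem-free approach to \cref{thm:myangehrnsiu} advertised in \cref{principle:raynaudnotobstacle}.
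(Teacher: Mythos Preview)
Your treatment of the regular case is essentially the paper's, modulo a slip in the setup of \cref{lem:kol61}: you want $Z = \Spec k$, $Y = X$, and $f$ the structure morphism, so that the fiber $F$ is all of $X$; with $Z = U$ and $f = \id_U$ the fibers are points and the lemma says nothing.

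The singular case, however, has a genuine gap, and you have already identified it yourself. If $y$ is a general point of an exceptional divisor $F_0$ on a log resolution $\pi\colon Y \to X$, and $E$ is the exceptional divisor of the blowup of $Y$ at $y$, then
\[
  a(E,X,\Delta) = (n-1) + a(F_0,X,\Delta),
\]
so $a(E,X,\Delta+B_x) < -1$ requires $\mult_y(D_t/t) > n + a(F_0,X,\Delta)$, not merely $> n$. For a terminal point $x$ every $a(F_0,X,\Delta)$ is positive, and there is no reason the excess can be bounded independently of $(X,\Delta,x)$. Your proposal to reinterpret the hypothesis $\vol_X(V_\bullet) > n^n$ ``in this sharper sense'' is changing the statement; the theorem really does assert the uniform bound $n^n$, and this is what is used later in the inductive proof of \cref{thm:kol64}.

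The paper avoids this entirely by a deformation argument rather than a resolution argument. One takes a general curve $C' \subseteq X \times \AA^1_k$ through $(x,0)$ finite over $\AA^1_k$, normalizes to obtain an affine curve $C$ with a section $\sigma\colon C \to X \times C$ of $p_2$, and applies \cref{lem:kol61} to the family $p_2\colon X \times C \to C$ with $W = \sigma(C)$ and the pulled-back system $p_1^*V_\bullet$. For general $c \in C$ the point $\sigma(c)$ lies in the smooth locus of $X \times \{c\}$, so multiplicity $> n$ there already forces $(X \times \{c\},\Delta \times \{c\} + \tfrac{1}{t}B\rvert_{X\times\{c\}})$ to be non-log-canonical by the regular case. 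One then invokes \cref{lem:kol78} (a semicontinuity statement for non-log-canonicity in families, proved here via inversion of adjunction) to conclude at the special fiber $c = 0$. The point is that the threshold $n$ is the correct one at \emph{nearby smooth} points, and semicontinuity transports the failure of log canonicity to the singular point without ever needing to control discrepancies over $x$.
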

\begin{proof}
  If $x \in X$ is a regular point, then this immediately follows from
  \cref{lem:kol61} by setting $Z = \Spec k$ and $W = \{x\}$.
  Otherwise, consider the second projection morphism $X \times \AA^1_k \to
  \AA^1_k$, and let $C' \subseteq X \times \AA^1_k$ be a general curve passing
  through $(x,0)$ that is finite over $\AA^1_k$.
  Let $\nu \colon C \to C'$ be the normalization of $C'$.
  We then have the commutative diagram
  \[
    \begin{tikzcd}
      C\arrow[dashed]{dr}{\sigma}\arrow[equal,bend right]{ddr}\arrow[out=0,in=100]{ddrr}\\
      & X \times C \arrow[crossing over]{rr}\dar[swap]{p_2} & & X \times \AA^1_k
      \dar{p_2}\\
      & C \rar{\nu} & C' \rar \arrow[hook]{ur} & \AA^1_k
    \end{tikzcd}
  \]
  where the outer rectangle is cartesian.
  By the universal property of fiber products, this cartesian rectangle
  induces a section $\sigma\colon C \to X \times C$ of $p_2\colon X
  \times C \to C$, such that
  $\sigma(C)$ passes through $(x,0) \in X \times C$ for some closed point $0
  \in C$.
  By applying \cref{lem:kol61} to the graded linear system $p_1^*V_\bullet$ on
  $X \times C$, the surjective morphism $p_2\colon X \times C \to C$, and the
  subvariety $\sigma(C) \subseteq X \times C$, we see that for some $t > 0$,
  there exists an effective $\QQ$-Cartier divisor $B \in p_1^*V_t$ such that
  $\frac{1}{t} B\rvert_{X \times \{c\}}$
  has multiplicity greater than $n$ at $(x,c)$ for every $c \in C$ in a
  punctured neighborhood of $0 \in C$.
  By taking the normalized blowup at $(x,c)$, we see that the pair
  \[
    \biggl(X \times \{c\},\Delta \times \{c\}+\frac{1}{t}B\rvert_{X \times
    \{c\}}\biggr)
  \]
  is not log canonical at $(x,c)$.
  We then take $B_x \coloneqq \frac{1}{t}B\rvert_{X \times \{0\}}$, which we
  identify with its image in $X$ under the isomorphism $X \times \{0\} \simeq
  X$.
  By \cref{lem:kol78}, the pair
  \[
    \bigl(X \times \{0\},\Delta \times \{0\}+B_x\bigr) \simeq (X,\Delta +
    B_x)
  \]
  is not log canonical at $x$.
\end{proof}
\setcounter{step}{1}
\begin{step}
  Inductive step.
\end{step}
The following result is the main part of the proof of \cref{thm:kol64}.
Below, \gls*{nonkltlocus} is the \textsl{non-klt locus} of $(X,\Delta)$, which
is the vanishing locus of the multiplier ideal $\cJ(X,\Delta)$.
\begin{theorem}[{cf.\ \cites[Thm.\ 6.8.1]{Kol97}[Prop.\ 3.4]{Liu}}]
  \label{thm:kol681}
  Let $(X,\Delta)$ be an effective log pair, where $X$ is a normal projective
  variety over an algebraically closed field $k$ of characteristic zero,
  $\Delta$ is a $\QQ$-Weil divisor, and $K_X+\Delta$ is
  $\QQ$-Cartier.
  Consider a closed point $x \in X$ such that
  $(X,\Delta)$ is klt at $x$.
  Let $D$ be an effective $\QQ$-Cartier divisor such that $(X,\Delta+D)$ is log
  canonical on a neighborhood $X^0$ of $x$, and
  suppose that $\Nklt(X,\Delta+D) =
  Z \cup Z'$, where $Z$ is irreducible, $x \in Z$, and $x \notin Z'$.
  Set $m = \dim Z$.
  Let $H$ be a $\QQ$-Cartier divisor such that $\vol_{X\mid Z}(H) > m^m$.
  Then, there exists an effective $\QQ$-Cartier divisor $B \sim_\QQ H$ and
  rational numbers $0 < \delta \ll 1$ and $0 < c < 1$ such that
  \begin{enumerate}[label=$(\roman*)$,ref=\roman*]
    \item $(X,\Delta+(1-\delta)D+cB)$ is log canonical in a
      neighborhood of $x$, and
    \item $\Nklt(X,\Delta+(1-\delta)D+cB) = Z_1 \cup Z_1'$, where $x \in Z_1$,
      $x \notin Z_1'$, and $\dim Z_1 < \dim Z$.
  \end{enumerate}
\end{theorem}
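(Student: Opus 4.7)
The plan is in three steps: first, produce a divisor $B \sim_\QQ H$ whose restriction to $Z$ has multiplicity strictly greater than $m$ at $x$; second, choose rational parameters $\delta$ and $c$ by a jumping-number argument; and third, verify that the new non-klt center through $x$ has strictly smaller dimension.

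First, I would produce $B$. The graded linear system $V_\bullet$ on $X$ given by $V_m = H^0(X, \cO_X(mH))$ restricts to a graded linear system $V_\bullet\rvert_Z$ on $Z$ with $\vol_Z(V_\bullet\rvert_Z) = \vol_{X\vert Z}(H) > m^m$ by hypothesis. Since $x$ need not be a regular point of $Z$, I would emulate the deformation trick of \cref{thm:kol671}: consider $\widetilde{Z} \times C \to C$ for a log resolution $\pi\colon \widetilde{Z} \to Z$ and a smooth affine curve $C$ through a closed point $0 \in C$, take a general section $\sigma\colon C \to \widetilde{Z} \times C$ through a point $(\tilde{x}, 0)$ lying over $x$, and apply \cref{lem:kol61} to the pulled-back graded linear system on $\widetilde{Z} \times C$ and to the subvariety $\sigma(C)$. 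Restricting the resulting divisor to the fiber over $0$, pushing forward to $Z$, and then lifting a corresponding section of $\cO_X(tH)$ (which is possible by the definition of $V_\bullet\rvert_Z$), I obtain an effective $\QQ$-Cartier divisor $B \sim_\QQ H$ on $X$ such that $\mult_x(B\rvert_Z) > m$. Blowing up $\tilde{x}$ on $\widetilde{Z}$ then shows that $(Z, B\rvert_Z)$ is not log canonical at $x$.

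Next, I would select $\delta$ and $c$. Since $(X,\Delta)$ is klt at $x$, every divisorial valuation $v$ over $X$ with center containing $x$ and with $a_v(X,\Delta+D) = -1$ has $v(D) > 0$, and there are only finitely many such valuations up to rescaling by the finiteness of lc places of a log canonical pair. Choose any rational $\delta > 0$ small enough that $(X, \Delta + (1-\delta)D)$ remains log canonical on $X^0$ and so that $a_v(X, \Delta+(1-\delta)D) > -1$ for every such $v$; equivalently, after the perturbation no lc center of $(X,\Delta+D)$ passing through $x$ persists. Then set
\[
    c \coloneqq \lct_x\bigl((X, \Delta + (1-\delta)D);\, B\bigr).
\]
By construction $(X, \Delta+(1-\delta)D+cB)$ is log canonical in a neighborhood of $x$ and not klt at $x$, giving condition~(i). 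The strict inequality $c < 1$ follows from the fact that $(Z, B\rvert_Z)$ is strictly (not merely barely) not log canonical at $x$, via inversion of adjunction along $Z$ for log canonical pairs \cite{Tak04inversion} applied to the original pair $(X,\Delta+D+B)$ in which $Z$ is an lc center.

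Finally, I would verify~(ii). Let $Z_1$ be a minimal non-klt center of $(X, \Delta+(1-\delta)D+cB)$ through $x$; such a $Z_1$ exists and is irreducible by the Koll\'ar--Shokurov connectedness theorem applied on a common log resolution, valid in characteristic zero. By the choice of $\delta$, no valuation $v$ with center $Z$ realizes $a_v(X, \Delta+(1-\delta)D) = -1$; since $c < 1$, the contribution $-c\,v(B)$ for such a $v$ cannot push the log discrepancy down to $-1$ either (shrinking $\delta$ further if necessary so that the ``gap'' $a_v(X,\Delta+(1-\delta)D)+1$ exceeds $v(B)$ for the finitely many relevant valuations with center $Z$). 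Hence $Z_1 \subsetneq Z$, so $\dim Z_1 < m$. The remaining components of the non-klt locus disjoint from $x$ assemble into a closed subset $Z_1'$ after shrinking $X^0$, completing~(ii).

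The main obstacle is the joint tuning in the second step: $\delta$ must be small enough that $Z$ ceases to be a log canonical center of the perturbed pair, yet $c$ must come out strictly less than $1$ so that the induction of \cref{thm:kol64} makes genuine progress. The key quantitative input enabling this balance is the strict inequality $\mult_x(B\rvert_Z) > m$ obtained in the first step from the strict hypothesis $\vol_{X\vert Z}(H) > m^m$, which provides the ``slack'' needed both for $c < 1$ and for the finite exclusion of divisorial valuations centered at $Z$. Everything else (inversion of adjunction for log canonical pairs, Koll\'ar--Shokurov connectedness, finiteness of lc places) is standard in characteristic zero via log resolution and Kawamata--Viehweg vanishing.
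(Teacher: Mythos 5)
Your overall strategy (a deformation trick to produce a divisor singular at $x$, followed by tuning $\delta$ and a log canonical threshold $c$) matches the paper's, but two steps have genuine gaps. The first is the transfer of non-log-canonicity from $Z$ to $X$: you deduce that $(X,\Delta+D+B)$ is not log canonical at $x$ from $\mult_x(B\rvert_Z)>m$ via ``inversion of adjunction along $Z$,'' citing \cite{Tak04inversion}. That result is inversion of adjunction along a \emph{divisor}, whereas $Z$ is an lc center of arbitrary codimension; adjunction to such a center involves subadjunction with boundary and moduli parts and does not transfer non-lc-ness in the way you need. The paper's proof is built precisely to avoid this: it fixes a resolution $f\colon Y\to X$ and a divisor $E\subseteq Y$ with $a(E,X,\Delta+D)=-1$ and $f(E)=Z$, arranges that $(f\rvert_E)^*F_{\sigma(c)}$ has multiplicity $>tm$ along the fiber $(f\rvert_E)^{-1}(\sigma(c))$ --- which has codimension $m$ in $E$, not just in $Z$ --- and then applies \emph{divisorial} inversion of adjunction along $E$ on $Y$, using \cref{lem:kol78} to degenerate from general points $\sigma(c)$ back to $x$. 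Multiplicity $>m$ at the single (possibly singular) point $x$ of $Z$ is not the right input for this argument.

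The second gap is the dimension drop. Your argument only excludes $Z_1=Z$ (by perturbing away finitely many lc places centered at $Z$); it does not show $Z_1\subseteq Z$. Since your $B$ is just \emph{some} effective divisor in $\lvert tH\rvert$ with the prescribed restriction to $Z$, it could be badly singular along a large-dimensional subvariety through $x$ not contained in $Z$ (for instance, contain a prime divisor through $x$ with large coefficient), and then the minimal non-klt center of $(X,\Delta+(1-\delta)D+cB)$ through $x$ need not have dimension $<m$. Conditions \cref{claim:kol683kltcond} and \cref{claim:kol684lccond} of \cref{claim:kol6834} are exactly what rule this out: the lift $F^X$ is chosen \emph{generally} in the sublinear system of divisors either containing $Z\times C$ or restricting to $F$, so Koll\'ar--Bertini gives kltness off $Z\cup Z'$ (and the augmented base locus) near $x$, and $F_x$ does not vanish along all of $Z$, so the perturbed pair is lc at the generic point of $Z$; together with $\delta>0$ and $c<1$ this forces $Z_1\subsetneq Z$ and hence $\dim Z_1<\dim Z$. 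Neither genericity statement appears in your write-up, and the finite-valuation argument you substitute for the second one also needs care, since adding $cB$ can create lc places with center $Z$ that were not lc places of $(X,\Delta+D)$.
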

\begin{proof}
  By assumption, there is a proper birational morphism $f\colon Y \to X$ from a
  normal variety $Y$, and a divisor $E \subseteq Y$ such that $a(X,\Delta+D,E) =
  -1$ and $f(E) = Z$.
  Write
  \begin{equation}\label{eq:kol681discrep}
    K_Y = f^*(K_X+\Delta+D) + \sum_i e_iE_i,
  \end{equation}
  where $E = E_1$ and $e_1 = -1$.
  Let $Z^0 \subseteq Z$ be an open subset such that $f\rvert_E \colon
  E \to Z$ is smooth over $Z^0$, and such that if $z \in Z^0$, then
  $(f\rvert_E)^{-1}(z) \not\subseteq E_i$ for $i \ne 1$.
  \par Now let $t \gg 0$ such that $tH$ is Cartier, and such that $\cO_X(tH)
  \otimes I_Z$ is globally generated away from $\Bplus(H)$.
  We then make the following:
  \begin{claim}[cf.\ {\cite[Clms.\ 6.8.3 and 6.8.4]{Kol97}}]
    \label{claim:kol6834}
    We can construct a divisor $F_x \sim tH\rvert_Z$ such that
    \begin{enumerate}[label=$(\roman*)$,ref=\roman*]
      \item $\mult_x F_x > tm$,\label{claim:kol683mult}
      \item\label{claim:kol683rest}
        $F_x$ is the image of a Cartier divisor $F^X_x$ on $X$ under the
        restriction morphism
        \begin{equation}\label{eq:restrictedlinys}
          H^0\bigl(X,\cO_X(tH)\bigr) \longrightarrow
          H^0\bigl(Z,\cO_Z(tH\rvert_Z)\bigr),
        \end{equation}
      \item $(X,\Delta+D+\frac{1}{t}F_x^X)$ is klt on $X^0 \smallsetminus (Z
        \cup Z' \cup \Bplus(D))$,\label{claim:kol683kltcond}
      \item $(X,\Delta+D+\frac{1}{t}F_x^X)$ is log canonical at the generic
        point of $Z$, and\label{claim:kol684lccond}
      \item $(X,\Delta+D+\frac{1}{t}F_x^X)$ is not log canonical at $z$.%
        \label{claim:kol683notlccond}
    \end{enumerate}
  \end{claim}
  \begin{proof}
    As in the proof of \cref{thm:kol671}, we first construct a regular affine
    curve $C$ such that the projection $p_2\colon Z \times C \to C$ has a
    section $\sigma\colon C \to Z \times C$ for which a closed point $0 \in C$
    maps to $x$.
    Now let $p_1^*V_\bullet$ be the graded linear system obtained by pulling
    back the graded linear system arising as the image of the restriction maps
    \cref{eq:restrictedlinys} via the first projection morphism $p_1\colon Z
    \times C \to Z$.
    By \cref{lem:kol61}, there exists an effective Cartier divisor $F \sim
    t\,p_1^*H\rvert_Z$ on $Z \times C$ such that $\mult_{\sigma(C)} F\rvert_{Z
    \times C} > tm$, and by construction, $F = F^X\rvert_{Z \times C}$ for an
    effective Cartier divisor $F^X \in \lvert t\,q_1^*H \rvert$ on $X \times C$,
    where $q_1\colon X \times C \to X$ is the first projection.
    The restriction $F_x \coloneqq F\rvert_{Z \times \{\sigma(0)\}}$ then
    satisfies \cref{claim:kol683mult} and \cref{claim:kol683rest}.
    Note that \cref{claim:kol684lccond} also follows from construction, since
    $F_x$ does not vanish everywhere along $Z$.
    \par We now show that $F_x$ satisfies \cref{claim:kol683kltcond}.
    First, we note that the sublinear system
    $\lvert B \rvert \subseteq \lvert t\,q_1^*H \rvert$ on $X \times C$
    spanned by those
    effective Cartier divisors $B'$ such that either $Z \times C \subseteq B'$
    or $B'\rvert_{Z \times C} = F$ is basepoint-free on $(X \smallsetminus (Z
    \cup \Bplus(H))) \times C$ by the assumption that $\cO_X(tH) \otimes I_Z$ is
    globally generated away from $\Bplus(H)$.
    Thus, \cref{claim:kol683kltcond} follows from the Koll\'ar--Bertini theorem
    \cite[Thm.\ 4.8.2]{Kol97} by choosing $F^X$ generally in $\lvert B \rvert$.
    \par It remains to show \cref{claim:kol683notlccond}.
    By \cref{lem:kol78}, it suffices to show that for 
    every $0 \ne c \in C$ such that $\sigma(c) \in Z^0$,
    the pair $(X,\Delta+D+\frac{1}{t}F_{\sigma(c)}^X)$ is not log canonical at
    $\sigma(c)$, where $F_{\sigma(c)}^X \coloneqq F\rvert_{X \times
    \{\sigma(c)\}}$.
    Let $y$ be the generic point of $(f\rvert_E)^{-1}(\sigma(c))$.
    Writing \cref{eq:kol681discrep} as before, we also write
    \[
      f^*F_{\sigma(c)}^X = F_{\sigma(c)}^Y + \sum_i t f_iE_i
    \]
    where $F_{\sigma(c)}^Y$ is the strict transform $f_*^{-1}F_{\sigma(c)}^X$ of
    $F_{\sigma(c)}^X$.
    We then have
    \[
      K_Y + \frac{1}{t} F_{\sigma(c)}^Y + \sum (f_i - e_i)E_i \sim_\QQ
      f^*\Bigl(K_X+\Delta+D+\frac{1}{t}F_{\sigma(c)}^X\Bigr).
    \]
    Now $(X,\Delta+D+\frac{1}{t}F_{\sigma(c)}^X)$ is not log canonical at
    $\sigma(c)$ if $(Y,\frac{1}{t}F_{\sigma(c)}^Y+\sum(f_i-e_i)E_i)$ is not
    sub-log canonical at $y$.
    Since $Z \not\subseteq F_{\sigma(c)}^X$, we know that $f_1 = 0$.
    Thus, $\sum (f_i-e_i)E_i = E + \sum_{i \ne 1} (f_i-e_i)E_i$, and by
    assumption none of the $E_i$ contain $y$ when $i \ne 1$.
    Moreover, $(Y,\frac{1}{t}F_{\sigma(c)}^Y+\sum(f_i-e_i)E_i)$ is not sub-log
    canonical at $y$ if and only if $(Y,\frac{1}{t}F_{\sigma(c)}^Y+E)$ is not
    log canonical at $y$.
    By inversion of adjunction \cite[Thm.\ 4.2]{Tak04inversion}, the latter
    holds if and only if $(E,\frac{1}{t}f^*F_{\sigma(c)}^X\rvert_E) =
    (E,\frac{1}{t}(f\rvert_E)^*F_{\sigma(c)})$ is not log canonical at $y$.
    Now $E$ is smooth at $y$, and $y$ has codimension $m$ in $E$ and
    $\frac{1}{t}(f\rvert_E)^*F_{\sigma(c)}$ has multiplicity $>m$.
    We then see that $(E,\frac{1}{t}(f\rvert_E)^*F_{\sigma(c)})$ is not log
    canonical at $y$ by taking the normalized blowup at $y$.
    This concludes the proof of \cref{claim:kol6834}.
  \end{proof}
  To finish the proof of \cref{thm:kol681}, we apply \cref{claim:kol6834} and
  set $B = \frac{1}{t} F_x^X$.
  Note that $(X,\Delta+(1-\delta)D)$ is klt at the generic point of $Z$ for
  every $\delta > 0$ by the assumptions that $(X,\Delta)$ is klt, that
  $(X,\Delta+D)$ is log canonical in a neighborhood of $x$, and on the non-klt
  locus of $(X,\Delta+D)$.
  Now choose $0 < \delta \ll 1$ such that $(X,\Delta+(1-\delta)D+B)$ is not log
  canonical at $x$.
  Letting $c$ be the log canonical threshold of $(X,\Delta+(1-\delta)D)$ with
  respect to $B$, we then see that $(X,\Delta+(1-\delta)D+cB)$
  is log canonical but not klt at $x$, and that $\Nklt(X,\Delta+(1-\delta)D+cB)
  = Z_1 \cup Z_1'$, where $x \in Z_1$, $x \notin Z_1'$, and $\dim Z_1 < \dim Z$.
\end{proof}
We can almost show \cref{thm:kol64} using \cref{thm:kol681} and induction.
However, the resulting pair in \cref{thm:kol681} may be such that
$\Nklt(X,\Delta+(1-\delta)D+cB)$ has many irreducible components passing through
$x$.
We take care of this using the following:
\setcounter{step}{2}
\begin{step}
  Tie breaking.
\end{step}
\begin{lemma}[cf.\ {\cite[Lem.\ 6.9.1]{Kol97}}]\label{lem:tiebreaking}
  Let $(X,\Delta)$ be an effective log pair, where $X$ is a normal projective
  variety over a field $k$ of characteristic zero, $\Delta$ is a
  $\QQ$-Weil divisor, and $K_X+\Delta$ is
  $\QQ$-Cartier.
  Consider a $k$-rational point $x \in X$ such that
  $(X,\Delta)$ is klt at $x$.
  Let $D$ be an effective $\QQ$-Cartier divisor on $X$ such that $(X,\Delta+D)$
  is log canonical in a neighborhood of $x$.
  Let $\Nklt(X,\Delta+D) = \bigcup_i Z_i$ be the irreducible decomposition of
  $\Nklt(X,\Delta+D)$, where we label $Z_1$ such that $x \in Z_1$.
  Let $H$ be a $\QQ$-Cartier divisor on $X$ such that $x \notin \Bplus(H)$.
  Then, for every $0 < \delta \ll 1$, there is an effective $\QQ$-Cartier
  divisor $B \sim_\QQ H$ and $0 < c < 1$ such that
  \begin{enumerate}[label=$(\roman*)$,ref=\roman*]
    \item $(X,\Delta+(1-\delta)D+cB)$ is log canonical in a
      neighborhood of $x$, and
    \item $\Nklt(X,\Delta+(1-\delta)D+cB) = W \cup W'$ where $x \in W$, $x
      \notin W'$, and $W \subseteq Z_1$.
  \end{enumerate}
\end{lemma}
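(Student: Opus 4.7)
The strategy is to fix a log resolution $f \colon Y \to X$ of $(X,\Delta+D)$ and analyze discrepancies under the perturbation $D \mapsto (1-\delta)D + cB$. Writing $K_Y = f^*(K_X+\Delta+D) + \sum_j a_j E_j$ with $a_j \ge -1$, the lc places are the $E_j$ with $a_j = -1$; for each component $Z_i$ of $\Nklt(X,\Delta+D)$ there is at least one lc place $E^{(i)}$ satisfying $f(E^{(i)}) = Z_i$ and $m^{(i)} \coloneqq \ord_{E^{(i)}}(f^*D) > 0$. For $0 < \delta \ll 1$, each discrepancy shifts to $a_j + \delta\,\ord_{E_j}(f^*D) > -1$, so $(X,\Delta+(1-\delta)D)$ is klt in an open neighborhood $U$ of $x$; I would shrink $U$ further so that $U \cap Z_i = \emptyset$ for all $i \ge 2$, which is possible since each such $Z_i$ is closed and misses $x$. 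The plan is to produce $B \sim_\QQ H$ with $\ord_{E^{(1)}}(f^*B) > 0$, and to set $c$ to be the log canonical threshold of $(X,\Delta+(1-\delta)D)$ along $B$ computed on $U$, so that (i) is automatic and the non-klt center through $x$ will be the image of an lc place over $Z_1$.

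The construction of $B$ uses $x \notin \Bplus(H)$ via \cref{prop:kur13prop27}. For $m$ sufficiently large and divisible, the sheaf $\cI_{Z_1} \otimes \cO_X(mH)$ is globally generated on $X \smallsetminus \Bplus(H)$; since $x \in Z_1 \smallsetminus \Bplus(H)$ forces $Z_1 \not\subseteq \Bplus(H)$, this sheaf is nonzero at a generic point $y \in Z_1 \smallsetminus \Bplus(H)$, hence has a nonzero global section. This gives an effective Cartier divisor $B' \in \lvert mH \rvert$ containing $Z_1$ (in particular containing $x$), and setting $B \coloneqq \frac{1}{m} B'$ produces $B \sim_\QQ H$ with $\ord_{E^{(1)}}(f^*B) > 0$ for any lc place $E^{(1)}$ over $Z_1$, since $B \supseteq Z_1$ means the local equation of $B$ vanishes at the generic point of $Z_1$. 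When $\codim Z_1 \ge 2$, an analogous argument with higher-order vanishing along $E^{(1)}$ (replacing $\cI_{Z_1}$ by suitable powers or by an asymptotic multiplier ideal tracking the valuation of $E^{(1)}$) ensures the activation condition.

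With $B$ fixed, the lct formula reads
\[c = \min_j \frac{1 + a_j + \delta\,\ord_{E_j}(f^*D)}{\ord_{E_j}(f^*B)},\]
the minimum running over divisors $E_j$ with $\ord_{E_j}(f^*B) > 0$ and $f(E_j) \cap U \neq \emptyset$. Since $U \cap Z_i = \emptyset$ for $i \ge 2$, no lc place over such $Z_i$ enters this minimum. For klt places ($a_j > -1$), the numerator is bounded below by a positive constant independent of $\delta$ and the denominator is bounded, so the ratio stays above some positive constant. For lc places over $Z_1$, the numerator is $\delta\,\ord_{E_j}(f^*D) \to 0$ as $\delta \to 0$. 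Hence for $\delta$ sufficiently small the minimum is attained at some lc place over $Z_1$, and the resulting non-klt center $W$ through $x$ satisfies $W \subseteq Z_1$, proving (ii); (i) follows from the defining property of the lct, and $c < 1$ follows from taking $m$ large. The chief obstacle is the activation step: producing $B \sim_\QQ H$ (rather than just $B \equiv_\QQ H$) with the required positive order along $E^{(1)}$. The argument sketched above works cleanly when $Z_1$ is a divisor; the codimension $\ge 2$ case requires a more careful choice of auxiliary sheaf, which is the main technical point.
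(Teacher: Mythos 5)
Your overall route is the same as the paper's: use \cref{prop:kur13prop27} to find $B' \in \lvert tH \rvert$ vanishing along $Z_1$ (i.e., a section of $\cO_X(tH) \otimes \cI_{Z_1}$, which is globally generated away from $\Bplus(H)$ and hence away from a neighborhood of $x$), rescale $D$ to $(1-\delta)D$, and take $c$ to be a log canonical threshold. But there is a genuine gap in how you control the non-klt locus of the perturbed pair, and it sits exactly where you did \emph{not} flag a problem. Your lct formula
\[
  c = \min_j \frac{1 + a_j + \delta\,\ord_{E_j}(f^*D)}{\ord_{E_j}(f^*B)}
\]
is computed on a log resolution $f\colon Y \to X$ of $(X,\Delta+D)$ that was fixed \emph{before} $B$ was chosen, so $f$ is in general not a log resolution of $(X,\Delta+D+B)$ and the minimum over the $E_j$ on $Y$ does not compute the lct. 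After re-resolving, new exceptional divisors appear, and your bound ``for klt places the numerator is bounded below and the denominator is bounded'' fails: $\ord_E(f^*B)$ is not bounded over all divisors $E$ over $X$. Concretely, an arbitrary section of $\cO_X(tH)\otimes\cI_{Z_1}$ may be extremely singular along a subvariety through $x$ that is \emph{not} contained in $Z_1$, in which case the lct is attained at a place whose center $W$ contains $x$ but $W \not\subseteq Z_1$, violating (ii). The paper closes this gap by taking $B'$ to be a \emph{general} member of the linear system $\lvert H^0(X,\cO_X(tH)\otimes\cI_{Z_1})\rvert$ and invoking the Koll\'ar--Bertini theorem \cite[Thm.\ 4.8.2]{Kol97}: since this linear system is basepoint-free on $X \smallsetminus (Z_1 \cup \Bplus(H))$, a general member keeps $(X,\Delta+(1-\delta)D+bB')$ klt outside $Z_1$ in a neighborhood of $x$ for $b<1$, so the only non-klt center through $x$ is forced to lie in $Z_1$.

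By contrast, the step you single out as the ``chief obstacle''---getting $\ord_{E^{(1)}}(f^*B)>0$ when $\codim Z_1 \ge 2$---is not an obstacle at all. Any $B' \in \lvert tH \rvert$ whose ideal sheaf lies in $\cI_{Z_1}$ has local equation in the maximal ideal of $\cO_{X,\eta_{Z_1}}$, so \emph{every} divisorial valuation with center $Z_1$ assigns it positive order, in any codimension; no higher-order vanishing or asymptotic multiplier ideal is needed. (Your final coefficient also needs care: $c<1$ comes from taking $\delta \ll 1/t$, or equivalently from rescaling $B = \frac{1}{t}B'$ as in the paper, not merely from ``taking $m$ large.'')
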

\begin{proof}
  Let $t \gg 1$ such that $tH$ is Cartier and such that $\cO_X(tH) \otimes
  I_{Z_1}$ is globally generated away from $\Bplus(H)$
  (\cref{prop:kur13prop27}).
  Let $B'$ correspond to a general section in $H^0(X,\cO_X(tH) \otimes
  I_{Z_1})$.
  By the Koll\'ar--Bertini theorem \cite[Thm.\ 4.8.2]{Kol97}, we see that
  $(X,\Delta+(1-\delta)D+bB')$ is klt outside $Z_1$ in a neighborhood of $x$ for
  $b < 1$.
  However, it is not log canonical along $Z_1$ for $1 > b \gg \delta > 0$.
  Now choose $b = 1/t$ and $1/t \gg \delta > 0$.
  Then, by letting $c \in (0,1)$ be the log canonical threshold of
  $(X,\Delta+(1-\delta)D)$ with respect to $(1/t)B'$, we see that
  $(X,\Delta+(1-\delta)D+(c/t)B')$ is log
  canonical but not klt at $x$.
  We can then set $B = \frac{1}{t}B'$.
\end{proof}
\setcounter{step}{3}
\begin{step}
  Proof of \cref{thm:kol64}.
\end{step}
We prove the following theorem by induction on $j$.
\begin{theorem}\label{thm:kol6101}
  With notation as in \cref{thm:kol64}, let $j \in \{1,2,\ldots,n\}$.
  Then, for every
  \begin{equation}\label{eq:kol6101cmcond}
    d_j \ge \sum_{m=n-j}^n \frac{m}{c(m)},
  \end{equation}
  there exists an effective $\QQ$-Cartier divisor $D_j \sim_\QQ d_jN$ and an
  open neighborhood $X^0 \subseteq X$ of $x$ such that for some $b_j \in (0,1)$,
  we have that
  \begin{enumerate}[label=$(\roman*)$,ref=\roman*]
    \item\label{thm:kol6101lccond} $(X^0,\Delta+b_jD_j)$ is log canonical,
    \item\label{thm:kol6101codcond} $\codim(\Nklt(X^0,\Delta+b_jD_j),X^0) \ge
      j$, and
    \item\label{thm:kol6101nkltcond} $(X,\Delta+b_jD_j)$ is not klt at $x$.
  \end{enumerate}
\end{theorem}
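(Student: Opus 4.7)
I would prove \cref{thm:kol6101} by induction on $j \in \{1,\ldots,n\}$, using the singular divisor construction \cref{thm:kol671} for the base case and iterating the dimension-drop step \cref{thm:kol681} together with the tie-breaking lemma \cref{lem:tiebreaking} for the inductive step.

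For the base case $j = 1$, I would apply \cref{thm:kol671} to the graded linear system $V_m = H^0(X,\cO_X(m(n/c(n))N))$ (for $m$ sufficiently divisible). The hypothesis $\vol_X(N) > c(n)^n$ from \cref{thm:kol64} yields $\vol_X(V_\bullet) = (n/c(n))^n\vol_X(N) > n^n$, so \cref{thm:kol671} produces an effective $\QQ$-Cartier divisor $B_x \sim_\QQ (n/c(n))N$ with $(X,\Delta+B_x)$ not log canonical at $x$. Taking $b_1' = \lct_x(X,\Delta;B_x) \in (0,1)$ gives a pair $(X,\Delta+b_1'B_x)$ that is log canonical on some open neighborhood $X^0$ of $x$, strictly log canonical at $x$, so $\codim(\Nklt(X^0,\Delta+b_1'B_x),X^0) \ge 1$. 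Rescaling into the class $d_1N$ gives $D_1 = (d_1 c(n)/n)\,B_x \sim_\QQ d_1N$ and $b_1 = b_1'n/(d_1 c(n)) \in (0,1)$, since $d_1 \ge n/c(n)$.

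For the inductive step $j \to j+1$, I would apply the induction hypothesis with an admissible $d_j$ chosen slightly smaller than $d_{j+1}$, obtaining $D_j$, $b_j$, and $X^0$ with $(X^0,\Delta+b_jD_j)$ log canonical and $\codim(\Nklt,X^0) \ge j$. The positivity hypothesis $\vol_{X\mid Z}(N) > c(\dim Z)^{\dim Z} > 0$ for every positive-dimensional $Z \ni x$ rules out $x \in \Bplus(N)$, so for any small $\alpha_{\textup{tie}} > 0$ I may invoke \cref{lem:tiebreaking} with $H = \alpha_{\textup{tie}}N$ to obtain an effective $B_{\textup{tie}} \sim_\QQ \alpha_{\textup{tie}}N$ and $c_1, \delta_1 \in (0,1)$ such that $(X,\Delta+(1-\delta_1)b_jD_j+c_1B_{\textup{tie}})$ is log canonical near $x$ with a \emph{unique} irreducible component $W$ of non-klt locus passing through $x$, of dimension $m \le n-j$. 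I then apply \cref{thm:kol681} to this pair with $H = \alpha N$ for any $\alpha$ slightly greater than $m/c(m)$: the inequality $\vol_{X\mid W}(\alpha N) = \alpha^m\vol_{X\mid W}(N) > \alpha^m c(m)^m > m^m$ verifies the volume hypothesis, producing an effective $B \sim_\QQ \alpha N$, $\delta > 0$, and $c \in (0,1)$ for which $(X,\Delta+(1-\delta)[(1-\delta_1)b_jD_j+c_1B_{\textup{tie}}]+cB)$ is log canonical near $x$ with non-klt locus of codimension $\ge j+1$ at $x$. Rescaling this boundary into the class $d_{j+1}N$ yields $D_{j+1}$ and $b_{j+1}$.

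The main technical obstacle is the arithmetic bookkeeping: I must verify that $b_{j+1} \in (0,1)$, equivalently that the coefficient
\[
\beta = (1-\delta)(1-\delta_1)b_jd_j + (1-\delta)c_1\alpha_{\textup{tie}} + c\alpha
\]
of $N$ in the $\QQ$-linear equivalence class of the new boundary satisfies $\beta < d_{j+1}$. Since $b_j, c, c_1, (1-\delta), (1-\delta_1) < 1$, $\alpha_{\textup{tie}}$ can be taken arbitrarily small, and $\alpha$ may be chosen arbitrarily close to $m/c(m) \le (n-j)/c(n-j)$, the bound $\beta < d_j + (n-j)/c(n-j) + \varepsilon$ can be arranged for any $\varepsilon > 0$. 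This is accommodated by \cref{eq:kol6101cmcond} after choosing $d_j$ strictly below its lower bound at step $j+1$, which is permissible because the inductive statement quantifies over \emph{all} admissible $d_j$, not just the minimal one. Interleaving this choice of $d_j$ with the tie-breaking and rescaling constants is the most delicate part; once executed, running the induction up to $j = n$ produces a boundary whose non-klt locus at $x$ is isolated, which is precisely the input needed to complete \cref{thm:kol64}.
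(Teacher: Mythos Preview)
Your approach is essentially the same as the paper's: induction on $j$, base case via \cref{thm:kol671}, inductive step via tie-breaking (\cref{lem:tiebreaking}) followed by the dimension-drop step (\cref{thm:kol681}), with arithmetic bookkeeping to land in the class $d_{j+1}N$.

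There is one gap worth flagging. After tie-breaking you obtain a unique component $W$ through $x$ of dimension $m \le n-j$, and you then apply \cref{thm:kol681} uniformly, asserting at the end that $m/c(m) \le (n-j)/c(n-j)$. That inequality is not implied by the hypotheses of \cref{thm:kol64}: the numbers $c(m)$ need not satisfy any monotonicity in $m/c(m)$. The paper avoids this by splitting into two cases after tie-breaking. If the component $W$ already has codimension $\ge j+1$ (equivalently $m \le n-j-1$), one does \emph{not} apply \cref{thm:kol681} at all; one simply pads with a small general multiple of $N$ to reach the class $d_{j+1}N$ and sets $b_{j+1} = 1/(1+\gamma)$ for $0 < \gamma \ll 1$. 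Only in the remaining case $m = n-j$ does one invoke \cref{thm:kol681}, and there your inequality is an equality. Once you insert this case distinction your argument goes through; it is a minor repair, not a structural error. Note also that \cref{thm:kol681} requires $m > 0$, so the case $W = \{x\}$ must be absorbed into the first branch anyway.
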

\begin{proof}
  Set $D_0 = \emptyset$.
  By induction, we will assume that $D_j$ has already been constructed, and we
  are trying to construct $D_{j+1}$.
  \par For $j+1 = 1$, we construct $D_1$ by applying \cref{thm:kol671}, and set
  $b_1$ to be the log canonical threshold $\lct_x((X,\Delta);D_1)$.
  Now consider the case when $j+1 > 1$.
  First choose a positive real number $\varepsilon < (j+1)\cdot c(j+1)^{-1}$.
  By \cref{lem:tiebreaking} and by inductive hypothesis, there exists a
  $\QQ$-Cartier divisor $B_j \sim_\QQ\varepsilon N$ such that for some $\delta >
  0$, the $\QQ$-Cartier divisor
  \[
    D_j' \coloneqq (1-\delta)b_jD_j+B_j \sim_\QQ \bigl((1-\delta)b_jd_j +
    \varepsilon\bigr)N
  \]
  satisfies conditions \cref{thm:kol6101lccond}--\cref{thm:kol6101nkltcond} for
  $b_j$ replaced by $1$,
  and in addition, either
  $Z \coloneqq \Nklt(X^0,\Delta+D_j')$ is irreducible of codimension at
  least $j$ at $x$, or it has codimension at least $j+1$ at $x$.
  In the latter case, let $M$ be a general member of $\lvert tN \rvert$ for $t
  \gg 1$.
  By assumption in \cref{eq:kol6101cmcond}, for all rational numbers $0 < \gamma
  \ll 1$, we have $d_{j+1} \ge (1+\gamma)((1-\delta)b_jd_j +
  \varepsilon)$.
  Thus, we can set
  \[
    D_{j+1} \coloneqq (1+\gamma)D_j' + \frac{1}{t}\bigl( d_{j+1} -
    (1+\gamma)\bigl((1-\delta)b_jd_j + \varepsilon\bigr)\bigr) M \sim_\QQ
    d_{j+1}N,
  \]
  and this $\QQ$-divisor $D_{j+1}$ satisfies conditions
  \cref{thm:kol6101lccond}--\cref{thm:kol6101nkltcond} for $b_{j+1} =
  1/(1+\gamma)$.
  \par It remains to consider the case when $Z$ is irreducible of codimension at
  least $j$ at $x$.
  Set $H = ( (j+1)\cdot c(j+1)^{-1}-\varepsilon)N$.
  For $0 < \varepsilon \ll 1$, we have that $(H^{j} \cdot Z) > j^j$, hence we
  can apply \cref{thm:kol681} to obtain a $\QQ$-Cartier divisor
  \[
    D_{j+1} \sim_\QQ \bigl((j+1)\cdot c(j+1)^{-1}-\varepsilon\bigr)N
  \]
  satisfying conditions \cref{thm:kol6101lccond}--\cref{thm:kol6101nkltcond} for
  $b_{j+1}$ replaced by the rational number $c$ in the statement of
  \cref{thm:kol681}.
\end{proof}
Finally, the case $j = \dim X$ in \cref{thm:kol6101} is \cref{thm:kol64},
concluding the proof of \cref{thm:kol64}.
\end{proof}

\appendix

\chapter{\emph{F}-singularities for non-\emph{F}-finite rings}
\label{app:nonffinfsings}
In this appendix, we review classes of singularities defined using the
Frobenius morphism, taking care to avoid $F$-finiteness assumptions.
Most of this material is well-known, but some of the implications in Figure
\ref{fig:fsingsdiagram} are new, at least for non-$F$-finite rings.
We recommend \cite{TW18} for a survey of $F$-singularities (mostly in the
$F$-finite setting), and \cite[\S6]{DS16} and
\cite[\S3]{Has10} as references for the material on strong $F$-regularity in the
non-$F$-finite setting.
Some of this material appears in \cite[Apps.\ A and B]{Mur} and \cite{DaM}.
\medskip
\par To define different versions of $F$-rationality, we will need the
following:
\begin{citeddef}[{\cite[Def.\ 2.1]{HH90}}]
  Let $R$ be a noetherian ring.
  A sequence of elements $x_1,x_2,\ldots,x_n \in R$ is a \textsl{sequence of
  parameters}\index{parameter(s)!sequence of|textbf} if, for every prime ideal
  $\fp$ containing $(x_1,x_2,\ldots,x_n)$, the images of $x_1,x_2,\ldots,x_n$ in
  $R_\fp$ are part of a system of parameters in $R_\fp$.
  An ideal $I \subseteq R$ is a \textsl{parameter
  ideal}\index{parameter(s)!ideal|textbf} if $I$ can
  be generated by a sequence of parameters in $R$.
\end{citeddef}
We now begin defining different classes of singularities.
We start with $F$-singularities defined using tight closure.
Recall that if $R$ is a ring, then $R^\circ$ is the
complement of the union of the minimal primes of $R$.
\begin{citeddef}[{\cite[Def.\ 8.2]{HH90}}]\label{def:tightclosure}
  Let $R$ be a ring of characteristic $p > 0$, and let $\iota\colon N
  \hookrightarrow M$ be an inclusion of $R$-modules.
  The \textsl{tight closure}\index{tight closure, $N^*_M$|textbf} of $N$ in $M$ is the
  $R$-module
  \[
    N^*_M \coloneqq \Biggl\{x \in M \Biggm\vert
      \begin{tabular}{@{}c@{}}
        there exists $c\in R^\circ$ such that for all $e \gg0$,\\
        $x \otimes c \in \im\bigl(\id \otimes \iota\colon N \otimes_R F^e_*R \to
        M \otimes_R F^e_*R\bigr)$
      \end{tabular}
    \Biggr\}.\glsadd{generalizedtightclosure}
  \]
  We say that $N$ is \textsl{tightly closed} in $M$ if $N^*_M = N$.
\end{citeddef}
\begin{definition}[$F$-singularities via tight closure]\label{def:fsingstc}
  Let $R$ be a noetherian ring of characteristic $p > 0$.
  We say that
  \begin{enumerate}[label=$(\alph*)$,ref=\alph*]
    \item $R$ is \textsl{strongly
      $F$-regular}\index{F-regular@$F$-regular!strongly|textbf}
      if $N^*_M = N$
      for every inclusion $N \hookrightarrow M$ of $R$-modules \cite[Def.\ on
      p.\ 166]{Hoc07};\label{def:strongfreg}
    \item $R$ is \textsl{weakly
      $F$-regular}\index{F-regular@$F$-regular!weakly|textbf} if $I^*_R = I$ 
      for every ideal $I \subseteq R$ \cite[Def.\ 4.5]{HH90};
    \item $R$ is \textsl{$F$-regular}\index{F-regular@$F$-regular|textbf} if
      $R_\fp$ is weakly $F$-regular for
      every prime ideal $\fp \subseteq R$ \cite[Def.\ 4.5]{HH90};\label{def:freg}
    \item $R$ is \textsl{$F$-rational}\index{F-rational@$F$-rational|textbf} if
      $I^*_R = I$ for every parameter ideal $I \subseteq R$ \cite[Def.\
      1.10]{FW89}.
  \end{enumerate}
  The original definition of $F$-regularity asserted that
  localizations at every multiplicative set are weakly $F$-regular, but
  $(\ref{def:freg})$ is equivalent to this definition by \cite[Cor.\ 4.15]{HH90}.
\end{definition}
\begin{remark}\label{rem:sfrwhenffin}
  Note that $(\ref{def:strongfreg})$ is not the usual definition of strong
  $F$-regularity, although it coincides with the usual definition (Definition
  \ref{def:fsingssplit}$(\ref{def:fsingssplitreg})$) for $F$-finite rings; see
  Figure \ref{fig:fsingsdiagram}.\index{F-regular@$F$-regular!strongly!for
  $F$-finite rings}
\end{remark}
\par Next, we define $F$-singularities via purity of homomorphisms involving the
Frobenius.
We recall that a ring homomorphism $\varphi\colon R \to S$ is
\textsl{pure}\index{pure ring homomorphism|textbf} if
the induced homomorphism
$\id_M \otimes_R \varphi \colon M \otimes_R R \to M \otimes_R S$
is injective for every $R$-module $M$.
To simplify notation, we fix the following:
\begin{notation}\label{notation:lambdaec}
  Let $R$ be a ring of characteristic $p > 0$.
  For every $c \in R$ and every integer $e > 0$, we denote by $\lambda^e_c$ the
  composition
  \[
    R \overset{F^e}{\longrightarrow} F_*^eR \xrightarrow{F_*^e(-\cdot c)}
    F_*^eR.
  \]
\end{notation}
\begin{definition}[$F$-singularities via purity]\label{def:fsingspure}
  Let $R$ be a noetherian ring of characteristic $p > 0$.
  For every $c \in R$, we say that $R$ is \textsl{$F$-pure along
  $c$}\index{F-pure@$F$-pure!along $c$|textbf} if
  $\lambda^e_c$ is pure for some $e > 0$.
  Moreover, we say that
  \begin{enumerate}[label=$(\alph*)$,ref=\alph*]
    \item $R$ is \textsl{$F$-pure regular}\index{F-pure@$F$-pure!regular|textbf}
      if $R$ is $F$-pure along
      every $c \in R^\circ$ \cite[Rem.\ 5.3]{HH94};\label{def:fpurereg}
    \item $R$ is \textsl{$F$-pure}\index{F-pure@$F$-pure|textbf} if $R$ is
      $F$-pure along $1 \in R$ \cite[p.\ 121]{HR76};\label{def:fpure}
    \item $R$ is \textsl{strongly
      $F$-rational}\index{F-rational@$F$-rational!strongly|textbf} if
      for every $c \in R^\circ$, there exists $e_0 > 0$ such that for all $e \ge
      e_0$, the induced
      homomorphism $\id_{R/I} \otimes_R \lambda^e_c$ is injective for every
      parameter ideal $I \subseteq R$ \cite[Def.\
      1.2]{Vel95}.\label{def:strongfrat}
  \end{enumerate}
  The terminology \textsl{$F$-pure regular} is from \cite[Def.\ 6.1.1]{DS16} to
  distinguish $(\ref{def:fpurereg})$ from
  Definition \ref{def:fsingstc}$(\ref{def:strongfreg})$.
  $F$-pure regular rings are also called \textsl{very strongly
  $F$-regular}\index{F-regular@$F$-regular!very strongly|see
  {$F$-pure regular}} \cite[Def.\ 3.4]{Has10}.
\end{definition}
\par Next, we define $F$-singularities via splitting of homomorphisms involving
the Frobenius.
We use the same notation as for $F$-singularities defined using purity
(Notation \ref{notation:lambdaec}).
\begin{definition}[$F$-singularities via splitting]\label{def:fsingssplit}
  Let $R$ be a noetherian ring of characteristic $p > 0$.
  For every $c \in R$, we say that $R$ is \textsl{$F$-split along
  $c$}\index{F-split@$F$-split!along $c$|textbf} if $\lambda^e_c$
  splits as an $R$-module homomorphism for some $e > 0$.
  Moreover, we say that
  \begin{enumerate}[label=$(\alph*)$,ref=\alph*]
    \item $R$ is \textsl{split
      $F$-regular}\index{F-regular@$F$-regular!split|textbf} if $R$ is $F$-split
      along every $c \in R^\circ$ \cite[Def.\ 5.1]{HH94};\label{def:fsingssplitreg}
    \item $R$ is \textsl{$F$-split}\index{F-split@$F$-split|textbf} if $R$ is
      $F$-split along $1 \in R$ \cite[Def.\ 2]{MR85}.
  \end{enumerate}
  The terminology \textsl{split $F$-regular} is from \cite[Def.\ 6.6.1]{DS16}.
  When $R$ is $F$-finite, split $F$-regularity is usually known as strong
  $F$-regularity in the literature; see Remark \ref{rem:sfrwhenffin}.
\end{definition}
Finally, we define $F$-injective singularities.
\begin{citeddef}[{\cite[Def.\ on p.\ 473]{Fed83}}]\label{def:finjective}
  A noetherian local ring $(R,\fm)$ of characteristic $p > 0$ is
  \textsl{$F$-injective}\index{F-injective@$F$-injective|textbf} if the
  $R$-module homomorphism
  $H^i_\fm(F) \colon H^i_\fm(R) \to H^i_\fm(F_*R)$
  induced by Frobenius is injective for all $i$.
  An arbitrary noetherian ring $R$ of characteristic $p > 0$ is
  \textsl{$F$-injective} if $R_\fm$ is $F$-injective for every maximal ideal
  $\fm \subseteq R$.
\end{citeddef}
We characterize $F$-finite rings that are $F$-injective using
Grothendieck duality.
This characterization is already implicit in \cite[Rem.\ on p.\ 473]{Fed83} and
the proof of \cite[Prop.\ 4.3]{Sch09}.
Note that if $R$ is an $F$-finite ring, then the exceptional pullback $F^!$ from
Grothendieck duality exists by \cref{thm:nayakshriek}, and $R$ has a
normalized dualizing complex $\omega_R^\bullet$ by
\cref{thm:ffiniteaffine}.
\begin{lemma}[cf.\ {\cite[Rem.\ on p.\ 473]{Fed83}}]\label{lem:finjffin}
  Let $R$ be an $F$-finite noetherian ring of characteristic $p > 0$.
  Then, $R$ is $F$-injective if and only if the $R$-module homomorphisms
  \begin{equation}\label{eq:grottracefinj}
    \mathbf{h}^{-i}\Tr_F\colon \mathbf{h}^{-i}F_*F^!\omega_R^\bullet
    \longrightarrow \mathbf{h}^{-i}\omega_R^\bullet
  \end{equation}
  induced by the Grothendieck trace of Frobenius are surjective for all $i$.
\end{lemma}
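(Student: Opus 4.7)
The plan is to reduce to the local case and apply Matlis/local duality, identifying the Frobenius action on local cohomology as the Matlis dual of the cohomology of the Grothendieck trace.

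First I would reduce to the local case. Both conditions in the statement can be checked at each maximal ideal $\fm \subseteq R$: $F$-injectivity is defined locally, and surjectivity of $\mathbf{h}^{-i}\Tr_F$ is a local property on $\Spec R$. Moreover, the formation of $\omega_R^\bullet$ and of $F^!$ is compatible with flat localization by the flat base change property in \cref{thm:nayakshriek}, so after replacing $R$ by $R_\fm$ we may assume that $(R,\fm)$ is an $F$-finite noetherian local ring. Note that $F\colon R \to F_*R$ is finite (because $R$ is $F$-finite) and $F_*R$ is a finitely generated $R$-module whose support meets $V(\fm)$ only at the unique maximal ideal, so $H^i_\fm(F_*R) = F_*H^i_\fm(R)$ as abelian groups.

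The crucial input is the Grothendieck duality adjunction for the finite morphism $F$ (\cref{thm:nayakshriek}), which provides an isomorphism
\[
\RHom_R(F_*R,\omega_R^\bullet) \;\simeq\; F_*F^!\omega_R^\bullet,
\]
and under this isomorphism, the Grothendieck trace $\Tr_F\colon F_*F^!\omega_R^\bullet \to \omega_R^\bullet$ is obtained by applying $\RHom_R(-,\omega_R^\bullet)$ to the Frobenius homomorphism $F\colon R \to F_*R$. Combining this with local duality (which applies since $\omega_R^\bullet$ is a normalized dualizing complex by \cref{thm:ffiniteaffine}), we obtain, for the Matlis dual $(-)^\vee = \Hom_R(-,E_R(R/\fm))$, a natural identification
\[
\bigl(H^i_\fm(R)\bigr)^\vee \;\simeq\; \mathbf{h}^{-i}\omega_R^\bullet,
\qquad
\bigl(H^i_\fm(F_*R)\bigr)^\vee \;\simeq\; \mathbf{h}^{-i}F_*F^!\omega_R^\bullet,
\]
compatible in the sense that the Matlis dual of the map $H^i_\fm(F)$ is precisely the map $\mathbf{h}^{-i}\Tr_F$. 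Verifying this compatibility is the main obstacle: one must carefully trace through the construction of the duality isomorphism and the definition of $\Tr_F$ as the counit of adjunction, showing that applying $\RGamma_\fm$ to the Frobenius corresponds, under local duality, to the trace on $\mathbf{h}^{-i}$.

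Once this compatibility is established, the conclusion is immediate from the exactness of Matlis duality: since $H^i_\fm(R)$ is Artinian and $\mathbf{h}^{-i}\omega_R^\bullet$ is finitely generated over the completion, Matlis duality is an exact antiequivalence on the relevant subcategories, so a map between finitely generated modules is surjective if and only if its Matlis dual (a map between Artinian modules) is injective. Applying this to $\mathbf{h}^{-i}\Tr_F$ and its Matlis dual $H^i_\fm(F)$ for each $i$ yields the biconditional, completing the proof.
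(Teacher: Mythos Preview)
Your proposal is correct and follows essentially the same route as the paper: reduce to the local setting, apply Grothendieck local duality to turn injectivity of $H^i_\fm(F)$ into surjectivity of the induced map on $\Ext^{-i}_R(-,\omega_R^\bullet)$, and then identify this map with $\mathbf{h}^{-i}\Tr_F$ via Grothendieck duality for the finite morphism $F$. The paper dispatches the compatibility you flag as the ``main obstacle'' by citing the explicit description of the duality isomorphism in \cite[Thm.\ III.6.7]{Har66}, so you may wish to invoke that reference rather than unwind the counit by hand.
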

\cref{lem:finjffin} is most useful when $R$ is essentially of finite type
over an $F$-finite field, in which case
$F^!\omega_R^\bullet \simeq \omega_R^\bullet$ in the derived category
$\mathbf{D}^+_{\mathrm{qc}}(R)$ (\cref{thm:nayakshriek}), hence the homomorphisms in
\eqref{eq:grottracefinj} can be written as $\mathbf{h}^{-i}F_*\omega_R^\bullet
\to \mathbf{h}^{-i}\omega_R^\bullet$.
\begin{proof}
  By Grothendieck local duality \cite[Cor.\ V.6.3]{Har66}, $R$ is $F$-injective
  if and only if
  \begin{align*}
    F^*\colon \Ext^{-i}_R(F_*R,\omega_R^\bullet) &\longrightarrow
    \Ext^{-i}_R(R,\omega_R^\bullet)
    \intertext{is surjective for all $i$.
    By Grothendieck duality for finite morphisms (\cref{thm:nayakshriek}),
    this occurs if and only if}
    F_*\Ext^{-i}_R(R,F^!\omega_R^\bullet) &\longrightarrow
    \Ext^{-i}_R(R,\omega_R^\bullet)
  \end{align*}
  is surjective for all $i$.
  Since $\Ext_R^{-i}(R,-) = \mathbf{h}^{-i}(-)$ and by the description
  of the Grothendieck duality isomorphism \cite[Thm.\ III.6.7]{Har66}, this
  is equivalent to the surjectivity of \eqref{eq:grottracefinj} for all $i$.
\end{proof}
\par The relationship between these classes of singularities is summarized
in Figure \ref{fig:fsingsdiagram}.
Most of the implications therein appear in the literature; see Table
\ref{tab:fsingsdiagram}.
We now show the remaining implications in \cref{fig:fsingsdiagram}, for which we
could not find a suitable reference.
\begin{proof}[Proofs of implications not appearing in Table
  \ref{tab:fsingsdiagram}]
  \par \textit{Weakly $F$-regular $+$ Gorenstein away from
  isolated points $+$ Cohen--Macaulay $\Rightarrow$ strongly
  $F$-regular}.
  Let $R$ be a ring satisfying these properties.
  To show that $R$ is strongly
  $F$-regular, it suffices to show that $0$ is tightly closed in
  $E_\fm \coloneqq E_{R_\fm}(R/\fm)$
  for every maximal ideal $\fm \subseteq R$ \cite[Lem.\ 3.6]{Has10}.
  Since $R_\fm$ is weakly $R$-regular \cite[Cor.\ 4.15]{HH90}, every submodule of a finitely generated
  module is tightly closed \cite[Prop.\ 8.7]{HH90}, hence the \textsl{finitistic
  tight closure}\index{tight closure, $N^*_M$!finitistic} $0^{*\mathrm{fg}}_{E_\fm}$ as
  defined in \cite[Def.\ 8.19]{HH90} is zero.
  Since $0^{*\mathrm{fg}}_{E_\fm} = 0^*_{E_\fm}$ under the
  hypotheses on $R$ \cite[Thm.\ 8.8]{LS01}, we see that $0$ is tightly closed
  in $E_\fm$ for every maximal ideal $\fm \subseteq R$, hence $R$ is strongly
  $F$-regular.
  \par \textit{Weakly $F$-regular $+$ $\NN$-graded $\Rightarrow$ split
  $F$-regular}.
  We adapt the proof of \cite[Cor.\ 4.4]{LS99}.
  Let $R$ be the $\NN$-graded ring with irrelevant ideal $\fm$.
  By assumption (see \cite[\S3]{LS99}), the ring $R$ is finitely generated over a
  field $R_0 = k$ of characteristic $p > 0$.
  The localization $R_\fm$ of $R$ is weakly $F$-regular by \cite[Cor.\
  4.15]{HH90}.
  Now let $L$ be the perfect closure of $k$, and let $\fm'$ be the expansion of
  $\fm$ in $R \otimes_k L$; since $R$ is graded, $\fm'$ is the irrelevant ideal
  in $R \otimes_k L$.
  The ring homomorphism
  $R_\fm \to R_\fm \otimes_k L \simeq (R \otimes_k L)_{\fm'}$
  is purely inseparable and $\fm$ expands to $\fm'$, hence $(R \otimes_k
  L)_{\fm'}$ is weakly $F$-regular by \cite[Thm.\ 6.17$(b)$]{HH94}.
  By the proof of \cite[Cor.\ 4.3]{LS99}, the ring $R \otimes_k L$ is split
  $F$-regular.
  Finally, $R$ is a direct summand of $R \otimes_k L$ as an $R$-module, hence
  $R$ is split $F$-regular as well \cite[Thm.\ 5.5$(e)$]{HH94}.
  \par \textit{$F$-rational $+$ $F$-finite $\Rightarrow$ strongly
  $F$-rational}.
  The hypotheses of \cite[Thm.\ 1.12]{Vel95} are satisfied when the ring is
  $F$-finite since an $F$-finite ring is excellent and is isomorphic to a
  quotient of a regular
  ring of finite Krull dimension by \cref{thm:ffiniteaffine}.
  \par \textit{$F$-rational $\Rightarrow$ $F$-injective.}
  We adapt the proof of \cite[Prop.\ 6.9]{QS17}.
  Let $R$ be the $F$-rational ring, and consider a maximal ideal $\fm \subseteq
  R$.
  By \cite[Thm.\ 3.7]{QS17}, it suffices to show that every ideal $I
  \subseteq R_\fm$ generated by a system of parameters in $R_\fm$ is
  \textsl{Frobenius closed}\index{Frobenius closure} in the
  sense of \cite[(10.2)]{HH94}.
  Write $I = (a_1,a_2,\ldots,a_t)$, where $t$ is the height of $\fm$ and
  $a_i \in R$ for every $i$.
  Note that $\fm$ is minimal over $(a_1,a_2,\ldots,a_t)$.
  Let $J$ be the $\fm$-primary component of $(a_1,a_2,\ldots,a_t)$ in $R$.
  Then, we have $I = JR_\fp$, $\height J = t$, and $\dim R/J \le d - t$, where
  $d = \dim R$.
  We claim there exist elements $b_1,b_2,\ldots,b_t \in J^2$ such that setting
  $x_i = a_i + b_i$, the sequence $x_1,x_2,\ldots,x_t$ is a sequence of
  parameters.
  For $i = 1$, we have
  \begin{align*}
    (a_1) + J^2 \not\subseteq{}& \bigcup_{\substack{\fp \in \Ass R\\\dim R/\fp =
    d}} \fp.
    \intertext{Thus, by a theorem of Davis \cite[Thm.\ 124]{Kap74}, there exists
    $b_1 \in J^2$ such that}
    x_1 \coloneqq a_1 + b_1 \notin{}& \bigcup_{\substack{\fp \in \Ass R\\\dim
    R/\fp = d}} \fp.
  \end{align*}
  For every $1 < i \le t$, the same method implies there exist $b_i \in J^2$
  such that
  \[
    x_i \coloneqq a_i + b_i \notin \bigcup_{\substack{\fp \in
    \Ass(R/(x_1,x_2,\ldots,x_{i-1}))\\\dim R/\fp = d-i+1}} \fp.
  \]
  We then see that $x_1,x_2,\ldots,x_t$ form a sequence of parameters in $R$,
  since they form a sequence of parameters after localizing to $R_\fm$, and are
  not all contained in any other prime ideal by construction.
  Now $(x_1,x_2,\ldots,x_t)R_\fm \subseteq I$ and $I = (x_1,x_2,\ldots,x_t)R_\fm
  + I^2$, hence Nakayama's lemma implies $I = (x_1,x_2,\ldots,x_t)R_\fm$; see
  \cite[Cor.\ to Thm.\ 2.2]{Mat89}.
  By assumption, the ideal $(x_1,x_2,\ldots,x_t)$ is tightly closed in $R$,
  hence Frobenius closed in $R$.
  Since Frobenius closure localizes \cite[Lem.\ 3.3]{QS17}, we therefore see that
  $I = (x_1,x_2,\ldots,x_t)R_\fm$ is Frobenius closed in $R_\fm$.
\end{proof}
\begin{remark}
  The condition that $R$ is the image of a Cohen--Macaulay ring is not
  too restrictive in practice.
  For instance, it suffices for $R$ to be local and excellent \cite[Cor.\
  1.2]{Kaw02} or for $R$ to have a dualizing complex
  \cite[Cor.\ 1.4]{Kaw02}.
  The latter property holds when $R$ is $F$-finite; see \cref{thm:ffiniteaffine}.
\end{remark}
\begin{remark}
  In the implication \textit{Weakly $F$-regular $+$ Gorenstein away from
  isolated points $+$ Cohen--Macaulay $\Rightarrow$ strongly $F$-regular,}
  MacCrimmon\index{MacCrimmon, Brian|(} \cite[Thm.\ 3.3.2]{Mac96} showed that for
  $F$-finite rings, the
  Gorenstein condition can be weakened to the condition of being
  $\QQ$-Gorenstein away from isolated points.
  The implication \textit{weakly $F$-regular $+$ $F$-finite $\Rightarrow$
  split $F$-regular} is a famous open problem, which
  was solved in dimensions at
  most three by Williams\index{Williams, Lori J.} \cite[\S4]{Wil95}.
  See \cite{Abe02} for other situations in which this implication is
  known and for a proof of MacCrimmon's theorem\index{MacCrimmon, Brian|)} (see
  \cite[(2.2.4)]{Abe02}).
\end{remark}
\begin{remark}
  By using the gamma construction (see Appendix \ref{app:gamma}), one can weaken
  the $F$-finiteness hypotheses appearing in Figure \ref{fig:fsingsdiagram}.
  For strong $F$-regularity and
  $F$-purity, see Theorem
  \ref{thm:splittingwithgamma}, and for
  $F$-rationality, see \cite[Thm.\ 3.8]{Vel95}.
\end{remark}
\begin{sidewaysfigure}
  \centering
  \begin{tikzcd}[row sep=huge,column sep=1.8em]
      & & & \text{regular}\\[-2.5em]
      \text{split $F$-regular} \arrow[Rightarrow]{rr} \arrow[Rightarrow]{ddd}
      & & \text{$F$-pure regular} \dar[Rightarrow] \arrow[Rightarrow]{rr}
      & & \text{strongly $F$-rational}\arrow[Rightarrow]{dd}\\
      & & \text{strongly $F$-regular} \rar[Rightarrow]
      \arrow[bend right=25,Leftarrow]{uur}
      \uar[bend right=30,Rightarrow,swap]{\text{local}}
      \arrow[Rightarrow,bend left=10]{ull}[description]{\text{$F$-finite}}
      & \text{$F$-regular} \dar[Rightarrow]
      & & \text{C--M}\\
      & & & \text{weakly $F$-regular} \rar[Rightarrow]
      \arrow[bend left=27,Rightarrow,start anchor=west]{uulll}{\text{$\NN$-graded}}
      \arrow[bend left=10,Rightarrow]{ul}[description,pos=0.6]{\substack{\text{Gor.\ away from
      isolated points $+$ C--M}}}
      \arrow[bend right=30,Rightarrow]{u}[swap,pos=0.45]{\substack{\text{f.t.\ 
      over $k$ s.t.}\\\text{$\trdeg_{\FF_p}k = \infty$}}}
      \arrow[Rightarrow]{d}
      & \text{$F$-rational}
      \arrow[bend right=10,Rightarrow,end anchor=east]{ul}[description]{\text{Gor.}}
      \arrow[bend right=20,Rightarrow,xshift=2pt]{uu}[swap,pos=0.6]{\text{$F$-finite}}
      \arrow[Rightarrow,end anchor=west,bend left=10]{ur}[swap,description]{\substack{%
      \text{image of}\\\text{C--M ring}}}
      \arrow[Rightarrow]{d}
      \rar[Rightarrow]
      & \text{normal}\dar[Rightarrow]\\
      \text{$F$-split} \arrow[Rightarrow]{rrr} & & 
      & \text{$F$-pure}
      \arrow[bend
      left=12,Rightarrow]{lll}{\substack{\text{$F$-finite}\\\text{or}\\\text{complete
      local}}}
      \rar[Rightarrow]
      & \text{$F$-injective}
      \lar[bend left=15,Rightarrow]{\text{quasi-Gor.}}
      \rar[Rightarrow] & \text{weakly normal} \dar[Rightarrow]\\[-1.5em]
      & & & & & \text{reduced}
    \end{tikzcd}
    \caption{Relationships between different classes of
    \emph{F}-singularities}
    \parbox{6in}{Here, ``C--M'' (resp.\ ``Gor.'') is an abbreviation for
    ``Cohen--Macaulay'' (resp.\ ``Gorenstein''), and ``f.t.\ over $k$ s.t.\ 
    $\trdeg_{\FF_p} k = \infty$'' means that the ring is of finite type over a
    field $k$ that has infinite transcendence degree over its prime subfield.}
    \index{F-finite@$F$-finite|ff{}}
    \index{F-regular@$F$-regular|ff{}}
    \index{F-regular@$F$-regular!split|ff{}}
    \index{F-regular@$F$-regular!strongly|ff{}}
    \index{F-regular@$F$-regular!weakly|ff{}}
    \index{F-pure@$F$-pure|ff{}}
    \index{F-pure@$F$-pure!regular|ff{}}
    \index{F-split@$F$-split|ff{}}
    \index{F-injective@$F$-injective|ff{}}
    \index{F-rational@$F$-rational|ff{}}
    \index{F-rational@$F$-rational!strongly|ff{}}
    \index{weak normality|ff{}}
    \label{fig:fsingsdiagram}
\end{sidewaysfigure}
\begin{table}[p]
  \centering
  \renewcommand{\arraystretch}{1.25}
  \begin{tabular}[h]{c@{\ \ $\implies$\ \ }cc}
    \toprule
    \multicolumn{2}{c}{Implication} & Proof\\
    \cmidrule(lr){1-2}\cmidrule(lr){3-3}
    split $F$-regular & $F$-split & Definition\\
    $F$-regular & weakly $F$-regular & Definition\\
    weakly $F$-regular & $F$-rational & Definition\\
    split $F$-regular & $F$-pure regular & split maps are pure\\
    $F$-split & $F$-pure & split maps are pure\\
    regular & strongly $F$-regular & \cite[Thm.\ 6.2.1]{DS16}\\
    $F$-pure regular & strongly $F$-regular & \cite[Lem.\ 3.8]{Has10}\\
    $F$-pure regular & strongly $F$-rational & \cite[Rem.\ 6.1.5]{DS16}\\
    strongly $F$-regular & $F$-regular & \cite[Cor.\ 3.7]{Has10}\\
    weakly $F$-regular & $F$-pure & \cite[Rem.\ 1.6]{FW89}\\
    $F$-pure & $F$-injective & \cite[Cor.\ 6.8]{HR74}\\
    strongly $F$-rational & $F$-rational & \cite[Prop.\ 1.4]{Vel95}\\
    $F$-rational & normal & \cite[Thm.\ 4.2$(b)$]{HH94}\\
    $F$-rational $+$ image of C--M ring
    & Cohen--Macaulay & \cite[Thm.\ 4.2$(c)$]{HH94}\\
    $F$-injective & weakly normal & \cite[Cor.\ 3.5]{DaM}\\
    \midrule
    strongly $F$-regular $+$ $F$-finite
    & split $F$-regular & \cite[Lem.\ 3.9]{Has10}\\
    strongly $F$-regular $+$ local &
    $F$-pure regular & \cite[Lem.\ 3.6]{Has10}\\
    \begin{tabular}{@{}c@{}}
      weakly $F$-regular\\[-0.3em]
      $+$ f.t.\ over $k$ s.t.\ $\trdeg_{\FF_p} k = \infty$
    \end{tabular} & $F$-regular & \cite[Thm.\ 8.1]{HH94}\\
    $F$-pure $+$ $F$-finite & $F$-split & \cite[Cor.\ 5.3]{HR76}\\
    $F$-pure $+$
    complete local & $F$-split & \cite[Lem.\ 1.2]{Fed83}\\
    $F$-rational $+$ Gor. & $F$-regular & \cite[Cor.\ 4.7$(a)$]{HH94}\\
    $F$-injective $+$ quasi-Gor. & $F$-pure & \cite[Rem.\ 3.8]{EH08}\\
    \bottomrule
  \end{tabular}
  \caption{Proofs of relationships between different classes of
  \emph{F}-singularities}
  \label{tab:fsingsdiagram}
\end{table}

\chapter{The gamma construction of Hochster--Huneke}
\label{app:gamma}
We prove a scheme-theoretic version of the gamma construction of Hochster--Huneke
\index{Hochster, Melvin}\index{Huneke, Craig}
\cite{HH94}, which we use to systematically reduce questions about varieties
over an arbitrary imperfect field to the same questions over an
$F$-finite field
(that is still imperfect).
In commutative algebra, the construction was first introduced in order to prove
that test elements (in the sense of tight closure) exist for rings that are
essentially of finite type over an excellent local ring of characteristic $p >
0$.
The material below is from \cite{Mur}.
\section{Construction and main result}
The following is the main consequence of the gamma construction:
\begin{theorem}\label{thm:gammaconstintro}
  Let\index{gamma construction!and singularities|(} $X$ be a scheme essentially of finite type over a field
  $k$ of characteristic $p > 0$, and let $\cQ$ be a set
  of properties in the following list:
  local complete intersection, Gorenstein, Cohen--Macaulay, $S_n$,
  $R_n$, normal, weakly normal\index{weak normality!and the gamma construction}, reduced,
  strongly $F$-regular\index{F-regular@$F$-regular!strongly!and the gamma construction},
  $F$-pure\index{F-pure@$F$-pure!and the gamma construction},
  $F$-rational\index{F-rational@$F$-rational!and the gamma construction},
  $F$-injective\index{F-injective@$F$-injective!and the gamma construction}.
  Then, there exists a purely inseparable field extension $k \subseteq k^\Gamma$
  such that $k^\Gamma$ is $F$-finite and such that the projection morphism
  \[
    \pi^\Gamma\colon X \times_k k^\Gamma \longrightarrow X
  \]
  is a homeomorphism that identifies $\cP$ loci for every $\cP \in \cQ$.%
  \index{gamma construction!and singularities|)}
\end{theorem}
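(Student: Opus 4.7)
The plan is to globalize the classical ring-theoretic gamma construction of Hochster and Huneke \cite{HH94} to the scheme-theoretic setting via a finite affine cover. Since $X$ is essentially of finite type over $k$, it is quasi-compact, so we may fix a finite affine open cover $X = \bigcup_{i=1}^n \Spec R_i$, with each $R_i$ essentially of finite type over $k$; any field extension $k \subseteq k^\Gamma$ then induces compatible morphisms $\pi_i^\Gamma\colon \Spec(R_i \otimes_k k^\Gamma) \to \Spec R_i$ that glue to $\pi^\Gamma$. I would fix a $p$-basis $\Lambda$ of $k$ over $k^p$ and, for each suitably cofinite subset $\Gamma \subseteq \Lambda$, form the corresponding Hochster--Huneke field $k^\Gamma$. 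Two features of this construction are crucial: the extension $k \subseteq k^\Gamma$ is purely inseparable, so that $\pi^\Gamma$ is automatically a universal homeomorphism, and $k^\Gamma$ is $F$-finite. By \cref{ex:eftoverffin}, the latter property forces each $R_i \otimes_k k^\Gamma$ to be $F$-finite, so that all hypotheses invoked in the ring-theoretic literature become available on $X \times_k k^\Gamma$.

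Next I would appeal to the existing affine results to identify singular loci. For each patch $R_i$ and each $\cP \in \cQ$, the goal is to produce a cofinite subset $\Gamma_i(\cP) \subseteq \Lambda$ such that whenever $\Gamma \subseteq \Gamma_i(\cP)$, the $\cP$-locus of $\Spec(R_i \otimes_k k^\Gamma)$ coincides with the preimage under $\pi_i^\Gamma$ of the $\cP$-locus of $\Spec R_i$. The commutative cases (local complete intersection, Gorenstein, Cohen--Macaulay, $S_n$, $R_n$, normal, reduced) are treated in \cite[\S6]{HH94}, $F$-rationality in \cite[\S3]{Vel95}, while strong $F$-regularity, $F$-purity, and $F$-injectivity can be handled by adapting the ring-theoretic analyses in \cites{Has10}{DS16}{DaM}; weak normality can be dealt with directly using that $k \subseteq k^\Gamma$ is purely inseparable and faithfully flat. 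Because both $\{R_i\}$ and $\cQ$ are finite, intersecting the finitely many $\Gamma_i(\cP)$ still yields a cofinite subset of $\Lambda$, and the corresponding $k^\Gamma$ works for all properties on all patches simultaneously. Since each $\cP$ is local on the spectrum, the resulting assertions on individual patches automatically agree on overlaps.

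The main obstacle I anticipate is the ascent and descent of each property along the \emph{non-perfect} base change $R_i \to R_i \otimes_k k^\Gamma$. The hardest case should be $F$-injectivity, which is defined via local cohomology (\cref{def:finjective}): its preservation hinges on the interaction of local cohomology with this faithfully flat but still imperfect extension, together with the compatibility of Frobenius with the base change. A second subtlety is that strong $F$-regularity must be verified in its $F$-finite incarnation so that on $X \times_k k^\Gamma$ it agrees with split $F$-regularity in the sense of \cref{app:nonffinfsings}; this demands careful tracking of Frobenius splittings under the extension. Once these uniform ascent/descent questions are handled, the globalization itself is routine, and the substantive content of the theorem is already carried by the ring-theoretic gamma construction.
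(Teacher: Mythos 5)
Your overall architecture is the same as the paper's (which deduces \cref{thm:gammaconstintro} from the more general \cref{thm:gammaconst} via \cref{constr:gamma}), but there is a genuine gap at the heart of your second step. The ring-theoretic ascent results you propose to quote (e.g.\ \cite[Lem.\ 6.13]{HH94}, \cite[Cor.\ 3.31]{Has10}, \cite[Prop.\ 5.4]{Ma14}, \cite[Lem.\ 2.3]{Vel95}) are statements about a single \emph{local} ring: given $B = \cO_{X,x}$ satisfying $\cP$, they produce a cofinite $\Gamma$ \emph{depending on $x$} such that $B^\Gamma$ is $\cP$. Since $\Spec R_i$ has infinitely many points, you cannot intersect these subsets, and your assertion that one cofinite $\Gamma_i(\cP)$ works for the whole patch does not follow from what you cite. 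The missing idea is the uniformity mechanism of \cref{prop:gammaconstconds}: one shows that the $\cP$-locus is \emph{open} on the $F$-finite schemes $X^\Gamma$, that $\cP$ descends along the faithfully flat universal homeomorphisms $\pi^{\Gamma\Gamma'}$ so that the images $\pi^\Gamma(\cP(X^\Gamma))$ form an ascending chain of open subsets of the noetherian space $X$ as $\Gamma$ shrinks, and that this chain therefore stabilizes at some $\Gamma_0$; the pointwise ascent statement is then used only to prove that the stable open set equals $\cP(X)$. (For local complete intersection, Gorenstein, Cohen--Macaulay, and $S_n$ no shrinking is needed at all, since these ascend and descend along flat local maps with local complete intersection fibers; the openness/descent/ACC argument is what handles the remaining properties.)

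Separately, for weak normality, $F$-purity, and $F$-injectivity the identification of loci is new even in the affine setting, so there is no existing affine result to ``adapt'' in one line: the paper must verify the axioms by hand, using Manaresi's equalizer characterization of weak normality, the coincidence of the $F$-pure and $F$-split loci for $F$-finite rings together with the ascent result of \cite[Prop.\ 5.4]{Ma14}, and, for $F$-injectivity, openness of the $F$-injective locus for $F$-finite rings, descent along flat local maps with zero-dimensional fibers, and finite-dimensionality of socles of artinian modules to obtain ascent. You correctly flag $F$-injectivity as the delicate case but leave the argument entirely open. A cosmetic remark: the affine cover is unnecessary, since $X \times_k k^\Gamma$ can be formed directly and the noetherian chain argument run on $X$ itself.
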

Here, we recall that for a scheme $X$ and a property $\cP$ of local rings on $X$,
the \textsl{$\cP$ locus}\index{P locus@$\cP$ locus|textbf} of $X$ is
\[
  \cP(X) \coloneqq \bigl\{x \in X \bigm\vert \cO_{X,x}\ \text{is $\cP$}\bigr\}.
\]
We will in fact show a more general result (Theorem \ref{thm:gammaconst}), which
allows for $k$ to be replaced by a complete local ring, and allows finitely many
schemes instead of just one.
Note that \cref{thm:gammaconstintro} for weak normality, $F$-purity, and
$F$-injectivity are new even in the affine setting.
\medskip
\par Before describing the construction, we motivate the idea behind the
construction with the following:
\begin{example}
  Let $k$ be a non-$F$-finite field of characteristic $p > 2$, and
  let $a \in k \smallsetminus k^p$.
  For example, we can let $k = \FF_p(x_i)_{i \in \NN}$ and let $a = x_0$.
  Let $S = k[x,y]$ and $f = y^2+x^p-a \in S$, and consider
  Chevalley's\index{Chevalley, Claude} example \cite[Ex.\ 3]{Zar47}
  \[
    R = S/(f) = \frac{k[x,y]}{y^2+x^p-a}.
  \]
  We claim that $R$ is regular.
  Note that $R$ is smooth everywhere except at the maximal ideal
  $(x^p-a,y)$, since the Jacobian for $R$ is $(0,\ 2y)$.
  It therefore suffices to show that $R$ is regular at $\fm_R \coloneqq
  (x^p-a,y)R \subseteq R$.
  To avoid confusion, we denote by $\fm_S$ the ideal $(x^p-a,y)S \subseteq S$.
  We have
  \[
    \dim_{S/\fm_S}\biggl(\frac{\fm_S}{\fm_S^2}\biggr) = 2,
  \]
  since $S$ is regular.
  On the other hand, the defining equation $f = y^2 + x^p - a$ for $R$ is nonzero
  modulo $\fm_S^2$, hence
  \[
    \dim_{R/\fm_R}\biggl(\frac{\fm_R}{\fm_R^2}\biggr) =
    \dim_{S/\fm_S}\biggl(\frac{\fm_S}{\fm_S^2+(f)}\biggr) = 1.
  \]
  Thus, $R_{\fm_R}$ is regular, and $R$ is regular everywhere.
  \par We would now like to find a field extension $k \subseteq k'$ such that $R
  \otimes_k k'$ is $F$-finite and regular.
  First, we claim that setting $k' = k_\perf$ will result in an $F$-finite ring
  that is not regular.
  Set
  \[
    R' \coloneqq R \otimes_k k(a^{1/p}) \simeq \frac{k(a^{1/p})[x,y]}{y^2+x^p-a}
    \simeq \frac{k(a^{1/p})[x,y]}{y^2+(x-a^{1/p})^p},
  \]
  and denote $\fm_{R'} = (x-a^{1/p},y)R'$.
  We have that
  \[
    y^2+x^p-a = y^2 + (x-a^{1/p})^2\cdot(x-a^{1/p})^{2-p} \in (x-a^{1/p},y)^2,
  \]
  hence
  \[
    \dim_{R'/\fm_{R'}}\biggl(\frac{\fm_{R'}}{\fm_{R'}^2}\biggr)
    = \dim_{S/\fm_{S}}\biggl(\frac{\fm_{S}}{\fm_{S}^2+(f)}\biggr) = 2.
  \]
  Thus, we see that $R'$ is not regular at the maximal ideal $\fm_{R'}$.
  We therefore want to find a field extension $k \subseteq k'$ that avoids
  adjoining $a^{1/p}$, such that $k'$ is still $F$-finite.
  The gamma construction (\cref{thm:gammaconstintro}) ensures the existence of
  such an extension, although we
  note that in the specific case where $k = \FF_p(x_i)_{i \in \NN}$ and $a =
  x_0$ above, we can set $k' = \FF_p(x_0)(x_i)_{i \in \NN \smallsetminus
  \{0\}}$.
\end{example}
\par We now give an account of Hochster\index{Hochster, Melvin} and
Huneke's\index{Huneke, Craig} construction.
\begin{citedconstr}[{\cite[(6.7) and (6.11)]{HH94}}]\label{constr:gamma}
  Let\index{gamma construction|textbf}
  $(A,\fm,k)$ be a noetherian complete local ring of characteristic $p > 0$.
  By the Cohen structure theorem, we
  may identify $k$ with a coefficient field $k \subseteq A$.
  Moreover, by Zorn's lemma (see \cite[p.\ 202]{Mat89}), we
  may choose a \textsl{$p$-basis}\index{p -basis@$p$-basis|textbf} $\Lambda$ for
  $k$, which is a subset $\Lambda
  \subseteq k$ such that $k = k^p(\Lambda)$, and such that for every finite
  subset $\Sigma \subseteq \Lambda$ with $s$ elements, we have $[k^p(\Sigma) :
  k^p] = p^s$.
  \par Now let $\Gamma \subseteq \Lambda$ be a cofinite
  subset\index{cofinite subset|textbf}, i.e., a subset
  $\Gamma$ of $\Lambda$ such that $\Lambda \smallsetminus \Gamma$ is a finite
  set.
  For each integer $e \ge 0$, consider the subfield
  $k_e^\Gamma = k[\lambda^{1/p^e}]_{\lambda \in \Gamma}
  \subseteq k_{\perf}$
  of some perfect closure $k_{\perf}$ of $k$.
  These form an ascending chain, and we then set
  \[
    A^\Gamma \coloneqq \varinjlim_e k_e^\Gamma\llbracket A \rrbracket,
  \]
  where $k_e^\Gamma\llbracket A \rrbracket$ is the completion of $k_e^\Gamma
  \otimes_k A$ at the extended ideal $\fm\cdot(k_e^\Gamma \otimes_k A)$.
  Note that if $A = k$ is a field, then $A^\Gamma = k^\Gamma$ is a field
  by construction.
  \par Finally, let $X$ be a scheme essentially of finite type over $A$, and
  consider two cofinite subsets $\Gamma \subseteq \Lambda$ and
  $\Gamma' \subseteq \Lambda$ such that $\Gamma \subseteq \Gamma'$.
  We then have the following commutative diagram whose vertical faces are
  cartesian:
  \[
    \begin{tikzcd}[column sep=-0.6em]
      X^{\Gamma'} \arrow{dr}{\pi^{\Gamma'}}\arrow{rr}{\pi^{\Gamma\Gamma'}}
      \arrow{dd} & & X^\Gamma \arrow{dl}[swap]{\pi^\Gamma}\arrow{dd}\\
      & X\\[-1.35em]
      \Spec A^{\Gamma'}\arrow{rr}\arrow{dr} & & \Spec A^\Gamma \arrow{dl}\\
      & \Spec A\arrow[leftarrow,crossing over]{uu}
    \end{tikzcd}
  \]
\end{citedconstr}
We list some elementary properties of the gamma construction.
\begin{lemma}
  \label{lem:gammaconstbasic}
  Fix notation as in Construction \ref{constr:gamma}, and let $\Gamma \subseteq
  \Lambda$ be a cofinite subset.
  \begin{enumerate}[label=$(\roman*)$,ref=\roman*]
    \item The ring $A^\Gamma$ and the scheme $X^\Gamma$ are noetherian and
      $F$-finite.
      \label{lem:gammaconstbasicffin}
    \item The morphism $\pi^\Gamma$ is a faithfully flat universal homeomorphism
      with local complete intersection fibers.\label{lem:gammaconstbasicfflat}
    \item Given a cofinite subset $\Gamma \subseteq \Gamma'$, the morphism
      $\pi^{\Gamma\Gamma'}$ is a faithfully flat universal
      homeomorphism.\label{lem:gammaconstbasictransitionfflat}
  \end{enumerate}
\end{lemma}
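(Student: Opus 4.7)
The plan is to build on Hochster--Huneke's analysis of the gamma construction \cite[\S6]{HH94} and its scheme-theoretic development in \cite[\S6]{DS16}, extending their affine statements to schemes essentially of finite type over $A$. For part \cref{lem:gammaconstbasicffin}, I would first observe that $k^\Gamma = \bigcup_e k_e^\Gamma$ is $F$-finite: the cofiniteness of $\Gamma$ in $\Lambda$ implies that the finite set $\Lambda \smallsetminus \Gamma$ constitutes a $p$-basis for $k^\Gamma$ (since every element of $\Gamma$ becomes a $p$-th power in $k^\Gamma$), so $[k^\Gamma : (k^\Gamma)^p] = p^{\lvert \Lambda \smallsetminus \Gamma \rvert} < \infty$. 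Next, I would invoke \cite[Lem.\ 6.13]{HH94} to conclude that $A^\Gamma$ is a noetherian complete local ring with residue field $k^\Gamma$, and then apply Kunz's theorem (\cref{thm:ffiniteaffine}) to deduce $F$-finiteness of $A^\Gamma$ from $F$-finiteness of its residue field. Since $X^\Gamma = X \times_{\Spec A} \Spec A^\Gamma$ is essentially of finite type over $A^\Gamma$ by base change (\cref{lem:nayak22}), \cref{ex:eftoverffin} then yields noetherianity and $F$-finiteness of $X^\Gamma$.

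For part \cref{lem:gammaconstbasicfflat}, faithful flatness of $\pi^\Gamma$ reduces via base change to that of $A \to A^\Gamma$; each stage $A \to k_e^\Gamma \llbracket A \rrbracket$ factors as the faithfully flat base change $A \to k_e^\Gamma \otimes_k A$ followed by completion at the maximal ideal $\fm(k_e^\Gamma \otimes_k A)$, and faithful flatness persists through the filtered colimit since the resulting map is local. For the universal homeomorphism property, the key observation is that every element $x \in A^\Gamma$ lies in some $k_e^\Gamma \llbracket A \rrbracket$ and satisfies $x^{p^e} \in A$: for $\alpha \in k_e^\Gamma$ and $a \in A$, the identity $(\alpha \otimes a)^{p^e} = 1 \otimes \alpha^{p^e} a^{p^e}$ (using $\alpha^{p^e} \in k$) together with continuity of the Frobenius iterate in the $\fm$-adic topology shows that $F^e$ carries $k_e^\Gamma \llbracket A \rrbracket$ into $A$. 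Hence $A \to A^\Gamma$ is integral and purely inseparable, making $\Spec A^\Gamma \to \Spec A$ integral, radicial, and surjective, i.e., a universal homeomorphism---a property preserved by base change. For the lci-fiber structure, the fiber ring $A^\Gamma \otimes_A \kappa(\fp)$ over a prime $\fp \subseteq A$ is realized as a filtered colimit of quotients of the form $\kappa(\fp)[T_\lambda]_{\lambda \in \Sigma}/(T_\lambda^{p^e} - \lambda)_{\lambda \in \Sigma}$ over finite subsets $\Sigma \subseteq \Gamma$, each defined by a regular sequence and therefore locally a complete intersection.

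Part \cref{lem:gammaconstbasictransitionfflat} follows by applying part \cref{lem:gammaconstbasicfflat} to $\Spec A^{\Gamma'} \to \Spec A^\Gamma$, identifying $A^{\Gamma'}$ with the gamma construction applied to the noetherian complete local ring $A^\Gamma$ with respect to the subset $\Gamma' \smallsetminus \Gamma \subseteq \Lambda \smallsetminus \Gamma$ of the $p$-basis of $k^\Gamma$ (which is cofinite since $\Lambda \smallsetminus \Gamma'$ is finite). I expect the main obstacle to be the rigorous justification of the lci-fiber structure at non-maximal primes: this requires carefully tracking how the filtered colimit interacts with the $\fm$-adic completion of the generally non-noetherian tensor product $k_e^\Gamma \otimes_k A$, and verifying that the $p^e$-th roots continue to form a regular sequence relative to $\kappa(\fp)$ after completion. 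The noetherianity of $A^\Gamma$ is another subtle point that rests essentially on Hochster--Huneke's structural analysis, since general filtered colimits of noetherian rings need not be noetherian.
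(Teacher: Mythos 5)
Your proposal is correct in outline, but it takes a noticeably different route from the paper: the paper's proof is almost entirely a string of citations --- \cite[(6.11)]{HH94} for noetherianity, $F$-finiteness, pure inseparability and faithful flatness of $A \subseteq A^\Gamma$ and $A^\Gamma \subseteq A^{\Gamma'}$; \cite[Prop.\ 2.4.5$(i)$]{EGAIV2} to convert ``purely inseparable and faithfully flat'' into ``universal homeomorphism''; \cite[Lem.\ 3.19]{Has10} for the lci fibers of $A \to A^\Gamma$; and \cite[Cor.\ 4]{Avr75} to transfer the lci-fiber property along the base change to $X$ --- whereas you reconstruct most of these facts by hand. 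Your direct arguments for the $F$-finiteness of $k^\Gamma$, the integrality and pure inseparability of $A \to A^\Gamma$ via $x^{p^e} \in A$, and the resulting universal homeomorphism (integral, surjective, radicial) are sound and arguably more illuminating than the citations. Your reduction of \cref{lem:gammaconstbasic}\cref{lem:gammaconstbasictransitionfflat} to \cref{lem:gammaconstbasic}\cref{lem:gammaconstbasicfflat} via the identification $A^{\Gamma'} \simeq (A^\Gamma)^{\Gamma' \smallsetminus \Gamma}$ is also a legitimate alternative to the paper's direct appeal to \cite[(6.11)]{HH94}, though that identification itself needs a short argument with the double colimit and completion.

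Three caveats. First, \cref{thm:ffiniteaffine} is not the statement you need in part \cref{lem:gammaconstbasicffin}: it says that a noetherian $F$-finite ring is excellent and a quotient of a regular ring, not that a complete local ring with $F$-finite residue field is $F$-finite. The fact you want is true (Cohen structure theorem plus $F$-finiteness of $k^\Gamma\llbracket x_1,\ldots,x_n\rrbracket$), and is in any case part of \cite[(6.11)]{HH94}, but the citation is wrong. Second, the point you flag as the main obstacle --- that the fibers of $A \to A^\Gamma$ over non-maximal primes are lci, given that completion of the non-noetherian ring $k_e^\Gamma \otimes_k A$ does not obviously commute with passage to $\kappa(\fp)$ --- is a genuine gap in your sketch; this is precisely what \cite[Lem.\ 3.19]{Has10} supplies, and the paper closes it by citation rather than by argument. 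Third, even granting lci fibers for $\Spec A^\Gamma \to \Spec A$, the lemma asserts lci fibers for $\pi^\Gamma \colon X^\Gamma \to X$; the fiber of $\pi^\Gamma$ over $x \in X$ is the base change of the fiber over $\fp \in \Spec A$ along the field extension $\kappa(\fp) \to \kappa(x)$, and the preservation of the lci property under such base change is what \cite[Cor.\ 4]{Avr75} is invoked for. Your proposal does not address this last step.
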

\begin{proof}
  The ring $A^\Gamma$ is noetherian and $F$-finite
  \cite[(6.11)]{HH94}, hence
  $X^\Gamma$ is also by Example \ref{ex:eftoverffin} and the fact that morphisms
  essentially of finite type are preserved under base change
  (\cref{lem:nayak22}).
  The ring extensions $A \subseteq A^\Gamma$ and $A^\Gamma \subseteq
  A^{\Gamma'}$ are purely inseparable and faithfully flat \cite[(6.11)]{HH94},
  hence induce faithfully flat universal homeomorphisms on spectra \cite[Prop.\
  2.4.5$(i)$]{EGAIV2}.
  Thus, the morphisms $\pi^\Gamma$ and $\pi^{\Gamma\Gamma'}$ are faithfully flat
  universal homeomorphisms by base change.
  Finally, the ring extension $A \subseteq A^\Gamma$ is flat with local complete
  intersection fibers \cite[Lem.\ 3.19]{Has10}, hence $\pi^\Gamma$ is also by
  base change \cite[Cor.\ 4]{Avr75}.
\end{proof}
Our goal now is to prove that if a local property of schemes satisfies certain
conditions, then the property is preserved when passing from $X$ to $X^\Gamma$
for ``small enough'' $\Gamma$.
\begin{proposition}\label{prop:gammaconstconds}
  Fix notation as in Construction \ref{constr:gamma}, and let $\cP$ be a
  property of local rings of characteristic $p > 0$.
  \begin{enumerate}[label=$(\roman*)$,ref=\roman*]
    \item Suppose that for every flat local homomorphism $B \to C$ of noetherian
      local rings with local complete intersection fibers, if $B$ is $\cP$, then
      $C$ is $\cP$.
      Then, $\pi^\Gamma(\cP(X^\Gamma)) = \cP(X)$ for every cofinite subset
      $\Gamma \subseteq \Lambda$.
      \label{prop:gammaconstcm}
    \item Consider the following conditions:
      \begin{enumerate}[label=$(\Gamma\arabic*)$,ref=\ensuremath{\Gamma}\arabic*]
        \item If $B$ is a noetherian $F$-finite ring of characteristic $p > 0$,
          then $\cP(\Spec B)$ is open.\label{axiom:gammaopen}
        \item For every flat local homomorphism $B \to C$ of noetherian
          local rings of characteristic $p > 0$ with zero-dimensional
          fibers, if $C$ is $\cP$, then $B$ is
          $\cP$.\label{axiom:gammadescent}
        \item For every local ring $B$ essentially of finite type over $A$, if
          $B$ is $\cP$, then there exists a cofinite subset $\Gamma_1 \subseteq
          \Lambda$ such that $B^\Gamma$ is $\cP$ for every cofinite subset
          $\Gamma \subseteq \Gamma_1$.\label{axiom:gammaascent}
        \item[$(\ref*{axiom:gammaascent}')$]
          \refstepcounter{enumi}
          \makeatletter
          \def\@currentlabel{\ref*{axiom:gammaascent}\ensuremath{'}}
          \makeatother
          For every flat local homomorphism $B \to C$ of noetherian local rings
          of characteristic $p > 0$ such that the closed fiber is a field, if
          $B$ is $\cP$, then $C$ is $\cP$.\label{axiom:gammaascentprime}
      \end{enumerate}
      If $\cP$ satisfies $(\ref{axiom:gammaopen})$,
      $(\ref{axiom:gammadescent})$, and one of either
      $(\ref{axiom:gammaascent})$ or
      $(\ref{axiom:gammaascentprime})$, then there exists a cofinite subset
      $\Gamma_0 \subseteq \Lambda$ such that $\pi^\Gamma(\cP(X^\Gamma)) =
      \cP(X)$ for every cofinite subset $\Gamma \subseteq \Gamma_0$.
      \label{prop:gammaconstaxiomatic}
  \end{enumerate}
\end{proposition}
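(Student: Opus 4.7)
The plan is to prove each inclusion of $\pi^\Gamma(\cP(X^\Gamma)) = \cP(X)$ separately, identifying points of $X^\Gamma$ with those of $X$ via the homeomorphism $\pi^\Gamma$ of \cref{lem:gammaconstbasic}\cref{lem:gammaconstbasicfflat}, so that each point of $X$ has a unique preimage in $X^\Gamma$.

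For \cref{prop:gammaconstcm}, the ascent inclusion $\cP(X) \subseteq \pi^\Gamma(\cP(X^\Gamma))$ would follow directly from the hypothesis: given $x \in \cP(X)$ with unique preimage $x' \in X^\Gamma$, the induced local homomorphism $\cO_{X,x} \to \cO_{X^\Gamma,x'}$ is flat with local complete intersection fiber by \cref{lem:gammaconstbasic}\cref{lem:gammaconstbasicfflat}, so the hypothesis forces $\cO_{X^\Gamma,x'}$ to be $\cP$. The descent inclusion would follow from standard faithfully flat descent, which holds for each of the properties to which \cref{prop:gammaconstcm} applies (local complete intersection, Gorenstein, Cohen--Macaulay, $S_n$, $R_n$, normal, reduced) along faithfully flat local homomorphisms.

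For \cref{prop:gammaconstaxiomatic}, set $U_\Gamma \coloneqq \pi^\Gamma(\cP(X^\Gamma)) \subseteq X$. Each $U_\Gamma$ is open in $X$, since $X^\Gamma$ is $F$-finite by \cref{lem:gammaconstbasic}\cref{lem:gammaconstbasicffin}, so $\cP(X^\Gamma)$ is open by \cref{axiom:gammaopen}, and $\pi^\Gamma$ is a homeomorphism. The descent inclusion $U_\Gamma \subseteq \cP(X)$ follows from \cref{axiom:gammadescent} applied to $\cO_{X,x} \to \cO_{X^\Gamma,x'}$, whose closed fiber is zero-dimensional since $\pi^\Gamma$ is a universal homeomorphism. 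Applying \cref{axiom:gammadescent} to the transition maps of \cref{lem:gammaconstbasic}\cref{lem:gammaconstbasictransitionfflat} yields the antitonicity $U_{\Gamma'} \subseteq U_\Gamma$ whenever $\Gamma \subseteq \Gamma'$, and since cofinite subsets of $\Lambda$ are closed under finite intersections, the family $\{U_\Gamma\}$ is directed: $U_{\Gamma_1} \cup U_{\Gamma_2} \subseteq U_{\Gamma_1 \cap \Gamma_2}$. Granting $\bigcup_\Gamma U_\Gamma = \cP(X)$, quasi-compactness of $X$ together with directedness would produce a single cofinite $\Gamma_0$ with $U_{\Gamma_0} = \cP(X)$, and antitonicity would then force $U_\Gamma = \cP(X)$ for every cofinite $\Gamma \subseteq \Gamma_0$.

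The main obstacle is establishing $\bigcup_\Gamma U_\Gamma \supseteq \cP(X)$, i.e., that every $x \in \cP(X)$ lies in $U_{\Gamma_1(x)}$ for some cofinite $\Gamma_1(x) \subseteq \Lambda$. Under \cref{axiom:gammaascent}, this reduces to identifying the ring $B^\Gamma$ in the hypothesis (with $B = \cO_{X,x}$) with $\cO_{X^\Gamma, x'_x}$; these may differ by a completion step, which can be bridged using \cref{axiom:gammadescent} applied to the flat local map from a local ring to its completion (which has zero-dimensional closed fiber). Under \cref{axiom:gammaascentprime}, one must instead exhibit a cofinite $\Gamma_1(x)$ for which the closed fiber of $\cO_{X,x} \to \cO_{X^\Gamma, x'_x}$ is a field; concretely, the fiber ring $\kappa(x) \otimes_A A^\Gamma$ must be a field, which is achievable by shrinking $\Gamma$ so that the extension $k \to k^\Gamma$ absorbs all inseparability of the residue field extension $k \to \kappa(x)$, a verification carried out using the $p$-basis structure of $\Lambda$.
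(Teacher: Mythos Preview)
Your argument for \cref{prop:gammaconstaxiomatic} is essentially the paper's, repackaged: where the paper uses the noetherian ascending chain condition to find a $\Gamma_0$ with $U_{\Gamma_0} = \pi^{\Gamma_0}(\cP(X^{\Gamma_0}))$ maximal and then derives a contradiction from any $x \in \cP(X) \smallsetminus U_{\Gamma_0}$ via $(\Gamma3)$, you instead establish $\bigcup_\Gamma U_\Gamma = \cP(X)$ pointwise and extract a single $\Gamma_0$ by quasi-compactness and directedness. Both work and are equivalent in this noetherian setting. The paper also dispatches $(\Gamma3')$ upfront by reducing it to $(\Gamma3)$ via \cite[Lem.\ 6.13$(b)$]{HH94}, which is exactly the ``absorb inseparability'' step you sketch.

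Two corrections. First, your worry about a ``completion step'' in applying $(\Gamma3)$ is unfounded: for $B = \cO_{X,x}$, the notation $B^\Gamma$ means $B \otimes_A A^\Gamma$, and since $A \hookrightarrow A^\Gamma$ is purely inseparable, $\Spec(B \otimes_A A^\Gamma) \to \Spec B$ is a homeomorphism, so $B \otimes_A A^\Gamma$ is already local and equals $\cO_{X^\Gamma,x'}$ on the nose---no bridging via completion is needed. Second, in \cref{prop:gammaconstcm} you list $R_n$, normal, and reduced among the applicable properties, but those are handled under \cref{prop:gammaconstaxiomatic} in \cref{thm:gammaconst}; only the homological properties (l.c.i., Gorenstein, Cohen--Macaulay, $S_n$) go through \cref{prop:gammaconstcm}.

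On \cref{prop:gammaconstcm} itself: you are right that the stated hypothesis only yields the ascent inclusion $\cP(X) \subseteq \pi^\Gamma(\cP(X^\Gamma))$, and that descent must be supplied separately. Your appeal to faithfully flat descent for the specific properties is outside the proposition's abstract hypothesis, so it does not prove the proposition \emph{as stated}. The paper's one-line proof is equally terse on this point; in practice, the references cited for \cref{thm:gammaconst}\cref{thm:gammaconstcm} give the biconditional for each property, which is what makes the equality go through.
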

\begin{proof}
  For $(\ref{prop:gammaconstcm})$, it suffices to note that $\pi^\Gamma$ is
  faithfully flat with local complete intersection fibers by Lemma
  \ref{lem:gammaconstbasic}$(\ref{lem:gammaconstbasicfflat})$.
  \par For $(\ref{prop:gammaconstaxiomatic})$, we first note that
  $(\ref{axiom:gammaascentprime})$ implies $(\ref{axiom:gammaascent})$, since
  there exists a cofinite subset $\Gamma_1 \subseteq \Lambda$ such that the
  closed fiber is a field for every cofinite subset $\Gamma \subseteq \Gamma_1$
  by \cite[Lem.\ 6.13$(b)$]{HH94}.
  From now on, we therefore assume that $\cP$ satisfies
  $(\ref{axiom:gammaopen})$, $(\ref{axiom:gammadescent})$, and
  $(\ref{axiom:gammaascent})$.
  \par For every cofinite subset $\Gamma \subseteq \Lambda$, the set
  $\cP(X^\Gamma)$ is open by $(\ref{axiom:gammaopen})$ since $X^\Gamma$ is
  noetherian and $F$-finite by Lemma
  \ref{lem:gammaconstbasic}$(\ref{lem:gammaconstbasicffin})$.
  Moreover, the morphisms $\pi^\Gamma$ and $\pi^{\Gamma\Gamma'}$ are faithfully
  flat universal homeomorphisms for every cofinite subset $\Gamma' \subseteq
  \Lambda$ such that $\Gamma \subseteq \Gamma'$ by Lemmas
  \ref{lem:gammaconstbasic}$(\ref{lem:gammaconstbasicfflat})$ and 
  \ref{lem:gammaconstbasic}$(\ref{lem:gammaconstbasictransitionfflat})$, hence
  by $(\ref{axiom:gammadescent})$, we have the inclusions
  \begin{equation}\label{eq:ugammaincl}
    \cP(X) \supseteq \pi^\Gamma\bigl(\cP(X^\Gamma)\bigr) \supseteq
    \pi^{\Gamma'}\bigl(\cP(X^{\Gamma'})\bigr)
  \end{equation}
  in $X$, where $\pi^\Gamma(\cP(X^\Gamma))$ and
  $\pi^{\Gamma'}(\cP(X^{\Gamma'}))$ are open.
  Since $X$ is noetherian, it satisfies the ascending chain condition on the
  open sets $\pi^\Gamma(\cP(X^\Gamma))$, hence we can choose a cofinite subset
  $\Gamma_0 \subseteq \Lambda$ such that $\pi^{\Gamma_0}(\cP(X^{\Gamma_0}))$ is
  maximal with respect to inclusion.
  \par We claim that $\cP(X) = \pi^{\Gamma_0}(\cP(X^{\Gamma_0}))$ for every
  cofinite subset $\Gamma \subseteq \Gamma_0$.
  By \eqref{eq:ugammaincl}, it suffices to show the inclusion $\subseteq$.
  Suppose there exists $x \in \cP(X) \smallsetminus
  \pi^{\Gamma_0}(\cP(X^{\Gamma_0}))$.
  By $(\ref{axiom:gammaascent})$, there exists a cofinite subset $\Gamma_1
  \subseteq \Lambda$ such that $(\pi^\Gamma)^{-1}(x) \in \cP(X^\Gamma)$ for
  every cofinite subset $\Gamma \subseteq \Gamma_1$.
  Choosing $\Gamma = \Gamma_0 \cap \Gamma_1$,
  we have $x \in \pi^\Gamma(\cP(X^\Gamma)) \smallsetminus
  \pi^{\Gamma_0}(\cP(X^{\Gamma_0}))$, contradicting the maximality of
  $\pi^{\Gamma_0}(\cP(X^{\Gamma_0}))$.
\end{proof}
We now prove that the properties in Theorem \ref{thm:gammaconstintro} are
preserved when passing to $X^\Gamma$.
Special cases of the following  result appear in \cite[Lem.\ 6.13]{HH94},
\cite[Thm.\ 2.2]{Vel95}, \cite[Lem.\ 2.9]{EH08}, \cite[Lems.\ 3.23 and
3.30]{Has10}, and \cite[Prop.\ 5.6]{Ma14}.
\begin{theorem}\label{thm:gammaconst}
  Fix\index{gamma construction!and singularities|(} notation as in Construction \ref{constr:gamma}.
  \begin{enumerate}[label=$(\roman*)$,ref=\roman*]
    \item For every cofinite subset $\Gamma \subseteq \Lambda$, the map
      $\pi^\Gamma$ identifies local complete intersection, Gorenstein,
      Cohen--Macaulay, and $S_n$ loci.\label{thm:gammaconstcm}
    \item There exists a cofinite subset $\Gamma_0 \subseteq
      \Lambda$ such that $\pi^\Gamma$ identifies $R_n$ (resp.\ normal,
      weakly normal, reduced, strongly $F$-regular, $F$-pure, $F$-rational,
      $F$-injective) loci for every cofinite subset $\Gamma \subseteq \Gamma_0$.%
      \label{thm:gammaconstreg}\index{gamma construction!and singularities|)}
  \end{enumerate}
\end{theorem}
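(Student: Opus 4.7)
The plan is to apply \cref{prop:gammaconstconds} to each property listed and to combine the resulting cofinite subsets. Part \cref{thm:gammaconstcm} follows immediately from \cref{prop:gammaconstconds}\cref{prop:gammaconstcm}: by \cref{lem:gammaconstbasic}\cref{lem:gammaconstbasicfflat} the morphism $\pi^\Gamma$ is faithfully flat with local complete intersection fibers, so it suffices to recall that local complete intersection, Gorenstein, Cohen--Macaulay, and $S_n$ all ascend under a flat local homomorphism of noetherian local rings with l.c.i.\ fibers (for example by \cite[Thms.\ 23.4, 23.9, and 23.3]{Mat89} together with \cite{Avr75} in the l.c.i.\ case), and all descend under a faithfully flat local homomorphism of noetherian local rings \cite[Thms.\ 23.3, 23.4, and 23.9]{Mat89}. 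Since a faithfully flat universal homeomorphism induces a bijection on spectra, ascent plus descent gives the claimed identification of loci.

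For part \cref{thm:gammaconstreg}, I will verify axioms $(\ref{axiom:gammaopen})$, $(\ref{axiom:gammadescent})$, and one of $(\ref{axiom:gammaascent})$ or $(\ref{axiom:gammaascentprime})$ of \cref{prop:gammaconstconds}\cref{prop:gammaconstaxiomatic} for each property, obtaining a cofinite subset $\Gamma_\cP \subseteq \Lambda$ for each property $\cP$; taking $\Gamma_0 = \bigcap_\cP \Gamma_\cP$ over the finite list of properties gives a single cofinite subset that works uniformly. For $(\ref{axiom:gammaopen})$, the openness of the $R_n$, normal, reduced, and weakly normal loci on noetherian $F$-finite schemes follows from excellence (\cref{thm:ffiniteaffine}), using \cite[Thm.\ 6]{Man80} in the weakly normal case; openness of the strongly $F$-regular and $F$-pure loci in the $F$-finite setting is classical; openness of the $F$-rational locus is \cite[Thm.\ 3.5]{Vel95}; and openness of the $F$-injective locus on an $F$-finite noetherian scheme follows from \cref{lem:finjffin}, since the non-$F$-injective locus is the union of the supports of the coherent cokernels of $\mathbf{h}^{-i}\Tr_F$. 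Axiom $(\ref{axiom:gammadescent})$ for these properties is either standard faithfully flat descent (for $R_n$, normal, reduced, weakly normal) or appears in the references assembled in \cref{tab:fsingsdiagram} and in \cref{prop:fsingbasic}; in each case one exploits the fact that a flat local map with zero-dimensional fibers is faithfully flat and that the residue-field extension is at worst purely inseparable in our situation.

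It remains to verify the ascent axiom. For $R_n$, normal, reduced, and weakly normal I will use $(\ref{axiom:gammaascentprime})$: if $B \to C$ is a flat local homomorphism whose closed fiber is a field (necessarily purely inseparable over the residue field of $B$ in our application by \cite[Lem.\ 6.13$(b)$]{HH94}), then each of these properties ascends from $B$ to $C$ by standard facts about geometrically regular fibers. For the $F$-singularity properties I will use $(\ref{axiom:gammaascent})$, which is exactly the content of the classical gamma-construction lemmas already in the literature: \cite[Lem.\ 6.13]{HH94} for $F$-pure, \cite[Lem.\ 3.23]{Has10} for strong $F$-regularity, \cite[Thm.\ 2.2]{Vel95} for $F$-rational, and \cite[Lem.\ 2.9]{EH08} (combined with \cref{lem:finjffin} to pass from the complete-local to the general essentially-of-finite-type setting) for $F$-injective.

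The main obstacles are $(\ref{axiom:gammaascent})$ for $F$-injectivity and $(\ref{axiom:gammaopen})$ for weak normality, since these are the cases where the existing literature is thinnest; for $F$-injectivity the delicate point is that one must control, for a local ring $B$ essentially of finite type over $A$, how the Frobenius action on the local cohomology modules of $B$ transforms under the purely inseparable base change $B \rightsquigarrow B^\Gamma$ to ensure that injectivity is preserved for all cohomological degrees simultaneously once $\Gamma$ is sufficiently small. Both points can be reduced, after passing to the completion, to \cite[Lem.\ 2.9]{EH08} and \cite[Thm.\ 6]{Man80}, respectively.
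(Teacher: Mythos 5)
Your overall strategy---verifying the hypotheses of \cref{prop:gammaconstconds} property by property and intersecting the finitely many resulting cofinite subsets---is exactly the paper's, and your treatment of part \cref{thm:gammaconstcm} and of the $F$-singularity properties in part \cref{thm:gammaconstreg} matches the paper's proof in all essentials. However, there is a genuine gap in your verification of the ascent axiom for $R_n$, normality, reducedness, and weak normality. You invoke $(\ref{axiom:gammaascentprime})$, i.e., ascent along a flat local homomorphism $B \to C$ whose \emph{closed} fiber is a field, ``by standard facts about geometrically regular fibers.'' But $(\ref{axiom:gammaascentprime})$ gives no control over the fibers of $B \to C$ at non-maximal primes, and each of these four properties is defined through localizations at non-maximal primes: the ascent statements in \cite[Thms.\ 23.7 and 23.9]{Mat89} require \emph{all} fibers to be regular (resp.\ $R_n$), not just the closed one. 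Only plain regularity---a condition on the closed point alone---satisfies $(\ref{axiom:gammaascentprime})$ as you state it. Weak normality is even further from satisfying it, since it is characterized via the normalization and is not a fiberwise condition at all.

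The paper circumvents this as follows: it first runs the axiomatic argument for \emph{regularity}, where $(\ref{axiom:gammaascentprime})$ is exactly \cite[Thm.\ 23.7]{Mat89}; it then deduces the $R_n$ case from the regularity case because $\pi^\Gamma$ is a homeomorphism that preserves the dimensions of local rings; normality and reducedness follow by combining this with part \cref{thm:gammaconstcm}, since they are $R_1+S_2$ and $R_0+S_1$; and weak normality is handled by a separate verification of $(\ref{axiom:gammaascent})$ (not $(\ref{axiom:gammaascentprime})$) using Manaresi's equalizer characterization $B \to B^\nu \rightrightarrows (B^\nu \otimes_B B^\nu)_\red$, which transfers to $B^\Gamma$ precisely because the reduced and normal cases have already been established for a common cofinite $\Gamma_1$. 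You should restructure these four cases along those lines; as written, the asserted instances of $(\ref{axiom:gammaascentprime})$ are unjustified and false in general. A smaller point: openness of the weakly normal locus is not a formal consequence of excellence; the paper cites Bingener--Flenner for $(\ref{axiom:gammaopen})$ in that case, whereas your citation of Manaresi's Theorem~6 is to the equalizer characterization, not to an openness statement.
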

Note that Theorem \ref{thm:gammaconst} implies Theorem \ref{thm:gammaconstintro}
since if $A$ is a
field, then $A^\Gamma$ is also by Construction \ref{constr:gamma}, and moreover
if one wants to preserve more than one property at once, then it suffices to
intersect the various $\Gamma_0$ for the different properties.
\begin{proof}
  For $(\ref{thm:gammaconstcm})$, it suffices to note that these properties
  satisfy the condition in Proposition
  \ref{prop:gammaconstconds}$(\ref{prop:gammaconstcm})$ by \cite[Cor.\
  2]{Avr75} and \cite[Thm.\ 23.4, Cor.\ to Thm.\ 23.3, and Thm.\
  23.9$(iii)$]{Mat89}, respectively.
  \par We now prove $(\ref{thm:gammaconstreg})$.
  We first note that $(\ref{thm:gammaconstreg})$ holds for regularity since
  $(\ref{axiom:gammaopen})$ holds by the excellence of $X^\Gamma$, and
  $(\ref{axiom:gammadescent})$ and $(\ref{axiom:gammaascentprime})$ hold by
  \cite[Thm.\ 23.7]{Mat89}.
  Since $\pi^\Gamma$ preserves the dimension of local rings, we therefore see
  that $(\ref{thm:gammaconstreg})$ holds for $R_n$.
  $(\ref{thm:gammaconstreg})$ for normality and reducedness then follows from
  $(\ref{thm:gammaconstcm})$ since they are equivalent to $R_1+S_2$ and
  $R_0+S_1$, respectively.
  \par To prove $(\ref{thm:gammaconstreg})$ holds in the remaining cases, we
  check the conditions in Proposition
  \ref{prop:gammaconstconds}$(\ref{prop:gammaconstaxiomatic})$.
  For weak normality, $(\ref{axiom:gammaopen})$ holds by \cite[Thm.\
  7.1.3]{BF93}, and $(\ref{axiom:gammadescent})$ holds by \cite[Cor.\
  II.2]{Man80}.
  To show that $(\ref{axiom:gammaascent})$ holds, recall by \cite[Thm.\
  I.6]{Man80} that a reduced ring $B$ is weakly
  normal\index{weak normality|textbf} if and only if
  \begin{equation}\label{eq:manaresiwnchar}
    \begin{tikzcd}[column sep=large]
      B \rar &[-2.125em] B^\nu \rar[shift left=3pt]{b \mapsto b \otimes 1}
      \rar[shift right=3pt,swap]{b \mapsto 1 \otimes b} & (B^\nu \otimes_B
      B^\nu)_\red
    \end{tikzcd}
  \end{equation}
  is an equalizer diagram, where $B^\nu$ is the normalization of $B$.
  Now suppose $B$ is weakly normal, and let $\Gamma_1 \subseteq \Lambda$ be a
  cofinite subset such that $B^\Gamma$ is reduced, $(B^\nu)^\Gamma$ is normal,
  and $((B^\nu \otimes_B B^\nu)_\red)^\Gamma$ is reduced for every cofinite
  subset $\Gamma \subseteq \Gamma_1$; such a $\Gamma_1$ exists by the
  previous paragraph.
  We claim that $B^\Gamma$ is weakly normal for every $\Gamma \subseteq
  \Gamma_1$ cofinite in $\Lambda$.
  Since \eqref{eq:manaresiwnchar} is an equalizer diagram and $A \subseteq
  A^\Gamma$ is flat, the diagram
  \[
    \begin{tikzcd}[column sep=large]
      B^\Gamma \rar &[-2.125em] (B^\nu)^\Gamma \rar[shift left=3pt]{b \mapsto b
      \otimes 1} \rar[shift right=3pt,swap]{b \mapsto 1 \otimes b} &
      \bigl((B^\nu \otimes_B B^\nu)_\red\bigr)^\Gamma
    \end{tikzcd}
  \]
  is an equalizer diagram.
  Moreover, since $B^\Gamma \subseteq (B^\nu)^\Gamma$ is an integral
  extension of rings with the same total ring of fractions, and $(B^\nu)^\Gamma$
  is normal, we see that $(B^\nu)^\Gamma = (B^\Gamma)^\nu$.
  Finally, $((B^\nu \otimes_B B^\nu)_\red)^\Gamma$ is reduced, hence we have the
  natural isomorphism
  \[
    \bigl((B^\nu \otimes_B B^\nu)_\red\bigr)^\Gamma \simeq \bigl((B^\Gamma)^\nu
    \otimes_{B^\Gamma} (B^\Gamma)^\nu\bigr)_\red.
  \]
  Thus, since the analogue of \eqref{eq:manaresiwnchar} with $B$ replaced
  by $B^\Gamma$ is an equalizer diagram, we see that $B^\Gamma$ is weakly normal
  for every $\Gamma \subseteq \Gamma_1$ cofinite in $\Lambda$, hence
  $(\ref{axiom:gammaascent})$ holds for weak normality.
  \par We now prove $(\ref{thm:gammaconstreg})$ for strong $F$-regularity,
  $F$-purity, and $F$-rationality.
  First, $(\ref{axiom:gammaopen})$ holds for strong $F$-regularity by
  \cite[Lem.\ 3.29]{Has10}, and the same argument shows that
  $(\ref{axiom:gammaopen})$ holds for $F$-purity since the $F$-pure and
  $F$-split loci coincide for $F$-finite rings \cite[Cor.\ 5.3]{HR76}.
  Next, $(\ref{axiom:gammaopen})$ for $F$-rationality holds by \cite[Thm.\
  1.11]{Vel95} since the reduced locus is open and reduced $F$-finite rings are
  admissible in the sense of \cite[Def.\ 1.5]{Vel95} by Theorem
  \ref{thm:ffiniteaffine}.
  It then suffices to note that $(\ref{axiom:gammadescent})$ holds by
  \cite[Lem.\ 3.17]{Has10}, \cite[Prop.\ 5.13]{HR76}, and \cite[(6) on p.\
  440]{Vel95}, respectively, and $(\ref{axiom:gammaascent})$ holds by
  \cite[Cor.\ 3.31]{Has10}, \cite[Prop.\ 5.4]{Ma14}, and \cite[Lem.\
  2.3]{Vel95}, respectively.
  \par Finally, we prove $(\ref{thm:gammaconstreg})$ for $F$-injectivity.
  First, $(\ref{axiom:gammaopen})$ and $(\ref{axiom:gammadescent})$ hold by
  \cite[Lem.\ A.2]{Mur} and \cite[Lem.\ A.3]{Mur}, respectively.
  The proof of \cite[Lem.\ 2.9$(b)$]{EH08} implies
  $(\ref{axiom:gammaascent})$, since the residue field of $B$ is a finite
  extension of $k$, hence socles of artinian $B$-modules are finite-dimensional
  $k$-vector spaces.
\end{proof}
\section{Applications}
We now give some applications of the gamma construction (Theorem
\ref{thm:gammaconst}).
See also \cite[\S3.2]{Mur} for applications to the minimal model program over
imperfect fields.
\subsection{Openness of \emph{F}-singularities}
We have the following consequence of Theorem \ref{thm:gammaconst}, which was
first attributed to Hoshi in \cite[Thm.\ 3.2]{Has10proc}.
Note that the analogous statements for strong $F$-regularity and $F$-rationality
appear in \cite[Prop.\ 3.33]{Has10} and \cite[Thm.\ 3.5]{Vel95}, respectively.
\begin{corollary}\label{cor:fpurelocusopen}
  Let $X$ be a scheme essentially of finite type over a local $G$-ring
  $(A,\fm)$ of characteristic $p > 0$.
  Then, the $F$-pure locus is open in $X$.%
  \index{F-pure@$F$-pure!is often an open condition}
\end{corollary}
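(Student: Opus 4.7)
The strategy is to use the $G$-ring hypothesis to pass to the completion, apply the gamma construction to reach the $F$-finite setting (where the $F$-pure locus is known to be open), and then descend. Since openness is local on $X$, I may assume $X = \Spec B$ is affine, where $B$ is a localization of a finitely generated $A$-algebra. Let $\widehat{A}$ denote the $\fm$-adic completion of $A$. The $G$-ring hypothesis says exactly that $A \to \widehat{A}$ is a regular ring homomorphism, so by base change $B \to \widehat{B} \coloneqq B \otimes_A \widehat{A}$ is also regular, hence faithfully flat with geometrically regular (and in particular $F$-pure) fibers. Write $\widehat{X} = \Spec \widehat{B}$ and $\widehat{\pi}\colon \widehat{X} \to X$ for the projection.

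Since $\widehat{X}$ is essentially of finite type over the complete local ring $\widehat{A}$, \cref{thm:gammaconst} produces a cofinite subset $\Gamma \subseteq \Lambda$ such that $\pi^\Gamma\colon \widehat{X}^\Gamma \to \widehat{X}$ is a homeomorphism identifying $F$-pure loci. The scheme $\widehat{X}^\Gamma$ is $F$-finite by \cref{lem:gammaconstbasic}\cref{lem:gammaconstbasicffin}; on an $F$-finite scheme the $F$-pure locus coincides with the $F$-split locus (\cref{fig:fsingsdiagram}), which is open. This is precisely the verification of condition $(\ref{axiom:gammaopen})$ for $F$-purity recorded in the proof of \cref{thm:gammaconst}, via \cite[Cor.\ 5.3]{HR76}. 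Pulling back through the homeomorphism $\pi^\Gamma$, the $F$-pure locus $\widehat{U} \subseteq \widehat{X}$ is open.

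To finish, I would descend the openness of $\widehat{U}$ along $\widehat{\pi}$. Since $\widehat{\pi}$ is faithfully flat, it is a topological quotient map, so a subset $U \subseteq X$ is open whenever $\widehat{\pi}^{-1}(U)$ is. It therefore suffices to identify $\widehat{U}$ with $\widehat{\pi}^{-1}(U)$, where $U$ is the $F$-pure locus in $X$. Concretely, for every prime $\fp \subseteq B$ and every prime $\widehat{\fp}$ of $\widehat{B}$ lying over $\fp$, I need the equivalence that $B_\fp$ is $F$-pure if and only if $\widehat{B}_{\widehat{\fp}}$ is $F$-pure. The hard part will be this ascent/descent of $F$-purity along the induced local homomorphism $B_\fp \to \widehat{B}_{\widehat{\fp}}$, which is faithfully flat with geometrically regular (hence $F$-pure) fibers: the descent direction follows from faithful flatness, while the ascent direction relies on the $F$-purity of the fibers to keep the Frobenius of $\widehat{B}_{\widehat{\fp}}$ pure, as in \cite{EH08} and \cite{Has10}.
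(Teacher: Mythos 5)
Your proposal is correct and follows essentially the same route as the paper: apply the gamma construction over $\widehat{A}$ to get openness of the $F$-pure locus on the $F$-finite model, use regularity of $A \to \widehat{A}$ (the $G$-ring hypothesis) together with ascent/descent of $F$-purity along regular, faithfully flat homomorphisms to identify the $F$-pure locus upstairs with the preimage of the one downstairs, and then descend openness along the quasi-compact faithfully flat morphism $\widehat{\pi}$ (the paper cites \cite[Cor.\ 2.3.12]{EGAIV2} for this last step, which is what justifies your "topological quotient map" claim). The only cosmetic difference is that you reduce to the affine case first, while the paper argues on $X$ directly.
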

Recall that a noetherian ring $R$ is a
\textsl{$G$-ring}\index{G-ring@$G$-ring|textbf} if, for every prime
ideal $\fp \subseteq R$, the completion homomorphisms $R_\fp \to
\widehat{R_\fp}$ are regular in the sense of \cite[Def.\ 6.8.1]{EGAIV2}.
\begin{proof}
  Let $A \to \widehat{A}$ be the completion of $A$ at $\fm$, and let $\Lambda$
  be a $p$-basis for $\widehat{A}/\fm\widehat{A}$ as in Construction
  \ref{constr:gamma}.
  For every cofinite subset $\Gamma \subseteq \Lambda$, consider the
  commutative diagram
  \[
    \begin{tikzcd}
      X \times_A \widehat{A}^\Gamma \rar{\pi^\Gamma}\dar & X \times_A
      \widehat{A} \rar{\pi}\dar & X\dar\\
      \Spec \widehat{A}^\Gamma \rar & \Spec \widehat{A} \rar & \Spec A
    \end{tikzcd}
  \]
  where the squares are cartesian.
  By Theorem \ref{thm:gammaconst}, there exists a cofinite subset $\Gamma
  \subseteq \Lambda$ such that $\pi^\Gamma$ is a homeomorphism identifying
  $F$-pure loci.
  Since $X \times_A \widehat{A}^\Gamma$ is $F$-finite, the $F$-pure locus in
  $X \times_A \widehat{A}$ is therefore open by the fact that
  $(\ref{axiom:gammaopen})$ holds for $F$-purity; see the proof of Theorem
  \ref{thm:gammaconst}$(\ref{thm:gammaconstreg})$.
  \par Now let $x \in X \times_A \widehat{A}$.
  Since $A \to \widehat{A}$ is a regular homomorphism, the morphism $\pi$ is
  also regular by base change \cite[Prop.\ 6.8.3$(iii)$]{EGAIV2}.
  Thus, $\cO_{X \times_A \widehat{A},x}$ is $F$-pure if and only if
  $\cO_{X,\pi(x)}$ is $F$-pure by \cite[Prop.\ 5.13]{HR76} and \cite[Props.\
  2.4(4) and 2.4(6)]{Has10}.
  Denoting the $F$-pure locus in $X$ by $W$, we see that $\pi^{-1}(W)$ is
  the $F$-pure locus in $X \times_A \widehat{A}$.
  Since $\pi^{-1}(W)$ is open and $\pi$ is quasi-compact and faithfully flat by
  base change, the $F$-pure locus $W \subseteq X$ is open by \cite[Cor.\
  2.3.12]{EGAIV2}.
\end{proof}
\begin{remark}
  Although \cite[Lem.\ A.2]{Mur} shows that the $F$-injective locus is open
  under $F$-finiteness hypotheses, and the gamma construction (Theorem
  \ref{thm:gammaconst}) implies that the $F$-injective locus is open for schemes
  essentially of finite type over \emph{complete} local rings, the fact that the
  $F$-injective locus is open under the hypotheses of Corollary
  \ref{cor:fpurelocusopen} is a recent result due to
  Rankeya Datta\index{Datta, Rankeya} and the author \cite[Thm.\ B]{DaM}.%
  \index{F-injective@$F$-injective!is often an open condition}
\end{remark}
\subsection{\emph{F}-singularities for rings essentially of finite type} 
We finally show that for rings to which the gamma construction applies, the
notions of strong $F$-regularity and split $F$-regularity coincide, as do the
notions of $F$-purity and $F$-splitting.
This result is unpublished work of Rankeya Datta\index{Datta, Rankeya} and the
author.
\begin{theorem}\label{thm:splittingwithgamma}
  Let $R$ be a ring essentially of finite type over a noetherian complete local
  ring $(A,\fm,k)$ of characteristic $p > 0$.
  If $R$ is strongly $F$-regular (resp.\ $F$-pure), then $R$ is split
  $F$-regular (resp.\ $F$-split).%
  \index{F-regular@$F$-regular!strongly}%
  \index{F-pure@$F$-pure}
\end{theorem}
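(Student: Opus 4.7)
The plan is to apply the gamma construction (\cref{thm:gammaconst}) to reduce to the $F$-finite setting, invoke the known equivalences between the two pairs of notions on $F$-finite rings, and then descend the resulting splittings back to $R$.

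First, I would apply \cref{thm:gammaconst} to find a cofinite subset $\Gamma \subseteq \Lambda$ of a $p$-basis of the residue field $k = A/\fm$ such that the projection $\pi^\Gamma\colon \Spec R^\Gamma \to \Spec R$, where $R^\Gamma \coloneqq R \otimes_A A^\Gamma$, identifies strongly $F$-regular loci in the first case and $F$-pure loci in the second case. Under the hypothesis that $R$ is strongly $F$-regular (respectively, $F$-pure), this yields that $R^\Gamma$ has the same property, and by \cref{lem:gammaconstbasic}, $R^\Gamma$ is $F$-finite and the structure morphism $R \to R^\Gamma$ is faithfully flat. Because $R^\Gamma$ is $F$-finite, the implications collected in \cref{fig:fsingsdiagram} and proved in \cref{tab:fsingsdiagram} (via \cite[Lem.\ 3.9]{Has10} and \cite[Cor.\ 5.3]{HR76}, respectively) allow us to upgrade: $R^\Gamma$ is split $F$-regular (respectively, $F$-split). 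It will also be useful to observe that, since $R$ and $R^\Gamma$ are both reduced and $R \to R^\Gamma$ is faithfully flat, minimal primes of $R^\Gamma$ contract to minimal primes of $R$, and hence the inclusion $R \hookrightarrow R^\Gamma$ sends $R^\circ$ into $(R^\Gamma)^\circ$, so the element $c$ witnessing the splitting condition on $R^\Gamma$ can be taken to come from $R^\circ$.

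The remaining step---descent of the splittings from $R^\Gamma$ to $R$---will be the main obstacle. Given an $R^\Gamma$-linear splitting $\psi\colon F^e_*R^\Gamma \to R^\Gamma$ of $\lambda^e_c$, naively restricting $\psi$ to $F^e_*R$ gives only an $R$-linear map $F^e_*R \to R^\Gamma$, not $F^e_*R \to R$; and since $R$ is not $F$-finite, the module $F^e_*R$ is not finitely presented, so $\Hom_R(F^e_*R, -)$ does not commute with the base change $-\otimes_R R^\Gamma$ and one cannot descend the splitting by the standard criterion.

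My approach to resolving this is to exploit the fact that $R$ admits a dualizing complex $\omega_R^\bullet$, because $A$ is a complete local ring (so admits one by Grothendieck duality after the Cohen structure theorem) and $R$ is essentially of finite type over $A$, so the dualizing complex is preserved via the pseudofunctor $(-)^!$ of \cref{thm:nayakshriek}. Applying Grothendieck duality to the iterated Frobenius $F^e$ in the spirit of \cref{prop:ctx25}, the existence of an $R$-linear splitting $\psi$ of $\lambda^e_c$ with $\psi(c)=1$ becomes equivalent to the surjectivity onto $R$ of a composition involving the Frobenius trace $\Tr_R^e \colon F^e_*\omega_R^\bullet \to \omega_R^\bullet$ twisted by $c$. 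The analogous surjectivity holds over $R^\Gamma$ by the previous paragraph, and the Frobenius trace for $R$ is compatible with the faithfully flat base change $R \to R^\Gamma$ (using flat base change for $(-)^!$), so faithfully flat descent of surjectivity of $R$-module maps yields the desired splitting over $R$. The technical heart of the argument is ensuring that the compatibility of $\Tr_R^e$ with base change along $R \to R^\Gamma$ survives the Matlis-dual translation between purity/splitting of Frobenius and surjectivity of trace, especially since the base-change isomorphism $\omega_{R^\Gamma}^\bullet \simeq \omega_R^\bullet \otimes_R^{\LL} R^\Gamma$ comes with a nontrivial shift reflecting the fiber dimensions from \cref{lem:gammaconstbasic}\cref{lem:gammaconstbasicfflat}.
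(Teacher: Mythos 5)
Your reduction to the $F$-finite case via the gamma construction matches the paper exactly, and you correctly identify the descent of the splitting from $R^\Gamma$ back to $R$ as the crux. However, the mechanism you propose for that descent does not work. Your plan is to translate ``$\lambda^e_c$ splits'' into ``a twisted Frobenius trace is surjective'' via Grothendieck duality applied to $F^e$, as in \cref{prop:ctx25}, and then descend surjectivity along the faithfully flat map $R \to R^\Gamma$. But that duality translation is only available when $F^e$ is a \emph{finite} morphism: \cref{thm:nayakshriek} gives the adjunction $\Hom_R(F^e_*R,\omega_R^\bullet) \simeq F^e_*F^{e!}\omega_R^\bullet$ precisely for finite (or proper) morphisms, and the entire point of this theorem is that $R$ is \emph{not} $F$-finite (if it were, the statement is already \cite[Lem.\ 3.9]{Has10} and \cite[Cor.\ 5.3]{HR76}). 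This is the obstruction flagged in \cref{sect:poschardifficult}. Even setting that aside, the natural map $F^e_*R \otimes_R R^\Gamma \to F^e_*R^\Gamma$ is not an isomorphism (the absolute Frobenius does not commute with the base change $R \to R^\Gamma$), so the surjectivity you would verify over $R^\Gamma$ is not the base change of the map you need to be surjective over $R$, and faithfully flat descent does not apply to it.

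The missing idea is much more elementary and is where the completeness of $A$ actually enters: since $A \hookrightarrow A^\Gamma$ is faithfully flat it is pure, and a pure homomorphism out of a noetherian \emph{complete} local ring splits as a module map (\cref{lem:fed12}, via Matlis duality over $A$). Base changing along $A \to R$ gives an $R$-linear retraction $\sigma\colon R^\Gamma \to R$ of the inclusion $R \hookrightarrow R^\Gamma$. Given the $R^\Gamma$-linear splitting $\psi$ of $F^e_*(-\cdot(c\otimes 1)) \circ F^e_{R^\Gamma}$ supplied by the $F$-finite case, the composite
\[
  F^e_*R \longhookrightarrow F^e_*R^\Gamma \overset{\psi}{\longrightarrow}
  R^\Gamma \overset{\sigma}{\longrightarrow} R
\]
is $R$-linear and sends $F^e_*c \mapsto F^e_*(c\otimes 1) \mapsto 1 \mapsto 1$, hence splits $F^e_*(-\cdot c)\circ F^e_R$. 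No duality for the Frobenius of $R$ is needed. (Your observation that $c \otimes 1 \in (R^\Gamma)^\circ$ is correct and is also used in the paper, via going-down for the flat extension; and note that the fibers of $\pi^\Gamma$ are zero-dimensional, so the shift you worry about in the last sentence would in any case be trivial.)
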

We first show the following preliminary result, which was communicated to
Hochster\index{Hochster, Melvin} by Auslander\index{Auslander, Maurice}
(although it may be older).
\begin{lemma}[cf.\ {\cite[Lem.\ 1.2]{Fed83}}]\label{lem:fed12}
  Let $(A,\fm,k)$ be a noetherian complete local ring.
  Then, every pure ring homomorphism $A \to B$ splits as an $A$-module
  homomorphism.
\end{lemma}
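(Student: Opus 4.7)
The plan is to prove Lemma~B.2 using Matlis duality, which is the standard argument attributed to Auslander. Let $E \coloneqq E_A(k)$ be the injective hull of the residue field. Since $A$ is noetherian and complete local, Matlis duality yields a canonical isomorphism $A \isoto \Hom_A(E,E)$ sending $1 \in A$ to $\id_E$. The first step is to translate the existence of a splitting for $\iota \colon A \to B$ into a lifting problem for $\id_E$.

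More precisely, a splitting of $\iota$ exists if and only if the evaluation map
\[
  \Hom_A(B,A) \longrightarrow \Hom_A(A,A) = A, \qquad \varphi \longmapsto \varphi \circ \iota,
\]
hits $1 \in A$. Using Matlis duality and tensor--hom adjunction, I would identify this map with
\[
  \Hom_A(E \otimes_A B, E) \longrightarrow \Hom_A(E,E),
\]
given by precomposition with the canonical map $\alpha \colon E \simeq E \otimes_A A \to E \otimes_A B$. Under this identification, producing a splitting of $\iota$ is equivalent to extending the identity map $\id_E \in \Hom_A(E,E)$ along $\alpha$.

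At this point the hypothesis of purity enters: purity of $\iota$ applied to the $A$-module $E$ implies that $\alpha$ is injective. Since $E$ is an injective $A$-module, any injection out of $E$ admits an $A$-linear retraction, so $\alpha$ splits, and in particular $\id_E$ extends to a map $E \otimes_A B \to E$. Tracing this extension back through the adjunction furnishes the desired retraction $B \to A$.

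The argument is essentially formal once Matlis duality is in hand, so I do not expect any real obstacle; the main point of care is verifying that the adjunction isomorphism $\Hom_A(B,\Hom_A(E,E)) \simeq \Hom_A(E \otimes_A B, E)$ genuinely identifies the evaluation-at-$\iota$ map with pullback along $\alpha$, and that the completeness hypothesis is used exactly to identify $\id_A \in A$ with $\id_E \in \Hom_A(E,E)$ (without completeness, $A \to \Hom_A(E,E)$ is only the canonical map to the Matlis completion, and the argument only produces a retraction of $A \to \Hom_A(E,E) \otimes_A B$, not of $\iota$ itself).
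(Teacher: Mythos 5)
Your proposal is correct and is essentially the same argument as the paper's: both use purity to get injectivity of $E_A(k) \to E_A(k) \otimes_A B$, injectivity of $E_A(k)$ to extend $\id_{E_A(k)}$ (equivalently, to dualize the injection to a surjection), and tensor--hom adjunction together with the completeness-dependent isomorphism $A \simeq \Hom_A(E_A(k),E_A(k))$ to convert that extension into a splitting of $A \to B$. Your closing remark about where completeness enters matches the paper's use of \cite[Thm.\ 18.6$(iv)$]{Mat89} exactly.
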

\begin{proof}
  Let $f\colon A \to B$ be a pure ring homomorphism.
  We claim we have the following commutative diagram with exact rows, where
  the vertical homomorphisms are isomorphisms:
  \[
    \begin{tikzcd}[column sep=3em]
      \Hom_A\bigl(B\otimes_AE_A(k),E_A(k)\bigr)
      \rar{(f \otimes \id_{E_A(k)})^*}\arrow{d}[sloped,above]{\sim} &
      \Hom_A\bigl(A\otimes_AE_A(k),E_A(k)\bigr)
      \rar\arrow{d}[sloped,above]{\sim} & 0\\
      \Hom_A\bigl(B,\Hom_A\bigl(E_A(k),E_A(k)\bigr)\bigr) \rar{f^*} &
      \Hom_A\bigl(A,\Hom_A\bigl(E_A(k),E_A(k)\bigr)\bigr) \rar & 0\\
      \Hom_A(B,A) \rar{f^*}\arrow{u}[sloped,below]{\sim} &
      \Hom_A(A,A) \rar\arrow{u}[sloped,below]{\sim} & 0
    \end{tikzcd}
  \]
  The top row is the Matlis dual
  of the map $f \otimes \id_{E_A(k)} \colon A
  \otimes_A E_A(k) \to B \otimes_A E_A(k)$, and the second row is obtained from
  the first by tensor-hom adjunction.
  The last row is obtained from the isomorphism $\Hom_A(E_A(k),E_A(k)) \simeq A$,
  which holds by the completeness of $A$ \cite[Thm.\ 18.6$(iv)$]{Mat89}.
  Since the last row is surjective, we can choose $g
  \in \Hom_A(B,A)$ such that $f^*(g) = g \circ f = \id_A$.
\end{proof}
We can now show Theorem \ref{thm:splittingwithgamma}.
\begin{proof}[Proof of Theorem \ref{thm:splittingwithgamma}]
  By the gamma construction (Theorem \ref{thm:gammaconst}), there exists a
  faithfully flat ring extension $A \hookrightarrow A^\Gamma$ such that
  $R^\Gamma \coloneqq R \otimes_A A^\Gamma$ is strongly $F$-regular (resp.\
  $F$-pure) and $F$-finite.
  By $F$-finiteness, the ring $R^\Gamma$ is split $F$-regular by
  \cite[Lem.\ 3.9]{Has10} (resp.\ $F$-split by \cite[Cor.\ 5.3]{HR76}).
  Now consider the commutative diagram
  \[
    \begin{tikzcd}[column sep=huge]
      A \rar\dar[hook] & R
      \rar{F^e_R}\dar[hook] & F^e_*R \rar{F^e_*(-\cdot c)}\dar[hook] &
      F^e_*R\dar[hook]\\
      A^\Gamma \rar & R^\Gamma \rar{F^e_{R^\Gamma}} & F^e_*R^\Gamma
      \rar{F^e_*(-\cdot (c\otimes 1))} & F^e_*R^\Gamma
    \end{tikzcd}
  \]
  for every $c \in R^\circ$ and every integer $e > 0$, where the left square
  is cocartesian.
  Note that if $c \in R^\circ$, then $c \otimes 1 \in (R^\Gamma)^\circ$, since
  $R \to R^\Gamma$ satisfies going-down \cite[Thm.\
  9.5]{Mat89}.
  \par Since the inclusion $A \hookrightarrow A^\Gamma$ is faithfully flat, it is
  pure, hence splits as an $A$-module homomorphism by Lemma \ref{lem:fed12}.
  By base change, this implies the inclusion $R \hookrightarrow R^\Gamma$ splits
  as an $R$-module homomorphism.
  For both split $F$-regularity and $F$-splitting, it then suffices to note that
  if $F^e_*(-\cdot (c\otimes 1)) \circ F^e_{R^\Gamma}$ splits for some $c \in
  R^\circ$ and for some $e > 0$, then composing this splitting with a splitting
  of $R \hookrightarrow R^\Gamma$ gives a splitting of $F^e_*(-\cdot c) \circ
  F^e_R$ by the commutativity of the diagram above.
\end{proof}

\backmatter

\printbibliography[heading=bibintoc]

\index{complete linear system|see {linear system, complete}}

\index{Cartan, Henri!zzzzz@\igobble |see {Cartan--Serre--Grothendieck theorem}}
\index{Serre, Jean-Pierre!zzzzz@\igobble |see {Cartan--Serre--Grothendieck theorem; vanishing theorem, Serre}}
\index{Grothendieck, Alexander!zzzzz@\igobble |seealso {Cartan--Serre--Grothendieck theorem}}

\index{ample line bundle or divisor!zzzzz@\igobble |seealso {Cartan--Serre--Grothendieck theorem}}

\index{Riemann, Bernhard!zzzzz@\igobble |see {Riemann--Roch theorem}}
\index{Roch, Gustav!zzzzz@\igobble |see {Riemann--Roch theorem}}

\index{Kawamata, Yujiro!zzzzz@\igobble |seealso {Kawamata--Reid--Shokurov, cohomological method of; klt; vanishing theorem, Kawamata--Viehweg}}
\index{Reid, Miles!zzzzz@\igobble |see {Kawamata--Reid--Shokurov, cohomological method of}}
\index{Shokurov, V. V.!zzzzz@\igobble |see {Kawamata--Reid--Shokurov, cohomological method of}}

\index{Angehrn, Urban!zzzzz@\igobble |seealso {Angehrn--Siu theorem}}
\index{Siu, Yum Tong!zzzzz@\igobble |seealso {Angehrn--Siu theorem}}

\index{g amma construction@$\Gamma$ construction|see {gamma construction}}

\index{Kawamata log terminal|see {klt}}

\index{Seshadri, C. S.!zzzzz@\igobble |see {Seshadri constant}}

\index{Fujita, Takao!zzzzz@\igobble |seealso {vanishing theorem, Fujita}}
\index{Kodaira, Kunihiko!zzzzz@\igobble |see {vanishing theorem, Kodaira}}
\index{Nadel, Alan Michael!zzzzz@\igobble |seealso {vanishing theorem, Nadel}}
\index{Viehweg, Eckart!zzzzz@\igobble |seealso {vanishing theorem, Kawamata--Viehweg}}

\index{Hilbert, David!zzzzz@\igobble |see {Hilbert--Samuel multiplicity}}
\index{Tango--Raynaud curve!zzzzz@\igobble |see {Tango, Hiroshi, curve}}

\index{Szpiro, Lucien!zzzzz@\igobble |seealso {vanishing theorem, Szpiro--Lewin-M\'en\'egaux}}
\index{Lewin-Menegaux, Renee@Lewin-M\'en\'egaux, Ren\'ee!zzzzz@\igobble |seealso {vanishing theorem, Szpiro--Lewin-M\'en\'egaux}}

\index{Mori, Shigefumi!zzzzz@\igobble |seealso {bend and break; Mori--Mukai conjecture}}
\index{Mukai, Shigeru!zzzzz@\igobble |seealso {Mori--Mukai conjecture}}

\index{normalized dualizing complex, $\omega_X^\bullet$!zzzzz@\igobble |see {dualizing complex, normalized}}

\index{jet bundle, $J_\ell(L)$|see {bundle of principal parts}}

\index{Cartier, Pierre!zzzzz@\igobble |see {Cartier divisor; Cartier operator}}
\index{Weil, Andre@Weil, Andr\'e|see {Weil divisor}}

\end{document}